\newtheorem{lemma}{Lemma}[section]
\newtheorem{theorem}{Theorem}[section]
\newtheorem{proposition}{Proposition}[section]
\newtheorem{remark}{Remark}[section]
\newtheorem{corollary}{Corollary}[section]
\numberwithin{equation}{section}
\newcommand{\dis}{\displaystyle}
\newcommand{\R}{\mathbb{R}}
\newcommand{\CD}{\mathcal{D}}
\newcommand{\CE}{\mathcal{E}}
\newcommand{\CI}{\mathcal{I}}
\newcommand{\CL}{\mathcal{L}}
\newcommand{\ep}{\epsilon}
\newcommand{\al}{\alpha}
\newcommand{\be}{\beta}
\newcommand{\la}{\lambda}
\newcommand{\de}{\delta}
\newcommand{\si}{\sigma}
\newcommand{\pa}{\partial}
\newcommand{\ka}{\kappa}
\newcommand{\eps}{\epsilon}
\newcommand{\Ga}{\Gamma}
\newcommand{\vertiii}[1]{{\left\vert\kern-0.25ex\left\vert\kern-0.25ex\left\vert #1
		\right\vert\kern-0.25ex\right\vert\kern-0.25ex\right\vert}}
\begin{document}
	
	\title[Ionic KdV structure in weakly collisional plasmas]{Ionic KdV structure in weakly collisional plasmas
    %in weakly collisional condition
    }
	
	\author[R.-J. Duan]{Renjun Duan}
	\address[RJD]{Department of Mathematics, The Chinese University of Hong Kong, Shatin, Hong Kong, P.R.~China}
	\email{rjduan@math.cuhk.edu.hk}
	
	\author[Z.-G. Li]{Zongguang Li}
	\address[ZGL]{Department of Applied Mathematics, The Hong Kong Polytechnic University, Hung Hom, Hong Kong, P.R.~China}
	\email{zongguang.li@polyu.edu.hk}
	
	\author[D.-C. Yang]{Dongcheng Yang}
	\address[DCY]{School of Mathematics, South China University of Technology, Guangzhou, 510641, P.R.~China}
	\email{mathdcyang@scut.edu.cn}
	
	\author[T. Yang]{Tong Yang}
	\address[TY]{Institute for Math \& AI, Wuhan University, Wuhan, 430072, P.R.~China}
	\email{tongyang@whu.edu.cn}

	\begin{abstract}
    We consider the one-dimensional ions dynamics in weakly collisional plasmas governed by the Vlasov-Poisson-Landau system under the Boltzmann relation with the small collision frequency $\nu>0$. It is observed in physical experiments that the interplay of nonlinearities and dispersion may lead to the formation of ion acoustic solitons that are described by the Korteweg-de Vries equation. In this paper, to capture the ionic KdV structure in the weak-collision regime, we study the combined cold-ions limit and longwave limit of the rescaled VPL system depending on a small scaling parameter $\eps>0$. 
    
    The main goal is to justify the uniform convergence of the VPL solutions to the KdV solutions over any finite time interval as $\eps\to 0$ under restriction that $\eps^{3/2}\lesssim \nu \lesssim \eps^{1/2}$. The proof is based on the energy method near local Maxwellians for making use of the Euler-Poisson dynamics under the longwave scaling. The KdV profiles, in particular including both velocity field and electric potential, may have large amplitude, which induces the cubic velocity growth. To overcome the $\eps$-singularity in such multi-parameter limit problem, we design delicate velocity weighted energy functional and dissipation rate functional in the framework of macro-micro decomposition that is further incorporated with the Caflisch's decomposition. 
    
    As an application of our approach, the global-in-time existence of solutions near global Maxwellians when the KdV profile is degenerate to a constant equilibrium is also established under the same scaling with $\eps^{3}\lesssim \nu \lesssim \eps^{5/2}$. For the proof, the velocity weight is modified to depend on the solution itself, providing an extra quartic dissipation so as to obtain the global dynamics for most singular Coulomb potentials.
	\end{abstract}

	\date{\today}
	
	\subjclass[2020]{35Q83, 35Q84, 35Q53, 35B35; 35Q20, 76X05, 82D10, 35B40}
	
	%35Q20  	Boltzmann equations
	%35B35  	Stability in context of PDEs
    %35Q53  	KdV equations (Korteweg-de Vries equations)
    %76X05  	Ionized gas flow in electromagnetic fields; plasmic flow
    %82D10  	Statistical mechanics of plasmas
    %35Q84  	Fokker-Planck equations 
    %35Q83  	Vlasov equations
    %35B40  	Asymptotic behavior of solutions to PDEs

\keywords{Vlasov-Poisson-Landau system, Euler-Poisson dynamics, Korteweg-de Vries equations, weak collisions, cold-ions limit, longwave limit, macro-micro decomposition, Caflisch's decomposition, energy method}
	\maketitle
	\thispagestyle{empty}
	
	\tableofcontents
	\section{Introduction}
%    \Red{To add: Direct motivation. RJ. Line:
%    \begin{itemize}
%  \item Ionic motion in plasma physics exposes the KdV effect:....
%  \item The rigorous math derivation of KdV from ionic Euler-Poisson: Guo-Pu
%  \item The rigorous math derivation of KdV from Vlasov-Poisson: Han-Kwan
%  \item The rigorous math derivation of KdV from VPL in the collision-domonated regime: Duan-Yang-Yu
%  \item Explain why we need another work to derive KdV from VPL in the weak-collision regime. What are the main difference with the work by Duan-Yang-Yu. Give difficulties in words (without math formulas.)
%\end{itemize}}

In the paper we consider the spatially one-dimensional motion of positively charged ions in plasmas governed by the following Vlasov-Poisson-Landau (VPL) system in the absence of a magnetic field,
	\begin{equation}
		\label{VPL}
		\left\{
		\begin{array}{rl}
			\dis\partial_{t}F+v_1\partial_{x}F
			-\partial_{x}\phi\partial_{v_1}F&\dis=\nu Q(F,F),
			\\[3mm]
			\dis-\partial^{2}_{x}\phi&\dis=\rho-(1+\phi),
		\end{array} \right.
	\end{equation}
where the unknowns are $F=F(t,x,v)\geq0$ being the density distribution function of ions with velocity $v=(v_{1},v_{2},v_{3})\in\mathbb{R}^{3}$ at time $t\geq0$ and position $x\in\mathbb{R}$, and $\phi=\phi(t,x)$ being the electric potential. Here, $\rho=\rho(t,x)=\int_{\mathbb{R}^{3}}F(t,x,v)\,dv$ denotes the ions density, and the binary collision between particles is described by the bilinear Landau operator acting only on velocity variable
	\begin{equation}
		\label{LandauOp}
		Q(F,G)(v)=\nabla_{v}\cdot\int_{\mathbb{R}^{3}}\Phi(v-v_*)\left\{F(v_*)\nabla_{v}G(v)
		-G(v)\nabla_{v_*}F(v_*)\right\}\,dv_*,
	\end{equation}
where we consider only the Coulomb interactions for which the Landau  kernel $\Phi(z)=[\Phi_{ij}(z)]$ with $z=v-v_*$  is a matrix-valued function given by
	\begin{equation*}
		\Phi_{ij}(z)=\frac{1}{|z|}\big(\delta_{ij}-\frac{z_iz_j}{|z|^{2}}\big),\quad 1\leq i,j\leq 3,
	\end{equation*}
with  $\delta_{ij}$ being the Kronecker delta. We use a parameter $\nu=\frac{1}{{\rm K\!n}}>0$ to describe the frequency or amplitude of collisions with ${\rm K\!n}$ understood to be the Knudsen number as in the Boltzmann theory. 

In plasma physics the motion of charged particles in the absence of a magnetic field is usually governed by the two-species isothermal compressible Euler-Poisson system for both positively charged ions and negatively charged electrons,
\begin{equation}\label{tfEP}
\left\{\begin{aligned}
  \pa_tn_\pm +\pa_x(n_\pm u_\pm)  &=0,   \\
  m_\pm n_\pm (\pa_t u_\pm +u_\pm \pa_x u_\pm)+T_\pm \pa_x n_\pm  &=\mp n_\pm \pa_x \phi, \\
  -\pa_x^2 \phi &= n_+-n_-, 
\end{aligned}\right.
\end{equation}
where $n_\pm$ denote number densities, $u_\pm$ velocities, $T_\pm$ temperatures and $m_\pm$ masses for ions and electrons, respectively. We assume $T_-=1$ and $T_+\geq 0$ with the case $T_+=0$ corresponding to the cold ions.  Since mass of electron $m_-$ is much lighter than that of ion $m_+$, the so-called Boltzmann relation 
\begin{equation}\label{BErel}
n_-=e^\phi,
\end{equation}
can be taken into account for giving the approximate electron number density in terms of the electric potential; we refer to \cite{Bastdos-Golse2018,FG,GGPS} for discussions and rigorous justifications of the Boltzmann relation from \eqref{tfEP} under the electron massless limit $m_-\to 0$. Through the paper, we adopt the linear approximation $n_-=1+\phi$ to the Boltzmann relation as in the Poisson equation of \eqref{VPL} for simplicity of presentation, but all the results can be carried over to the case of $n_-=e^\phi$ in \eqref{BErel}, because the perturbation method in  high-order Sobolev spaces will be employed. 

It is well known in plasma physics that the ionic motion exposes the soliton behavior. In particular, at the fluid level, the one-dimensional motion of ions governed by the collisionless Euler-Poisson system under the Boltzmann relation,
\begin{equation}\label{ep+}
\left\{\begin{aligned}
  \pa_tn_+ +\pa_x(n_+ u_+)  &=0,   \\
  m_+ n_+ (\pa_t u_+ +u_+ \pa_x u_+)+T_+ \pa_x n_+  &=- n_+ \pa_x \phi, \\
  -\pa_x^2 \phi &= n_+-e^{\phi}, 
\end{aligned}\right.
\end{equation}
 can be approximated by solutions of  the Kortweg-de Vries (KdV) equations describing a reversible dispersion process. Indeed, it is Gardner-Morikawa \cite{GaMo} that first introduced the following transformation
\begin{equation}
\label{gmt}
\eps^{1/2}(x-Vt)\to x,\quad \eps^{3/2}t\to t, 
\end{equation} 
with a traveling speed $V$ so as to derive the KdV equation:
\begin{equation}
\label{kdv-xi}
\pa_t \xi +V\xi\pa_x \xi +\frac{1}{2m_+V}\pa_x^3\xi=0,
\end{equation}
where it necessarily requires $V^2=\frac{1+T_+}{m_+}$. Thus \eqref{kdv-xi} can be regarded as a nonlinear dispersive approximation to the ionic Euler-Poisson system \eqref{ep+} under the Gardner-Morikawa transformation \eqref{gmt}, which will be called the {\it longwave limit} $\eps\to 0$ for later use; see also other references \cite{Washimi,Su-1}. The first rigorous mathematical proof for such longwave limit was given by Guo-Pu \cite{Guo-Pu} for either $T_+>0$ or $T_+=0$, through the classical energy method together with the Gronwall argument.

At the kinetic level, the collisionless Vlasov-Poisson system under the Boltzmann relation,
\begin{equation}\label{vp+}
\left\{\begin{aligned}
\pa_{t}F+v_1\partial_{x}F-\partial_{x}\phi\partial_{v_1}F&=0,\\
-\partial^{2}_{x}\phi&=\rho-e^{\phi},
\end{aligned}\right.
\end{equation}
corresponding to \eqref{VPL} with $\nu=0$, is usually taken to govern the one-dimensional motion of ions with slab symmetry in space. One can consider the {\it cold-ions limit} $\kappa\to 0$ under the transformation
\begin{equation}
\label{lim.coldi}
 \frac{1}{\kappa^3}F(t,x,\frac{v}{\kappa})\to F(t,x,v),
\end{equation} 
so that for $\kappa>0$ small enough, $F(t,x,v)$ is close to a Dirac delta function in velocity 
\begin{equation}
\label{def.mok}
\rho \delta_{v=(u,0,0)}
\end{equation} 
with $(\rho,u)$ satisfying the {\it pressureless} Euler-Poisson system of the form as given in \eqref{ep+} in the cold ions case $T_+=0$. We regard \eqref{def.mok} as the monokinetic solution to the Vlasov-Poisson \eqref{vp+} with the zero kinetic temperature. Furthermore, inspired by \cite{Guo-Pu} mentioned before, the pressureless Euler-Poisson system may converge to the KdV equations \eqref{kdv-xi} under the longwave limit $\eps\to 0$. Hence there should be a direct connection of the Vlasov-Poisson system \eqref{vp+} with the KdV equations \eqref{kdv-xi}  under the simultaneous limits $\kappa\to 0$ and $\eps\to 0$. Indeed, for the Vlasov-Poisson system \eqref{vp+}, Han-Kwan \cite{Han-Kwan} first proposed a combined cold-ions and longwave limit as in \eqref{lim.coldi} and \eqref{gmt} with a specific choice of $\kappa$ given by  $\kappa=\eps$; this is an important observation. The proof in \cite{Han-Kwan} is based on the relative entropy method to obtain the uniform stability estimates for the convergence of non-negative global weak solutions of the Vlasov-Poisson system to smooth solutions of the KdV equations.     

In contrast to the collisionless model \eqref{vp+}, we are interested in the KdV limit of the VPL system \eqref{VPL} when grazing collisions occur. Note that the Landau operator \eqref{LandauOp} with Coulomb interactions is the physically most relevant collision operator for describing the long time motion of kinetic plasmas. Motivated by \cite{Guo-Pu} and \cite{Han-Kwan}, one may study the issue in two distinct cases where collisions are either strong or weak to be explained as follows:\\

\begin{itemize}
  \item[(a)] {\it Strong-collision regime:} In this regime VPL converges to Euler-Poisson in the fluid limit ${\rm K\!n}\to 0$ which further converges to KdV in the longwave limit $\eps\to 0$; see the Figure \ref{fig1} below. In a direct way VPL may converge to  KdV under the combined limits with the restriction ${\rm K\!n}=O(\eps^\al)$ as $\eps\to 0$ for a constant parameter $\alpha>0$. To the best of our knowledge, the recent work \cite{DYY} provided the first rigorous justification for $3/2\leq \alpha\leq 5/2$.  
  
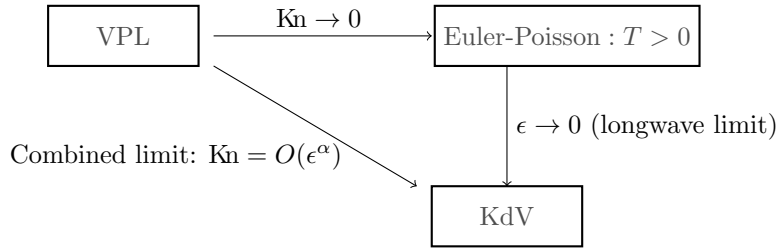
\begin{figure}[h]
	\begin{tikzpicture}	
		\node (A) at (-0.1,0) [draw, thick, fill=white, fill opacity=0.7,
		minimum width=2cm,minimum height=0.8cm] {$\mbox{VPL}$};
		
		\draw[->] (1.1,0) -- (4.0,0); 
		\node[above] at (2.5,0){${\rm K\!n} \to 0$};
		%\node[below] at (2.1,0){compressible limit};
		%\draw[->] (0,-0.4) -- (0,-2); 
		%\node[right] at (0,-1.2){$\varepsilon \to 0$};
		
		\node (B) at (5.8,0) [draw, thick, fill=white, fill opacity=0.7,
		minimum width=2cm,minimum height=0.8cm] {$\mbox{Euler-Poisson}: T>0$};
		\draw[->] (5,-0.4) -- (5,-2); 
		\node[right] at (5,-1.2){$\eps \to 0$ (longwave limit)};
		
		%\node (C) at (0,-2.4) [draw, thick, fill=white, fill opacity=0.7,
		%minimum width=2cm,minimum height=0.8cm] {$\mbox{Euler-Poisson}$};
		%\draw[->] (1.15,-2.5) -- (2.75,-2.5); 
		%\node[above] at (1.8,-2.5){$\lambda \to 0$};
		
		\node (D) at (5,-2.4) [draw, thick, fill=white, fill opacity=0.7,
		minimum width=2cm,minimum height=0.8cm] {$\mbox{KdV}$};
		%\draw[->] (2.4,-3) -- (3.3,-3); 
		%\node[above] at (2.85,-3){$\varepsilon \rightarrow 0$};
		
		\coordinate (M) at (1.1,-0.4);
		\coordinate (N) at (3.8,-2.0);
		\draw [->] (M) -- (N) {};
		\node[above] at (0.6,-1.9){Combined limit: ${\rm K\!n}=O(\eps^\alpha)$};
	\end{tikzpicture}
	\caption{From VPL to KdV in strong-collision regime}\label{fig1}	
\end{figure}

  \item[(b)] {\it Weak-collision regime:} In this regime VPL converges to pressureless Euler-Poisson in the cold ions limit $\kappa\to 0$ which further converges to KdV in the longwave limit $\eps\to 0$; see the Figure \ref{fig2} below. In a direct way VPL may converge to KdV under the combined limits with the restriction that $\kappa=\eps$ and $\nu=1/{\rm K\!n}=O(\eps^\beta)$ as $\eps\to 0$ for another constant parameter $\beta>0$. In contrast to \cite{DYY}, {\bf the goal of the current work is to justify the convergence of VPL to KdV under such simultaneous limits in the weak-collision regime for a certain range of $\beta>0$.} It turns out that the convergence to KdV is valid over finite time interval for $1/2 \leq \beta\leq 3/2$, and if the KdV profile is degenerate to zero then the convergence is valid globally in time for $5/2\leq \beta\leq 3$. The precise statements of those two results are to be given in Theorem \ref{TheoremKdV} and Theorem \ref{TheoremGlobal} in the next section, respectively.   
  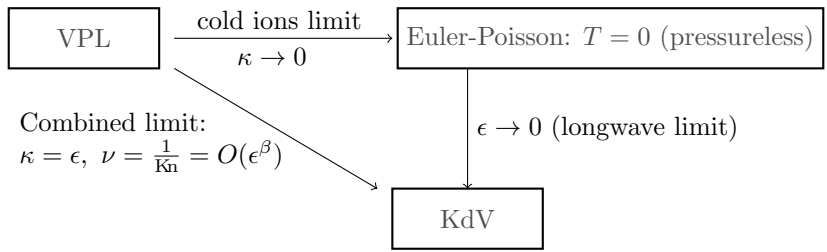
\begin{figure}[h]
	\begin{tikzpicture}	
		\node (A) at (-0.1,0) [draw, thick, fill=white, fill opacity=0.7,
		minimum width=2cm,minimum height=0.8cm] {$\mbox{VPL}$};
		
		\draw[->] (1.1,0) -- (4.0,0); 
		\node[above] at (2.5,0){cold ions limit};
		\node[below] at (2.4,0){$\kappa\to 0$};
		%\draw[->] (0,-0.4) -- (0,-2); 
		%\node[right] at (0,-1.2){$\varepsilon \to 0$};
		
		\node (B) at (6.9,0) [draw, thick, fill=white, fill opacity=0.7,
		minimum width=2cm,minimum height=0.8cm] {\mbox{Euler-Poisson: $T=0$ (pressureless)}};
		\draw[->] (5,-0.4) -- (5,-2); 
		\node[right] at (5,-1.2){$\eps \to 0$ (longwave limit)};
		
		%\node (C) at (0,-2.4) [draw, thick, fill=white, fill opacity=0.7,
		%minimum width=2cm,minimum height=0.8cm] {$\mbox{Euler-Poisson}$};
		%\draw[->] (1.15,-2.5) -- (2.75,-2.5); 
		%\node[above] at (1.8,-2.5){$\lambda \to 0$};
		
		\node (D) at (5,-2.4) [draw, thick, fill=white, fill opacity=0.7,
		minimum width=2cm,minimum height=0.8cm] {$\mbox{KdV}$};
		%\draw[->] (2.4,-3) -- (3.3,-3); 
		%\node[above] at (2.85,-3){$\varepsilon \rightarrow 0$};
		
		\coordinate (M) at (1.1,-0.4);
		\coordinate (N) at (3.8,-2.0);
		\draw [->] (M) -- (N) {};
		\node[above] at (0.8,-1.9){\parbox{10em}{Combined limit: \\
		$\kappa=\eps,\ \nu={ \frac{1}{\rm K\!n}}=O(\eps^\beta)$}};
	\end{tikzpicture}
	\caption{From VPL to KdV in weak-collision regime}\label{fig2}		
\end{figure}
\end{itemize} 

We further refer readers to \cite{Guo-Pu, Han-Kwan} and \cite{DYY} as well as references therein for the literature review related to the KdV limits in different situations. In what follows we only make a few comments on the main results of the current work:\\

\begin{itemize}
  \item Supplementary to \cite{Han-Kwan} for the collisionless Vlasov-Poisson system, we obtain the convergence of VPL to KdV in a regularized version with the vanishing Landau collision additionally involved. In fact, similar to the Boltzmann theory in the presence of collisions, due to the $H$-theorem, the asymptotic state of the VPL system is a local Maxwellian equilibrium with its macroscopic temperature converging to zero under the cold ions limit $\kappa\to 0$, and hence the local Maxwellian provides a smooth approximation to the Dirac delta function in the collisionless situation. Moreover, instead of the relative entropy method for weak solutions in \cite{Han-Kwan}, we are able to adopt the energy method for constructing classical solutions of the VPL system for the dispersive scaling parameter $\eps>0$. Because of the specific choice of $\kappa$ by $\kappa=\eps$, the system temperature after scaling is still a positive constant such that the method in \cite{DYY} could be applicable for $\nu=O(\eps^\beta)$ with a suitable range of $\beta>0$. We also remark that the energy method in case of cold ions used in \cite{Guo-Pu} gives us a clue for how to obtain the uniform bounds on the potential $\phi$.  \\  
  
  \item Although \cite{DYY} treats the strong-collision regime, we employ in the current work an essentially similar argument for treating the weak-collision regime via the construction of energy functional and energy dissipation functional in the macro-micro decomposition framework. However, there are still several main differences of the proof between the current work and \cite{DYY}. First of all, the amplitude of KdV solutions is assumed to be sufficiently small in \cite{DYY}, so the general strategy as in \cite{Duan-Yang-Yu-VPL} for dealing with solutions around small-amplitude rarefaction waves may be used. Instead we allow the KdV profile to be of arbitrarily large amplitude. According to the pioneering work by Caflisch \cite{Caflisch}, the cubic growth in large velocities becomes the main obstacle to be overcome through a decomposition. In the context of Landau collision or non-cutoff Boltzmann collision, we hence borrow another strategy recently developed in \cite{Duan-Li} for incorporating the Caflisch’s decomposition into the macro-micro decomposition for allowing the microscopic component to exhibit only the polynomial tail in large velocities. Here, for simplicity we still take the Gaussian tail $\exp(-c|v|^2)$ for $c>0$ small enough. In the meantime, due to scaling on velocity variable for the cold ions limit, singularity of the terms $\frac{1}{\eps}\pa_x F$ and $\frac{1}{\eps}\pa_x\phi \pa_{v_1}F$ in the rescaled system \eqref{reVPL} looks much stronger than that of those corresponding terms in the strong-collision regime. Another point is that if the KdV profile is degenerate to zero or equivalently the constant equilibrium is considered for the scaled system, we are able to obtain the global-in-time and uniform-in-$\eps$ estimates, and such estimates seem hard to achieve by the usual Hilbert expansion around global Maxwellians. In particular, the proof also relies on the appropriate velocity weight function in terms of the solution itself whenever the explicit time-decay of solutions is no longer avaivable, see \eqref{defw} later.  \\ 
\end{itemize}

The rest of this paper is arranged as follows: In Section \ref{secMain} we reformulate the problem with scalings under considerations in the weak-collision regime and present the main results together with detailed explanations to ideas of the proof. In Section \ref{secPre}, we collect some important properties of the Landau collision operator around local Maxwellians. In addition, to simplify our later calculations, some useful estimates on the microscopic correction are given. In Section \ref{secKdV}, we perform all the energy estimates required to obtain our KdV limit. In Section \ref{SecProof}, we prove the first theorem on the construction of solutions to the rescaled VPL system which converges to the KdV equation. In Section \ref{SecGlobal}, we use the framework of previous sections to study the VPL system under the same scalings as in the combined cold-ions limit and longwave limit for obtaining the global existence of solutions around global Maxwellians. Section \ref{sec.app} is an appendix giving the proofs of two lemmas that are left in Section \ref{secPre}.

%\newpage	
\section{Main results}\label{secMain}

In order to derive the KdV equation from \eqref{VPL}, we perform the longwave scaling 
	\begin{equation}
		\label{scaling}
		\left\{
		\begin{array}{rl}
		\dis&\tilde{t}=\eps^{3/2}t,\quad\tilde{x}=\eps^{1/2}x, \quad \tilde{v}=\eps^{-1}v,\\
		&\tilde{F}(\tilde{t},\tilde{x},\tilde{v})=\eps^3F(t,x,v),\\
		&\tilde{\phi}(\tilde{t},\tilde{x})=\eps^{-1}\phi(t,x),
		\end{array} \right.
	\end{equation}
	as in \cite{Han-Kwan}, and still denote $F,t,x,v$ for $\tilde{F},\tilde{t},\tilde{x},\tilde{v}$ to get
	\begin{equation}\label{r1VPL}
		%	\label{reVPL}
		\left\{
		\begin{array}{rl}
			\dis \eps\partial_{t}F+\eps v_{1}\partial_{x}F-\partial_{x}\phi\partial_{v_{1}}F&\dis =\frac{\nu}{\eps^{3+1/2}}Q(F,F),
			\\[3mm]
			\dis -\eps^2\partial^{2}_{x}\phi+\eps\phi&\dis =\rho-1.
		\end{array} \right.
	\end{equation}
	Then a shift on $x$,
	\begin{equation}
		\label{shift}
		\left\{
		\begin{array}{rl}
			\dis&\bar{x}=x-t/\eps,\\
			&\overline{F}(t,\bar{x},v)=F(t,x,v),\\
			&\bar{\phi}(t,\bar{x})=\phi(t,x),
		\end{array} \right.
	\end{equation}
	yields
	\begin{equation}\label{rVPL}
	%	\label{reVPL}
		\left\{
		\begin{array}{rl}
			\dis \eps\partial_{t}F-\partial_{x}F+\eps v_{1}\partial_{x}F-\partial_{x}\phi\partial_{v_{1}}F&\dis =\frac{\nu}{\eps^{3+1/2}}Q(F,F),
			\\[3mm]
			\dis -\eps^2\partial^{2}_{x}\phi+\eps\phi&\dis =\rho-1,
		\end{array} \right.
	\end{equation}
	where we still denote $F,x$ for $\overline{F}, \bar{x}$ in the resulting equation. When $\nu$ is equal to zero, the work \cite{Han-Kwan} suggests that the electronic potential can behave like
	$
	\phi=\phi_0+O(\eps),
	$
	where $\phi_0$ solves the KdV equation 
	$$
	2\pa_t\phi_0+\pa_x^3\phi_0+3\phi_0\pa_x\phi_0=0.
	$$
	The longwave scaling \eqref{scaling} implies the cold ions assumption that the temperature for the original distribution function $f$ is low. Taking the collision effect into consideration, we shall construct solutions and prove the KdV limit for the original distribution $F$ that is close to the Dirac function $F\sim \rho(t,x)\de_{v=u(t,x)}$.
To simplify the notation, we set
\begin{equation}\label{def.delta}
\frac{1}{\de}:=\frac{\nu}{\eps^{3+1/2}},
 \end{equation}
 and
rewrite the above system \eqref{rVPL} to be
	\begin{equation}
		\label{reVPL}
		\left\{
		\begin{array}{rl}
			\dis \partial_{t}F-\frac{1}{\eps}\partial_{x}F+ v_{1}\partial_{x}F-\frac{1}{\eps}\partial_{x}\phi\partial_{v_{1}}F&\dis =\frac{1}{\eps\de}Q(F,F),
			\\[3mm]
			\dis -\eps^2\partial^{2}_{x}\phi+\eps\phi&\dis =\rho-1.
		\end{array} \right.
	\end{equation}
To the end we mainly focus on the rescaled system \eqref{rVPL} or \eqref{reVPL} with \eqref{def.delta} for studying the limit $\eps\to 0$ with $\nu=\eps^\beta$ for suitable $0<\beta<7/2$ such that $\delta\to 0$; see Remark \ref{rk.beta} for disccussions in case of $\beta>7/2$.
	
	\subsection{Decomposition of the system}
	It is worth pointing out that the Landau collision operator \eqref{LandauOp}  has five collision invariants:
	\begin{equation*}
		\psi_{0}(v)=1, \quad \psi_{i}(v)=v_{i},~(i=1,2,3),\quad \psi_{4}(v)=\frac{1}{2}|v|^{2},
	\end{equation*}
satisfying
	\begin{equation*}
		\int_{\mathbb{R}^{3}}\psi_{i}(v)Q(F,F)\,dv=0,\quad \mbox{for $i=0,1,2,3,4$.}
	\end{equation*}
Associated with any solution $F(t,x,v)$ of 	the VPL system \eqref{reVPL}, we define the macroscopic quantities, 
	the mass density $\rho(t,x)>0$, momentum $\rho(t,x)u(t,x)$, and
	energy density $\rho(\frac{3}{2}K\theta(t,x)+\frac 12|u(t,x)|^2)$, by
	\begin{equation}
		\label{DefMacro}
		\left\{
		\begin{array}{rl}
			\rho(t,x)&\dis \equiv\int_{\mathbb{R}^{3}}\psi_{0}(v)F(t,x,v)\,dv,
			\\[3mm]
			\rho(t,x) u_{i}(t,x)&\dis \equiv\int_{\mathbb{R}^{3}}\psi_{i}(v)F(t,x,v)\,dv, \quad \mbox{for $i=1,2,3$,}
			\\[3mm]
			\rho(t,x)(\frac{3}{2}K\theta(t,x)+\frac{1}{2}|u(t,x)|^{2})&\dis \equiv\int_{\mathbb{R}^{3}}\psi_{4}(v)F(t,x,v)\,dv,
		\end{array} \right.
	\end{equation} 
together with the corresponding local Maxwellian
\begin{equation}
		\label{DefLocalMax}
		M\equiv M_{[\rho,u,\theta]}(t,x,v):=\frac{\rho(t,x)}{\sqrt{(2\pi K\theta(t,x))^{3}}}\exp\big\{-\frac{|v-u(t,x)|^{2}}{2K\theta(t,x)}\big\},
\end{equation}
such that $\int_{\R^3}\psi_i(v) (F-M)\,dv=0$ for all collision invariants $\psi_{i}(v)$ $(0\leq i\leq 4)$.
Here we have denoted $u(t,x)=(u_{1},u_{2},u_{3})(t,x)$ and $\theta(t,x)$ to be the bulk velocity and temperature, respectively, and $K>0$ is the ideal gas constant. Through the paper we take $K=\frac{2}{3}$ for conveniences. 
%\Red{Q: We will take the constant background macro profile as \eqref{def.backpcon}, so it is convenient to take $K=2/3$ such that $K\bar{\theta}=1$. We need to further define the constant equilibrium $\mu$ with a reason for both part I (KdV profile) and part II (constant macro profile) ...}
	
	Let the inner product on  $L^2(\mathbb{R}^{3})$  be defined by
	$\langle h,g\rangle=\langle h,g\rangle_{L^2_v}:=\int_{\mathbb{R}^{3}}h(v)g(v)\,d v$, and the linearized collision operator $L_{M}$ around the local Maxwellian $M$ as in \eqref{DefLocalMax} be defined by 
\begin{equation}
\label{def.LM}
L_Mh:=Q(M,h)+Q(h,M).
\end{equation}
It is direct to verify that the orthonormal basis of the
	null space $\mathcal{N}$ of $L_{M}$ is
	\begin{equation*}
		%\label{basis}
		\left\{
		\begin{array}{rl}
			&\dis \chi_{0}(v)=\frac{1}{\sqrt{\rho}}M,\\[2mm]
			&\dis  \chi_{i}(v)=\frac{v_{i}-u_{i}}{\sqrt{K\rho\theta}}M, \quad i=1,2,3,
			\\[2mm]
			&\dis \chi_{4}(v)=\frac{1}{\sqrt{6\rho}}(\frac{|v-u|^{2}}{K\theta}-3)M.
		\end{array} \right.
	\end{equation*}
	We set the macroscopic and microscopic projections  $P_{0}$ and $P_{1}$ to be
	\begin{equation}
		\label{DefProjection}
		P_{0}h_1\equiv\sum_{i=0}^{4}\langle h_1,\frac{\chi_{i}}{M}\rangle\chi_{i},\quad P_{1}h_1\equiv h_1-P_{0}h_1,
	\end{equation}
with the properties
	$$
	P_{0}P_{0}=P_{0},\quad
	P_{1}P_{1}=P_{1},\quad
	P_{1}P_{0}=P_{0}P_{1}=0.
	$$
A function $h_1(v)$ is called microscopic if
\begin{equation*}
	%\label{2.11A}
	\langle h_1(v),\psi_{i}(v)\rangle=0, \quad \mbox{for $i=0,1,2,3,4$}.
\end{equation*}
Using the notations above, the solution $F(t,x,v)$ of the VPL system \eqref{reVPL} can be decomposed into its macroscopic part $M$ and microscopic part $G$ by
	\begin{equation*}
		F=M+G, \quad P_{0}F=M, \quad P_{1}F=G.
	\end{equation*}
    
		To further study $M$ and $G$ above, we will derive the equations to describe the fluid quantities $\rho,u,\theta$ and the non-fluid part $G$ by macro-micro decomposition. Multiplying \eqref{reVPL} by the collision invariants $\psi_{i}(v)$ $(i=0,1,2,3,4)$ and integrating the resulting equations with respect to
	$v\in\mathbb{R}^{3}$, together with the last equation in \eqref{reVPL}, we obtain the macroscopic system as follows:
	\begin{equation}\label{1macro}
		\left\{
		\begin{array}{rl}
			&\dis\partial_t\rho-\frac{1}{\eps}\partial_x\rho+\partial_x(\rho u_{1})=0,
			\\
			&\dis\partial_t(\rho u_{1})-\frac{1}{\eps}\partial_x(\rho u_{1})
			+\partial_x(\rho u_{1}^{2})+\partial_xp+\frac{1}{\eps}\rho\partial_x\phi=-\int_{\mathbb{R}^{3}} v^{2}_{1}\partial_xG\,dv,
			\\
			&\dis\partial_t(\rho u_{i})-\frac{1}{\eps}\partial_x(\rho u_{i})+\partial_x(\rho u_{1}u_{i})=-\int_{\mathbb{R}^{3}} v_{1}v_{i}\partial_xG\,dv, ~~i=2,3,
			\\
			&\dis\partial_t\{\rho (\theta+\frac{|u|^{2}}{2})\}-\frac{1}{\eps}\partial_x\{\rho (\theta+\frac{|u|^{2}}{2})\}
			+\partial_x\{\rho u_{1}(\theta+\frac{|u|^{2}}{2})+pu_{1}\}
			\\
			&\dis\hspace{2cm}+\frac{1}{\eps}\rho u_{1}\partial_x\phi
			=-\int_{\mathbb{R}^{3}} \frac{1}{2}v_{1}|v|^{2}\partial_xG\,dv,
			\\
			&\dis-\eps^2\partial^{2}_{x}\phi+\eps\phi =\rho-1,
		\end{array} \right.
	\end{equation}
 with the pressure $p=\frac{2}{3}\rho\theta$.
	Then we take $P_{1}$ of both side of \eqref{reVPL} to obtain
	\begin{align}\label{equG}
		\partial_tG-\frac{1}{\eps}\partial_xG+P_{1}(v_{1}\partial_xG)+P_{1}(v_{1}\partial_xM)-\frac{1}{\eps}\partial_x\phi\partial_{v_{1}}G
		=\frac{1}{\eps\delta}L_{M}G+\frac{1}{\eps\delta}Q(G,G),
	\end{align}
	where $L_M$ is defined in \eqref{def.LM}. Since $L_{M}: \mathcal{N}^\perp\to \mathcal{N}^\perp$ is invertible, the above equation implies
	\begin{equation}\label{reG}
		G=\eps\de L^{-1}_{M}[P_{1}(v_{1}\partial_xM)]+L^{-1}_{M}\Theta,
	\end{equation}
	with
	\begin{equation}\label{DefTheta}
		\Theta:=\eps\de\partial_tG-\de\partial_xG+\eps\de P_{1}(v_{1}\partial_xG)-\de\partial_x\phi\partial_{v_{1}}G-Q(G,G).
	\end{equation}
	For the first term on the right hand side of \eqref{reG}, one has the identities that
	\begin{align}
		-\int_{\R^3} v_i v_1\pa_x
		L^{-1}_M[P_1(v_1\pa_x M)] dv
		&\equiv\partial_x(\mu(\theta)\partial_xu_{i})+\frac{1}{3}\de_{1i}\partial_x(\mu(\theta)\partial_xu_{1}),\quad i=1,2,3,\label{id1}
		\\
		-\int_{\R^3} \frac{1}{2}|v|^{2} v_1\pa_x
		L^{-1}_M[P_{1}(v_1\pa_xM)] dv
		&\equiv\partial_x(\kappa(\theta)\partial_x\theta)+\partial_x(\mu(\theta)\{\frac{4}{3}u_{1}\partial_xu_{1}+u_{2}\partial_xu_{2}+u_{3}\partial_xu_{3}\})\label{id2},
	\end{align}
	where the viscosity coefficient $\mu(\theta)$ and heat conductivity coefficient $\ka(\theta)$ are given by 
		\begin{align*}
		\mu(\theta)&=- K\theta\int_{\R^3}\hat{B}_{ij}(\frac{v-u}{\sqrt{K\theta}})
		B_{ij}(\frac{v-u}{\sqrt{K\theta}})dv>0,\quad i\neq j,\quad
		\kappa(\theta)=-K^2\theta\int_{\R^3}\hat{A}_j(\frac{v-u}{\sqrt{K\theta}})
		A_j(\frac{v-u}{\sqrt{K\theta}})dv>0.
	\end{align*}
	In the above definition, we have used Burnett functions that
		\begin{equation}\label{defbur1}
		\hat{A}_j(v)=\frac{|v|^2-5}{2}v_j, \quad \hat{B}_{ij}(v)=v_iv_j-\frac{1}{3}\delta_{ij}|v|^2, 
	\end{equation}
	and
	\begin{equation}\label{defbur2}
		A_{j}(v)=L^{-1}_M[\hat{A}_j(v)M], \quad B_{ij}(v)=L^{-1}_M[\hat{B}_{ij}(v)M].
	\end{equation}
	We substitute \eqref{reG} into \eqref{1macro} and use \eqref{id1} and \eqref{id2} to get
	\begin{equation}
		\label{NSmacro}
		\left\{
		\begin{array}{rl}
			&\dis\partial_t\rho-\frac{1}{\eps}\partial_x\rho+\partial_x(\rho u_{1})=0,
			\\
			&\dis\partial_t(\rho u_{1})-\frac{1}{\eps}\partial_x(\rho u_{1})+\partial_x(\rho u_{1}^{2})+\partial_xp+\frac{1}{\eps}\rho\partial_x\phi=\frac{4}{3}\eps\de\partial_x(\mu(\theta)\partial_xu_{1})-\partial_x(\int_{\mathbb{R}^{3}} v^{2}_{1}L^{-1}_{M}\Theta \,dv),
			\\
			&\dis\partial_t(\rho u_{i})-\frac{1}{\eps}\partial_x(\rho u_{i})+\partial_x(\rho u_{1}u_{i})=\eps\de\partial_x(\mu(\theta)\partial_xu_{i})
			-\partial_x(\int_{\mathbb{R}^{3}} v_{1}v_{i}L^{-1}_{M}\Theta \,dv), ~~i=2,3,
			\\
			&\dis\partial_t\{\rho (\theta+\frac{|u|^{2}}{2})\}-\frac{1}{\eps}\partial_x\{\rho (\theta+\frac{|u|^{2}}{2})\}+\partial_x\{\rho u_{1}(\theta+\frac{|u|^{2}}{2})+pu_{1}\}+\frac{1}{\eps}\rho u_{1}\partial_x\phi=\eps\de\partial_x(\kappa(\theta)\partial_x\theta)
			\\
			&\dis\qquad+\frac{4}{3}\eps\de\partial_x(\mu(\theta)u_{1}\partial_xu_{1})+\eps\de\partial_x(\mu(\theta)u_{2}\partial_xu_{2}+\mu(\theta)u_{3}\partial_xu_{3})
			-\frac{1}{2}\partial_x(\int_{\mathbb{R}^{3}}v_{1}|v|^{2}L^{-1}_{M}\Theta \,dv),
			\\
			&\dis-\eps^2\partial^{2}_{x}\phi+\eps\phi =\rho-1.
		\end{array} \right.
	\end{equation}
Then we obtain the macroscopic system \eqref{NSmacro} and the microscopic equation \eqref{equG}.
	
	\subsection{Formal derivation to KdV equation}
	If we assume the non-fluid quantity $G$ in \eqref{1macro} has minor impact on the system and ignore $G$, $u_2$ and $u_3$ by letting $G=u_2=u_3=0$, one further gets the closed system,
	\begin{equation}\label{formalep}
		\left\{
		\begin{array}{rl}
			&\dis\partial_t\rho-\frac{1}{\eps}\partial_x\rho+\partial_x(\rho u_{1})=0,
			\\
			&\dis\partial_t u_{1}-\frac{1}{\eps}\partial_x u_{1}
			+u_{1}\partial_x u_{1}+\frac{2}{3}\frac{\partial_x\rho}{\rho}\theta+\frac{2}{3}\pa_x\theta+\frac{1}{\eps}\partial_x\phi=0,
			\\
			&\dis\partial_t\theta-\frac{1}{\eps}\partial_x\theta+u\pa_x\theta+\frac{2}{3}\theta\pa_xu
			=0,
			\\
			&\dis-\eps^2\partial^{2}_{x}\phi+\eps\phi =\rho-1.
		\end{array} \right.
	\end{equation}
Note that the rescaled Euler-Poisson system above is different from that in \cite{Guo-Pu}, but it is consistent with the one derived from the fluid dynamic limit system of the collisionless Euler-Poisson system under the cold-ions limit as in \cite{Han-Kwan}. Indeed, we perform the expansion:
	\begin{align}\label{expansion}
\left\{
\begin{array}{rl}		\rho&=1+\eps\rho_1+\eps^2\rho_2+\eps^3\rho_3+O(\ep^4),\\
		u_1&=u_{10}+\eps u_{11}+\eps^2u_{12}+\eps^3u_{13}+O(\ep^4),\\
		\theta&=\frac{3}{2}+\eps\theta_1+\eps^2\theta_2+\eps^3\theta_3+O(\ep^4),\\
		\phi&=\phi_0+\eps\phi_1+\eps^2\phi_2+\eps^3\phi_3+O(\ep^4),
        \end{array} \right.
	\end{align}
    where the zero-orders of $u_1$ and $\phi$ can be variable. 
 Substituting the above ansatz into \eqref{formalep} and matching the different powers of $\eps$, we obtain the equations for $\rho_i,u_{1i},\theta_i$ and $\phi_i$ as follows.
 \begin{enumerate}
 	\item From mass equation:
 	\begin{align}
 	O(1):\quad&-\pa_x\rho_1+\pa_xu_{10}=0,\label{mass1}\\
 	O(\eps):\quad&\pa_t\rho_1-\pa_x\rho_2+\pa_x(\rho_1u_{10})+\pa_xu_{11}=0,\label{mass2}\\
 	O(\eps^2):\quad&\pa_t\rho_2-\pa_x\rho_3+\pa_x(\rho_2u_{10}+\rho_1 u_{11})+\pa_xu_{12}=0.\label{mass3}
 	\end{align}
 	
 	 	\item From velocity equation:
 	\begin{align}
 		O(\eps^{-1}):\quad&-\pa_xu_{10}+\pa_x\phi_0=0,\label{velocity1}\\
 		O(1):\quad&\pa_t u_{10}-\pa_x u_{11}+u_{10}\pa_x u_{10}+\pa_x \phi_1=0,\label{velocity2}\\
 		O(\eps):\quad&\pa_t u_{11}-\pa_x u_{12}+u_{10}\pa_x u_{11}+u_{11}\pa_x u_{10}+\pa_x\rho_1+\frac{2}{3}\pa_x\theta_1+\pa_x \phi_2=0, \label{velocity3}\\
 		O(\eps^2):\quad&\pa_t u_{12}-\pa_x u_{13}+u_{10}\pa_x u_{12}+u_{11}\pa_x u_{11}+u_{12}\pa_x u_{10}+\pa_x\rho_2+\frac{2}{3}\theta_1\pa_x\rho_1+\frac{2}{3}\pa_x\theta_2+\pa_x \phi_3=0.\label{velocity4}	
 	\end{align}
  		
 	 	 	\item From temperature equation:
 	\begin{align}
	O(1):\quad&-\pa_x \theta_1+\pa_x u_{10}=0,\label{temperature1}\\
	O(\eps):\quad&\pa_t \theta_1-\pa_x \theta_2+u_{10}\pa_x \theta_1+\frac{2}{3}\theta_1 \pa_x u_{10}+\pa_x u_{11}=0,	\label{temperature2}\\
		O(\eps^2):\quad&\pa_t \theta_2-\pa_x \theta_3+u_{10}\pa_x \theta_2+u_{11}\pa_x \theta_1+\frac{2}{3}\theta_2 \pa_x u_{10}+\frac{2}{3}\theta_1 \pa_x u_{11}+\pa_x u_{12}=0.	\label{temperature3}
\end{align}
 
 \item From Poisson equation:
 	\begin{align}
	O(\eps):\quad&\phi_0=\rho_1,\label{Poisson1}\\
	O(\eps^2):\quad&-\pa_x^2 \phi_0+\phi_1=\rho_2,	\label{Poisson2}\\
	O(\eps^3):\quad&-\pa_x^2 \phi_1+\phi_2=\rho_3.	\label{Poisson3}
\end{align} 
 	\end{enumerate}	
We use \eqref{mass1}, \eqref{velocity1}, \eqref{temperature1} and \eqref{Poisson1} to get 
\begin{align}\label{1orderid}
	u_{10}=\rho_1=\theta_1=\phi_0.
\end{align}	
Plugging the above identity into \eqref{mass2}, together with \eqref{Poisson2} gives
\begin{align*}
	\pa_t\phi_0+\pa_x^3\phi_0-\pa_x\phi_1+\pa_x(\phi_0^2)+\pa_x u_{11}=0,
\end{align*}
	which, further combined with \eqref{velocity2} and \eqref{1orderid}, yields the KdV equation
	\begin{align}\label{KdV}
		2\pa_t\phi_0+\pa_x^3\phi_0+3\phi_0\pa_x\phi_0=0.
	\end{align}
Hence, we formally derive the KdV equations from  the fluid-type system \eqref{NSmacro}. Notice that we only use some of the equations in \eqref{mass1}--\eqref{Poisson3}. The others are still important since we need to use them to define the approximate fluid quantities in the higher orders. The existence of solutions to the KdV equation \eqref{KdV} is guaranteed by the following proposition. See \cite{Kenig} for the complete statement of the result and proof. 

\begin{proposition}
	Let $s\geq2$ be a sufficiently large integer. For any given $T>0$, the Cauchy problem on the KdV equation \eqref{KdV} with initial data $\phi_0(0,x)\in H^{s}(\mathbb{R})$ admits a unique
	smooth  solution
	\begin{equation*}
		\phi_0(t,x)\in L^{\infty}(-T,T;H^{s}(\mathbb{R})).
	\end{equation*}
\end{proposition}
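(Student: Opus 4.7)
The plan is to establish this standard well-posedness result for KdV by parabolic regularization combined with energy estimates in the spirit of Bona-Smith. For $\eta>0$ I would consider
$$
2\pa_t\phi_0^\eta+\pa_x^3\phi_0^\eta+3\phi_0^\eta\pa_x\phi_0^\eta=\eta\pa_x^2\phi_0^\eta,
$$
with the prescribed $H^s$ initial datum. After recasting this as a semilinear equation driven by the group generated by $\pa_x^3-\eta\pa_x^2$, a standard contraction-mapping argument in $C([0,T_\eta];H^s(\R))$ furnishes a unique smooth local-in-time solution for each $\eta>0$.

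The core of the proof is the $\eta$-independent a priori estimate in $H^s$. Applying $\pa_x^k$ for $0\leq k\leq s$ and taking the $L^2$ inner product with $\pa_x^k\phi_0^\eta$, the dispersive term integrates to zero by skew-symmetry, while the viscous term yields the nonpositive contribution $-\eta\|\pa_x^{k+1}\phi_0^\eta\|_{L^2}^2$. The nonlinearity is handled by a Kato-Ponce commutator and product estimate, so that
$$
\frac{d}{dt}\|\phi_0^\eta\|_{H^s}^2\leq C\,\|\pa_x\phi_0^\eta\|_{L^\infty}\,\|\phi_0^\eta\|_{H^s}^2.
$$
Since $s\geq 2$, the Sobolev embedding $H^s(\R)\hookrightarrow W^{1,\infty}(\R)$ combined with Gronwall yields a uniform-in-$\eta$ bound on a short time interval. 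To propagate the bound to an arbitrary $[-T,T]$, I would invoke the conservation laws of KdV---most immediately the $L^2$ mass $\int\phi_0^2\,dx$ and the Hamiltonian $\int\big((\pa_x\phi_0)^2-\phi_0^3\big)\,dx$, and more generally the Miura-Gardner hierarchy of conserved functionals---and then Gagliardo-Nirenberg interpolation rules out finite-time blow-up of $\|\phi_0\|_{H^s}$.

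Passing $\eta\to 0$ by weak-$\ast$ compactness together with standard lower-semicontinuity arguments then recovers a solution $\phi_0\in L^\infty(-T,T;H^s(\R))$ of \eqref{KdV}. Uniqueness follows from an $L^2$ energy estimate for the difference $w=\phi_0-\tilde\phi_0$ of two solutions, in which the dispersive term cancels by integration by parts and the remaining nonlinear terms are controlled by $\|\phi_0\|_{W^{1,\infty}}+\|\tilde\phi_0\|_{W^{1,\infty}}$, so that Gronwall forces $w\equiv 0$. The one delicate point is the top-order commutator estimate for $3\phi_0\pa_x\phi_0$ at regularity $s$; this is precisely the ingredient documented in \cite{Kenig}, and since KdV is among the most intensively studied integrable equations no new technique is required, so I would simply cite the classical statement.
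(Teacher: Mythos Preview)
The paper does not actually prove this proposition: it simply states the result and refers the reader to \cite{Kenig} (Kenig--Ponce--Vega) for the complete statement and proof. Your proposal, by contrast, sketches a self-contained argument via parabolic regularization in the Bona--Smith tradition, which is a perfectly valid and classical route to $H^s$ well-posedness for KdV at the high regularity needed here. The cited reference \cite{Kenig} instead proceeds through the contraction principle, using the local smoothing and maximal function estimates associated to the Airy group; that machinery is what allows them to push down to low regularity, but for the large $s$ required in this paper your energy-based approach is entirely adequate and arguably more transparent.

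One small point: in your limit step, weak-$\ast$ compactness alone is not quite enough to pass to the limit in the nonlinearity $\phi_0^\eta\pa_x\phi_0^\eta$; the standard Bona--Smith device is to compare regularized solutions at different $\eta$ and extract strong convergence in $C([0,T];H^{s-1})$, which then handles the product. Since you already flag the argument as standard and point to the literature, this is not a real gap, just a place where the sketch could be tightened.
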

The existence of other $(\rho_i,u_{1i},\theta_i,\phi_i)$ can be ensured by the following lemma. The proof will be given in the appendix.
\begin{lemma}\label{lemmacor}
Let $r\geq2$ be any integer. Then there exists a sufficiently large integer $s>r$ such that for any $T>0$ and $\phi_0(t,x)\in L^{\infty}(-T,T;H^{s}(\mathbb{R}))$, the Cauchy problem on equations \eqref{mass1}--\eqref{Poisson3} with any initial data $(\rho_{i},u_{1j},\theta_{i},\phi_j)(0,x)\in H^{r}(\mathbb{R})$ admits a unique
	smooth solution
	\begin{equation}\label{boundcor}
		(\rho_{i},u_{1j},\theta_{i},\phi_j)\in L^{\infty}(-T,T;H^{r}(\mathbb{R})),\quad i=2,3, \quad j=1,2,3.
	\end{equation}
\end{lemma}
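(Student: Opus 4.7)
The plan is to construct $(\rho_2, \rho_3, u_{11}, u_{12}, u_{13}, \theta_2, \theta_3, \phi_1, \phi_2, \phi_3)$ iteratively by exploiting the triangular structure of \eqref{mass1}--\eqref{Poisson3}. At each level $k = 1, 2, 3$, I would first express $\rho_{k+1}$, $u_{1k}$, and $\theta_{k+1}$ algebraically in terms of $\phi_k$ and previously constructed quantities: the Poisson equation at order $\eps^{k+1}$ gives $\rho_{k+1}$ directly, while a single $x$-integration of the velocity equation at order $\eps^{k-1}$ and the temperature equation at order $\eps^{k}$ (imposing decay at infinity, consistent with $H^r$ initial data) yields $u_{1k}$ and $\theta_{k+1}$ respectively. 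Combining these relations with the first-order identities $u_{10} = \rho_1 = \theta_1 = \phi_0$ from \eqref{1orderid} is what decouples each level from higher-order unknowns; for example, level one produces $\rho_2 = \phi_1 - \pa_x^2\phi_0$, $u_{11} = \phi_1 - \tfrac{1}{2}\pa_x^2\phi_0 - \tfrac{1}{4}\phi_0^2$, and $\theta_2 = \phi_1 - \pa_x^2\phi_0 - \tfrac{1}{6}\phi_0^2$ after using the KdV equation for $\phi_0$ to eliminate $\pa_t\phi_0$.

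Substituting these algebraic expressions back into the mass equation at the next order, I would derive a linearized KdV equation of the form
\begin{equation*}
2\pa_t \phi_k + \pa_x^3 \phi_k + 3\pa_x(\phi_0 \phi_k) = \mathcal{S}_k[\phi_0, \ldots, \phi_{k-1}],
\end{equation*}
where the source $\mathcal{S}_k$ is a polynomial in $\phi_0, \ldots, \phi_{k-1}$ and a bounded number of their spatial derivatives. Since $\pa_t + \tfrac{1}{2}\pa_x^3$ is skew-adjoint on $L^2(\R)$ and the variable-coefficient term $3\pa_x(\phi_0\,\cdot)$ is controlled by $\|\phi_0\|_{W^{1,\infty}}$ via Sobolev embedding, a standard energy estimate in $H^{r_k}(\R)$ combined with Gronwall's inequality yields well-posedness on any finite time interval $(-T, T)$ for initial data in $H^{r_k}(\R)$, provided $\phi_0 \in L^\infty(-T, T; H^s(\R))$ with $s$ sufficiently large. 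Each level costs a bounded number of derivatives (three from $\pa_x^3$ applied to lower-order $\phi_j$, plus a fixed additional number from the nonlinear source), so taking $s$ linear in $r$ guarantees $\phi_k \in L^\infty(-T, T; H^r(\R))$ for $k = 1, 2, 3$; the same regularity then transfers to $(\rho_i, u_{1j}, \theta_i)$ via the algebraic reconstruction.

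The main obstacle is the algebraic bookkeeping required to verify closure at each level: one has to check that the combinations of mass, velocity, temperature, and Poisson equations genuinely yield a single scalar linearized KdV for $\phi_k$ whose source $\mathcal{S}_k$ involves only lower-order quantities, rather than secretly producing new top-order contributions that would block the induction. This rests on the cancellations forced by the first-order identities $u_{10} = \rho_1 = \theta_1 = \phi_0$, which pin the principal part of the linearized operator at every level to that of KdV itself. A minor but essential technical point is the top-level velocity equation \eqref{velocity4}, which, once $\phi_3$ is determined, recovers $u_{13}$ by one further $x$-integration subject to decay at infinity, so that the final quantity $u_{13}$ inherits $H^r$-regularity from $\phi_3$ without loss beyond a single derivative.
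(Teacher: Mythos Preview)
Your hierarchical plan and the level-one formulas ($\rho_2 = \phi_1 - \pa_x^2\phi_0$, $u_{11} = \phi_1 - \tfrac12\pa_x^2\phi_0 - \tfrac14\phi_0^2$, $\theta_2 = \phi_1 - \pa_x^2\phi_0 - \tfrac16\phi_0^2$) agree with the paper's, which instead takes $u_{1k}$ as the primary unknown at each level --- a purely cosmetic difference. The genuine gap is the claim that \emph{every} level $k=1,2,3$ yields a linearized KdV equation $2\pa_t\phi_k + \pa_x^3\phi_k + 3\pa_x(\phi_0\phi_k) = \mathcal{S}_k[\phi_0,\dots,\phi_{k-1}]$.

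The dispersive term $\pa_x^3\phi_k$ is produced by combining the mass, velocity, and Poisson equations one order above those you used to express $\rho_{k+1}, u_{1k}, \theta_{k+1}$ algebraically; for $\phi_1$ this means \eqref{mass3}$+$\eqref{velocity3}$+$\eqref{Poisson3}, all of which are present, so the linearized KdV for $\phi_1$ is genuine. For $\phi_2$ you would need a mass equation at $O(\eps^3)$ and a Poisson equation at $O(\eps^4)$, neither of which appears in \eqref{mass1}--\eqref{Poisson3}. The only surviving equation at that stage is \eqref{velocity4}, which yields merely a first-order transport equation $\pa_t u_{12} + \pa_x(u_{10}u_{12}) = \pa_x(u_{13}-\phi_3) + N_2[\phi_0,\phi_1]$ --- no dispersion, and the right side still contains the undetermined combination $u_{13}-\phi_3$. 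For $\phi_3$ there is no evolution equation at all. The paper handles this by noting that the system is underdetermined at the top and simply choosing $u_{13},\phi_3\in H^{r+1}$ freely; with that choice the transport equation for $u_{12}$ closes by Gronwall, and $\rho_3,\theta_3,\phi_2$ follow from the algebraic relations you already wrote down.

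So your scheme goes through at $k=1$ but not as stated at $k=2,3$: replace the nonexistent linearized KdV for $\phi_2$ by the transport equation from \eqref{velocity4}, treat $(\phi_3,u_{13})$ as prescribed data rather than as quantities to be ``determined'', and the rest of your outline is correct.
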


\begin{remark}
Combining \eqref{scaling} and \eqref{shift}, we have performed the transform
$$
\eps^{3/2}t\rightarrow t,\quad \eps^{1/2}(x-t)\rightarrow x,\quad \eps^{-1}v\rightarrow v.
$$
If we set the transform for spacial variable to be
$$
\eps^{1/2}(x-Vt)\rightarrow x
$$
with the velocity parameter $V$, then \eqref{mass1}, \eqref{velocity1}, \eqref{temperature1}, \eqref{Poisson1} will become
\begin{align*}
	-V\pa_x\rho_1+\pa_xu_{10}=0,\quad -V\pa_xu_{10}+\pa_x\phi_0=0,\quad -V\pa_x \theta_1+\pa_x u_{10}=0,\quad \phi_0=\rho_1.
\end{align*}
To avoid trivial solutions, one sees that $V$ should satisfy $V^2-1=0$, which implies $V=\pm 1.$ Hence, in \eqref{shift}, we directly let $V=1$.
\end{remark}

\subsection{Approximate fluid quantities and perturbed system}	
In terms of the expansion \eqref{expansion}, we choose our approximation to the fluid quantities $(\rho,u,\theta,\phi)$ to be 
	\begin{align}\label{approx}
	\left\{
	\begin{array}{rl}		\bar{\rho}&=1+\eps\rho_1+\eps^2\rho_2+\eps^3\rho_3,\\
		\bar{u}_1&=u_{10}+\eps u_{11}+\eps^2u_{12}+\eps^3u_{13},\\
		\bar{u}_2&=\bar{u}_3=0,\\
		\bar{\theta}&=\frac{3}{2}+\eps\theta_1+\eps^2\theta_2+\eps^3\theta_3,\\
		\bar{\phi}&=\phi_0+\eps\phi_1+\eps^2\phi_2+\eps^3\phi_3.\end{array} \right.
\end{align}

\begin{remark}\label{rkkdvexp}
Here we should point out that for $\eps>0$ small enough, $\bar{\rho}$ and $\bar{\theta}$ are sufficiently close to constant states and have small variations, even though the KdV profile is of large amplitude. However, for $\bar{u}_1$ and $\bar{\phi}$, they may still have large amplitude and variations, which is one of the main difficulties to be overcome. Thus, the situation under consideration is essentially different from that in the previous work \cite{DYY}.
\end{remark}

Corresponding to \eqref{approx}, the local Maxwellian is given by
	\begin{equation}
    \label{2.38A}
		\overline{M}:=M_{[\bar{\rho},\bar{u},\bar{\theta}](t,x)}(v)
		=\frac{\bar{\rho}(t,x)}{(2\pi K\bar{\theta}(t,x))^{3/2}}\exp\big(-\frac{|v-\bar{u}(t,x)|^{2}}{2K\bar{\theta}(t,x)}\big).
	\end{equation}
	Moreover, we define the microscopic Chapman-Enskog correction
	\begin{equation}\label{DefbarG}
		\overline{G}=\eps\de L_{M}^{-1}\{ P_{1}v_{1}M(\frac{|v-u|^{2}
			\partial_x\bar{\theta}}{2K\theta^{2}}+\frac{(v-u)\cdot\partial_x\bar{u}}{K\theta})\}.
	\end{equation}
	The perturbation is denoted by 
	\begin{equation}\label{Defpert}
		\left\{
		\begin{array}{rl}
			&\dis(\widetilde{\rho},\widetilde{u},\widetilde{\theta},\widetilde{\phi})(t,x)
			=({\rho}-\bar{\rho},u-\bar{u},\theta-\overline{\theta},\phi-\overline{\phi})(t,x),
			\\[3mm]
			&\dis h(t,x,v)=G(t,x,v)-\overline{G}(t,x,v)\ \text{with}\ \ G=F-M.
		\end{array} \right.
	\end{equation}
	We substitute the first identity above into \eqref{NSmacro} to get
	\begin{align}
		\label{perturbeq}
		\left\{
		\begin{array}{rl}
			&\dis \partial_{t}\widetilde{\rho}-\frac{1}{\eps}\pa_x \widetilde{\rho}
			+\partial_x(\bar{\rho}\widetilde{u}_{1})
			+\partial_x(\widetilde{\rho}u_{1})=-\eps^3R_1,
			\\
			&\dis \partial_t\widetilde{u}_{1}-\frac{1}{\eps}\partial_x\widetilde{u}_{1}
			+u_{1}\partial_x\widetilde{u}_{1}+\widetilde{u}_{1}\partial_x\bar{u}_{1}+\frac{2}{3}\partial_x\widetilde{\theta}
			+\frac{2}{3}(\frac{\theta}{\rho}-\frac{\bar{\theta}}{\bar{\rho}})\partial_x\rho+\frac{2}{3}\frac{\bar{\theta}}{\bar{\rho}}\partial_x\widetilde{\rho}
			+\frac{1}{\eps}\partial_x\widetilde{\phi}
			\\
			&\dis\qquad=\eps\de\frac{4}{3\rho}\partial_x(\mu(\theta)\partial_xu_{1})-\frac{1}{\rho}\partial_x(\int_{\mathbb{R}^{3}} v^{2}_{1}L^{-1}_{M}\Theta\,dv)-\eps^3\frac{1}{\bar{\rho}}R_2,\\
			&\dis\partial_t\widetilde{u}_{i}-\frac{1}{\eps}\partial_x\widetilde{u}_{i}+u_{1}\partial_x\widetilde{u}_{i}=\eps\de\frac{1}{\rho}\partial_x(\mu(\theta)\partial_xu_{i})-\frac{1}{\rho}\partial_x(\int_{\mathbb{R}^3} v_{1}v_{i}L^{-1}_{M}\Theta\,dv), ~~i=2,3,
			\\
			&\dis\partial_t\widetilde{\theta}-\frac{1}{\eps}\partial_x\widetilde{\theta}
			+\frac{2}{3}\bar{\theta}\partial_x\widetilde{u}_{1}+\frac{2}{3}\widetilde{\theta}\partial_xu_{1}
			+u_1\partial_x\widetilde{\theta}+\widetilde{u}_{1}\partial_x\bar{\theta}
			\\
			&\dis\qquad=\eps\de\frac{1}{\rho}\partial_x(\kappa(\theta)\partial_x\theta) 
			+\eps\de\frac{4}{3\rho}\mu(\theta)(\partial_xu_{1})^2
			+\eps\de\frac{1}{\rho}\mu(\theta)[(\partial_xu_{2})^2+(\partial_xu_{3})^2]
			\\
			&\dis\qquad\quad+\frac{1}{\rho}u\cdot\partial_x(\int_{\mathbb{R}^3} v_{1}v L^{-1}_{M}\Theta\, dv)-\frac{1}{\rho}\partial_x(\int_{\mathbb{R}^3}v_{1}\frac{|v|^{2}}{2}L^{-1}_{M}\Theta\, dv)-\eps^3 R_3,
			\\
			&\dis-\eps^2\pa_x^2\widetilde{\phi}+\eps\widetilde{\phi}=\widetilde{\rho}-\eps^4R_4,
		\end{array} \right.
	\end{align}
where the remainder terms are denoted by
\begin{align*}
	R_1&=\pa_t\rho_3+\pa_x\big(\rho_3(u_{10}+\eps u_{11}+\eps^2u_{12}+\eps^3u_{13})+\rho_2( u_{11}+\eps u_{12}+\eps^2u_{13})+\rho_1(u_{12}+\eps u_{13})+\pa_xu_{13}\big),\\
	R_2&=(1+\eps\rho_1+\eps^2\rho_2+\eps^3\rho_3)(\pa_tu_{13}+u_{10}\pa_xu_{13}+u_{11}\pa_x(u_{12}+\eps u_{13})+\frac{2}{3}\pa_x\theta_3\notag\\
	&\qquad\qquad\qquad\qquad\qquad\qquad+u_{12}\pa_x(u_{11}+\eps u_{12}+\eps^2 u_{13})+u_{13}\pa_x(u_{10}+\eps u_{11}+\eps^2 u_{12}+\eps^3 u_{13}))\notag\\
	&\qquad-(\rho_1+\eps\rho_2+\eps^2\rho_3)(\pa_x\rho_2+\frac{2}{3}\theta_1\pa_x\rho_1)-(\rho_2+\eps\rho_3)\pa_x\rho_1+\pa_x\rho_3+\frac{2}{3}\theta_1\pa_x(\rho_2+\eps\rho_3)\notag\\
	&\qquad+\frac{2}{3}(\theta_2+\eps\theta_3)\pa_x(\rho_1+\eps\rho_2+\eps^2\rho_3)
    +\frac{2}{3}(1+\eps\rho_1+\eps^2\rho_2+\eps^3\rho_3)\pa_x\theta_3,  
    \\
R_3&=\pa_t\theta_3+u_{10}\pa_x\theta_3+u_{11}\pa_x(\theta_2+\eps\theta_3)+u_{12}\pa_x(\theta_1+\eps\theta_2+\eps^2\theta_3)+\pa_xu_{13}\notag\\
	&\qquad+\frac{2}{3}\theta_1\pa_x(u_{12}+\eps u_{13})+\frac{2}{3}\theta_2\pa_x(u_{11}+\eps u_{12}+\eps^2 u_{13})+\frac{2}{3}\theta_3\pa_x(u_{10}+\eps u_{11}+\eps^2 u_{12}+\eps^3 u_{13})
    \\
    &\qquad+u_{13}\pa_x(\eps\theta_1+\eps^2\theta_2+\eps^3\theta_3),\\
	R_4&=-\pa_x^2(\phi_2+\eps\phi_3)+\phi_3.
\end{align*}
This would be the major fluid-type system.
	
Then we substitute the second line in \eqref{Defpert} into \eqref{equG} to get the microscopic equation
		\begin{align}\label{eqh}
		&\partial_th-\frac{1}{\eps}\partial_xh+v_{1}\partial_xh-\frac{1}{\eps}\partial_x\phi\partial_{v_{1}}h	-\frac{1}{\eps\de}L_M h
		\notag\\
		=&\frac{1}{\eps\de}Q(G,G)+P_{0}(v_{1}\partial_xh)-P_{1}\{v_{1}M(\frac{|v-u|^{2}
			\partial_x\widetilde{\theta}}{2K\theta^{2}}+\frac{(v-u)\cdot\partial_x\widetilde{u}}{K\theta})\}
		\notag\\
		&+\frac{1}{\eps}\partial_x\overline{G}+\frac{1}{\eps}\partial_x\phi\partial_{v_{1}}\overline{G}
		-P_{1}(v_{1}\partial_x\overline{G})
		-\partial_t\overline{G},
	\end{align}
		with the initial data
	\begin{align*}
		h_0(x,v):=h(0,x,v)=G(0,x,v)-\overline{G}(0,x,v).
	\end{align*}
As explained in Remark \ref{rkkdvexp}, we are forced to apply the Caflisch's decomposition for solving \eqref{eqh}. Thus we introduce a global Maxwellian $\mu$ by 
	\begin{align}\label{Defmu}
		\mu=\mu(v)=e^{-\frac{|v|^2}{4}},
	\end{align}
which has the slower decay in large velocities than $\overline{M}$. 

\begin{remark}
We remark that such $\mu$ in \eqref{Defmu} could be replaced by a general Gaussian form $\exp (-c|v|^2)$ with $0<c\leq 1/4$, or a stretched exponential $\exp(-c\langle v\rangle^s)$ with $0<s<2$ and $c>0$, or a much heavier-tailed polynomial $\langle v\rangle^m$. However, we do not persue such generality in the current work; interested readers may refer to a recent work \cite{Duan-Li} for details.
\end{remark}

We further decompose the solution $h$ as
\begin{equation}
\label{2.46A}
h(t,x,v)=\sqrt{\overline{M}}f(t,x,v)+\sqrt{\mu}g(t,x,v),
\end{equation}
with $g$ and $f$ satisfying
	\begin{align}\label{eqg}
		&\partial_tg-\frac{1}{\eps}\partial_xg+v_{1}\partial_xg	-\frac{1}{\eps\de}L_Dg+\frac{(\pa_t-\frac{1}{\eps}\pa_x+v_1\pa_x)\sqrt{\overline{M}}}{\sqrt{\mu}}f-\frac{1}{\eps}\frac{\partial_x\phi\partial_{v_{1}}(\sqrt{\mu}g+\sqrt{\overline{M}}f)}{\sqrt{\mu}}
		\notag\\
		=&\frac{1}{\eps\de}\Gamma(g+\frac{\sqrt{\overline{M}}}{\sqrt{\mu}}f+\frac{\overline{G}}{\sqrt{\mu}},g+\frac{\sqrt{\overline{M}}}{\sqrt{\mu}}f+\frac{\overline{G}}{\sqrt{\mu}})+\frac{1}{\eps\de}\{\Gamma(\frac{M-\overline{M}}{\sqrt{\mu}},\frac{\sqrt{\overline{M}}}{\sqrt{\mu}}f)+\Gamma(\frac{\sqrt{\overline{M}}}{\sqrt{\mu}}f,\frac{M-\overline{M}}{\sqrt{\mu}})\},
	\end{align}
	and
	\begin{align}\label{eqf}
		&\partial_tf-\frac{1}{\eps}\partial_xf+v_{1}\partial_xf	-\frac{1}{\eps\de}\CL_{\overline{M}} f-\frac{1}{\eps\de}\frac{\sqrt{\mu}}{\sqrt{\overline{M}}}L_Bg\notag\\
		=&\frac{P_{0}(v_{1}\partial_x(\sqrt{\overline{M}}f)+v_{1}\sqrt{\mu}\partial_xg)}{\sqrt{\overline{M}}}-\frac{1}{\sqrt{\overline{M}}}P_{1}\{v_{1}M(\frac{|v-u|^{2}
			\partial_x\widetilde{\theta}}{2K\theta^{2}}+\frac{(v-u)\cdot\partial_x\widetilde{u}}{K\theta})\}\notag\\
		&+\frac{1}{\eps}\frac{\partial_x\overline{G}}{\sqrt{\overline{M}}}+\frac{1}{\eps}\frac{\partial_x\phi\partial_{v_{1}}\overline{G}}{\sqrt{\overline{M}}}
		-\frac{P_{1}(v_{1}\partial_x\overline{G})}{\sqrt{\overline{M}}}
	-\frac{\partial_t\overline{G}}{\sqrt{\overline{M}}}.
	\end{align}
We supplement \eqref{eqg} and \eqref{eqf} with initial data
	\begin{align*}
		g_0(x,v):=g(0,x,v)=\frac{h_0(x,v)}{\sqrt{\mu}(v)},\quad f_0(x,v):=f(0,x,v)=0,
	\end{align*}
where the slow-velocity-decay part $\sqrt{\mu}g(t,x,v)$ carries the whole initial data of $h$. Here we have used the following notations:
\begin{equation}\label{DefGa}
\Gamma(g_1,g_2):=\frac{1}{\sqrt{\mu}}Q(\sqrt{\mu}g_1,\sqrt{\mu}g_2),
\end{equation}
and 
\begin{align}
\CL_{\overline{M}}f:=\frac{1}{\sqrt{\overline{M}}}Q(\overline{M},\sqrt{\overline{M}}f)+\frac{1}{\sqrt{\overline{M}}}Q(\sqrt{\overline{M}}f,\overline{M}),
\label{DefLM}
\end{align}
and
\begin{align}
-L_Dg:=-\frac{1}{\sqrt\mu}Q(M,\sqrt{\mu}g)-\frac{1}{\sqrt\mu}Q(\sqrt{\mu}g,M)+A_1\chi_{|v|\leq A_2}g,\quad L_Bg:=A_1\chi_{|v|\leq A_2}g.\label{DefL}
\end{align}
In the above definition, $\chi_{|v|\leq A_2}=\chi_{A_2}(v)$ is a smooth cut-off function such that $\chi_{A_2}(v)=0$ when $|v|>2A_2$ and $\chi_{A_2}(v)=1$ when $|v|\leq A_2$. The constants $A_1$ and $A_2$ are chosen in Lemma \ref{LemmaL} such that $L_D$ has good dissipative property and $L_B$ is bounded. It is straightforward to verify that if $(g,f)$ satisfies \eqref{eqg} and \eqref{eqf}, then $h(t,x,v)=\sqrt{\overline{M}}f(t,x,v)+\sqrt{\mu}g(t,x,v)$ solves \eqref{eqh}.

\subsection{Notations}
Throughout the paper we shall use $\langle \cdot , \cdot \rangle$  to denote the standard $L^{2}$ inner product in $\mathbb{R}_{v}^{3}$
with its corresponding $L^{2}$ norm $|\cdot|_2$, and $( \cdot , \cdot )$ to denote $L^{2}$ inner product in
$\mathbb{R}_{x}$ or $\mathbb{R}_{x}\times \mathbb{R}_{v}^{3}$  with its corresponding $L^{2}$ norm $\|\cdot\|$.
In order to control the polynomial growth caused by the forcing term in the VPL system, we should define the time-velocity weight
\begin{equation}\label{Defw}
	w(\alpha,\beta)(t,v):=\langle v\rangle^{2(l-\al-|\beta|)}\exp\left(\frac{q_1}{(1+t)^{q_2}}\frac{\langle v\rangle^2}{2}\right),\quad  l\geq \al+|\beta|,
\end{equation}
and the pure velocity weight
\begin{equation}
\label{DefW}
	W(\alpha,\beta)(t,v):=\langle v\rangle^{2(l-\al-|\beta|)},\quad  l\geq \al+|\beta|,
\end{equation}
where $\langle v\rangle=\sqrt{1+|v|^{2}}$  and $ q_1,q_2\in[0,1)$ will be chosen later.
The Landau collision frequency is given by
\begin{equation*}
	\sigma_{ij}(v):=(\Phi_{ij}\ast \mu)(v)=\int_{{\mathbb R}^3}\Phi_{ij}(v-v_{\ast})\mu(v_{\ast})\,dv_{\ast}.
\end{equation*}
Here $[\sigma_{ij}(v)]_{1\leq i,j\leq 3}$ is a positive-definite self-adjoint matrix. Given integer $\al\geq0$ and multiple index $\be=(\be_1,\be_2,\be_3)$, we define the differential operator
$$
\pa^\al_\be:=\pa^\al_x\pa^\be_v.
$$
Moreover, we denote the weighted $L^2$ energy and dissipation norms as follows:
\begin{align*}
|\partial^\alpha_\beta g|_{w(\al',\be')}^2:&= \int_{{\mathbb R}^3}|w(\al',\be')|^{2}|\partial^\alpha_\beta g|^2\,dv,\quad
\|\partial^\alpha_\beta g\|_{w(\al',\be')}^2:=\int_{{\mathbb R}}|\partial^\alpha_\beta g|_{w(\al',\be')}^2\,dx,
\\
	|\partial^\alpha_\beta g|^2_{\sigma,w(\al',\be')}:&=\sum_{i,j=1}^3\int_{{\mathbb
			R}^3}|w(\al',\be')|^{2}[\sigma_{ij}\partial_{v_i}\partial^\alpha_\beta g\partial_{v_j}\partial^\alpha_\beta g+\sigma_{ij}\frac{v_i}{2}\frac{v_j}{2}(\partial^\alpha_\beta g)^2]\,dv,
\end{align*}
and
$$
 \|\partial^\alpha_\beta g\|^2_{\sigma,w(\al',\be')}:=\int_{\R}|\partial^\alpha_\beta g|^2_{\sigma,w(\al',\be')}dx.
$$
If $(\al',\be')=(\al,\be)$, we write $w$ instead of $w(\al',\be')$ in the above definition. Moreover, $\|\partial^\alpha_\beta f\|_{W}^2$ and $\|\partial^\alpha_\beta f\|_{\si,W}^2$ are defined in the corresponding way.
In particular, we write $\|g\|_{\sigma}=\|g\|_{\sigma,1}$.
It is shown in \cite[Lemma 5,p.315]{Strain-Guo} that
\begin{equation}\label{boundlandaunorm}
	|g|_{\sigma,w}\sim |w\langle v\rangle^{-\frac{1}{2}}g|_2+\Big|w\langle v\rangle^{-\frac{3}{2}}\nabla_vg\cdot\frac{v}{|v|}\Big|_2+\Big|w\langle v\rangle^{-\frac{1}{2}}\nabla_vg\times\frac{v}{|v|}\Big|_2.
\end{equation}
Here we say $A\sim B$ if there exists some $C>1$ such that $\frac{1}{C}B\leq A\leq CB.$ 

Note that $h(t,x,v)=\sqrt{\overline{M}}f(t,x,v)+\sqrt{\mu}g(t,x,v)$ is purely microscopic since $G$ and $\overline{G}$ are both microscopic, which however, does not imply that $\sqrt{\overline{M}}f$ or $\sqrt{\mu}g$ is microscopic. Thus, we need some notations to capture the macro and micro parts of $f$ {corresponding to the linearized collision term $\CL_{\overline{M}} f$ in \eqref{eqf}}.
The orthonormal basis of the null space $\overline{\mathcal{N}}$ of $\CL_{\overline{M}}$, given by	
\begin{equation*} 			
\left\{ 		\begin{array}{rl} 			
&\dis \bar{\chi}_{0}(v)=\frac{1}{\sqrt{\bar{\rho}}}\overline{M},\\[2mm] 			
&\dis  \bar{\chi}_{i}(v)=\frac{v_{i}-\bar{u}_{i}}{\sqrt{K\bar{\rho}\bar{\theta}}}\overline{M}, \quad i=1,2,3, 			\\[2mm] 			&\dis \bar{\chi}_{4}(v)=\frac{1}{\sqrt{6\bar{\rho}}}(\frac{|v-\bar{u}|^{2}}{K\bar{\theta}}-3)\overline{M}. 		\end{array} \right. 	
\end{equation*}
Let us define $\mathbf{P}_0$ as the orthogonal projection from
$L^2_v$ onto $\overline{\mathcal{N}}$, then we have
\begin{align}\label{defP0}
	\mathbf{P}_0f(t,x,v)=\{a(t,x)\frac{1}{\sqrt{\bar{\rho}}}+b(t,x)\cdot \frac{v-\bar{u}}{\sqrt{K\bar{\rho}\bar{\theta}}} + c(t,x)\frac{1}{\sqrt{6\bar{\rho}}}(\frac{|v-\bar{u}|^{2}}{K\bar{\theta}}-3)\}\overline{M}^{1/2}(v),
\end{align}
with
\begin{align*}
	\dis \left\{\begin{array}{rl}
	\dis	a(t,x)&\dis=\int_{\R^3}f(t,x,v)\frac{1}{\sqrt{\bar{\rho}}}\overline{M}^{1/2}(v)\,dv,\\
	\dis	b(t,x)&\dis=\int_{\R^3}f(t,x,v)\frac{v-\bar{u}}{\sqrt{K\bar{\rho}\bar{\theta}}}\overline{M}^{1/2}(v)\,dv,\\
	\dis	c(t,x)&\dis=\int_{\R^3}f(t,x,v)\frac{1}{\sqrt{6\bar{\rho}}}(\frac{|v-\bar{u}|^{2}}{K\bar{\theta}}-3)\overline{M}^{1/2}(v)\,dv.
	\end{array}\right.
\end{align*}
Correspondingly, we write the microscopic projection as 
\begin{equation*}
%\label{defP1}
\mathbf{P}_1f=f-\mathbf{P}_0f. 
\end{equation*} 
Notice that $\sqrt{\overline{M}}f(t,x,v)+\sqrt{\mu}g(t,x,v)$ is microscopic, then
$$
\int_{\R^3}(\sqrt{\overline{M}}f+\sqrt{\mu}g)\frac{1}{\sqrt{\bar{\rho}}}\,dv=
\int_{\R^3}(\sqrt{\overline{M}}f+\sqrt{\mu}g)\frac{v-\bar{u}}{\sqrt{K\bar{\rho}\bar{\theta}}}\,dv
=\int_{\R^3}(\sqrt{\overline{M}}f+\sqrt{\mu}g)\frac{1}{\sqrt{6\bar{\rho}}}(\frac{|v-\bar{u}|^{2}}{K\bar{\theta}}-3)\,dv=0.
$$
It follows from this and \eqref{defP0} that
\begin{align*}
%\label{g2macro}
\mathbf{P}_0f=-\{\frac{1}{\sqrt{\bar{\rho}}}\int_{\R^3}\frac{1}{\sqrt{\bar{\rho}}}\sqrt\mu g \,dv+&\frac{v-\bar{u}}{\sqrt{K\bar{\rho}\bar{\theta}}}\cdot\int_{\R^3}\frac{v-\bar{u}}{\sqrt{K\bar{\rho}\bar{\theta}}}\sqrt\mu g \,dv
\notag\\&+\frac{1}{\sqrt{6\bar{\rho}}}(\frac{|v-\bar{u}|^{2}}{K\bar{\theta}}-3)\int_{\R^3}\frac{1}{\sqrt{6\bar{\rho}}}(\frac{|v-\bar{u}|^{2}}{K\bar{\theta}}-3)\}\sqrt\mu g \,dv\}\overline{M}^{1/2}.
\end{align*}
Hence we can recover the macroscopic dissipation of $f$ by $g$.
%Note that the equivalency above is up to a constant depending on $\|(\bar{\rho},\bar{u},\bar{\theta})\|_{H^2_x}.$
\subsection{Main theorems}
We define the instant energy functional
\begin{align}\label{DefE}
	\mathcal{E}(t):=&\sum_{\alpha\leq 1}\{\|\partial^{\alpha}_x(\widetilde{\rho},\widetilde{u},\widetilde{\theta},\widetilde{\phi})(t)\|^{2}+\eps\|\pa^\al_x \pa_x\widetilde{\phi}\|^2
	+\|\partial^{\alpha}_xg(t)\|_{w}^{2}+\|\partial^{\alpha}_xf(t)\|_W^{2}\}
	\nonumber\\
	&\hspace{0.5cm}+\eps^2 \de^2\{\|\pa^2_x(\widetilde{\rho},\widetilde{u},\widetilde{\theta},\widetilde{\phi})(t)\|^{2}+\eps\|\pa^3_x\widetilde{\phi}\|^2
	+\|\pa^2_xg(t)\|_{w}^{2}+\|\pa^2_xf(t)\|_W^{2}\}\notag\\
	&\hspace{0.5cm}+\sum_{\alpha+|\beta|\leq 2,|\beta|\geq1}\{\|\partial^{\alpha}_{\beta}g(t)\|_{w}^{2}+\|\partial^{\alpha}_{\beta}f(t)\|_W^{2}\},
\end{align}
and the dissipation rate functional
\begin{align}\label{DefD}
	\mathcal{D}(t)&:=\eps\de\sum_{1\leq\alpha\leq 2}\{
	\|\partial^{\alpha}_x(\widetilde{\rho},\widetilde{u},\widetilde{\theta},\widetilde{\phi})(t)\|^{2}+\eps\|\pa^\al_x \pa_x\widetilde{\phi}\|^2\}
	+\frac{1}{\eps\de}\sum_{\alpha+|\beta|\leq 2,|\beta|\geq1}\{\|\partial^{\alpha}_{\beta}g(t)\|_{\sigma,w}^{2}+\|\partial^{\alpha}_{\beta}f(t)\|_{\sigma,W}^{2}\}
	\nonumber\\
	&\quad+\frac{1}{\eps\de}\sum_{\al\leq 1}\{\|\partial^{\alpha}_xg(t)\|_{\sigma,w}^{2}+\|\partial^{\alpha}_xf(t)\|_{\sigma,W}^{2}\}+\eps\de\sum_{\al=2}\{\|\partial^{\alpha}_xg(t)\|_{\sigma,w}^{2}+\|\partial^{\alpha}_xf(t)\|_{\sigma,W}^{2}\}.
\end{align}
We require the solution of KdV equation to be bounded by some $\eta\geq 0$ in the sense that 
\begin{align}\label{2.61AA}
	&\|(\rho_i,u_{1i},\theta_i,\phi_i)\|_{L^{\infty}}\leq C,\quad \|\partial_x(\rho_i,u_{1i},\theta_i,\phi_i)\|_{H^k}
	+\|\partial_t(\rho_i,u_{1i},\theta_i,\phi_i)\|_{H^k}\leq \eta,\quad i=0,1,2,3,
\end{align}
where $k\geq 4$, $\rho_0=1$ and $\theta_0=3/2.$ Note that when the solution to KdV equation is constant state, then $\rho_1,u_{10},\theta_1,\phi_0$ are constants and we let all other $\rho_i,u_{1i},\theta_i,\phi_i$ and remainders $R_i$ vanish, which implies $\eta=0$. In the paper, to give a more special structure for constant KdV solution, we also track the dependence in $\eta$ by some constant $C(\eta)\geq 0$ with $C(0)=0$ corresponding to the case when $\eta=0$. 

By using \eqref{2.61AA} and the definition of $(R_1,R_2,R_3,R_4)$ in \eqref{perturbeq}, one has
\begin{align}\label{boundkdv}
\|(R_1,R_2,R_3,R_4)\|^2_{H^4}+\|\partial_t(R_1,R_2,R_3,R_4)\|^2_{H^4}\leq C(\eta).
\end{align}

With the above preparations, we can state the main results of this paper as follows.

\begin{theorem}\label{TheoremKdV}
	Let $T>0$ be given and \begin{align*}
		\left\{
		\begin{array}{rl}		\bar{\rho}&=1+\eps\rho_1+\eps^2\rho_2+\eps^3\rho_3,\\
			\bar{u}_1&=u_{10}+\eps u_{11}+\eps^2u_{12}+\eps^3u_{13},\\
			\bar{u}_2&=\bar{u}_3=0,\\
			\bar{\theta}&=\frac{3}{2}+\eps\theta_1+\eps^2\theta_2+\eps^3\theta_3,\\
			\bar{\phi}&=\phi_0+\eps\phi_1+\eps^2\phi_2+\eps^3\phi_3,\end{array} \right.
	\end{align*}
	where $\phi_0=\rho_1=u_{10}=\theta_1\in L^{\infty}(-T,T;H^{s}(\mathbb{R}))$ is the solution to KdV equation \eqref{KdV} for large $s$ such that \eqref{boundkdv} holds for any $\eta>0$, and other  $\rho_i,u_{1i},\theta_i,\phi_i$ are corrections defined by \eqref{mass1}--\eqref{Poisson3} on $(t,x)\in [0,T]\times \R$.  
    For any $0<c_0<1/2$, there are constants  $0<c_1\ll 1$ and $0<\nu_0\ll 1$ independent of $\eps$ and $\nu$,  such that for 
	\begin{align}\label{nueps}
	\eps^{{\frac{3}{2}}-c_0}\leq \nu\leq\eps^{\frac{1}{2}+c_0}\ \text{or}\ \nu=c_1\eps^{\frac{1}{2}}\ \text{or}\ \nu=\frac{1}{c_1}\eps^{\frac{3}{2}},
	\end{align} and $\nu\leq \nu_0$, if it holds that $F_0(x,v)\geq 0$ and
	\begin{align*}%\label{initial}
		\mathcal{E}(0)\leq \eps^4,
	\end{align*} 
	where we have chosen $q_1\in(0,1)$ small enough and some given $q_2>0$ and $l>3$ in velocty weight functions of the energy functional $\CE(t)$  defined by \eqref{DefE},
	then the Cauchy problem on the Vlasov-Poisson-Landau system \eqref{rVPL} admits a unique  solution $(F(t,x,v),\phi(t,x))$  over $[0,T]\times \R\times \R^3$ satisfying
\begin{equation*}%\label{BoundE}
\sup_{0\leq t\leq T}\mathcal{E}(t)
\leq A\eps^4,
\end{equation*}
where constant $A>1$ is independent of $\eps$.	In particular, it holds that 
	\begin{equation}
    \label{2.63A}
	\sup_{0\leq t\leq T}\{\|\frac{F(t,x,v)-M_{[\bar{\rho},\bar{u},\bar{\theta}](t,x)}(v)}{\sqrt{\mu}}\|_{L^2_xL^2_v}+\|\frac{F(t,x,v)-M_{[\bar{\rho},\bar{u},\bar{\theta}](t,x)}(v)}{\sqrt{\mu}}\|_{L^\infty_xL^2_v}\}\leq C\eps^2,
	\end{equation}
	and 
	\begin{equation*}
	\sup_{0\leq t\leq T}\{\|\phi(t,x)-\phi_0(t,x)\|_{L^2_x}+\|\phi(t,x)-\phi_0(t,x)\|_{L_{x}^{\infty}}\}\leq C\eps^2.
	\end{equation*}
\end{theorem}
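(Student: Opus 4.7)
The plan is local-in-time well-posedness for the decomposed system \eqref{perturbeq}--\eqref{eqg}--\eqref{eqf} by a standard linearized iteration, combined with the uniform a priori estimate
\begin{equation*}
\CE(t)+\int_0^t\CD(s)\,ds \;\leq\; C\CE(0)+C(\eta)\eps^{8}T+C\sqrt{\CE(t)}\,\CD(t)
\end{equation*}
on $[0,T]$, closed by a continuation argument under the bootstrap hypothesis $\CE(t)\leq A\eps^4$. Given $\CE(0)\leq \eps^4$ and the smallness of $\sqrt{A}\,\eps^2$, this delivers the desired bound $\CE(t)\leq A\eps^4$ on $[0,T]$. The two convergence statements in \eqref{2.63A} then follow from the decomposition $F-\overline{M}=\sqrt{\overline{M}}f+\sqrt{\mu}g+\overline{G}$, the bound $\|\overline{G}/\sqrt{\mu}\|\lesssim \eps\delta\lesssim\eps^2$ read off from \eqref{DefbarG}, and the one-dimensional Sobolev embedding in $x$.

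\textbf{Macroscopic estimates.} Apply $\partial^\alpha_x$ ($0\leq\alpha\leq 2$) to \eqref{perturbeq} and pair with the Euler entropy multipliers $\bar\rho^{-1}\partial^\alpha_x\widetilde\rho,\ \bar\rho\,\partial^\alpha_x\widetilde u,\ \tfrac{3}{2}\bar\rho\bar\theta^{-1}\partial^\alpha_x\widetilde\theta$. The antisymmetric transport $-\tfrac{1}{\eps}\partial_x$ drops out at leading order, while the dangerous coupling $\tfrac{1}{\eps}\partial_x\widetilde\phi$ in the momentum equation is combined with $\partial_t$ of the Poisson equation $-\eps^2\partial_x^2\widetilde\phi+\eps\widetilde\phi=\widetilde\rho-\eps^4 R_4$ to produce the time-derivative of the electric energy $\tfrac{\eps}{2}\|\widetilde\phi\|^2+\tfrac{\eps^2}{2}\|\partial_x\widetilde\phi\|^2$ already built into $\CE$. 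Fluid dissipation for $\partial_x\widetilde u,\partial_x\widetilde\theta$ is supplied by the Navier--Stokes-type viscous terms $\eps\delta\partial_x(\mu\partial_x u)$ in \eqref{NSmacro}, and the dissipation for $\partial_x\widetilde\rho,\partial_x\widetilde\phi$ is extracted through the velocity equation and the Poisson relation in the Kawashima--Shizuta manner. The remainder terms contribute only $\eps^3\|R_i\|\sqrt{\CE}\lesssim\eps^7 C(\eta)$ by \eqref{boundkdv}.

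\textbf{Microscopic estimates.} Pair \eqref{eqg} with $w^2 g$: coercivity of $L_D$ (Lemma \ref{LemmaL}) yields the $\tfrac{1}{\eps\delta}\|g\|_{\sigma,w}^2$ dissipation, and the critical forcing $-\tfrac{1}{\eps}\partial_x\phi\,\partial_{v_1}(\sqrt{\mu}g)/\sqrt{\mu}$, which after integration by parts in $v$ produces a cubic-in-$v_1$ term with the singular prefactor $\tfrac{1}{\eps}$, is absorbed by the extra dissipation $\tfrac{q_1q_2}{(1+t)^{q_2+1}}\int\langle v\rangle^2 w^2 g^2\,dv$ released when $\partial_t$ hits the exponential factor $\exp(q_1\langle v\rangle^2/(1+t)^{q_2})$ of $w$; the $\sigma$-norm alone only supplies linear $\langle v\rangle$-growth, so this time-velocity weight is essential. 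For $f$, pair \eqref{eqf} with $W^2 f$: $\CL_{\overline{M}}$ is coercive only on $\mathbf{P}_1 f$, but the macroscopic $\mathbf{P}_0 f$ is recovered algebraically from $g$ via the identity displayed just below \eqref{defP0}, so combined with the compact coupling $-\tfrac{1}{\eps\delta}\tfrac{\sqrt{\mu}}{\sqrt{\overline{M}}}L_B g$ (bounded thanks to the cutoff in $L_B$) the full $\|f\|_{\sigma,W}^2$ dissipation is obtained. The mixed derivative estimates $\partial^\alpha_\beta$ with $|\beta|\geq 1$ proceed by induction on $|\beta|$, commuting with $v_1\partial_x$ and $\partial_x\phi\,\partial_{v_1}$ in the standard way; the extra $\eps^2\delta^2$ factor in $\CE$ at the highest $\alpha=2$ level absorbs the derivative loss.

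\textbf{Main obstacle.} The central difficulty is the simultaneous $\tfrac{1}{\eps}$-singularity from the longwave scaling and the cubic velocity growth in the force term $\tfrac{1}{\eps}\partial_x\phi\,\partial_{v_1}$ produced by the $O(1)$ amplitude of $\bar{u}$ and $\bar\phi$ coming from the large-amplitude KdV profile. No Gaussian weight tuned to the solution-dependent $\overline{M}$ can absorb this, which is precisely what forces Caflisch's decomposition \eqref{2.46A}: split $h$ into a solution-\emph{independent} Gaussian part $\sqrt{\mu}g$ (where the cubic growth is fixed and the exponential velocity weight supplies the needed $\langle v\rangle^2$-dissipation) and an $\overline{M}$-aligned polynomial part $\sqrt{\overline{M}}f$ (where $W$ is purely polynomial and no weight-amplification of the force arises). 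The scaling window $\eps^{3/2-c_0}\leq\nu\leq\eps^{1/2+c_0}$ is exactly the range in which $\tfrac{1}{\eps\delta}=\tfrac{\nu}{\eps^{7/2}}$ is large enough to absorb the $\tfrac{1}{\eps}$-singular forcings (upper bound on $\nu$) and $\eps\delta$ is small enough not to destabilize the macroscopic estimate (lower bound on $\nu$); the borderline cases $\nu=c_1\eps^{1/2}$ and $\nu=\eps^{3/2}/c_1$ close thanks to the $c_1$-smallness. Matching powers of $\eps,\delta$ and the KdV-amplitude $\eta$ so every cross term fits under either $\sqrt{\CE}\,\CD$ or the $\eps^8 C(\eta)$ forcing budget is the most delicate bookkeeping.
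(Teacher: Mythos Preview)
Your overall architecture (macro-micro decomposition, Caflisch splitting, time-velocity weight on $g$, polynomial weight on $f$) matches the paper, but the closing mechanism you propose is not the one that works here, and there is a real gap.

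\textbf{The $\sqrt{\CE}\,\CD$ closure fails for large-amplitude KdV.} You claim the a priori estimate takes the form $\CE+\int\CD\leq C\CE(0)+C(\eta)\eps^8 T+C\sqrt{\CE}\,\CD$. This is not achievable because the KdV profile $\bar{u}_1=u_{10}+O(\eps)$ has $O(1)$ amplitude, so terms such as $(\partial_x(u_1\partial_x\widetilde\rho),\partial_x\widetilde\rho)$ or $(\widetilde u_1\partial_x\bar u_1,\cdot)$ contribute $C(\eta)\CE(t)$ with $C(\eta)$ \emph{not small}. There is no way to absorb these into $\sqrt{\CE}\,\CD$ or into the dissipation. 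The paper instead arrives at an inequality of the form
\[
\CE(t)+\tfrac12\int_0^t\CD\leq C\CE(0)+C(\eta)(t+1)\eps^4+(CA\sqrt{c_1}+C(\eta))\int_0^t\CE(s)\,ds
\]
and closes by Gronwall on the \emph{finite} interval $[0,T]$, choosing $A$ depending exponentially on $T$ and then $c_1$ small. This is precisely why the result is only up to time $T$ and why the constant $A$ is not universal. Your forcing budget $\eps^8$ is also too optimistic: the remainders $\eps^3 R_i$ paired with the solution give at best $C(\eta)\eps^5$ per unit time, and additional $C(\eta)(\eps^2\delta^2+\eps^3\delta+\eps\delta^3)$ terms arise from $\overline G$.

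\textbf{The highest-order estimate is not as you describe.} Applying $\partial_x^2$ directly to \eqref{eqf} does not close: the term $P_0(v_1\partial_x h)/\sqrt{\overline M}$ on the right raises the derivative count, and the $\eps^2\delta^2$ weighting alone does not cure this. The paper's remedy is to derive and work with the equation for $M/\sqrt{\overline M}+\overline G/\sqrt{\overline M}+f$ (obtained by subtracting $\sqrt\mu\times$\eqref{eqg} from the original equation \eqref{reVPL}), in which the derivative-raising term is absent; the second-order energy for $(\widetilde\rho,\widetilde u,\widetilde\theta,\widetilde\phi,f)$ is then extracted from $\|\partial_x^2(M/\sqrt{\overline M}+\overline G/\sqrt{\overline M}+f)\|^2$ via the expansion of $\partial_x^2 M$. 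This is the content of Lemmas~\ref{lem4.16}--\ref{le2ndenergy} and is not a routine commutator argument.

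A minor point: the cubic-in-$v$ growth that motivates Caflisch's decomposition does not come from the force term $\tfrac{1}{\eps}\partial_x\phi\,\partial_{v_1}$ (that produces only linear growth in $v_1$); it comes from the transport operator acting on $\sqrt{\overline M}$, i.e.\ the factor $(\partial_t+v_1\partial_x)\sqrt{\overline M}/\sqrt\mu$ in \eqref{eqg}, which scales like $v_1|v|^2$.
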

%\Red{To add: Add a statement as Corollary to present the convergence for the original un-rescaled VPL system. In terms of the original variables and functions, we may say: For any given $T>0$, over the time interval $0<t<\frac{T}{\eps^{3/2}}$, the following asymptotics holds:
%\begin{equation}
%F(t,x,v)= \frac{1}{\eps^3} M_{[\bar{\rho},\bar{u},\bar{\theta}](\eps^{3/2}t,\eps^{1/2}(x-t)) }(\frac{v}{\eps})+o(\eps).
%\end{equation}
%Recall \eqref{approx}. Then, think about it as
%\begin{equation}
%F(t,x,v)\sim_{\eps\to 0}\delta_1(v_1-\eps \bar{u}_{10})\delta_2(v_2)\delta_3(v_3).
%\end{equation}}
%Our theorem is in terms of the scaled VPL system \eqref{rVPL}. 

Regarding the original VPL system \eqref{VPL}, we immediately have the following result.

\begin{corollary}
Let $T>0$, $(\bar{\rho},\bar{u},\bar{\theta},\bar{\phi})$ be defined in Theorem \ref{TheoremKdV} and $(\eps,\nu)$ satisfy \eqref{nueps}. There exists $\nu_0>0$ such that for well prepared initial data and $0<\nu<\nu_0$, the Cauchy problem on the Vlasov-Poisson-Landau system \eqref{VPL} admits a unique  solution $(F(t,x,v),\phi(t,x))$  over $[0,\eps^{-\frac{3}{2}}T]\times \R\times \R^3$ satisfying
\begin{align*}
	\sup_{0\leq t\leq \eps^{-\frac{3}{2}}T}\{&\eps^{\frac{7}{4}}\|\frac{F(t,x,v)-\eps^{-3}M_{[\bar{\rho},\bar{u},\bar{\theta}](\eps^{\frac{3}{2}}t,\eps^{\frac{1}{2}}(x-t))}(\eps^{-1}v)}{\sqrt{\mu}(\eps^{-1}v)}\|_{L^2_xL^2_v}\notag\\
	&+\eps^{\frac{3}{2}}\|\frac{F(t,x,v)-\eps^{-3}M_{[\bar{\rho},\bar{u},\bar{\theta}](\eps^{\frac{3}{2}}t,\eps^{\frac{1}{2}}(x-t))}(\eps^{-1}v)}{\sqrt{\mu}(\eps^{-1}v)}\|_{L^\infty_xL^2_v}\}\leq C\eps^{2},
\end{align*}
and 
\begin{align*}
	\sup_{0\leq t\leq \eps^{-\frac{3}{2}}T}\{&\eps^{-\frac{3}{4}}\|\phi(t,x)-\eps\bar{\phi}(\eps^{\frac{3}{2}}t,\eps^{\frac{1}{2}}(x-t))\|_{L^2_x}\notag\\
	&+\eps^{-1}\|\phi(t,x)-\eps\bar{\phi}(\eps^{\frac{3}{2}}t,\eps^{\frac{1}{2}}(x-t))\|_{L_{x}^{\infty}}\}\leq C\eps^{2}.
\end{align*}
Moreover, for the macroscopic component $(\rho,u,\theta)(t,x)$ of $F(t,x,v)$, it holds that
\begin{align*}
\sup_{0\leq t\leq \eps^{-\frac{3}{2}}T}\{&\|(\rho,\eps^{-1}u,\eps^{-2}\theta)(t,x)-(\bar{\rho},\bar{u},\bar{\theta})(\eps^{\frac{3}{2}}t,\eps^{\frac{1}{2}}(x-t))\|_{L^2_x}\\
&+\eps^{-\frac{1}{4}}\|(\rho,\eps^{-1}u,\eps^{-2}\theta)(t,x)-(\bar{\rho},\bar{u},\bar{\theta})(\eps^{\frac{3}{2}}t,\eps^{\frac{1}{2}}(x-t))\|_{L^\infty_x}\}\leq C\eps^{\frac{7}{4}}.
%&\sup_{0\leq t\leq \eps^{-\frac{3}{2}}T}\{\|(\rho(t,x)-\bar{\rho}(\eps^{\frac{3}{2}}t,\eps^{\frac{1}{2}}(x-t)),\eps^{-1}u(t,x)-\bar{u}(\eps^{\frac{3}{2}}t,\eps^{\frac{1}{2}}(x-t)),\eps^{-2}\theta(t,x)-\bar{\theta}(\eps^{\frac{3}{2}}t,\eps^{\frac{1}{2}}(x-t)))\|\notag\\
%&\quad+\eps^{-\frac{1}{4}}\|(\rho(t,x)-\bar{\rho}(\eps^{\frac{3}{2}}t,\eps^{\frac{1}{2}}(x-t)),\eps^{-1}u(t,x)-\bar{u}(\eps^{\frac{3}{2}}t,\eps^{\frac{1}{2}}(x-t)),\eps^{-2}\theta(t,x)-\bar{\theta}(\eps^{\frac{3}{2}}t,\eps^{\frac{1}{2}}(x-t)))\|_{L^\infty_x}\}\leq C\eps^{\frac{7}{4}}.
\end{align*}
\end{corollary}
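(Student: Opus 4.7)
The plan is to deduce the Corollary directly from Theorem \ref{TheoremKdV} by inverting the longwave scaling \eqref{scaling} and the shift \eqref{shift} and tracking Jacobians. Concretely, let $(F^{re},\phi^{re})$ be the solution of the rescaled system \eqref{rVPL} on $[0,T]\times\R\times\R^{3}$ furnished by Theorem \ref{TheoremKdV}. I would define
\[
F(t,x,v):=\eps^{-3}F^{re}\bigl(\eps^{3/2}t,\eps^{1/2}(x-t),\eps^{-1}v\bigr),\qquad \phi(t,x):=\eps\,\phi^{re}\bigl(\eps^{3/2}t,\eps^{1/2}(x-t)\bigr),
\]
which by construction solves \eqref{VPL} on $[0,\eps^{-3/2}T]$ and remains non-negative.

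The remainder of the argument is bookkeeping under the change of variables $\tilde t=\eps^{3/2}t$, $\tilde x=\eps^{1/2}(x-t)$, $\tilde v=\eps^{-1}v$, where $dx=\eps^{-1/2}d\tilde x$ and $dv=\eps^{3}d\tilde v$. With $\overline M$ as in \eqref{2.38A}, one checks
\[
F(t,x,v)-\eps^{-3}M_{[\bar\rho,\bar u,\bar\theta](\tilde t,\tilde x)}(\tilde v)=\eps^{-3}(F^{re}-\overline M)(\tilde t,\tilde x,\tilde v),\quad \phi(t,x)-\eps\bar\phi(\tilde t,\tilde x)=\eps\,\widetilde\phi(\tilde t,\tilde x),
\]
while computing moments from \eqref{DefMacro} yields $\rho=\rho^{re}$, $u=\eps u^{re}$, $\theta=\eps^{2}\theta^{re}$, so that $(\rho,\eps^{-1}u,\eps^{-2}\theta)(t,x)-(\bar\rho,\bar u,\bar\theta)(\tilde t,\tilde x)=(\widetilde\rho,\widetilde u,\widetilde\theta)(\tilde t,\tilde x)$ with the perturbations of \eqref{Defpert}. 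A direct Jacobian computation then gives, for example,
\[
\Bigl\|\tfrac{F-\eps^{-3}M}{\sqrt\mu(\eps^{-1}v)}\Bigr\|_{L^2_xL^2_v}=\eps^{-7/4}\Bigl\|\tfrac{F^{re}-\overline M}{\sqrt\mu}\Bigr\|_{L^2_{\tilde x}L^2_{\tilde v}},\quad \Bigl\|\tfrac{F-\eps^{-3}M}{\sqrt\mu(\eps^{-1}v)}\Bigr\|_{L^\infty_xL^2_v}=\eps^{-3/2}\Bigl\|\tfrac{F^{re}-\overline M}{\sqrt\mu}\Bigr\|_{L^\infty_{\tilde x}L^2_{\tilde v}},
\]
together with $\|\phi-\eps\bar\phi\|_{L^2_x}=\eps^{3/4}\|\widetilde\phi\|_{L^2_{\tilde x}}$, $\|\phi-\eps\bar\phi\|_{L^\infty_x}=\eps\|\widetilde\phi\|_{L^\infty_{\tilde x}}$, $\|(\widetilde\rho,\widetilde u,\widetilde\theta)\|_{L^2_x}=\eps^{-1/4}\|(\widetilde\rho,\widetilde u,\widetilde\theta)\|_{L^2_{\tilde x}}$ and $L^\infty_x=L^\infty_{\tilde x}$ for the fluid triple.

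All the rescaled-variable right-hand sides are $O(\eps^{2})$: the $L^2_{\tilde x}$ bounds on $\widetilde\phi,\widetilde\rho,\widetilde u,\widetilde\theta$ follow from $\sup_{t}\mathcal E(t)\leq A\eps^{4}$ via the definition \eqref{DefE}; the $L^2_{\tilde x}L^2_{\tilde v}$ and $L^\infty_{\tilde x}L^2_{\tilde v}$ bounds on $(F^{re}-\overline M)/\sqrt\mu$ are exactly \eqref{2.63A}; and the $L^\infty_{\tilde x}$ versions of the fluid quantities are recovered from the 1D Sobolev embedding $\|h\|_{L^\infty_{\tilde x}}\lesssim\|h\|_{L^2_{\tilde x}}^{1/2}\|\partial_{\tilde x}h\|_{L^2_{\tilde x}}^{1/2}$, since $\mathcal E(t)$ also controls one $\tilde x$-derivative. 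Substituting these estimates into the Jacobian identities reproduces exactly the $\eps$-powers stated in the Corollary. I do not anticipate a genuine obstacle: this is a transcription of Theorem \ref{TheoremKdV} into the original variables via dimensional analysis. The only point that requires minor care is that the comparison in the Corollary is against the full expansion $\eps\bar\phi$ and $(\bar\rho,\bar u,\bar\theta)$ from \eqref{approx} rather than the pure KdV profile, so that the natural $O(\eps^{2})$ accuracy of the perturbation $(\widetilde\rho,\widetilde u,\widetilde\theta,\widetilde\phi)$ is preserved without being degraded to $O(\eps)$ by the higher-order correctors.
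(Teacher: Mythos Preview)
Your proposal is correct and is precisely the argument the paper has in mind: the Corollary is stated without proof as an immediate consequence of Theorem~\ref{TheoremKdV}, obtained by inverting the scalings \eqref{scaling}--\eqref{shift} and tracking the Jacobians exactly as you describe. Your computations of the scaling exponents for each norm are all accurate, and your use of the 1D Sobolev embedding to upgrade the $L^2_{\tilde x}$ control of $(\widetilde\rho,\widetilde u,\widetilde\theta,\widetilde\phi)$ to $L^\infty_{\tilde x}$ via the first-derivative bounds in $\mathcal E(t)$ is the right way to close the $L^\infty$ estimates.
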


Our estimates hold when the KdV solution becomes a constant state. In this case, $C(\eta)=0$  with $\eta=0$, which leads to a global solution to VPL system. However, we need to modify our energy and dissipation functionals as in \eqref{DefEG} and \eqref{DefDG} given in the proof later.

\begin{theorem}\label{TheoremGlobal}
	Let $(\bar{\rho},\bar{u},\bar{\theta},\bar{\phi})=(1,0,\frac{3}{2},0)$. For any $0<c_0<1/2$, there are small constants $c_1>0$ and $\nu_0>0$ independent of $\eps$ and $\nu$ such that for
	\begin{align}\label{Gnueps}
		\eps^{3-c_0}\leq \nu\leq\eps^{\frac{5}{2}+c_0}\ \text{or}\ \nu=c_1\eps^{\frac{5}{2}}\ \text{or}\ \nu=\frac{1}{c_1}\eps^{3},
	\end{align} and $\nu\leq \nu_0$, if it holds that $F_0(x,v)\geq 0$ and
	\begin{align*}%\label{globalinitial}
		\mathcal{E}(0)\leq \frac{1}{C_1}\eps^{4+\frac{3}{4}},
	\end{align*} 
where the constant $C_1>1$ is independent of $\eps$,  then the Cauchy problem on the Vlasov-Poisson-Landau system \eqref{rVPL} admits a unique global solution $(F(t,x,v),\phi(t,x))$ over $[0,\infty]\times \R\times \R^3$ satisfying
\begin{equation}\label{globalenergy}
\mathcal{E}(t)+\int^{t}_{0}\mathcal{D}(s) \,ds
\leq 2\eps^{4+\frac{3}{4}},\quad 0\leq t<\infty.
\end{equation}
Here $\mathcal{E}(t)$ and $\mathcal{D}(t)$ are defined in \eqref{DefEG} and \eqref{DefDG} respectively.
\end{theorem}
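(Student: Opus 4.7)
\medskip

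\noindent\textbf{Proof proposal.} The plan is to run a standard continuity argument, but with two structural simplifications and one essential new ingredient compared with Theorem \ref{TheoremKdV}. Because $(\bar{\rho},\bar{u},\bar{\theta},\bar{\phi})=(1,0,\tfrac{3}{2},0)$, every remainder $R_i$ in \eqref{perturbeq} vanishes, $\overline{M}$ collapses to a fixed global Maxwellian, and $\overline{G}\equiv 0$, so the entire forcing produced by the KdV profile disappears from the $(\widetilde{\rho},\widetilde{u},\widetilde{\theta},\widetilde{\phi},g,f)$ system. On the other hand, the range \eqref{Gnueps} forces $\delta=\eps^{7/2}/\nu$ to sit between $\eps^{1/2-c_0}$ and $\eps^{1+c_0}$, which is strictly larger than in Theorem \ref{TheoremKdV}; this is precisely the weak-collision regime where the $\tfrac{1}{\eps\de}L_M$ dissipation is weakest. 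I would first set up the a priori assumption $\CE(t)\leq 2\eps^{4+3/4}$ on a maximal interval $[0,T^*)$ and aim at an estimate of the schematic form
\begin{equation*}
\frac{d}{dt}\CE(t)+c_0\CD(t)\;\leq\;C\,\CE(t)^{1/2}\CD(t)+C\eps\de\cdot(\text{lower-order source}),
\end{equation*}
with $C$ independent of $\eps,\de$, and then integrate using $\CE(0)\leq \eps^{4+3/4}/C_1$.

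The energy estimates themselves proceed along the macro--micro scheme of Section \ref{secKdV}, but now with no $C(\eta)$ terms on the right-hand side. Macroscopic dissipation for $\pa_x(\widetilde{\rho},\widetilde{u},\widetilde{\theta},\widetilde{\phi})$ is extracted as usual from \eqref{perturbeq} by suitable linear combinations; the Poisson equation gives $\eps\|\pa_x^2\widetilde{\phi}\|^2$ control via the standard integration-by-parts trick. For the microscopic part, I would apply $\pa^\al_\be$ with $|\al|+|\be|\leq 2$ to the $g$-equation \eqref{eqg} and the $f$-equation \eqref{eqf}, take the weighted inner product against $w^2\pa^\al_\be g$ and $W^2\pa^\al_\be f$ respectively, and use the coercivity estimates from Section \ref{secPre} for $L_D$ and $\CL_{\overline{M}}$. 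The Caflisch splitting $h=\sqrt{\overline{M}}f+\sqrt{\mu}g$ is what allows the $g$-piece to carry only the polynomial velocity tail $\sqrt{\mu}$ needed to close the $v\cdot\pa_x\phi\pa_{v_1}$ terms arising from the Coulomb field; the $f$-piece inherits the full Maxwellian decay.

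The decisive new element is the modification of the velocity weight to depend on the solution itself. The time-decay factor $\exp(q_1(1+t)^{-q_2}\langle v\rangle^2/2)$ in \eqref{Defw}, when differentiated in $t$, yields a negative contribution that in the localized setting of Theorem \ref{TheoremKdV} absorbs the cubic velocity growth from $\tfrac{1}{\eps}\pa_x\phi\,\pa_{v_1}$. In the present global regime no such background time decay is available. Following the remark before \eqref{defw}, I would instead replace $w$ by a weight of the form
\begin{equation*}
w_{*}(\al,\be)(t,v)\;=\;\langle v\rangle^{2(l-\al-|\be|)}\exp\!\left(q_1\frac{\langle v\rangle^2}{2(1+t)^{q_2}}\right)\exp\!\left(q_3\,\Phi[\widetilde{\phi},g,f](t)\,\langle v\rangle\right),
\end{equation*}
where $\Phi[\cdot](t)$ is a nonnegative functional of the current solution comparable to $\sqrt{\CE(t)}$ or to $\|\pa_x\widetilde{\phi}\|_{L^\infty}$. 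Differentiating $w_{*}^2$ in time then produces, in addition to the usual $-q_1 q_2 (1+t)^{-q_2-1}\langle v\rangle^2 |\pa^\al_\be g|^2$ piece, a quartic dissipation of the shape $|\dot{\Phi}|\langle v\rangle w_{*}^2|\pa^\al_\be g|^2$ which is precisely what is needed to absorb the most singular transport terms $\tfrac{1}{\eps}\pa_x\phi\,\pa_{v_1}g$ when $\phi$ is of order $O(1)$ in $\eps$. This quartic-dissipation gain is what allows us to reach the most singular Coulomb potential globally in time with only $\nu\gtrsim \eps^3$.

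The hardest step will be reconciling the $\eps$-counting in the weighted microscopic estimates with the $\delta$-counting coming from the Landau dissipation. Each transport term brings a $1/\eps$, each macroscopic correction exchange between $g$ and $f$ (through $L_D$, $L_B$ and $(M-\overline{M})/\sqrt{\mu}$) brings additional $1/(\eps\de)$ or $\eps\de$ factors, and the Poisson coupling forces $\eps\|\pa_x\widetilde{\phi}\|^2$ to be tracked separately. Balancing these with the quartic gain provided by $w_{*}$, under the hypothesis $\nu\in[\eps^{3-c_0},\eps^{5/2+c_0}]$, is what dictates the threshold $\CE(0)\leq \eps^{4+3/4}/C_1$: any weaker smallness would lose the continuity-argument closure. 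Once the differential inequality is established, integrating gives \eqref{globalenergy}, and the nonnegativity of $F$ is preserved by the standard argument for the Landau equation with sufficiently regular coefficients, extending the local solution of Section \ref{SecProof} to $t=\infty$.
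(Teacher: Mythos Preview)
Your high-level plan is right (continuity argument, macro--micro with Caflisch splitting, all $C(\eta)$ terms dropped), but three concrete mechanisms that the paper relies on are missing or mis-specified, and without them the closure does not go through.

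First, your weight $w_*$ is not what is needed. The paper's actual weight \eqref{defw} drops the $\langle v\rangle^2/(1+t)^{q_2}$ factor entirely and uses
\[
\textbf{w}(\al,\be)(t,v)=\langle v\rangle^{2(l-\al-|\be|)}\exp\Big((q_1-q_2{\textstyle\int_0^t} q_3(\tau)\,d\tau)\langle v\rangle\Big),\qquad
q_3(t)=\tfrac{\de}{\eps}\|\pa_t\widetilde\phi\|_{L^\infty_x}^2+\tfrac{\de}{\eps^3}\|\pa_x\widetilde\phi\|_{L^\infty_x}^2,
\]
with $q_2=\eps^{-1/8}$. The exponent is linear in $\langle v\rangle$ and the time coefficient is a \emph{decreasing} functional of the solution (an integral of $q_3\geq 0$), so $\pa_t\textbf{w}^2=-2q_2q_3(t)\langle v\rangle\textbf{w}^2$ is automatically a good sign. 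Your $\Phi\sim\sqrt{\CE(t)}$ or $\|\pa_x\widetilde\phi\|_{L^\infty}$ is a pointwise quantity whose derivative has no sign, so writing $|\dot\Phi|$ does not make it dissipation. Moreover, the a~priori assumption must include $\int_0^T\CD(s)\,ds$ (see \eqref{Gapriori}) precisely so that $q_2\int_0^\infty q_3\leq CA\eps^{1/8}<q_1$, keeping $\textbf{w}$ bounded below.

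Second, the term $\tfrac{1}{\eps}\pa_x\phi\cdot\tfrac{v_1}{4}g$ (the $I_4$ part of \eqref{phif1}) grows like $|v_1|$ and cannot be absorbed by the weight dissipation alone with $q_2=\eps^{-1/8}$: a direct bound would force $q_2\gtrsim\eps^{-5/8}$, which destroys the positivity of the exponent. The paper instead tests the $g$-equation against $\textbf{w}^2 e^{\frac{1}{2}\eps^{-1}\widetilde\phi}\pa^\al_\be g$ (Lemma~\ref{Gleg}). Integration by parts on $v_1\pa_x g$ then produces exactly $-\tfrac{1}{4\eps}(v_1\pa_x\widetilde\phi\, g,\textbf{w}^2 e^{\frac{1}{2}\eps^{-1}\widetilde\phi}g)$, cancelling the worst piece of $I_4$; the remaining pieces from $\pa_t,\pa_x$ acting on $e^{\frac{1}{2}\eps^{-1}\widetilde\phi}$ are what $q_3$ is designed to match. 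This $e^{\phi/\eps}$ trick adapted to the singular $1/\eps$ prefactor is essential and absent from your outline.

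Third, for the zeroth-order fluid energy you write ``extracted as usual from \eqref{perturbeq}''. The direct method of Section~\ref{secKdV} leaves trilinear terms like $(\widetilde u_1\pa_x\widetilde\rho,\widetilde\rho)$ that cannot be bounded by $\int\CD$ (there is no zero-order fluid dissipation in $\CD$) and give a Gronwall factor that is not integrable on $[0,\infty)$. The paper replaces this step by the relative entropy--entropy flux pair around $\overline M$ (Lemma~\ref{le0fluid}), producing \eqref{Gzerofluid} with no linear-in-$t$ source. This is a separate structural ingredient, not a refinement of the Section~\ref{secKdV} argument.

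Finally, a minor point: here $\bar\phi=0$, so $\phi=\widetilde\phi=O(\eps^{2+1/8})$, not $O(1)$; the obstruction is purely the $1/\eps$ coefficient in front, not the size of $\phi$.
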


We also have the following corollary for the original VPL system \eqref{VPL}.
\begin{corollary}
Let $\eps$ and $\nu$ satisfy \eqref{Gnueps}. There exists $\nu_0>0$ such that for well prepared initial data and $0<\nu<\nu_0$, the Cauchy problem on the Vlasov-Poisson-Landau system \eqref{VPL} admits a unique global solution $(F(t,x,v),\phi(t,x))$  over $[0,\infty)\times \R\times \R^3$ satisfying
	\begin{align*}
		\sup_{0\leq t<\infty}\{&\eps^{\frac{7}{4}}\|\frac{F(t,x,v)-\eps^{-3}M_{[1,0,3/2]}(\eps^{-1}v)}{\sqrt{\mu}(\eps^{-1}v)}\|_{L^2_xL^2_v}\notag\\
		&+\eps^{\frac{3}{2}}\|\frac{F(t,x,v)-\eps^{-3}M_{[1,0,3/2]}(\eps^{-1}v)}{\sqrt{\mu}(\eps^{-1}v)}\|_{L^\infty_xL^2_v}\}\leq C\eps^{\frac{19}{8}},
	\end{align*}
	and 
	\begin{align*}
		\sup_{0\leq t<\infty}\{&\eps^{-\frac{3}{4}}\|\phi(t,x)\|_{L^2_x}+\eps^{-1}\|\phi(t,x)\|_{L_{x}^{\infty}}\}\leq C\eps^{\frac{19}{8}}.
	\end{align*}
Moreover, for the macroscopic component $(\rho,u,\theta)(t,x)$ of $F(t,x,v)$, it holds that
\begin{align*}
\sup_{0\leq t<\infty}\{\|(\rho,\eps^{-1}u,\eps^{-2}\theta)(t,x)-(1,0,\frac{3}{2})\|_{L^2_x}+\eps^{-\frac{1}{4}}\|(\rho,\eps^{-1}u,\eps^{-2}\theta)(t,x)-(1,0,\frac{3}{2})\|_{L^\infty_x}\}\leq C\eps^{\frac{17}{8}}.
\end{align*}
\end{corollary}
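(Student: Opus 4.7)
The plan is to run a standard continuity/bootstrap argument on an energy–dissipation inequality of the form $\mathcal{E}(t) + \int_0^t \mathcal{D}(s)\,ds \lesssim \mathcal{E}(0) + (\text{cubic remainders})$, exactly in the spirit of Theorem \ref{TheoremKdV}, but exploiting the massive simplifications afforded by $(\bar\rho,\bar u,\bar\theta,\bar\phi) = (1,0,\tfrac32,0)$. Under this constant background, $\overline{M}$ coincides with the global Maxwellian $M_{[1,0,3/2]}$, all its space–time derivatives vanish, $\overline{G}\equiv 0$ in \eqref{DefbarG}, the linearized operator $\mathcal{L}_{\overline{M}}$ is time-independent with spectral gap on $\overline{\mathcal{N}}^\perp$, and all the remainders $R_1,\dots,R_4$ in \eqref{perturbeq} vanish together with $C(\eta)$ in \eqref{boundkdv}. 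Thus the system \eqref{perturbeq}–\eqref{eqf} collapses to a purely perturbative problem around a global equilibrium with no KdV-driven forcing, and the target is to propagate the smallness $\mathcal{E}(t)\leq 2\eps^{4+3/4}$ from initial time to all times.

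The crucial new ingredient, as flagged in the abstract, is to upgrade the weight \eqref{Defw} to a \emph{solution-dependent} weight of the schematic form
\begin{equation*}
w(\alpha,\beta)(t,v) = \langle v\rangle^{2(l-\alpha-|\beta|)}\exp\!\Bigl(\frac{q_1\langle v\rangle^2}{(1+t)^{q_2}} + q_3\bigl(\lambda - \!\int_0^t\|\pa_x\widetilde\phi(s)\|_{H^k_x}\,ds\bigr)\langle v\rangle\Bigr),
\end{equation*}
or a close variant whose time derivative produces a pointwise negative term of order $|v|$ (hence $|v|^2 |wf|^2$ against $wf$) that is large precisely when the solution is large. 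This is dictated by the singular Coulomb scaling: since $\sigma_{ij}(v)\sim\langle v\rangle^{-3}$, the norm $\|\cdot\|_{\sigma,w}$ in \eqref{boundlandaunorm} controls only $w\langle v\rangle^{-1/2}$-weighted derivatives, whereas the force term $\frac{1}{\eps}\pa_x\phi\,\pa_{v_1}$ in \eqref{eqf}--\eqref{eqg}, after integration by parts against $w^2f$, generates commutators of order $|v|w^2 f^2$ that cannot be absorbed by the ordinary dissipation $\|f\|_{\sigma,w}^2$. The extra quadratic/quartic dissipation harvested from $\pa_t w$ is exactly what absorbs these commutators, which is why this device is unavoidable for the most singular Coulomb potential.

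The execution then proceeds level by level. First, derive the fluid energy estimate from \eqref{perturbeq}: the $-\frac{1}{\eps}\pa_x$ flux integrates to zero, and the Poisson equation gives control of $\|\widetilde\phi\|^2 + \eps\|\pa_x\widetilde\phi\|^2$ by $\|\widetilde\rho\|^2$; the cancellation between $\frac{1}{\eps}\rho\pa_x\widetilde\phi$ and the linearization of $-\eps^2\pa_x^2\widetilde\phi+\eps\widetilde\phi=\widetilde\rho$ produces the dissipation $\eps\delta\sum_{1\leq\alpha\leq 2}\|\pa^\alpha_x(\widetilde\rho,\widetilde u,\widetilde\theta,\widetilde\phi)\|^2$ encoded in $\mathcal{D}(t)$. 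Second, run the weighted kinetic estimate for $g$ against \eqref{eqg} using the good dissipative property of $L_D$ in Lemma \ref{LemmaL}, absorbing the $L_B$ coupling by smallness of $\chi_{|v|\leq A_2}$ terms paired with the extra quartic dissipation. Third, run the weighted kinetic estimate for $f$ against \eqref{eqf} using the spectral gap of $\mathcal{L}_{\overline{M}}$ on $\overline{\mathcal{N}}^\perp$, and recover the macroscopic dissipation of $\mathbf{P}_0 f$ through its explicit expression in $g$ from the identity just after \eqref{defP0}. Fourth, iterate to higher derivatives $\pa^\alpha_\beta$ with $\alpha+|\beta|\leq 2$ using the microscopic nature of $h$ to close the hierarchy. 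The scaling window \eqref{Gnueps} corresponds to $\eps^{-1/2-c_0}\lesssim\delta\lesssim\eps^{1/2-c_0}$, which is the regime where the prefactors $\frac{1}{\eps\delta}$ (dissipation), $\eps\delta$ (viscosity), and $\eps,\delta,\eps/\delta$ (cross terms) balance so that all nonlinear errors are of size $\lesssim\eps^{3/4}\mathcal{D}(t)+\eps^{4+3/4}$.

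The main obstacle is exactly the interplay between the $\frac{1}{\eps}\pa_x\phi\,\pa_{v_1}$ field term and the degenerate Coulomb dissipation in velocity weighted norms, both for $f$ in \eqref{eqf} and for $g$ in \eqref{eqg}; this is the mechanism that forces the solution-dependent weight. Once this commutator is tamed, the rest is a routine (if lengthy) adaptation of the macro–micro/Caflisch framework of Theorem \ref{TheoremKdV} with all KdV forcing switched off, and the standard continuity argument closes the a priori bound $\mathcal{E}(t)+\int_0^t\mathcal{D}(s)\,ds\leq 2\eps^{4+3/4}$ globally, yielding \eqref{globalenergy}.
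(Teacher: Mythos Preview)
Your sketch correctly targets Theorem~\ref{TheoremGlobal} for the rescaled system, from which the Corollary follows by undoing the scaling~\eqref{scaling}; the overall architecture (constant background kills $\overline{G}$, $R_i$, $C(\eta)$, then close the macro--micro/Caflisch scheme globally with a solution-dependent velocity weight) matches Section~\ref{SecGlobal}.

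The genuine gap is in your handling of the linear-in-$v$ piece $\frac{1}{\eps}\pa_x\widetilde\phi\cdot\frac{v_1}{4}g$ coming from $\pa_{v_1}\sqrt\mu/\sqrt\mu$. You claim the extra dissipation from $\pa_t\textbf{w}$ absorbs it, but that dissipation is $q_2q_3(t)\|\langle v\rangle^{1/2}g\|_{\textbf{w}}^2$ with $q_3$ \emph{quadratic} in $\pa_x\widetilde\phi$, whereas the bad term contributes $\frac{1}{\eps}\|\pa_x\widetilde\phi\|_{L^\infty}\|\langle v\rangle^{1/2}g\|_{\textbf{w}}^2$, which is \emph{linear} in $\pa_x\widetilde\phi$; these are not comparable and no interpolation closes. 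The paper's remedy (Lemma~\ref{Gleg}) is to test against $\textbf{w}^2 e^{\frac12\eps^{-1}\widetilde\phi}\pa^\al_\be g$ with an additional rescaled Guo-type spatial weight $e^{\frac12\eps^{-1}\widetilde\phi}$: integrating the transport term $(v_1\pa_x g,\textbf{w}^2 e^{\frac12\eps^{-1}\widetilde\phi}g)$ by parts in $x$ then produces exactly $-\frac{1}{4\eps}(v_1\pa_x\widetilde\phi\, g,\textbf{w}^2 e^{\frac12\eps^{-1}\widetilde\phi}g)$, which cancels the bad term identically; see \eqref{G1gtran0}, \eqref{GI4}, \eqref{Gphif}. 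This device is absent from your proposal and is not optional. Smaller issues: your schematic weight keeps the $(1+t)^{-q_2}\langle v\rangle^2$ piece from \eqref{Defw}, which the paper drops in the global case because its dissipation vanishes as $t\to\infty$ (only the $\langle v\rangle$-exponential in \eqref{defw} survives, with $q_2=\eps^{-1/8}$); the zero-order fluid estimate is done via an entropy--entropy flux pair (Lemma~\ref{le0fluid}) rather than a direct $L^2$ estimate, precisely to avoid terms like $(u_1\pa_x\widetilde\rho,\widetilde\rho)$ that lack zero-order dissipation; and your translation of the $\de$-window is off, since $\de=\eps^{7/2}/\nu$ with \eqref{Gnueps} gives $\eps^{1-c_0}\lesssim\de\lesssim\eps^{1/2+c_0}$ as in \eqref{Gdeeps}.
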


\begin{remark}
	Our framework is supposed to be applicable for proving similar results for the general VPL system of ions where the electronic field satisfies
	$
	-\partial^{2}_{x}\phi=\rho-e^{\phi},
	$
	or, after scaling, $
	-\eps^2\partial^{2}_{x}\phi=\rho-e^{\eps\phi}.
	$ The approach for handling this general relation strongly depends on the current case for the linearized Boltzmann relation, and the way for bounding remainders beyond the linear part is also standard but requires more calculations, see the study on VPL and Euler-Poisson systems for ions \cite{Bastdos-Golse2018,Duan-Yang-Yu-VPL,Guo-Pausader}. To catch the key points of the proof, we only consider the  linearized Boltzmann relation in this paper for brevity.
\end{remark}

\begin{remark}\label{rk.beta}
The restriction condition \eqref{nueps} or \eqref{Gnueps} is employed to ensure that $\nu\sim \eps^\beta$ with $0<\beta< 7/2$, maintaining the dominance of collisions in the rescaled VPL system \eqref{r1VPL}. In the significantly weaker collision regime where $\beta>7/2$, the Landau damping effect may prevail, making the justification of KdV from VPL in this regime more subtle and challenging. This will be addressed in our future work.
%The restriction condition \eqref{nueps} or \eqref{Gnueps} is used to ensure $\nu\sim \eps^\beta$ with $0<\beta<7/2$ such that the collisions in the rescaled VPL system \eqref{r1VPL} are still dominated. In the much weaker collision regime $\beta>7/2$, the Landau damping effect could prevail and the problem for justification of KdV from VPL in such regime becomes more subtle and challenging. This will be left for our future work.
\end{remark}

\subsection{Ideas of the proof}
The KdV limit for VPL system is determined by the Euler-Poisson structure, which is observed for either collisionless case (Vlasov-Poisson) in \cite{Han-Kwan}, or the collision dominated cases (VPL or Euler-Poisson) in \cite{DYY,Guo-Pu}. Due to the complexity of Landau collisional operator, in comparison with Vlasov-Poisson system, we choose the macro-micro decomposition around local Maxwellian to capture the Euler-Poisson dynamics, as well as the microscopic part dominated by the linearized Landau operator. The lower order macroscopic energy can be estimated by the Navier-Stokes-Poisson type structure with small viscosity in terms of $(\eps,\de)$. On the other hand, we observe from \eqref{reVPL} that there are terms with singularities in small parameter, such as $\eps^{-1}\partial_{x}\phi\partial_{v_{1}}F,\eps^{-1}\de^{-1}Q(F,F)$, or the extra term $\eps^{-1}\partial_{x}F$ compared to VPL system. Moreover, the amplitude for our bulk velocity $u$ and electronic field $\phi$ could be large, since they are very close to the KdV solution and we have dropped any smallness assumption on the KdV solutions, which makes the local Maxwellian $M$ far away from any global Maxwellian $\mu$ in $L^\infty$ sense. In what follows we explain the important ideas in this paper on how to solve those troubles.

\subsubsection{Decomposition on microscopic quantity for large amplitude reference state}	
In the sense of taking the limit, the macroscopic part $M$ of our solution $F$ is close to $(2\pi)^{-3/2}\exp\{-|v-u_0|^2/2\}$, where $u_0=(u_{10},0,0)$ with $u_{10}$ being the bounded solution to the KdV equation with possibly large amplitude. If we still use the global Maxwellian $\mu$ as the reference state for linearization like the previous work on the hydrodynamic limit problem for kinetic equation in \cite{Duan-Yang-Yu,Guo-Jang-Jiang-2010,Liu-Yang-Yu} and so on, we will find that in our estimates for the microscopic quantity $h$, there should occur some terms in the form of $(\eps\de)^{-1}(\Ga(M-\mu,\pa^\al_xh),w^2\pa^\al_xh)$, which is difficult to control 
using
$$(\eps\de)^{-1}|(\Ga(M-\mu,\pa^\al_xh),w^2\pa^\al_xh)|
\lesssim \|M-\mu\|_{L^\infty_xL^2_v}(\eps\de)^{-1}\|w\pa^\al_xh\|_{\si}^2\lesssim \|M-\mu\|_{L^\infty_xL^2_v}\CD(t)$$ with the dissipation norm $\CD(t)$ 
since 
$$
\|M-\mu\|_{L^\infty_xL^2_v}\lesssim \|(\bar{\rho}-1,\bar{u},\bar{\theta}-\frac{3}{2})\|_{L^\infty}+\cdots
$$
could be very large due to our assumption that the KdV solution $\bar{u}$ is only bounded but not necessarily small. To overcome this difficulty, we choose the local Maxwellian $\overline{M}$ to be the reference state for linearization where it is close enough to $M$ with an $O(\eps)$ rate, which controls the nonlinear terms mentioned above by $$
(\eps\de)^{-1}|(\Ga(M-\overline{M},\pa^\al_xh),w^2\pa^\al_xh)|
\leq C\|(\rho-\bar{\rho},u-\bar{u},\theta-\bar{\theta})\|_{L^\infty}(\eps\de)^{-1}\|w\pa^\al_xh\|_{\si}^2
\lesssim O(\eps)\CD(t).
$$ 
However, in this setting, to obtain the hypocoercivity for the symmetric linear operator, it is expected that our solution has the form $F=M+\sqrt{\overline{M}}f+\cdots,$ which lead to another well-known difficulty that in derivation for the equation of $f$, when the transport operator $\pa_t+v_1\pa_x$ acts on $\sqrt{\overline{M}}$, it produces the growth of $v_1|v|^2$. In order to cancel this growth in $v$, we make use of the Caflisch's decomposition on the microscopic part such that the solution becomes $F=M+\sqrt{\overline{M}}f+\sqrt{\mu}g+\cdots,$ and we need to construct both equations \eqref{eqg} and \eqref{eqf} for $f$ and $g$. The construction should have two main purposes. The first one is to absorb the polynomial growth in $v$, and the second is to obtain the dissipation for both $f$ and $g$. The first goal is achieved by choosing the global Maxwellian $\mu$ to be much smaller than $\overline{M}$ such that when we put the growth of $v$ with the form $(\pa_t+v_1\pa_x)\sqrt{\overline{M}}f$ in the equation of $g$ in \eqref{eqg}, it can be controlled by 
$$
|(\frac{(\pa_t+v_1\pa_x)\sqrt{\overline{M}}}{\sqrt{\mu}}f,g)|\lesssim O(\eps)\CD(t).
$$
For the second goal, it is natural to see the linear part of $f$ should be determined by the symmetric, positive linearized operator $-\CL_{\overline{M}}f:=-\Gamma_{\overline{M}}(\sqrt{\overline{M}},f)-\Gamma_{\overline{M}}(f,\sqrt{\overline{M}})$ which gives the coercivity. But the linearized operator for $g$ would be $-\frac{1}{\sqrt\mu}(Q(M,\sqrt{\mu}g)+Q(\sqrt{\mu}g,M))$, which is neither positive nor symmetric. To obtain the dissipation for $g$, we decompose it into two parts
$$
	\frac{1}{\sqrt\mu}(Q(M,\sqrt{\mu}g)+Q(\sqrt{\mu}g,M))=L_Dg+L_Bg,
$$
where $L_D$ is dissipative and $L_B$ preserves only bounded velocity part. Then we let the linear behavior of $g$ be described by $L_D$ and move $L_B g$ to the equation of $f$ such that this part can be bounded by 
$$
\frac{1}{\eps\de}|(\frac{\sqrt{\mu}}{\sqrt{\overline{M}}}L_Bg, f)|\leq \ka\frac{1}{\eps\de}\|f\|_{\sigma}^2+C_\ka\frac{1}{\eps\de}\|g\|_{\sigma}^2.
$$
The first term, which is contained in the dissipation functional, can be chosen small and the second one is the dissipation of $g$ which can be absorbed by taking linear combination with the estimate for $g$. Such decomposition method to cancel the increase is inspired by Caflisch \cite{Caflisch} in the study of Euler limit for Boltzmann equation without ensuring the positivity of the solution, which we fix the scheme to guarantee. See also \cite{DL-arma,DL-cmaa,DL-hard} for kinetic shear flow where they use the similar thoughts to control the growth of velocity. In comparison, our structure of the decomposed microscopic equations for the rescaled Vlasov-Poisson-Landau system produces more trouble terms such as the singular in $\eps$ electronic term $\eps^{-1}\partial_x\phi\partial_{v_{1}}(\sqrt{\mu}g+\sqrt{\overline{M}}f)$ which also gives extra polynomial in $v$ and the term that increases the order of $x$ derivative $P_{0}(v_{1}\partial_x[\sqrt{\overline{M}}f]+v_{1}\sqrt{\mu}\partial_xg)$. It is also tricky to determine the position of those terms in our decomposition and control them. More importantly, for the equations of $(g,f)$, it is difficult to obtain the highest order estimate due to the aforementioned term with extra $\pa_x$ that always gives some $\al+1$ order derivative term when we apply $\pa^\al_x$ to the equation and try to get the $\al$ order derivative estimate. Thus, we need to develop other strategies to solve these problems.

\subsubsection{Construction for the weight functions}	
The choice of weight functions to bound the microscopic quantities is subtle. In view of our decomposition for the microscopic part $G=\overline{G}+\sqrt{\overline{M}}f+\sqrt{\mu}g$, the electronic term in the equation \eqref{eqg} will produce a term
$$
\frac{1}{\eps}\sqrt{\mu}^{-1}\partial_x\phi\partial_{v_{1}}(\sqrt{\mu}g+\sqrt{\overline{M}}f)
$$
which causes the first-order polynomial increase in $v_1$ behaving like $\frac{1}{\eps}\partial_x\phi v_1 g$, and a term $\frac{1}{\eps}\partial_x\phi\pa_{v_1} g$ with the effect of $\pa_{v_1}$. Motivated by \cite{Duan-Yang-Zhao-M3,Duan-Yang-Zhao}, we choose the time-velocity weight 
$$
w(\alpha,\beta)(t,v)=\langle v\rangle^{2(l-\al-|\beta|)}\exp\left(\frac{q_1}{(1+t)^{q_2}}\frac{\langle v\rangle^2}{2}\right)
$$
to control such two terms since the transport operator acting on this weight gives the dissipation 
$$
-\partial_{t}w^{2} =q_{1}q_{2}(1+t)^{-(1+q_2)}\langle v\rangle^2 w^{2},
$$
such that it absorbs the large velocity growth for finite time. However, we will find this weight is inconvenient to control the equation for $f$. As mentioned above, the linear behaviors of $g$ and $f$ are largely determined by $L_D$ and $\CL_{\overline{M}}$ where the weighted coercivities are obtained by 
$$
-( L_Dg,w^2(0,0) g)
\geq c\| g\|^2_{\sigma,w} \quad  \text{and}\ -(\CL_{\overline{M}} f,w^2(0,0) f)
\geq c\| f\|^2_{\sigma,w}-C\|f\|^2_{\sigma}.
$$
Therefore, for $f$, we not only need to make estimates with the weight, but also should obtain the one without the weight. If we still use $w$ in the equation of $f$, then the estimates with and without $w$, the choice of three different parameters $q_1,q_2,l$  in $w$ than the one for $g$, and the high order derivatives acting on $\sqrt{\overline{M}}$ will make it very complicated to calculate. We simplify the case by putting the term $\frac{1}{\eps}\partial_x\phi\partial_{v_{1}}(\sqrt{\overline{M}}f)/\sqrt{\mu}$ on the equation of $g$ such that it becomes $\frac{1}{\eps}\mu^{-1/2}\partial_x\phi\partial_{v_{1}}(\sqrt{\mu}g+\sqrt{\overline{M}}f)$ which avoids the increase of $v$ and is controlled by $O(\eps)\CD(t).$ Without such term, the weighted estimate for equation of $f$ is very similar to Landau equation and we can bound it using the pure velocity weight $\langle v\rangle^{2(l-\al-|\beta|)}$ as in \cite{Guo-2002}. We should also mention that the case for global solution is different since the dissipation provided by $w$ tends to zero when time goes to infinity. Then we construct the time, velocity and parameters dependent weight 
$$
\textbf{w}(\alpha,\beta)(t,v)=\dis \langle v\rangle^{2(l-\al-|\beta|)}\exp\left((q_1-q_2\int^t_0 q_3(s)\,ds)\langle v\rangle\right),
$$
which provides the dissipation 
$$
-\partial_{t}\textbf{w}^{2} =2q_{2}q_3(t)\langle v\rangle \textbf{w}^{2}
$$
with $q_3$ being dependent of some norms of $\widetilde{\phi}$ and two small parameters $\eps$ and $\de$. This weight is useful for solving the term $\frac{1}{\eps}\partial_x\phi\pa_{v_1} g$ with extra $\pa_{v_1}$. Notice that there is the other trouble term $\frac{1}{\eps}\partial_x\phi v_1 g$ with the additional $v_1$. A classic approach by Guo in \cite{Guo-JAMS} makes use of another weight $e^{\phi}$ and the cancellation 
$$
(v_1\partial_\be^{\alpha} \pa_x g,\textbf{w}^2e^{\phi}\partial_\be^{\alpha} g)+(\frac{v_1}{2} \partial_x\phi\pa^\al_\be g,\textbf{w}^2e^{\phi}\partial^\al_\be g)=0.
$$
In our setting, we notice that the coefficient of the transport term $v_{1}\partial_xg$ and the electronic term $\eps^{-1}v_{1}\partial_x\phi g$ is different where there is a significant singularity $\eps^{-1}$ in the electronic term. To carry out the cancellation, we modify the exponential in $\phi$ weight to be $e^{\phi/\eps}$. Due to the singularity in the power of this weight, our construct of the time, velocity and parameters dependent $\textbf{w}$ need to satisfy a couple of requirement. The first one is to provide dissipation that is strong enough to absorb $\eps^{-1}\partial_x\phi\pa_{v_1} g$ and all singularities which come from the power of the weight $e^{\phi/\eps}$ when $\pa_t$ and $\pa_x$ act on it. The second one is to guarantee the boundedness $\int^\infty_0 q_3(s)\,ds<\infty$ and the positivity $q_1-q_2\int^\infty_0 q_3(s)\,ds>0$ uniformly on both small parameters $\eps$ and $\de$. These requirements highly rely on how we design the time function $q_3(t)$ and its dependency on $\eps$ and $\de$. Also we need to make appropriate a priori assumption and keep the structure of our equations, energy functional and most of the estimates being very similar to the ones for KdV limit. See \eqref{defw}, \eqref{Gapriori} and Lemma \ref{Gleg} in Section \ref{SecGlobal}.

\subsubsection{Singularities in $\eps$ and $\de$}
We observe that there are many terms including both fluid and non-fluid quantities that contain singularities in $\eps$ and $\de$, such as $\eps^{-1}\partial_x\widetilde{\phi}$ in the equation of $u_1$ in \eqref{perturbeq}, $\eps^{-1}\partial_x(\widetilde{\rho},\widetilde{u},\widetilde{\theta})$ on the left hand side of \eqref{perturbeq} and $\eps^{-1}\partial_x\phi\partial_{v_{1}}(\sqrt{\mu}g+\sqrt{\overline{M}}f)\sqrt{\mu}^{-1}$ in the microscopic equation \eqref{eqg}. Note that our KdV approximation appears on the zero order correction and it is only bounded but not necessarily small, which makes it very difficult to cancel by the background state in our weak collision setting compared to the fluid dominated case in \cite{DYY} where their $\nu$ tends to infinity and the approximated solution naturally provides $O(\eps)$ by $(\bar\rho,\bar u,\bar\theta,\bar\phi)=(1+O(\eps),O(\eps),3/2+O(\eps),O(\eps))$. We should use various of approaches to deal with these singularities which cause troubles mainly on linear terms. The first important observation is that we can cancel the singularity induced by the shift $\bar{x}=x-t/\eps$ in \eqref{shift} by tracing back along this direction. For example, when we try to obtain the first order estimate on $\pa_x\widetilde{u}$ by applying $\pa_x$ to the second equation in \eqref{perturbeq}, the $\eps\de\pa^2_x\pa_tG$ in $\pa^2_x\Theta$ defined in \eqref{DefTheta} on the right hand side of \eqref{perturbeq} gives a term which behaves as
$$
\eps\de|(\pa^3_x
h,\partial_t\widetilde{u}_1)|=\eps\de|(\pa^2_x
h,\pa_x\partial_t\widetilde{u}_1)|\lesssim (\eps\de)^{1/2} \|\pa_x\partial_t\widetilde{u}_1\|\sqrt{\CD(t)}.
$$
Then for the bound of $(\eps\de)^{1/2}\|\pa_x\partial_t\widetilde{u}_1\|$, because of the $\frac{1}{\eps}\partial_x\widetilde{u}_1$ in the equation of $\widetilde{u}_1$, one can only get 
$$(\eps\de)^{1/2}\|\pa_x\partial_t\widetilde{u}_1\|\lesssim \frac{1}{\eps}(\eps\de)^{1/2}\|\pa^2_x \widetilde{u}_1\|+\cdots\lesssim \frac{1}{\eps}\sqrt{\CD(t)}+\cdots,
$$
where the $O(\eps^{-1})$ remains unbounded. On the other hand, we notice that there is the other term $-\eps\de\pa^3_xG$ in $\pa^2_x\Theta$ that has the same issue which produces the trouble term 
$
-\de(\pa^3_x
h,\partial_x\widetilde{u}_1)=\de (\pa^2_x
h,\pa^2_x\widetilde{u}_1).
$ The combination of these two terms gives 
$$
-\eps\de(\pa^2_x
h,\pa_x\pa_t\widetilde{u}_1)+\de(\pa^2_x
h,\pa^2_x\widetilde{u}_1)=-\eps\de(\pa^2_x
h,\pa_x(\pa_t-\frac{1}{\eps}\pa_x)\widetilde{u}_1)
$$
and $\pa_x(\pa_t-\frac{1}{\eps}\pa_x)\widetilde{u}_1$ has much better properties since the $O(\eps^{-1})$ is transferred to $\frac{1}{\eps}\pa^2_x\widetilde\phi$ which has less singularity using our energy estimates on electronic field.
The second ingredient is on the structure of scaled Euler-Poisson type equation in the last line of \eqref{perturbeq}. The perturbed density quantity $\widetilde{\rho}$ is coupled with the perturbed electronic field which induces $O(\eps)$ and $O(\eps^2)$ terms that can be used to balance $O(\eps^{-1})$ or $O(\eps^{-2})$. See \eqref{pa2phipau3} and \eqref{pa2phipau41}. For the $O(\eps^{-1})$ in the microscopic equation, for instance, $|\frac{1}{\eps}(\partial_x\phi\partial_{v_1} g,w^2 g)|$ from the equation of $g$ in \eqref{eqg}, we can use the interpolation method among the microscopic dissipation, the small parameters and the weighted energy, and the choice of relation between $\eps$ and $\de$ to cancel it. Considering the above term, as seen in the later proof \eqref{I31}, the interpolation and the definition of energy functional will lead to 
$$
|\frac{1}{\eps}(\partial_x\phi\partial_{v_1} g,w^2 g)|\lesssim 
\ka\frac{1}{\eps\delta}\|g\|_{\sigma,w}^2 	+C_\ka(C(\eta)+\eps)\frac{\de^{1/2}}{\eps}\|\langle v\rangle g\|_w^2,
$$
where $\ka$ is small and $\|\langle v\rangle g\|_w$ can be well controlled by the additional dissipation induced by weight function. Then we can choose $\de$ small enough to balance with $\eps$, while keeping the Knudsen number $1/\nu$ large.

\subsubsection{Estimates on highest order}
In both macroscopic and microscopic system, there are additional terms that increase the order of $x$ derivatives, see $\frac{1}{\rho}\partial_x(\int_{\mathbb{R}^{3}} v^{2}_{1}L^{-1}_{M}\Theta\,dv)$ on the right hand side of \eqref{perturbeq} and $P_{0}(v_{1}\partial_xh)$ in \eqref{eqh}. When we take inner product to obtain the energy estimates, those terms remain there. Thus, we need special consideration on the second order energy estimates. A series of recent studies on the hydrodynamic limit from kinetic equations (Landau equation, Vlasov-Poisson-Landau/Boltzmann systems) to compressible fluid equations (compressible Euler equation, Euler-Poisson system) in \cite{Duan-Yang-Yu,Duan-Yang-Yu-VPL} indicates that it is possible to use the original equation for the highest order. In the current paper, if we directly turn to the equation for $F=M+\overline{G}+\sqrt{\mu}g+\sqrt{\overline{M}}f,$ in order to give the dissipation for $f$ from $\CL_{\overline{M}}f$, we should apply $\pa^2_x$ and take inner product with $\pa^2_xF/\overline{M}$, which induces a term $\|\mu g/\overline{M}\|$ with exponential increase in $v$ since $\mu$ has much smaller decay in $v$ than $\overline{M}$. We resolve the equation for $F$ into two parts. In the design of the equation for $g$, we try to avoid any term with extra $\pa_x$ and put those terms in the equation of $f$. Then we may get estimates for any order of $g$, from zero to the highest order, using its equation. For the fluid quantities and $f$, we take difference of equations of $F$ and $\sqrt{\mu}g$ to get the other equation for $F-\sqrt{\mu}g\sim M+\overline{G}+\sqrt{\overline{M}}f$, then apply $\pa^2_x$ and take inner product with $\pa^2_x(M+\overline{G}+\sqrt{\overline{M}}f)/\overline{M}$ to estimate the second order energy for $(\widetilde{\rho},\widetilde{u},\widetilde{\theta},\widetilde{\phi},f).$ Notice that if there are some nonlinear terms such as $Q(\cdot,\cdot)$ in the equation of $F-\sqrt{\mu}g$, then the above process require us to estimate many crossing terms involving the nonlinear operator from $(\pa^2_xQ(\cdot,\cdot),\pa^2_x(M+\overline{G}+\sqrt{\overline{M}}f)/\overline{M})$ and we also need to track how the second order derivative in $x$ acts on $M,\sqrt{\overline{M}}$ and the elements in the nonlinear operator. To lighten our burden on the tedious calculations, in our construction of equations for $f,g$, we put almost all nonlinear terms to the equation of $g$. Hence, we only need to bound much simpler terms in the form of $(\pa^2_xQ(\cdot,\cdot),\pa^2_x g).$

Moreover, the energy setting for second order is different. When we carry out the aforementioned approach to analyze highest order energy, it is necessary to control
 $$(\eps\de)^{-1}|\CL_{\overline{M}}\pa^2f,\pa^2_x M|\sim (\eps\de)^{-1}\|\pa^2_x f\|_\si \|\pa^2_x(\widetilde{\rho},\widetilde{u},\widetilde{\theta})\|,$$
 where the fluid dissipation $\eps\de\|\pa^2_x(\widetilde{\rho},\widetilde{u},\widetilde{\theta})\|^2$ is obtained from the Navier-Stokes-Poisson structure in the macroscopic system. Hence, we see if one bounds the above term by Cauchy-Schwarz inequality, there will be extra $O(\eps^{-2}\de^{-2})$ singularity as coefficient for the microscopic dissipation $(\eps\de)^{-1}\|\pa^2_x f\|^2_\si.$ We enlarge the functional space by setting the highest order energy to be $\eps^2\de^2\|(\widetilde{\rho},\widetilde{u},\widetilde{\theta})\|^2+\eps^2\de^2\times\it{Microscopic\ quantities}$, where the corresponding microscopic dissipation becomes $\eps^2\de^2\times(\eps\de)^{-1}\|\pa^2_x f\|^2_\si=\eps\de\|\pa^2_x f\|^2_\si$, such that the coefficient $\eps^2\de^2$ balances the singularity by
$$
\eps^2\de^2\times(\eps\de)^{-1}|(\CL_{\overline{M}}\pa^2f,\pa^2_x M)|\lesssim \eps\de\|\pa^2_x f\|^2_\si+ \eps\de\|\pa^2_x(\widetilde{\rho},\widetilde{u},\widetilde{\theta})\|^2.
$$

\subsubsection{Other difficulties}
The enlargement of highest order energy functional space could cause singular terms in $\eps$ and $\de$ in the lower order analysis. Especially the $L^2$ bound of the nonlinear fluid quantities forces us to use Sobolev embedding such that part of the $L^\infty$ bound for lower order terms leads to $L^2$ norm for highest order, for instance, $
|(\partial_x\widetilde{u}_1\pa_x\widetilde{\rho},\pa_x\widetilde{\rho})|\lesssim \|\partial_x\widetilde{u}_1\|^{-1/2}\|\partial^2_x\widetilde{u}_1\|^{-1/2}\|\pa_x\widetilde{\rho}\|^2\lesssim (\eps\de)^{-1/2}\CE(t)^{3/2}.$ Then it is crucial to make an appropriate a priori assumption by careful choice on $r$ for $\CE(t)\leq \eps^r.$ Our collision is weak described by $0<\nu\ll 1$ implying that $\de=\eps^{7/2}/\nu$ should not be too small, thus we need to determine some $r$ which is not that small. In addition, we expect that $r$ is not too large such that the solution space is small. Also, large $r$ require us to expand the approximated solution to high order corrections on $\eps$, which makes the approximated equation too complicated. We derive the expansion for the approximate equation to $O(\eps^2)$ and see that $r=4$ is a suitable choice.

One of the key points when using the macro-micro decomposition around local Maxwellians is that the non-fluid quantity $h=G-\overline{G}$ is purely microscopic, then up to $(\eps\de)^{-1}$ singularity, one can get the clean coercivity $-(\eps\de)^{-1}(\CL h,h)\gtrsim (\eps\de)^{-1}\|h\|^2_\si,$ without the appearance of similar terms to singular macroscopic $\|(\eps\de)^{-1}\mathbf{P}_0 h\|^2_\si$ when one applies Guo's approach \cite{Guo-2002} by setting $F=\mu+\sqrt{\mu}h$. The singular $O(\ep^{-1}\de^{-1})$ macroscopic term shown above is difficult to be absorbed by macroscopic dissipation which is only $O(\eps\de)$, such as $\eps\de\|\pa^\al_x(\widetilde{\rho},\widetilde{u},\widetilde{\theta})\|^2\sim\eps\de\|\pa^\al_xP_0 F\|^2$ by the Navier-Stokes-Poisson structure. However, if we decompose $h$ into $h=\sqrt{\overline{M}}f+\sqrt{\mu}g$, neither $f$ nor $g$ can be proved to be purely microscopic and there would be the appearance of singular $O(\ep^{-1}\de^{-1})$ macroscopic term. An important observation is that $g$ is dominated by $L_D$ which gives $-(\eps\de)^{-1}(L_Dg,g)\gtrsim \|(\eps\de)^{-1}g\|^2_\si$ without assuming $g$ to be microscopic, while $h$ is microscopic, which leads to $P_0(\sqrt{\overline{M}}f)=-P_0(\sqrt{\mu}g)$. Then we can recover the macroscopic dissipation of $f$ by $g$ that avoids passing the singularity of microscopic part to fluid quantities.

When we consider the global estimates near global Maxwellian, the direct inner product method will be difficult to control terms in the form like $(u\pa_x\widetilde{\rho},\widetilde{\rho})$ by $O(\eps)\CD(t)$ due to the lack of zero order dissipation on the fluid part. Motivated by \cite{Liu-Yang-Yu}, we use the entropy and entropy flux method. For lower order, we expect to get all the estimates from the previous sections, then we track the dependence on KdV solutions in the calculations such that all trouble terms which stop us from getting the uniform global-in-time estimates vanish when our KdV solutions tend to constant state.

\subsection{Related literature}
For the derivation of KdV equation from either VPL or Euler-Poisson system and the rigorous mathematical proof, one may refer to \cite{Han-Kwan,Guo-Pu,Su-1,Washimi}. In two and three dimensional cases, the Zakharov-Kuznetsov are derived in \cite{Lannes,Pu}. There are many results of long wave limits towards KdV from other equations, we mention \cite{Chiron,BGSS1,BGSS2} for Nonlinear Schr\"odinger equations, \cite{Ben} for general hyperbolic systems, and \cite{Alvarez-Samaniego,Bona,Craig,Schneider} for the water waves.

When the collision frequency is small, we expect the VPL system to have similar properties to the collisionless case. The weak collisional limit of VPL system with uniform-in-$\nu$ Landau damping and enhancing dissipation is proved in \cite{Chaturvedi-Luk-Nguyen}, see also \cite{Bedrossian,Bedrossian-Zhao-Zi} for Vlasov-Poisson-Fokker-Planck system. On the contrary, for small Knudsen number which implies large collision frequency, the VPL system should be dominated by the compressible Euler-Poisson equation. For the studies on hydrodynamic limit of kinetic equations, one may refer to \cite{Caflisch,Duan-Yang-Yu-2,Duan-Li} for compressible Euler limit of Boltzmann or Landau equations, \cite{Duan-Yang-Yu,Duan-Yang-Yu-1,Duan-Yang-Yu-VPL} for rarefaction or contact waves for Landau equation or VPL system, \cite{Duan-Yang-Yu-VMB,Guo-Jang,Lei-Liu-Xiao-Zhao} for compressible Euler-Poisson/Maxwell limit of Vlasov-Poisson/Maxwell-Boltzmann/Landau systems. When Knudsen number is constant, the well-posedness of VPL system around equilibrium is proved in \cite{Guo-JAMS}, see also \cite{Duan-Yang-Zhao,Strain-Guo}. In addition, we mention \cite{Bobylev-Potapenko} for longwave asymptotics for the VPL system.

%\subsection{Organization of the paper}

\section{Preliminaries}\label{secPre}
In this section, we give some useful estimates that will be used for many times in the later parts including the bound for microscopic correction $\overline{G}$, the coercivity for $\CL_{\overline{M}}$ and $L_D$, and the control for nonlinear collision operator. Before we start the major part of this section, we first make the {\bf a priori assumption}  \begin{equation}\label{apriori} 	  \sup_{0\leq t\leq T}\mathcal{E}(t)\leq A\eps^4,  \end{equation} where $A>1$ will be chosen later. Since $A$ is independent of $\eps$ and $\de$, we let $\eps$ and $\de$ to be small such that  
\begin{align}  
\label{smalleps} 	  
A\eps^{\frac{1}{2}}+A\de^{\frac{1}{2}}<1.
\end{align}
Now we give some bounded estimate for microscopic correction $\overline{G}$.
\begin{lemma}\label{lem3.1}
Let $\overline{G}$ and $w=w(\alpha,\beta)$ be defined in \eqref{DefbarG} and \eqref{Defw} respectively with small constant $q_1\in(0,1]$ and $q_2>0$. 		
	Under the a priori assumption \eqref{apriori}, for any $k\geq 0$, $1\leq\alpha\leq 2$ and $\be\geq0$, one has
	\begin{equation}\label{boundbarG0}
		|\langle v\rangle^{k}e^{\frac{|v|^2}{8}}\pa^\be_v(\frac{\overline{G}}{\sqrt{\mu}})|_{w}
		\leq C\eps\de(|\pa_x\bar{u}|+|\pa_x\bar{\theta}|),
	\end{equation}
and
	\begin{align}\label{boundbarG}
		&|\langle v\rangle^{k}e^{\frac{|v|^2}{8}}\partial^{\alpha}_{\beta}(\frac{\overline{G}}{\sqrt{\mu}})|_w+
		|\langle v\rangle^{k}e^{\frac{|v|^2}{8}}\partial^{\alpha}_{\beta}(\frac{\overline{G}}{\sqrt{\mu}})|_{\sigma,w}
		\leq C\eps\de\sum^{\al}_{i=1}(|\pa^i_x\bar{u}|+|\pa^i_x\bar{\theta}|)
		(|\partial^{\alpha+1-i}_xu|+|\partial^{\alpha+1-i}_x\theta|)
        \notag\\
       &\quad\quad+C\eps\de(|\pa_x\bar{u}|+|\pa_x\bar{\theta}|)
		(|\partial_xu|^2+|\partial_x\theta|^2) +C\eps\de(|\pa^{\al+1}_x\bar{u}|+|\pa^{\al+1}_x\bar{\theta}|).
	\end{align}
\end{lemma}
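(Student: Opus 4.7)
The proof is a pointwise calculation exploiting the explicit structure of $\overline G$. From \eqref{DefbarG}, $\overline G = \eps\delta\,L_M^{-1}[P_1 v_1 M\Psi]$ where $\Psi$ is linear in $(v-u)$ with coefficients $\pa_x\bar\theta$ and $\pa_x\bar u$. Since $L_M^{-1}$ inverts $L_M$ on $\mathcal{N}^\perp$ and preserves Gaussian decay, one obtains the Burnett-function representation
\begin{equation*}
\overline G = \eps\delta\, M \Bigl[\tilde A\bigl(\tfrac{v-u}{\sqrt{K\theta}}\bigr)\cdot \tfrac{\pa_x\bar\theta}{\sqrt{K\theta}} + \tilde B\bigl(\tfrac{v-u}{\sqrt{K\theta}}\bigr):\tfrac{\pa_x\bar u}{\sqrt{K\theta}} \Bigr],
\end{equation*}
with $\tilde A,\tilde B$ built from the Burnett functions $A_j,B_{ij}$ in \eqref{defbur1}--\eqref{defbur2} and of polynomial growth in velocity. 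Hence $\overline G/\sqrt\mu$ equals $\eps\delta$ times a smooth function of the macroscopic state $(\rho,u,\theta)$ which is polynomial-times-Gaussian in $v$, namely a polynomial multiplied by $M/\sqrt\mu$. Because $\bar\theta\approx 3/2$, $K=2/3$, and $\bar u$ is bounded via \eqref{2.61AA} together with the a priori assumption \eqref{apriori} (so that $\widetilde\rho,\widetilde u,\widetilde\theta$ are small), $K\theta\approx 1$ and $M/\sqrt\mu\propto\exp\{-|v-u|^2/(2K\theta)+|v|^2/8\}$ still decays like a Gaussian with a uniform margin. For $q_1>0$ chosen small enough, the velocity weight $\langle v\rangle^k e^{|v|^2/8}w(\alpha,\beta)$ is therefore pointwise absorbed into $M/\sqrt\mu$, uniformly in time and in $(x,v)$.

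The bound \eqref{boundbarG0} then follows immediately: $\pa_v^\beta$ applied to the representation only produces extra polynomial factors in $v$, again absorbed by the Gaussian, and integration in $v$ yields $C\eps\delta(|\pa_x\bar u|+|\pa_x\bar\theta|)$.

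For \eqref{boundbarG}, expand $\pa_x^\alpha$ by the Leibniz rule. Each derivative lands in one of two places: either directly on the explicit coefficient $\pa_x\bar u$ or $\pa_x\bar\theta$, raising its order; or on the remainder, which depends smoothly on $(\rho,u,\theta)$ through $\tilde A,\tilde B,M$ and the normalization factors $1/\sqrt{K\theta}$. Chain-rule derivatives of the latter produce factors $\pa_x^j u$ or $\pa_x^j\theta$. Collecting terms according to the partition of $\alpha$ between these two roles yields precisely the three groups on the right-hand side of \eqref{boundbarG}: the main sum, with a single highest-order factor $\pa_x^{\alpha+1-i}u$ (or $\pa_x^{\alpha+1-i}\theta$) paired with $\pa_x^i\bar u$ for $1\le i\le\alpha$; the quadratic contribution $|\pa_x\bar u|(|\pa_x u|^2+|\pa_x\theta|^2)$ arising only for $\alpha=2$, when two separate first-derivative factors appear via the chain rule applied to $M$ or to $1/\sqrt{K\theta}$; and the pure KdV-profile term $|\pa_x^{\alpha+1}\bar u|+|\pa_x^{\alpha+1}\bar\theta|$ when all $\alpha$ derivatives fall on the explicit coefficient. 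Taking $\pa_v^\beta$ contributes further polynomial factors in $v$, still absorbed by $M/\sqrt\mu$, and the $\sigma$-weighted norm is controlled via \eqref{boundlandaunorm} at the cost of one extra $\pa_v$, which does not change the order of the bound.

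The main obstacle is really just the combinatorial bookkeeping: one must partition the $\alpha$ $x$-derivatives between the explicit KdV coefficient and the macroscopic remainder, collecting contributions according to the highest-order factor and applying Young's inequality to symmetrise cross terms such as $|\pa_x u\,\pa_x\theta|$. The only genuinely analytic point is choosing $q_1>0$ small enough, uniformly in $t$, so that $\langle v\rangle^k e^{|v|^2/8}w(\alpha,\beta)$ is pointwise dominated by $M/\sqrt\mu$; this uses crucially that $\bar u$ is only bounded (not small), so $|v-u|^2/(2K\theta)-|v|^2/8$ must stay coercive in $v$, which forces the Gaussian in $w$ to have strictly smaller exponent than $1/4$.
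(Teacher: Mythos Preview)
Your proposal is correct and follows essentially the same approach as the paper: the Burnett-function representation \eqref{ReG}, the decay bound \eqref{boundAB} which absorbs the weight $\langle v\rangle^k e^{|v|^2/8}w\,\mu^{-1/2}$ once $q_1$ is small, and the Leibniz/chain rule bookkeeping for $\pa_x^\alpha$. The only imprecision is your description of $\overline G$ as ``a polynomial multiplied by $M$'': the Burnett functions $A_j,B_{ij}$ are only bounded by $M^{1-a}$ for arbitrarily small $a>0$, not by polynomial-times-$M$, and the paper's weight estimate \eqref{3.5AA} is stated accordingly; this does not affect your argument since the margin in the Gaussian still suffices.
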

\begin{proof}
In view of \eqref{defbur1}	and \eqref{defbur2}, we rewrite
	\begin{equation}\label{ReG}
		\overline{G}=\eps\de \{\frac{\sqrt{K}\partial_x\bar{\theta}}{\sqrt{\theta}}A_{1}(\frac{v-u}{\sqrt{K\theta}})
		+\partial_x\bar{u}_{1}B_{11}(\frac{v-u}{\sqrt{K\theta}})\}.
	\end{equation}	
From the similar arguments in \cite{Duan-Yang-Yu-1,Duan-Yang-Yu-2,DYY}, we have
\begin{equation}\label{boundAB}
	|\pa^\be_vA_{j}(\frac{v-u}{\sqrt{K\theta}})|+|\partial_{\beta}B_{ij}(\frac{v-u}{\sqrt{K\theta}})|
	\leq C_{a,\beta}M^{1-a},
\end{equation}
for any $0<a<1$ and $|\be|\geq0$. By choosing $q_1\in(0,1]$ small enough in \eqref{Defw}, then for any $|\beta|\geq0$, $k\geq0$ and sufficiently small $a>0$, we claim that
\begin{equation}
\label{3.5AA}
|\langle v\rangle^k e^{\frac{|v|^2}{8}}w(0,\beta)\mu^{-\frac{1}{2}}M^{1-a}|_2+|\langle v\rangle^k e^{\frac{|v|^2}{8}}w(0,\beta)\mu^{-\frac{1}{2}}M^{1-a}|_\sigma\leq C.
\end{equation}
Let $\beta_{1}=(1,0,0)$, a direct calculations gives 	\begin{equation}\label{pabebarG} 		\partial^{\beta_{1}}_v\overline{G}=\eps\de\{\frac{\sqrt{K}\partial_x\bar{\theta}}{\sqrt{\theta}} 		\partial_{v_{1}}A_{1}(\frac{v-u}{\sqrt{K\theta}})\frac{1}{\sqrt{K\theta}} 		+\partial_x\bar{u}_{1}\partial_{v_{1}}B_{11}(\frac{v-u}{\sqrt{K\theta}})\frac{1}{\sqrt{K\theta}}\}. 	\end{equation}	
Then \eqref{boundbarG0} holds for $\be=0$ and $\be=\be_1$
in terms of \eqref{ReG}, \eqref{boundAB}, \eqref{3.5AA}, \eqref{pabebarG}, $|u|\leq C$ and $4/3<\theta<2$. The other cases follow from the similar calculation as in \eqref{pabebarG}.

We take $\pa_x$ to \eqref{ReG} to get
	\begin{align*}
		\partial_{x}\overline{G}=&\eps\de\Big\{\frac{\sqrt{K}\partial^2_x\bar{\theta}}{\sqrt{\theta}}A_{1}(\frac{v-u}{\sqrt{K\theta}})
		-\frac{\sqrt{K}\partial_x\bar{\theta}\partial_x\theta}{2\sqrt{\theta^{3}}}A_{1}(\frac{v-u}{\sqrt{K\theta}})
		\notag\\
		&\qquad-\frac{\sqrt{K}\partial_x\bar{\theta}}{\sqrt{\theta}}
		\nabla_{v}A_{1}(\frac{v-u}{\sqrt{K\theta}})\cdot\frac{\partial_xu}{\sqrt{K\theta}}
		-\frac{\sqrt{K}\partial_x\bar{\theta}\partial_x\theta}{\sqrt{\theta}}
		\nabla_{v}A_{1}(\frac{v-u}{\sqrt{K\theta}})\cdot\frac{v-u}{2\sqrt{K\theta^{3}}}
		\notag\\
		&\qquad+\partial^2_x\bar{u}_{1}B_{11}(\frac{v-u}{\sqrt{K\theta}})
		-\frac{\partial_x\bar{u}_{1}\partial_xu}{\sqrt{K\theta}}\cdot\nabla_{v}B_{11}(\frac{v-u}{\sqrt{K\theta}})-\frac{\partial_x\bar{u}_{1}\partial_x\theta(v-u)}{2\sqrt{K\theta^{3}}}\cdot\nabla_{v}B_{11}(\frac{v-u}{\sqrt{K\theta}})
		\Big\}.
	\end{align*}
Then for $\al=1$, $\be=0$, we can see that \eqref{boundbarG} holds from the above identity and \eqref{boundAB}. The other cases can be obtained by similar approach and are omitted here for simplicity.
\end{proof}

The estimates of  the nonlinear collision operator $\Ga$ in \eqref{DefGa} can be proved as in \cite[Lemma 9]{Strain-Guo}, see also \cite[Lemma 2.3]{Wang}. The proof is omitted for brevity. 	
\begin{lemma}
%\label{leGa}
Let $\Gamma(g_1,g_2)$ defined in \eqref{DefGa} and $w=w(\alpha,\beta)$ in \eqref{Defw} with sufficiently small $q_1\in(0,1]$ and  $q_2>0$. 
	For any $\al,\be\geq0$ and some small $a>0$, it holds that
	\begin{equation}\label{controlGa}
		\langle\partial^\alpha_x \Gamma(g_1,g_2), g_3\rangle\leq C\sum_{\alpha'\leq\alpha}|\mu^a\partial^{\alpha'}_xg_1|_2| \partial^{\alpha-\alpha'}_xg_2|_\sigma|  g_3|_\sigma,
	\end{equation}
	and
	\begin{equation}\label{controlpaGa}
		\langle\partial^\alpha_\beta \Gamma(g_1,g_2), w^2(\alpha,\beta)g_3\rangle\leq
		C\sum_{\alpha'\leq\alpha}\sum_{\bar{\beta}\leq\beta'\leq\beta}|\mu^a\partial^{\alpha'}_{\bar{\beta}}g_1|_2|  \partial^{\alpha-\alpha'}_{\beta-\beta'}g_2|_{\sigma,w(\al,\be)}|g_3|_{\sigma,w}.
	\end{equation}
\end{lemma}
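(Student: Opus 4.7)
The plan is to follow the bilinear-form approach of Strain--Guo, carefully tracked in the present weighted setting. The key ingredients are the divergence-form representation of $\Gamma$, a weighted convolution estimate for the kernel $\Phi^{ij}$, and Cauchy--Schwarz in the Landau dissipation norm.

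First, I would rewrite $\Gamma(g_1,g_2)$ using the divergence structure of the Landau operator. A direct computation starting from \eqref{LandauOp} and \eqref{DefGa}, distributing $\sqrt\mu$ and using $\partial_{v_{*j}}\sqrt{\mu(v_*)}=-(v_{*j}/2)\sqrt{\mu(v_*)}$, gives a representation of the form
\begin{equation*}
\Gamma(g_1,g_2)=\partial_{v_i}\bigl\{[\Phi^{ij}*(\sqrt\mu g_1)]\partial_{v_j}g_2\bigr\}-[\Phi^{ij}*(\sqrt\mu g_1)]\tfrac{v_iv_j}{4}g_2+(\text{lower-order terms involving }v_j\sqrt\mu g_1).
\end{equation*}
Pairing with $g_3$ and integrating by parts in $v$ produces a bilinear form whose typical representative is
$\int \sigma^{g_1}_{ij}(v)\,\partial_{v_j}g_2\,\partial_{v_i}g_3\,dv$,
with $\sigma^{g_1}_{ij}(v):=\int\Phi^{ij}(v-v_*)\sqrt{\mu(v_*)}\,g_1(v_*)\,dv_*$ and analogous ``lower order'' kernels.

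Second, I would establish the pointwise matrix inequality
\begin{equation*}
|\sigma^{g_1}_{ij}(v)\xi_i\xi_j|\lesssim |\mu^a g_1|_2\,\bigl(\sigma_{ij}(v)\xi_i\xi_j\bigr),\qquad \xi\in\mathbb{R}^3,
\end{equation*}
for some small $a>0$, where $\sigma_{ij}=\Phi_{ij}*\mu$ is the standard collision frequency entering \eqref{boundlandaunorm}. This follows from $|\Phi^{ij}(z)|\lesssim |z|^{-1}$ by a weighted Cauchy--Schwarz in $v_*$, splitting $\sqrt\mu=\mu^{1/2-a}\cdot\mu^a$ and absorbing the factor $\mu^{1/2-a}$ into the kernel; analogous estimates hold for the lower-order kernels with extra factors $v_j$. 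Combining with Cauchy--Schwarz against the Landau quadratic form, one gets the basic bound
\begin{equation*}
|\langle\Gamma(g_1,g_2),g_3\rangle|\lesssim |\mu^a g_1|_2\,|g_2|_\sigma\,|g_3|_\sigma.
\end{equation*}

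Third, for \eqref{controlGa} I would apply $\partial_x^\alpha$ to $\Gamma(g_1,g_2)$; since the $v$-convolution commutes with $\partial_x$, Leibniz gives $\partial_x^\alpha\Gamma(g_1,g_2)=\sum_{\alpha'\leq\alpha}\binom{\alpha}{\alpha'}\Gamma(\partial_x^{\alpha'}g_1,\partial_x^{\alpha-\alpha'}g_2)$, and each term is controlled by the basic bound above. For \eqref{controlpaGa}, $v$-derivatives must be moved carefully: using $\partial_{v}\Phi^{ij}(v-v_*)=-\partial_{v_*}\Phi^{ij}(v-v_*)$ and integrating by parts in $v_*$, the $v$-derivatives transfer onto $\sqrt{\mu}g_1$, so derivatives on $\sqrt\mu$ generate polynomial factors absorbed by the remaining Gaussian, while derivatives on $g_1$ produce $\partial_{\bar\beta}^{\alpha'}g_1$ for $\bar\beta\leq\beta'\leq\beta$. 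The weight $w(\alpha,\beta)$ on $g_3$ is then incorporated into the weighted version of the matrix inequality above: for $q_1$ small, the factor $w(\alpha,\beta)\,\mu^{1/2-a}$ (appearing after moving the weight inside the $v_*$-integral via $\langle v\rangle\lesssim\langle v-v_*\rangle\langle v_*\rangle$) stays $\lesssim \mu^{1/2-2a}$, which lets us produce $|g_3|_{\sigma,w}$ on one side and $|\partial^{\alpha-\alpha'}_{\beta-\beta'}g_2|_{\sigma,w(\alpha,\beta)}$ on the other, while the $g_1$-factor is $|\mu^a\partial^{\alpha'}_{\bar\beta}g_1|_2$.

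The main obstacle is the bookkeeping in the last step: exchanging the weight $w(\alpha,\beta)(v)$ across the convolution requires simultaneously (i) splitting $\langle v\rangle$ between $v-v_*$ and $v_*$ to distribute the polynomial part of $w$, and (ii) controlling the exponential part $\exp(q_1(1+t)^{-q_2}\langle v\rangle^2/2)$ against the Gaussian $\sqrt\mu(v_*)$. The latter is precisely where the smallness of $q_1\in(0,1]$ (with $q_2>0$) in the definition \eqref{Defw} is needed: it guarantees that $w(\alpha,\beta)\sqrt\mu$ remains integrable uniformly in time and leaves a residual Gaussian $\mu^a$ to absorb any $v_*$-growth transferred from $g_1$. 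With this choice the estimates \eqref{controlGa} and \eqref{controlpaGa} follow, matching the form stated.
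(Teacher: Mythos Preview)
Your proposal is correct and follows precisely the Strain--Guo approach that the paper itself defers to: the paper omits the proof entirely, citing \cite[Lemma 9]{Strain-Guo} and \cite[Lemma 2.3]{Wang}, and your outline of the divergence-form representation, the pointwise matrix bound $|\sigma^{g_1}_{ij}\xi_i\xi_j|\lesssim |\mu^a g_1|_2\,\sigma_{ij}\xi_i\xi_j$, and the weight-splitting via smallness of $q_1$ is exactly that argument. There is nothing to add.
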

Then we provide coercivity estimates for the linearized operator $\CL_{\overline{M}}$ defined in \eqref{DefLM}. For completeness we give the proof of Lemma
\ref{leL} in the Appendix \ref{sec.app} later.
\begin{lemma}\label{leL} 
Let $\CL_{\overline{M}}$ be defined in \eqref{DefLM}, there exists  $c>0$ such that 
\begin{equation} 
\label{controlLbarM} -\langle \CL_{\overline{M}} f, f\rangle\geq c|\mathbf{P}_1 f|_{\sigma}^2. 
\end{equation} 
Moreover, for $W=W(\alpha,\beta)$ give in \eqref{DefW}, one has 
\begin{equation} 
\label{controlpaL} -\langle \CL_{\overline{M}} f,W^2(0,0) f\rangle\geq c| f|_{\sigma,W}^2-C|\chi_{|v|\leq C}f|_2^2. 
\end{equation} 
and for $|\al|\geq 1$, 
\begin{align} 		 
\label{controlpaLM} 		 
-\langle\partial^\alpha_x \CL_{\overline{M}} f,W^2(\alpha,0)\partial^\alpha_x f\rangle \geq&  c|\partial^\alpha_x f|_{\sigma,W}^2-C|\chi_{|v|\leq C}\partial^\alpha_x f|_2^2-C(\eta)\sum_{\al'<\al}|\pa^{\al'}_x f|^2_{\si,W}. 	 
\end{align} 
For  $|\be|>0$ and small $\ka>0$, there exists $C_\ka>0$ such that
\begin{align} 
\label{controlLM} -\langle\partial^\alpha_\beta \CL_{\overline{M}}f,W^2(\alpha,\beta)\partial^\alpha_\beta f\rangle \geq& c|\partial^\alpha_\beta f|_{\sigma,W}^2-\ka\sum_{|\beta'|=|\beta|}|\partial^\alpha_{\beta'} f|_{\sigma,W}^2 -C_\ka\sum_{|\beta'|<|\beta|}|\partial^\alpha_{\beta'} f|_{\sigma,W}^2
\notag\\ 
&-C(\eta)\sum_{\al'<\al,\beta'\leq\beta}|\pa^{\al'}_{\be'} f|^2_{\si,W}. \end{align} 
Here $C(\eta)$ depends only on $\eta$ defined in 
\eqref{boundkdv}, and $\chi_{|v|\leq C}$ is a general cutoff function depending on $C$.   
\end{lemma}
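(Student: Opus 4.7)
The strategy is to reduce $\CL_{\overline{M}}$ to a normalized form via the change of variables $v\mapsto (v-\bar{u})/\sqrt{K\bar{\theta}}$, under which $\overline{M}$ becomes proportional to a standard Gaussian and $\CL_{\overline{M}}$ becomes a rescaled version of the classical linearized Landau operator $\mathcal{L}$ around a global Maxwellian. Since \eqref{2.61AA} forces $(\bar{\rho},\bar{u},\bar{\theta})$ to lie in a fixed compact set (with $\bar\rho=1+O(\eps)$, $\bar\theta=3/2+O(\eps)$, $\bar u$ bounded), the constants in all estimates can be taken uniform. For the basic bound \eqref{controlLbarM}, the classical coercivity (cf.\ Guo, Strain-Guo) $-\langle \mathcal{L}g,g\rangle\geq c|\mathbf{P}_1g|_\sigma^2$ then pulls back to give $-\langle\CL_{\overline{M}}f,f\rangle\geq c|\mathbf{P}_1f|_\sigma^2$.

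For the weighted estimate \eqref{controlpaL}, decompose $\CL_{\overline{M}}=-\mathcal{A}+\mathcal{K}$, where $\mathcal{A}$ denotes the diffusion/friction part (associated with $\sigma_{ij}^{\overline{M}}$) and $\mathcal{K}$ is a lower-order remainder. A direct integration by parts and use of \eqref{boundlandaunorm} yields $\langle\mathcal{A}f,W^2(0,0)f\rangle\geq c'|f|_{\sigma,W}^2$, while the smoothing/compactness of $\mathcal{K}$ gives $|\langle\mathcal{K}f,W^2(0,0)f\rangle|\leq \kappa|f|_{\sigma,W}^2+C_\kappa|\chi_{|v|\leq C}f|_2^2$ upon splitting $v\in\{|v|\leq R\}\cup\{|v|>R\}$. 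Choosing $\kappa$ small completes \eqref{controlpaL}.

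For spatial derivatives \eqref{controlpaLM}, use Leibniz to write $\partial^\alpha_x\CL_{\overline{M}}f=\CL_{\overline{M}}\partial^\alpha_xf+\sum_{0<\alpha''\leq\alpha}\binom{\alpha}{\alpha''}[\partial^{\alpha''}_x\CL_{\overline{M}}]\partial^{\alpha-\alpha''}_xf$, where each commutator carries $\partial^{\alpha''}_x(\bar\rho,\bar u,\bar\theta)=O(\eta)$ by \eqref{2.61AA}. Apply \eqref{controlpaL} to $\partial^\alpha_xf$ and bound the commutators via Cauchy-Schwarz, absorbing the main part into $c|\partial^\alpha_xf|^2_{\sigma,W}$ and pushing the rest into $C(\eta)\sum_{\alpha'<\alpha}|\partial^{\alpha'}_xf|^2_{\sigma,W}$. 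Crucially, $C(\eta)\to 0$ as $\eta\to 0$, which is what allows the $C(\eta)$ terms to vanish in the constant-equilibrium case needed for Theorem \ref{TheoremGlobal}. For mixed derivatives \eqref{controlLM}, apply $\partial^\alpha_\beta$ and again use Leibniz; every $\partial_{v_i}$ on $\Phi_{ij}(v-v_*)$ costs one power of $\langle v\rangle$-decay, and the weight $W(\alpha,\beta)=\langle v\rangle^{2(l-\alpha-|\beta|)}$ drops two powers per velocity derivative, so the top-order commutators with $|\beta'|=|\beta|$ on $f$ come with a $\sigma$-weight deficit that lets us interpolate them against $|\partial^\alpha_{\beta'}f|_{\sigma,W}$ with arbitrarily small prefactor $\kappa$. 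Terms with $|\beta'|<|\beta|$ are absorbed into $C_\kappa\sum_{|\beta'|<|\beta|}|\partial^\alpha_{\beta'}f|^2_{\sigma,W}$, and $x$-derivatives on $\overline{M}$ generate the $C(\eta)$ sum over $\alpha'<\alpha,\beta'\leq\beta$.

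The main obstacle is obtaining the small $\kappa$-coefficient on the top-order terms $|\beta'|=|\beta|$ in \eqref{controlLM}. This requires a careful weighted integration by parts on the matrix $[\sigma_{ij}^{\overline{M}}]$ and a precise accounting of how $\partial_v$ acting on $\Phi_{ij}*\overline{M}$ produces coefficients with exactly one missing power of $\langle v\rangle^{1/2}$ relative to the $\sigma$-norm, which then enables the interpolation $|\text{commutator}|\leq \kappa|\partial^\alpha_{\beta'}f|^2_{\sigma,W}+C_\kappa|\partial^\alpha_\beta f|^2_{\sigma,W}$. The other delicate point is to track the $\eta$-dependence so that every term arising from $\partial_x\overline{M}$ is grouped into $C(\eta)$, vanishing identically when $(\bar\rho,\bar u,\bar\theta)\equiv (1,0,\tfrac 3 2)$.
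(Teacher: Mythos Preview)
Your outline for the derivative estimates \eqref{controlpaLM} and \eqref{controlLM} matches the paper's approach: Leibniz expansion, commutator terms carrying $\partial_x(\bar\rho,\bar u,\bar\theta)=O(\eta)$, and the Strain--Guo mechanism for the $\kappa$-small top-order velocity commutators. The paper simply cites these as ``similar proofs'' together with \eqref{controlGa}--\eqref{controlpaGa} and omits details, so your sketch is adequate there.

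The substantive difference is in \eqref{controlLbarM} and \eqref{controlpaL}. You reduce to the standard operator via $v\mapsto (v-\bar u)/\sqrt{K\bar\theta}$, and for $\mathcal{A}$ you invoke \eqref{boundlandaunorm} directly. The gap is that the paper's $\sigma$-norm is defined through $\sigma_{ij}=\Phi_{ij}*\mu$ with $\mu=e^{-|v|^2/4}$ centered at the \emph{origin}, whereas $\CL_{\overline{M}}$ and its diffusion matrix $\sigma^{ij}_{\overline{M}}=\Phi_{ij}*\overline{M}$ are centered at $\bar u$, which here is only bounded (KdV amplitude), not small. Pulling back the classical coercivity gives control in the shifted anisotropic norm built from $(v-\bar u)/|v-\bar u|$; to land in $|\cdot|_\sigma$ one must show these two anisotropic norms are equivalent. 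This is not automatic, because \eqref{boundlandaunorm} distinguishes the $\langle v\rangle^{-3/2}$-weighted radial derivative $\nabla_vg\cdot v/|v|$ from the $\langle v\rangle^{-1/2}$-weighted tangential part, and an $O(1)$ shift mixes these directions.

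The paper does not change variables. Instead it computes $\sigma^{ij}_{\overline{M}}(v)$ directly, diagonalizes it in the frame $\{v/|v|,\perp\}$ via a rotation, and shows the eigenvalues satisfy $\lambda_1(v)\sim\langle v\rangle^{-3}$, $\lambda_2(v)\sim\langle v\rangle^{-1}$ uniformly for bounded $\bar u$, using the elementary comparison $||v|-v_{*1}-\bar u_1|^2+|v_{*2}+\bar u_2|^2+|v_{*3}+\bar u_3|^2\sim ||v|-v_{*1}|^2+v_{*2}^2+v_{*3}^2$ up to constants depending on $|\bar u|$. This places $\sigma^{ij}_{\overline{M}}$ and $\sigma_{ij}$ in the same spectral class, after which the $A_{\overline{M}}+K_{\overline{M}}$ decomposition and the Poincar\'e argument proceed exactly as in Strain--Guo. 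Your change-of-variables route would reach the same conclusion, but only after supplying precisely this norm-equivalence argument; as written, the step ``use of \eqref{boundlandaunorm} yields $\langle\mathcal{A}f,W^2f\rangle\geq c'|f|_{\sigma,W}^2$'' assumes what needs to be proved.
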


The properties of the linear operator $L_D$ defined in \eqref{DefL} can be summarized as follows. 
\begin{lemma}\label{LemmaL}
Let $w=w(\alpha,\beta)$ be defined in \eqref{Defw} with $q_1\in(0,1]$ small enough and some $q_2>0$ and $l>\al-|\beta|-\frac{1}{4}$, there exists $A_1,A_2>0$ large enough such that 
\begin{equation}
\label{coLD0}
-\langle L_Dg,w^2(0,0) g\rangle \geq c| g|_{\si,w}^2,
\end{equation}
and
\begin{align}
	-\langle\pa^\al_x L_Dg,w^2(\al,0)\pa^\al_x g\rangle
		\geq& c|\pa^\al_x g|_{\si,w}^2-C\sum_{\al'<\al}(|\mu^a \frac{\pa^{\al-\al'}_xM}{\sqrt{\mu}}|_2| \partial^{\al'}_x g|_{\si,w}\notag\\
		&\qquad\qquad\qquad+|\mu^a\partial^{\al'}_xg|_2|w(\al,0) \frac{\pa^{\al-\al'}_xM}{\sqrt{\mu}})|_\sigma)|\pa^\al_x g|_{\si,w},\label{coLDx}
	\end{align}
	and 
	\begin{align}\label{coLD}
		&-\langle\pa^\al_\be L_Dg,w^2(\al,\be)\pa^\al_\be g\rangle
		\geq c|\pa^\al_\be g|_{\si,w}^2-\ka\sum_{|\beta'|=|\beta|}|\pa^\al_x\partial^{\beta'}_v g|_{\si,w}^2
		-C_\ka\sum_{|\beta'|<|\beta|}|\pa^\al_x\partial^{\beta'}_v g|_{\si,w}^2
        \notag\\
		&\quad-C\sum_{\al'<\al,\bar{\beta}\leq\beta'\leq\beta}(|\mu^a\partial^{\bar{\beta}}_v [\frac{\pa^{\al-\al'}_xM}{\sqrt{\mu}}]|_2| \partial^{\al'}_{\beta-\beta'} g|_{\si,w}
    +|\mu^a\partial^{\al'}_{\bar{\beta}}g|_2|w(\al,\be)\pa^{\beta-\beta'}_v [\frac{\pa^{\al-\al'}_xM}{\sqrt{\mu}}]|_\sigma)|\pa^\al_\be g|_{\si,w},
	\end{align}
for any  $0<\ka<1$,  where $c$ depends  on $\rho,u$ and $\theta$. 
\end{lemma}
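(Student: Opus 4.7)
The plan is to view $L_D$ as the Landau operator linearized around the \emph{local} Maxwellian $M$ (conjugated by the factor $1/\sqrt{\mu}$), augmented by the cutoff penalty $-A_1\chi_{|v|\le A_2}g$ whose sole purpose is to absorb the non-coercive low-velocity remainder. Setting $L_D^0 g:=\frac{1}{\sqrt{\mu}}[Q(M,\sqrt{\mu}g)+Q(\sqrt{\mu}g,M)]$, the identity $\frac{1}{\sqrt{\mu}}Q(F,\sqrt{\mu}h)=\Gamma(F/\sqrt{\mu},h)$ rewrites this as $L_D^0 g=\Gamma(M/\sqrt{\mu},g)+\Gamma(g,M/\sqrt{\mu})$. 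For \eqref{coLD0}, I would integrate by parts in $v$ against $w^2(0,0)g$ to extract the positive diffusion form $\int w^2\sigma^M_{ij}\partial_{v_i}g\,\partial_{v_j}g\,dv$ and the drift form $\int w^2\sigma^M_{ij}\tfrac{v_i}{2}\tfrac{v_j}{2}g^2\,dv$ with $\sigma^M_{ij}:=\Phi_{ij}\ast M$. All remaining lower-order and $\nabla w$-commutator contributions are either absorbed (by taking $q_1\in(0,1]$ small enough) or bounded by $C|\chi_{|v|\le R_0}g|_2^2$ for some fixed $R_0>0$ determined by the admissible range of $(\rho,u,\theta)$. Since $M$ and $\mu$ are both Gaussians with parameters in a bounded set under the a priori assumption \eqref{apriori}, $\sigma^M$ and $\sigma_{ij}=\Phi_{ij}\ast\mu$ are pointwise comparable on $\R^3$; together with the norm equivalence \eqref{boundlandaunorm} this yields
$$-\langle L_D^0 g,w^2(0,0)g\rangle\geq c_0|g|_{\sigma,w}^2-C_0|\chi_{|v|\le R_0}g|_2^2,$$
after which choosing $A_2\ge R_0$ and $A_1$ so large that $A_1\inf_{|v|\le A_2}w^2\ge 2C_0$ completes \eqref{coLD0}.

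For \eqref{coLDx}, I would apply $\partial^\alpha_x$ to $L_D g$ and use the Leibniz rule on the bilinear part (the cutoff is $x$-independent and commutes with $\partial^\alpha_x$):
$$\partial^\alpha_x L_D^0 g=L_D^0(\partial^\alpha_x g)+\sum_{\alpha'<\alpha}\binom{\alpha}{\alpha'}\Big[\Gamma\big(\tfrac{\partial^{\alpha-\alpha'}_x M}{\sqrt{\mu}},\partial^{\alpha'}_x g\big)+\Gamma\big(\partial^{\alpha'}_x g,\tfrac{\partial^{\alpha-\alpha'}_x M}{\sqrt{\mu}}\big)\Big].$$
Pairing with $w^2(\alpha,0)\partial^\alpha_x g$: the diagonal piece $L_D^0(\partial^\alpha_x g)$ yields the coercive $c|\partial^\alpha_x g|_{\sigma,w}^2$ via the step above, while the cross terms are bounded by the bilinear estimate \eqref{controlGa}, with the weight placed on one factor or the other to match the two sums appearing on the right-hand side of \eqref{coLDx}. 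The implicit constant $C(\eta)$ enters because $\partial^{\alpha-\alpha'}_x M$ with $\alpha-\alpha'\geq 1$ involves derivatives of the macroscopic variables: the $(\bar{\rho},\bar{u},\bar{\theta})$ part is bounded by $\eta$ via \eqref{boundkdv}, while the $(\widetilde{\rho},\widetilde{u},\widetilde{\theta})$ part sits inside the small a priori energy \eqref{apriori}. For \eqref{coLD}, the additional $v$-differentiation produces commutators in which $\partial_v$ falls on the convolution kernel inside $\sigma^M$; the classical Strain--Guo treatment then yields one same-order term absorbed by the small $\kappa$, strictly lower-order $v$-derivative contributions bounded by $C_\kappa$, and Gamma-type cross terms $\Gamma(\partial^{\bar\beta}_v(\partial^{\alpha-\alpha'}_x M/\sqrt{\mu}),\partial^{\alpha'}_{\beta-\beta'}g)$ handled via the weighted bilinear bound \eqref{controlpaGa}.

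The main obstacle is the mismatch between the \emph{local} Maxwellian $M$ driving $L_D$ and the \emph{global} $\mu$ encoded both in the norm $|\cdot|_{\sigma,w}$ and in the weight: because the KdV profile may be of large amplitude, $u=\bar{u}+\widetilde{u}$ is only bounded, not small, so $M$ is never close to $\mu$ in $L^\infty$ and standard ``linearization-around-$\mu$'' coercivity arguments fail directly. The compensating observation that makes everything work is that one only needs pointwise comparability $\sigma^M\sim \sigma$ on $\R^3$, which follows from both being convolutions against Gaussians with parameters in a fixed bounded set (maintained by the a priori assumption \eqref{apriori}); the constants in the coercivity then depend only on this admissible range of $(\rho,u,\theta)$. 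A secondary technical issue is controlling the $q_1$-dependent exponential factor in $w$, whose $v$-derivatives generate $\langle v\rangle^2$-growing commutator contributions when integrated by parts; these are tamed by taking $q_1$ small enough so that the Gaussian tail of $M$ inside $\sigma^M$ dominates. The hypothesis $l>\alpha-|\beta|-\tfrac14$ ensures that the polynomial part of $w$ is compatible with the $\langle v\rangle^{-1}$-decay of the Coulomb diffusion coefficient so that $|\cdot|_{\sigma,w}$ remains well defined at infinity.
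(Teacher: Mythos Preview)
Your overall architecture matches the paper's: write $L_D^0 g=\Gamma(M/\sqrt{\mu},g)+\Gamma(g,M/\sqrt{\mu})$, extract the positive $\sigma^M$-form, add the penalty $A_1\chi_{|v|\le A_2}$, and handle $x$- and $v$-derivatives by Leibniz together with \eqref{controlGa}--\eqref{controlpaGa}. The derivative parts of your sketch are fine.

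The gap is in the zero-order step \eqref{coLD0}. You assert that the remainder after extracting the diffusion form is bounded by $C|\chi_{|v|\le R_0}g|_2^2$. This is what happens for the \emph{symmetric} operator $\CL_{\overline{M}}$ (conjugation and linearization by the same Maxwellian), but $L_D^0$ is asymmetric: linearization around $M$, conjugation by $\sqrt{\mu}$. The paper, following Strain--Guo, only obtains
\[
-\langle L_D^0 g,\,w^2(0,0)g\rangle \ \ge\ c\,|g|_{\sigma,w}^2 - C_2\,|g|_2^2,
\]
with a \emph{full} $L^2$ error, not a compactly supported one. The penalty $A_1|\chi_{|v|\le A_2}g|_2^2$ can then only absorb the low-velocity part of $C_2|g|_2^2$; on $\{|v|>A_2\}$ there is nothing the penalty can do.

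The paper closes this by sacrificing half of the coercive term. Using $|g|_{\sigma,w}^2\gtrsim \int w^2\langle v\rangle^{-1}|g|^2\,dv$ and splitting at $|v|=A_2$, one needs
\[
\tfrac{c}{4}\,w^2(0,0)\langle v\rangle^{-1}\ \ge\ C_2\qquad\text{on }\{|v|>A_2\},
\]
which holds for $A_2$ large precisely when the polynomial exponent in $w^2\langle v\rangle^{-1}$ is positive. \emph{This} is the role of the hypothesis on $l$: it is an absorption condition, not a well-definedness condition for $|\cdot|_{\sigma,w}$ (that norm is defined for any weight). Your reading of the $l$-condition is therefore incorrect, and without this polynomial-weight absorption step your argument for \eqref{coLD0} does not close. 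Once \eqref{coLD0} is fixed in this way, \eqref{coLDx} and \eqref{coLD} follow exactly as you outline.
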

\begin{proof}
From the definition of $L_D$ in \eqref{DefL}, one has
\begin{align*}
-\langle\pa^\be_v L_Dg,w^2(0,\be)\pa^\be_v g\rangle=&
-\langle\pa^\be_v\{\frac{1}{\sqrt\mu}Q(M,\sqrt{\mu}g)+\frac{1}{\sqrt\mu}Q(\sqrt{\mu}g,M)\},w^2(0,\be)\pa^\be_v g\rangle
\notag\\
&+\langle A_1\pa^\be_v (\chi_{|v|\leq A_2}g),w^2(0,\be)\pa^\be_v g\rangle.
\end{align*}
From similar arguments in \cite[Lemmas 8,9]{Strain-Guo}, it holds that
\begin{align*}
-\langle\pa^\be_v L_Dg,w^2(0,\be)\pa^\be_v g\rangle
\geq& c|\pa^\be_v g|_{\si,w}^2-C_2|\pa^\be_v g|_2^2
\notag\\
&-\ka\sum_{|\beta'|=|\beta|}|\partial^{\beta'}_v g|_{\si,w}^2
	-C_\ka\sum_{|\beta'|<|\beta|}|\partial^{\beta'}_v g|_{\si,w}^2+A_1|\chi_{|v|\leq A_2}\pa^\be_v g|_2^2.
\end{align*}
for any $\ka>0$ and some $C_2>1$, where the constant $0<c<1$ depends on $\rho,u$ and $\theta$.
Then we prove that $c/2|\pa^\be_v g|_{\si,w}^2-C_2|\pa^\be_v g|^2+A_1|\chi_{|v|\leq A_2}\pa^\be_v g|_2^2\geq0$ for large $A_1$ and $A_2$. Direct calculation shows
\begin{align*}
& \frac{c}{2}|\pa^\be_v g|_{\si,w}^2-C_2|\pa^\be_v g|_2^2+A_1|\chi_{|v|\leq A_2}\pa^\be_v g|_2^2
\\
&\geq \frac{c}{4}\int_{\R^3}\big(\langle v\rangle^{2l-2|\be|-1/2}(1-\chi_{|v|\leq A_2})+\chi_{|v|\leq A_2}\big)|\pa^\be_v g|^2\,dv-C_2| \pa^\be_v g|_2 ^2+A_1|\chi_{|v|\leq A_2}\pa^\be_v g|_{2}^2 
    \\
&=\int_{\R^3}\big(\frac{c}{4}-C_2\langle v\rangle^{-(2l-2|\be|-1/2)}\big)\langle v\rangle^{2l-2|\be|-1/2}(1-\chi_{|v|\leq A_2})|\pa^\be_v g|^2\,dv\\
	&\quad+\int_{\R^3}\big(\frac{c}{4}\langle v\rangle^{2l-2|\be|-1/2}-C_2+A_1\big)\chi_{|v|\leq A_2}|\pa^\be_v g|^2\,dv\geq 0.
\end{align*}
Here we have required $2l>2|\be|+1/2$ and taken suitably large $A_1>0$ and $A_2>0$ such  that $(\frac{c}{4}--C_2\langle v\rangle^{-(2l-2|\be|-1/2)})(1-\chi_{|v|\leq A_2})\geq0$ and $\frac{c}{4}-C_2+A_1>0$.
This leads to 
\begin{align}
	-\langle\pa^\be_v L_Dg,w^2(0,\be)\pa^\be_v g\rangle&\geq \frac{c}{2}|\pa^\be_v g|_{\si,w}^2-\ka\sum_{|\beta'|=|\beta|}|\partial^{\beta'}_v g|_{\si,w}^2
	-C_\ka\sum_{|\beta'|<|\beta|}|\partial^{\beta'}_v g|_{\si,w}^2.
    \label{coLDv}
\end{align}
Similarly,  the desired estimate \eqref{coLD0} holds true.

Applying $\pa^\al_\be$ to $L_D$ and using \eqref{coLDv}, one gets
\begin{align*}
-\langle \pa^\al_\be L_Dg,w^2(\al,\be)\pa^\al_\be g\rangle=&\langle-\pa^\al_\be\{\frac{1}{\sqrt\mu}Q(M,\sqrt{\mu}g)+\frac{1}{\sqrt\mu}Q(\sqrt{\mu}g,M)\}+A_1\pa^\al_\be\chi_{|v|\leq A_2}g,w^2(\al,\be)\pa^\al_\be g\rangle
\notag\\
	\geq&\langle-\pa^\be_v\{\frac{1}{\sqrt\mu}Q(M,\sqrt{\mu}\pa^\al_x g)+\frac{1}{\sqrt\mu}Q(\sqrt{\mu}\pa^\al_x g,M)\}+A_1\pa^\be_v\chi_{|v|\leq A_2}\pa^\al_x g,w^2(\al,\be)\pa^\al_\be g\rangle
    \notag\\
&-C\sum_{\al'<\al}\Big|\langle-\pa^\be_v\{\frac{1}{\sqrt\mu}Q(\pa^{\al-\al'}_xM,\sqrt{\mu}\pa^{\al'}_x g)+\frac{1}{\sqrt\mu}Q(\sqrt{\mu}\pa^{\al'}_x g,\pa^{\al-\al'}_xM)\},w^2(\al,\be)\pa^\al_\be g\rangle\Big|\notag\\
\geq & c|\pa^\al_\be g|_{\si,w}^2-\ka\sum_{|\beta'|=|\beta|}|\pa^\al_x\partial^{\beta'}_v g|_{\si,w}^2
	-C_\ka\sum_{|\beta'|<|\beta|}|\pa^\al_x\partial^{\beta'}_v g|_{\si,w}^2
    \notag\\
	&-C\sum_{\al'<\al}\Big|\langle\pa^\be_v\Ga(\frac{\pa^{\al-\al'}_xM}{\sqrt{\mu}},\pa^{\al'}_x g)+\Ga(\pa^{\al'}_x g,\frac{\pa^{\al-\al'}_xM}{\sqrt{\mu}})\},w^2(\al,\be)\pa^\al_\be g\rangle \Big|,
\end{align*}
which, combined with \eqref{controlpaGa}, gives \eqref{coLD}. Moreover, \eqref{coLDx} can be obtained by similar calculations above and \eqref{controlGa}. This completes the proof of Lemma \ref{LemmaL}.
\end{proof}

\section{Energy estimates}\label{secKdV}
Before we start the major part of this section, we now prove some bounds of the fluid quantities involving time derivatives.
\begin{lemma}\label{lem4.1A}
Under the a priori assumption \eqref{apriori}, it holds that
\begin{align}\label{patphi}
\eps^2\|\partial_t\pa_x\widetilde{\phi}\|^2
+\eps\|\partial_t\widetilde{\phi}\|^2
		\leq C\|\pa^2_x\widetilde{\phi}\|^2
		+C\frac{1}{\eps}\CE(t)+C(\eta)\eps^4,	
\end{align}
and
\begin{align}\label{patpaxphi}
	\eps^2\|\partial_t\pa^2_x\widetilde{\phi}\|^2+\eps\|\partial_t\pa_x\widetilde{\phi}\|^2
	\leq C\|\pa^3_x\widetilde{\phi}\|^2+C\frac{1}{\eps}\|\pa^2_x(\widetilde{\rho},\widetilde{u}_1,\widetilde{\phi})\|^2+CA\frac{\eps^2}{\de}\CE(t)+C(\eta)A\eps^3,	
\end{align}
where $C(\eta)\geq0$ depends only on $\eta$ with $C(0)=0$.
\end{lemma}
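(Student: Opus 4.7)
Both estimates follow from differentiating the Poisson equation in \eqref{perturbeq} with respect to $t$, namely
\begin{equation*}
-\eps^2\pa_x^2\pa_t\widetilde{\phi}+\eps\pa_t\widetilde{\phi}=\pa_t\widetilde{\rho}-\eps^4\pa_tR_4,
\end{equation*}
and then eliminating $\pa_t\widetilde{\rho}$ via the first equation of \eqref{perturbeq},
\begin{equation*}
\pa_t\widetilde{\rho}=\frac{1}{\eps}\pa_x\widetilde{\rho}-\pa_x(\bar{\rho}\widetilde{u}_1)-\pa_x(\widetilde{\rho}u_1)-\eps^3R_1.
\end{equation*}
The only obstruction to testing against $\pa_t\widetilde{\phi}$ (for \eqref{patphi}) or $\pa_x\pa_t\widetilde{\phi}$ (for \eqref{patpaxphi}) is the singular term $\eps^{-1}\pa_x\widetilde{\rho}$. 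The key idea is to substitute Poisson itself back in the form $\widetilde{\rho}=-\eps^2\pa_x^2\widetilde{\phi}+\eps\widetilde{\phi}+\eps^4R_4$, which turns this singularity into
\begin{equation*}
\frac{1}{\eps}\pa_x\widetilde{\rho}=-\eps\pa_x^3\widetilde{\phi}+\pa_x\widetilde{\phi}+\eps^3\pa_xR_4,
\end{equation*}
so the $\eps^{-1}$ factor is traded for one extra $x$-derivative on $\widetilde{\phi}$. After one integration by parts this produces terms of the type $\eps(\pa_x^2\widetilde{\phi},\pa_x\pa_t\widetilde{\phi})$ and (for the second estimate) $\eps(\pa_x^3\widetilde{\phi},\pa_x^2\pa_t\widetilde{\phi})$, which are absorbed by the $\eps^2$-terms of the left-hand side up to the allowed $\|\pa_x^2\widetilde{\phi}\|^2$ or $\|\pa_x^3\widetilde{\phi}\|^2$ on the right.

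For \eqref{patphi}, after testing against $\pa_t\widetilde{\phi}$ and performing the substitution above, the bilinear flux $\pa_x(\bar{\rho}\widetilde{u}_1+\widetilde{\rho}u_1)$ is handled by the Sobolev embedding $\|\widetilde{\rho}\|_{L^\infty}\lesssim\|\widetilde{\rho}\|^{1/2}\|\pa_x\widetilde{\rho}\|^{1/2}\lesssim\CE(t)^{1/2}$ together with $\|\bar{\rho}\|_{L^\infty}, \|u_1\|_{L^\infty}\lesssim 1$; the a priori bound \eqref{apriori} then reduces every nonlinear contribution to size at most $\eps^{-1}\CE(t)$, while the remainder inner products involving $R_1$, $R_4$, $\pa_tR_4$ produce only $C(\eta)\eps^4$ thanks to \eqref{boundkdv}. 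Applying Young's inequality in the weighted form $(A,B)\le\kappa\eps\|B\|^2+C_\kappa\eps^{-1}\|A\|^2$ (matched to the LHS scaling) and taking $\kappa$ small closes \eqref{patphi}.

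The genuine difficulty lies in \eqref{patpaxphi}, where the extra $\pa_x$ produces the quadratic term $\pa_x^2(\widetilde{\rho}u_1)=u_1\pa_x^2\widetilde{\rho}+2\pa_x u_1\pa_x\widetilde{\rho}+\widetilde{\rho}\pa_x^2u_1$. The first summand feeds directly into the admissible $\eps^{-1}\|\pa_x^2\widetilde{\rho}\|^2$ piece. In the third summand, the $\bar{u}_1$-part is $C(\eta)$-bounded and contributes $C(\eta)^2\eps^{-1}\CE(t)\le C(\eta)A\eps^3$, while the $\widetilde{u}_1$-part satisfies $\eps^{-1}\|\widetilde{\rho}\|_{L^\infty}^2\|\pa_x^2\widetilde{u}_1\|^2\lesssim A\eps^3\|\pa_x^2\widetilde{u}_1\|^2$, which by \eqref{smalleps} is absorbed into $\eps^{-1}\|\pa_x^2\widetilde{u}_1\|^2$. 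The delicate middle summand requires the interpolation
\begin{equation*}
\|\pa_x u_1\|_{L^\infty}\leq C(\eta)+C\|\pa_x\widetilde{u}_1\|^{1/2}\|\pa_x^2\widetilde{u}_1\|^{1/2}\leq C(\eta)+C\sqrt{A}\,\eps^{3/2}\de^{-1/2},
\end{equation*}
where the last bound uses that the highest-order norm $\|\pa_x^2\widetilde{u}_1\|$ is only controlled at scale $(\eps\de)^{-2}\CE(t)$ by the definition \eqref{DefE}. Squaring and combining with $\eps^{-1}\|\pa_x\widetilde{\rho}\|^2$ generates precisely the $A\eps^2\de^{-1}\CE(t)$ factor appearing on the right-hand side of \eqref{patpaxphi}, and identical estimates apply to the lower-order $\pa_x^2(\bar{\rho}\widetilde{u}_1)$ piece. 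I expect this careful tracking of the $(\eps,\de)$-scaling of the highest-order derivatives inside $\CE(t)$ to be the main technical point; once it is correctly accounted for, summing all estimates and choosing all Young's-inequality parameters small enough to absorb the $\eps^2\|\pa_x^2\pa_t\widetilde{\phi}\|^2$ and $\eps\|\pa_x\pa_t\widetilde{\phi}\|^2$ contributions on the right completes \eqref{patpaxphi}.
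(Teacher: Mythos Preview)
Your proposal is correct and follows essentially the same route as the paper: differentiate the Poisson equation in $t$, test against $\pa_t\widetilde{\phi}$ (resp.\ $\pa_x\pa_t\widetilde{\phi}$), replace $\pa_t\widetilde{\rho}$ via the continuity equation, and kill the singular $\eps^{-1}\pa_x\widetilde{\rho}$ by substituting the Poisson equation back in. Your identification of the critical term---the $L^\infty$ bound $\|\pa_x\widetilde{u}_1\|_{L^\infty}^2\lesssim A\eps^3/\de$ coming from the $(\eps\de)^{-2}$ weight on $\|\pa_x^2\widetilde{u}_1\|^2$ in $\CE(t)$---is exactly the point the paper highlights, and is the source of the $A\eps^2\de^{-1}\CE(t)$ contribution in \eqref{patpaxphi}; the only cosmetic difference is that the paper first rewrites $\bar{\rho}\widetilde{u}_1+\widetilde{\rho}u_1=\rho\widetilde{u}_1+\widetilde{\rho}\bar{u}_1$ before expanding, which slightly streamlines the bookkeeping but changes nothing substantive.
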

\begin{proof}
Applying $\pa_t$ to the last equation of \eqref{perturbeq}, then taking the inner product with $\partial_t\widetilde{\phi}$ and using the first equation in \eqref{perturbeq}, we reach at 
	\begin{align}\label{1patphi}
		-\eps^2(\partial^{2}_{x}\partial_t\widetilde{\phi},\partial_t\widetilde{\phi})+\eps(\partial_t\widetilde{\phi},\partial_t\widetilde{\phi})
		=&(\partial_t\widetilde{\rho},\partial_t\widetilde{\phi})
		-\eps^4(\partial_tR_{4},\partial_t\widetilde{\phi})
		\notag\\
		=&(\frac{1}{\eps}\pa_x \widetilde{\rho}
		-\partial_x(\bar{\rho}\widetilde{u}_{1})
		-\partial_x(\widetilde{\rho}u_{1})-\eps^3R_1,\partial_t\widetilde{\phi})
		-\eps^4(\partial_tR_{4},\partial_t\widetilde{\phi}).
	\end{align}
	Using the last equation of \eqref{perturbeq} again, we have
	\begin{align*}
		(\frac{1}{\eps}\pa_x \widetilde{\rho},\partial_t\widetilde{\phi})&=(\frac{1}{\eps}\pa_x (-\eps^2\pa_x^2\widetilde{\phi}+\eps\widetilde{\phi}+\eps^4R_4),\partial_t\widetilde{\phi})\notag\\
		&=\eps(\pa_x^2\widetilde{\phi},\partial_t\pa_x\widetilde{\phi})+(\pa_x\widetilde{\phi},\pa_t\widetilde{\phi})+\eps^3(\pa_xR_4,\pa_t\widetilde{\phi}).
	\end{align*}
	Then by the Cauchy-Schwarz inequality and \eqref{boundkdv}, we obtain
	\begin{align}\label{1patphi1}
		|(\frac{1}{\eps}\pa_x \widetilde{\rho},\partial_t\widetilde{\phi})|\leq \frac{1}{2}(\eps^2\|\partial_t\pa_x\widetilde{\phi}\|^2+\eps\|\partial_t\widetilde{\phi}\|^2)+C\frac{1}{\eps}\|\pa_x\widetilde{\phi}\|^2+C\|\pa^2_x\widetilde{\phi}\|^2
        +C(\eta)\eps^4.
	\end{align} 
By \eqref{approx} and \eqref{2.61AA}, we get
\begin{equation}
\label{4.5A}
\|(\bar{\rho}-1,\bar{\theta}-\frac{3}{2})\|_{L^{\infty}}+\|\partial_x(\bar{\rho},\bar{\theta})\|_{H^4}
+\|\partial_t(\bar{\rho},\bar{\theta})\|_{H^4}\leq C(\eta)\eps, \quad \|\partial_x(\bar{u},\bar{\phi})\|_{H^4}
+\|\partial_t(\bar{u},\bar{\phi})\|_{H^4}\leq C(\eta).
\end{equation}
By the Sobolev embedding, \eqref{2.61AA}, \eqref{apriori}, \eqref{4.5A} and \eqref{smalleps}, we get
	\begin{align}\label{1patphi2}
		|(\partial_x(\bar{\rho}\widetilde{u}_{1})
		+\partial_x(\widetilde{\rho}u_{1}),
        \partial_t\widetilde{\phi})|\leq& \frac{\eps}{4}\|\partial_t\widetilde{\phi}\|^2+C\frac{1}{\eps}\|\partial_x(\rho\widetilde{u}_{1})
		+\partial_x(\widetilde{\rho}\bar{u}_{1})\|^2\notag\\
		\leq&\frac{\eps}{4}\|\partial_t\widetilde{\phi}\|^2+C\frac{1}{\eps}\big(\|\rho\|_{L^\infty}^2\|\pa_x\widetilde{u}_{1}\|^2\notag\\
		&\qquad\qquad\qquad\qquad+\|\pa_x\rho\|^2\|\widetilde{u}_{1}\|_{L^\infty}^2+\|\pa_x\bar{u}_1\|^2_{L^\infty}\|\widetilde{\rho}\|^2\|\bar{u}_1\|^2_{L^\infty}\|\pa_x\widetilde{\rho}\|^2\big)\notag\\
		\leq&\frac{\eps}{4}\|\partial_t\widetilde{\phi}\|^2+\frac{C}{\eps}(1+C(\eta))\CE(t).
	\end{align}
	Moreover, by \eqref{boundkdv}, it is straightforward to get 
	\begin{align}\label{1patphi3}
		|\eps^3R_1+\eps^4\pa_tR_4,\pa_t\widetilde{\phi})|\leq \frac{\eps}{8}\|\partial_t\widetilde{\phi}\|^2+C(\eta)\eps^5.
	\end{align}
	Hence, the estimate \eqref{patphi} follows from \eqref{1patphi}, \eqref{1patphi1}, \eqref{1patphi2} and \eqref{1patphi3}.
	
In the following, we prove that estimate \eqref{patpaxphi} holds true. We first apply $\pa_t\pa_x$ to the last equation of \eqref{perturbeq} and take the inner product to get
	\begin{align}\label{1patpaxphi}
\eps^2\|\partial_t\pa^2_x\widetilde{\phi}\|^2+\eps\|\partial_t\pa_x\widetilde{\phi}\|^2=&(\partial_{x}\partial_t\widetilde{\rho},\partial_{x}\partial_t\widetilde{\phi})-\eps^4(\partial_{x}\partial_tR_{4},\partial_{x}\partial_t\widetilde{\phi})\notag\\
		=&(\frac{1}{\eps}\pa^2_x \widetilde{\rho}
		-\partial^2_x(\bar{\rho}\widetilde{u}_{1})
		-\partial^2_x(\widetilde{\rho}u_{1})-\eps^3\pa_xR_1,\partial_t\pa_x\widetilde{\phi})
		-\eps^4(\partial_t\pa_xR_{4},\partial_t\pa_x\widetilde{\phi}).
	\end{align}
	By the last equation of \eqref{perturbeq}, \eqref{boundkdv}, \eqref{2.61AA} and \eqref{apriori}, one gets
	\begin{align}\label{1patpaxphi1}
		|(\frac{1}{\eps}\pa^2_x \widetilde{\rho},\pa_x\partial_t\widetilde{\phi})|&=|(\frac{1}{\eps}\pa^2_x (-\eps^2\pa_x^2\widetilde{\phi}+\eps\widetilde{\phi}+\eps^4R_4),\pa_x\partial_t\widetilde{\phi})|\notag\\
		&\leq |\eps(\pa_x^3\widetilde{\phi},\partial_t\pa^2_x\widetilde{\phi})|+|(\pa^2_x\widetilde{\phi},\pa_t\pa_x\widetilde{\phi})|+|\eps^3(\pa^2_xR_4,\pa_t\pa_x\widetilde{\phi})|\notag\\
		&\leq \frac{1}{2}(\eps^2\|\partial_t\pa^2_x\widetilde{\phi}\|^2+\eps\|\partial_t\pa_x\widetilde{\phi}\|^2)+C\|\pa^3_x\widetilde{\phi}\|^2+C\frac{1}{\eps}\|\pa^2_x\widetilde{\phi}\|^2+C(\eta)\eps^4.
	\end{align}
Using the similar calculation \eqref{1patphi2} and the fact that
	$$
	\|\pa_x\widetilde{u}_1\|^2_{L^\infty}\leq C\|\pa_x\widetilde{u}_1\|\|\pa^2_x\widetilde{u}_1\|\leq CA\eps^2\frac{\eps^2}{\eps\de}=CA\frac{\eps^3}{\de},
	$$ it holds that
		\begin{align}\label{1patpaxphi2}
		|(\partial^2_x(\bar{\rho}\widetilde{u}_{1}+\widetilde{\rho}u_{1}),\partial_t\pa_x\widetilde{\phi})|\leq& \frac{\eps}{4}\|\partial_t\pa_x\widetilde{\phi}\|^2+\frac{C}{\eps}\|\partial^2_x(\rho\widetilde{u}_{1})
		+\partial^2_x(\widetilde{\rho}\bar{u}_{1})\|^2\notag\\
		\leq&\frac{\eps}{4}\|\partial_t\pa_x\widetilde{\phi}\|^2+\frac{C}{\eps}\|\partial^2_x(\widetilde{\rho},\widetilde{u}_{1})\|^2
        +CA\frac{\eps^2}{\de}\CE(t)+C(\eta)A\eps^3.
	\end{align}
By \eqref{boundkdv}, we directly see that
	\begin{align}\label{1patpaxphi3}
		|\eps^3\pa_xR_1+\eps^4\pa_t\pa_xR_4,\pa_t\pa_x\widetilde{\phi})|\leq \frac{\eps}{8}\|\partial_t\pa_x\widetilde{\phi}\|^2+C(\eta)\eps^5.
	\end{align}
	Thus, \eqref{patpaxphi} follows from \eqref{1patpaxphi}, \eqref{1patpaxphi1}, \eqref{1patpaxphi2} and \eqref{1patpaxphi3}.
    This completes the proof of Lemma \ref{lem4.1A}.
\end{proof}
	We also need to bound the time derivatives of $\widetilde{\rho}$, $\widetilde{u}$ and $\widetilde{\theta}$.
	\begin{lemma}\label{lem4.2}
Under the a priori assumption \eqref{apriori}, it holds 
for any small $a>0$ that
		\begin{equation}\label{estpat}
			\|(\partial_t\widetilde{\rho},\partial_t\widetilde{u},\partial_t\widetilde{\theta})\|^{2}
			\leq C\frac{1}{\eps^2}\|\partial_x(\widetilde{\rho},\widetilde{u},\widetilde{\theta},\widetilde{\phi})\|^{2}
            +C\|\mu^{a}\partial_x(g,f)\|^{2}
            +C(\eta)\CE(t)+C(\eta)(\eps^4+\de^4),
		\end{equation}
		and 
		\begin{align}\label{estpatpax}
\|\partial_x\partial_t(\widetilde{\rho},\widetilde{u},\widetilde{\theta})\|^{2}\leq&
			C\frac{1}{\eps^2}\|\partial^2_x(\widetilde{\rho},\widetilde{u},\widetilde{\theta},\widetilde{\phi})\|^{2}+C\|\mu^{a}\partial^2_x(g,f)\|^{2}
+C\eps^2(\|\mu^{a}\partial_x(g,f)\|^{2}
+\|\partial_x(\widetilde{\rho},\widetilde{u},\widetilde{\theta},\widetilde{\phi})\|^{2})
\notag\\
&+C(\eta)\CE(t)
+C(\eta)(\de^4+\eps^4),
		\end{align}
 where $C(\eta)\geq0$ is a constant which depends only on $\eta$ with $C(0)=0$.
	\end{lemma}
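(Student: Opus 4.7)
The plan is to solve for $\partial_t(\widetilde\rho,\widetilde u,\widetilde\theta)$ directly from the perturbed macroscopic system \eqref{perturbeq} and estimate each resulting term in $L^2_x$. From the mass equation, $\partial_t\widetilde\rho=\frac{1}{\eps}\pa_x\widetilde\rho-\pa_x(\bar\rho\widetilde u_1)-\pa_x(\widetilde\rho u_1)-\eps^3 R_1$, where the first term produces the singular $\frac{1}{\eps^2}\|\pa_x\widetilde\rho\|^2$ contribution, the convective products are controlled via Sobolev embedding together with the a priori assumption \eqref{apriori} and the bounds $\|\pa_x\bar u\|_{L^\infty},\|\bar u\|_{L^\infty}\leq C(\eta)$ from \eqref{2.61AA}, and the remainder is absorbed using \eqref{boundkdv}.

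For $\partial_t\widetilde u_1$, the singular terms $\frac{1}{\eps}\pa_x\widetilde u_1$ and $\frac{1}{\eps}\pa_x\widetilde\phi$ account for the $\frac{1}{\eps^2}\|\pa_x(\widetilde u,\widetilde\phi)\|^2$ factor. The delicate microscopic term $\frac{1}{\rho}\pa_x\int_{\R^3} v_1^2 L^{-1}_M\Theta\,dv$ is first simplified by invoking \eqref{reG}, which rewrites $L^{-1}_M\Theta=G-\eps\de L^{-1}_M[P_1(v_1\pa_x M)]$; by the identity \eqref{id1} the second piece generates exactly the explicit viscous term $\eps\de\frac{4}{3\rho}\pa_x(\mu(\theta)\pa_x u_1)$ already present in \eqref{perturbeq} with opposite sign, so they cancel and we are left with only $-\frac{1}{\rho}\pa_x\int v_1^2 G\,dv$. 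Plugging in $G=\overline G+\sqrt{\overline M}f+\sqrt\mu g$, the $\overline G$ contribution is controlled via Lemma \ref{lem3.1} at the cost of $\eps^2\de^2 C(\eta)\lesssim C(\eta)(\eps^4+\de^4)$ after squaring, while the $f,g$ contributions are bounded by $C\|\mu^a\pa_x(g,f)\|^2$ using that $\sqrt{\overline M}\leq C\mu^a$ and $v_1^2\sqrt\mu\,\mu^{-a}\in L^2_v$ for any small $a<1/2$. Contributions where $\pa_x$ falls on $\sqrt{\overline M}$ or $\bar\rho$ are absorbed into $C(\eta)\CE(t)$ since $\pa_x\bar\rho,\pa_x\bar\theta=O(\eps)$ and $\pa_x\sqrt{\overline M}$ is governed by $\pa_x\bar u,\pa_x\bar\theta$. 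The energy equation for $\partial_t\widetilde\theta$ is handled analogously.

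For the second estimate \eqref{estpatpax} we apply $\pa_x$ to the same equations and repeat the scheme. Now the singular contributions produce $\frac{1}{\eps^2}\|\pa^2_x(\widetilde\rho,\widetilde u,\widetilde\theta,\widetilde\phi)\|^2$ and the microscopic contributions produce $C\|\mu^a\pa^2_x(g,f)\|^2$. The extra weight $C\eps^2$ in front of $\|\pa_x(\widetilde\rho,\widetilde u,\widetilde\theta,\widetilde\phi)\|^2+\|\mu^a\pa_x(g,f)\|^2$ originates precisely when $\pa_x$ hits the background $\bar\rho=1+\eps\rho_1+O(\eps^2)$ or $\bar\theta=3/2+\eps\theta_1+O(\eps^2)$: these give $\pa_x\bar\rho,\pa_x\bar\theta=O(\eps)$, and after squaring yield the $\eps^2$ prefactor.

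The main obstacle is the viscous–microscopic cancellation described above; without reverting to the conservative form \eqref{1macro} via \eqref{reG} and \eqref{id1}, one is forced to estimate $\Theta$ directly, which hides a $\pa_t G$ that is singular in $\eps\de$ and would make the bound circular. Once the microscopic piece is reduced to $\pa_x\int v_1^2 G\,dv$, a careful bookkeeping of the $(\eps,\de)$ powers between the $\overline G$-piece (absorbed into $C(\eta)(\eps^4+\de^4)$ through Lemma \ref{lem3.1}) and the crossings $\|\pa_x\bar u\|_{L^\infty}\|\pa_x\widetilde u\|$ (absorbed into $C(\eta)\CE(t)$) completes both estimates \eqref{estpat} and \eqref{estpatpax}.
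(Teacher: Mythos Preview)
Your proposal is correct and follows essentially the same approach as the paper. The only cosmetic difference is that the paper works directly from the equivalent system \eqref{perturbeqA}, which already carries the raw microscopic term $-\frac{1}{\rho}\int v_1^2\pa_x G\,dv$, whereas you start from \eqref{perturbeq} and explicitly invoke \eqref{reG}--\eqref{id1} to observe the cancellation between the viscous term and the Chapman--Enskog part of $L^{-1}_M\Theta$; both routes land on the same $G=\overline G+\sqrt{\overline M}f+\sqrt{\mu}g$ decomposition and the same term-by-term $L^2$ bookkeeping. One small caveat: your explanation of the $C\eps^2$ prefactor in \eqref{estpatpax} attributes it solely to $\pa_x\bar\rho,\pa_x\bar\theta=O(\eps)$, but recall that $\pa_x\bar u$ is only $O(C(\eta))$, not $O(\eps)$ (cf.\ \eqref{4.5A}); those contributions are instead absorbed into $C(\eta)\CE(t)$, which your outline also accounts for.
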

	\begin{proof}
By \eqref{1macro}, \eqref{Defpert} and \eqref{approx}, we arrive at
\begin{align}
		\label{perturbeqA}
		\left\{
		\begin{array}{rl}
			&\dis \partial_{t}\widetilde{\rho}-\frac{1}{\eps}\pa_x \widetilde{\rho}
			+\partial_x(\bar{\rho}\widetilde{u}_{1})
			+\partial_x(\widetilde{\rho}u_{1})=-\eps^3R_1,
			\\
			&\dis \partial_t\widetilde{u}_{1}-\frac{1}{\eps}\partial_x\widetilde{u}_{1}
			+u_{1}\partial_x\widetilde{u}_{1}+\widetilde{u}_{1}\partial_x\bar{u}_{1}+\frac{2}{3}\partial_x\widetilde{\theta}
			+\frac{2}{3}(\frac{\theta}{\rho}-\frac{\bar{\theta}}{\bar{\rho}})\partial_x\rho+\frac{2}{3}\frac{\bar{\theta}}{\bar{\rho}}\partial_x\widetilde{\rho}
			+\frac{1}{\eps}\partial_x\widetilde{\phi}
			\\
			&\dis\qquad=-\frac{1}{\rho}\int_{\mathbb{R}^{3}} v^{2}_{1}\partial_xG\,dv-\eps^3\frac{1}{\bar{\rho}}R_2,\\
			&\dis\partial_t\widetilde{u}_{i}-\frac{1}{\eps}\partial_x\widetilde{u}_{i}+u_{1}\partial_x\widetilde{u}_{i}=-\frac{1}{\rho}\int_{\mathbb{R}^{3}} v_{1}v_{i}\partial_xG\,dv, ~~i=2,3,
			\\
			&\dis\partial_t\widetilde{\theta}-\frac{1}{\eps}\partial_x\widetilde{\theta}
			+\frac{2}{3}\bar{\theta}\partial_x\widetilde{u}_{1}+\frac{2}{3}\widetilde{\theta}\partial_xu_{1}
			+u_1\partial_x\widetilde{\theta}+\widetilde{u}_{1}\partial_x\bar{\theta}
			\\
			&\dis\qquad=\frac{1}{\rho}u\cdot\partial_x(\int_{\mathbb{R}^3} v_{1}vG \, dv)-\frac{1}{\rho}\partial_x(\int_{\mathbb{R}^3}v_{1}\frac{|v|^{2}}{2}G\, dv)-\eps^3 R_3,
			\\
			&\dis-\eps^2\pa_x^2\widetilde{\phi}+\eps\widetilde{\phi}=\widetilde{\rho}-\eps^4R_4.
		\end{array} \right.
	\end{align}
By taking the inner product 
of the second equation of \eqref{perturbeqA} with  $\partial_t\widetilde{u}_1$, one has
		\begin{align*}
			(\partial_t\widetilde{u}_{1},\partial_t\widetilde{u}_{1})
			&=-(-\frac{1}{\eps}\partial_x\widetilde{u}_{1}
+u_{1}\partial_x\widetilde{u}_{1}+\widetilde{u}_{1}\partial_x\bar{u}_{1}+\frac{2}{3}\partial_x\widetilde{\theta}
			+\frac{2}{3}(\frac{\theta}{\rho}-\frac{\bar{\theta}}{\bar{\rho}})\partial_x\rho+\frac{2\bar{\theta}}{3\bar{\rho}}\partial_x\widetilde{\rho},\partial_t\widetilde{u}_{1})-\frac{1}{\eps}(\partial_x\widetilde{\phi},\partial_t\widetilde{u}_{1})
			\notag\\
			&\quad
			-(\frac{1}{\rho}\int_{\mathbb{R}^3} v^2_{1}\partial_x\overline{G}\,dv,\partial_t\widetilde{u}_{1})-(\frac{1}{\rho}\int_{\mathbb{R}^3} v^2_{1}\partial_x h\,dv,\partial_t\widetilde{u}_{1})-(\eps^3\frac{1}{\bar{\rho}}R_2,\partial_t\widetilde{u}_{1}).
		\end{align*}
Here we have used $G=\overline{G}+h$. A direct application of Cauchy-Schwarz inequality, \eqref{2.61AA} and \eqref{apriori} gives
		\begin{align}\label{patest}
		&|(-\frac{1}{\eps}\partial_x\widetilde{u}_{1}
		+u_{1}\partial_x\widetilde{u}_{1}+\widetilde{u}_{1}\partial_x\bar{u}_{1}+\frac{2}{3}\partial_x\widetilde{\theta}
		+\frac{2}{3}(\frac{\theta}{\rho}-\frac{\bar{\theta}}{\bar{\rho}})\partial_x\rho+\frac{2\bar{\theta}}{3\bar{\rho}}\partial_x\widetilde{\rho},\partial_t\widetilde{u}_{1})|+\frac{1}{\eps}|(\partial_x\widetilde{\phi},\partial_t\widetilde{u}_{1})|\notag\\
		\leq&	\frac{1}{2}\|\partial_t\widetilde{u}_{1}\|^{2}+C\frac{1}{\eps^2}\|\partial_x(\widetilde{\rho},\widetilde{u}_1,\widetilde{\theta},\widetilde{\phi})\|^{2}+C(\|u_1\|^2_{L^\infty}\|\pa_x\widetilde{u}_1\|^2+\|\widetilde{u}_1\|^2\|\pa_x\bar{u}_1\|^2_{L^\infty}+\|\frac{\theta}{\rho}-\frac{\bar{\theta}}{\bar{\rho}}\|^2_{L^\infty}\|\partial_x\rho\|^2)
        \notag\\
		\leq&	\frac{1}{2}\|\partial_t\widetilde{u}_{1}\|^{2}+C\frac{1}{\eps^2}\|\partial_x(\widetilde{\rho},\widetilde{u}_1,\widetilde{\theta},\widetilde{\phi})\|^{2}+C(\eta)\CE(t).
		\end{align}
Recalling $h=\sqrt{\overline{M}}f+\sqrt{\mu}g$ given in \eqref{2.46A},
for any small $a>0$, we have
\begin{align*}
|(\frac{1}{\rho}\int_{\mathbb{R}^3} v^2_{1}\partial_x h\,dv,\partial_t\widetilde{u}_{1})|
\leq \frac{1}{8}\|\partial_t\widetilde{u}_{1}\|^{2}+
C\|\mu^{a}\partial_x(g,f)\|^{2}+ C(\eta)\|\mu^{a}f\|^{2}.
\end{align*}
Using \eqref{boundbarG}, \eqref{boundkdv}, \eqref{4.5A} and \eqref{apriori}, we obtain 
		\begin{align*}
	|(\frac{1}{\rho}\int_{\mathbb{R}^3} v^2_{1}\partial_x\overline{G}\,dv,\partial_t\widetilde{u}_{1})|+|(\eps^3\frac{1}{\bar{\rho}}R_2,\partial_t\widetilde{u}_{1})|
			\leq&\frac{1}{4}\|\partial_t\widetilde{u}_{1}\|^{2}+C(\eta)\eps^2\de^2+C(\eta)\CE(t)+C(\eta)\eps^4.
		\end{align*}
Hence, we collect the above estimates to get 
$$\|\partial_t\widetilde{u}_1\|^{2}\leq  C\frac{1}{\eps^2}\|\partial_x(\widetilde{\rho},\widetilde{u},\widetilde{\theta},\widetilde{\phi})\|^{2}
+C\|\mu^{a}\partial_x(g,f)\|^{2}
+C(\eta)\CE(t)+C(\eta)\eps^4+C(\eta)\eps^2\de^2.
$$
The same bound for $\partial_t\widetilde{\rho}$, $\partial_t\widetilde{u}_2$, $\partial_t\widetilde{u}_3$ and $\partial_t\widetilde{\theta}$ can be deduced similarly and we omit the proof for simplicity. Therefore, the desired estimate \eqref{estpat} holds.
		
Next, by taking $\partial_x$ to the second equation of \eqref{perturbeqA} and then taking the inner product with $\partial_x\partial_t\widetilde{u}_1$, one gets
		\begin{align}
        \label{4.16A}
(\partial_x\partial_t\widetilde{u}_{1},\partial_x\partial_t\widetilde{u}_{1})&=-(\partial_x[-\frac{1}{\eps}\partial_x\widetilde{u}_{1}+u_{1}\partial_x\widetilde{u}_{1}+\widetilde{u}_{1}\partial_x\bar{u}_{1}+\frac{2}{3}\partial_x\widetilde{\theta}
			+\frac{2}{3}(\frac{\theta}{\rho}-\frac{\bar{\theta}}{\bar{\rho}})\partial_x\rho+\frac{2}{3}\frac{\bar{\theta}}{\bar{\rho}}\partial_x\widetilde{\rho}]+\frac{1}{\eps}\partial^2_x\widetilde{\phi},\partial_x\partial_t\widetilde{u}_{1})
			\notag\\
			&\quad
			-(\partial_x[\frac{1}{\rho}\int_{\mathbb{R}^3} v^2_{1}\partial_x(\overline{G}+h)\,dv],\partial_x\partial_t\widetilde{u}_{1})-(\eps^3	\partial_x[\frac{1}{\bar{\rho}}R_2],
			\partial_x\partial_t\widetilde{u}_{1}).
		\end{align}
		Similar calculation as in \eqref{patest} gives
		\begin{align*}
			&\big|(\partial_x[-\frac{1}{\eps}\partial_x\widetilde{u}_{1}
			+u_{1}\partial_x\widetilde{u}_{1}+\widetilde{u}_{1}\partial_x\bar{u}_{1}+\frac{2}{3}\partial_x\widetilde{\theta}
			+\frac{2}{3}(\frac{\theta}{\rho}-\frac{\bar{\theta}}{\bar{\rho}})\partial_x\rho+\frac{2}{3}\frac{\bar{\theta}}{\bar{\rho}}\partial_x\widetilde{\rho}]+\frac{1}{\eps}\partial^2_x\widetilde{\phi},\partial_x\partial_t\widetilde{u}_{1}) \big|\notag\\
			&\leq	\frac{1}{2}\|\partial_t\pa_x\widetilde{u}_{1}\|^{2}+C\frac{1}{\eps^2}\|\partial^2_x(\widetilde{\rho},\widetilde{u},\widetilde{\theta},\widetilde{\phi})\|^{2}+C\eps^2\|\partial_x(\widetilde{\rho},\widetilde{u},\widetilde{\theta},\widetilde{\phi})\|^{2}
            +C(\eta)\CE(t).
		\end{align*}
By using $h=\sqrt{\overline{M}}f+\sqrt{\mu}g$, \eqref{boundbarG}, \eqref{boundkdv}, \eqref{4.5A} and \eqref{apriori}, we obtain that the last line in \eqref{4.16A} is bound by
			\begin{align*}
\frac{1}{4}\|\partial_t\pa_x\widetilde{u}_{1}\|^{2}+C(\eta)\eps^2\de^2+C(\eta)\CE(t)+C(\eta)\eps^4+C\|\mu^{a}\partial^2_x(g,f)\|^{2}+C\eps^2\|\mu^{a}\partial_x(g,f)\|^{2},
	\end{align*}
for any small $a>0$. From the above estimates, we have
\begin{align*}
\|\partial_t\pa_x\widetilde{u}_{1}\|^{2}\leq& C\frac{1}{\eps^2}\|\partial^2_x(\widetilde{\rho},\widetilde{u},\widetilde{\theta},\widetilde{\phi})\|^{2}+C\|\mu^{a}\partial^2_x(g,f)\|^{2}
+C\eps^2(\|\mu^{a}\partial_x(g,f)\|^{2}
+\|\partial_x(\widetilde{\rho},\widetilde{u},\widetilde{\theta},\widetilde{\phi})\|^{2})
\notag\\
&+C(\eta)\CE(t)
+C(\eta)(\de^4+\eps^4).
\end{align*}
Hence, the desired estimate \eqref{estpatpax} follows from this and the similar arguments for $\widetilde{\rho}$, $\widetilde{u}_2$, $\widetilde{u}_3$ and $\widetilde{\theta}$. This completes the proof of 
Lemma \ref{lem4.2}.
\end{proof}
	
\subsection{Lower order estimates on fluid quantities}
In this subsection, we obtain the energy estimates for macroscopic quantities up to order one. Note that the global zero order estimate will be treated using energy flux pair approach in Section \ref{SecGlobal} with $\eta=0$. Then we only track the dependence on $\eta$ for the first order.
\begin{lemma}\label{le1strho}
	Under the a priori assumption \eqref{apriori}, for any small $\ka>0$, there exists some $C_\ka>0$ such that
		\begin{align}\label{zerorho}
		\frac{1}{2}\frac{d}{dt}(\widetilde{\rho},\frac{2\bar{\theta}}{3\bar{\rho}^{2}}\widetilde{\rho})+(\pa_x\widetilde{u}_1,\frac{2\bar{\theta}}{3\bar{\rho}}\widetilde{\rho})
		\leq& C(\eta)\mathcal{E}(t)+C\eps^4,
	\end{align}
and
	\begin{align}\label{1strho}
\frac{1}{2}\frac{d}{dt}(\pa_x\widetilde{\rho},\frac{2\bar{\theta}}{3\bar{\rho}^{2}}\pa_x\widetilde{\rho})
+(\pa^2_x\widetilde{u}_1,\frac{2\bar{\theta}}{3\bar{\rho}}\pa_x\widetilde{\rho})
\leq& C\ka\eps\de\|\partial^2_x\widetilde{u}_1\|^2+C(\eta)\mathcal{E}(t)
\notag\\&
\quad+C_\ka A\min\{(\frac{\eps}{\de}+\frac{\eps^2}{\de^2})\CD(t),(\frac{\eps^{3/2}}{\sqrt{\de}}+\frac{\eps^{3}}{\de})\CE(t)\}+C(\eta)\eps^4,
	\end{align}
where $C(\eta)\geq0$ depends only on $\eta$ with $C(0)=0$.
	\end{lemma}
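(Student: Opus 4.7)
\textbf{Proof plan for Lemma \ref{le1strho}.}
Both estimates come from testing the mass equation of \eqref{perturbeq} against a carefully weighted multiple of $\widetilde{\rho}$. For \eqref{zerorho} I would take the $L^2_x$ inner product of the first equation of \eqref{perturbeq} with $\tfrac{2\bar\theta}{3\bar\rho^2}\widetilde\rho$; for \eqref{1strho} I would first apply $\pa_x$ to that equation and then test against $\tfrac{2\bar\theta}{3\bar\rho^2}\pa_x\widetilde\rho$. The weight $\tfrac{2\bar\theta}{3\bar\rho^2}$ is engineered so that the cross terms $(\pa_x\widetilde u_1,\tfrac{2\bar\theta}{3\bar\rho}\widetilde\rho)$ and $(\pa^2_x\widetilde u_1,\tfrac{2\bar\theta}{3\bar\rho}\pa_x\widetilde\rho)$ on the left of the lemma will later match (with opposite sign) the corresponding terms produced by the momentum estimate for $\widetilde u_1$; that is why they are kept on the left-hand side here instead of being absorbed.

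For \eqref{zerorho}, the time derivative gives the leading $\tfrac12\tfrac{d}{dt}(\widetilde\rho,\tfrac{2\bar\theta}{3\bar\rho^2}\widetilde\rho)$ plus a commutator from $\pa_t\tfrac{2\bar\theta}{3\bar\rho^2}$, which by \eqref{4.5A} is $O(\eta)$ and hence absorbs into $C(\eta)\CE(t)$. The first delicate point is the singular transport $-\tfrac{1}{\eps}\pa_x\widetilde\rho$: integration by parts produces $\tfrac{1}{2\eps}(\widetilde\rho^2,\pa_x\tfrac{2\bar\theta}{3\bar\rho^2})$, and since the KdV ansatz forces $(\bar\rho-1,\bar\theta-\tfrac32)$ together with their $x$-derivatives to be $O(\eps)C(\eta)$ by \eqref{4.5A}, the factor $1/\eps$ is exactly cancelled, leaving a term bounded by $C(\eta)\CE(t)$. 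The term $\pa_x(\bar\rho\widetilde u_1)$ yields the designated cross term plus a lower-order piece $(\widetilde u_1\pa_x\bar\rho,\tfrac{2\bar\theta}{3\bar\rho^2}\widetilde\rho)\lesssim C(\eta)\CE(t)$. The nonlinear term $\pa_x(\widetilde\rho u_1)$ is controlled by Cauchy--Schwarz, $\|u_1\|_{L^\infty}\le C$, and the a priori bound \eqref{apriori}, producing at most $C(\eta)\CE(t)$. Finally, $\eps^3 R_1$ is handled by Young's inequality and \eqref{boundkdv} to give $C\eps^4$.

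The first-order estimate \eqref{1strho} follows the same template one derivative higher, but the crucial new difficulty is the cubic term arising from $\pa_x^2(\widetilde\rho u_1)$ paired with $\pa_x\widetilde\rho$. After distributing derivatives one obtains products such as $\pa_x\widetilde u_1\cdot(\pa_x\widetilde\rho)^2$ and $\widetilde\rho\cdot\pa^2_x\widetilde u_1\cdot\pa_x\widetilde\rho$. This is the main obstacle: the only handle on $\|\pa_x\widetilde u_1\|_{L^\infty}$ here is the interpolation $\|\pa_x\widetilde u_1\|_{L^\infty}\le C\|\pa_x\widetilde u_1\|^{1/2}\|\pa^2_x\widetilde u_1\|^{1/2}$, which forces a choice about where to place the $\|\pa^2_x\widetilde u_1\|$ factor. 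Either one extracts $(\eps\de)^{-1/2}$ from $\eps\de\|\pa^2_x\widetilde u_1\|^2\subset\CD(t)$, producing a bound of the form $C_\ka A(\eps/\de+\eps^2/\de^2)\CD(t)$, or one uses \eqref{apriori} to bound $\|\pa^2_x\widetilde u_1\|\lesssim A^{1/2}\eps/\de$ directly, producing $C_\ka A(\eps^{3/2}\de^{-1/2}+\eps^3\de^{-1})\CE(t)$; keeping both options gives the $\min\{\cdot,\cdot\}$ on the right-hand side. Separately, the genuinely highest-order linear coupling, where $\|\pa^2_x\widetilde u_1\|$ appears without any companion small factor, is split by a Young inequality with small parameter $\ka$, yielding the residual $C\ka\eps\de\|\pa^2_x\widetilde u_1\|^2$ term that will later be absorbed against the $\widetilde u$-dissipation from the momentum estimate. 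All remaining linear and remainder pieces (the singular $\tfrac{1}{\eps}$-transport, the $\pa_x(\bar\rho\widetilde u_1)$ term, the commutators with the weight, and $\eps^3\pa_x R_1$) are handled by the exact same mechanism as in the zeroth-order step, using \eqref{4.5A} and \eqref{boundkdv}, and contribute only to $C(\eta)\CE(t)$ and $C(\eta)\eps^4$.
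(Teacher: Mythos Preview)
Your proposal is correct and follows essentially the same route as the paper: test the (differentiated) continuity equation against $\tfrac{2\bar\theta}{3\bar\rho^2}\widetilde\rho$ (resp.\ $\tfrac{2\bar\theta}{3\bar\rho^2}\pa_x\widetilde\rho$), use that $\pa_x(\bar\rho,\bar\theta)=O(\eps)C(\eta)$ to kill the $\tfrac{1}{\eps}$-transport after integration by parts, and for the cubic pieces use the Sobolev interpolation $\|\pa_x\widetilde u_1\|_{L^\infty}\le C\|\pa_x\widetilde u_1\|^{1/2}\|\pa^2_x\widetilde u_1\|^{1/2}$ together with the two readings of $\|\pa^2_x\widetilde u_1\|$ (via $\CD$ or via \eqref{apriori}) to produce the $\min\{\cdot,\cdot\}$. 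One small clarification: what you call the ``highest-order linear coupling'' that receives the $\ka$-Young split is in fact the \emph{trilinear} term $(\widetilde\rho\,\pa^2_x\widetilde u_1,\tfrac{2\bar\theta}{3\bar\rho^2}\pa_x\widetilde\rho)$ coming from $\pa_x(\widetilde\rho\pa_x u_1)$; the genuinely linear piece $(\pa^2_x\widetilde u_1,\tfrac{2\bar\theta}{3\bar\rho}\pa_x\widetilde\rho)$ is the cross term kept on the left-hand side and is not estimated here.
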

	\begin{proof}
We prove \eqref{1strho} first. By applying $\pa_x$ to the first equation  of \eqref{perturbeqA} and then taking the inner product of the resulting equation with $\frac{2\bar{\theta}}{3\bar{\rho}^{2}}\pa_x\widetilde{\rho}$, one gets
\begin{align}\label{iprho}
	&\frac{1}{2}\frac{d}{dt}(\pa_x\widetilde{\rho},\frac{2\bar{\theta}}{3\bar{\rho}^{2}}\pa_x\widetilde{\rho})
	-\frac{1}{2}(\pa_x\widetilde{\rho},\partial_t(\frac{2\bar{\theta}}{3\bar{\rho}^{2}})\pa_x\widetilde{\rho})
	-\frac{1}{\eps}(\pa^2_x\widetilde{\rho},\frac{2\bar{\theta}}{3\bar{\rho}^{2}}\pa_x\widetilde{\rho})
	+(\bar{\rho}\pa^2_x\widetilde{u}_{1},\frac{2\bar{\theta}}{3\bar{\rho}^{2}}\pa_x\widetilde{\rho})+(\pa_x\bar{\rho}\partial_x\widetilde{u}_{1},
	\frac{2\bar{\theta}}{3\bar{\rho}^{2}}\pa_x\widetilde{\rho})
	\notag\\
	=&-(\pa_x(\widetilde{u}_1\partial_x\bar{\rho}),\frac{2\bar{\theta}}{3\bar{\rho}^{2}}\pa_x\widetilde{\rho})
	-(\pa_x(u_1\partial_x\widetilde{\rho}),\frac{2\bar{\theta}}{3\bar{\rho}^{2}}\pa_x\widetilde{\rho})-(\pa_x(\widetilde{\rho}\partial_xu_{1}),\frac{2\bar{\theta}}{3\bar{\rho}^{2}}\pa_x\widetilde{\rho})
	-(\eps^3\pa_xR_1,\frac{2\bar{\theta}}{3\bar{\rho}^{2}}\pa_x\widetilde{\rho}).
\end{align}		
It follows from integration by parts, \eqref{4.5A}, \eqref{approx}, \eqref{boundkdv} and \eqref{apriori} that
	\begin{align}\label{1rho1}
		&\big| \frac{1}{2}(\pa_x\widetilde{\rho},\partial_t(\frac{2\bar{\theta}}{3\bar{\rho}^{2}})\pa_x\widetilde{\rho})
		+\frac{1}{\eps}(\pa^2_x\widetilde{\rho},\frac{2\bar{\theta}}{3\bar{\rho}^{2}}\pa_x\widetilde{\rho}) \big|=\big| \frac{1}{2}(\pa_x\widetilde{\rho},\partial_t(\frac{2\bar{\theta}}{3\bar{\rho}^{2}})\pa_x\widetilde{\rho})
		-\frac{1}{2\eps}(\pa_x\widetilde{\rho},\pa_x(\frac{2\bar{\theta}}{3\bar{\rho}^{2}})\pa_x\widetilde{\rho}) \big|\notag\\
		\leq& C(\|\partial_t(\bar{\rho},\bar{\theta})\|_{L^{\infty}}+\frac{1}{\eps}\|\partial_x(\bar{\rho},\bar{\theta})\|_{L^{\infty}})\|\pa_x\widetilde{\rho}\|^2\leq C(\eta)\CE(t).
	\end{align}
	Similarly, one has
		\begin{align}\label{1rho2}
		&\big| (\pa_x\bar{\rho}\partial_x\widetilde{u}_{1},
		\frac{2\bar{\theta}}{3\bar{\rho}^{2}}\pa_x\widetilde{\rho})+(\pa_x(\widetilde{u}_1\partial_x\bar{\rho}),\frac{2\bar{\theta}}{3\bar{\rho}^{2}}\pa_x\widetilde{\rho}) \big|
		\leq C(\eta)\CE(t).
	\end{align}
	For the second term on the right hand side of \eqref{iprho}, it holds that
	\begin{align}\label{1rho3}
		| (\pa_x(u_1\partial_x\widetilde{\rho}),\frac{2\bar{\theta}}{3\bar{\rho}^{2}}\pa_x\widetilde{\rho})|
		\leq& | (\pa_x(\bar{u}_1\partial_x\widetilde{\rho}),\frac{2\bar{\theta}}{3\bar{\rho}^{2}}\pa_x\widetilde{\rho})|+| (\pa_x\widetilde{u}_1\partial_x\widetilde{\rho},\frac{2\bar{\theta}}{3\bar{\rho}^{2}}\pa_x\widetilde{\rho})|+| (\widetilde{u}_1\partial^2_x\widetilde{\rho},\frac{2\bar{\theta}}{3\bar{\rho}^{2}}\pa_x\widetilde{\rho})|\notag\\
		\leq&C(\|\partial_x(\bar{\rho},\bar{u}_1,\bar{\theta})\|_{L^{\infty}}+\|\partial_x\widetilde{u}_1\|_{L^{\infty}}+\|\partial_x(\widetilde{u}_1\frac{2\bar{\theta}}{3\bar{\rho}^{2}})\|_{L^{\infty}})\|\pa_x\widetilde{\rho}\|^2\notag\\
		\leq&C(\eta)\CE(t)+C\min\{\frac{1}{\ep\de}\sqrt{\CE(t)}\CD(t),\frac{1}{\sqrt{\eps\de}}\CE(t)^{\frac{3}{2}}\}\notag\\
		\leq&C(\eta)\CE(t)+C\sqrt{A}\min\{\frac{\eps}{\de}\CD(t),\frac{\eps^{3/2}}{\sqrt{\de}}\CE(t)\}.
	\end{align}
Here the third inequality  in \eqref{1rho3} holds since
$$   \|\partial_x\widetilde{u}_1\|_{L^{\infty}}\|\pa_x\widetilde{\rho}\|^2\leq C\|\partial_x\widetilde{u}_1\|^\frac{1}{2}\|\partial^2_x\widetilde{u}_1\|^\frac{1}{2}\|\pa_x\widetilde{\rho}\|^2\leq C\min\{\frac{1}{\ep\de}\sqrt{\CE(t)}\CD(t),\frac{1}{\sqrt{\eps\de}}\CE(t)^{\frac{3}{2}}\},
$$
by the Sobolev embedding, \eqref{DefE} and \eqref{DefD}. Likewise, we have for any $0<\ka<1$ that
	\begin{align}\label{1rho4}
		\big| (\pa_x(\widetilde{\rho}\partial_xu_{1}),\frac{2\bar{\theta}}{3\bar{\rho}^{2}}\pa_x\widetilde{\rho})\big|
		\leq&C(\eta)\CE(t)+C\sqrt{A}\min\{\frac{\eps}{\de}\CD(t),\frac{\eps^{3/2}}{\sqrt{\de}}\CE(t)\}+C| (\widetilde{\rho}\partial^2_x\widetilde{u}_{1},\frac{2\bar{\theta}}{3\bar{\rho}^{2}}\pa_x\widetilde{\rho})|
        \notag\\
		\leq&C\ka\eps\de\|\partial^2_x\widetilde{u}_1\|^2+C(\eta)\CE(t)+CA\min\{\frac{\eps}{\de}\CD(t),\frac{\eps^{3/2}}{\sqrt{\de}}\CE(t)\}+C_\ka\frac{1}{\eps\de}\|\widetilde{\rho}\|^2_{L^\infty}\|\pa_x\widetilde{\rho}\|^2
        \notag\\
		\leq&C\ka\eps\de\|\partial^2_x\widetilde{u}_1\|^2+C(\eta)\CE(t)
        +C_\ka A\min\{(\frac{\eps}{\de}+\frac{\eps^2}{\de^2})\CD(t),(\frac{\eps^{3/2}}{\sqrt{\de}}+\frac{\eps^{3}}{\de})\CE(t)\}.
	\end{align}
	In the last inequality of \eqref{1rho4}, we have used the fact that
	$$
	 \frac{1}{\eps\de}\|\widetilde{\rho}\|^2_{L^\infty}\|\pa_x\widetilde{\rho}\|^2\leq CA\frac{\eps^3}{\de}\|\pa_x\widetilde{\rho}\|^2\leq CA\min\{\frac{\eps^2}{\de^2}\CD(t),\frac{\eps^{3}}{\de}\CE(t)\}.
	$$
By \eqref{boundkdv} and \eqref{DefE}, we get
	\begin{align}\label{1rho5}
		|(\eps^3\pa_xR_1,\frac{2\bar{\theta}}{3\bar{\rho}^{2}}\pa_x\widetilde{\rho})|\leq C(\eta)\eps^3\|\pa_x\widetilde{\rho}\|\leq C(\eta)\CE(t)+C(\eta)\eps^6.
	\end{align}
Hence, the desired estimate \eqref{1strho} follows from \eqref{1rho1}, \eqref{1rho2}, \eqref{1rho3}, \eqref{1rho4} and \eqref{1rho5}. 

The proof of \eqref{zerorho} is similar. By taking the inner product on both side of the first equation in \eqref{perturbeq} with $\frac{2\bar{\theta}}{3\bar{\rho}^{2}}\widetilde{\rho}$, we can arrive at
\begin{align*}
	&\frac{1}{2}\frac{d}{dt}(\widetilde{\rho},\frac{2\bar{\theta}}{3\bar{\rho}^{2}}\widetilde{\rho})
	+(\bar{\rho}\pa_x\widetilde{u}_{1},\frac{2\bar{\theta}}{3\bar{\rho}^{2}}\widetilde{\rho})
	\notag\\
	=&\frac{1}{2}(\widetilde{\rho},\partial_t(\frac{2\bar{\theta}}{3\bar{\rho}^{2}})\widetilde{\rho}) +\frac{1}{\eps}(\pa_x\widetilde{\rho},\frac{2\bar{\theta}}{3\bar{\rho}^{2}}\widetilde{\rho})
    -(\widetilde{u}_1\partial_x\bar{\rho},\frac{2\bar{\theta}}{3\bar{\rho}^{2}}\widetilde{\rho})
	-(u_1\partial_x\widetilde{\rho},\frac{2\bar{\theta}}{3\bar{\rho}^{2}}\widetilde{\rho})
    \notag\\
    &-(\widetilde{\rho}\partial_xu_{1},\frac{2\bar{\theta}}{3\bar{\rho}^{2}}\widetilde{\rho})
	-(\eps^3R_1,\frac{2\bar{\theta}}{3\bar{\rho}^{2}}\widetilde{\rho})
    \notag\\
    \leq&C(\|\pa_t(\bar{\rho},\bar{\theta})\|_{L^\infty}+\frac{1}{\eps}
     \|\pa_x(\bar{\rho},\bar{\theta})\|_{L^\infty}+
    \|\pa_x(\bar{\rho},\bar{u}_1,\bar{\theta})\|_{L^\infty})
    \|(\widetilde{\rho},\widetilde{u})\|^2+C    \|\widetilde{\rho}\|_{L^\infty}\|\pa_x\widetilde{u}_1\|\|\widetilde{\rho}\|+C(\eta)\eps^3\|\widetilde{\rho}\|
\notag\\     \leq& C(\eta)\CE(t)+C(\eta)A\eps^5+CA^{\frac{3}{2}}\eps^6,
\end{align*}
which, together with our assumption \eqref{smalleps}, gives the desired estimate \eqref{zerorho}. Hence, we complete the proof of Lemma \ref{le1strho}.
	\end{proof}
	
For the velocity quantity $\pa_x\widetilde{u}$, we have the following lemma where the energy of the electronic field is obtained simultaneously. It is crucial for us to deal with the $O(\eps^{-1})$ and $O(\eps^{-2})$ terms caused by the electronic field and singular shift on $x$.

\begin{lemma}\label{le1stu}
Under the a priori assumption \eqref{apriori}, 
for any small $\ka>0$,
there exists constants $0<c<1$ and $C_{\ka}>0$ such that
	\begin{align}
		&\frac{1}{2}\frac{d}{dt}\{\|\pa_x\widetilde{u}_1\|^{2}+(\frac{1}{\bar\rho}\pa_x\widetilde{\phi},\pa_x\widetilde{\phi})+\eps(\frac{1}{\bar\rho}\pa^2_x\widetilde{\phi},\pa^2_x\widetilde{\phi})\}+
		(\frac{2\bar{\theta}}{3\bar{\rho}}\pa^2_x\widetilde{\rho},\pa_x\widetilde{u}_1)+\frac{2}{3}(\pa^2_x\widetilde{\theta},\pa_x\widetilde{u}_1)+c\ep\de\|\pa^2_x\widetilde{u}_1\|^{2}\notag\\
		&\quad+\ep\de\frac{d}{dt}\int_{\mathbb{R}}\int_{\mathbb{R}^{3}}
		\big\{\pa_x[\frac{1}{\rho}\partial_{x}(K\theta B_{11}(\frac{v-u}{\sqrt{K\theta}})\frac{1}{M}h)]
		\pa_x\widetilde{u}_1\big\}\,dv\,dx
		\notag\\
		\leq & C\ka\eps\de\|\pa^2_x(\widetilde{\rho},     \widetilde{u},\widetilde{\theta})\|^{2}+
        C_\ka(\de+\eps+\frac{\de}{\sqrt\eps})\CD(t)+C_\ka\eps\de\|\pa^2_x(g,f)\|_\sigma^2
        \notag\\
		&+C_\ka A\min\{(\frac{\eps}{\de}+\frac{\eps^2}{\de^2})\CD(t),(\frac{\eps^{3/2}}{\sqrt{\de}}+\frac{\eps^{3}}{\de})\CE(t)\}+C_\ka C(\eta)\CE(t)+C_\ka C(\eta)(\eps^4+\eps^3\de+\eps\de^3),\label{1stu}
	\end{align}
and for $i=2,3$,
\begin{align}\label{1stui}
	&\frac{1}{2}\frac{d}{dt}\|\pa_x\widetilde{u}_i\|^{2}
	+c\ep\de\|\pa^2_x\widetilde{u}_i\|^{2}
	+\ep\de\frac{d}{dt}\int_{\mathbb{R}}\int_{\mathbb{R}^{3}}
	\big\{\pa_x[\frac{1}{\rho}\partial_{x}(K\theta B_{1i}(\frac{v-u}{\sqrt{K\theta}})\frac{1}{M}h)]
	\pa_x\widetilde{u}_i\big\}\,dv\,dx\notag\\
	\leq & C\ka\eps\de\|\pa^2_x(\widetilde{\rho},     \widetilde{u},\widetilde{\theta})\|^{2}+
    C_\ka(\de+\eps+\frac{\de}{\sqrt\eps})\CD(t)+C_\ka\eps\de\|\pa^2_x(g,f)\|_\sigma^2
    \notag\\
	&+C_\ka A\min\{(\frac{\eps}{\de}+\frac{\eps^2}{\de^2})\CD(t),(\frac{\eps^{3/2}}{\sqrt{\de}}+\frac{\eps^{3}}{\de})\CE(t)\}+C_\ka C(\eta)\CE(t)+C_\ka C(\eta)(\eps^4+\eps^3\de+\eps\de^3),
\end{align}
for $i=2,3$, where $C(\eta)\geq0$ depends only on $\eta$ with $C(0)=0$.
\end{lemma}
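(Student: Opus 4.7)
The strategy is to differentiate the second (respectively third/fourth) equation of \eqref{perturbeq} in $x$ and pair with $\pa_x\widetilde{u}_1$ (respectively $\pa_x\widetilde{u}_i$). The $\pa_t$ term yields $\tfrac12 \tfrac{d}{dt}\|\pa_x\widetilde{u}_1\|^2$; the singular transport $-\tfrac{1}{\eps}\pa_x^2 \widetilde{u}_1$ integrates to zero; the pressure pieces $\tfrac{2}{3}\pa_x \widetilde{\theta}$ and $\tfrac{2\bar\theta}{3\bar\rho}\pa_x\widetilde{\rho}$ survive as the structural terms displayed on the left of \eqref{1stu} modulo commutators with $\pa_x(\bar\theta/\bar\rho)$ of size $C(\eta)\mathcal{E}(t)$; and the viscosity $\eps\de\tfrac{4}{3\rho}\pa_x(\mu(\theta)\pa_x u_1)$, after transferring one $\pa_x$ onto $\pa_x\widetilde{u}_1$, produces the dissipation $c\eps\de\|\pa_x^2\widetilde{u}_1\|^2$ with the remainder $C(\eta)\mathcal{E}(t)$. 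The nonlinear fluid factors $u_1\pa_x\widetilde{u}_1$, $\widetilde{u}_1\pa_x\bar{u}_1$ and $(\tfrac{\theta}{\rho}-\tfrac{\bar\theta}{\bar\rho})\pa_x\rho$ are estimated exactly as in the proof of Lemma \ref{le1strho}, using Sobolev embedding, the a priori assumption \eqref{apriori}, and \eqref{boundkdv}: the worst factor $\|\pa_x^2\widetilde{u}_1\|$ is split off by Young's inequality into $\ka\eps\de\|\pa_x^2\widetilde{u}_1\|^2$, and the leftover produces the mixed bound $C_\ka A\min\{(\tfrac{\eps}{\de}+\tfrac{\eps^2}{\de^2})\mathcal{D}(t),(\tfrac{\eps^{3/2}}{\sqrt\de}+\tfrac{\eps^3}{\de})\mathcal{E}(t)\}$ that appears in the conclusion. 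The remainder $\eps^3 R_2/\bar\rho$ is trivially handled by \eqref{boundkdv}.

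The key new difficulty is the singular electric contribution $\tfrac{1}{\eps}(\pa_x^2\widetilde{\phi},\pa_x\widetilde{u}_1)$. To absorb the $\tfrac{1}{\eps}$, I invoke the continuity equation in \eqref{perturbeq} to write $\bar\rho\,\pa_x\widetilde{u}_1=-\pa_t\widetilde{\rho}+\tfrac{1}{\eps}\pa_x\widetilde{\rho}-\pa_x(\widetilde{\rho}u_1)-\widetilde{u}_1\pa_x\bar\rho-\eps^3 R_1$. The leading piece $-\tfrac{1}{\eps\bar\rho}(\pa_x^2\widetilde{\phi},\pa_t\widetilde{\rho})$ is then opened via the Poisson equation $\widetilde{\rho}=-\eps^2\pa_x^2\widetilde{\phi}+\eps\widetilde{\phi}-\eps^4 R_4$ into $\eps(\tfrac{1}{\bar\rho}\pa_x^2\widetilde{\phi},\pa_t\pa_x^2\widetilde{\phi})-(\tfrac{1}{\bar\rho}\pa_x^2\widetilde{\phi},\pa_t\widetilde{\phi})+O(\eps^3)$; one integration by parts in $x$ on the second term together with an integration in $t$ then yields exactly the structural $\tfrac12\tfrac{d}{dt}\{(\tfrac{1}{\bar\rho}\pa_x\widetilde{\phi},\pa_x\widetilde{\phi})+\eps(\tfrac{1}{\bar\rho}\pa_x^2\widetilde{\phi},\pa_x^2\widetilde{\phi})\}$ in \eqref{1stu}, the commutators with $\pa_t(1/\bar\rho)$ being of size $C(\eta)\mathcal{E}(t)$. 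The leftover $\tfrac{1}{\eps^2\bar\rho}(\pa_x^2\widetilde{\phi},\pa_x\widetilde{\rho})$ is, after one more integration by parts and another application of Poisson, reduced to expressions already in $\mathcal{E}(t)$, while the $\eps\|\pa_t\pa_x\widetilde{\phi}\|^2$ remainder generated in the process is controlled by $\mathcal{D}(t)$ through \eqref{patpaxphi} of Lemma \ref{lem4.1A}.

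Finally the microscopic term $-\tfrac{1}{\rho}\pa_x^2 \int v_1^2 L_M^{-1}\Theta\,dv$, paired against $\pa_x\widetilde{u}_1$, is decomposed along the terms of $\Theta$ in \eqref{DefTheta} with $G=\overline{G}+h$. The piece $\eps\de\,\pa_t G$ is rewritten using the self-adjointness of $L_M$ and the Burnett identity $\int v_1^2 L_M^{-1}(P_1\psi)\,dv=\int \tfrac{K\theta B_{11}((v-u)/\sqrt{K\theta})}{M}\psi\,dv$ to produce, after pulling $\pa_t$ outside and neglecting the $\overline{G}$ contribution (order $\eps\de$) via Lemma \ref{lem3.1}, exactly the time derivative $\eps\de\tfrac{d}{dt}\iint \pa_x[\tfrac{1}{\rho}\pa_x(K\theta B_{11}\tfrac{h}{M})]\pa_x\widetilde{u}_1\,dv\,dx$ appearing on the left of \eqref{1stu}; the commutators with $\pa_t u$, $\pa_t\theta$, $\pa_t\rho$ produced in the process are absorbed into $C_\ka C(\eta)\mathcal{E}(t)$ through Lemma \ref{lem4.2}. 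The remaining pieces $-\de\pa_x G$, $\eps\de P_1(v_1\pa_x G)$, $-\de\pa_x\phi\,\pa_{v_1}G$ and $-Q(G,G)$ in $\Theta$ are estimated by moving $\pa_x$ onto the fluid factor and using \eqref{controlGa} together with Lemma \ref{lem3.1}, contributing $C_\ka\eps\de\|\pa_x^2(g,f)\|_\sigma^2$ and $(\de+\eps+\de/\sqrt\eps)\mathcal{D}(t)$; the $\de/\sqrt\eps$ loss arises from $\|\pa_x\phi\|_{L^\infty}\sim 1$ when paired with the $\pa_{v_1}G$ factor. The $i=2,3$ case is strictly easier: the momentum equation is decoupled from $\widetilde{\rho}$, $\widetilde{\theta}$ and $\widetilde{\phi}$, so the paragraph-two manipulation is unnecessary, and the same procedure gives \eqref{1stui}. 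The main obstacle is the paragraph-two cancellation: without the precise continuity--Poisson duality, the $\tfrac{1}{\eps}\pa_x\widetilde{\phi}$ term would lose a full power of $\eps$ uncontrollably, and the delicate point is to verify that the errors it produces genuinely fit into $\mathcal{D}(t)$ via the time-derivative bounds of Lemma \ref{lem4.1A}.
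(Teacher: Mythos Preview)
Your overall strategy matches the paper's, including the continuity--Poisson duality for the electric term. However, there is a genuine gap in your treatment of the microscopic $\Theta$-contribution.

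You propose to handle $\eps\de\,\pa_t G$ by pulling $\pa_t$ outside and then to estimate $-\de\,\pa_x G$ separately among ``the remaining pieces''. This does not close. When $\pa_t$ is pulled outside, in addition to the commutators with $\pa_t(\rho,u,\theta)$ that you mention, there is the term where $\pa_t$ lands on the fluid factor $\pa_x^2\widetilde{u}_1$, namely (after one integration by parts in $x$)
\[
\eps\de\int_{\R}\int_{\R^3}\pa_x^2\Big[\frac{K\theta B_{11}}{\rho M}\,h\Big]\,\pa_x\pa_t\widetilde{u}_1\,dv\,dx.
\]
By Lemma~\ref{lem4.2}, $\|\pa_x\pa_t\widetilde{u}_1\|$ carries an unavoidable $\eps^{-1}\|\pa_x^2(\widetilde\rho,\widetilde u,\widetilde\theta,\widetilde\phi)\|$; pairing this with the leading $\|\pa_x^2 h\|_\sigma$ yields a contribution of size $\de\,\|\pa_x^2(g,f)\|_\sigma\,\|\pa_x^2(\widetilde\rho,\dots)\|\sim \eps^{-1}\CD(t)$, which is not absorbed by any term on the right of \eqref{1stu}. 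Treating $-\de\,\pa_x G$ directly produces the same $\eps^{-1}\CD(t)$ loss from $\de\int A\,\pa_x^2 h\,\pa_x^2\widetilde{u}_1$: splitting by Young gives $\ka\eps\de\|\pa_x^2\widetilde u_1\|^2+C_\ka\tfrac{\de}{\eps}\|\pa_x^2(g,f)\|_\sigma^2$, and $\tfrac{\de}{\eps}$ is larger than the allowed $\eps\de$ by a factor $\eps^{-2}$.

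The paper's remedy (see \eqref{defI11}--\eqref{6I112}) is to keep $\eps\de\,\pa_t G - \de\,\pa_x G$ \emph{together} as $\eps\de(\pa_t-\eps^{-1}\pa_x)G$. Then the term where the differential operator falls on the fluid factor becomes
\[
\eps\de\int_{\R}\int_{\R^3}\pa_x^2\Big[\frac{K\theta B_{11}}{\rho M}\,h\Big]\,\pa_x(\pa_t-\eps^{-1}\pa_x)\widetilde{u}_1\,dv\,dx,
\]
and by the second equation of \eqref{perturbeqA} the combination $(\pa_t-\eps^{-1}\pa_x)\widetilde{u}_1$ has \emph{no} $\eps^{-1}$-singular transport piece; the only residual $\eps^{-1}$ sits in $\eps^{-1}\pa_x\widetilde\phi$, whose contribution after one more integration by parts is precisely the origin of the $\de/\sqrt\eps$ factor (estimate \eqref{6I112}). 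In particular, the $\de/\sqrt\eps$ does not come from the $-\de\,\pa_x\phi\,\pa_{v_1}G$ piece of $\Theta$ as you suggest.
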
	
\begin{proof}
 Applying $\pa_x$ to the second equation  of \eqref{perturbeq} and then taking the inner product of the resulting equation with $\pa_x\widetilde{u}_1$, it holds that
\begin{align}\label{ipu}
	&\frac{1}{2}\frac{d}{dt}\|\pa_x\widetilde{u}_1\|^{2}-\frac{1}{\eps}(\pa^2_x\widetilde{u}_1,\pa_x\widetilde{u}_1)
	+
	(\frac{2\bar{\theta}}{3\bar{\rho}}\partial^2_x\widetilde{\rho},\pa_x\widetilde{u}_1)+(\frac{2}{3}\pa^2_x\widetilde{\theta},\pa_x\widetilde{u}_1)+\frac{1}{\eps}(\pa^2_x\widetilde{\phi},\pa_x\widetilde{u}_1)
	\notag\\
	=&-(\pa_x(\frac{2\bar{\theta}}{3\bar{\rho}})\partial_x\widetilde{\rho},\pa_x\widetilde{u}_1)
	-(\pa_x(u_1\partial_x\widetilde{u}_1),\pa_x\widetilde{u}_1)
	-(\pa_x(\widetilde{u}_1\partial_x\bar{u}_1),\pa_x\widetilde{u}_1)-(\pa_x[\frac{2}{3}(\frac{\theta}{\rho}-\frac{\bar{\theta}}{\bar{\rho}})\partial_x\rho],\pa_x\widetilde{u}_1)
	\notag\\
	&+\eps\de(\pa_x[\frac{4}{3\rho}\partial_x(\mu(\theta)\partial_xu_1)],\pa_x\widetilde{u}_1)
	-(\pa_x[\frac{1}{\rho}\partial_x(\int_{\mathbb{R}^{3}} v_{1}^2L^{-1}_{M}\Theta\,dv)],\pa_x\widetilde{u}_1)-\eps^3(\pa_x[\frac{1}{\bar{\rho}}R_2],\pa_x\widetilde{u}_1).
\end{align}
In the following we compute \eqref{ipu} term by term. First note that
$(\pa^2_x\widetilde{u}_1,\pa_x\widetilde{u}_1)=0$, then we perform similar calculations in the proof of Lemma \ref{le1strho} to get
\begin{align}\label{1stufluid}
	&|(\pa_x(\frac{2\bar{\theta}}{3\bar{\rho}})\partial_x\widetilde{\rho},\pa_x\widetilde{u}_1)
	+(\pa_x(u_1\partial_x\widetilde{u}_1),\pa_x\widetilde{u}_1)
	+(\pa_x(\widetilde{u}_1\partial_x\bar{u}_1),\pa_x\widetilde{u}_1)|\notag\\
	&+|(\pa_x[\frac{2}{3}(\frac{\theta}{\rho}-\frac{\bar{\theta}}{\bar{\rho}})\partial_x\rho],\pa_x\widetilde{u}_1)|+\eps^3|(\pa_x[\frac{1}{\bar{\rho}}R_2],\pa_x\widetilde{u}_1)|\notag\\
	\leq& C\ka\eps\de\|\partial^2_x\widetilde{u}_1\|^2+C(\eta)\mathcal{E}(t)+C_\ka A\min\{(\frac{\eps}{\de}+\frac{\eps^2}{\de^2})\CD(t),(\frac{\eps^{3/2}}{\sqrt{\de}}+\frac{\eps^{3}}{\de})\CE(t)\}+C(\eta)\eps^4.
\end{align}
Now we only need to consider the last term on the left hand side of  \eqref{ipu}, and the fifth, sixth terms on the right hand side. For the last term on the left hand side, we first use the first equation of \eqref{perturbeq} to get
\begin{align*}%\label{reu1}
	\partial_x\widetilde{u}_{1}=-\frac{1}{\bar{\rho}}\partial_{t}\widetilde{\rho}-\frac{1}{\bar{\rho}}\widetilde{u}_{1}\partial_x\bar{\rho}+\frac{1}{\eps}\frac{1}{\bar{\rho}}\pa_x \widetilde{\rho}-\frac{1}{\bar{\rho}}\partial_x(\widetilde{\rho}u_{1})-\frac{1}{\bar{\rho}}\eps^3R_1,
\end{align*} 
which yields
\begin{align}\label{pa2phipau}
	\frac{1}{\eps}(\pa^2_x\widetilde{\phi},\pa_x\widetilde{u}_1)=\frac{1}{\eps}(\pa^2_x\widetilde{\phi},-\frac{1}{\bar{\rho}}\partial_{t}\widetilde{\rho}-\frac{1}{\bar{\rho}}\widetilde{u}_{1}\partial_x\bar{\rho}+\frac{1}{\eps}\frac{1}{\bar{\rho}}\pa_x \widetilde{\rho}-\frac{1}{\bar{\rho}}\partial_x(\widetilde{\rho}u_{1})-\frac{1}{\bar{\rho}}\eps^3R_1).
\end{align}
Then by the last equation in \eqref{perturbeq}, it holds that
\begin{align*}
	\frac{1}{\eps}(\pa^2_x\widetilde{\phi},-\frac{1}{\bar{\rho}}\partial_{t}\widetilde{\rho})
	=&\frac{1}{\eps}(\pa^2_x\widetilde{\phi},\frac{1}{\bar{\rho}}\partial_{t}(\eps^2\pa_x^2\widetilde{\phi}-\eps\widetilde{\phi}-\eps^4R_4))\notag\\
	=&\frac{1}{2}\frac{d}{dt}\{(\frac{1}{\bar{\rho}}\pa_x\widetilde{\phi},\pa_x\widetilde{\phi})+\eps(\frac{1}{\bar{\rho}}\pa^2_x\widetilde{\phi},\pa^2_x\widetilde{\phi})\}-\eps(\pa^2_x\widetilde{\phi},\partial_{t}(\frac{1}{\bar{\rho}})\pa^2_x\widetilde{\phi})+(\pa_x\widetilde{\phi},\pa_x(\frac{1}{\bar{\rho}})\pa_t\widetilde{\phi})\notag\\
	&-(\pa_x\widetilde{\phi},\pa_t(\frac{1}{\bar{\rho}})\pa_x\widetilde{\phi})+\eps^3(\pa_x\widetilde{\phi},\pa_x[\frac{1}{\bar{\rho}}\partial_{t}R_4]).
\end{align*}
By using \eqref{patphi}, \eqref{4.5A}, \eqref{boundkdv} and \eqref{DefE}, we obtain
$$
|\eps(\pa^2_x\widetilde{\phi},\partial_{t}(\frac{1}{\bar{\rho}})\pa^2_x\widetilde{\phi})|+|(\pa_x\widetilde{\phi},\pa_t(\frac{1}{\bar{\rho}})\pa_x\widetilde{\phi})|+|\eps^3(\pa_x\widetilde{\phi},\pa_x[\frac{1}{\bar{\rho}}\partial_{t}R_4])|\leq C(\eta)\CE(t)+ C(\eta)\eps^4,
$$
and
$$
|(\pa_x\widetilde{\phi},\pa_x(\frac{1}{\bar{\rho}})\pa_t\widetilde{\phi})|\leq C(\eta)\|\pa_x\widetilde{\phi}\|^2+C(\eta)\eps^2\|\pa_t\widetilde{\phi}\|^2\leq C(\eta)\CE(t)+C(\eta)\eps^4.
$$
It follows from the above three estimates that
\begin{align}\label{pa2phipau1}
	-\frac{1}{\eps}(\pa^2_x\widetilde{\phi},\frac{1}{\bar{\rho}}\partial_{t}\widetilde{\rho})
	\geq&\frac{1}{2}\frac{d}{dt}\{(\frac{1}{\bar{\rho}}\pa_x\widetilde{\phi},\pa_x\widetilde{\phi})+\eps(\frac{1}{\bar{\rho}}\pa^2_x\widetilde{\phi},\pa^2_x\widetilde{\phi})\}-C(\eta)\CE(t)-C(\eta)\eps^4.
\end{align}
On the other hand, it is direct to obtain
\begin{align}\label{pa2phipau2}
	|\frac{1}{\eps}(\pa^2_x\widetilde{\phi},-\frac{1}{\bar{\rho}}\widetilde{u}_{1}\partial_x\bar{\rho})|&\leq |\frac{1}{\eps}(\pa_x\widetilde{\phi},\frac{1}{\bar{\rho}}\partial_x\bar{\rho}\pa_x\widetilde{u}_{1})|+|\frac{1}{\eps}(\pa_x\widetilde{\phi},\pa_x[\frac{\partial_x\bar{\rho}}{\bar{\rho}}]\widetilde{u}_{1})|\leq C(\eta)\CE(t).
\end{align}
We turn to the third term on the right hand side of \eqref{pa2phipau}. By combining with the last equation in \eqref{perturbeq}, it holds that
\begin{align}
\label{pa2phipau3}
&|\frac{1}{\eps^2}(\pa^2_x\widetilde{\phi},\frac{1}{\bar{\rho}}\pa_x \widetilde{\rho})|
=|\frac{1}{\eps^2}(\pa^2_x\widetilde{\phi},\frac{1}{\bar{\rho}} (-\eps^2\pa_x^3\widetilde{\phi}+\eps\pa_x\widetilde{\phi}+\eps^4\pa_x R_4))|
\notag\\
&\leq C|(\pa_x^2\widetilde{\phi},\pa_x(\frac{1}{\bar{\rho}})\pa_x^2\widetilde{\phi})|+\frac{1}{\eps}|(\pa_x\widetilde{\phi},\pa_x(\frac{1}{\bar{\rho}})\pa_x\widetilde{\phi})|+\eps^2
    |(\pa_x\widetilde{\phi},\pa_x[\frac{1}{\bar{\rho}} \pa_x R_4])|
    \notag\\
	&\leq C(\eta)\CE(t)+C(\eta)\eps^4,
\end{align}
where in the last inequality we have used \eqref{patphi}, \eqref{4.5A}, \eqref{boundkdv} and \eqref{DefE}.

For fourth term on the right hand side of \eqref{pa2phipau}, we also use the last equation of \eqref{perturbeq} to get
\begin{align*}
	\big|\frac{1}{\eps}(\pa^2_x\widetilde{\phi},-\frac{1}{\bar{\rho}}\partial_x(\widetilde{\rho}u_{1}))\big|\leq& \frac{1}{\eps}|(\pa^2_x\widetilde{\phi},\frac{\bar{u}_1+\widetilde{u}_1}{\bar{\rho}}(-\eps^2\pa_x^3\widetilde{\phi}+\eps\pa_x\widetilde{\phi}+\eps^4\pa_xR_4))|\notag\\
	&\quad+\frac{1}{\eps}|(\pa^2_x\widetilde{\phi},\frac{\pa_x(\bar{u}_1+\widetilde{u}_1)}{\bar{\rho}}(-\eps^2\pa_x^2\widetilde{\phi}+\eps\widetilde{\phi}+\eps^4R_4))|.
\end{align*}
All terms involving $\bar{u}_1$ can be bounded in the similar way as in \eqref{pa2phipau3}, which leads to
\begin{align}\label{pa2phipau41}
	\big|\frac{1}{\eps}(\pa^2_x\widetilde{\phi},-\frac{1}{\bar{\rho}}\partial_x(\widetilde{\rho}u_{1}))\big|\leq&C(\eta)\CE(t)+C(\eta)\eps^4+ \frac{1}{\eps}|(\pa^2_x\widetilde{\phi},\frac{\widetilde{u}_1}{\bar{\rho}}(-\eps^2\pa_x^3\widetilde{\phi}+\eps\pa_x\widetilde{\phi}+\eps^4\pa_xR_4))|\notag\\
	&\quad+\frac{1}{\eps}|(\pa^2_x\widetilde{\phi},\frac{\pa_x\widetilde{u}_1}{\bar{\rho}}(-\eps^2\pa_x^2\widetilde{\phi}+\eps\widetilde{\phi}+\eps^4R_4))|.
\end{align}
We only estimate the last term above and the rest can be bounded similarly. Using the Sobolev embedding inequality, \eqref{DefE}, \eqref{DefD} and \eqref{apriori}, one has
\begin{align}\label{pa2phipau42}
	\frac{1}{\eps}|(\pa^2_x\widetilde{\phi},\frac{\pa_x\widetilde{u}_1}{\bar{\rho}}(-\eps^2\pa_x^2\widetilde{\phi}+\eps\widetilde{\phi}+\eps^4R_4))|\leq& C\eps\|\pa_x\widetilde{u}_1\|^\frac{1}{2}\|\pa^2_x\widetilde{u}_1\|^\frac{1}{2}\|\pa^2_x\widetilde{\phi}\|^2\notag\\
	&+C\|\pa_x\widetilde{u}_1\|\|\pa^2_x\widetilde{\phi}\|\|\pa_x\widetilde{\phi}\|^\frac{1}{2}\|\widetilde{\phi}\|^\frac{1}{2}+C(\eta)\eps^3\|\pa^2_x\widetilde{\phi}\|\|\pa_x\widetilde{u}_1\|\notag\\
	\leq&C(\eta)\CE(t)+CA\min\{\frac{\eps}{\de}\CD(t),\frac{\eps^{3/2}}{\sqrt{\de}}\CE(t)\}.
\end{align}
%In the above inequality, the last term can be replaced by some better upper bounds, for example we can deduce
%$$\eps\|\pa_x\widetilde{u}_1\|^\frac{1}{2}\|\pa^2_x\widetilde{u}_1\|^\frac{1}{2}\|\pa^2_x\widetilde{\phi}\|^2\leq CA\min\{\frac{\eps^\frac{3}{2}}{\de}\CD(t),\frac{\eps^\frac{5}{2}}{\sqrt{\de}}\CE(t)\}.
%$$
%But in order to be consistent with the previous lemmas and simplify the notations, we still keep this upper bound as before.
The combination of \eqref{pa2phipau41} and \eqref{pa2phipau42} gives
\begin{align}\label{pa2phipau4}
	\big|\frac{1}{\eps}(\pa^2_x\widetilde{\phi},-\frac{1}{\bar{\rho}}\partial_x(\widetilde{\rho}u_{1}))\big|\leq&C(\eta)\CE(t)+C(\eta)\eps^4+CA\min\{\frac{\eps}{\de}\CD(t),\frac{\eps^{3/2}}{\sqrt{\de}}\CE(t)\}.
\end{align}
The last term on the right hand side of \eqref{pa2phipau} is more bounded by
\begin{align}\label{pa2phipau5}
	|\frac{1}{\eps}(\pa^2_x\widetilde{\phi},\frac{1}{\bar{\rho}}\eps^3R_1)|\leq |\frac{1}{\eps}(\pa_x\widetilde{\phi},\pa_x[\frac{1}{\bar{\rho}}\eps^3R_1])|\leq C(\eta)\CE(t)+C(\eta)\eps^4.
\end{align}
It follows from \eqref{pa2phipau}, \eqref{pa2phipau1}, \eqref{pa2phipau2}, \eqref{pa2phipau3}, \eqref{pa2phipau4} and \eqref{pa2phipau5} that
\begin{align}\label{1stuphi}
	\frac{1}{\eps}(\pa^2_x\widetilde{\phi},\pa_x\widetilde{u}_1)\geq & \frac{1}{2}\frac{d}{dt}\{(\frac{1}{\bar{\rho}}\pa_x\widetilde{\phi},\pa_x\widetilde{\phi})+\eps(\frac{1}{\bar{\rho}}\pa^2_x\widetilde{\phi},\pa^2_x\widetilde{\phi})\}\notag\\
	&\qquad-C(\eta)\CE(t)-C(\eta)\eps^4-CA\min\{\frac{\eps}{\de}\CD(t),\frac{\eps^{3/2}}{\sqrt{\de}}\CE(t)\}.
\end{align}
We now consider the term $\ep\de(\pa_x[\frac{4}{3\rho}\partial_x(\mu(\theta)\partial_xu_1)],\pa_x\widetilde{u}_1)$ in \eqref{ipu}. First split it into
\begin{align*}
	\ep\de(\pa_x[\frac{4}{3\rho}\partial_x(\mu(\theta)\partial_xu_1)],\pa_x\widetilde{u}_1)=\ep\de(\pa_x[\frac{4}{3\rho}\partial_x(\mu(\theta)\partial_x\bar{u}_1)],\pa_x\widetilde{u}_1)+\ep\de(\pa_x[\frac{4}{3\rho}\partial_x(\mu(\theta)\partial_x\widetilde{u}_1)],\pa_x\widetilde{u}_1).
\end{align*} 
Since both $\mu(\theta)$ and $\kappa(\theta)$ are smooth functions of $\theta$, there exists a constant $C>1$ 
such that $\mu(\theta),\kappa(\theta)\in[C^{-1},C]$. The first term on the right hand side above is bounded by
\begin{align*}
	\ep\de(\pa_x[\frac{4}{3\rho}\partial_x(\mu(\theta)\partial_x\bar{u}_1)],\pa_x\widetilde{u}_1)&\leq C(\eta)\ep\de\|\pa_x\widetilde{u}_1\|(1+\|\pa_x(\rho,\theta)\|+\|\pa_x(\rho,\theta)\|_{L^{\infty}}\|\pa_x\theta\|+\|\pa^2_x\theta\|)
    \notag\\
    &\leq C(\eta)\CE(t)+C(\eta)\eps^2\de^2,
\end{align*}
according to \eqref{4.5A}, \eqref{DefE} and \eqref{apriori} as well as the smallness of $\eps$ and $\de$.
For the second term, we first integrate by parts, then use \eqref{apriori} and \eqref{smalleps} to get
\begin{align*}
	\eps\de(\pa_x[\frac{4}{3\rho}\partial_x(\mu(\theta)\partial_x\widetilde{u}_1)],\pa_x\widetilde{u}_1)
	=&-\eps\de(\frac{4}{3\rho}\partial_x(\mu(\theta)\partial_x\widetilde{u}_1),\pa^2_x\widetilde{u}_1)
	\notag\\
	\leq& -\eps\de(\frac{4}{3\rho}\mu(\theta)\pa^2_x\widetilde{u}_1,\pa^2_x\widetilde{u}_1)
	+|\eps\de(\frac{4}{3\rho}\partial_x[\mu(\theta)]\partial_x\widetilde{u}_1,\pa^2_x\widetilde{u}_1)|\notag\\
	\leq& -\eps\de(\frac{4}{3\rho}\mu(\theta)\pa^2_x\widetilde{u}_1,\pa^2_x\widetilde{u}_1)
	+C\eps\de\|\pa_x\theta\|_{L^\infty}\|\pa_x\widetilde{u}_1\|\|\pa^2_x\widetilde{u}_1\|
    \notag\\
	\leq& -\eps\de(\frac{4}{3\rho}\mu(\theta)\pa^2_x\widetilde{u}_1,\pa^2_x\widetilde{u}_1)+C(\eta)\CE(t)
	+C\eps\CD(t).
\end{align*}
We collect the above two estimates to obtain
\begin{equation}\label{1studiss}
	\ep\de(\pa_x[\frac{4}{3\rho}\partial_x(\mu(\theta)\partial_xu_1)],\pa_x\widetilde{u}_1)\leq -\eps\de(\frac{4}{3\rho}\mu(\theta)\pa^2_x\widetilde{u}_1,\pa^2_x\widetilde{u}_1)
	+C\eps\CD(t)+C(\eta)\CE(t)+C(\eta)\eps^2\de^2.
\end{equation}
Next we bound the term involving $L^{-1}_{M}\Theta$ in \eqref{ipu}. Recalling the Burnett functions defined in \eqref{defbur1} and \eqref{defbur2}, and using the identity
\begin{align}\label{reB}
	\int_{\mathbb{R}^{3}}v_{1}v_{j}L^{-1}_{M}\Theta \,dv
	=\int_{\mathbb{R}^{3}} L^{-1}_{M}\{K\theta\hat{B}_{1j}(\frac{v-u}{\sqrt{K\theta}})M\}\frac{\Theta}{M}\,dv
	=K\theta\int_{\mathbb{R}^{3}}B_{1j}(\frac{v-u}{\sqrt{K\theta}})\frac{\Theta}{M}\,dv,
\end{align}
one has from integration by parts that
\begin{align}\label{I1I2}
	-(\pa_x[\frac{1}{\rho}\partial_x(\int_{\mathbb{R}^{3}} v_{1}^2L^{-1}_{M}\Theta\,dv)],\pa_x\widetilde{u}_1)=I_1+I_2,
\end{align}
where
\begin{align*}
	I_1=&(\pa_x[\frac{1}{\rho}\int_{\mathbb{R}^{3}}K\theta B_{11}(\frac{v-u}{\sqrt{K\theta}})\frac{\Theta}{M}\, dv],\pa^2_x\widetilde{u}_1),
	\end{align*}
and
\begin{align}\label{defI2}
	I_2&=(\pa_x[\partial_{x}(\frac{1}{\rho})\int_{\mathbb{R}^{3}}K\theta B_{11}(\frac{v-u}{\sqrt{K\theta}})\frac{\Theta}{M}\, dv],\pa_x\widetilde{u}_1).
\end{align}
Consider $I_1$ first. We use \eqref{DefTheta} to rewrite
\begin{align}\label{defI1}
	I_1=&\int_{\mathbb{R}}\int_{\mathbb{R}^{3}}\pa_x[\frac{1}{\rho}K\theta B_{11}(\frac{v-u}{\sqrt{K\theta}})\frac{1}{M}\{(\eps\de\partial_tG-\de\partial_xG)+\eps\de P_{1}(v_{1}\partial_xG)-\de\partial_x\phi\partial_{v_{1}}G-Q(G,G)\}]\,\pa^2_x\widetilde{u}_1\,dv\,dx
	\notag\\
	:=&I_{11}+I_{12}+I_{13}+I_{14}.
\end{align}
Recalling $G=\overline{G}+h$ given by \eqref{Defpert}, we further decompose $I_{11}$ into
\begin{align}\label{defI11}
	I_{11}=&\eps\de\int_{\mathbb{R}}\int_{\mathbb{R}^{3}}\pa_x[\frac{1}{\rho}K\theta B_{11}(\frac{v-u}{\sqrt{K\theta}})\frac{\partial_t\overline{G}-\eps^{-1}\pa_x\overline{G}}{M}]\,\pa^2_x\widetilde{u}_1\,dv\,dx\notag\\
	&+\eps\de\int_{\mathbb{R}}\int_{\mathbb{R}^{3}}\pa_x[\frac{1}{\rho}K\theta B_{11}(\frac{v-u}{\sqrt{K\theta}})\frac{(\partial_th-\eps^{-1}\pa_xh)}{M}]\,\pa^2_x\widetilde{u}_1 \,dv\,dx
	\notag\\
	:=&I_{111}+I_{112}.
\end{align}
Then we easily see that
\begin{align*}
	I_{111}=&\eps\de\int_{\mathbb{R}}\int_{\mathbb{R}^{3}}\pa_x[\frac{1}{\rho}K\theta B_{11}(\frac{v-u}{\sqrt{K\theta}})]\frac{\partial_t\overline{G}-\eps^{-1}\pa_x\overline{G}}{M}\,\pa^2_x\widetilde{u}_1\,dv\,dx\notag\\
	&+\eps\de\int_{\mathbb{R}}\int_{\mathbb{R}^{3}}[\frac{1}{\rho}K\theta B_{11}(\frac{v-u}{\sqrt{K\theta}})\frac{\partial_t\pa_x\overline{G}-\eps^{-1}\pa^2_x\overline{G}}{M}]\,\pa^2_x\widetilde{u}_1\,dv\,dx.
\end{align*}
For the first term above, we use \eqref{ReG}, \eqref{boundAB}, \eqref{4.5A}, \eqref{estpat} and \eqref{apriori} to get
\begin{align*}
	&\eps\de\int_{\mathbb{R}}\int_{\mathbb{R}^{3}}\pa_x[\frac{1}{\rho}K\theta B_{11}(\frac{v-u}{\sqrt{K\theta}})]\frac{\partial_t\overline{G}-\eps^{-1}\pa_x\overline{G}}{M}\,\pa^2_x\widetilde{u}_1\,dv\,dx\notag\\
	\leq&C\eps\de\|\pa_x[\frac{1}{\rho}K\theta B_{11}(\frac{v-u}{\sqrt{K\theta}})M^{-1/2}]\|_{L^\infty_xL^2_v}\|\frac{\partial_t\overline{G}-\eps^{-1}\pa_x\overline{G}}{\sqrt{M}}\|_{L^2_xL^2_v}\|\pa^2_x\widetilde{u}_1\|
    \notag\\
	\leq&C\ka\eps\de\|\pa^2_x\widetilde{u}_1\|^2+C_\ka\eps\de\|\pa_x(\rho,u,\theta)\|^2_{L^\infty}\|\frac{\partial_t\overline{G}-\eps^{-1}\pa_x\overline{G}}{\sqrt{M}}\|^2_{L^2_xL^2_v}
    \notag\\
	\leq&C\ka\eps\de\|\pa^2_x\widetilde{u}_1\|^2
    +C_\ka(\eps+\de)\CD(t)
    +C_\ka C(\eta)(\eps^4+\eps\de^3),
\end{align*}
for any $0<\ka<1$. Here we have used the fact that
\begin{align*}
\|\frac{\partial_t\overline{G}-\eps^{-1}\pa_x\overline{G}}{\sqrt{M}}\|^2_{L^2_xL^2_v}
\leq& C\de^2\|\pa_x(\bar{u},\bar{\theta})
\|^2_{L^\infty}	\|(\partial_xu,\partial_x\theta) \|^2+C\de^2\|(\pa^{2}_x\bar{u},\pa^{2}_x\bar{\theta})\|^2
\notag\\
&+C\eps^{2}\de^2\|\pa_x(\bar{u},\bar{\theta}) \|^2_{L^\infty}	\|(\partial_tu,\partial_t\theta) \|^2+C\de^2\|(\pa_x\pa_t\bar{u},\pa_x\pa_t\bar{\theta})\|^2,
\end{align*}
which can be obtained with similar arguments as in \eqref{boundbarG}.
Similarly, we also have
\begin{align*}
\eps\de&\int_{\mathbb{R}}\int_{\mathbb{R}^{3}}[\frac{1}{\rho}K\theta B_{11}(\frac{v-u}{\sqrt{K\theta}})\frac{\partial_t\pa_x\overline{G}-\eps^{-1}\pa^2_x\overline{G}}{M}]\,\pa^2_x\widetilde{u}_1\,dv\,dx
    \notag\\
    &\leq \ka\eps\de\|\pa^2_x\widetilde{u}_1\|^2+ C_\ka\eps\de\|\frac{\partial_t\pa_x\overline{G}-\eps^{-1}\pa^2_x\overline{G}}{\sqrt{M}}\|^2
    \notag\\
	&\leq C\ka\eps\de\|\pa^2_x\widetilde{u}_1\|^2     +C_\ka(\eps+\de)\CD(t)     +C_\ka C(\eta)(\eps^4+\eps\de^3),
\end{align*}
which immediately gives
\begin{align}\label{I111}
	I_{111}\leq C\ka\eps\de\|\pa^2_x\widetilde{u}_1\|^2     +C_\ka(\eps+\de)\CD(t)     +C_\ka C(\eta)(\eps^4+\eps\de^3).
\end{align}
We should pay extra attention to $I_{112}$ since it contains $\pa_t\pa_x$ without providing extra $\eps\de$ as $\overline{G}$ to cancel the singularity $\eps^{-2}$ from the right hand side of \eqref{estpatpax}. It follows from integration by parts that
\begin{align}\label{defI112}
	I_{112}&=\eps\de\frac{d}{dt}\int_{\mathbb{R}}\int_{\mathbb{R}^{3}}\pa_x
	\big\{[\frac{1}{\rho}K\theta B_{11}(\frac{v-u}{\sqrt{K\theta}})\frac{1}{M}]h\big\}\pa^2_x\widetilde{u}_1\,dv\,dx
	\nonumber\\
	&\quad-\eps\de\int_{\mathbb{R}}\int_{\mathbb{R}^{3}}\pa_x	\big\{h(\partial_t-\eps^{-1}\pa_x)[\frac{1}{\rho}K\theta B_{11}(\frac{v-u}{\sqrt{K\theta}})\frac{1}{M}]\big\}\pa^2_x\widetilde{u}_1\,dv\,dx
	\nonumber\\
	&\quad+\eps\de\int_{\mathbb{R}}\int_{\mathbb{R}^{3}}\pa^2_x
	\big\{[\frac{1}{\rho}K\theta B_{11}(\frac{v-u}{\sqrt{K\theta}})\frac{1}{M}]h\big\}\pa_x(\partial_t-\eps^{-1}\pa_x)\widetilde{u}_1\,dv\,dx.
\end{align}
Noticing $h=\sqrt{\overline{M}}f(t,x,v)+\sqrt{\mu}g(t,x,v)$, the second line in \eqref{defI112} can be bounded by
\begin{align}\label{1I112}
	&\big|\eps\de\int_{\mathbb{R}}\int_{\mathbb{R}^{3}}\pa_x	\big\{h(\partial_t-\eps^{-1}\pa_x)[\frac{1}{\rho}K\theta B_{11}(\frac{v-u}{\sqrt{K\theta}})\frac{\sqrt{\mu}}{M}]\big\}\pa^2_x\widetilde{u}_1\,dv\,dx\big|
    \notag\\
	\leq &C\eps\de\|\langle v\rangle^{-\frac{1}{2}}\frac{\sqrt{\overline{M}}f+\sqrt{\mu}g}{\sqrt{\mu}}\|_{L^\infty_xL^2_v}\|\pa_x(\partial_t-\eps^{-1}\pa_x)[\frac{1}{\rho}K\theta B_{11}(\frac{v-u}{\sqrt{K\theta}})\frac{\mu}{M}]\|_{L^2_xL^2_v}\|\pa^2_x\widetilde{u}_1\|
    \notag\\
	&+C\eps\de\|\langle v\rangle^{-\frac{1}{2}}\pa_x[\frac{\sqrt{\overline{M}}f+\sqrt{\mu}g}{\sqrt{\mu}}]\|_{L^2_xL^2_v}\|(\partial_t-\eps^{-1}\pa_x)[\frac{1}{\rho}K\theta B_{11}(\frac{v-u}{\sqrt{K\theta}})\frac{\mu}{M}]\|_{L^\infty_xL^2_v}\|\pa^2_x\widetilde{u}_1\|\notag\\
\leq& C\ka\eps\de\|\pa^2_x\widetilde{u}_1\|^2     +C_\ka(\eps+\de)\CD(t)     +C_\ka C(\eta)(\eps^4+\eps\de^3).
\end{align}
In the above inequality, we have used $|\langle v\rangle^{m}\overline{M}^{\frac{1}{2}}\mu^{-\frac{1}{2}}|\leq C$ for any $m>0$, \eqref{boundAB}, \eqref{4.5A}, \eqref{DefE}, \eqref{DefD}, \eqref{apriori}, Lemma \ref{lem4.2} and
\begin{align*}
&\|\pa_x(\partial_t-\eps^{-1}\pa_x)[\frac{1}{\rho}K\theta B_{11}(\frac{v-u}{\sqrt{K\theta}})\frac{\mu}{M}]\|_{L^2_xL^2_v}
\notag\\
\leq&C(\|\pa_x\pa_t(\rho,u,\theta)\|+\|\pa_x(\rho,u,\theta)\|_{L^\infty}\|\pa_t(\rho,u,\theta)\|)
\notag\\
&+C\frac{1}{\eps}(\|\pa^2_x(\rho,u,\theta)\|+\|\pa_x(\rho,u,\theta)\|_{L^\infty}\|\pa_x(\rho,u,\theta)\|)
\notag\\
\leq& C(\eta)\frac{1}{\eps}+CA\frac{1}{\de}+C(\eta).
\end{align*}
For the last term on the right hand side of \eqref{defI112}, we first use the similar arguments as in \eqref{1I112} to get
\begin{align}\label{2I112}
	&|\eps\de\int_{\mathbb{R}}\int_{\mathbb{R}^{3}}\pa^2_x
	\big\{[\frac{1}{\rho}K\theta B_{11}(\frac{v-u}{\sqrt{K\theta}})\frac{1}{M}]h\big\}\pa_x(\partial_t-\eps^{-1}\pa_x)\widetilde{u}_1\,dv\,dx|\notag\\
	\leq& |\eps\de\int_{\mathbb{R}}\int_{\mathbb{R}^{3}}
	\frac{1}{\rho}K\theta B_{11}(\frac{v-u}{\sqrt{K\theta}})\frac{1}{M}\pa^2_xh\pa_x(\partial_t-\eps^{-1}\pa_x)\widetilde{u}_1\,dv\,dx|
    \notag\\
    &+ C\ka\eps\de\|\pa^2_x(\widetilde{\rho},\widetilde{u},\theta)\|^2+C_\ka(\de+\eps)\CD(t)+C(\eta)(\eps^3\de+\eps^4).
\end{align}
It remains to estimate the first term on the right hand side, which, combined with the second equation in \eqref{perturbeqA}, shows that
\begin{align}\label{3I112}
	&\eps\de|\int_{\mathbb{R}}\int_{\mathbb{R}^{3}}\frac{1}{\rho}K\theta B_{11}(\frac{v-u}{\sqrt{K\theta}})\frac{1}{M}\pa^2_xh\pa_x(\partial_t-\eps^{-1}\pa_x)\widetilde{u}_1\,dv\,dx|\notag\\
	=&\eps\de|\int_{\mathbb{R}}\int_{\mathbb{R}^{3}}\frac{1}{\rho}K\theta B_{11}(\frac{v-u}{\sqrt{K\theta}})\frac{1}{M}\pa^2_xh\pa_x(u_{1}\partial_x\widetilde{u}_{1}+\widetilde{u}_{1}\partial_x\bar{u}_{1}+\frac{2}{3}\partial_x\widetilde{\theta}
	+\frac{2}{3}(\frac{\theta}{\rho}-\frac{\bar{\theta}}{\bar{\rho}})\partial_x\rho+\frac{2}{3}\frac{\bar{\theta}}{\bar{\rho}}\partial_x\widetilde{\rho}\notag\\
	&\qquad\qquad\qquad\qquad\qquad\qquad\qquad\qquad\qquad
	+\frac{1}{\eps}\partial_x\widetilde{\phi}+\frac{1}{\rho}\int_{\mathbb{R}^3} \mathbf{v}^2_{1}\partial_xG(t,x,\mathbf{v})\,d\mathbf{v}+\eps^3	\partial_x[\frac{1}{\bar{\rho}}R_2])\,dv\,dx|,
\end{align}
where $\mathbf{v}=(\mathbf{v}_1,\mathbf{v}_2,\mathbf{v}_3)$. We perform similar calculations as in \eqref{1I112} to get 
\begin{align}\label{4I112}
	&\eps\de\int_{\mathbb{R}}\int_{\mathbb{R}^{3}}\frac{1}{\rho}K\theta B_{11}(\frac{v-u}{\sqrt{K\theta}})\frac{1}{M}\pa^2_xh\pa_x(u_{1}\partial_x\widetilde{u}_{1}+\widetilde{u}_{1}\partial_x\bar{u}_{1}+\frac{2}{3}\partial_x\widetilde{\theta}\notag\\
	&\qquad\qquad\qquad\qquad\qquad\qquad\qquad\qquad\qquad
	+\frac{2}{3}(\frac{\theta}{\rho}-\frac{\bar{\theta}}{\bar{\rho}})\partial_x\rho+\frac{2}{3}\frac{\bar{\theta}}{\bar{\rho}}\partial_x\widetilde{\rho}
	+\eps^3	\partial_x[\frac{1}{\bar{\rho}}R_2])\,dv\,dx
    \notag\\
\leq&C\ka\eps\de\|\pa^2_x(\widetilde{\rho},\widetilde{u},\widetilde{\theta})\|^{2}+C_\ka\eps\de\|\langle v\rangle^{-10}\mu^{-1/2}\pa^2_xh \|^{2}+C_\ka(\de+\eps)\CD(t)
    +C_\ka C(\eta)\CE(t)+C_\ka C(\eta)\eps^3\de
    \notag\\
	\leq&C\ka\eps\de\|\pa^2_x(\widetilde{\rho},\widetilde{u},\widetilde{\theta})\|^{2}
    +C_\ka(\eps+\de)\CD(t)+C_\ka\eps\de\|\pa^2_x(g,f)\|_{\sigma}^{2}+C_\ka C(\eta)(\CE(t)+\eps^3\de),
\end{align}
for any $0<\ka<1$. Here in the last inequality we have used
$h=\sqrt{\overline{M}}f(t,x,v)+\sqrt{\mu}g(t,x,v)$.
On the other hand, it holds that
\begin{align}\label{41I112}
	&\eps\de|\int_{\mathbb{R}}\int_{\mathbb{R}^{3}}\frac{1}{\rho}K\theta B_{11}(\frac{v-u}{\sqrt{K\theta}})\frac{1}{M}\pa^2_xh\pa_x(\frac{1}{\rho}\int_{\mathbb{R}^3} \mathbf{v}^2_{1}\partial_x\overline{G}(t,x,\mathbf{v})\,d\mathbf{v})\,dv\,dx|
    \notag\\
	\leq&\eps\de|\int_{\mathbb{R}}\int_{\mathbb{R}^{3}}\frac{1}{\rho}K\theta B_{11}(\frac{v-u}{\sqrt{K\theta}})\frac{1}{M}\pa^2_xh\frac{1}{\rho}\int_{\mathbb{R}^3} \mathbf{v}^2_{1}\partial^2_x\overline{G}(t,x,\mathbf{v})\,d\mathbf{v}\,dv\,dx|\notag\\
	&+\eps\de|\int_{\mathbb{R}}\int_{\mathbb{R}^{3}}\frac{1}{\rho}K\theta B_{11}(\frac{v-u}{\sqrt{K\theta}})\frac{1}{M}\pa^2_xh\pa_x(\frac{1}{\rho})\int_{\mathbb{R}^3} \mathbf{v}^2_{1}\partial_x\overline{G}(t,x,\mathbf{v})\,d\mathbf{v}\,dv\,dx|
    \notag\\
	\leq& C\eps\de\|\langle v\rangle^{-10}\mu^{-1/2}\pa^2_x( \sqrt{\overline{M}}f+\sqrt{\mu}g )\|\big(\|\partial^2_x\overline{G}\|+\|\pa_x\rho\|_{L^\infty_x}\|\partial_x\overline{G}\|\big)\notag\\
	\leq& C\eps\CD(t)+C(\eta)\eps^3\de,
\end{align}
and
\begin{align}\label{5I112}
	\eps\de|\int_{\mathbb{R}}\int_{\mathbb{R}^{3}}\frac{1}{\rho}K\theta B_{11}(\frac{v-u}{\sqrt{K\theta}})\frac{1}{M}\pa^2_xh\pa_x(\frac{1}{\rho}\int_{\mathbb{R}^3} \mathbf{v}^2_{1}\partial_xh(t,x,\mathbf{v})\,d\mathbf{v})\,dv\,dx|
	\leq C\eps\de\|\pa^2_x(g,f)\|_\sigma^2+C\eps\CD(t).
\end{align}
We turn to the term containing $\pa_x\widetilde{\phi}$ in \eqref{3I112} now. Integration by parts gives 
\begin{align}\label{6I112}
	&\de\Big|\int_{\mathbb{R}}\int_{\mathbb{R}^{3}}\frac{1}{\rho}K\theta B_{11}(\frac{v-u}{\sqrt{K\theta}})\frac{1}{M}\pa^2_xh\pa^2_x\widetilde{\phi}\,dv\,dx\Big|\notag\\
	\leq&\de|\int_{\mathbb{R}}\int_{\mathbb{R}^{3}}\pa_x(\frac{1}{\rho}K\theta B_{11}(\frac{v-u}{\sqrt{K\theta}})\frac{1}{M})\pa_xh\pa^2_x\widetilde{\phi}\,dv\,dx|+\de|\int_{\mathbb{R}}\int_{\mathbb{R}^{3}}\frac{1}{\rho}K\theta B_{11}(\frac{v-u}{\sqrt{K\theta}})\frac{1}{M}\pa_xh\pa^3_x\widetilde{\phi}\,dv\,dx|\notag\\
	\leq& C(\eps+\de)\CD(t)+C\de\times\frac{1}{\sqrt{\de\eps}}\|\langle v\rangle^{-\frac{1}{2}}\pa_x(g,f)\|\times\eps\sqrt \de\|\pa^3_x\widetilde{\phi}\|\times\frac{1}{\sqrt{\eps}}
    \notag\\
	\leq& C(\eps+\de+\frac{\de}{\sqrt\eps})\CD(t),
\end{align}
where in the last two inequalities we have used \eqref{boundlandaunorm},
\eqref{DefD}, \eqref{4.5A} and \eqref{apriori}.
It follows from \eqref{defI112}, \eqref{1I112}, \eqref{2I112}, \eqref{3I112}, \eqref{4I112}, \eqref{41I112}, \eqref{5I112} and \eqref{6I112} that 
\begin{align*}
	I_{112}\leq& \eps\de\frac{d}{dt}\int_{\mathbb{R}}\int_{\mathbb{R}^{3}}\pa_x
	\big\{\frac{1}{\rho}K\theta B_{11}(\frac{v-u}{\sqrt{K\theta}})\frac{1}{M}h\big\}\pa^2_x\widetilde{u}_1\,dv\,dx
+C\ka\eps\de\|\pa^2_x(\widetilde{\rho},\widetilde{u},\widetilde{\theta})\|^{2}
    \notag\\
	&\qquad +C_\ka(\de+\eps+\frac{\de}{\sqrt\eps})\CD(t)+C_\ka\eps\de\|\pa^2_x(g,f)\|_\sigma^{2}+C_\ka C(\eta)\CE(t)+C_\ka C(\eta)(\eps^4+\eps^3\de+\eps\de^3),
\end{align*}
which, combined with \eqref{defI11} and \eqref{I111}, yields
\begin{align}\label{I11}
	I_{11}\leq &\eps\de\frac{d}{dt}\int_{\mathbb{R}}\int_{\mathbb{R}^{3}}\pa_x
	\big\{\frac{1}{\rho}K\theta B_{11}(\frac{v-u}{\sqrt{K\theta}})\frac{1}{M}h\big\}\pa^2_x\widetilde{u}_1\,dv\,dx
+C\ka\eps\de\|\pa^2_x(\widetilde{\rho},\widetilde{u},\widetilde{\theta})\|^{2}
    \notag\\
    &+C_\ka(\de+\eps+\frac{\de}{\sqrt\eps})\CD(t)
	+C_\ka\eps\de\|\pa^2_x(g,f)\|_\sigma^{2}+C_\ka C(\eta)\CE(t)+C_\ka C(\eta)(\eps^4+\eps^3\de+\eps\de^3).
\end{align}
Similar to \eqref{I11}, we can obtain
\begin{align}\label{I1213}
	I_{12}+I_{13}\leq& C\ka\eps\de\|\pa^2_x(\widetilde{\rho},\widetilde{u},\widetilde{\theta})\|^{2}+
    C_\ka(\de+\eps+\frac{\de}{\sqrt\eps})\CD(t)\notag\\
	&+C_\ka\eps\de\|\pa^2_x(g,f)\|_\sigma^{2}+C_\ka C(\eta)\CE(t)+C_\ka C(\eta)(\eps^4+\eps^3\de+\eps\de^3).
\end{align}
It remains to estimate $I_{14}$ now. Using \eqref{controlpaGa}, \eqref{boundbarG0}, \eqref{boundbarG}, \eqref{4.5A}, \eqref{DefE}, \eqref{DefD} and \eqref{apriori}, we have
\begin{align}\label{I14}
	I_{14}=&-\int_{\mathbb{R}}\int_{\mathbb{R}^{3}}\pa_x[\frac{1}{\rho}K\theta B_{11}(\frac{v-u}{\sqrt{K\theta}})\frac{\sqrt{\mu}}{M}\Gamma(\frac{G}{\sqrt{\mu}},\frac{G}{\sqrt{\mu}})]\,\pa^2_x\widetilde{u}_1\,dv\,dx
	\notag\\
	\leq& C
	\int_{\mathbb{R}}|\pa_x(\rho,u,\theta)|\mu^a\frac{G}{\sqrt{\mu}}|_{2}|\frac{G}{\sqrt{\mu}}|_{\sigma}|\pa^2_x\widetilde{u}_1|\,dx
	\notag\\
	&+C
	\int_{\mathbb{R}}|\mu^a\frac{\pa_xG}{\sqrt{\mu}}|_{2}|\frac{G}{\sqrt{\mu}}|_{\sigma}|\pa^2_x\widetilde{u}_1|\,dx
	+C
	\int_{\mathbb{R}}|\mu^a\frac{G}{\sqrt{\mu}}|_{2}|\frac{\pa_xG}{\sqrt{\mu}}|_{\sigma}|\pa^2_x\widetilde{u}_1|\,dx\notag\\
	\leq&C(\eps+\de)\CD(t)+C(\eta)\CE(t)+C(\eta)\eps^3\de,
\end{align}
where the last inequality is achieved by
\begin{align*}
&\int_{\mathbb{R}}|\mu^a\frac{\pa_xG}{\sqrt{\mu}}|_{2}|\frac{G}{\sqrt{\mu}}|_{\sigma}|\pa^2_x\widetilde{u}_1|\,dx
\leq C\big\||\frac{G}{\sqrt{\mu}}|_{\sigma}\big\|_{L^\infty}
\big\||\mu^a\frac{\pa_xG}{\sqrt{\mu}}|_{2}\big\|
\|\pa^2_x\widetilde{u}_1\|
\notag\\
&\leq C(\|\frac{G}{\sqrt{\mu}}\|_{\sigma}+
\|\frac{\pa_xG}{\sqrt{\mu}}\|_{\sigma})
(\big\|\mu^a\frac{\pa_x\overline{G}}{\sqrt{\mu}}\big\|
+\big\|\mu^a\frac{\pa_x(\sqrt{\overline{M}}f+\sqrt{\mu}g)}{\sqrt{\mu}}\big\|)\|\pa^2_x\widetilde{u}_1\|
\notag\\ 
&\leq C(C(\eta)\de\eps+
\|(f,g)\|_{\sigma}+\|\pa_x(f,g)\|_{\sigma})
(C(\eta)\de\eps+ \|\langle v\rangle^{-\frac{1}{2}}f\|+\|\langle v\rangle^{-\frac{1}{2}}\pa_x(f,g)\|)\|\pa^2_x\widetilde{u}_1\|
\notag\\ 	
&\leq C(\eps+\de)\CD(t)+C(\eta)\CE(t)+C(\eta)\eps^3\de.
\end{align*}
The combination of \eqref{defI1}, \eqref{I11}, \eqref{I1213} and \eqref{I14} gives
\begin{align*}%\label{I1}
	I_{1}\leq &\eps\de\frac{d}{dt}\int_{\mathbb{R}}\int_{\mathbb{R}^{3}}\pa_x
	\big\{\frac{1}{\rho}K\theta B_{11}(\frac{v-u}{\sqrt{K\theta}})\frac{1}{M}h\big\}\pa^2_x\widetilde{u}_1\,dv\,dx
+C\ka\eps\de\|\pa^2_x(\widetilde{\rho},\widetilde{u},\widetilde{\theta})\|^{2}
    \notag\\
	&\qquad+C_\ka(\de+\eps+\frac{\de}{\sqrt\eps})\CD(t)+C_\ka\eps\de\|\pa^2_x(g,f)\|_\sigma^{2}+C_\ka C(\eta)\CE(t)+C_\ka C(\eta)(\eps^4+\eps^3\de+\eps\de^3).
\end{align*}
The estimate for $I_2$, which is defined in \eqref{defI2}, can be obtained in the similar way that
\begin{align*}%\label{I2}
	I_{2}\leq &\eps\de\frac{d}{dt}\int_{\mathbb{R}}\int_{\mathbb{R}^{3}}\pa_x
	\big\{\pa_x(\frac{1}{\rho})K\theta B_{11}(\frac{v-u}{\sqrt{K\theta}})\frac{1}{M}h\big\}\pa_x\widetilde{u}_1\,dv\,dx
    +C\ka\eps\de\|\pa^2_x(\widetilde{\rho},\widetilde{u},\widetilde{\theta})\|^{2}
    \notag\\
	&\qquad+C_\ka(\de+\eps+\frac{\de}{\sqrt\eps})\CD(t)+C_\ka\eps\de\|\pa^2_x(g,f)\|_\sigma^{2}+C_\ka C(\eta)\CE(t)+C_\ka C(\eta)(\eps^4+\eps^3\de+\eps\de^3).
\end{align*}
It follows from the above two inequalities and \eqref{I1I2} that
\begin{align}\label{1stutheta}
	&-(\pa_x[\frac{1}{\rho}\partial_x(\int_{\mathbb{R}^{3}} v_{1}^2L^{-1}_{M}\Theta\,dv)],\pa_x\widetilde{u}_1)
    \notag\\
	\leq &\eps\de\frac{d}{dt}\int_{\mathbb{R}}\int_{\mathbb{R}^{3}}
	\big\{\pa_x
	[\frac{1}{\rho}K\theta B_{11}(\frac{v-u}{\sqrt{K\theta}})\frac{1}{M}h]\pa^2_x\widetilde{u}_1+\pa_x[\pa_x(\frac{1}{\rho})K\theta B_{11}(\frac{v-u}{\sqrt{K\theta}})\frac{1}{M}h]\pa_x\widetilde{u}_1\big\}\,dv\,dx
    \notag\\
	&+C\ka\eps\de\|\pa^2_x(\widetilde{\rho},     \widetilde{u},\widetilde{\theta})\|^{2}+C_\ka(\de+\eps+\frac{\de}{\sqrt\eps})\CD(t)+C_\ka\eps\de\|\pa^2_x(g,f)\|_\sigma^{2}+C_\ka C(\eta)(\CE(t)+\eps^4+\eps^3\de+\eps\de^3)
    \notag\\
	= &-\eps\de\frac{d}{dt}\int_{\mathbb{R}}\int_{\mathbb{R}^{3}}
	\big\{\pa_x[\frac{1}{\rho}\partial_{x}(K\theta B_{11}(\frac{v-u}{\sqrt{K\theta}})\frac{1}{M}h)]
	\pa_x\widetilde{u}_1\big\}\,dv\,dx
    +C\ka\eps\de\|\pa^2_x(\widetilde{\rho},
    \widetilde{u},\widetilde{\theta})\|^{2}
    \notag\\
	&\qquad+C_\ka(\de+\eps+\frac{\de}{\sqrt\eps})\CD(t)+C_\ka\eps\de\|\pa^2_x(g,f)\|_\sigma^{2}+C_\ka C(\eta)\CE(t)+C_\ka C(\eta)(\eps^4+\eps^3\de+\eps\de^3).
\end{align}
Hence, the desired estimate \eqref{1stu} follows from \eqref{ipu}, \eqref{1stufluid}, \eqref{1stuphi}, \eqref{1studiss} and \eqref{1stutheta}. On the other hand, by performing similar calculations, we can prove \eqref{1stui} holds without giving all the details. This completes the proof of Lemma
\ref{le1stu}.
\end{proof}
The zero order estimates for $\widetilde{u}$ is more straightforward.
\begin{lemma}
Under the a priori assumption \eqref{apriori}, for any small $\ka>0$, there exists constants $0<c<1$ and $C_{\ka}>0$ such that
\begin{align}
		&\frac{1}{2}\frac{d}{dt}\{\|\widetilde{u}_1\|^{2}+(\frac{1}{\bar\rho}\widetilde{\phi},\widetilde{\phi})+\eps(\frac{1}{\bar\rho}\pa_x\widetilde{\phi},\pa_x\widetilde{\phi})\}+
	(\frac{2\bar{\theta}}{3\bar{\rho}}\pa_x\widetilde{\rho},\widetilde{u}_1)+\frac{2}{3}(\pa_x\widetilde{\theta},\widetilde{u}_1)+c\ep\de\|\pa_x\widetilde{u}_1\|^{2}\notag\\
	&\quad+\ep\de\frac{d}{dt}\int_{\mathbb{R}}\int_{\mathbb{R}^{3}}
	\big\{\frac{1}{\rho}\partial_{x}(K\theta B_{11}(\frac{v-u}{\sqrt{K\theta}})\frac{1}{M}h)
	\widetilde{u}_1\big\}\,dv\,dx
	\notag\\
	\leq &
     C\ka\eps\de\|\pa_x(\widetilde{\rho},     \widetilde{u},\widetilde{\theta})\|^{2}
    +C_\ka(\de+\eps+\frac{\de}{\sqrt\eps})\CD(t)+C_\ka C(\eta)(\CE(t)+\eps^4+\eps^3\de+\eps\de^3)+C\eps^4,
    \label{zerou}
\end{align}
and 
\begin{align}\label{zeroui}
	&\frac{1}{2}\frac{d}{dt}\|\widetilde{u}_i\|^{2}
	+c\ep\de\|\pa_x\widetilde{u}_i\|^{2}
	+\ep\de\frac{d}{dt}\int_{\mathbb{R}}\int_{\mathbb{R}^{3}}
	\big\{\frac{1}{\rho}\partial_{x}(K\theta B_{1i}(\frac{v-u}{\sqrt{K\theta}})\frac{1}{M}h)
	\widetilde{u}_i\big\}\,dv\,dx
    \notag\\
	\leq &
     C\ka\eps\de\|\pa_x(\widetilde{\rho},     \widetilde{u},\widetilde{\theta})\|^{2}
    +C_\ka(\de+\eps+\frac{\de}{\sqrt\eps})\CD(t)+C_\ka C(\eta)(\CE(t)+\eps^3\de+\eps\de^3)+C\eps^4.
\end{align}
\end{lemma}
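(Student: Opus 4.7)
The estimates \eqref{zerou} and \eqref{zeroui} are the zeroth-order analogues of Lemma \ref{le1stu}, so I would follow the same blueprint but take the $L^2_x$ inner product of the $i$-th momentum equation in \eqref{perturbeq} directly with $\widetilde u_i$ (not with $\partial_x\widetilde u_i$). After summing the contributions from the transport, pressure, electric, viscous and microscopic terms, the $\frac{1}{2}\frac{d}{dt}\|\widetilde u_i\|^2$ and the pressure–density cross terms $\tfrac{2\bar\theta}{3\bar\rho}(\partial_x\widetilde\rho,\widetilde u_1)$, $\tfrac23(\partial_x\widetilde\theta,\widetilde u_1)$ that appear on the left of \eqref{zerou} fall out automatically. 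The shift term $-\tfrac1\eps(\partial_x\widetilde u_i,\widetilde u_i)=0$ by integration by parts, and the cubic piece $(\widetilde u_i\partial_x\widetilde u_i,\widetilde u_i)$ from splitting $u_1=\bar u_1+\widetilde u_1$ also vanishes, which is essential so that no $\sqrt{A}$–weighted $\CE(t)$ factors arise at this order.

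\textbf{Main obstacle: the singular potential term.} The term $\frac1\eps(\partial_x\widetilde\phi,\widetilde u_1)$ in \eqref{zerou} has to be converted into the asserted time derivative $\frac12\frac{d}{dt}\{(\tfrac1{\bar\rho}\widetilde\phi,\widetilde\phi)+\eps(\tfrac1{\bar\rho}\partial_x\widetilde\phi,\partial_x\widetilde\phi)\}$. Integration by parts first gives $-\frac1\eps(\widetilde\phi,\partial_x\widetilde u_1)$; then solve the continuity equation for $\partial_x\widetilde u_1$,
\begin{equation*}
\partial_x\widetilde u_1=-\tfrac1{\bar\rho}\partial_t\widetilde\rho+\tfrac1{\eps\bar\rho}\partial_x\widetilde\rho-\tfrac1{\bar\rho}\widetilde u_1\partial_x\bar\rho-\tfrac1{\bar\rho}\partial_x(\widetilde\rho u_1)-\tfrac1{\bar\rho}\eps^3R_1,
\end{equation*}
and substitute the Poisson equation $\widetilde\rho=-\eps^2\partial_x^2\widetilde\phi+\eps\widetilde\phi+\eps^4R_4$ to eliminate $\partial_t\widetilde\rho$ and $\partial_x\widetilde\rho$. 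The $\partial_t\widetilde\rho$ piece yields exactly the two desired time derivatives (this is the same identity used in \eqref{pa2phipau1}, but at the zeroth order). The dangerous $O(\eps^{-2})$ piece $-\frac1{\eps^2}(\widetilde\phi,\tfrac1{\bar\rho}\partial_x\widetilde\rho)$ becomes, after using Poisson,
\begin{equation*}
(\widetilde\phi,\tfrac1{\bar\rho}\partial_x^3\widetilde\phi)-\tfrac1\eps(\widetilde\phi,\tfrac1{\bar\rho}\partial_x\widetilde\phi)-\eps^2(\widetilde\phi,\tfrac1{\bar\rho}\partial_xR_4);
\end{equation*}
two integrations by parts reduce each of the first two surviving terms to an integral containing $\partial_x(\tfrac1{\bar\rho})=O(\eta\eps)$ from \eqref{4.5A}, so they are absorbed into $C(\eta)\CE(t)$. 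The remaining terms $\partial_x(\widetilde\rho u_1)$ and $\widetilde u_1\partial_x\bar\rho$, plus $\pa_t\pa_x\widetilde\phi$ produced by integration by parts in $x$ inside $(\widetilde\phi,\tfrac1{\bar\rho}\partial_t\partial_x^2\widetilde\phi)$, are controlled by \eqref{patphi} and give only $C(\eta)\CE(t)+C(\eta)\eps^4$.

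\textbf{Remaining terms.} The viscous term is handled exactly as in the proof of Lemma \ref{le1stu}: integration by parts produces $-\eps\de(\tfrac{4}{3\rho}\mu(\theta)\partial_x\widetilde u_1,\partial_x\widetilde u_1)\leq -c\eps\de\|\partial_x\widetilde u_1\|^2$ plus commutator errors bounded by $C\eps\CD(t)+C(\eta)\CE(t)+C(\eta)\eps^2\de^2$. The microscopic term $-(\tfrac1\rho\partial_x(\int v_1^2 L_M^{-1}\Theta\,dv),\widetilde u_1)$ is integrated by parts once to move the derivative onto $\widetilde u_1$, then $\Theta$ is expanded as in \eqref{DefTheta} using $G=\overline G+h=\overline G+\sqrt{\overline M}f+\sqrt\mu g$; only the $\partial_th-\eps^{-1}\partial_xh$ piece is delicate and, after one more integration by parts in $t$, produces the microscopic correction $\eps\de\frac{d}{dt}\iint\tfrac1\rho\partial_x(K\theta B_{1i}(\cdot)M^{-1}h)\widetilde u_i\,dv\,dx$ plus the $(\eps+\de+\tfrac\de{\sqrt\eps})\CD(t)$ dissipation terms, with the $\tfrac\de{\sqrt\eps}$ factor coming from the $\pa_x^2\widetilde\phi$ contribution exactly as in \eqref{6I112}. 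The routine fluid nonlinearities $(u_1\partial_x\widetilde u_1,\widetilde u_1)$, $(\widetilde u_1\partial_x\bar u_1,\widetilde u_1)$, $\tfrac23((\tfrac\theta\rho-\tfrac{\bar\theta}{\bar\rho})\partial_x\rho,\widetilde u_1)$, together with the remainder $\eps^3(\tfrac1{\bar\rho}R_2,\widetilde u_1)$, are all bounded by $C(\eta)\CE(t)+C(\eta)\eps^4$ using \eqref{4.5A}, \eqref{boundkdv} and \eqref{apriori}; the cubic cancellation noted above is what prevents any $A$-factor from creeping in.

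\textbf{The case \eqref{zeroui}.} For $i=2,3$ the proof is identical but strictly simpler: there is no potential term (hence no $\widetilde\phi$ time derivatives), no $\partial_x\widetilde\rho$, $\partial_x\widetilde\theta$ cross terms, and no remainder $R_2,R_3$. Only the transport, viscous and microscopic $B_{1i}$-correction survive, yielding \eqref{zeroui} by the same integration-by-parts arguments.
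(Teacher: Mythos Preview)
Your proposal is correct and matches the paper's approach exactly: the paper simply states that the proof is similar to Lemma~\ref{le1stu} and omits the details, and you have carried out precisely that zeroth-order reduction, including the key handling of the singular potential term via the continuity and Poisson equations as in \eqref{pa2phipau}--\eqref{pa2phipau5}. One small slip: for $i=2,3$ the cubic piece is $(\widetilde u_1\partial_x\widetilde u_i,\widetilde u_i)=-\tfrac12(\partial_x\widetilde u_1,\widetilde u_i^2)$, which does not vanish identically, but it is still harmless since $\|\partial_x\widetilde u_1\|_{L^\infty}\|\widetilde u_i\|^2\le C(\eps\de)^{-1/4}\CE(t)^{3/2}\le C\eps^4$ under \eqref{apriori} and \eqref{smalleps}.
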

\begin{proof}
The proof is similar to Lemma \ref{le1stu} and hence are omitted here
for simplicity.
\end{proof}

The following lemma shows the estimate of the temperature $\pa_x\widetilde{\theta}$.
\begin{lemma}\label{le1sttheta}
Under the a priori assumption \eqref{apriori}, for any small $\ka>0$, there exists constants $0<c<1$ and $C_{\ka}>0$ such that
\begin{align}\label{1sttheta}
	&\frac{1}{2}\frac{d}{dt}(\pa_x\widetilde{\theta},\frac{1}{\bar{\theta}}\pa_x\widetilde{\theta})+\frac{2}{3}(\pa^2_x\widetilde{u}_1,\pa_x\widetilde{\theta})
	+c\eps\de\|\pa^2_x\widetilde{\theta}\|^2+\eps\de\frac{d}{dt}\int_{\mathbb{R}}\int_{\mathbb{R}^{3}}
	\big\{\pa_x[\frac{1}{\rho}\partial_{x}
	((K\theta)^{\frac{3}{2}}A_{1}(\frac{v-u}{\sqrt{K\theta}})\frac{1}{M}h)]\frac{1}{\bar{\theta}}\pa_x\widetilde{\theta}\big\}\,dv\,dx
	\notag\\	
	&+\eps\de\sum^{3}_{j=1}\frac{d}{dt}\int_{\mathbb{R}}\int_{\mathbb{R}^{3}}
	\big\{\pa_x[\frac{1}{\rho}
	\partial_{x}u_j K\theta B_{1j}(\frac{v-u}{\sqrt{K\theta}})\frac{1}{M}h]\frac{1}{\bar{\theta}}\pa_x\widetilde{\theta}\big\}\,dv\,dx
	\notag\\
	\leq& C\ka\eps\de\|\pa^2_x(\widetilde{\rho},     \widetilde{u},\widetilde{\theta})\|^{2}
    +C_\ka(\de+\eps+\frac{\de}{\sqrt\eps})\CD(t)+C_\ka\eps\de\|\pa^2_x(g,f)\|_\sigma^2
    \notag\\
	&+C_\ka A\min\{(\frac{\eps}{\de}+\frac{\eps^2}{\de^2})\CD(t),(\frac{\eps^{3/2}}{\sqrt{\de}}+\frac{\eps^{3}}{\de})\CE(t)\}+C_\ka C(\eta)\CE(t)+C_\ka C(\eta)(\eps^4+\eps^3\de+\eps\de^3),
\end{align}
where $C(\eta)\geq0$ depends only on $\eta$ with $C(0)=0$.
\end{lemma}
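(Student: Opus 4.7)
The strategy parallels the proof of Lemma \ref{le1stu}, now applied to the fourth equation of \eqref{perturbeq} governing $\widetilde{\theta}$. First I would apply $\partial_x$ to that equation and take the inner product of the result with $\frac{1}{\bar{\theta}}\partial_x\widetilde{\theta}$. The term $-\frac{1}{\eps}(\partial_x^2\widetilde{\theta},\frac{1}{\bar{\theta}}\partial_x\widetilde{\theta})$ is integrated by parts and absorbed into $C(\eta)\mathcal{E}(t)$ using $\|\partial_x\bar{\theta}\|_{L^{\infty}}\leq C(\eta)\eps$ from \eqref{4.5A}. The transport nonlinearities $u_1\partial_x\widetilde{\theta}$, $\widetilde{u}_1\partial_x\bar{\theta}$ and the convective term $\frac{2}{3}\widetilde{\theta}\partial_x u_1$, as well as $\frac{1}{2}\partial_t(\frac{1}{\bar{\theta}})$, are all handled exactly as the analogous terms in the proof of Lemma \ref{le1stu}, producing the bounds $C(\eta)\mathcal{E}(t)+C_\ka A\min\{(\frac{\eps}{\de}+\frac{\eps^2}{\de^2})\CD(t),(\frac{\eps^{3/2}}{\sqrt{\de}}+\frac{\eps^3}{\de})\CE(t)\}$. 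The coupling term $\frac{2}{3}(\partial_x^2\widetilde{u}_1,\partial_x\widetilde{\theta})$ stays on the left of the inequality, to be eventually cancelled against the corresponding term in \eqref{1stu} when forming a linear combination.

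Next I would extract the dissipation. The term $\eps\de(\partial_x[\frac{1}{\rho}\partial_x(\kappa(\theta)\partial_x\theta)],\frac{1}{\bar{\theta}}\partial_x\widetilde{\theta})$ is split into pieces involving $\partial_x\bar{\theta}$ (which yield $C(\eta)\mathcal{E}(t)+C(\eta)\eps^2\de^2$) and the main piece, which upon integration by parts yields $-\eps\de(\frac{\kappa(\theta)}{\rho\bar{\theta}}\partial_x^2\widetilde{\theta},\partial_x^2\widetilde{\theta})$ modulo $C\eps\CD(t)+C(\eta)\mathcal{E}(t)$; since $\kappa(\theta)\gtrsim 1$, this gives the sought $c\eps\de\|\partial_x^2\widetilde{\theta}\|^2$. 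The viscosity-type terms $\eps\de\partial_x(\mu(\theta)u_i\partial_xu_i)$ are similarly bounded since any $\partial_x\bar{u}_i$ or $\partial_x\widetilde{u}_i$ factor combined with $\partial_x^2\widetilde{\theta}$ is absorbed either into the $\ka$-small dissipation $C\ka\eps\de\|\partial_x^2(\widetilde{\rho},\widetilde{u},\widetilde{\theta})\|^2$ or into $C_\ka A\min\{\cdots\}$ via Sobolev embedding.

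The main work is on the microscopic terms $\partial_x[\frac{1}{\rho}u\cdot\partial_x(\int v_1 v L^{-1}_M\Theta\,dv)-\frac{1}{\rho}\partial_x(\int v_1\frac{|v|^2}{2}L^{-1}_M\Theta\,dv)]$. Using the analogue of \eqref{reB} with $\hat{A}_1$ and $\hat{B}_{1j}$, one rewrites these as $K\theta\int A_1(\frac{v-u}{\sqrt{K\theta}})\frac{\Theta}{M}\,dv$ and $K\theta\int u_j B_{1j}(\frac{v-u}{\sqrt{K\theta}})\frac{\Theta}{M}\,dv$. Substituting the definition of $\Theta$ from \eqref{DefTheta} decomposes each into four pieces: (i) $\eps\de(\partial_t G - \eps^{-1}\partial_x G)$, (ii) $\eps\de P_1(v_1\partial_x G)$, (iii) $\de\partial_x\phi\,\partial_{v_1}G$, and (iv) $Q(G,G)$, with $G=\overline{G}+h$. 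The $\overline{G}$ contributions are controlled using Lemma \ref{lem3.1}; the $h$ contributions with the $\partial_t$ factor produce the two time-derivative terms on the left of \eqref{1sttheta} after one integration by parts in $t$.

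The crux, as in Lemma \ref{le1stu}, is the surviving ``$\partial_x(\partial_t-\eps^{-1}\partial_x)\widetilde{\theta}$'' factor generated by this integration by parts. The key observation is the cancellation $\eps\de\partial_t-\de\partial_x = \eps\de(\partial_t-\eps^{-1}\partial_x)$, so we can trade $\partial_x\partial_t\widetilde{\theta}$ against $\partial_x\widetilde{\theta}$ and, via the $\widetilde{\theta}$-equation itself, convert this into a sum of lower-order terms involving $\partial_x^2\widetilde{\theta}, \partial_x^2\widetilde{u}_1, \partial_x^2 h,$ and the forcing $\partial_x[\frac{1}{\rho}u\cdot\partial_x\int v_1 v G\,dv - \frac{1}{\rho}\partial_x \int v_1\frac{|v|^2}{2}G\,dv]$ and $R_3$; each piece is then bounded exactly as the corresponding piece in \eqref{1I112}--\eqref{6I112}, using Cauchy–Schwarz, \eqref{boundlandaunorm}, the decomposition $h=\sqrt{\overline{M}}f+\sqrt{\mu}g$, the energy/dissipation norms in \eqref{DefE}--\eqref{DefD}, and the smallness in \eqref{smalleps}. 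In particular the term with $\de\,\partial_x^3\widetilde{\phi}$, treated via \eqref{DefD}, contributes precisely the bad factor $\frac{\de}{\sqrt{\eps}}\CD(t)$ that appears on the right of \eqref{1sttheta}. The nonlinear $Q(G,G)$ piece is handled by \eqref{controlGa} and \eqref{controlpaGa} together with Lemma \ref{lem3.1}, paralleling \eqref{I14}. The remainder term $\eps^3 R_3$ is bounded by \eqref{boundkdv} and yields $C(\eta)\eps^4$. Collecting all estimates and absorbing the dissipation term by the previously extracted $c\eps\de\|\partial_x^2\widetilde{\theta}\|^2$ completes the proof. The principal technical obstacle is the careful bookkeeping in the $\partial_x(\partial_t-\eps^{-1}\partial_x)\widetilde{\theta}$ cancellation, since the $\eps^{-1}$ singularity must be absorbed without losing the smallness in $\eps$ or $\de$ needed for closure of the energy argument.
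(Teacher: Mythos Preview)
Your approach is essentially the same as the paper's and is correct in outline. Two small corrections: in the Burnett-function rewriting, the $A_1$ term carries the factor $(K\theta)^{3/2}$ rather than $K\theta$, and the $B_{1j}$ term arises with the coefficient $\partial_x u_j$ (from pulling $u_j$ through the outer $\partial_x$), not $u_j$ itself. More importantly, your identification of the source of the $\frac{\de}{\sqrt{\eps}}\CD(t)$ term is off: the $\widetilde{\theta}$-equation in \eqref{perturbeqA} contains no $\frac{1}{\eps}\partial_x\widetilde{\phi}$ term, so when you convert $\partial_x(\partial_t-\eps^{-1}\partial_x)\widetilde{\theta}$ via that equation, no $\partial_x^3\widetilde{\phi}$ appears; the $\frac{\de}{\sqrt{\eps}}$ factor in the stated bound is simply inherited from reusing the $I_1$, $I_2$ estimates of Lemma \ref{le1stu} verbatim, not from a genuinely singular contribution here.
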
	
\begin{proof}
We have from the fourth equation of \eqref{perturbeq} that
	\begin{align}\label{iptheta}
		&\frac{1}{2}\frac{d}{dt}(\pa_x\widetilde{\theta},\frac{1}{\bar{\theta}}\pa_x\widetilde{\theta})
        \notag 		\\
        =&\frac{1}{2}(\pa_x\widetilde{\theta},\partial_{t}(\frac{1}{\bar{\theta}})\pa_x\widetilde{\theta})
		+\frac{1}{\eps}(\pa^2_x\widetilde{\theta},\frac{1}{\bar{\theta}}\pa_x\widetilde{\theta})-\frac{2}{3}(\pa_x[\bar{\theta}\partial_x\widetilde{u}_{1}],\frac{1}{\bar{\theta}}\pa_x\widetilde{\theta})-\frac{2}{3}(\pa_x(\widetilde{\theta}\partial_xu_1),\frac{1}{\bar{\theta}}\pa_x\widetilde{\theta})\notag
		\\
		&-(\pa_x(u_1\partial_x\widetilde{\theta}),\frac{1}{\bar{\theta}}\pa_x\widetilde{\theta})
		-(\pa_x(\widetilde{u}_1\partial_{x}\bar{\theta}),\frac{1}{\bar{\theta}}\pa_x\widetilde{\theta})
		+\eps\de(\pa_x[\frac{1}{\rho}\partial_x(\kappa(\theta)\partial_x\theta)],\frac{1}{\bar{\theta}}\pa_x\widetilde{\theta})
		\notag\\
		&+\eps\de(\pa_x\{\frac{4}{3\rho}\mu(\theta)(\partial_xu_{1})^2
		+\frac{1}{\rho}\mu(\theta)[(\partial_xu_{2})^2+(\partial_xu_{3})^2]\},\frac{1}{\bar{\theta}}\pa_x\widetilde{\theta})
        -\eps^3(\pa_xR_3,\frac{1}{\bar{\theta}}\pa_x\widetilde{\theta})
		\notag\\
		&+(\pa_x[\frac{1}{\rho}u\cdot\partial_x(\int_{\mathbb{R}^3} v_{1}v L^{-1}_{M}\Theta\, dv)] 		,\frac{1}{\bar{\theta}}\pa_x\widetilde{\theta})-(\pa_x[\frac{1}{\rho}\partial_x(\int_{\mathbb{R}^3}v_{1}\frac{|v|^{2}}{2}L^{-1}_{M}\Theta\, dv)],\frac{1}{\bar{\theta}}\pa_x\widetilde{\theta}).
	\end{align}
Most of the terms above have similar structures to the corresponding ones in the proofs of Lemma \ref{le1strho} and Lemma \ref{le1stu}. Thus, we mainly focus on the different terms. Similar calculations as in \eqref{1rho1}, \eqref{1rho2}, \eqref{1rho3}, \eqref{1rho4}, \eqref{1rho5} and \eqref{1studiss}  show that the second, third, fourth lines in \eqref{iptheta} are bounded by
\begin{align}\label{1theta1}
&-c\eps\de\|\pa^2_x\widetilde{\theta}\|^2-\frac{2}{3}(\pa^2_x\widetilde{u}_1,\pa_x\widetilde{\theta})+C_\ka A\min\{(\frac{\eps}{\de}+\frac{\eps^2}{\de^2})\CD(t),(\frac{\eps^{3/2}}{\sqrt{\de}}+\frac{\eps^{3}}{\de})\CE(t)\}
\notag\\
& +C\ka\eps\de\|\pa^2_x(\widetilde{\rho},     \widetilde{u},\widetilde{\theta})\|^{2}
+C_\ka(\de+\eps)\CD(t)+C(\eta)\mathcal{E}(t)+C(\eta)(\eps^4+\eps^2\de^2).
\end{align}
For the last two terms in \eqref{iptheta},
we first use \eqref{reB} and the property
\begin{equation*}%\label{reA}
	\int_{\mathbb{R}^{3}}(\frac{1}{2}v_{i}|v|^{2}-v_{i}u\cdot v)L^{-1}_{M}\Theta \,dv
	=\int_{\mathbb{R}^{3}} L^{-1}_{M}\{(K\theta)^{\frac{3}{2}}\hat{A}_{i}(\frac{v-u}{\sqrt{K\theta}})M\}\frac{\Theta}{M}\,dv
	=(K\theta)^{\frac{3}{2}}\int_{\mathbb{R}^{3}}A_{i}(\frac{v-u}{\sqrt{K\theta}})\frac{\Theta}{M}\,dv
\end{equation*}
to rewrite
\begin{align*}
	&(\pa_x[\frac{1}{\rho}u\cdot\partial_x(\int_{\mathbb{R}^3} v_{1}v L^{-1}_{M}\Theta\, dv)]
	,\frac{1}{\bar{\theta}}\pa_x\widetilde{\theta})
	-(\pa_x[\frac{1}{\rho}\partial_x(\int_{\mathbb{R}^3}v_{1}\frac{|v|^{2}}{2}L^{-1}_{M}\Theta\, dv)],\frac{1}{\bar{\theta}}\pa_x\widetilde{\theta})	
	\notag\\	
	=&-(\pa_x[\frac{1}{\rho}\partial_{x}(\int_{\mathbb{R}^{3}}
	(\frac{1}{2}|v|^{2}v_1-u\cdot vv_{1})L^{-1}_{M}\Theta\,dv)],\frac{1}{\bar{\theta}}\pa_x\widetilde{\theta})-
	(\pa_x[\frac{1}{\rho}\int_{\mathbb{R}^{3}}
	\partial_{x}u\cdot vv_{1}L^{-1}_{M}\Theta\,dv],\frac{1}{\bar{\theta}}\pa_x\widetilde{\theta})
	\notag\\
	=&-\int_{\mathbb{R}}\int_{\mathbb{R}^{3}}\pa_x[\frac{1}{\rho}\partial_{x}((K\theta)^{\frac{3}{2}}A_{1}(\frac{v-u}{\sqrt{K\theta}})\frac{\Theta}{M})]\,\frac{1}{\bar{\theta}}\pa_x\widetilde{\theta}\,dv\,dx-\sum^{3}_{j=1}\int_{\mathbb{R}}\int_{\mathbb{R}^{3}}\pa_x[\frac{1}{\rho}
	\partial_{x}u_j K\theta B_{1j}(\frac{v-u}{\sqrt{K\theta}})\frac{\Theta}{M}]\,\frac{1}{\bar{\theta}}\pa_x\widetilde{\theta}\,dv\,dx.
\end{align*}
We notice that both two terms on the right hand side above enjoy the similar structure to $I_2$ defined in \eqref{defI2}, and, after integration by parts, $I_1$ defined in \eqref{defI2}. Hence, the approach of how we get \eqref{I11}, \eqref{I1213} and \eqref{I14} gives
\begin{align}\label{1theta4}
&-\int_{\mathbb{R}}\int_{\mathbb{R}^{3}}\pa_x[\frac{1}{\rho}\partial_{x}((K\theta)^{\frac{3}{2}}A_{1}(\frac{v-u}{\sqrt{K\theta}})\frac{\Theta}{M})]\,\frac{1}{\bar{\theta}}\pa_x\widetilde{\theta}\,dv\,dx-\sum^{3}_{j=1}\int_{\mathbb{R}}\int_{\mathbb{R}^{3}}\pa_x[\frac{1}{\rho}
\partial_{x}u_j K\theta B_{1j}(\frac{v-u}{\sqrt{K\theta}})\frac{\Theta}{M}]\,\frac{1}{\bar{\theta}}\pa_x\widetilde{\theta}\,dv\,dx
\notag\\
\leq& 	-\eps\de\frac{d}{dt}\int_{\mathbb{R}}\int_{\mathbb{R}^{3}}
\big\{\pa_x[\frac{1}{\rho}\partial_{x}
((K\theta)^{\frac{3}{2}}A_{1}(\frac{v-u}{\sqrt{K\theta}})\frac{1}{M}h)+\sum^{3}_{j=1}\frac{1}{\rho}
\partial_{x}u_j K\theta B_{1j}(\frac{v-u}{\sqrt{K\theta}})\frac{1}{M}h]\frac{1}{\bar{\theta}}\pa_x\widetilde{\theta}\big\}\,dv\,dx
\notag\\
&+C\ka\eps\de\|\pa^2_x(\widetilde{\rho},     \widetilde{u},\widetilde{\theta})\|^{2}
+C_\ka(\de+\eps+\frac{\de}{\sqrt\eps})\CD(t)+C_\ka\eps\de\|\pa^2_x(g,f)\|_\sigma^{2}+C_\ka C(\eta)\CE(t)+C_\ka C(\eta)(\eps^4+\eps^3\de+\eps\de^3).
\end{align}
Hence, \eqref{1sttheta} follows from \eqref{iptheta}, \eqref{1theta1} and \eqref{1theta4}. We thus complete the proof of Lemma \ref{le1sttheta}.
\end{proof}	
The zero order estimate of $\widetilde{\theta}$ can be obtained in the same way.
\begin{lemma}
Under the a priori assumption \eqref{apriori}, 
for any small $\ka>0$, there exists constants $0<c<1$ and $C_{\ka}>0$ such that
	\begin{align}\label{zerotheta}
		&\frac{1}{2}\frac{d}{dt}(\widetilde{\theta},\frac{1}{\bar{\theta}}\widetilde{\theta})+\frac{2}{3}(\pa_x\widetilde{u}_1,\widetilde{\theta})
		+c\eps\de\|\pa_x\widetilde{\theta}\|^2+\eps\de\frac{d}{dt}\int_{\mathbb{R}}\int_{\mathbb{R}^{3}}
		\big\{\frac{1}{\rho}\partial_{x}
		((K\theta)^{\frac{3}{2}}A_{1}(\frac{v-u}{\sqrt{K\theta}})\frac{1}{M}h)\frac{1}{\bar{\theta}}\widetilde{\theta}\big\}\,dv\,dx
		\notag\\	
		&+\eps\de\sum^{3}_{j=1}\frac{d}{dt}\int_{\mathbb{R}}\int_{\mathbb{R}^{3}}
		\big\{\frac{1}{\rho}
		\partial_{x}u_j K\theta B_{1j}(\frac{v-u}{\sqrt{K\theta}})\frac{1}{M}h\frac{1}{\bar{\theta}}\widetilde{\theta}\big\}\,dv\,dx
		\notag\\
		\leq&
        C\ka\eps\de\|\pa_x(\widetilde{\rho},     \widetilde{u},\widetilde{\theta})\|^{2}
        +C_\ka(\de+\eps+\frac{\de}{\sqrt\eps})\CD(t)+C_\ka C(\eta) (\CE(t)+\eps^3\de+\eps\de^3)+C\eps^4.
	\end{align}
\end{lemma}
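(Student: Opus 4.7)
The plan is to mirror the first-order proof of Lemma \ref{le1sttheta} at the level of zero-order derivatives. Specifically, I would take the fourth equation of \eqref{perturbeq} (the $\widetilde{\theta}$ equation) directly, without applying $\pa_x$, and take the $L^2_x$ inner product with $\frac{1}{\bar{\theta}}\widetilde{\theta}$. The time derivative on $\widetilde{\theta}$ gives $\frac{1}{2}\frac{d}{dt}(\widetilde{\theta},\frac{1}{\bar\theta}\widetilde{\theta})$ modulo the commutator term $\frac{1}{2}(\widetilde{\theta},\pa_t(\frac{1}{\bar\theta})\widetilde{\theta})$, which by \eqref{4.5A} is $C(\eta)\CE(t)$. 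The singular transport $-\frac{1}{\eps}\pa_x\widetilde{\theta}$ paired with $\frac{1}{\bar\theta}\widetilde{\theta}$ yields, after integration by parts, a harmless $\frac{1}{\eps}\|\pa_x\bar\theta\|_{L^\infty}\lesssim C(\eta)$ coefficient, hence is again absorbed by $C(\eta)\CE(t)$. The coupling $\frac{2}{3}\bar\theta\pa_x\widetilde{u}_1$ produces exactly the term $\frac{2}{3}(\pa_x\widetilde{u}_1,\widetilde{\theta})$ appearing in the statement.

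Next I would handle the transport-type nonlinearities $\widetilde{u}_1\pa_x\bar\theta$, $u_1\pa_x\widetilde{\theta}$, $\widetilde{\theta}\pa_x u_1$ by Sobolev embedding in one dimension, writing $\|\widetilde{u}_1\|_{L^\infty}\lesssim \|\widetilde{u}_1\|^{1/2}\|\pa_x\widetilde{u}_1\|^{1/2}$ and similarly for $\widetilde{\theta}$; using \eqref{DefE}--\eqref{DefD} and \eqref{apriori}, each such term is bounded by $C(\eta)\CE(t)+C\eps\CD(t)$. For the viscous piece $\eps\de\pa_x(\kappa(\theta)\pa_x\theta)$, I split $\pa_x\theta=\pa_x\widetilde{\theta}+\pa_x\bar\theta$; after integrating by parts against $\frac{1}{\bar\theta}\widetilde{\theta}$, the principal contribution is $-\eps\de(\kappa(\theta)\pa_x\widetilde{\theta},\frac{1}{\bar\theta}\pa_x\widetilde{\theta})\leq -c\eps\de\|\pa_x\widetilde{\theta}\|^2$, exactly the advertised dissipation, while the commutator $\pa_x(\frac{1}{\bar\theta})$ and the inhomogeneous piece involving $\pa_x\bar\theta$ are absorbed into $C(\eta)\CE(t)$ and $C(\eta)\eps^2\de^2$. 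The viscous source $\frac{4}{3\rho}\mu(\theta)(\pa_x u_1)^2+\frac{1}{\rho}\mu(\theta)[(\pa_xu_2)^2+(\pa_xu_3)^2]$ is quadratic in $\pa_x u$, hence controlled by $C(\eta)\CE(t)+C\eps\CD(t)$. The remainder $\eps^3 R_3$ yields $C(\eta)\eps^4$ via \eqref{boundkdv}.

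The substantive step, as in Lemma \ref{le1sttheta}, is the microscopic piece
\[
(\frac{1}{\rho}u\cdot\pa_x(\int v_1 v L^{-1}_M\Theta\,dv),\tfrac{1}{\bar\theta}\widetilde\theta)-(\frac{1}{\rho}\pa_x(\int v_1\tfrac{|v|^2}{2}L^{-1}_M\Theta\,dv),\tfrac{1}{\bar\theta}\widetilde\theta).
\]
Rewriting via \eqref{reB} and the Burnett-function identity used above \eqref{1theta4}, this equals
\[
-\!\int\!\!\int\pa_x\!\big[\tfrac{1}{\rho}((K\theta)^{3/2}A_1\tfrac{\Theta}{M})\big]\tfrac{\widetilde\theta}{\bar\theta}\,dv\,dx-\sum_{j=1}^3\!\int\!\!\int\pa_x\!\big[\tfrac{1}{\rho}\pa_x u_j K\theta B_{1j}\tfrac{\Theta}{M}\big]\tfrac{\widetilde\theta}{\bar\theta}\,dv\,dx.
\]
Expanding $\Theta$ by \eqref{DefTheta} and $G=\overline G+h$ with $h=\sqrt{\overline M}f+\sqrt\mu g$, the term containing $\eps\de\pa_t h-\de\pa_x h$ is integrated by parts in $t$ and then in $x$; since zero-order integration in $x$ moves $\pa_x$ entirely off $h$ onto $\widetilde\theta/\bar\theta$, this produces precisely the moving time-derivative structures displayed in \eqref{zerotheta}. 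The residual pairings then involve $(\pa_t-\frac{1}{\eps}\pa_x)\widetilde\theta$, which — thanks to the combination produced by the shift \eqref{shift} — is handled via Lemma \ref{lem4.2}, avoiding any uncompensated $\eps^{-1}$ singularity; the remaining $P_1(v_1\pa_x G)$, $\pa_x\phi\pa_{v_1}G$ and $Q(G,G)$ pieces are estimated exactly as in \eqref{I1213}--\eqref{I14} using \eqref{controlpaGa}, \eqref{boundbarG0}, \eqref{boundbarG}, with one less $x$-derivative throughout, yielding the bound $C_\ka(\de+\eps+\de/\sqrt\eps)\CD(t)+C_\ka C(\eta)(\CE(t)+\eps^3\de+\eps\de^3)$.

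The main technical obstacle I anticipate is the same one encountered in \eqref{defI112}: namely, the $\eps\de\pa_t h$ contribution, once integrated by parts in $t$, leaves a residual with $\pa_t(\frac{\widetilde\theta}{\bar\theta})$ which naively exhibits an $\eps^{-1}$ singularity; the resolution is to combine it with the $\de\pa_x h$ term so that the singular piece appears only through the good transport operator $\pa_t-\frac{1}{\eps}\pa_x$, whose norm is controlled by \eqref{estpat} together with the fluid energy, giving ultimately the clean bound stated in \eqref{zerotheta}. Collecting all the above contributions and invoking \eqref{smalleps} produces the claimed inequality.
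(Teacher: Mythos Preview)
Your proposal is correct and is precisely the approach the paper takes: the paper's own proof of this lemma is simply ``The proof is very similar to Lemma \ref{le1sttheta} and hence omitted here,'' and you have outlined exactly that parallel zero-order argument, including the key device of combining the $\eps\de\pa_t h$ and $-\de\pa_x h$ contributions so that only the good operator $\pa_t-\frac{1}{\eps}\pa_x$ acts on $\widetilde{\theta}$.
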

\begin{proof}
	The proof is very similar to Lemma \ref{le1sttheta} and hence omitted here.
\end{proof}
We see from the above three lemmas that in order to complete our estimates on the fluid part, it is also required to get the dissipation of mass and electric field from the first two equations in \eqref{perturbeq} where the $O(\eps^{-1})$ terms in the transport terms will remain since good properties in previous energy lemmas like $\eps^{-1}(\pa^2_x u_1,\pa_x u_1)$ no longer appear here. We present a key observation to use the interaction between the two singular transport operators on the density and velocity to cancel the $O(\eps^{-1})$ terms. 
\begin{lemma}\label{lem4.8}
		Under the a priori assumption \eqref{apriori}, it holds that
	\begin{align}\label{zerodissrho}
		&\eps\de\frac{d}{dt}(\widetilde{u}_1,\pa_x\widetilde{\rho})
		+c\eps\de(\|\pa_x\widetilde{\rho}\|^2+\|\pa_x\widetilde{\phi}\|^2
		+\eps\|\pa^2_x\widetilde{\phi}\|^2)
        \notag\\
		&\leq C\eps\de\|\pa_x(\widetilde{u},\widetilde{\theta})\|^2+ C\eps\CD(t)+C(\eta)\CE(t)
		+C(\eta)(\eps^4+\eps^2\de^2),
	\end{align}
and	
	\begin{align}\label{dissrho}
		&\eps\de\frac{d}{dt}(\pa_x\widetilde{u}_1,\pa^2_x\widetilde{\rho})
		+c\eps\de(\|\pa^2_x\widetilde{\rho}\|^2+\|\pa^2_x\widetilde{\phi}\|^2
		+\eps\|\pa^3_x\widetilde{\phi}\|^2)\notag\\
		\leq& C\eps\de(\|\pa^2_x(\widetilde{u},\widetilde{\theta})\|^2+\|\langle v\rangle^{-\frac{1}{2}}\pa^2_x(g,f)\|^2)+C\eps\CD(t)+C(\eta)\CE(t)
		+C(\eta)(\eps^4+\eps^2\de^2),
	\end{align}
where $C(\eta)\geq0$ depends only on $\eta$ with $C(0)=0$.
\end{lemma}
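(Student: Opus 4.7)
\textbf{Proof plan for Lemma \ref{lem4.8}.} The goal is to recover dissipation of $\widetilde{\rho}$ and of $\widetilde{\phi}$ from the coupled macroscopic system \eqref{perturbeq}. Since the parabolic-type diffusion in the macroscopic equations gives dissipation only for $(\widetilde{u},\widetilde{\theta})$ (through the $\eps\de\,\pa_x(\mu(\theta)\pa_xu)$ viscosity and $\eps\de\,\pa_x(\kappa(\theta)\pa_x\theta)$ heat-conduction), the dissipation on $\widetilde{\rho}$ and $\widetilde{\phi}$ must be generated from the hyperbolic Navier--Stokes--Poisson structure by testing the momentum equation against $\pa_x\widetilde{\rho}$. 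The natural cross-term is $(\widetilde{u}_1,\pa_x\widetilde{\rho})$ for \eqref{zerodissrho} and $(\pa_x\widetilde{u}_1,\pa^2_x\widetilde{\rho})$ for \eqref{dissrho}.

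The first step is to expand
\[
\frac{d}{dt}(\widetilde{u}_1,\pa_x\widetilde{\rho})=(\pa_t\widetilde{u}_1,\pa_x\widetilde{\rho})-(\pa_x\widetilde{u}_1,\pa_t\widetilde{\rho}),
\]
and substitute $\pa_t\widetilde{u}_1$ from the second equation of \eqref{perturbeq} and $\pa_t\widetilde{\rho}$ from the first. The term $\frac{1}{\eps}\pa_x\widetilde{u}_1$ in the momentum equation produces $\frac{1}{\eps}(\pa_x\widetilde{u}_1,\pa_x\widetilde{\rho})$, while the shift $\frac{1}{\eps}\pa_x\widetilde{\rho}$ in the mass equation produces $-\frac{1}{\eps}(\pa_x\widetilde{u}_1,\pa_x\widetilde{\rho})$; these $O(\eps^{-1})$ pieces cancel exactly. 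The remaining leading contributions are the good dissipation term $-\frac{2}{3}(\frac{\bar{\theta}}{\bar{\rho}}\pa_x\widetilde{\rho},\pa_x\widetilde{\rho})$, which produces the $\|\pa_x\widetilde{\rho}\|^2$ piece, and the singular electric-field term $-\frac{1}{\eps}(\pa_x\widetilde{\phi},\pa_x\widetilde{\rho})$. For the latter, I would invoke the Poisson equation $\pa_x\widetilde{\rho}=-\eps^2\pa^3_x\widetilde{\phi}+\eps\pa_x\widetilde{\phi}-\eps^4\pa_xR_4$ and integrate by parts to obtain
\[
-\tfrac{1}{\eps}(\pa_x\widetilde{\phi},\pa_x\widetilde{\rho})=-\eps\|\pa^2_x\widetilde{\phi}\|^2-\|\pa_x\widetilde{\phi}\|^2+\eps^3(\pa_x\widetilde{\phi},\pa_xR_4),
\]
which gives the Poisson dissipation appearing on the left-hand side of \eqref{zerodissrho} after multiplication by $\eps\de$. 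The estimate \eqref{dissrho} follows by the same device applied to the first-$x$-derivative of both equations, with the same $O(\eps^{-1})$ cancellation between $\frac{1}{\eps}(\pa^2_x\widetilde{u}_1,\pa^2_x\widetilde{\rho})$ contributions and with Poisson giving $-\eps\|\pa^3_x\widetilde{\phi}\|^2-\|\pa^2_x\widetilde{\phi}\|^2$.

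The remaining terms split into three categories. The linear perturbative transport terms $\widetilde{u}_1\pa_x\bar{u}_1$, $u_1\pa_x\widetilde{u}_1$, $\frac{2}{3}\pa_x\widetilde{\theta}$, $\frac{2}{3}(\frac{\theta}{\rho}-\frac{\bar\theta}{\bar\rho})\pa_x\rho$ and their counterparts in the mass equation are absorbed by $C\eps\de\|\pa_x(\widetilde{u},\widetilde{\theta})\|^2$ and $C(\eta)\mathcal{E}(t)$ via Cauchy--Schwarz together with \eqref{4.5A}, \eqref{boundkdv}, and the a~priori assumption \eqref{apriori} (and Sobolev embedding to absorb any $\|\pa_x\widetilde{u}_1\|_{L^\infty}$ factors using $\|\pa^2_x\widetilde{u}_1\|$ as in Lemma~\ref{le1strho}). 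The viscosity term $\eps\de\,\pa_x(\frac{4}{3\rho}\pa_x(\mu(\theta)\pa_xu_1))$ tested against $\pa_x\widetilde{\rho}$ contributes $O(\eps\de)$, absorbed by $C\eps\CD(t)$ and $C(\eta)\eps^2\de^2$ after one integration by parts. The microscopic contribution $-\frac{1}{\rho}\pa_x(\int v_1^2L^{-1}_M\Theta\,dv)$ is handled, as in \eqref{I1I2}--\eqref{5I112}, by the decomposition $G=\overline{G}+\sqrt{\overline{M}}f+\sqrt{\mu}g$ with the bounds of Lemma~\ref{lem3.1}; this produces the $\eps\de\|\langle v\rangle^{-1/2}\pa^2_x(g,f)\|^2$ term in \eqref{dissrho} and is subsumed by $C\eps\CD(t)$ in \eqref{zerodissrho}. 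The remainder $\eps^3R_2/\bar{\rho}$ is controlled by $C(\eta)\eps^4$ via \eqref{boundkdv}.

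The main technical point is the clean $O(\eps^{-1})$ cancellation between the singular shift in the mass and momentum equations; without it, no $\eps\de$-level dissipation of $\widetilde{\rho}$ could be extracted, as the naive estimate would leave an $O(\de)$-singular term. A secondary difficulty is handling the Poisson term at second order, where the same Poisson-based integration by parts produces $\eps\|\pa^3_x\widetilde{\phi}\|^2$ together with a boundary-like cross-term $\eps^3(\pa^2_x\widetilde{\phi},\pa^2_x R_4)$ that must be absorbed by $C(\eta)\eps^4$ using $\eps\de\ll 1$. Once these structural cancellations are carried out, all remaining terms are of the type already treated in the preceding lemmas and fit within the stated right-hand side after multiplying the identity by $\eps\de$.
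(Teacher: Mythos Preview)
Your overall strategy is correct and matches the paper's key mechanism: the cross-term $(\widetilde{u}_1,\pa_x\widetilde{\rho})$ (resp.\ $(\pa_x\widetilde{u}_1,\pa^2_x\widetilde{\rho})$), the exact $O(\eps^{-1})$ cancellation between the shift terms in the mass and momentum equations, and the Poisson substitution $\pa_x\widetilde{\rho}=-\eps^2\pa_x^3\widetilde{\phi}+\eps\pa_x\widetilde{\phi}-\eps^4\pa_xR_4$ to generate $\|\pa_x\widetilde{\phi}\|^2+\eps\|\pa_x^2\widetilde{\phi}\|^2$. The paper organizes the same cancellation slightly differently---it pairs $\eps\de(\pa_x\pa_t\widetilde{u}_1,\pa^2_x\widetilde{\rho})-\de(\pa_x^2\widetilde{u}_1,\pa^2_x\widetilde{\rho})$ into $\eps\de\big((\pa_t-\tfrac{1}{\eps}\pa_x)\pa_x\widetilde{u}_1,\pa^2_x\widetilde{\rho}\big)$, integrates by parts to produce $\eps\de(\pa^2_x\widetilde{u}_1,\pa_x(\pa_t-\tfrac{1}{\eps}\pa_x)\widetilde{\rho})$, and then replaces $(\pa_t-\tfrac{1}{\eps}\pa_x)\widetilde{\rho}$ via the mass equation---but this is algebraically equivalent to your direct term-by-term cancellation.

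One simplification you are missing: the paper does \emph{not} work from \eqref{perturbeq} with the $L_M^{-1}\Theta$ source, but from the raw conservation form \eqref{perturbeqA}, whose right-hand side is $-\frac{1}{\rho}\int v_1^2\,\pa_xG\,dv$. This avoids the entire $\Theta$ decomposition you propose to re-run ``as in \eqref{I1I2}--\eqref{5I112}''. With \eqref{perturbeqA} the microscopic term is just
\[
\eps\de\Big(\pa_x\big[\tfrac{1}{\rho}\!\int v_1^2\,\pa_xG\,dv\big],\pa^2_x\widetilde{\rho}\Big),
\]
which, writing $G=\overline{G}+\sqrt{\overline{M}}f+\sqrt{\mu}g$ and using Lemma~\ref{lem3.1}, is bounded in one step by $\ka\eps\de\|\pa^2_x\widetilde{\rho}\|^2+C_\ka\eps\de\|\langle v\rangle^{-1/2}\pa^2_x(g,f)\|^2+C\eps\CD(t)+C(\eta)(\CE(t)+\eps^2\de^2)$. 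Your route through $L_M^{-1}\Theta$ would work (the extra $\eps\de$ in $\Theta=\eps\de\pa_tG-\de\pa_xG+\cdots$ makes the $\pa_tG$ piece harmless after the overall $\eps\de$ factor), and the viscosity term you mention is likewise benign, but it is unnecessarily heavy here; there is no need for the time-derivative gymnastics of Lemma~\ref{le1stu} in this lemma.
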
	
\begin{proof}
We prove \eqref{dissrho} first since it is more general. Differentiating the second equation of \eqref{perturbeqA} in $x$ and taking the inner product with $\eps\de\pa^2_x\widetilde{\rho}$, we have	
	\begin{align}\label{dissrho1}
		&\eps\de(\pa_x\partial_t\widetilde{u}_{1},\pa^2_x\widetilde{\rho})-\de(\partial^2_x\widetilde{u}_{1},\pa^2_x\widetilde{\rho})+\de(\pa^2_x\widetilde{\phi},\pa^2_x\widetilde{\rho})
		+\eps\de(\frac{2\bar{\theta}}{3\bar{\rho}}\pa^2_x\widetilde{\rho},\pa^2_x\widetilde{\rho})+\eps\de(\pa_x(\frac{2\bar{\theta}}{3\bar{\rho}})\partial_x\widetilde{\rho}
		,\pa^2_x\widetilde{\rho})\notag\\
		=&-
		\eps\de(\pa_x[
		u_{1}\partial_x\widetilde{u}_{1}+\widetilde{u}_{1}\partial_x\bar{u}_{1}+\frac{2}{3}\partial_x\widetilde{\theta}
		+\frac{2}{3}(\frac{\theta}{\rho}-\frac{\bar{\theta}}{\bar{\rho}})\partial_x\rho+\eps^3\frac{1}{\bar{\rho}}R_2],\pa^2_x\widetilde{\rho})
		-\eps\de(\pa_x[\frac{1}{\rho}\int_{\mathbb{R}^3} v^2_{1}\partial_xG\,dv],\pa^2_x\widetilde{\rho}).
	\end{align}
Notice that the coefficient $\eps^{-1}$ in the second equation of \eqref{perturbeq} causes some singularity in the term $-\de(\partial^2_x\widetilde{u}_{1},\pa^2_x\widetilde{\rho})+\de(\pa^2_x\widetilde{\phi},\pa^2_x\widetilde{\rho})$, to which we should pay extra attention. The rest terms can be bounded as before. We first write
\begin{align*}
	\eps\de(\pa_x\partial_t\widetilde{u}_{1},\pa^2_x\widetilde{\rho})-\de(\partial^2_x\widetilde{u}_{1},\pa^2_x\widetilde{\rho})=&\eps\de([\partial_t-\frac{1}{\eps}\partial_x]\pa_x\widetilde{u}_{1},\pa^2_x\widetilde{\rho})\notag\\
	=&\frac{d}{dt}\eps\de(\pa_x\widetilde{u}_{1},\pa^2_x\widetilde{\rho})+\eps\de(\pa^2_x\widetilde{u}_{1},\pa_x[\partial_t-\frac{1}{\eps}\partial_x]\widetilde{\rho}).
\end{align*}
Using the first equation of \eqref{perturbeqA}, we arrive at
\begin{align*}
	\eps\de(\pa_x\partial_t\widetilde{u}_{1},\pa^2_x\widetilde{\rho})-\de(\partial^2_x\widetilde{u}_{1},\pa^2_x\widetilde{\rho})=\frac{d}{dt}\eps\de(\pa_x\widetilde{u}_{1},\pa^2_x\widetilde{\rho})-\eps\de(\pa^2_x\widetilde{u}_{1},\pa_x[\partial_x(\bar{\rho}\widetilde{u}_{1})+\partial_x(\widetilde{\rho}u_{1})+\eps^3R_1]).
\end{align*}
By \eqref{DefE}, \eqref{DefD}, \eqref{apriori}, \eqref{boundkdv} and \eqref{2.61AA}, we obtain
\begin{align*}
	|\eps\de(\pa^2_x\widetilde{u}_{1},\pa_x[\partial_x(\bar{\rho}\widetilde{u}_{1})+\partial_x(\widetilde{\rho}u_{1})+\eps^3R_1])|\leq C\eps\de\|\pa^2_x\widetilde{u}\|^2+C\eps\CD(t)+C(\eta)\CE(t)+C(\eta)\eps^4,
\end{align*}
which, together with the identities above, yields
\begin{align}\label{dissrho2}
	\eps\de(\pa_x\partial_t\widetilde{u}_{1},\pa^2_x\widetilde{\rho})-\de(\partial^2_x\widetilde{u}_{1},\pa^2_x\widetilde{\rho})\geq \frac{d}{dt}\eps\de(\pa_x\widetilde{u}_{1},\pa^2_x\widetilde{\rho})-C\eps\de\|\pa^2_x\widetilde{u}\|^2-C\eps\CD(t)-C(\eta)\CE(t)-C(\eta)\eps^4.
\end{align}
Now we consider the term involving $\|\pa^2_x\widetilde{\phi}\|$ in \eqref{dissrho1}. Substituting the last equation of \eqref{perturbeq} into $\de(\pa^2_x\widetilde{\phi},\pa^2_x\widetilde{\rho})$ gives
\begin{align}\label{dissrho3}
	\de(\pa^2_x\widetilde{\phi},\pa^2_x\widetilde{\rho})=\de(\pa^2_x\widetilde{\phi},-\eps^2\pa_x^4\widetilde{\phi}+\eps\pa_x^2\widetilde{\phi}+\eps^4\pa_x^2R_4)\geq \eps\de(\|\pa^2_x\widetilde{\phi}\|^2
	+\eps\|\pa^3_x\widetilde{\phi}\|^2)-C(\eta)\eps^4. 
\end{align}
At last, the rest terms in \eqref{dissrho1} can be controlled as in the proofs of Lemma \ref{le1strho} and Lemma \ref{le1stu} that
\begin{align}\label{dissrho4}
	&\big|\eps\de(\pa_x[
	u_{1}\partial_x\widetilde{u}_{1}+\widetilde{u}_{1}\partial_x\bar{u}_{1}+\frac{2}{3}\partial_x\widetilde{\theta}
	+\frac{2}{3}(\frac{\theta}{\rho}-\frac{\bar{\theta}}{\bar{\rho}})\partial_x\rho+\eps^3\frac{1}{\bar{\rho}}R_2],\pa^2_x\widetilde{\rho})
	-\eps\de(\pa_x[\frac{1}{\rho}\int_{\mathbb{R}^3} v^2_{1}\partial_xG\,dv],\pa^2_x\widetilde{\rho})\big|\notag\\
	\leq&\ka\eps\de\|\pa^2_x\widetilde{\rho}\|^2+C_\ka\eps\de(\|\pa^2_x(\widetilde{u},\widetilde{\theta})\|^2+\|\langle v\rangle^{-\frac{1}{2}}\pa^2_x(g,f)\|^2)+C\eps\CD(t)+C_\ka C(\eta)\CE(t)+C_\ka C(\eta)(\eps^4+\eps^2\de^2),
\end{align}
for any $0<\ka<1.$ Hence, \eqref{dissrho} holds by collecting \eqref{dissrho1}, \eqref{dissrho2}, \eqref{dissrho3}, \eqref{dissrho4} and choosing $\ka$ to be small.

Using very similar arguments, we can also obtain \eqref{zerodissrho} without giving all details of the proof. This completes the proof of 
Lemma \ref{lem4.8}.
\end{proof}

With all above lemmas, now we can obtain the final estimate for first order fluid part.

\begin{lemma}\label{le1stfluid}
Under the a priori assumption \eqref{apriori}, it holds that
\begin{align}\label{1stfluid}
&\|\pa_x(\widetilde{\rho},\widetilde{u},\widetilde{\theta},\widetilde{\phi})(t)\|^{2}+\eps\|\pa^2_x\widetilde{\phi}(t)\|^2+\eps\de\int^t_0\{\|\pa^2_x(\widetilde{\rho},\widetilde{u},\widetilde{\theta},\widetilde{\phi})(s)\|^{2}+\eps\|\pa^3_x\widetilde{\phi}(s)\|^{2}\}\,ds
\notag\\
\leq&C\CE(0)+C\eps^2\de^2\|\pa^2_x\widetilde{\rho}(t)\|^2+C\eps^2\de^2\|\pa^2_x(g,f)(t)\|^2+
C\eps\de\int_0^t\|\pa^2_x(g,f)(s)\|_\sigma^{2}\,ds
\notag\\
&+C(\de+\eps+\frac{\de}{\sqrt\eps})\int_0^t\CD(s)\,ds+CA\int_0^t\min\{(\frac{\eps}{\de}+\frac{\eps^2}{\de^2})\CD(s),(\frac{\eps^{3/2}}{\sqrt{\de}}+\frac{\eps^{3}}{\de})\CE(s)\}\,ds
\notag\\
&+C(\eta)\int^t_0\CE(s)\,ds+C(\eta)t(\eps^4+\eps^3\de+\eps\de^3)+C\eps^5,
\end{align}	
for any $0<t<T$, where $C(\eta)\geq0$ depends only on $\eta$ with $C(0)=0$.
\end{lemma}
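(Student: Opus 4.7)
The plan is to form an appropriate positive linear combination of the estimates already established in Lemmas~\ref{le1strho}, \ref{le1stu}, \ref{le1stui}, \ref{le1sttheta}, and \ref{lem4.8}, and then integrate in time. More precisely, I will consider a weighted sum
\[
\lambda_1\cdot\eqref{1strho}+\lambda_2\cdot\eqref{1stu}+\lambda_3\sum_{i=2,3}\eqref{1stui}+\lambda_4\cdot\eqref{1sttheta}+\lambda_5\cdot\eqref{dissrho},
\]
with small but fixed positive constants $\lambda_1,\ldots,\lambda_5$. The reason for including \eqref{dissrho} (with a sufficiently small coefficient $\lambda_5$, chosen after $\lambda_1,\ldots,\lambda_4$) is that Lemmas~\ref{le1stu}, \ref{le1stui}, \ref{le1sttheta} only produce the dissipation $\eps\delta\|\pa_x^2(\widetilde u,\widetilde\theta)\|^2$ for the velocity and temperature, while the dissipation $\eps\delta(\|\pa_x^2\widetilde\rho\|^2+\|\pa_x^2\widetilde\phi\|^2+\eps\|\pa_x^3\widetilde\phi\|^2)$ for the density and potential must come from \eqref{dissrho} via the Navier--Stokes--Poisson-type structure.

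The key algebraic point is that the mixed boundary-type (non-dissipative) cross terms on the left-hand side of these lemmas cancel up to harmless remainders. Indeed, integration by parts gives
\[
(\pa_x^2\widetilde u_1,\tfrac{2\bar\theta}{3\bar\rho}\pa_x\widetilde\rho)+(\tfrac{2\bar\theta}{3\bar\rho}\pa_x^2\widetilde\rho,\pa_x\widetilde u_1)
=-(\pa_x\widetilde u_1,\pa_x(\tfrac{2\bar\theta}{3\bar\rho})\pa_x\widetilde\rho),
\]
which by \eqref{4.5A} is controlled by $C(\eta)\CE(t)$, and similarly
\[
\tfrac{2}{3}(\pa_x^2\widetilde\theta,\pa_x\widetilde u_1)+\tfrac{2}{3}(\pa_x^2\widetilde u_1,\pa_x\widetilde\theta)=0,
\]
so that the cross terms coming from the $\pa_x\widetilde\rho$--$\pa_x\widetilde u_1$ and $\pa_x\widetilde\theta$--$\pa_x\widetilde u_1$ pairings disappear from the combined identity modulo controllable remainders. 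After this cancellation the left-hand side of the combined identity provides, up to positive constants, exactly the quantity $\frac{1}{2}\frac{d}{dt}\{\|\pa_x(\widetilde\rho,\widetilde u,\widetilde\theta,\widetilde\phi)\|^2+\eps\|\pa_x^2\widetilde\phi\|^2\}$ together with the full dissipation $c\eps\delta\{\|\pa_x^2(\widetilde\rho,\widetilde u,\widetilde\theta,\widetilde\phi)\|^2+\eps\|\pa_x^3\widetilde\phi\|^2\}$, provided the smallness $\kappa\ll 1$ is chosen first to absorb the $\kappa\eps\delta\|\pa_x^2(\widetilde\rho,\widetilde u,\widetilde\theta)\|^2$ contributions on the right of each lemma.

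Next, I integrate in time from $0$ to $t$. The boundary terms at $t$ arising from the exact time derivatives
\[
\eps\delta(\pa_x\widetilde u_1,\pa_x^2\widetilde\rho)(t),\quad \eps\delta\!\!\int\!\!\int\pa_x\!\Bigl[\tfrac{1}{\rho}\pa_x\!\bigl(K\theta B_{1j}(\tfrac{v-u}{\sqrt{K\theta}})\tfrac{h}{M}\bigr)\Bigr]\!\pa_x\widetilde u_j\,dv\,dx\Big|_{s=t},
\]
and the analogous ones from Lemma~\ref{le1sttheta} must be handled by Cauchy--Schwarz in the form $|\eps\delta(a,b)(t)|\le \tfrac{1}{4}\|a(t)\|^2+C\eps^2\delta^2\|b(t)\|^2$. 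Applied to the first, this yields $\frac{1}{4}\|\pa_x\widetilde u_1(t)\|^2+C\eps^2\delta^2\|\pa_x^2\widetilde\rho(t)\|^2$; the first piece is absorbed into the energy on the left, while the second is exactly the $C\eps^2\delta^2\|\pa_x^2\widetilde\rho(t)\|^2$ appearing on the right of \eqref{1stfluid}. For the microscopic boundary term, after one integration by parts in $x$ and use of $h=\sqrt{\overline M}f+\sqrt\mu g$ together with the Gaussian tail bounds $|\langle v\rangle^{k}\overline M^{1/2}\mu^{-1/2}|\le C$, the same Cauchy--Schwarz trick produces $C\eps^2\delta^2\|\pa_x^2(g,f)(t)\|^2$ plus a fraction of $\|\pa_x\widetilde u(t)\|^2$ and lower-order quantities bounded by $C(\eta)\CE(t)$; these last are absorbed on the left. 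The boundary terms at $s=0$ all contribute to $C\CE(0)$. Collecting and, finally, choosing $\eps,\delta$ small enough (using \eqref{smalleps}) to absorb the $\kappa$-terms, one obtains \eqref{1stfluid}.

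The main obstacle is the careful bookkeeping of the time-boundary terms produced by the terms of the form $\eps\delta\,\tfrac{d}{dt}\int\!\!\int\pa_x[\cdots]\pa_x\widetilde u_j\,dv\,dx$ in \eqref{1stu}, \eqref{1stui} and \eqref{1sttheta}: one must verify that a single Cauchy--Schwarz with the weight split $\eps\delta=(\tfrac{1}{2}\cdot 1)\cdot(2\eps^2\delta^2)^{1/2}$ produces precisely the combination $\eps^2\delta^2\|\pa_x^2(g,f)(t)\|^2+\tfrac{1}{4}\|\pa_x\widetilde u(t)\|^2$ (and nothing larger), and that the contribution with one fewer spatial derivative is actually of size $\eps^2\delta^2$ times $\|\pa_x^2(g,f)\|^2$ rather than of a larger norm. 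The polynomial-in-$v$ growth produced by differentiating $M^{-1}$ in these integrals is absorbed by the Gaussian weights attached to $\sqrt{\overline M}$ and $\sqrt\mu$, so the only nontrivial loss in powers of $\eps,\delta$ is the one explicitly displayed in the conclusion.
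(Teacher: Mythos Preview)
Your approach is essentially the same as the paper's: sum \eqref{1strho}, \eqref{1stu}, \eqref{1stui}, \eqref{1sttheta}, use the integration-by-parts cancellation of the cross terms, then add a small multiple of \eqref{dissrho} to recover the $\eps\delta\|\partial_x^2(\widetilde\rho,\widetilde\phi)\|^2+\eps^2\delta\|\partial_x^3\widetilde\phi\|^2$ dissipation, choose $\kappa$ small, integrate in time, and treat the time-boundary terms by Cauchy--Schwarz.

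One small correction: the lower-order pieces of the microscopic boundary term at $s=t$ are \emph{not} absorbed on the left as $C(\eta)\CE(t)$ (the left-hand side only controls first-order norms, not all of $\CE(t)$). In the paper these terms, being products of at least three factors each bounded via \eqref{apriori} and \eqref{4.5A}, are estimated directly by $C\eps^5$, which is exactly the final term on the right of \eqref{1stfluid}; no integration by parts in $x$ is needed since the expression already contains $\partial_x^2h$ as its top-order term.
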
	
\begin{proof}
	The summation of \eqref{1strho}, \eqref{1stu}, \eqref{1stui} and \eqref{1sttheta} gives
\begin{align}\label{1fluid1}
	&\frac{1}{2}\frac{d}{dt}\{(\pa_x\widetilde{\rho},\frac{2\bar{\theta}}{3\bar{\rho}^{2}}\pa_x\widetilde{\rho})+\|\pa_x\widetilde{u}\|^{2}+(\frac{1}{\bar\rho}\pa_x\widetilde{\phi},\pa_x\widetilde{\phi})+\eps(\frac{1}{\bar\rho}\pa^2_x\widetilde{\phi},\pa^2_x\widetilde{\phi})+(\pa_x\widetilde{\theta},\frac{1}{\bar{\theta}}\pa_x\widetilde{\theta})\}
	+c\ep\de\|\pa^2_x(\widetilde{u},\widetilde{\theta})\|^{2}\notag\\
	&+\ep\de\sum^{3}_{i=1}\frac{d}{dt}\int_{\mathbb{R}}\int_{\mathbb{R}^{3}}
	\big\{\pa_x[\frac{1}{\rho}\partial_{x}(K\theta B_{11}(\frac{v-u}{\sqrt{K\theta}})\frac{1}{M}h)]
	\pa_x\widetilde{u}_i\big\}\,dv\,dx
	\notag\\
	&+\eps\de\frac{d}{dt}\int_{\mathbb{R}}\int_{\mathbb{R}^{3}}
	\big\{\pa_x[\frac{1}{\rho}\partial_{x}
	((K\theta)^{\frac{3}{2}}A_{1}(\frac{v-u}{\sqrt{K\theta}})\frac{1}{M}h)+\sum^{3}_{j=1}\frac{1}{\rho}
	\partial_{x}u_j K\theta B_{1j}(\frac{v-u}{\sqrt{K\theta}})\frac{1}{M}h]\frac{1}{\bar{\theta}}\pa_x\widetilde{\theta}\big\}\,dv\,dx
	\notag\\
	\leq& C\ka\eps\de\|\pa^2_x(\widetilde{\rho},     \widetilde{u},\widetilde{\theta})\|^{2}
	 +C_\ka(\de+\eps+\frac{\de}{\sqrt\eps})\CD(t)+C_\ka\eps\de\|\pa^2_x(g,f)\|_\sigma^2
     \notag\\
	 &+C_\ka A\min\{(\frac{\eps}{\de}+\frac{\eps^2}{\de^2})\CD(t),(\frac{\eps^{3/2}}{\sqrt{\de}}+\frac{\eps^{3}}{\de})\CE(t)\}+C_\ka C(\eta)\CE(t)+C_\ka C(\eta)(\eps^4+\eps^3\de+\eps\de^3),
\end{align}
where we have used the fact that
$$
\big|(\pa^2_x\widetilde{u}_1,\frac{2\bar{\theta}}{3\bar{\rho}}\pa_x\widetilde{\rho})+
(\frac{2\bar{\theta}}{3\bar{\rho}}\pa^2_x\widetilde{\rho},\pa_x\widetilde{u}_1)+\frac{2}{3}(\pa^2_x\widetilde{\theta},\pa_x\widetilde{u}_1)+\frac{2}{3}(\pa^2_x\widetilde{u}_1,\pa_x\widetilde{\theta})\big|=|(\pa_x\widetilde{u}_1,\pa_x(\frac{2\bar{\theta}}{3\bar{\rho}})\pa_x\widetilde{\rho})|\leq C(\eta)\eps\CE(t).
$$
Then we multiply \eqref{dissrho} with $\ka_1>0$ and add it to \eqref{1fluid1} to get
\begin{align*}
	&\frac{1}{2}\frac{d}{dt}\{(\pa_x\widetilde{\rho},\frac{2\bar{\theta}}{3\bar{\rho}^{2}}\pa_x\widetilde{\rho})+\|\pa_x\widetilde{u}\|^{2}+(\frac{1}{\bar\rho}\pa_x\widetilde{\phi},\pa_x\widetilde{\phi})+\eps(\frac{1}{\bar\rho}\pa^2_x\widetilde{\phi},\pa^2_x\widetilde{\phi})+(\pa_x\widetilde{\theta},\frac{1}{\bar{\theta}}\pa_x\widetilde{\theta})\}+\ka_1\eps\de\frac{d}{dt}(\pa_x\widetilde{u}_1,\pa^2_x\widetilde{\rho})\notag\\
	&
	+c\eps\de\{\|\pa^2_x(\widetilde{u},\widetilde{\theta})\|^{2}+\ka_1\|\pa^2_x(\widetilde{\rho},\widetilde{\phi})\|^{2}+\ka_1\eps\|\pa^3_x\widetilde{\phi}\|^{2}\}+\eps\de\frac{d}{dt}\Big\{\sum^{3}_{i=1}\int_{\mathbb{R}}\int_{\mathbb{R}^{3}}
	\pa_x[\frac{1}{\rho}\partial_{x}(K\theta B_{11}(\frac{v-u}{\sqrt{K\theta}})\frac{1}{M}h)]
	\pa_x\widetilde{u}_i
	\notag\\
	&\qquad+
	\pa_x[\frac{1}{\rho}\partial_{x}
	((K\theta)^{\frac{3}{2}}A_{1}(\frac{v-u}{\sqrt{K\theta}})\frac{1}{M}h)+\sum^{3}_{j=1}\frac{1}{\rho}
	\partial_{x}u_j K\theta B_{1j}(\frac{v-u}{\sqrt{K\theta}})\frac{1}{M}h]\frac{1}{\bar{\theta}}\pa_x\widetilde{\theta}\Big\}\,dv\,dx
	\notag\\
	\leq&C\ka_1\eps\de(\|\pa^2_x(\widetilde{u},\widetilde{\theta})\|^2)
    +C\ka\eps\de\|\pa^2_x(\widetilde{\rho},     \widetilde{u},\widetilde{\theta})\|^{2}
    +(C\ka_1+C_\ka)\eps\de\|\pa^2_x(g,f)\|_\sigma^2+
C_\ka(\de+\eps+\frac{\de}{\sqrt\eps})\CD(t)\notag\\
	&+C_\ka A\min\{(\frac{\eps}{\de}+\frac{\eps^2}{\de^2})\CD(t),(\frac{\eps^{3/2}}{\sqrt{\de}}+\frac{\eps^{3}}{\de})\CE(t)\}+C_\ka C(\eta)\CE(t)+C_\ka C(\eta)(\eps^4+\eps^3\de+\eps\de^3),
\end{align*}
which, by choosing $\ka_1$ to be small first and then taking $\ka>0$ small enough, yields
\begin{align*}
		&\frac{1}{2}\frac{d}{dt}\{(\pa_x\widetilde{\rho},\frac{2\bar{\theta}}{3\bar{\rho}^{2}}\pa_x\widetilde{\rho})+\|\pa_x\widetilde{u}\|^{2}+(\frac{1}{\bar\rho}\pa_x\widetilde{\phi},\pa_x\widetilde{\phi})+\eps(\frac{1}{\bar\rho}\pa^2_x\widetilde{\phi},\pa^2_x\widetilde{\phi})+(\pa_x\widetilde{\theta},\frac{1}{\bar{\theta}}\pa_x\widetilde{\theta})\}+\ka_1\eps\de\frac{d}{dt}(\pa_x\widetilde{u}_1,\pa^2_x\widetilde{\rho})\notag\\
	&
	+c\eps\de\{\|\pa^2_x(\widetilde{\rho},\widetilde{u},\widetilde{\theta},\widetilde{\phi})\|^{2}+\eps\|\pa^3_x\widetilde{\phi}\|^{2}\}+\eps\de\sum^{3}_{i=1}\frac{d}{dt}\int_{\mathbb{R}}\int_{\mathbb{R}^{3}}
	\big\{\pa_x[\frac{1}{\rho}\partial_{x}(K\theta B_{11}(\frac{v-u}{\sqrt{K\theta}})\frac{1}{M}h)]
	\pa_x\widetilde{u}_i\big\}\,dv\,dx
	\notag\\
	&+\eps\de\frac{d}{dt}\int_{\mathbb{R}}\int_{\mathbb{R}^{3}}
	\big\{\pa_x[\frac{1}{\rho}\partial_{x}
	((K\theta)^{\frac{3}{2}}A_{1}(\frac{v-u}{\sqrt{K\theta}})\frac{1}{M}h)+\sum^{3}_{j=1}\frac{1}{\rho}
	\partial_{x}u_j K\theta B_{1j}(\frac{v-u}{\sqrt{K\theta}})\frac{1}{M}h]\frac{1}{\bar{\theta}}\pa_x\widetilde{\theta}\big\}\,dv\,dx
	\notag\\
	\leq&
	C(\de+\eps+\frac{\de}{\sqrt\eps})\CD(t)
    +C\eps\de\|\pa^2_x(g,f)\|_\sigma^2+C(\eta)\CE(t)
    \notag\\
	&+C A\min\{(\frac{\eps}{\de}+\frac{\eps^2}{\de^2})\CD(t),(\frac{\eps^{3/2}}{\sqrt{\de}}+\frac{\eps^{3}}{\de})\CE(t)\}+ C(\eta)(\eps^4+\eps^3\de+\eps\de^3).
\end{align*}
Consider $i=1$ for the second term in the second line above. Recalling $h=\sqrt{\overline{M}}f(t,x,v)+\sqrt{\mu}g(t,x,v)$, it follows from \eqref{boundAB}, \eqref{4.5A} and \eqref{apriori} that
\begin{align*}
	&\eps\de\int_{\mathbb{R}}\int_{\mathbb{R}^{3}}
	\big\{\pa_x[\frac{1}{\rho}\partial_{x}(K\theta B_{11}(\frac{v-u}{\sqrt{K\theta}})\frac{1}{M}h)]
	\pa_x\widetilde{u}_1\big\}\,dv\,dx\notag\\
	\leq&C\eps\de\big\{\|\pa_x(\rho,u,\theta)\|_{L^\infty}(\|\pa_x(\rho,u,\theta)\|\|(f,g)\|_{L^\infty}+\|\pa_x(f,g)\|+C(\eta)\|f\|)
    \|\pa_x\widetilde{u}_1\|
    \notag\\
	&\quad+\|\pa^2_x(\rho,u,\theta)\|\|(f,g)\|_{L^\infty}\|\pa_x\widetilde{u}_1\|+\|\pa^2_x(f,g)\|\|\pa_x\widetilde{u}_1\|\big\}
    \notag\\
	\leq& \ka\|\pa_x\widetilde{u}_1\|^2+C_{\ka}\eps^2\de^2\|\pa^2_x(f,g)\|^2 +C\eps^5,
\end{align*}
for any $\ka>0$. Similarly, we obtain
\begin{align*}
	&\eps\de\sum^{3}_{i=2}\int_{\mathbb{R}}\int_{\mathbb{R}^{3}}
	\big\{\pa_x[\frac{1}{\rho}\partial_{x}(K\theta B_{11}(\frac{v-u}{\sqrt{K\theta}})\frac{1}{M}h)]
	\pa_x\widetilde{u}_i\big\}\,dv\,dx\notag\\
	&+\eps\de\int_{\mathbb{R}}\int_{\mathbb{R}^{3}}
	\big\{\pa_x[\frac{1}{\rho}\partial_{x}
	((K\theta)^{\frac{3}{2}}A_{1}(\frac{v-u}{\sqrt{K\theta}})\frac{1}{M}h)+\sum^{3}_{j=1}\frac{1}{\rho}
	\partial_{x}u_j K\theta B_{1j}(\frac{v-u}{\sqrt{K\theta}})\frac{1}{M}h]\frac{1}{\bar{\theta}}\pa_x\widetilde{\theta}\big\}\,dv\,dx
	\notag\\
	\leq&\ka\|\pa_x(\widetilde{u},\widetilde{\theta})\|^2+C_{\ka}\eps^2\de^2\|\pa^2_x(f,g)\|^2 +C\eps^5.
\end{align*}
Then taking integration in $t$ and combining with the above estimate, we get
\begin{align*}
		&\frac{1}{2}\{(\pa_x\widetilde{\rho},\frac{2\bar{\theta}}{3\bar{\rho}^{2}}\pa_x\widetilde{\rho})+\|\pa_x\widetilde{u}\|^{2}+(\frac{1}{\bar\rho}\pa_x\widetilde{\phi},\pa_x\widetilde{\phi})+\eps(\frac{1}{\bar\rho}\pa^2_x\widetilde{\phi},\pa^2_x\widetilde{\phi})+(\pa_x\widetilde{\theta},\frac{1}{\bar{\theta}}\pa_x\widetilde{\theta})\}\notag\\
	&
	+c\eps\de\int^t_0\{\|\pa^2_x(\widetilde{\rho},\widetilde{u},\widetilde{\theta},\widetilde{\phi})(s)\|^{2}+\eps\|\pa^3_x\widetilde{\phi}(s)\|^{2}\}\,ds
	\notag\\
	\leq&C\ka\|\pa_x(\widetilde{u},\widetilde{\theta})\|^2+C_{\ka}\CE(0)+C_{\ka}\eps^2\de^2\|\pa^2_x\widetilde{\rho}\|^2+C_{\ka}\eps^2\de^2\|\pa^2_x(g,f)\|^2+
	C\eps\de\int_0^t\|\pa^2_x(g,f)(s)\|_\sigma^{2}\,ds
    \notag\\
	&+C(\de+\eps+\frac{\de}{\sqrt\eps})\int_0^t\CD(s)ds+CA\int_0^t\min\{(\frac{\eps}{\de}+\frac{\eps^2}{\de^2})\CD(s),(\frac{\eps^{3/2}}{\sqrt{\de}}+\frac{\eps^{3}}{\de})\CE(s)\}\,ds
	\notag\\
	&+ C(\eta)\int^t_0\CE(s)\,ds+ C(\eta)t(\eps^4+\eps^3\de+\eps\de^3)+C\eps^5,
\end{align*}
for any  $0<\ka<1$.
Hence, the desired estimate \eqref{1stfluid} holds by choosing $\ka$ to be small.
\end{proof}

We can obtain the corresponding zero order energy estimate with similar arguments.

\begin{lemma}\label{lezerofluid}
	Under the a priori assumption \eqref{apriori}, it holds that
	\begin{align}\label{zerofluid}
		&\|(\widetilde{\rho},\widetilde{u},\widetilde{\theta},\widetilde{\phi})(t)\|^{2}+\eps\|\pa_x\widetilde{\phi}(t)\|^2+\eps\de\int^t_0\{\|\pa_x(\widetilde{\rho},\widetilde{u},\widetilde{\theta},\widetilde{\phi})(s)\|^{2}+\eps\|\pa^2_x\widetilde{\phi}(s)\|^{2}\}\,ds
		\notag\\
		\leq&C\CE(0)+C(\de+\eps+\frac{\de}{\sqrt\eps})\int_0^t\CD(s)\,ds+C(\eta)\int^t_0\CE(s)ds+Ct(\eps^4+\eps^3\de+\eps\de^3)+C\eps^4,
	\end{align}	
for any $0<t<T$, where $C(\eta)\geq0$ depends only on $\eta$ with $C(0)=0$.
\end{lemma}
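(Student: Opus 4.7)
The plan is to mirror the strategy used for Lemma \ref{le1stfluid}, combining the four zero order estimates \eqref{zerorho}, \eqref{zerou}, \eqref{zeroui}, \eqref{zerotheta} together with the zero order dissipation bound \eqref{zerodissrho} for $(\widetilde{\rho},\widetilde{\phi})$. First I would sum the four estimates to produce a single identity whose left hand side contains the time derivative of the weighted $L^2$ energy
\[
\tfrac{1}{2}\tfrac{d}{dt}\Big\{(\widetilde{\rho},\tfrac{2\bar{\theta}}{3\bar{\rho}^{2}}\widetilde{\rho})+\|\widetilde{u}\|^{2}+(\tfrac{1}{\bar\rho}\widetilde{\phi},\widetilde{\phi})+\eps(\tfrac{1}{\bar\rho}\pa_x\widetilde{\phi},\pa_x\widetilde{\phi})+(\widetilde{\theta},\tfrac{1}{\bar{\theta}}\widetilde{\theta})\Big\},
\]
plus the already good dissipation $c\eps\de\|\pa_x(\widetilde{u},\widetilde{\theta})\|^2$, plus the cross terms and the microscopic-correction time derivatives collected from \eqref{zerou}--\eqref{zerotheta}.

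The key observation is that the apparently bad cross terms cancel up to harmless remainders. Namely, the pairs $(\pa_x\widetilde{u}_1,\tfrac{2\bar{\theta}}{3\bar{\rho}}\widetilde{\rho})$ from \eqref{zerorho} and $(\tfrac{2\bar{\theta}}{3\bar{\rho}}\pa_x\widetilde{\rho},\widetilde{u}_1)$ from \eqref{zerou}, as well as $\tfrac{2}{3}(\pa_x\widetilde{\theta},\widetilde{u}_1)$ from \eqref{zerou} and $\tfrac{2}{3}(\pa_x\widetilde{u}_1,\widetilde{\theta})$ from \eqref{zerotheta}, combine via integration by parts to give only
\[
(\pa_x[\tfrac{2\bar{\theta}}{3\bar{\rho}}]\widetilde{u}_1,\widetilde{\rho}),
\]
which is $O(\eta)\CE(t)$ by \eqref{4.5A} and \eqref{DefE}. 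Next I would add $\ka_1$ times \eqref{zerodissrho} for a small $\ka_1>0$ so as to supplement $\eps\de(\|\pa_x\widetilde{\rho}\|^2+\|\pa_x\widetilde{\phi}\|^2+\eps\|\pa_x^2\widetilde{\phi}\|^2)$ to the dissipation side; the $\ka_1\eps\de\|\pa_x(\widetilde{u},\widetilde{\theta})\|^2$ it introduces is absorbed by the already-present $c\eps\de\|\pa_x(\widetilde{u},\widetilde{\theta})\|^2$ after choosing $\ka_1$ small enough. In this zero order setting there is no need for the interpolation argument used in Lemma \ref{le1stfluid} (which was required to tame $\|\widetilde{\rho}\|_{L^\infty}$--type factors), so the nonlinear contributions already sit inside $C(\eta)\CE(t)$ and $CA\min\{\cdots\}$ bounds which by \eqref{apriori} and \eqref{smalleps} can be absorbed into $C(\eta)\CE(t)$.

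Finally, I would integrate in time on $[0,t]$. The leading quadratic form on the left is equivalent (uniformly in $\eps,\de$) to $\|(\widetilde{\rho},\widetilde{u},\widetilde{\theta},\widetilde{\phi})(t)\|^{2}+\eps\|\pa_x\widetilde{\phi}(t)\|^2$ by \eqref{4.5A}. The boundary terms produced by $\eps\de\tfrac{d}{dt}\int\!\!\int\{\cdots\}\,dv\,dx$ from \eqref{zerou}, \eqref{zeroui}, \eqref{zerotheta} (the microscopic-correction contributions) are bounded, using $h=\sqrt{\overline{M}}f+\sqrt{\mu}g$, \eqref{boundAB} and the Cauchy-Schwarz inequality, by a small fraction of $\|(\widetilde{u},\widetilde{\theta})\|^2$ plus $C\eps^2\de^2\|\pa_x(f,g)\|^2+C\eps^4$, both of which are absorbed (the first into the leading energy, the second into the stated $C\eps^4$). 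The same remark applies at $t=0$, yielding the initial term $C\CE(0)$. Collecting every contribution gives exactly the right hand side of \eqref{zerofluid}.

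I do not expect any serious obstacle beyond bookkeeping: the singular-in-$\eps$ terms that made the first order case delicate (the interpolation in \eqref{pa2phipau3}--\eqref{pa2phipau42} and the highest-order $\pa_t\pa_x$ term in \eqref{defI112}) either do not appear here or appear in strictly weaker form, because in the zero order estimate the cross term $\tfrac{1}{\eps}(\pa_x\widetilde{\phi},\widetilde{u}_1)$ produced by the electric force is absorbed \emph{exactly} by the Poisson relation $-\eps^2\pa_x^2\widetilde\phi+\eps\widetilde\phi=\widetilde\rho-\eps^4R_4$, as already exhibited in the derivation of \eqref{zerou}. The mildly delicate point is making sure the microscopic-correction boundary terms at time $t$ (not multiplied by $\pa_x\widetilde u$ as in the first order case but by $\widetilde u$, $\widetilde\theta$) are controlled by $\CE(t)$ without generating an unbounded $\tfrac{1}{\eps\de}$ factor; this is handled because $\eps\de \cdot \|h/\sqrt{M}\|_{L^2}\cdot\|\widetilde u\|\lesssim \eps\de\|(f,g)\|\|\widetilde u\|\lesssim \CE(t)$ thanks to \eqref{smalleps}.
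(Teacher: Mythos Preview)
Your proposal is correct and follows essentially the same approach as the paper: the paper's own proof is a single sentence stating that \eqref{zerofluid} follows from \eqref{zerorho}, \eqref{zerou}, \eqref{zeroui}, \eqref{zerotheta}, \eqref{zerodissrho} by the same linear-combination argument used for Lemma \ref{le1stfluid}, which is precisely what you carry out in detail. Your observations that the cross terms cancel up to $C(\eta)\CE(t)$, that the $CA\min\{\cdots\}$ interpolation terms are absent at zero order, and that the microscopic-correction boundary terms are harmlessly bounded after time integration, are all accurate elaborations of what the paper leaves implicit.
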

\begin{proof}
The desired estimate \eqref{zerofluid} follows from \eqref{zerorho}, \eqref{zerou}, \eqref{zeroui}, \eqref{zerotheta}, \eqref{zerodissrho} and similar calculations in the proof of Lemma \ref{le1stfluid}.
\end{proof}
\subsection{Lower order estimates on non-fluid part}	
In this subsection, we start from equations \eqref{eqg} and \eqref{eqf} to obtain the energy estimates for lower order non-fluid part $\|\partial^{\alpha}_{\beta}g(t)\|_{w}^{2}$ and $\|\partial^{\alpha}_{\beta}f(t)\|^{2}$ with $\be=0$, $\al<2$ or $\alpha+|\beta|\leq 2$, $|\beta|\geq1$. 
\begin{lemma}\label{lem4.11}
Under the a priori assumption \eqref{apriori}, it holds that
	\begin{align}\label{walbeg}
		&\frac{d}{dt}\big\{C_\alpha\sum_{\alpha\leq 1}\|\partial^{\alpha}_x g\|_w^{2}+C_{\alpha,\be}\sum_{\alpha+|\beta|\leq 2,|\beta|\geq 1}\|\partial^{\alpha}_\be g\|_w^{2}\big\}+c\frac{1}{\eps\de}\big\{\sum_{\alpha\leq 1}\|\partial^{\alpha}_x g\|_{\sigma,w}^{2}+\sum_{\al+|\beta|\leq 2,|\beta|\geq 1}\|\partial^{\alpha}_\be g\|_{\sigma,w}^{2}\big\}\notag\\
		&\qquad+cq_{1}q_{2}(1+t)^{-(1+q_2)}\big\{\sum_{\alpha\leq 1}\|\langle v\rangle \partial^{\alpha}_xg\|_{w}^{2}+\sum_{\al+|\beta|\leq 2,|\beta|\geq 1}\|\langle v\rangle \partial^{\alpha}_{\beta}g\|_{w}^{2}\big\}\notag\\
		\leq& C\eps\de\|\pa^2_xg\|_{\sigma,w}^{2}+
        C(\eps+\de)\mathcal{D}(t)+
        C(\eta)\eps^2\de^2+ C(\eta)\CE(t)\notag\\
		&+C(C(\eta)+\eps)\frac{\de^{1/2}}{\eps}\big\{\sum_{\alpha\leq 1}\|\langle v\rangle \pa^\al_x g\|_w^2+\sum_{\al+|\beta|\leq 2,|\beta|\geq 1}\|\langle v\rangle \pa^\al_\be g\|_w^2\big\},
	\end{align}	
	for some positive constants $C_\alpha$ and $C_{\alpha,\be}$, where $C(\eta)\geq0$ depends only on $\eta$ with $C(0)=0$.
\end{lemma}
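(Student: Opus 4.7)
The plan is to apply $\partial^\alpha_\beta$ to the microscopic equation \eqref{eqg} for $g$ and take the weighted inner product with $w^2(\alpha,\beta)\partial^\alpha_\beta g$, then sum over the relevant indices with carefully ordered multipliers $C_\alpha,C_{\alpha,\beta}$. The transport part $(\partial_t-\eps^{-1}\partial_x+v_1\partial_x)g$ contributes $\tfrac{1}{2}\frac{d}{dt}\|\partial^\alpha_\beta g\|_w^{2}$ together with the crucial extra coercive piece $-\tfrac{1}{2}(\partial_t w^{2}\partial^\alpha_\beta g,\partial^\alpha_\beta g)$; since $-\partial_t w^{2}=q_1q_2(1+t)^{-(1+q_2)}\langle v\rangle^{2}w^{2}$, this yields the positive term $c\,q_1q_2(1+t)^{-(1+q_2)}\|\langle v\rangle\partial^\alpha_\beta g\|_w^{2}$ on the left of \eqref{walbeg}. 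The singular transport $-\eps^{-1}\partial_x g$ vanishes after integration by parts in $x$, and $(v_1\partial_x g,w^{2}\partial^\alpha_\beta g)$ vanishes after integration by parts since the weight is $x$-independent. The linear operator $-\frac{1}{\eps\delta}L_D g$ produces, by Lemma \ref{LemmaL}, the microscopic dissipation $\frac{c}{\eps\delta}\|\partial^\alpha_\beta g\|_{\sigma,w}^{2}$ modulo lower-order $(\alpha',\beta')$-terms and commutator contributions with $\partial_x^{\alpha-\alpha'}M/\sqrt{\mu}$, all of which are absorbed by taking $C_\alpha,C_{\alpha,\beta}$ suitably large and $\kappa$ small.

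The coupling forcing $(\partial_t-\eps^{-1}\partial_x+v_1\partial_x)\sqrt{\overline M}/\sqrt{\mu}\cdot f$ is handled by observing that since $\mu$ has a strictly slower Gaussian tail than $\overline M$, the factor $\sqrt{\overline M}/\sqrt{\mu}$ decays rapidly and the coefficient $(\partial_t-\eps^{-1}\partial_x)\sqrt{\overline M}/\sqrt{\mu}$ is a rapidly decaying function of $v$ multiplied by $(\partial_t-\eps^{-1}\partial_x)(\bar\rho,\bar u,\bar\theta)$ or by analogous quantities involving the perturbation $(\widetilde\rho,\widetilde u,\widetilde\theta)$. Using Lemmas \ref{lem4.1A} and \ref{lem4.2} to bound these time/space-derivatives, and applying Cauchy--Schwarz, the resulting contributions fit into $C(\eta)\CE(t)+C(\eps+\delta)\CD(t)+C(\eta)\eps^{2}\delta^{2}$. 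The cubic growth in $v_1$ produced when $v_1\partial_x$ falls on $\sqrt{\overline M}/\sqrt{\mu}$ is precisely what is absorbed by the smallness of $\mu/\overline M$, which is the reason the Caflisch-type decomposition was imposed in the first place. The nonlinear $\Gamma$ contributions on the right-hand side of \eqref{eqg} are estimated by \eqref{controlpaGa}; the cross terms $\Gamma((M-\overline M)/\sqrt{\mu},\cdot)$ and $\Gamma(\cdot,(M-\overline M)/\sqrt{\mu})$ use that $\|(M-\overline M)/\sqrt{\mu}\|_{L^{\infty}_xL^{2}_v}\lesssim \eps$ (by Taylor expansion in the fluid variables and the a priori assumption \eqref{apriori}), so the $1/(\eps\delta)$ prefactor absorbs one power of $\eps$, leaving terms of order $O(\delta^{-1})\cdot$($L^\infty$ smallness)$\cdot$(energy/dissipation) that fit into the stated bound.

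The principal obstacle is the singular electric-field term $-\frac{1}{\eps}\partial_x\phi\,\partial_{v_1}(\sqrt{\mu}g+\sqrt{\overline M}f)/\sqrt{\mu}$. Expanding via the product rule gives contributions of the form $\frac{1}{\eps}\partial_x\phi\,\partial_{v_1}g$, $-\frac{v_1}{2\eps}\partial_x\phi\,g$ and analogous $f$-terms weighted by $\sqrt{\overline M}/\sqrt{\mu}$. The $\partial_{v_1}g$ piece is converted, via integration by parts and Cauchy--Schwarz, into higher-order $\partial_\beta$ norms that are absorbed by a small fraction of $\frac{1}{\eps\delta}\|\partial^\alpha_\beta g\|_{\sigma,w}^{2}$. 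The dangerous piece is the polynomial-growth term $v_1 g$, which I will estimate as
\begin{equation*}
\frac{1}{\eps}\bigl|(\partial_x\phi\,v_1 g,w^{2}g)\bigr|\leq \frac{C}{\eps}\|\partial_x\phi\|_{L^{\infty}}\|\langle v\rangle g\|_w^{2},
\end{equation*}
and then split $\partial_x\phi=\partial_x\bar\phi+\partial_x\widetilde\phi$. The background part obeys $\|\partial_x\bar\phi\|_{L^\infty}\leq C(\eta)$, while for the perturbation one uses Sobolev interpolation together with $\|\pa_x\widetilde\phi\|^{2}\lesssim \CE(t)\lesssim A\eps^{4}$ and $\|\pa_x^{2}\widetilde\phi\|^{2}\lesssim \frac{1}{\eps\delta}\CD(t)/\eps$ to obtain $\|\partial_x\widetilde\phi\|_{L^\infty}\lesssim \delta^{1/2}$ times small quantities, producing the factor $(C(\eta)+\eps)\delta^{1/2}/\eps$ multiplying $\|\langle v\rangle g\|_w^{2}$ as claimed. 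The $f$-contributions with $\sqrt{\overline M}/\sqrt{\mu}$ decay rapidly and avoid any growth issue, fitting into $(C(\eps+\delta))\CD(t)$ after using \eqref{boundbarG0}. The remaining term $\eps\delta\|\partial_x^{2}g\|_{\sigma,w}^{2}$ on the right arises because at the highest order $\alpha=1$ the commutator with $\partial_x$ of the electric term produces a $\partial_x^{2}g$-type term whose $\sigma$-norm cannot be absorbed at this level and must be carried over to the top-order estimate. Collecting all contributions and choosing $q_1\in(0,1]$ sufficiently small (to guarantee the coercivity of $L_D$ from Lemma \ref{LemmaL} under weight $w$) together with appropriate linear combination coefficients closes \eqref{walbeg}.
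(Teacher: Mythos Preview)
Your outline has the right architecture, but two concrete steps are misidentified and one of them would make the argument fail.

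First, the claim that $(v_1\partial_x g,\,w^{2}\partial^\alpha_\beta g)$ ``vanishes after integration by parts since the weight is $x$-independent'' is only true when $|\beta|=0$. Once $\partial^\alpha_\beta$ contains a $\partial_{v_1}$, the commutator $[\partial_{v_1},v_1]=1$ produces an extra $\partial_x$ that cannot be integrated away. In particular, for $\alpha=1$, $\beta=(1,0,0)$ one has
\[
(\partial^\alpha_\beta(v_1\partial_x g),\,w^{2}\partial^\alpha_\beta g)=(\partial_x^{2}g,\,w^{2}(1,\beta)\,\partial_x\partial_{v_1}g),
\]
and bounding this by Cauchy--Schwarz with the weight shift $\langle v\rangle^{2}w(1,\beta)\leq w(2,0)$ gives $\kappa(\eps\delta)^{-1}\|\partial_x g\|_{\sigma,w}^{2}+C_\kappa\,\eps\delta\|\partial_x^{2}g\|_{\sigma,w}^{2}$. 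This commutator, not the electric term, is the true source of the $\eps\delta\|\partial_x^{2}g\|_{\sigma,w}^{2}$ on the right of \eqref{walbeg}.

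Second, and more seriously, your treatment of the singular electric term does not close. From the direct bound
\[
\frac{1}{\eps}\bigl|(\partial_x\phi\,v_1 g,\,w^{2}g)\bigr|\leq \frac{C}{\eps}\|\partial_x\phi\|_{L^\infty}\|\langle v\rangle g\|_w^{2},
\]
the background contribution $\|\partial_x\bar\phi\|_{L^\infty}\leq C(\eta)$ yields $C(\eta)\eps^{-1}\|\langle v\rangle g\|_w^{2}$, with no $\delta^{1/2}$ factor; this blows up as $\eps\to 0$ and cannot be absorbed by the time-weight dissipation $q_1q_2(1+t)^{-(1+q_2)}\|\langle v\rangle g\|_w^{2}$. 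The paper instead interpolates in $v$: after integrating by parts (which replaces $\partial_{v_1}g$ by $\partial_{v_1}w^{2}\lesssim \langle v\rangle w^{2}$), one writes
\[
\int_{\R^3}\langle v\rangle w^{2}|\partial^\alpha_\beta g|^{2}\,dv
\leq\Bigl(\int_{\R^3}\langle v\rangle^{-1}w^{2}|\partial^\alpha_\beta g|^{2}\,dv\Bigr)^{1/3}
\Bigl(\int_{\R^3}\langle v\rangle^{2}w^{2}|\partial^\alpha_\beta g|^{2}\,dv\Bigr)^{2/3},
\]
and then applies Young's inequality with parameter $\delta$ to obtain
\[
\frac{C\kappa}{\eps\delta}\|\partial^\alpha_\beta g\|_{\sigma,w}^{2}
+C_\kappa\,\frac{\delta^{1/2}}{\eps}\|\partial_x\phi\|_{L^\infty}^{3/2}\|\langle v\rangle\partial^\alpha_\beta g\|_w^{2}.
\]
The first piece is absorbed into the microscopic dissipation, and since $\|\partial_x\phi\|_{L^\infty}^{3/2}\leq C(\eta)+\eps$ under \eqref{apriori}, the second piece gives exactly the $(C(\eta)+\eps)\delta^{1/2}/\eps$ coefficient in \eqref{walbeg}. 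Without this interpolation the estimate cannot be closed.
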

\begin{proof}
In the following proof, it is important to bound the derivatives of the weight function $w$ defined in \eqref{Defw}. Direct calculations show that
	\begin{equation}\label{patw}
		\partial_{t}[w^{2}(\alpha,\beta)] =-q_{1}q_{2}(1+t)^{-(1+q_2)}\langle v\rangle^2 w^{2}(\alpha,\beta),
	\end{equation}
	and
	\begin{equation}\label{pavw}
		\partial_{v_1}w(\alpha,\beta)=2(l-\al-|\beta|)\frac{v_1}{\langle v\rangle^{2}}w(\alpha,\beta)+
		q_1(1+t)^{-q_2}v_1 w(\alpha,\beta), \quad |\partial_{v_1}w(\alpha,\beta)|
		\leq C\langle v\rangle w(\alpha,\beta).
	\end{equation}
	Let $\alpha\leq 1$ when $|\be|=0$, and $\alpha+|\be|\leq 2$ when $|\be|\geq1$. Applying $\partial^{\alpha}_\be$ to \eqref{eqg} and taking the inner product with $w^2(\al,\be)\partial^{\alpha}_\be g$, one has
	\begin{align}\label{ipg}
		&(\partial_t\partial^{\alpha}_\be g,w^2(\alpha,\be)\partial^{\alpha}_\be g)+(\partial^{\alpha}_\be (v_1\pa_xg),w^2(\alpha,\beta)\partial_{\beta}^\alpha g)-\frac{1}{\eps}(\partial^{\alpha}_\be[\frac{\partial_x\phi\partial_{v_{1}}(\sqrt{\mu}g)}{\sqrt{\mu}}],w^2(\al,\be)\partial^{\alpha}_\be g)\notag\\
		&
		-\frac{1}{\eps\de}(\partial^{\alpha}_\be L_D g,w^2(\al,\be)\partial^{\alpha}_\be g)+(\partial^{\alpha}_\be[\frac{(\pa_t+v_1\pa_x)\sqrt{\overline{M}}}{\sqrt{\mu}}f],w^2(\al,\be)\partial^{\alpha}_\be g)\notag\\
		&-\frac{1}{\eps}(\partial^{\alpha}_\be[\frac{\pa_x\sqrt{\overline{M}}}{\sqrt{\mu}}f],w^2(\al,\be)\partial^{\alpha}_\be g)-\frac{1}{\eps}(\partial^{\alpha}_\be[\frac{\partial_x\phi\partial_{v_{1}}(\sqrt{\overline{M}}f)}{\sqrt{\mu}}],w^2(\al,\be)\partial^{\alpha}_\be g)
		\notag\\
		=&\frac{1}{\eps\de}(\partial^{\alpha}_\be \Gamma(g+\frac{\sqrt{\overline{M}}}{\sqrt{\mu}}f+\frac{\overline{G}}{\sqrt{\mu}},g+\frac{\sqrt{\overline{M}}}{\sqrt{\mu}}f+\frac{\overline{G}}{\sqrt{\mu}}),w^2(\al,\be)\partial^{\alpha}_\be g)\notag\\
		&
		+\frac{1}{\eps\de}(\partial^{\alpha}_\be\Gamma(\frac{M-\overline{M}}{\sqrt{\mu}},\frac{\sqrt{\overline{M}}}{\sqrt{\mu}}f)+\partial^{\alpha}_\be\Gamma(\frac{\sqrt{\overline{M}}}{\sqrt{\mu}}f,\frac{M-\overline{M}}{\sqrt{\mu}}),w^2(\al,\be)\partial^{\alpha}_\be g).
	\end{align}
We should bound each term above and begin with the first one on the left hand side. It follows from \eqref{patw} that
\begin{align}\label{1gtime}
	(\partial_{t}\partial^{\alpha}_\be g,w^2(\alpha,\be)\partial^{\alpha}_\be g)=&\frac{1}{2}\frac{d}{dt}(\partial^\alpha_\be g,w^2(\alpha,\be)\partial^\alpha_\be g)
	-\frac{1}{2}(\partial^\alpha_\be g,\partial_{t}[w^2(\alpha,\be)]\partial^\alpha_\be g)
	\notag\\
	=&\frac{1}{2}\frac{d}{dt}\|\partial^\alpha_\be g\|_{w}^2+\frac{1}{2}q_{1}q_{2}(1+t)^{-(1+q_2)}\|\langle v\rangle \partial^\alpha_\be g\|^2_{w}.
\end{align}
For the second term above, it is direct to get
\begin{align*}
	&(\partial^{\alpha}_x (v_1\pa_xg),w^2(\alpha,0)\partial_{x}^\alpha g)=0,\quad \al\leq 1,\notag\\
	&(\partial^{\alpha}_x\pa_{v_1} (v_1\pa_xg),w^2(\alpha,\beta)\partial^{\alpha}_x\pa_{v_1} g)=(\partial^{\alpha+1}_xg,w^2(\alpha,\beta)\partial^{\alpha}_x\pa_{v_1} g),\quad \be=(1,0,0),\quad \al\leq 1\notag\\
	&(\pa^2_{v_1} (v_1\pa_xg),w^2(0,\beta)\pa^2_{v_1} g)=2(\partial_x\pa_{v_1}g,w^2(0,\beta)\pa^2_{v_1} g),\quad \be=(2,0,0),\notag\\
	&(\pa_{v_1}\pa_{v_i} (v_1\pa_xg),w^2(0,\beta)\pa_{v_1}\pa_{v_i} g)=(\partial_x\pa_{v_i}g,w^2(0,\beta)\pa_{v_1}\pa_{v_i} g),\quad \be=(1,\de_{i2},\de_{i3}),\quad i=2,3,\notag\\
	&(\partial^{\alpha}_x\pa_{v_i} (v_1\pa_xg),w^2(\alpha,\beta)\partial^{\alpha}_x\pa_{v_i} g)=0,\quad \al\leq1,\quad \be=(0,\de_{i2},\de_{i3}), \quad i=2,3,
    \notag\\
	&(\pa_{v_i}\pa_{v_j} (v_1\pa_xg),w^2(0,\beta)\pa_{v_i}\pa_{v_j}g)=0,\quad \be=(0,\de_{i2}+\de_{j2},\de_{i3}+\de_{j3}), \quad i,j=2,3,
\end{align*}
where $\de_{ij}$ stands for Kronecker delta that $\de_{ij}=1$ when $i=j$ and $\de_{ij}=0$ otherwise. The above identities show all the cases for the second term in \eqref{ipg} and we only need to consider three nonzero ones among them. Using the definition of weight function \eqref{Defw}, for $\be=(1,0,0)$ and $\al= 1$, it holds that
\begin{align*}
	|(\partial^{\alpha+1}_xg,w^2(\alpha,\beta)\partial^{\alpha}_x\pa_{v_1} g)|\leq& \ka\frac{1}{\eps\de}\|\langle v\rangle^{\frac{1}{2}}w(\alpha,\beta)\partial^{\alpha}_x\pa_{v_1} g\|^2+C_{\ka} \eps\de\|\langle v\rangle^{-\frac{1}{2}}w(\alpha,\beta)\partial^{\alpha+1}_xg\|^2\notag\\
	\leq& \ka\frac{1}{\eps\de}\|\langle v\rangle^2\langle v\rangle^{-\frac{3}{2}}w(\alpha,\beta)\partial^{\alpha}_x\pa_{v_1} g\|^2+C_{\ka} \eps\de\|\langle v\rangle^{-\frac{1}{2}}w(\alpha+1,0)\partial^{\alpha+1}_xg\|^2\notag\\
	\leq&\ka\frac{1}{\eps\de}\|\langle v\rangle^{-\frac{3}{2}}w(\alpha,0)\partial^{\alpha}_x\pa_{v_1} g\|^2+C_\ka\eps\de\|\partial^2_xg\|^2_{\sigma,w}\notag\\
	\leq& C\ka\frac{1}{\eps\de}\|\partial^{\alpha}_xg\|_{\sigma,w}^2+C_\ka \eps\de\|\partial^2_xg\|^2_{\sigma,w}.
\end{align*}
For $\be=(1,0,0),$ $\al=0$, one has from \eqref{boundlandaunorm} and \eqref{DefD} that
\begin{align*}
	|(\partial^{\alpha+1}_xg,w^2(\alpha,\beta)\partial^{\alpha}_x\pa_{v_1} g)|\leq& \|\langle v\rangle^2\langle v\rangle^{-\frac{3}{2}}w(0,\beta)\pa_{v_1} g\|\|\langle v\rangle^{-\frac{1}{2}}w(0,\beta)\partial_xg\|
	\leq C\eps\CD(t).
\end{align*}
Similarly, for $\be=(2,0,0)$, one has
\begin{align*}
|(\partial_x\pa_{v_1}g,w^2(0,\beta)\pa^2_{v_1} g)|\leq C\|\langle v\rangle^2\langle v\rangle^{-\frac{3}{2}}w(0,\beta)\pa_{v_1} \pa_xg\|\|\langle v\rangle^{-\frac{1}{2}}w(0,\beta)\pa^2_{v_1}g\|
	\leq C\eps\CD(t),
\end{align*}
and for $\be=(1,\de_{i2},\de_{i3}),$ $i=2,3,$
\begin{align*}
	|(\pa_{v_1}\pa_{v_i} (v_1\pa_xg),w^2(0,\beta)\pa_{v_1}\pa_{v_i} g)|=|(\partial_x\pa_{v_i}g,w^2(0,\beta)\pa_{v_1}\pa_{v_i} g)|\leq&C\eps\CD(t).
\end{align*}
Collecting the above estimates gives
\begin{align}\label{1gtran}
	|(\partial^{\alpha}_\be (v_1\pa_xg),w^2(\alpha,\beta)\partial_{\beta}^\alpha g)|\leq 
    C\ka\frac{1}{\eps\de}\|\partial^{\alpha}_xg\|_{\sigma,w}^2+C_\ka \eps\de\|\partial^2_xg\|^2_{\sigma,w}+     C\eps\CD(t).
\end{align}
Now we turn to the third term of \eqref{ipg} and rewrite
\begin{align}\label{phif1}
	\frac{1}{\eps}(\partial_\be^{\alpha} [\frac{\partial_x\phi\partial_{v_1}(\sqrt{\mu}g)}{\sqrt{\mu}}],w^2(\al,\be)\partial_\be^{\alpha} g)
	&=\frac{1}{\eps}(\partial^\alpha_\be[\partial_x\phi\partial_{v_1}g],w^2(\al,\be)\partial_\be^\alpha g)-\frac{1}{\eps}(\partial^\alpha_\be[\frac{v_1}{4} g\partial_x\phi],w^2(\al,\be)\partial_\be^\alpha g)\notag\\
	&=I_3-I_4.
\end{align}
First consider $I_3$ for $\al=1$ where we rewrite
\begin{align*}
	I_3=&\frac{1}{\eps}(\partial_x\phi\partial_{v_1}\partial^\alpha_\beta g,w^2(\alpha,\beta)\partial_{\beta}^\alpha g)+\frac{1}{\eps}(\partial^2_x\phi\partial_{v_1}\partial^\beta_v g,w^2(\alpha,\beta)\partial_{\beta}^\alpha g).
\end{align*}
Then we use \eqref{pavw}, the H\"older's inequality, \eqref{4.5A}, \eqref{DefE}, \eqref{DefD}, \eqref{boundkdv} and \eqref{apriori} to get 
%\Red{To NOTICE: The following estimates gives the necessity that $\delta\sim \eps^2$, so that $\nu\sim \eps^{3/2}$.}
\begin{align}\label{I31}
	&|\frac{1}{\eps}(\partial_x\phi\partial_{v_1}\partial^\alpha_\beta g,w^2(\alpha,\beta)\partial_{\beta}^\alpha g)|=|\frac{1}{2\eps}(\partial_x\phi [\pa_{v_1} w^2(1,\be)]\partial_{\beta}^\alpha g,\partial_{\beta}^\alpha g)|\notag\\
	\leq& C\frac{1}{\eps}\int_{\mathbb R}\int_{{\mathbb R}^3}|\partial_x\phi\langle v\rangle w^2(1,\be)
	(\partial_x\pa^\be_v g)^2|\,dv\,dx
	\notag\\
	\leq& C\frac{1}{\eps}\int_{\mathbb R}|\partial_x\phi|(\int_{{\mathbb R}^3}\langle v\rangle^{-1} w^2(1,\be)|\partial_x\pa^\be_vg|^2\,dv)^{1/3}
	(\int_{{\mathbb R}^3}\langle v\rangle^{2}w^2(1,\be) |\partial_x\pa^\be_vg|^2\,dv)^{2/3}\,dx
	\notag\\	
	\leq& C\frac{1}{\eps}\int_{\mathbb R}\{\frac{\ka}{\de}\int_{{\mathbb R}^3}\langle v\rangle^{-1}w^2(1,\be) |\partial_x\pa^\be_vg|^2\,dv+C_\ka
	\de^{1/2}|\partial_x\phi|^{3/2}\int_{{\mathbb R}^3}\langle v\rangle^{2}w^2(1,\be) |\partial_x\pa^\be_vg|^2\,dv\}\,dx	
	\notag\\	
	\leq& C\ka\frac{1}{\eps\delta}\| \partial_x\pa^\be_v g\|_{\sigma,w}^2
	+C_\ka\frac{\de^{1/2}}{\eps}\|\partial_x\phi\|^{3/2}_{L^{\infty}}\|\langle v\rangle w(1,\be)\partial_x\pa^\be_v g\|^2
	\notag\\
	\leq& C\ka\frac{1}{\eps\delta}\| \partial_x\pa^\be_v g\|_{\sigma,w}^2
	+C_\ka(C(\eta)+\eps)\frac{\de^{1/2}}{\eps}\|\langle v\rangle w(1,\be)\partial_x\pa^\be_v g\|^2,
\end{align}
and
\begin{align}\label{I32}
	&|\frac{1}{\eps}(\partial^2_x\phi\partial_{v_1}\partial^\beta_v g,w^2(\alpha,\beta)\partial_{\beta}^\alpha g)|=|\frac{1}{\eps}(\partial^2_x\phi\partial_{v_1}\partial^\beta_v g,w^2(1,\beta)\partial_x\partial^\beta_v g)|\notag\\
	\leq&	\frac{1}{\eps}\|\partial^2_x\phi\|_{L^\infty}\|\langle v\rangle^2 \langle v\rangle^{-\frac{3}{2}} w(1,\beta)\partial_{v_1}\partial^\beta_v g\|\|\langle v\rangle^{-\frac{1}{2}}w(1,\beta)\partial_x\partial^\beta_v g\|\notag\\
	\leq&	C\frac{1}{\eps}(C(\eta)+\frac{A^{1/4}\eps}{\eps^{1/4}}\frac{A^{1/4}\eps}{(\eps^3\de^2)^{1/4}})\eps\de\CD(t)\notag\\
	\leq&	C(\eps+\de)\CD(t).
\end{align}
The case $\al=0$ is more straightforward. We apply the approach in \eqref{I31} to get
\begin{align}\label{I33}
	I_3=&|\frac{1}{\eps}(\partial_x\phi\partial_{v_1}\partial^\alpha_\beta g,w^2(\alpha,\beta)\partial_{\beta}^\alpha g)|=|\frac{1}{2\eps}(\partial_x\phi [\pa_{v_1} w^2(0,\be)]\partial_{\beta} g,\partial_{\beta} g)|\notag\\
	\leq& C\frac{1}{\eps}\int_{\mathbb R}\int_{{\mathbb R}^3}|\partial_x\phi\langle v\rangle w^2(0,\be)
	(\pa^\be_v g)^2|\,dv\,dx
	\notag\\
	\leq& C\ka\frac{1}{\eps\delta}\| \pa^\be_v g\|_{\sigma,w}^2
	+C_\ka(C(\eta)+\eps)\frac{\de^{1/2}}{\eps}\|\langle v\rangle w(0,\be)\pa^\be_v g\|^2.
\end{align}
Then it follows from \eqref{I31}, \eqref{I32} and \eqref{I33} that
\begin{align}\label{I3}
	I_3\leq C\ka\frac{1}{\eps\delta}\| \pa^\al_\be g\|_{\sigma,w}^2
	+C_\ka(C(\eta)+\eps)\frac{\de^{1/2}}{\eps}\|\langle v\rangle w(\al,\be)\pa^\al_\be g\|^2+C(\eps+\de)\CD(t).
\end{align}
Consider $I_4=\frac{1}{\eps}(\partial^\alpha_\be[\frac{v_1}{4} g\partial_x\phi],w^2(\al,\be)\partial_\be^\alpha g)$. We compute
\begin{align}\label{reI4}
	2I_4=	\left\{
	\begin{array}{rl}
		&\dis	\frac{1}{\eps}(\frac{v_1}{2} g\partial_x\phi,w^2(0,0) g),\  \al=0, \ \be=0,\\
		&\dis	\frac{1}{\eps}(\frac{v_1}{2} \pa_xg\partial_x\phi+\frac{v_1}{2} g\partial^2_x\phi,w^2(1,0)\partial_x g),\  \al=1, \ \be=0,\\
		&\dis \frac{1}{\eps}(\frac{v_1}{2} \partial_x\phi\partial_{v_1}g+\frac{1}{2} g\partial_x\phi,w^2(0,\be)\partial_{v_1} g),\  \al=0, \ \be=(1,0,0),\\
		&\dis \frac{1}{\eps}(\frac{v_1}{2} \partial_x\phi\partial_{v_1}\pa_xg+\frac{v_1}{2} \partial^2_x\phi\partial_{v_1}g+\frac{1}{2}\pa_x g\partial_x\phi+\frac{1}{2} g\partial^2_x\phi,w^2(1,\be)\partial_{v_1}\pa_x g),\  \al=1, \ \be=(1,0,0),\\
		&\dis \frac{1}{\eps}(\frac{v_1}{2} \partial_x\phi\partial_{v_i}g,w^2(0,\be)\partial_{v_i} g),\  \al=0, \ \be=(0,\de_{i2},\de_{i3}),\ i=2,3,\\
		&\dis \frac{1}{\eps}(\frac{v_1}{2} \partial_x\phi\pa_x\partial_{v_i}g+\frac{v_1}{2} \partial^2_x\phi\partial_{v_i}g,w^2(1,\be)\partial_{v_i} \pa_xg),\  \al=1, \ \be=(0,\de_{i2},\de_{i3}),\ i=2,3,\\
		&\dis \frac{1}{\eps}(\frac{v_1}{2} \partial_x\phi\partial_{v_iv_j}g,w^2(0,\be)\partial_{v_iv_j} g),\  \al=0, \ \be=(0,\de_{i2}+\de_{j2},\de_{i3}+\de_{j3}),\ i,j=2,3,\\
		&\dis \frac{1}{\eps}(\frac{v_1}{2} \partial_x\phi\partial^2_{v_1}g+ \pa_{v_1}g\partial_x\phi,w^2(0,\be)\partial^2_{v_1} g),\  \al=0, \ \be=(2,0,0),\\
		&\dis \frac{1}{\eps}(\frac{v_1}{2} \partial_x\phi\partial_{v_1v_i}g+\frac{1}{2}\pa_{v_i} g\partial_x\phi,w^2(0,\be)\partial_{v_1v_i} g),\  \al=0, \ \be=(1,\de_{i2},\de_{i3}),\ i=2,3.
	\end{array} \right.
\end{align}
By observation, all terms above have similar structure to either \eqref{I31} or \eqref{I32}. To be more specific, terms with the form $\frac{1}{\eps}(\frac{v_1}{2} \partial_x\phi\pa^\al_\be g,w^2(1,0)\pa^\al_\be g)$ can be bounded as in \eqref{I31}, and the others can be controlled in the similar way as in \eqref{I32}. We skip the details to get
\begin{align}\label{I4}
	|I_4|\leq C\ka\frac{1}{\eps\delta}\| \pa^\al_\be g\|_{\sigma,w}^2
	+C_\ka(C(\eta)+\eps)\frac{\de^{1/2}}{\eps}\|\langle v\rangle w(\al,\be)\pa^\al_\be g\|^2+C(\eps+\de)\CD(t).
\end{align}
It follows from \eqref{phif1}, \eqref{I3} and \eqref{I4} that
\begin{align}\label{phig}
	\frac{1}{\eps}(\partial_\be^{\alpha} [\frac{\partial_x\phi\partial_{v_1}(\sqrt{\mu}g)}{\sqrt{\mu}}],w^2(\al,\be)\partial_\be^{\alpha} g)\leq& C\ka\frac{1}{\eps\delta}\| \pa^\al_\be g\|_{\sigma,w}^2
    \notag\\
	&\quad+C_\ka(C(\eta)+\eps)\frac{\de^{1/2}}{\eps}\|\langle v\rangle w(\al,\be)\pa^\al_\be g\|^2+C(\eps+\de)\CD(t).
\end{align}
For the fourth term on the left hand side of \eqref{ipg}, using \eqref{coLDx} and \eqref{coLD} respectively, we have
\begin{equation}\label{Lg}
	-\frac{1}{\eps\de}(\partial^\alpha_x L_D g,w^2(\alpha,0)\partial^\alpha_x g)
	\geq c\frac{1}{\eps\de}\|\partial^\alpha_x g\|^2_{\sigma,w}-C\frac{1}{\eps\de}\sum_{\al'<\al}\|\partial^{\al'}_x g\|^2_{\sigma,w}-C\eps\CD(t),
\end{equation}
for $\al\leq1$ and 
\begin{align}\label{paLg}
	%\label{ }
	-\frac{1}{\eps\de}(\partial^\alpha_\beta L_D g,w^2(\alpha,\beta)\partial^\alpha_\beta g)
	\geq \frac{1}{\eps\de}\big(c\|\partial^\alpha_\beta g\|^2_{\sigma,w}-&\ka_1\sum_{|\beta'|=|\beta|}\|\partial^\alpha_{\beta'} g\|_{\sigma,w}^2-C_{\ka_1}\sum_{|\beta'|<|\beta|}\|\partial^\alpha_{\beta'}g\|_{\sigma,w}^2\notag\\
	&-C\sum_{\al'<\al,|\beta'|\leq|\beta|}\|\partial^{\alpha'}_{\beta'}g\|_{\sigma,w}^2\big)-C\eps\CD(t),
\end{align}
for $\al+|\be|\leq2$, $|\be|\geq 1$ and some $0<\ka_1<1$ that will be chosen later. We turn to the fifth term on the left hand side of \eqref{ipg}. Since $(\pa_t+v_1\pa_x)\sqrt{\overline{M}}$ produces exponential decay in $v$ that is smaller than $e^{-8|v|^2}$, it holds that
\begin{align}\label{tranbarM}
	&|(\partial^{\alpha}_\be[\frac{(\pa_t+v_1\pa_x)\sqrt{\overline{M}}}{\sqrt{\mu}}f],w^2(\al,\be)\partial^{\alpha}_\be g)|
    \notag\\
    &\leq C\sum_{\alpha'\leq\alpha,\be'\leq \be}\|\langle v\rangle^{2}w^2(\al,\be)\pa^{\al-\al'}_{\be-\be'}(\pa_t+v_1\pa_x)\sqrt{\overline{M}}\|_{L^\infty_xL^\infty_v}\|\langle v\rangle^{-1}\pa^{\al'}_{\be'}f\|\|\langle v\rangle^{-1}\pa^\al_\be g\|\notag\\
	&\leq C(\eta)\eps\CD(t).
\end{align}
Here we have used $q_1>0$ small enough such that $|\langle v\rangle^{m}w^2(\al,\be)\overline{M}^{\frac{1}{2}}\mu^{-\frac{1}{2}}|\leq C$ for any $m>0$.
We consider the rest two terms on the left hand side of \eqref{ipg} simultaneously. Direct calculations gives
\begin{align*}
	&-\frac{1}{\eps}(\partial^{\alpha}_\be[\frac{\pa_x\sqrt{\overline{M}}}{\sqrt{\mu}}f],w^2(\al,\be)\partial^{\alpha}_\be g)-\frac{1}{\eps}(\partial^{\alpha}_\be[\frac{\partial_x\phi\partial_{v_{1}}(\sqrt{\overline{M}}f)}{\sqrt{\mu}}],w^2(\al,\be)\partial^{\alpha}_\be g)\notag\\
	=&-\frac{1}{\eps}(\partial^{\alpha}_\be[\frac{\pa_x\sqrt{\overline{M}}}{\sqrt{\mu}}f],w^2(\al,\be)\partial^{\alpha}_\be g)+\frac{1}{2\eps}(\partial^{\alpha}_\be[\frac{\partial_x\bar{\phi}\sqrt{\overline{M}}f}{\sqrt{\mu}}\frac{v_1-\bar{u}_1}{K\bar{\theta}}],w^2(\al,\be)\partial^{\alpha}_\be g)\notag\\
	&+\frac{1}{2\eps}(\partial^{\alpha}_\be[\frac{\partial_x\widetilde{\phi}\sqrt{\overline{M}}f}{\sqrt{\mu}}\frac{v_1-\bar{u}_1}{K\bar{\theta}}],w^2(\al,\be)\partial^{\alpha}_\be g)-\frac{1}{\eps}(\partial^{\alpha}_\be[\frac{\partial_x\widetilde{\phi}\sqrt{\overline{M}}\partial_{v_{1}}f}{\sqrt{\mu}}],w^2(\al,\be)\partial^{\alpha}_\be g)\notag\\
	&-\frac{1}{\eps}(\partial^{\alpha}_\be[\frac{\partial_x\bar{\phi}\sqrt{\overline{M}}\partial_{v_{1}}f}{\sqrt{\mu}}],w^2(\al,\be)\partial^{\alpha}_\be g).
\end{align*}
The first, second and fifth terms on the right hand side above can be bounded by
\begin{align*}
	&\big|-\frac{1}{\eps}(\partial^{\alpha}_\be[\frac{\pa_x\sqrt{\overline{M}}}{\sqrt{\mu}}f]-\partial^{\alpha}_\be[\frac{\partial_x\bar{\phi}\sqrt{\overline{M}}f}{\sqrt{\mu}}\frac{v_1-\bar{u}_1}{2K\bar{\theta}}]+\partial^{\alpha}_\be[\frac{\partial_x\bar{\phi}\sqrt{\overline{M}}\partial_{v_{1}}f}{\sqrt{\mu}}],w^2(\al,\be)\partial^{\alpha}_\be g)\big|\notag\\
	\leq& C\frac{1}{\eps}\sum_{\alpha'\leq\alpha,\be'\leq \be}\{\|\langle v\rangle^6 w^2(\al,\be)\pa^{\al-\al'}_{\be-\be'}\frac{\pa_x\sqrt{\overline{M}}}{\sqrt{\mu}}\|_{L^\infty_xL^\infty_v}\|\langle v\rangle^{-3}\pa^{\al'}_{\be'}f\|\|\langle v\rangle^{-3}\pa^\al_\be g\|\notag\\
	&\qquad\qquad\qquad+\|\langle v\rangle^6 w^2(\al,\be)\pa^{\al-\al'}_{\be-\be'}[\frac{\partial_x\bar{\phi}\sqrt{\overline{M}}}{\sqrt{\mu}}\frac{v_1-\bar{u}_1}{K\bar{\theta}}]\|_{L^\infty_xL^\infty_v}\|\langle v\rangle^{-3}\pa^{\al'}_{\be'}f\|\|\langle v\rangle^{-3}\pa^\al_\be g\|\notag\\
	&\qquad\qquad\qquad+\|\langle v\rangle^6 w^2(\al,\be)\pa^{\al-\al'}_{\be-\be'}[\frac{\partial_x\bar{\phi}\sqrt{\overline{M}}}{\sqrt{\mu}}]\|_{L^\infty_xL^\infty_v}\|\langle v\rangle^{-3}\pa^{\al'}_{\be'}\pa_{v_1}f\|\|\langle v\rangle^{-3}\pa^\al_\be g\|\}\notag\\
	\leq& C(\eta)\frac{1}{\eps}\sum_{\alpha'\leq\alpha,\be'\leq \be}(\|\langle v\rangle^{-3}\pa^{\al'}_{\be'}f\|+\|\langle v\rangle^{-3}\pa^{\al'}_{\be'}\pa_{v_1}f\|)\|\pa^\al_\be g\|_{\sigma}
    \notag\\
	\leq& C(\eta)\de\CD(t),
\end{align*}
according to \eqref{4.5A}, \eqref{DefE}, \eqref{DefD} and \eqref{apriori}. On the other hand, the rest two terms are controlled by
\begin{align*}
	&\big|\frac{1}{2\eps}(\partial^{\alpha}_\be[\frac{\partial_x\widetilde{\phi}\sqrt{\overline{M}}f}{\sqrt{\mu}}\frac{v_1-\bar{u}_1}{K\bar{\theta}}],w^2(\al,\be)\partial^{\alpha}_\be g)-\frac{1}{\eps}(\partial^{\alpha}_\be[\frac{\partial_x\widetilde{\phi}\sqrt{\overline{M}}\partial_{v_{1}}f}{\sqrt{\mu}}],w^2(\al,\be)\partial^{\alpha}_\be g)\big|\notag\\
	\leq&C\frac{1}{\eps}\sum_{\alpha'=\alpha,\be'\leq \be}\{\|\langle v\rangle^6 w^2(\al,\be)\pa^{\al-\al'}_{\be-\be'}[\frac{\partial_x\widetilde{\phi}\sqrt{\overline{M}}}{\sqrt{\mu}}\frac{v_1-\bar{u}_1}{K\bar{\theta}}]\|_{L^\infty_xL^\infty_v}\|\langle v\rangle^{-3}\pa^{\al'}_{\be'}f\|\|\langle v\rangle^{-3}\pa^\al_\be g\|\notag\\
	&\qquad\qquad\qquad+\|\langle v\rangle^6 w^2(\al,\be)\pa^{\al-\al'}_{\be-\be'}[\frac{\partial_x\widetilde{\phi}\sqrt{\overline{M}}}{\sqrt{\mu}}]\|_{L^\infty_xL^\infty_v}\|\langle v\rangle^{-3}\pa^{\al'}_{\be'}\pa_{v_1}f\|\|\langle v\rangle^{-3}\pa^\al_\be g\|\}\notag\\
	&+C\frac{1}{\eps}\sum_{\alpha'<\alpha,\be'\leq \be}\{\|\langle v\rangle^6 w^2(\al,\be)\pa^{\al-\al'}_{\be-\be'}[\frac{\partial_x\widetilde{\phi}\sqrt{\overline{M}}}{\sqrt{\mu}}\frac{v_1-\bar{u}_1}{K\bar{\theta}}]\|_{L^2_xL^\infty_v}\|\langle v\rangle^{-3}\pa^{\al'}_{\be'}f\|_{L^\infty_xL^2_v}\|\langle v\rangle^{-3}\pa^\al_\be g\|\notag\\
	&\qquad\qquad\qquad+\|\langle v\rangle^6 w^2(\al,\be)\pa^{\al-\al'}_{\be-\be'}[\frac{\partial_x\widetilde{\phi}\sqrt{\overline{M}}}{\sqrt{\mu}}]\|_{L^2_xL^\infty_v}\|\langle v\rangle^{-3}\pa^{\al'}_{\be'}\pa_{v_1}f\|_{L^\infty_xL^2_v}\|\langle v\rangle^{-3}\pa^\al_\be g\|\}\notag\\
	\leq&C\de\CD(t).
\end{align*}
We combine the above two estimates to get
\begin{align}\label{pabarM}
\frac{1}{\eps}|(\partial^{\alpha}_\be[\frac{\pa_x\sqrt{\overline{M}}}{\sqrt{\mu}}f],w^2(\al,\be)\partial^{\alpha}_\be g)|+\frac{1}{\eps}|(\partial^{\alpha}_\be[\frac{\partial_x\phi\partial_{v_{1}}(\sqrt{\overline{M}}f)}{\sqrt{\mu}}],w^2(\al,\be)\partial^{\alpha}_\be g)|\leq C\de\CD(t).
\end{align}
The first term on the right hand side of \eqref{ipg} is equivalent to
\begin{align}
\label{4.90A}
&\frac{1}{\eps\de}\{
\partial^{\alpha}_\be \Gamma(\frac{\overline{G}}{\sqrt{\mu}},\frac{\overline{G}}{\sqrt{\mu}})+
(\partial^{\alpha}_\be \Gamma(\frac{\sqrt{\overline{M}}}{\sqrt{\mu}}f,\frac{\sqrt{\overline{M}}}{\sqrt{\mu}}f)
+\partial^{\alpha}_\be \Gamma(\frac{\sqrt{\overline{M}}}{\sqrt{\mu}}f,\frac{\overline{G}}{\sqrt{\mu}})
+\partial^{\alpha}_\be \Gamma(\frac{\overline{G}}{\sqrt{\mu}},\frac{\sqrt{\overline{M}}}{\sqrt{\mu}}f)
\notag\\
&+
\partial^{\alpha}_\be \Gamma(\frac{\overline{G}}{\sqrt{\mu}},g)
+\partial^{\alpha}_\be \Gamma(g,\frac{\overline{G}}{\sqrt{\mu}})
+ \partial^{\alpha}_\be \Gamma(\frac{\sqrt{\overline{M}}}{\sqrt{\mu}}f,g)
+\partial^{\alpha}_\be \Gamma(g,\frac{\sqrt{\overline{M}}}{\sqrt{\mu}}f)
+\partial^{\alpha}_\be \Gamma(g,g),w^2(\al,\be)\partial^{\alpha}_\be g)\}.
\end{align}
We use \eqref{controlpaGa}, \eqref{boundbarG0}, \eqref{boundbarG},
\eqref{DefD}, \eqref{DefE} and \eqref{apriori} to get
\begin{align*}
%\label{Gagg1A}
&\frac{1}{\eps\de}|(\partial^{\alpha}_\be \Gamma(\frac{\overline{G}}{\sqrt{\mu}},\frac{\overline{G}}{\sqrt{\mu}}),w^2(\al,\be)\partial^{\alpha}_\be g)|     \notag\\ 	\leq& C\sum_{\alpha'\leq\alpha}\sum_{\bar{\beta}\leq\beta'\leq\beta} 	\frac{1}{\eps\de}\int_{\mathbb {R}}|\mu^a \partial^{\alpha'}_{\bar{\beta}}[\frac{\overline{G}}{\sqrt{\mu}}]|_2|  \partial^{\alpha-\alpha'}_{\beta-\beta'}[\frac{\overline{G}}{\sqrt{\mu}}]|_{\sigma,w(\alpha,\beta)}|\partial^\alpha_\beta g|_{\sigma,w}\,dx     \notag\\ 	
\leq& C\sum_{ \frac{\alpha+|\beta|}{2}<\alpha'+|\bar{\beta}|}\sum_{\alpha'\leq\alpha,     \bar{\beta}\leq\beta'\leq\beta} 	\frac{1}{\eps\de}\|\mu^a \partial^{\alpha'}_{\bar{\beta}}[\frac{\overline{G}}{\sqrt{\mu}}]\|  |\partial^{\alpha-\alpha'}_{\beta-\beta'}[\frac{\overline{G}}{\sqrt{\mu}}]|_{\sigma,w(\alpha,\beta)}\|_{L^\infty_x}\|\partial^\alpha_\beta g\|_{\sigma,w}     \notag\\ &+C\sum_{\alpha'+|\bar{\beta}|\leq \frac{\alpha+|\beta|}{2}} \sum_{\alpha'\leq\alpha,\bar{\beta}\leq\beta'\leq\beta} 	\frac{1}{\eps\de}\|\mu^a \partial^{\alpha'}_{\bar{\beta}}[\frac{\overline{G}}{\sqrt{\mu}}]\|_{L^\infty_xL^2_v}\|\partial^{\alpha-\alpha'}_{\beta-\beta'}[\frac{\overline{G}}{\sqrt{\mu}}]\|_{\sigma,w(\alpha,\beta)}\|\partial^\alpha_\beta g\|_{\sigma,w} \notag\\ 
\leq& C\eps\CD(t)+C(\eta)\eps^2\de^2.
\end{align*}
Using \eqref{controlpaGa} again gives
\begin{align*}
\frac{1}{\eps\de}|(\partial^{\alpha}_\be \Gamma(g,g),w^2(\al,\be)\partial^{\alpha}_\be g)|
	\leq C\sum_{\alpha'\leq\alpha}\sum_{\bar{\beta}\leq\beta'\leq\beta}
	\frac{1}{\eps\de}\int_{\mathbb {R}}|\mu^a \partial^{\alpha'}_{\bar{\beta}}g|_2|  \partial^{\alpha-\alpha'}_{\beta-\beta'}g|_{\sigma,w(\alpha,\beta)}|\partial^\alpha_\beta g|_{\sigma,w}\,dx.
\end{align*}
Consider $\alpha+|\beta|\leq 2$, $|\beta|\geq1$, then $\alpha\leq1$. If $\alpha-\alpha'+|\beta-\beta'|=0$, then $w(\alpha,\beta)\leq w(\alpha_1,0)$ for $\alpha_1\leq 1$ and $|\beta|\geq 1$, it follows from \eqref{DefD}, \eqref{DefE} and \eqref{apriori} that
\begin{align}
\label{4.21A}
&\frac{1}{\eps\de}\int_{\mathbb {R}}|\mu^a \partial^{\alpha'}_{\bar{\beta}}g|_2|  \partial^{\alpha-\alpha'}_{\beta-\beta'}g|_{\sigma,w(\alpha,\beta)}|\partial^\alpha_\beta g|_{\sigma,w}\,dx
\notag\\
&\leq C\frac{1}{\eps\de}\|\mu^a\partial^{\alpha'}_{\bar{\beta}}g\|\||w(\alpha,\beta) \partial^{\alpha-\alpha'}_{\beta-\beta'}g|_{\sigma}\|_{L^{\infty}}\|\partial^\alpha_\beta g\|_{\sigma,w}
\notag\\
&\leq CA\eps^2\frac{1}{\eps\de}\|w(\alpha,\beta) \partial^{\alpha-\alpha'}_{\beta-\beta'}g\|^{\frac{1}{2}}_{\sigma}\|w(\alpha,\beta) \partial^{\alpha-\alpha'}_{\beta-\beta'}\partial_xg\|^{\frac{1}{2}}_{\sigma}\|\partial^\alpha_\beta g\|_{\sigma,w}
\notag\\
&\leq C\eps\CD(t).
\end{align}	
If $\alpha-\alpha'+|\beta-\beta'|=1$, then $\alpha'+|\beta'|\leq1$. 
In this case, if $\alpha'+|\beta'|=0$, then $\alpha+|\beta|=1$ and
$w(\alpha,\beta)=w(\alpha-\alpha',\beta-\beta')$, we thus have
\begin{align}
\label{4.22A}
&\frac{1}{\eps\de}\int_{\mathbb {R}}|\mu^a \partial^{\alpha'}_{\bar{\beta}}g|_2|  \partial^{\alpha-\alpha'}_{\beta-\beta'}g|_{\sigma,w(\alpha,\beta)}|\partial^\alpha_\beta g|_{\sigma,w}\,dx
\notag\\
&\leq \frac{1}{\eps\de}\||\mu^a\partial^{\alpha'}_{\bar{\beta}}g|_2\|_{L^{\infty}}\|w(\alpha,\beta) \partial^{\alpha-\alpha'}_{\beta-\beta'}g\|_{\sigma}\|\partial^\alpha_\beta g\|_{\sigma,w}
\leq C\eps\CD(t).
\end{align}	
If $\alpha'+|\beta'|=1$, then $\alpha+|\beta|=2$ and $w(\alpha,\beta)\leq w(\alpha-\alpha'+\alpha_1,\beta-\beta')$ for $\alpha_1\leq1$. if $\alpha-\alpha'=0$, we use the similar arguments as \eqref{4.21A} to get the same bound. If $\alpha-\alpha'=1$, we use the similar arguments as \eqref{4.22A} to get the same bound.
\\	
If $\alpha-\alpha'+|\beta-\beta'|=2$, then $\alpha'+|\beta'|=0$ and $w(\alpha,\beta)=w(\alpha-\alpha',\beta-\beta')$,
we use the similar arguments as \eqref{4.22A} to get the same bound.
Hence, for $\alpha+|\beta|\leq 2$ and $|\beta|\geq1$, we obtain
\begin{equation*}
%\label{Gagg}
\frac{1}{\eps\de}|(\partial^{\alpha}_\be \Gamma(g,g),w^2(\al,\be)\partial^{\alpha}_\be g)|
\leq C\eps\CD(t).	
\end{equation*}
The other terms in \eqref{4.90A} can be treated similarly. We thus arrive at
\begin{align}\label{Gagg}
	\frac{1}{\eps\de}|(\partial^{\alpha}_\be \Gamma(g+\frac{\sqrt{\overline{M}}}{\sqrt{\mu}}f+\frac{\overline{G}}{\sqrt{\mu}},g+\frac{\sqrt{\overline{M}}}{\sqrt{\mu}}f+\frac{\overline{G}}{\sqrt{\mu}}),w^2(\al,\be)\partial^{\alpha}_\be g)|
\leq C\eps\CD(t)+C(\eta)\eps^2\de^2.
\end{align}
By mean value theorem, \eqref{DefLocalMax}, \eqref{2.38A}, \eqref{Defpert} and \eqref{Defmu}, one has
\begin{equation*}
%\label{ineq.MMbar}
|M-\overline{M}|\leq C|(\widetilde{\rho},\widetilde{u},\widetilde{\theta})|\mu,
\end{equation*}
Using this and the identity
\begin{align*}
\partial_x(M-\overline{M})=&M\big(\frac{\pa_x\rho}{\rho}+\frac{(v-u)\cdot\pa_xu}{K\theta}+(\frac{|v-u|^{2}}{2K\theta}-\frac{3}{2})\frac{\pa_x\theta}{\theta} \big)
\notag\\
&-\overline{M}\big(\frac{\pa_x\bar{\rho}}{\bar{\rho}}+\frac{(v-\bar{u})\cdot\pa_x\bar{u}}{K\bar{\theta}}+(\frac{|v-\bar{u}|^{2}}{2K\bar{\theta}}-\frac{3}{2})\frac{\pa_x\bar{\theta}}{\bar{\theta}} \big),
\end{align*}
one has from \eqref{controlpaGa} that
\begin{align}\label{GaMf}
	&\frac{1}{\eps\de}|(\partial^{\alpha}_\be\Gamma(\frac{M-\overline{M}}{\sqrt{\mu}},\frac{\sqrt{\overline{M}}}{\sqrt{\mu}}f)+\partial^{\alpha}_\be\Gamma(\frac{\sqrt{\overline{M}}}{\sqrt{\mu}}f,\frac{M-\overline{M}}{\sqrt{\mu}}),w^2(\al,\be)\partial^{\alpha}_\be g)|\notag\\
	\leq& C\sum_{\alpha'\leq\alpha}\sum_{\bar{\beta}\leq\beta'\leq\beta}
	\frac{1}{\eps\de}\int_{\mathbb {R}}|\mu^a \partial^{\alpha'}_{\bar{\beta}}[\frac{M-\overline{M}}{\sqrt{\mu}}]|_2|  \partial^{\alpha-\alpha'}_{\beta-\beta'}[\frac{\sqrt{\overline{M}}}{\sqrt{\mu}}f]|_{\sigma,w(\alpha,\beta)}|\partial^\alpha_\beta g|_{\sigma,w}\,dx
\notag\\
&+ C\sum_{\alpha'\leq\alpha}\sum_{\bar{\beta}\leq\beta'\leq\beta}
	\frac{1}{\eps\de}\int_{\mathbb {R}}|\mu^a \partial^{\alpha'}_{\bar{\beta}}[\frac{\sqrt{\overline{M}}}{\sqrt{\mu}}f]|_2|  \partial^{\alpha-\alpha'}_{\beta-\beta'}[\frac{M-\overline{M}}{\sqrt{\mu}}]|_{\sigma,w(\alpha,\beta)}|\partial^\alpha_\beta g|_{\sigma,w}\,dx
  \notag\\
  \leq& C\eps\CD(t).
\end{align}
Here we have used \eqref{apriori}, \eqref{DefD} and
$$
\|\mu^a \partial_{\bar{\beta}}[\frac{M-\overline{M}}{\sqrt{\mu}}]\|
+\|\mu^a\partial_x\partial_{\bar{\beta}}[\frac{M-\overline{M}}{\sqrt{\mu}}]\|
+\|\partial_{\beta-\beta'}[\frac{M-\overline{M}}{\sqrt{\mu}}]\|_{\sigma,w(\alpha,\beta)}
+\|\partial_x\partial_{\beta-\beta'}[\frac{M-\overline{M}}{\sqrt{\mu}}]\|_{\sigma,w(\alpha,\beta)}
\leq C\eps,
$$
and $q_1>0$ small enough such that $|\langle v\rangle^{m}w^2(\al,\be)\overline{M}^{\frac{1}{2}}\mu^{-\frac{1}{2}}|\leq C$ for any $m>0$ and
\begin{align*}
&\|\partial_{\beta-\beta'}[\frac{\sqrt{\overline{M}}}{\sqrt{\mu}}f]\|_{\sigma,w(\alpha,\beta)}
+\|\partial_x\partial_{\beta-\beta'}[\frac{\sqrt{\overline{M}}}{\sqrt{\mu}}f]\|_{\sigma,w(\alpha,\beta)}
\notag\\
&\leq C\sum_{\alpha+|\beta|\leq 2,|\beta|\geq1}\|\partial^{\alpha}_{\beta}f(t)\|_{\sigma,W}
+C\sum_{\alpha\leq 1}\|\partial_x^{\alpha}f(t)\|_{\sigma,W}.
\end{align*}
Hence, it follows by \eqref{ipg}, \eqref{1gtime}, \eqref{1gtran}, \eqref{phig}, \eqref{Lg}, \eqref{paLg}, \eqref{tranbarM}, \eqref{pabarM}, \eqref{Gagg} and \eqref{GaMf} that
\begin{align}\label{albeg}
&\frac{1}{2}\frac{d}{dt}\|\partial^{\alpha}_\be g\|_w^{2}+c\frac{1}{\eps\de}\|\partial^{\alpha}_\be g\|_{\sigma,w}^{2}+\frac{1}{2}q_{1}q_{2}(1+t)^{-(1+q_2)}\|\langle v\rangle \partial^{\alpha}_{\beta}g(t)\|_{w}^{2}\notag\\
	\leq&C_{\ka_1}\frac{1}{\eps\de}\sum_{|\beta'|<|\beta|}\|\partial^\alpha_{\beta'}g\|_{\sigma,w}^2+\ka_1\frac{1}{\eps\de}\sum_{|\beta'|=|\beta|}\|\partial^\alpha_{\beta'} g\|_{\sigma,w}^2+C\frac{1}{\eps\de}\sum_{\al'<\al,|\beta'|\leq|\beta|}\|\partial^{\alpha'}_{\beta'}g\|_{\sigma,w}^2\notag\\
	&
    +C\ka\frac{1}{\eps\delta}(\| \pa^\al_\be g\|_{\sigma,w}^2+\|\partial^{\alpha}_xg\|_{\sigma,w}^2)
    +C_\ka\eps\de\|\pa^2_xg\|_{\sigma,w}^{2}+C_\ka C(\eta)\CE(t)
    \notag\\ 	&
    +C_\ka C(\eta)\eps^2\de^2+C_\ka(C(\eta)+\eps)\frac{\de^{1/2}}{\eps}\|\langle v\rangle w(\al,\be)\pa^\al_\be g\|^2
    +C(\eps+\de)\mathcal{D}(t),
\end{align}
for $\al+|\be|\leq2$, $|\be|\geq1$, and
\begin{align}\label{alg}
	&\frac{1}{2}\frac{d}{dt}\|\partial^{\alpha}_x g\|_w^{2}+c\frac{1}{\eps\de}\|\partial^{\alpha}_x g\|_{\sigma,w}^{2}+\frac{1}{2}q_{1}q_{2}(1+t)^{-(1+q_2)}\|\langle v\rangle \partial^{\alpha}_x g(t)\|_{w}^{2}\notag\\
	\leq&C(\eps+\de)\mathcal{D}(t)
    +C\frac{1}{\eps\de}\sum_{\al'<\al}\|\partial^{\al'}_x g\|^2_{\sigma,w}+C\ka\frac{1}{\eps\delta}\|\partial^{\alpha}_xg\|_{\sigma,w}^2
    \notag\\
	&+C_\ka\eps\de\|\pa^2_xg\|_{\sigma,w}^{2}+C_\ka C(\eta)\CE(t)+C_\ka C(\eta)\eps^2\de^2+C_\ka(C(\eta)+\eps)\frac{\de^{1/2}}{\eps}\|\langle v\rangle w(\al,\be)\pa^\al_\be g\|^2,
\end{align}
for $\al\leq1$. We take suitable linear combination of \eqref{alg} over all  $\al\leq1$  and choose $\ka$ to be sufficiently small to get
\begin{align*}
%\label{algA} 	
&\sum_{\alpha\leq 1}\{C_\alpha\frac{d}{dt}\|\partial^{\alpha}_x g\|_w^{2}+c\frac{1}{\eps\de}\|\partial^{\alpha}_x g\|_{\sigma,w}^{2}+q_{1}q_{2}(1+t)^{-(1+q_2)}\|\langle v\rangle \partial^{\alpha}_x g(t)\|_{w}^{2}\}
\notag\\ 	\leq&C(\eps+\de)\mathcal{D}(t)     	+C\eps\de\|\pa^2_xg\|_{\sigma,w}^{2}+C(\eta)\CE(t)+C(\eta)\eps^2\de^2+C(C(\eta)+\eps)\sum_{\alpha\leq 1}\frac{\de^{1/2}}{\eps}\|\langle v\rangle \pa^\al_\be g\|_w^2.
\end{align*}
which, together with the linear combination of \eqref{albeg} over  $\al+|\be|\leq2$, $|\be|\geq1$ and choose $\ka_1$  and $\ka$ to be sufficiently small, yields \eqref{walbeg}. This completes the proof of Lemma \ref{lem4.11}.

%\begin{align*}
%\sum_{\alpha+|\beta|\leq 2,|\beta|\geq 1}&\{\frac{d}{dt}\|\partial^{\alpha}_\be g\|_w^{2}+c\frac{1}{\eps\de}\|\partial^{\alpha}_\be g\|_{\sigma,w}^{2}+cq_{1}q_{2}(1+t)^{-(1+q_2)}\|\langle v\rangle \partial^{\alpha}_{\beta}g(t)\|_{w}^{2}\}
%\notag\\
%\leq&(C\ka+C_\ka(\eps+\de))\mathcal{D}(t)+C_\ka\eps\de\|\pa^2_xg\|_{\sigma,w}^{2}+C_\ka C(\eta)\CE(t)+C_\ka C(\eta)\eps^2\de^2
%\notag\\
%&+C\frac{1}{\eps\de}\sum_{\alpha\leq1}\|\partial_x^{\alpha}g\|_{\sigma,w}^{2}
%+C_\ka(C(\eta)+\eps)\frac{\de^{1/2}}{\eps}\sum_{\alpha+|\beta|\leq 2,|\beta|\geq 1}\|\langle v\rangle w(\al,\be)\pa^\al_\be g\|^2,
%\end{align*}

\end{proof}

Next we turn to $f$ using the equation \eqref{eqf}. Note that if we directly multiply both sides of \eqref{eqf} by $W(\al,\be)$ and try to get the weighted energy estimate, it is still needed to control the dissipation without the weight $\|\partial^{\alpha}_{\beta}f(t)\|_{\sigma}^{2}$ from the inequality 
\begin{equation}
\label{wLf}
	-(\mathcal{L}_{\overline{M}}\partial^\alpha_x f,W^2(\alpha,0)\partial^\alpha_x f)
	\geq c\|\partial^\alpha_x f\|^2_{\sigma,W}
    -C\|\partial^\alpha_x f\|^2_{\sigma},
\end{equation}
as we will see later. However, we still begin with the weighted estimates since the calculations in the proof are more general compared to the one without the weight.	
\begin{lemma}
Under the a priori assumption \eqref{apriori}, 
there exist some constants $\overline{C}'_{\alpha}>0$
and $\overline{C}_{\alpha,\be}>0$ such that
\begin{align}\label{walbef}
		&\frac{d}{dt}\big\{\sum_{\alpha\leq 1}\overline{C}'_{\alpha}\|\partial^{\alpha}_x f\|_W^{2}+\sum_{\alpha+|\beta|\leq 2,|\beta|\geq 1}\overline{C}_{\alpha,\be}\|\partial^{\alpha}_\be f\|_W^{2}\big\}+c\frac{1}{\eps\de}\big\{\sum_{\alpha\leq 1}\|\partial^{\alpha}_x f\|_{\sigma,W}^{2}+\sum_{\al+|\beta|\leq 2,|\beta|\geq 1}\|\partial^{\alpha}_\be f\|_{\sigma,W}^{2}\big\}\notag\\
		\leq& C\eps\de(\|\pa^2_xf\|_{\sigma,W}^{2}+\|\pa^2_xg\|_{\sigma,w}^{2})+C(\eps+\de)\mathcal{D}(t)+C(\eta)(\CE(t)+\eps^3\de+\eps\de^3)
        \notag\\
		&+C\frac{1}{\eps\de}\sum_{\alpha\leq 1}\|\partial^\alpha_x\mathbf{P}_1 f\|^2_\sigma+C\eps\de  \sum_{\alpha\leq 1}\|\pa^{\alpha}_x\pa_x(\widetilde{u},\widetilde{\theta})\|^{2}
        +C\frac{1}{\eps\de}\big\{\sum_{\alpha\leq 1}\|\partial^{\alpha}_x g\|_{\sigma,w}^{2}+\sum_{\al+|\beta|\leq 2,|\beta|\geq 1}\|\partial^{\alpha}_\be g\|_{\sigma,w}^{2}\big\},
	\end{align}	
where $C(\eta)\geq0$ depends only on $\eta$ with $C(0)=0$.
\end{lemma}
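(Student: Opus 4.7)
The plan is to mirror the argument used for $g$ in Lemma \ref{lem4.11}, but applied to the equation \eqref{eqf} with the pure velocity weight $W$. First I would apply $\pa^\alpha_\beta$ to \eqref{eqf} and take the $L^2_{x,v}$ inner product with $W^2(\alpha,\beta)\pa^\alpha_\beta f$ for each admissible index ($\alpha\leq 1,|\beta|=0$; or $\alpha+|\beta|\leq 2,|\beta|\geq 1$). Since $W$ is a pure velocity weight, the singular term $-\eps^{-1}\pa_x f$ contributes nothing after integration by parts in $x$, and $W$ being time-independent produces no extra dissipation as the counterpart of $w$ did for $g$. The transport term $v_1\pa_x f$ generates cross terms only when $|\beta|\geq 1$, which are controlled by weight interpolation exactly as in \eqref{1gtran}, producing the $C\eps\de\|\pa^2_x f\|_{\sigma,W}^2$ buffer on the right.

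For the linear operator $-\frac{1}{\eps\de}\CL_{\overline{M}} f$, I would invoke Lemma \ref{leL} (\eqref{controlpaL}--\eqref{controlLM}) to get the weighted coercivity $c|\pa^\alpha_\beta f|_{\sigma,W}^2$ modulo a bounded-velocity error $|\chi f|_2^2$ plus lower-order terms $C(\eta)|\pa^{\alpha'}_{\beta'} f|_{\sigma,W}^2$. The crucial step is to absorb $|\chi f|_2^2$: splitting $f=\mathbf{P}_0 f+\mathbf{P}_1 f$, the microscopic property of $h=\sqrt{\overline{M}}f+\sqrt{\mu}g$ gives $\mathbf{P}_0 f = -\bar{M}^{-1/2}\mathbf{P}_0(\sqrt{\mu}g)$ as recorded before \eqref{defP0}, so $\|\chi\mathbf{P}_0 f\|^2$ is dominated by a multiple of $\|g\|^2$, which together with the factor $(\eps\de)^{-1}$ gives the last block $C(\eps\de)^{-1}\|\pa^\alpha_\beta g\|_{\sigma,w}^2$ on the right of \eqref{walbef}, while $\|\chi\mathbf{P}_1 f\|^2$ contributes the $C(\eps\de)^{-1}\|\pa^\alpha_x\mathbf{P}_1 f\|_\sigma^2$ term. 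The coupling $-\frac{1}{\eps\de}\tfrac{\sqrt{\mu}}{\sqrt{\overline{M}}}L_B g$, with $L_B$ bounded and velocity-compactly supported, yields by Cauchy--Schwarz a contribution $\kappa(\eps\de)^{-1}\|\pa^\alpha_\beta f\|_\sigma^2+C_\kappa(\eps\de)^{-1}\|\pa^\alpha_\beta g\|_{\sigma,w}^2$, and the first is absorbed into the dissipation left-hand side.

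For the inhomogeneous terms, the projection $P_0(v_1\pa_x(\sqrt{\overline{M}}f)+v_1\sqrt{\mu}\pa_x g)/\sqrt{\overline{M}}$, after $\pa^\alpha_\beta$ and pairing with $W^2\pa^\alpha_\beta f$, is bounded via Cauchy--Schwarz; the $v_1\sqrt{\mu}\pa_x g$ piece contributes to the last block on the right of \eqref{walbef}, while the $v_1\pa_x(\sqrt{\overline{M}}\mathbf{P}_1 f)$ piece, estimated with the splitting $1=\tfrac{1}{\sqrt{\eps\de}}\cdot\sqrt{\eps\de}$, yields the $C(\eps\de)^{-1}\|\pa^\alpha_x\mathbf{P}_1 f\|_\sigma^2$ term. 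The key $-P_1\{v_1 M(\tfrac{|v-u|^2\pa_x\widetilde\theta}{2K\theta^2}+\tfrac{(v-u)\cdot\pa_x\widetilde u}{K\theta})\}/\sqrt{\overline{M}}$ piece is linear in $\pa_x(\widetilde u,\widetilde\theta)$; the same splitting $ab\leq\tfrac{\kappa}{\eps\de}a^2+C_\kappa\eps\de\, b^2$ yields directly $C\eps\de\|\pa^\alpha_x\pa_x(\widetilde u,\widetilde\theta)\|^2$. The remaining terms $\eps^{-1}\pa_x\overline{G}/\sqrt{\overline{M}}$, $\pa_t\overline{G}/\sqrt{\overline{M}}$, $P_1(v_1\pa_x\overline{G})/\sqrt{\overline{M}}$ and the electric force $\eps^{-1}\pa_x\phi\,\pa_{v_1}\overline{G}/\sqrt{\overline{M}}$ are all controlled by Lemma \ref{lem3.1}, whose intrinsic factor $\eps\de$ cancels the $\eps^{-1}$ singularities and delivers the $C(\eta)(\CE(t)+\eps^3\de+\eps\de^3)$ error.

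The main obstacle I anticipate is the bookkeeping when combining the individual $(\alpha,\beta)$-level estimates into a single inequality. The coercivity estimates \eqref{controlpaLM}--\eqref{controlLM} leak both a small fraction of the same-$|\beta|$ dissipation and a lower-order term $\sum_{\alpha'<\alpha,\beta'\leq\beta}|\pa^{\alpha'}_{\beta'} f|_{\sigma,W}^2$ multiplied by $C(\eta)$, and the $L_B g$ and $P_0$ steps each introduce a separate $\kappa$. To close the inequality one must sum in the proper hierarchy (largest $|\beta|$ first, then descending in $|\beta|$ and finally in $\alpha$) with a carefully chosen family of positive constants $\overline{C}'_\alpha$ and $\overline{C}_{\alpha,\beta}$, balancing the various $\kappa$'s small enough that the principal coercivity $c(\eps\de)^{-1}\|\pa^\alpha_\beta f\|_{\sigma,W}^2$ survives, while $C(\eta)$-leakage into lower orders is absorbed using the a priori assumption \eqref{apriori} together with the smallness of $\eps$ and $\de$ from \eqref{smalleps}. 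The resulting weighted sum yields exactly \eqref{walbef}.
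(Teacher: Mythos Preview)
Your plan matches the paper's proof in all essential respects: the same equation is tested against $W^2\partial^\alpha_\beta f$, Lemma~\ref{leL} is invoked for the weighted coercivity, the bounded-velocity remainder $|\chi\partial^\alpha_x f|_2^2$ is split via $f=\mathbf{P}_0f+\mathbf{P}_1f$ with $\mathbf{P}_0f$ controlled by $g$ (this is exactly \eqref{4.114A} in the paper), and the hierarchical linear combination is carried out just as you describe.

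The one place where your bookkeeping is off is the treatment of the $P_0(v_1\partial_x(\sqrt{\overline{M}}f)+v_1\sqrt{\mu}\partial_x g)/\sqrt{\overline{M}}$ term. After applying $\partial^\alpha_\beta$ the $x$-derivative count is $\alpha+1$, so the Cauchy--Schwarz splitting produces $\kappa(\eps\de)^{-1}\|\partial^\alpha_\beta f\|_{\sigma,W}^2$ (absorbed on the left) together with $C_\kappa\eps\de\|\partial_x^{\alpha+1}(f,g)\|_\sigma^2$; see \eqref{P0ff}. For $\alpha=1$ this second piece is precisely the buffer $C\eps\de(\|\partial^2_xf\|_{\sigma,W}^2+\|\partial^2_xg\|_{\sigma,w}^2)$ on the right of \eqref{walbef}, not the $(\eps\de)^{-1}$ blocks you point to. There is no need (and no obvious way) to decompose $f$ into $\mathbf{P}_0f+\mathbf{P}_1f$ inside this particular term. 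The $(\eps\de)^{-1}\|\partial^\alpha_x\mathbf{P}_1f\|_\sigma^2$ contribution on the right of \eqref{walbef} arises solely from the coercivity remainder of $\CL_{\overline{M}}$, as you already correctly identified earlier in your proposal.
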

\begin{proof}
Let $\alpha\leq 1$ when $|\be|=0$, and $\alpha+|\be|\leq 2$ when $|\be|\geq1$. Applying $\partial^{\alpha}_\be$ to \eqref{eqf} and taking the inner product with $W^2(\al,\be)\partial^{\alpha}_\be f$, one has
\begin{align}\label{ipf}
	&(\partial_t\partial^{\alpha}_\be f,W^2(\alpha,\be)\partial^{\alpha}_\be f)+(\partial^{\alpha}_\be (v_1\pa_xf),W^2(\alpha,\beta)\partial_{\beta}^\alpha f)
	-\frac{1}{\eps\de}(\partial^{\alpha}_\be \CL_{\overline{M}} f,W^2(\al,\be)\partial^{\alpha}_\be f)
	\notag\\
	=&\frac{1}{\eps\de}(\partial^{\alpha}_\be[\frac{\sqrt{\mu}}{\sqrt{ \overline{M}}}
    L_Bg],W^2(\al,\be)\partial^{\alpha}_\be f)+(\partial^{\alpha}_\be[\frac{ P_{0}(v_{1}\partial_x(\sqrt{\overline{M}}f)+v_{1}\sqrt{\mu}\partial_xg)}{\sqrt{\overline{M}}}],W^2(\al,\be)\partial^{\alpha}_\be f)
    \notag\\
	&	-(\partial^{\alpha}_\be[\frac{1}{\sqrt{\overline{M}}} P_{1}\big\{v_{1}M(\frac{|v-u|^{2}\partial_x\widetilde{\theta}}{2K\theta^{2}}+\frac{(v-u)\cdot\partial_x\widetilde{u}}{K\theta})\big\}],W^2(\al,\be)\partial^{\alpha}_\be f)+\frac{1}{\eps}(\partial^{\alpha}_\be[\frac{\partial_x\overline{G}}{\sqrt{\overline{M}}}],W^2(\al,\be)\partial^{\alpha}_\be f)
	\notag\\
	&
	+\frac{1}{\eps}(\partial^{\alpha}_\be[\frac{\partial_x\phi\partial_{v_{1}}\overline{G}}{\sqrt{\overline{M}}}],W^2(\al,\be)\partial^{\alpha}_\be f)-(\partial^{\alpha}_\be[\frac{ P_{1}(v_1\partial_x\overline{G})}{\sqrt{\overline{M}}}],W^2(\al,\be)\partial^{\alpha}_\be f)-(\partial^{\alpha}_\be[\frac{\partial_{t}\overline{G}}{\sqrt{\overline{M}}}],W^2(\al,\be)\partial^{\alpha}_\be f).
\end{align}	
Recall $W(\al,\be)$ given by \eqref{DefW}, we have from an integration by parts that 
\begin{align*}%\label{1ftime}
(\partial_{t}\partial^{\alpha}_\be f,W^2(\alpha,\be)\partial^{\alpha}_\be f)=\frac{1}{2}\frac{d}{dt}\|\partial^\alpha_\be f\|_{W}^2.
\end{align*}
Note that
		\begin{equation*}%\label{pavW}
		\partial_{v_1}W(\alpha,\beta)=2(l-\al-|\beta|)\frac{v_1}{\langle v\rangle^{2}}W(\alpha,\beta).
	\end{equation*}
Similar arguments as in the proof for \eqref{1gtran} show that
\begin{align*}%\label{1ftran}
	|(\partial^{\alpha}_\be (v_1\pa_xg),W^2(\alpha,\beta)\partial_{\beta}^\alpha f)|\leq 
    C\ka\frac{1}{\eps\de}\|\partial^{\alpha}_xf\|_{\sigma,W}^2+C_\ka \eps\de\|\partial^2_xg\|^2_{\sigma,w}+
    C\eps\CD(t).
\end{align*}
For the linear operator $\CL_{\overline{M}}$, it holds by \eqref{controlpaLM} and \eqref{controlLM} that
\begin{equation*}%\label{Lf}
	-\frac{1}{\eps\de}(\partial^\alpha_x \CL_{\overline{M}} f,W^2(\alpha,0)\partial^\alpha_x f)
	\geq \frac{1}{\eps\de}(c\|\partial^\alpha_x  f\|^2_{\sigma,W}-C\|\partial^\alpha_x f\|^2_{\sigma}-C\sum_{\al'<\al}\|\partial^{\al'}_x f\|^2_{\sigma,W})-C\eps\CD(t),
\end{equation*}
for $\al\leq1$ and 
\begin{align*}%\label{paLf}
	-\frac{1}{\eps\de}(\partial^\alpha_\beta \CL_{\overline{M}} f,W^2(\alpha,\beta)\partial^\alpha_\beta f)
	\geq \frac{1}{\eps\de}\big(c\|\partial^\alpha_\beta f\|^2_{\sigma,W}-&\ka_1\sum_{|\beta'|=|\beta|}\|\partial^\alpha_{\beta'} f\|_{\sigma,W}^2-C_{\ka_1}\sum_{|\beta'|<|\beta|}\|\partial^\alpha_{\beta'}f\|_{\sigma,W}^2\big)
    \notag\\
	&-C\frac{1}{\eps\de}\sum_{\al'<\al,|\beta'|\leq|\beta|}\|\partial^{\alpha'}_{\beta'}f\|_{\sigma,W}^2-C\eps\CD(t),
\end{align*}
for $\al+|\be|\leq2$, $|\be|\geq 1$ and some $0<\ka_1<1$ that will be chosen later. We turn to the right hand side of \eqref{ipf} now. The boundedness property of $L_B$ gives that
\begin{align}\label{paLB}
	\frac{1}{\eps\de}|(\partial^{\alpha}_\be[\frac{\sqrt{\mu}}{\sqrt{\overline{M}}}L_Bg],W^2(\al,\be)\partial^{\alpha}_\be f)|\leq \ka\frac{1}{\eps\de}\|\partial^\alpha_\beta f\|^2_{\sigma,W}+C_{\ka}\frac{1}{\eps\de} \sum_{\al'\leq\al,|\beta'|\leq|\beta|}\|\partial^{\alpha'}_{\beta'}g\|_{\sigma,w}^2+C\eps\CD(t).
\end{align}
For the second term  on the right hand side of \eqref{ipf}, we have from \eqref{DefProjection} that
\begin{align}\label{P0ff}
	&(\partial^{\alpha}_\be[\frac{ P_{0}(v_{1}\partial_x(\sqrt{\overline{M}}f)+v_{1}\sqrt{\mu}\partial_xg)}{\sqrt{\overline{M}}}],W^2(\al,\be)\partial^{\alpha}_\be f)\notag\\
	=&\int_{\R}\int_{\R^3}\partial^{\alpha}_\be \big\{\overline{M}^{-\frac{1}{2}}\sum_{i=0}^{4}\langle v_{1}\partial_x(\sqrt{\overline{M}}f)+v_{1}\sqrt{\mu}\partial_xg,\frac{\chi_{i}}{M}\rangle\chi_{i}\big\}W^2(\al,\be)\partial^{\alpha}_\be f\,dv\,dx
    \notag\\
	\leq& C\int_{\R}|\langle v\rangle^{-2}(\partial^{\alpha}_x\pa_{x}f,\partial^{\alpha}_x\pa_{x}g)|_2|\langle v\rangle^{-2}W(\al,\be)\partial^{\alpha}_\be f|_2\,dx+C\eps\CD(t)+C(\eta)\CE(t)
    \notag\\
	\leq&
    \ka\frac{1}{\eps\de}\|\partial^\alpha_\beta f\|^2_{\sigma,W}+C_\ka\eps\de(\|\partial_x^{\alpha}\pa_xf\|_{\si,W}+\|\partial_x^{\alpha}\pa_xg\|_{\si,w})+C\eps\CD(t)+C(\eta)\CE(t),
\end{align}
for any $0<\ka<1$. Here we have used the fact that $\|\partial_x^{\alpha}\pa_xg\|_{\si,W}\leq\|\partial^{\alpha}\pa_xg\|_{\si,w}$. Similarly, we have
\begin{align}\label{thetauf}
	&|(\partial^{\alpha}_\be[\frac{1}{\sqrt{\overline{M}}} P_{1}\big\{v_{1}M(\frac{|v-u|^{2}\partial_x\widetilde{\theta}}{2K\theta^{2}}+\frac{(v-u)\cdot\partial_x\widetilde{u}}{K\theta})\big\}],W^2(\al,\be)\partial^{\alpha}_\be f)|
    \notag\\
	\leq& \ka\frac{1}{\eps\de}\|\partial^\alpha_\beta f\|^2_{\sigma,W}+C_\ka\eps\de\|\partial_x^{\alpha}(\pa_x\widetilde{u},\pa_x\widetilde{\theta})\|^2+C\eps\CD(t)+C(\eta)\CE(t).
\end{align}
For the rest four terms involving $\overline{G}$, we  use \eqref{boundbarG}, \eqref{boundlandaunorm}, \eqref{DefE}, \eqref{apriori} and \eqref{DefD} to obtain
\begin{align}\label{barGf}
	\frac{1}{\eps}|(\partial^{\alpha}_\be[\frac{\partial_x\overline{G}}{\sqrt{\overline{M}}}],W^2(\al,\be)\partial^{\alpha}_\be f)|
    \leq& \ka\frac{1}{\eps\delta}\| \langle v\rangle^{-3}W(\al,\be)\pa^\al_\be f\|^2+C_\ka\frac{\de}{\eps}\|\langle v\rangle^{3}W(\al,\be)\partial^{\alpha}_\be[\frac{\partial_x\overline{G}}{\sqrt{\overline{M}}}]\|^2
    \notag\\
	\leq& \ka\frac{1}{\eps\de}\|\partial^\alpha_\beta f\|^2_{\sigma,W}+C_\ka C(\eta)\frac{\de}{\eps}\eps^2\de^2(1+\|\pa^2_x(\widetilde{u},\widetilde{\theta})\|^2)\notag\\
	\leq& \ka\frac{1}{\eps\de}\|\partial^\alpha_\beta f\|^2_{\sigma,W}+ C_\ka C(\eta)\de\CD(t)+C_\ka C(\eta)\eps\de^3.
\end{align}
Likewise, it holds that
\begin{align}\label{phibarGf}
	\frac{1}{\eps}|(\partial^{\alpha}_\be[\frac{\partial_x\phi\partial_{v_{1}}\overline{G}}{\sqrt{\overline{M}}}],W^2(\al,\be)\partial^{\alpha}_\be f)|
    \leq \ka\frac{1}{\eps\de}\|\partial^\alpha_\beta f\|^2_{\sigma,W}+ C_\ka C(\eta)\de\CD(t)+C_\ka C(\eta)\eps\de^3,
\end{align}
and
\begin{align}\label{patbarGf}
	&|(\partial^{\alpha}_\be[\frac{ P_{1}(v_1\partial_x\overline{G})}{\sqrt{\overline{M}}}],W^2(\al,\be)\partial^{\alpha}_\be f)+(\partial^{\alpha}_\be[\frac{\partial_{t}\overline{G}}{\sqrt{\overline{M}}}],W^2(\al,\be)\partial^{\alpha}_\be f)|\notag\\
	\leq& \ka\frac{1}{\eps\delta}\| \langle v\rangle^{-3}W(\al,\be)\pa^\al_\be f\|^2+C_\ka\eps\de\|\langle v\rangle^{3}W(\al,\be)\partial^{\alpha}_\be[\frac{ P_{1}(v_1\partial_x\overline{G})+\partial_{t}\overline{G}}{\sqrt{\overline{M}}}]\|^2\notag\\
	\leq&  C\ka\frac{1}{\eps\de}\|\partial^\alpha_\beta f\|^2_{\sigma,W}+ C_\ka C(\eta)\de\CD(t)
    +C(\eta)\CE(t)+C_\ka C(\eta)(\eps\de^3+\eps^3\de),
\end{align}
where in the last inequality we have used \eqref{estpatpax}. 
Hence, it follows by \eqref{ipf}-\eqref{patbarGf} that
\begin{align}\label{albef}
	&\frac{d}{dt}\|\partial^{\alpha}_\be f\|_W^{2}+c\frac{1}{\eps\de}\|\partial^{\alpha}_\be f\|_{\sigma,W}^{2}-\ka_1\frac{1}{\eps\de}\sum_{|\beta'|=|\beta|}\|\partial^\alpha_{\beta'} f\|_{\sigma,W}^2-C_{\ka_1}\frac{1}{\eps\de}\sum_{|\beta'|<|\beta|}\|\partial^\alpha_{\beta'}f\|_{\sigma,W}^2\notag\\
	\leq& C\ka\frac{1}{\eps\de}\|\partial^\alpha_\beta f\|^2_{\sigma,W}+
    C_\ka(\eps+\de)\mathcal{D}(t)+C\frac{1}{\eps\de}\sum_{\al'<\al,|\beta'|\leq|\beta|}\|\partial^{\alpha'}_{\beta'}f\|_{\sigma,W}^2+C_{\ka}\frac{1}{\eps\de} \sum_{\al'\leq\al,|\beta'|\leq|\beta|}\|\partial^{\alpha'}_{\beta'}g\|_{\sigma,w}^2\notag\\
	&+ C_\ka\eps\de(\|\pa^{\alpha}_x\pa_x(\widetilde{u},\widetilde{\theta})\|^{2}+\|\pa^{\alpha}_x\pa_xf\|_{\si,W}+\|\pa^{\alpha}_x\pa_xg\|_{\si,w}^{2}+\|\pa^2_xg\|_{\si,w}^{2})+C_\ka C(\eta)(\CE(t)+\eps\de^3+\eps^3\de),
\end{align}
for $\al+|\be|\leq2$, $|\be|\geq1$, and
\begin{align}\label{alf}
	&\frac{d}{dt}\|\partial^{\alpha}_x f\|_W^{2}+c\frac{1}{\eps\de}\|\partial^{\alpha}_x f\|_{\sigma,W}^{2}\notag\\
	\leq&
    C\ka\frac{1}{\eps\de}\|\partial^\alpha f\|^2_{\sigma,W}+
    C_\ka(\eps+\de)\mathcal{D}(t)+C\frac{1}{\eps\de}\|\partial^{\alpha}_x f\|_{\sigma}^{2}+C\frac{1}{\eps\de}\sum_{\al'<\al}\|\partial^{\alpha'}_xf\|_{\sigma,W}^2+C_\ka\frac{1}{\eps\de} \sum_{\al'\leq\al}\|\partial^{\alpha'}_xg\|_{\sigma,w}^2\notag\\
	&+ 
C_\ka\eps\de(\|\pa^{\alpha}_x\pa_x(\widetilde{u},\widetilde{\theta})\|^{2}+\|\pa^{\alpha}_x\pa_xf\|_{\si,W}+\|\pa^{\alpha}_x\pa_xg\|_{\si,w}^{2}
+\|\pa^2_xg\|_{\si,w}^{2})+C_\ka C(\eta)(\CE(t)+\eps\de^3+\eps^3\de),
\end{align}
for $\al\leq1$. We take suitable linear combination of \eqref{albef} over all  $\al+|\be|\leq2$, $|\be|\geq1$, and then choose $\ka$ and $\ka_1$ to be sufficiently small to get
\begin{align*}
	&\frac{d}{dt}\overline{C}_{\alpha,\be}\sum_{\alpha+|\beta|\leq 2,|\beta|\geq 1}\|\partial^{\alpha}_\be f\|_W^{2}+c\frac{1}{\eps\de}\sum_{\al+|\beta|\leq 2,|\beta|\geq 1}\|\partial^{\alpha}_\be f\|_{\sigma,W}^{2}\notag\\
	\leq&C\frac{1}{\eps\de}\sum_{\al\leq 1}(\|\partial^\alpha_xf\|_{\sigma,W}^2+\|\partial^{\alpha}_xg\|_{\sigma,w}^2)
    +C\eps\de
 \sum_{\alpha\leq 1}\|\pa^{\alpha}_x\pa_x(\widetilde{u},\widetilde{\theta})\|^{2}
 +C\eps\de(\|\pa^2_xf\|_{\si,W}+\|\pa^2_xg\|_{\si,w}^{2})
 \notag\\
	&+C\frac{1}{\eps\de} \sum_{\alpha+|\beta|\leq 2,|\beta|\geq 1}\|\partial^{\alpha}_{\beta}g\|_{\sigma,w}^2
    +C(\eps+\de)\mathcal{D}(t)
    +C(\eta)(\CE(t)+\eps\de^3+\eps^3\de).
\end{align*}
Similarly, we have from \eqref{alf} that
\begin{align*}
	&\frac{d}{dt}
    \overline{C}_{\alpha}\sum_{\alpha\leq 1}
    \|\partial^{\alpha}_x f\|_W^{2}+c\frac{1}{\eps\de}\sum_{\alpha\leq 1}\|\partial^{\alpha}_x f\|_{\sigma,W}^{2}\notag\\
	\leq&
   C\frac{1}{\eps\de}\sum_{\alpha\leq 1}(\|\partial^{\alpha}_x f\|_{\sigma}^{2}+\|\partial^{\alpha}_xg\|_{\sigma,w}^2)
   +  C\sum_{\alpha\leq 1}\eps\de\|\pa^{\alpha}_x\pa_x(\widetilde{u},\widetilde{\theta})\|^{2}
    \notag\\
	& +C\eps\de(\|\pa^2_xf\|_{\si,W}+\|\pa^2_xg\|_{\si,w}^{2})
    +
 C(\eps+\de)\mathcal{D}(t)+
C(\eta)(\CE(t)+\eps\de^3+\eps^3\de).
\end{align*}
By the linear combination of the above two estimates, we can prove 
\eqref{walbef} hold true by using
\begin{equation}
\label{4.114A}
\sum_{\alpha\leq 1}\|\partial^\alpha_x f\|^2_{\sigma}\leq C\sum_{\alpha\leq 1}(\|\partial^{\alpha}_x\mathbf{P}_1 f\|_{\sigma}^{2} +\|\partial^\alpha_x g\|^2_{\sigma}).   
\end{equation}
\end{proof}
In the above lemma, we observe that due to the term $C\frac{1}{\eps\de}\sum_{\alpha\leq 1}\|\partial^\alpha_x \mathbf{P}_1f\|^2_{\sigma}$ in \eqref{walbef}, it is necessary to obtain the estimate on $\|\pa_x^\al \mathbf{P}_1f\|$ without the weight function, which is convenient to get since there will be many similar terms as in the proof of the above lemma.	
\begin{lemma}\label{lem4.13}
Under the a priori assumption \eqref{apriori}, it holds that
	\begin{align}\label{al01f}
		&\frac{d}{dt}\sum_{\alpha\leq 1}\|\partial^{\alpha}_x f\|^{2}+c\frac{1}{\eps\de}\sum_{\alpha\leq 1}\|\partial^{\alpha}_x\mathbf{P}_1 f\|_{\sigma}^{2}\notag\\
	\leq&
C\eps\de\sum_{\alpha\leq 1}\|\pa^{\alpha}_x\pa_x(\widetilde{u},\widetilde{\theta})\|^{2} +C\eps\de\|\pa^2_x(f,g)\|_{\sigma}^{2}+C\frac{1}{\eps\de}\sum_{\alpha\leq 1}\|\partial^{\alpha}_x g\|_{\sigma}^{2}
    \notag\\
&+C(\eps+\de)\mathcal{D}(t)+C(\eta)(\eps\de^3+\eps^3\de)+C(\eta)\CE(t),
	\end{align}	
where $C(\eta)\geq0$ depends only on $\eta$ with $C(0)=0$.
\end{lemma}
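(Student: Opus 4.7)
The plan is to parallel the weighted estimate \eqref{walbef}, but drop the velocity weight $W(\alpha,\beta)$ and exploit the microscopic-only coercivity \eqref{controlLbarM} of the linearized operator $\CL_{\overline{M}}$ in order to produce the dissipation $\|\partial^\alpha_x\mathbf{P}_1 f\|^2_\sigma$ on the left-hand side. For each $\alpha\le 1$, I would apply $\partial^\alpha_x$ to equation \eqref{eqf} and take the $L^2_{x,v}$-inner product with $\partial^\alpha_x f$. The temporal term gives $\frac12\frac{d}{dt}\|\partial^\alpha_x f\|^2$, the pure transport term $v_1\partial_x f$ produces no contribution after integration by parts, and the singular shift $-\frac{1}{\eps}\partial_x f$ also integrates to zero, so no $\eps^{-1}$-singularity arises from the kinetic part.

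For the linear collision term I would invoke \eqref{controlLbarM} (and its $\partial^\alpha_x$-variant, obtained by commuting $\partial_x$ with $\CL_{\overline{M}}$ as in \eqref{controlpaLM} but without the weight), which yields
$$-\frac{1}{\eps\de}(\partial^\alpha_x\CL_{\overline{M}} f,\partial^\alpha_x f)\ge \frac{c}{\eps\de}\|\partial^\alpha_x\mathbf{P}_1 f\|_\sigma^2-C(\eta)\frac{1}{\eps\de}\sum_{\alpha'<\alpha}\|\partial^{\alpha'}_x f\|_\sigma^2-C\eps\CD(t).$$
The lower-order term on the right can be absorbed by taking a linear combination over $\alpha\le 1$ (treating $\alpha=0$ first and then $\alpha=1$ with a small coefficient), and using the splitting \eqref{4.114A} to replace $\|\partial^{\alpha'}_x f\|_\sigma^2$ by $\|\partial^{\alpha'}_x\mathbf{P}_1 f\|_\sigma^2+\|\partial^{\alpha'}_x g\|_\sigma^2$, the latter being absorbed into the right-hand side of \eqref{al01f}.

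The right-hand side of \eqref{eqf} is handled exactly as in the proof of \eqref{walbef}, but without $W$. The $L_B$ term contributes $\ka(\eps\de)^{-1}\|\partial^\alpha_x f\|^2+C_\ka(\eps\de)^{-1}\|\partial^\alpha_x g\|_\sigma^2$, of which the first part is absorbed after noting $\|\partial^\alpha_x f\|^2_\sigma\le C(\|\partial^\alpha_x\mathbf{P}_1 f\|_\sigma^2+\|\partial^\alpha_x g\|_\sigma^2)$ and choosing $\ka$ small. The $P_0$ term in \eqref{eqf} is bounded by Cauchy-Schwarz and a derivative count: the worst piece involves $\partial^{\alpha+1}_x(\sqrt{\overline{M}}f,\sqrt\mu g)$, which is controlled by $\ka(\eps\de)^{-1}\|\partial^\alpha_x f\|^2_\sigma+C_\ka\eps\de(\|\partial^2_x f\|^2_\sigma+\|\partial^2_x g\|^2_\sigma)$, as in \eqref{P0ff}. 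The $P_1$ term involving $\partial_x\widetilde{u},\partial_x\widetilde\theta$ gives $\eps\de\sum_{\alpha\le 1}\|\partial^\alpha_x\partial_x(\widetilde{u},\widetilde\theta)\|^2$ after Cauchy-Schwarz, as in \eqref{thetauf}. Finally, the four terms involving $\overline{G}$, $\partial_t\overline{G}$ and $\eps^{-1}\partial_x\phi\,\partial_{v_1}\overline{G}$ are treated exactly as in \eqref{barGf}-\eqref{patbarGf}, using the pointwise bound \eqref{boundbarG} on $\overline{G}$ and Lemma \ref{lem4.2} to dispose of $\partial_t$ derivatives; this contributes $C(\eta)\de\CD(t)+C(\eta)\CE(t)+C(\eta)(\eps\de^3+\eps^3\de)$.

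Summing over $\alpha\le 1$ with suitable weights, choosing $\ka$ small enough to absorb all $\ka(\eps\de)^{-1}\|\partial^\alpha_x f\|^2_\sigma$ contributions into the leading coercive term, and using once more the identity $\|\partial^\alpha_x f\|^2_\sigma\le C(\|\partial^\alpha_x\mathbf{P}_1 f\|_\sigma^2+\|\partial^\alpha_x g\|_\sigma^2)$ to convert the residual $\|\partial^\alpha_x f\|^2_\sigma$ into the desired $\|\partial^\alpha_x\mathbf{P}_1 f\|_\sigma^2$ plus harmless $\|\partial^\alpha_x g\|_\sigma^2$, yields \eqref{al01f}. The main obstacle is the bookkeeping of the highest-order term $\alpha=1$: the right-hand side of \eqref{eqf} brings in $\partial^2_x f$ and $\partial^2_x g$, which must be retained with coefficient $\eps\de$ on the right-hand side of \eqref{al01f} rather than absorbed, precisely because the second-order dissipation for $f,g$ will be produced by a separate energy estimate at order two. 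All other pieces are routine adaptations of the weighted case.
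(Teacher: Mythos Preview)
Your proposal is correct and follows essentially the same route as the paper's proof: take the unweighted $L^2$ inner product of $\partial^\alpha_x$\eqref{eqf} with $\partial^\alpha_x f$, use \eqref{controlLbarM} (plus the commutator estimate for $\alpha=1$) to extract $\frac{c}{\eps\de}\|\partial^\alpha_x\mathbf{P}_1 f\|_\sigma^2$, bound the right-hand side terms exactly as in \eqref{paLB}--\eqref{patbarGf} without $W$, and close by a linear combination over $\alpha\le1$ together with the splitting \eqref{4.114A}. The only cosmetic difference is that the paper writes out the $\alpha=1$ commutator with $\CL_{\overline{M}}$ explicitly via \eqref{controlGa} rather than citing a weightless analogue of \eqref{controlpaLM}.
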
	
\begin{proof}
	Let $\alpha\leq 1$. Applying $\partial^{\alpha}_x$ to \eqref{eqf} and taking the inner product with $\partial^{\alpha}_x f$, we obtain
	\begin{align}\label{L2f}
		&(\partial_t\partial^{\alpha}_x f,\partial^{\alpha}_x f)+(\partial^{\alpha}_x (v_1\pa_xf),\partial_x^\alpha f)
		-\frac{1}{\eps\de}(\partial^{\alpha}_x \CL_{\overline{M}} f,\partial^{\alpha}_x f)
		\notag\\
		=&\frac{1}{\eps\de}(\partial^{\alpha}_x[\frac{\sqrt{\mu}}{\sqrt{\overline{M}}}L_Bg],\partial^{\alpha}_x f)+(\partial^{\alpha}_x[\frac{ P_{0}(v_{1}\partial_x(\sqrt{\overline{M}}f)+v_{1}\sqrt{\mu}\partial_xg)}{\sqrt{\overline{M}}}],\partial^{\alpha}_x f)
        \notag\\
		&	-(\partial^{\alpha}_x[\frac{1}{\sqrt{\overline{M}}} P_{1}\big\{v_{1}M(\frac{|v-u|^{2}\partial_x\widetilde{\theta}}{2K\theta^{2}}+\frac{(v-u)\cdot\partial_x\widetilde{u}}{K\theta})\big\}],\partial^{\alpha}_x f)+\frac{1}{\eps}(\partial^{\alpha}_x[\frac{\partial_x\overline{G}}{\sqrt{\overline{M}}}],\partial^{\alpha}_x f)
		\notag\\
		&
		+\frac{1}{\eps}(\partial^{\alpha}_x[\frac{\partial_x\phi\partial_{v_{1}}\overline{G}}{\sqrt{\overline{M}}}],\partial^{\alpha}_x f)-(\partial^{\alpha}_x[\frac{ P_{1}(v_1\partial_x\overline{G})}{\sqrt{\overline{M}}}],\partial^{\alpha}_x f)-(\partial^{\alpha}_x[\frac{\partial_{t}\overline{G}}{\sqrt{\overline{M}}}],\partial^{\alpha}_x f).
	\end{align}	
Then it holds that
\begin{align*}
(\partial_t\partial^{\alpha}_x f,\partial^{\alpha}_x f)+(\partial^{\alpha}_x (v_1\pa_xf),\partial^{\alpha}_x f)=\frac{1}{2}\frac{d}{dt}\|\pa^\al_x f\|^2.
\end{align*}
If $\al=0$, we have from \eqref{controlLbarM} that 
$$
-\frac{1}{\eps\de}(\pa^\al_x\CL_{\overline{M}}f,\pa^\al_xf)
\geq c\frac{1}{\eps\de}\|\pa^\al_x\mathbf{P}_1f\|^{2}_{\sigma}.
$$
If $\al=1$, we have from \eqref{controlLbarM}, \eqref{DefLM} and \eqref{controlGa} that
\begin{align*}%\label{cof}
-\frac{1}{\eps\de}(\pa^\al_x\CL_{\overline{M}}f,\pa^\al_xf)
&=-\frac{1}{\eps\de}(\CL_{\overline{M}}\pa^\al_xf,\pa^\al_xf)
-\frac{1}{\eps\de}\sum_{\al_1+\al_2+\al_3+\al'=\al,\al'<\al}(\pa^{\al_1}_x(\frac{1}{\sqrt{\overline{M}}})Q(\pa^{\al_2}_x\overline{M},\pa^{\al_3}_x(\sqrt{\overline{M}})\partial^{\al'}_xf)
\notag\\
&\quad \quad +\pa^{\al_1}_x(\frac{1}{\sqrt{\overline{M}}})Q(\pa^{\al_2}_x(\sqrt{\overline{M}})\pa^{\al'}_xf,\pa^{\al_3}_x\overline{M}),\pa^\al_xf)
\notag\\
&\geq \frac{c}{2}\frac{1}{\eps\de}\|\pa^\al_x\mathbf{P}_1f\|^{2}_{\sigma}-C(\eta)\frac{1}{\eps\de}\sum_{\al'<\al}\|\partial^{\al'}_x f\|_{\sigma}
(\|\partial^{\al}_x \mathbf{P}_1f\|_\sigma+\|\partial^{\al}_x \mathbf{P}_0f\|_\sigma)
\notag\\
		&\geq c\frac{1}{\eps\de}\|\pa^\al_x\mathbf{P}_1f\|^{2}_{\sigma}-C\frac{1}{\eps\de}(\|\mathbf{P}_1 f\|^2_{\sigma}+\|g\|^2_{\sigma}+\|\partial_x g\|^2_{\sigma}),
	\end{align*}
	where the last inequality holds from \eqref{4.114A}.
	The rest terms on the right hand side of \eqref{L2f} can be bounded in the similar way as in \eqref{paLB}, \eqref{P0ff}, \eqref{thetauf}, \eqref{barGf}, \eqref{phibarGf} and  \eqref{patbarGf}. One arrives at
	\begin{align*}
		&\big|\frac{1}{\eps\de}(\partial^{\alpha}_x[\frac{\sqrt{\mu}}{\sqrt{\overline{M}}}L_Bg],\partial^{\alpha}_x f)+(\partial^{\alpha}_x\frac{ P_{0}(v_{1}\partial_x(\sqrt{\overline{M}}f)+v_{1}\sqrt{\mu}\partial_xg)}{\sqrt{\overline{M}}},\partial^{\alpha}_x f)\notag\\
		&\quad	-(\partial^{\alpha}_x[\frac{1}{\sqrt{\overline{M}}} P_{1}\big\{v_{1}M(\frac{|v-u|^{2}\partial_x\widetilde{\theta}}{2K\theta^{2}}+\frac{(v-u)\cdot\partial_x\widetilde{u}}{K\theta})\big\}],\partial^{\alpha}_x f)+\frac{1}{\eps}(\partial^{\alpha}_x[\frac{\partial_x\overline{G}}{\sqrt{\overline{M}}}],\partial^{\alpha}_x f)
		\notag\\
		&\quad
		+\frac{1}{\eps}(\partial^{\alpha}_x[\frac{\partial_x\phi\partial_{v_{1}}\overline{G}}{\sqrt{\overline{M}}}],\partial^{\alpha}_x f)-(\partial^{\alpha}_x[\frac{ P_{1}(v_1\partial_x\overline{G})}{\sqrt{\overline{M}}}],\partial^{\alpha}_x f)-(\partial^{\alpha}_x[\frac{\partial_{t}\overline{G}}{\sqrt{\overline{M}}}],\partial^{\alpha}_x f)\big|\notag\\
		\leq&\ka\frac{1}{\eps\de}\|\partial_x^\alpha f\|^2_{\sigma}+C_\ka\eps\de
        (\|\pa^{\alpha}_x\pa_x(\widetilde{u},\widetilde{\theta})\|^{2}+\|\pa^{\alpha}_x\pa_x(f,g)\|_{\sigma}^{2})
+C_\ka(\eps+\de)\mathcal{D}(t)
        \notag\\
		&+C_\ka C(\eta)(\eps\de^3+\eps^3\de)+C_\ka C(\eta)\CE(t)+C_\ka\frac{1}{\eps\de}\sum_{\alpha\leq 1}\|\partial^{\alpha}_x g\|_{\sigma}^{2}.
	\end{align*}
Plugging the the above estimates into \eqref{L2f} and then the desired estimate \eqref{al01f} follows from 
a suitable linear combination of the resulting equation over $\al\leq1$.
This completes the proof of Lemma \ref{lem4.13}.
\end{proof}
Combining \eqref{walbef} and \eqref{al01f}, we obtain our final estimate for lower order part of $f$.
\begin{lemma}\label{le01f}
Under the a priori assumption \eqref{apriori}, there exist 
some positive constants $\overline{C}'_{\alpha}$, $\overline{C}_{\alpha,\be}$ and  sufficiently large $\overline{C}_1$ such that
\begin{align}\label{01f}
	&\frac{d}{dt}\big\{\sum_{\alpha\leq 1}\overline{C}'_{\alpha}\|\partial^{\alpha}_x f\|_W^{2}+\overline{C}_1\sum_{\alpha\leq 1}\|\partial^{\alpha}_x f\|^{2}
    +\sum_{\alpha+|\beta|\leq 2,|\beta|\geq 1}\overline{C}_{\alpha,\be}\|\partial^{\alpha}_\be f\|_W^{2}\big\}
     \notag\\
    &+c\frac{1}{\eps\de}\big\{\overline{C}_1
    \sum_{\alpha\leq 1}\|\partial^{\alpha}_x\mathbf{P}_1 f\|_{\sigma}^{2}+
    \sum_{\alpha\leq 1}\|\partial^{\alpha}_x f\|_{\sigma,W}^{2}+\sum_{\al+|\beta|\leq 2,|\beta|\geq 1}\|\partial^{\alpha}_\be f\|_{\sigma,W}^{2}\big\}\notag\\
	\leq&C\eps\de(\|\pa^2_xf\|_{\sigma,W}^{2}+\|\pa^2_xg\|_{\sigma,w}^{2})+C(\eps+\de)\mathcal{D}(t)+C(\eta)(\eps^3\de+\eps\de^3)+C(\eta)\CE(t)
    \notag\\
	&+C\eps\de  \sum_{\alpha\leq 1}\|\pa^{\alpha}_x\pa_x(\widetilde{u},\widetilde{\theta})\|^{2}      +C\frac{1}{\eps\de}\big\{\sum_{\alpha\leq 1}\|\partial^{\alpha}_x g\|_{\sigma,w}^{2}+\sum_{\al+|\beta|\leq 2,|\beta|\geq 1}\|\partial^{\alpha}_\be g\|_{\sigma,w}^{2}\big\},
\end{align}	
where $C(\eta)\geq0$ depends only on $\eta$ with $C(0)=0$.
\end{lemma}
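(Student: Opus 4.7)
The plan is to derive \eqref{01f} by a weighted linear combination of the two preceding inequalities \eqref{walbef} and \eqref{al01f}. Both of those already carry the correct sort of derivative norms of $f$ on the left; what remains is to arrange the combination so that the single obstructive term, namely $C\frac{1}{\eps\de}\sum_{\alpha\leq 1}\|\partial^\alpha_x \mathbf{P}_1 f\|_{\sigma}^2$ on the right-hand side of \eqref{walbef}, is absorbed. This term is the unavoidable residue left over after applying the weighted coercivity \eqref{controlpaL}--\eqref{controlpaLM} of $\CL_{\overline{M}}$ (which only controls $\|\partial^\alpha_x f\|_{\sigma,W}^2$ modulo an unweighted correction), combined with the elementary splitting $\|\partial^\alpha_x f\|_\sigma^2\lesssim \|\partial^\alpha_x \mathbf{P}_1 f\|_\sigma^2+\|\partial^\alpha_x g\|_\sigma^2$ from \eqref{4.114A}.

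The key step is therefore to multiply \eqref{al01f} by a constant $\overline{C}_1>0$ to be chosen large and add to \eqref{walbef}. The dissipative contribution $c\,\overline{C}_1\frac{1}{\eps\de}\sum_{\alpha\leq 1}\|\partial^\alpha_x \mathbf{P}_1 f\|_{\sigma}^2$ produced on the left of $\overline{C}_1\times$\eqref{al01f} is structurally identical to the problematic source term on the right of \eqref{walbef}, so it is enough to take $\overline{C}_1$ larger than the ratio of the universal constants $C/c$ appearing in \eqref{walbef} and \eqref{al01f}. This ratio depends only on the linearized collision operator and on the universal Sobolev constants, hence is independent of $\eps$, $\de$, and $\eta$, and the choice of $\overline{C}_1$ is uniform in all the small parameters. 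The weighted dissipation $\frac{1}{\eps\de}\sum_{\alpha\leq 1}\|\partial^\alpha_x f\|_{\sigma,W}^2+\frac{1}{\eps\de}\sum_{\alpha+|\beta|\leq 2,\,|\beta|\geq 1}\|\partial^\alpha_\beta f\|_{\sigma,W}^2$ inherited from \eqref{walbef} is untouched by this manipulation and completes the dissipative side of \eqref{01f}.

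It then remains to check that every source term on the right produced by the combination matches the stated right-hand side of \eqref{01f}. The fluid source $\eps\de\sum_{\alpha\leq 1}\|\pa^\alpha_x\pa_x(\widetilde u,\widetilde\theta)\|^2$, the top-order contribution $\eps\de(\|\pa^2_x f\|_{\sigma,W}^2+\|\pa^2_x g\|_{\sigma,w}^2)$, the generic dissipation remainder $(\eps+\de)\CD(t)$, the energy $C(\eta)\CE(t)$, the small residues $C(\eta)(\eps^3\de+\eps\de^3)$, and the $g$-dissipation $\frac{1}{\eps\de}\sum\|\partial^\alpha_\beta g\|_{\sigma,w}^2$ are all already present in \eqref{walbef}, hence transfer verbatim. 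The additional top-order contribution $\eps\de\|\pa^2_x(f,g)\|_{\sigma}^2$ supplied by $\overline{C}_1\times$\eqref{al01f} is dominated by $\eps\de(\|\pa^2_x f\|_{\sigma,W}^2+\|\pa^2_x g\|_{\sigma,w}^2)$ since $|\cdot|_\sigma\leq |\cdot|_{\sigma,W}$ and $|\cdot|_\sigma\leq |\cdot|_{\sigma,w}$, and the $\frac{1}{\eps\de}\|\partial^\alpha_x g\|_\sigma^2$ piece coming from the same multiplication is dominated by $\frac{1}{\eps\de}\|\partial^\alpha_x g\|_{\sigma,w}^2$ which already appears in \eqref{walbef}.

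The only substantive point in the proof is therefore the correct ordering of constant choices: the constants $\overline{C}_\alpha'$ and $\overline{C}_{\alpha,\beta}$ inherited from \eqref{walbef} must be fixed first (so that a suitable linear combination of the weighted estimates over $|\beta|$ closes, as in the derivation of \eqref{walbef} itself), and only then is $\overline{C}_1$ enlarged to beat the absolute constant $C$ in front of $\frac{1}{\eps\de}\sum_{\alpha\leq 1}\|\partial^\alpha_x \mathbf{P}_1 f\|_{\sigma}^2$. No nonlinear iteration is needed, and no new a priori bound beyond \eqref{apriori} is invoked, so the only genuine technical hurdle is bookkeeping the fact that all arising constants can be chosen uniformly in $\eps$, $\de$, and $\eta$; once that is in place, \eqref{01f} follows at once.
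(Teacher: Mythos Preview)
Your proposal is correct and matches the paper's approach exactly: the paper simply states that Lemma~\ref{le01f} follows by ``Combining \eqref{walbef} and \eqref{al01f}'', and your write-up fills in precisely the linear combination (multiply \eqref{al01f} by a sufficiently large $\overline{C}_1$ and add to \eqref{walbef}) needed to absorb the $\frac{1}{\eps\de}\sum_{\alpha\leq 1}\|\partial^\alpha_x\mathbf{P}_1 f\|_\sigma^2$ term. The bookkeeping you describe for the remaining source terms is accurate.
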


\subsection{Second order energy}	
We now turn to the highest order energy for both fluid and non-fluid parts. We observe that the equation of $g$ in \eqref{eqg}, besides the first three terms in \eqref{eqg} which vanish after taking inner product, the others will not increase the $x$ derivatives. Hence, we estimate $g$ by \eqref{eqg} and make use of the original equation for $F$ to bound the rest parts including the fluid quantities and $f$.
\begin{lemma}
\label{lehighg}
Under the a priori assumption \eqref{apriori}, it holds that
\begin{align}\label{w2g}
&\eps^2\de^2\|\partial^{2}_x g(t)\|_w^{2}+\eps\de\int_0^{t}\|\partial^{2}_x g(s)\|_{\sigma,w}^{2}\,ds+\eps^2\de^2 q_{1}q_{2}\int_0^{t}(1+s)^{-(1+q_2)}\|\langle v\rangle \partial^{2}_xg(s)\|_{w}^{2}\,ds
\notag\\
\leq&C\CE(0)+C(\eta)t\eps^2\de^2
 +C(\eps+\de)\int_0^{t}\mathcal{D}(s)\,ds
\notag\\
&+C(\eta)\int_0^{t}\CE(s)\,ds
+C\eps^2\de^2(C(\eta)+\eps)\frac{\de^{1/2}}{\eps}\int_0^{t}\|\langle v\rangle \pa^2_x g(s)\|_w^2\,ds,
\end{align}	
where $C(\eta)\geq0$ depends only on $\eta$ with $C(0)=0$.
\end{lemma}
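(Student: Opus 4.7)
The plan is to repeat the argument of Lemma~\ref{lem4.11} at $(\alpha,\beta)=(2,0)$, but to scale the whole inner product by $\eps^2\de^2$ in order to balance the $(\eps\de)^{-1}$ singularities generated when the two extra $x$--derivatives fall on the fluid background $(\rho,u,\theta,\phi)$, $M$, $\overline{M}$ and $\overline{G}$. Concretely, I apply $\pa_x^2$ to \eqref{eqg} and take the $L^2_{x,v}$ inner product with $\eps^2\de^2\,w^2(2,0)\,\pa_x^2 g$. The time--derivative piece produces
\begin{equation*}
\tfrac{\eps^2\de^2}{2}\tfrac{d}{dt}\|\pa_x^2 g\|_w^2 + \tfrac{\eps^2\de^2}{2}q_1q_2(1+t)^{-1-q_2}\|\langle v\rangle\pa_x^2 g\|_w^2,
\end{equation*}
which are precisely the first and third terms on the left of \eqref{w2g}, while the two singular transport contributions $-\eps^{-1}\pa_x(\pa_x^2 g)$ and $v_1\pa_x(\pa_x^2 g)$ both vanish against $w^2(2,0)\pa_x^2 g$ because $w$ is velocity--even and $x$--independent. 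The coercivity \eqref{coLDx} of $L_D$ at $\alpha=2$, multiplied by $\eps^2\de^2$, yields $c\,\eps\de\|\pa_x^2 g\|_{\sigma,w}^2$ plus a tail $C\eps\de\sum_{\alpha'<2}\|\pa_x^{\alpha'} g\|_{\sigma,w}^2$ that is absorbed by the lower--order dissipation already controlled in Lemma~\ref{lem4.11}.

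The collisional contributions on the right of \eqref{eqg} are handled in the same fashion as \eqref{Gagg}--\eqref{GaMf}: Lemma~\ref{lem3.1}, the estimate \eqref{controlpaGa}, the pointwise bound $|M-\overline M|\le C|(\widetilde\rho,\widetilde u,\widetilde\theta)|\mu$, and the a priori assumption \eqref{apriori} together give, after the $\eps^2\de^2$ prefactor, a global bound of the form $C(\eps+\de)\mathcal{D}(t)+C(\eta)\eps^2\de^2$. The coupling terms $(\pa_t-\eps^{-1}\pa_x+v_1\pa_x)\sqrt{\overline M}f/\sqrt\mu$ and the cross piece $-\eps^{-1}\pa_x\phi\,\pa_{v_1}(\sqrt{\overline M}f)/\sqrt\mu$ are treated as in \eqref{tranbarM}--\eqref{pabarM}, rewriting the apparent $\eps^{-1}\pa_x\sqrt{\overline M}$ through $\pa_x\bar\phi$ (controlled by $C(\eta)\mathcal{E}(t)$) and through $\pa_x\widetilde\phi$ (of size $C(\eta)\de\mathcal{D}(t)$ after the prefactor).

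The main obstacle is the self--field term $-\eps^{-1}\pa_x\phi\,\pa_{v_1}(\sqrt\mu g)/\sqrt\mu$ at second order, whose two delicate pieces after $\pa_x^2$ are $\eps^{-1}\pa_x\phi\,\pa_{v_1}\pa_x^2 g$ and $\eps^{-1}\pa_x^3\phi\,\pa_{v_1}g$. For the first I would repeat the interpolation of \eqref{I31}, moving $\pa_{v_1}$ onto $w^2(2,0)$ and splitting the bound between the Landau--type dissipation $(\eps\de)^{-1}\|\pa_x^2 g\|_{\sigma,w}^2$ and the weighted polynomial term $\frac{\de^{1/2}}{\eps}\|\langle v\rangle\pa_x^2 g\|_w^2$; after multiplication by $\eps^2\de^2$ these produce, respectively, an absorbable $\ka\,\eps\de\|\pa_x^2 g\|_{\sigma,w}^2$ and exactly the last term on the right of \eqref{w2g}, which will be treated at the final combination stage by the weight dissipation. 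For the cubic piece $\eps^{-1}\pa_x^3\phi\,\pa_{v_1}g$ I would use that $\eps\sqrt\de\,\|\pa_x^3\widetilde\phi\|$ is controlled by $\mathcal{D}(t)^{1/2}$ through the fluid functional, so that the $\eps^2\de^2$ prefactor leaves a contribution of order $\eps\sqrt\de\,\mathcal{D}(t)$ after Cauchy--Schwarz, whereas the $\pa_x^3\bar\phi$ portion, by \eqref{4.5A}, falls into $C(\eta)\mathcal{E}(t)$. Finally, collecting all contributions, integrating on $(0,t)$, and choosing $\ka>0$ small enough to absorb the $\ka\eps\de\|\pa_x^2 g\|_{\sigma,w}^2$ terms into the dissipation, one arrives at \eqref{w2g}.
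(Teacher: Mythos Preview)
Your proposal is correct and follows essentially the same approach as the paper: apply $\pa_x^2$ to \eqref{eqg}, take the inner product with $\eps^2\de^2\,w^2(2,0)\,\pa_x^2 g$, use the coercivity \eqref{coLDx} of $L_D$, repeat the interpolation \eqref{I31} for the self-field term, and control the $\pa_x^3\phi$ piece through the dissipation of $\pa_x^3\widetilde\phi$ contained in $\mathcal{D}(t)$. The only cosmetic difference is that the paper bundles the $L_D$ tail and the $\pa_x^3\bar\phi$ contribution directly into $C\eps\mathcal{D}(t)$ and $C(\eta)\eps^2\de^2$ respectively, rather than into $C(\eta)\mathcal{E}(t)$ as you suggest, but this does not affect the argument.
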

\begin{proof}
	Applying $\partial^{2}_x$ to \eqref{eqg} and taking the inner product with $\eps^2\de^2w^2(2,0)\partial^{2}_x g$, one has
	\begin{align}\label{2ipg}
		&\eps^2\de^2(\partial_t\partial^2_x g,w^2(2,0)\partial^2_x g)-\eps\de^2(\partial^2_x[\frac{\partial_x\phi\partial_{v_{1}}(\sqrt{\mu}g)}{\sqrt{\mu}}],w^2(2,0)\partial^2_x g)
		-\eps\de(\partial^2_x L_D g,w^2(2,0)\partial^2_x g)
        \notag\\&+\eps^2\de^2(\partial^2_x[\frac{(\pa_t+v_1\pa_x)\sqrt{\overline{M}}}{\sqrt{\mu}}f],w^2(2,0)\partial^2_x g)-\eps\de^2(\partial^2_x[\frac{\pa_x\sqrt{\overline{M}}}{\sqrt{\mu}}f],w^2(2,0)\partial^2_x g)\notag\\
		&-\eps\de^2(\partial^2_x[\frac{\partial_x\phi\partial_{v_{1}}(\sqrt{\overline{M}}f)}{\sqrt{\mu}}],w^2(2,0)\partial^2_x g)
		\notag\\
		=&\eps\de(\partial^2_x \Gamma(g+\frac{\sqrt{\overline{M}}}{\sqrt{\mu}}f+\frac{\overline{G}}{\sqrt{\mu}},g+\frac{\sqrt{\overline{M}}}{\sqrt{\mu}}f+\frac{\overline{G}}{\sqrt{\mu}}),w^2(2,0)\partial^2_x g)\notag\\
		&
		+\eps\de(\partial^2_x\Gamma(\frac{M-\overline{M}}{\sqrt{\mu}},\frac{\sqrt{\overline{M}}}{\sqrt{\mu}}f)+\partial^2_x\Gamma(\frac{\sqrt{\overline{M}}}{\sqrt{\mu}}f,\frac{M-\overline{M}}{\sqrt{\mu}}),w^2(2,0)\partial^2_x g).
	\end{align}
	It follows by \eqref{patw} that
	\begin{align}\label{2gtime}
		\eps^2\de^2(\partial_{t}\partial^2_x g,w^2(2,0)\pa^2_x g)=&\frac{1}{2}\eps^2\de^2\frac{d}{dt}(\pa^2_x g,w^2(2,0)\pa^2_x g)
		-\frac{1}{2}\eps^2\de^2(\pa^2_x g,\partial_{t}[w^2(2,0)]\pa^2_x g)
		\notag\\
		=&\frac{1}{2}\eps^2\de^2\frac{d}{dt}\|\pa^2_x g\|_{w}^2+\frac{1}{2}\eps^2\de^2q_{1}q_{2}(1+t)^{-(1+q_2)}\|\langle v\rangle \pa^2_x g\|^2_{w}.
	\end{align}
We write
\begin{align}\label{2phif1}
	\eps\de^2(\partial^2_x [\frac{\partial_x\phi\partial_{v_1}(\sqrt{\mu}g)}{\sqrt{\mu}}],w^2(2,0)\partial^2_x g)
	&=\eps\de^2(\partial^2_x[\partial_x\phi\partial_{v_1}g],w^2(2,0)\partial^2_x g)-\eps\de^2(\partial^2_x[\frac{v_1}{2} g\partial_x\phi],w^2(2,0)\partial^2_x g)\notag\\
	&=\eps^2\de^2\widetilde{I}_3-\eps^2\de^2\widetilde{I}_4.
\end{align}
 Then one has
\begin{align*}
	\eps^2\de^2\widetilde{I}_3=&\eps\de^2(\partial_x\phi\partial_{v_1}\partial^2_x g,w^2(2,0)\partial^2_x g)+2\eps\de^2(\partial^2_x\phi\partial_{v_1}\partial_x g,w^2(2,0)\partial^2_x g)+\eps\de^2(\partial^3_x\phi\partial_{v_1}g,w^2(2,0)\partial^2_x g).
\end{align*}
Similar calculation as in \eqref{I31} gives
\begin{align}\label{2I311}
	&|\eps\de^2(\partial_x\phi\partial_{v_1}\partial^2_x g,w^2(2,0)\partial^2_x g)|
	\leq C\ka\eps\de\| \partial^2_x g\|_{\sigma,w}^2
	+C_\ka\eps^2\de^2(C(\eta)+\eps)\frac{\de^{1/2}}{\eps}\|\langle v\rangle w(2,0)\partial^2_x g\|^2.	
\end{align}
By using \eqref{apriori}, \eqref{DefD}, \eqref{boundlandaunorm} and $\langle v\rangle^{2}w(2,0)\leq w(1,0)$, we obtain
\begin{align}\label{2I312}
	&|2\eps\de^2(\partial^2_x\phi\partial_{v_1}\partial_x g,w^2(2,0)\partial^2_x g)+\eps\de^2(\partial^3_x\phi\partial_{v_1}g,w^2(2,0)\partial^2_x g)|\notag\\
	\leq&	C\eps\de^2\|\partial^2_x\phi\|_{L^\infty}\|\langle v\rangle^2 \langle v\rangle^{-\frac{3}{2}} w(2,0)\partial_{v_1}\pa_x g\|\|\langle v\rangle^{-\frac{1}{2}}w(2,0)\partial^2_xg\|\notag\\
	&\qquad\qquad+C\eps\de^2\|\partial^3_x\phi\|\|\langle v\rangle^2 \langle v\rangle^{-\frac{3}{2}} w(2,0)\partial_{v_1} g\|_{L^\infty}\|\langle v\rangle^{-\frac{1}{2}}w(2,0)\partial^2_xg\|\notag\\
	\leq&	C\eps\de^2(C(\eta)+\frac{A^{1/4}\eps}{\eps^{1/4}}\frac{A^{1/4}\eps}{(\eps^3\de^2)^{1/4}}+\frac{A^{1/2}\eps^2}{(\eps^3\de^2)^{1/2}})\CD(t)\notag\\
	\leq&	C(\eps+\de)\CD(t).
\end{align}
The combination of the above three estimates gives
\begin{align}\label{2I3}
	\eps^2\de^2\widetilde{I}_3\leq C\ka\eps\de\| \partial^2_x g\|_{\sigma,w}^2
	+C_\ka\eps^2\de^2(C(\eta)+\eps)\frac{\de^{1/2}}{\eps}\|\langle v\rangle \partial^2_x g\|_w^2+C(\eps+\de)\CD(t).
\end{align}
  A direct calculation gives
\begin{align}\label{re2I4}
	\eps^2\de^2\widetilde{I}_4=	\eps\de^2(\frac{v_1}{2} \pa^2_xg\partial_x\phi+2\frac{v_1}{2} \pa_xg\partial^2_x\phi+\frac{v_1}{2} g\partial^3_x\phi,w^2(2,0)\partial^2_x g).
\end{align}
Similar arguments as in \eqref{2I311} and \eqref{2I312} show
\begin{align}\label{2I4}
	\eps^2\de^2|\widetilde{I}_4|\leq C\ka\eps\de\| \partial^2_x g\|_{\sigma,w}^2
	+C_\ka\eps^2\de^2(C(\eta)+\eps)\frac{\de^{1/2}}{\eps}\|\langle v\rangle \partial^2_x g\|_w^2+C(\eps+\de)\CD(t).
\end{align}
It holds by \eqref{2phif1}, \eqref{2I3} and \eqref{2I4} that
\begin{align}\label{2phig}
	&\eps\de^2(\partial^2_x[\frac{\partial_x\phi\partial_{v_{1}}(\sqrt{\mu}g)}{\sqrt{\mu}}],w^2(2,0)\partial^2_x g)
    \notag\\
    &\leq 
    C\ka\eps\de\| \partial^2_x g\|_{\sigma,w}^2+
    C(\eps+\de)\CD(t)
	+C_\ka\eps^2\de^2(C(\eta)+\eps)\frac{\de^{1/2}}{\eps}\|\langle v\rangle w(2,0)\partial^2_x g\|^2.
\end{align}
Using \eqref{coLDx}, the term involving the linear operator $L_D$ is controlled by
\begin{equation}\label{2Lg}
	-\eps\de(\partial^2_x L_D g,w^2(2,0)\partial^\alpha_x g)
	\geq c\eps\de\|\partial^2_x g\|^2_{\sigma,w}-C\eps\CD(t).
\end{equation}
We then use the same approach in \eqref{tranbarM} to 
bound the fourth term on the left hand side of \eqref{2ipg} as
\begin{align}\label{2tranbarM}
\eps^2\de^2|(\partial^2_x[\frac{(\pa_t+v_1\pa_x)\sqrt{\overline{M}}}{\sqrt{\mu}}f],w^2(2,0)\partial^2_x g)|\leq C(\eta)\eps\CD(t).
\end{align}
Likewise, it holds that
\begin{align}\label{barMfg}
	&|\eps\de^2(\partial^2_x[\frac{\pa_x\sqrt{\overline{M}}}{\sqrt{\mu}}f],w^2(2,0)\partial^2_x g)|\notag\\
	\leq& C\eps\de^2\sum_{\alpha'\leq 2}\{\|\langle v\rangle^6 w^2(2,0)\frac{\pa^{2-\al'}_x\pa_x\sqrt{\overline{M}}}{\sqrt{\mu}}\|_{L^\infty_xL^\infty_v}\|\langle v\rangle^{-3}\pa^{\al'}_xf\|\|\langle v\rangle^{-3}\pa^2_x g\|
	\leq C\de\CD(t),
\end{align}
and 
\begin{align}\label{phifg}
	&|\eps\de^2(\partial^2_x[\frac{\partial_x\phi\partial_{v_{1}}(\sqrt{\overline{M}}f)}{\sqrt{\mu}}],w^2(2,0)\partial^2_x g)|\notag\\
	\leq& C\eps\de^2\sum_{0<\alpha'\leq 2}\{\|\langle v\rangle^6 w^2(2,0)\mu^{1/16}\pa^{2-\al'}_x\pa_x\phi\|_{L^\infty_xL^\infty_v}\|\langle v\rangle^{-3}\mu^{-1/16}\pa^{\al'}_x\frac{\partial_{v_{1}}( \sqrt{\overline{M}}f)}{\sqrt{\mu}}\|\|\langle v\rangle^{-3}\pa^2_x g\|\notag\\
	&+C\eps\de^2\{\|\langle v\rangle^6 w^2(2,0)\mu^{1/16}\pa^{2}_x\pa_x\phi\|_{L^2_xL^\infty_v}\|\langle v\rangle^{-3}\mu^{-1/16}\pa^{\al'}_x\frac{\partial_{v_{1}}( \sqrt{\overline{M}}f)}{\sqrt{\mu}}\|_{L^\infty_xL^2_v}\|\langle v\rangle^{-3}\pa^2_x g\|\notag\\
	\leq& C\de\CD(t).
\end{align}
Furthermore, we use the argument as in \eqref{Gagg} to bound the first term on the right hand side of \eqref{2ipg} that
\begin{align}\label{2Gagg}
	\eps\de|(\partial^2_x \Gamma(g+\frac{\sqrt{\overline{M}}}{\sqrt{\mu}}f+\frac{\overline{G}}{\sqrt{\mu}},g+\frac{\sqrt{\overline{M}}}{\sqrt{\mu}}f+\frac{\overline{G}}{\sqrt{\mu}}),w^2(2,0)\partial^2_x g)|
	\leq C\eps\CD(t)+C(\eta)\eps^2\de^2.
\end{align}
Similar to \eqref{GaMf}, one has
\begin{align}\label{2GaMf}
\eps\de|(\partial^2_x\Gamma(\frac{M-\overline{M}}{\sqrt{\mu}},\frac{\sqrt{\overline{M}}}{\sqrt{\mu}}f)+\partial^{\alpha}_\be\Gamma(\frac{\sqrt{\overline{M}}}{\sqrt{\mu}}f,\frac{M-\overline{M}}{\sqrt{\mu}}),w^2(2,0)\partial^2_x g)|
\leq C\eps\CD(t).
\end{align}
Therefore, by collecting \eqref{2ipg}, \eqref{2gtime}, \eqref{2phig}, \eqref{2Lg}, \eqref{2tranbarM}, \eqref{barMfg}, \eqref{phifg}, \eqref{2Gagg} and \eqref{2GaMf}, then taking integral in $t$ of the resulting equation, we get the desired estimate \eqref{w2g}. This completes the proof of Lemma \ref{lehighg}.
\end{proof}

Note that if we apply $\pa^2_x$ to the equation of $f$ in \eqref{eqf} to get the second order energy, the $x$ derivative will increase to $\pa^3_x$ due to the terms involving $\pa_x$ on the right hand side of \eqref{eqf}. The same problem occurs in the fluid equation \eqref{perturbeq}. Thus, in order to get the highest order energy for such quantities, we should make use of the original equation \eqref{reVPL} where we first rewrite
\begin{align}
\label{4.133A}
	&\partial_tF-\frac{1}{\eps}\partial_xF+v_{1}\partial_xF
	-\frac{1}{\eps}\partial_x\phi\partial_{v_{1}}F
    -\frac{1}{\eps\de}L_{\overline{M}}(\sqrt{\overline{M}}f)\notag\\
	=&\frac{1}{\eps\de}
	\{Q(\sqrt{\overline{M}}f,M-\overline{M})+
	Q(M-\overline{M},\sqrt{\overline{M}}f)+Q(G,G)+Q(M,\overline{G}+\sqrt{\mu}g)+Q(\overline{G}+\sqrt{\mu}g,M)\},
\end{align}
where $L_{\overline{M}}(\sqrt{\overline{M}}f)=Q(\sqrt{\overline{M}}f,\overline{M})+ 	Q(\overline{M},\sqrt{\overline{M}}f)$. Then subtracting $\sqrt{\mu}\times\eqref{eqg}$ from \eqref{4.133A}, we have
\begin{align*}
	&\partial_tM+\sqrt{\overline{M}}\partial_tf-\frac{1}{\eps}\partial_xM-\frac{1}{\eps}\sqrt{\overline{M}}\partial_xf+v_{1}\partial_xM+\sqrt{\overline{M}}v_{1}\partial_xf
	-\frac{1}{\eps}\partial_x\phi\partial_{v_{1}}(M+\overline{G})\notag\\
	=&\frac{1}{\eps\de}
	L_{\overline{M}}\sqrt{\overline{M}}f-(\pa_t-\frac{1}{\eps}\pa_x+v_1\pa_x)\overline{G}+\frac{1}{\eps\de}L_M\overline{G}+\frac{1}{\eps\de}\sqrt{\mu}L_Bg,
\end{align*}
which, multiplying $1/\sqrt{\overline{M}}$, gives
\begin{align}\label{eqMf}
	&\partial_t(\frac{M}{\sqrt{\overline{M}}}+\frac{\overline{G}}{\sqrt{\overline{M}}}+f)-\frac{1}{\eps}\partial_x(\frac{M}{\sqrt{\overline{M}}}+\frac{\overline{G}}{\sqrt{\overline{M}}}+f)+v_{1}\partial_x(\frac{M}{\sqrt{\overline{M}}}+\frac{\overline{G}}{\sqrt{\overline{M}}}+f)
-\frac{1}{\eps}\frac{\partial_x\phi\partial_{v_{1}}(M+\overline{G})}{\sqrt{\overline{M}}}\notag\\
=&\frac{1}{\eps\de}
\CL_{\overline{M}}f+M(\pa_t-\frac{1}{\eps}\pa_x+v_1\pa_x)(\sqrt{\overline{M}}^{-1})+(\pa_t-\frac{1}{\eps}\pa_x+v_1\pa_x)(\frac{1}{\sqrt{\overline{M}}})\overline{G}
+\frac{1}{\eps\de}\frac{L_M\overline{G}}{\sqrt{\overline{M}}}+\frac{1}{\eps\de}\frac{\sqrt{\mu}L_Bg}{\sqrt{\overline{M}}}.
\end{align}
We use the above equation to obtain the highest order energy estimates for macroscopic quantities and $f$.
Due to similar reason as in \eqref{wLf}, it is also necessary to bound $F$ with and without the weight function. We start with the weighted estimate. 
\begin{lemma}\label{lem4.16}
Under the a priori assumption \eqref{apriori}, it holds that
	\begin{align}\label{2ndwdiss}
&\eps^2\de^2\|\pa^2_xf(t)\|^2_W+	\eps\de\int^t_0\|\pa^2_xf(s)\|^2_{\si,W}ds\notag\\
\leq	&C\CE(0)+C(\eta)\eps^2\de^2+C\eps^5
+C\eps^2\de^2\|\pa^2_x(\widetilde{\rho},\widetilde{u},\widetilde{\theta})(t)\|^2+C\ka_2\eps\de\int^t_0\|\pa^2_x(\widetilde{\rho},\widetilde{u},\widetilde{\theta})(s)\|^2\,ds
\notag\\
&+C\frac{1}{\ka_2}\eps\de\int^t_0\|(\pa^2_xg,\partial^{2}_x\mathbf{P}_1 f)(s)\|^2_{\si}\,ds
+(C\ka+C_\ka(\de+\eps+\frac{\de^2}{\eps}))\int^t_0\CD(s)\,ds+C_\ka  C(\eta)t(\eps\de^2+\de^3),
	\end{align}	
    for any $0<\ka<1$ and $0<\ka_2<1$.
\end{lemma}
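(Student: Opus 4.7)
The plan is to apply $\pa^2_x$ to the equation \eqref{eqMf} and take the inner product with $\eps^2\de^2 W^2(2,0)\pa^2_x f$, using $W(2,0)=\langle v\rangle^{2(l-2)}$. Writing $H:=\frac{M}{\sqrt{\overline{M}}}+\frac{\overline{G}}{\sqrt{\overline{M}}}+f$, the equation reads
\begin{equation*}
\pa_t H-\tfrac{1}{\eps}\pa_x H+v_1\pa_x H-\tfrac{1}{\eps}\tfrac{\pa_x\phi\pa_{v_1}(M+\overline{G})}{\sqrt{\overline{M}}}=\tfrac{1}{\eps\de}\CL_{\overline{M}} f+\mathrm{RHS},
\end{equation*}
where $\mathrm{RHS}$ collects the four terms on the right of \eqref{eqMf}. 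Since $\pa^2_x f$ and $\pa^2_x H$ agree in the part that will produce the good dissipation (because $\pa_t\pa^2_x(M/\sqrt{\overline{M}}+\overline{G}/\sqrt{\overline{M}})$ gives only fluid-type corrections after pairing with $\pa^2_x f$), the transport operator $\pa_t+v_1\pa_x-\eps^{-1}\pa_x$ applied to $\pa^2_x f$ yields only $\tfrac{1}{2}\eps^2\de^2\tfrac{d}{dt}\|\pa^2_x f\|^2_W$ after integration in $x$ (no weight derivative since $W$ is time-independent), while the linear operator contributes $-\eps\de(\CL_{\overline{M}}\pa^2_x f,W^2(2,0)\pa^2_x f)\geq c\eps\de\|\pa^2_x f\|^2_{\sigma,W}-C(\eta)\eps\de\sum_{\al'<2}\|\pa^{\al'}_x f\|^2_{\sigma,W}-C\eps\de\|\pa^2_x f\|^2_{\sigma}$ by \eqref{controlpaLM}. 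The last term is absorbed via a $\ka/\ka_2$ split against $\|\pa^2_x\mathbf{P}_1 f\|^2_\sigma+\|\pa^2_x g\|^2_\sigma$ as in \eqref{4.114A}.

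The main work goes into controlling the crossing terms produced when $\pa^2_x$ hits $M/\sqrt{\overline{M}}$ and $\overline{G}/\sqrt{\overline{M}}$ inside the transport part. First I would treat the piece $\eps^2\de^2\bigl(\pa_t\pa^2_x\tfrac{M}{\sqrt{\overline{M}}},W^2(2,0)\pa^2_x f\bigr)$ by integration by parts in $t$, producing a boundary term of the form $\eps^2\de^2\tfrac{d}{dt}(\pa^2_x\tfrac{M}{\sqrt{\overline{M}}},W^2(2,0)\pa^2_x f)$ which after time integration and Cauchy--Schwarz gives rise to $C\eps^2\de^2\|\pa^2_x(\widetilde\rho,\widetilde u,\widetilde\theta)(t)\|^2$ plus initial data and $\eps^2\de^2\|\pa^2_x f(t)\|^2_W$ absorbed to the left. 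The remainder produces $\pa_t\pa^{\al'}_x(\rho,u,\theta)$ for $\al'\leq 2$, which is controlled by \eqref{estpatpax} with the singular $\eps^{-1}$ balanced by the coefficient $\eps^2\de^2$. Second, for the transport pieces $\eps^2\de^2\bigl((v_1\pa_x-\tfrac{1}{\eps}\pa_x)\pa^2_x\tfrac{M}{\sqrt{\overline{M}}},W^2\pa^2_x f\bigr)$, the crucial cancellation is again done along $\pa_t-\tfrac{1}{\eps}\pa_x$ combined with the $\pa_t$ piece above, exactly as in the treatment preceding \eqref{I1} and \eqref{pa2phipau}: one groups the singular $-\eps\cdot\eps\de\pa^3_x$ pieces together with the $\pa_t$ pieces to transfer the $\eps^{-1}$ onto $\pa^2_x\widetilde\phi$ via the Poisson equation. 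This produces, by Cauchy--Schwarz, the bound $C\ka_2\eps\de\|\pa^2_x(\widetilde\rho,\widetilde u,\widetilde\theta)\|^2+C\ka_2^{-1}\eps\de\|(\pa^2_x g,\pa^2_x\mathbf{P}_1 f)\|^2_\sigma$. The electric-field contribution $\tfrac{1}{\eps}\pa^2_x[\pa_x\phi\pa_{v_1}(M+\overline{G})/\sqrt{\overline{M}}]$ is handled similarly, using \eqref{patphi}--\eqref{patpaxphi} and the Sobolev interpolation with $\CD(t)$ already exploited in \eqref{pa2phipau4}--\eqref{pa2phipau42}, giving the $(\de+\eps+\de^2/\eps)\CD(t)$ contribution.

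The $\overline{G}$-crossings are estimated exactly as in Lemma \ref{lem3.1} and yield $C(\eta)\eps\de^2+C(\eta)\de^3$ after time integration, noting that the overall coefficient $\eps^2\de^2$ combined with the $\eps\de$ smallness built into $\overline{G}$ through \eqref{boundbarG0}--\eqref{boundbarG} produces the stated $C_\ka C(\eta)t(\eps\de^2+\de^3)$. The source term $(\eps\de)^{-1}\sqrt{\mu}L_B g/\sqrt{\overline{M}}$ is bounded via the boundedness of $L_B$ as in \eqref{paLB}, contributing $\ka\eps\de\|\pa^2_x f\|^2_{\sigma,W}+C_\ka \eps\de \|\pa^2_x g\|^2_{\sigma,w}$ after multiplication by $\eps^2\de^2\cdot (\eps\de)^{-1}$. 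Collecting all bounds, choosing $\ka$ small to absorb $\|\pa^2_x f\|^2_{\sigma,W}$ to the left, and integrating from $0$ to $t$ produces precisely \eqref{2ndwdiss}.

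The main obstacle, and the reason the coefficient $\eps^2\de^2$ is inserted in front of the highest-order energy, is the necessity to balance the singular $(\eps\de)^{-1}$ factor coming from $\CL_{\overline{M}}$ against the fluid-dissipation scale $\eps\de\|\pa^2_x(\widetilde\rho,\widetilde u,\widetilde\theta)\|^2$ available from Lemma \ref{le1stfluid}: without the $\eps^2\de^2$ prefactor, the crossing $(\CL_{\overline{M}}\pa^2_x f,W^2\pa^2_x\tfrac{M}{\sqrt{\overline{M}}})$ would produce an uncontrollable $(\eps\de)^{-2}$-singular factor in front of the dissipation of $\pa^2_x(\widetilde\rho,\widetilde u,\widetilde\theta)$. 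The careful tracking of $\pa_t-\eps^{-1}\pa_x$ cancellations along the shift direction, combined with the algebraic choice of energy scaling, is precisely what makes the estimate closable.
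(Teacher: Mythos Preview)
There is a genuine gap in your approach. You pair with $\eps^2\de^2 W^2(2,0)\pa^2_x f$, whereas the paper pairs with $\eps^2\de^2 W^2(2,0)\pa^2_x H$ where $H=\frac{M}{\sqrt{\overline{M}}}+\frac{\overline{G}}{\sqrt{\overline{M}}}+f$. This difference is essential. With the paper's choice, the transport block $(\pa_t+v_1\pa_x-\eps^{-1}\pa_x)\pa^2_x H$ tested against $W^2\pa^2_x H$ yields exactly $\tfrac12\tfrac{d}{dt}\|\pa^2_x H\|^2_W$ by symmetry; all third-order spatial derivatives disappear by integration by parts in $x$. One then extracts $\eps^2\de^2\|\pa^2_x f\|^2_W$ from $\eps^2\de^2\|\pa^2_x H\|^2_W$ using $\|\pa^2_x(M/\sqrt{\overline{M}})\|^2_W\le C\|\pa^2_x(\widetilde\rho,\widetilde u,\widetilde\theta)\|^2+C(\eta)+CA\eps^3/\de$, which is exactly the origin of the term $C\eps^2\de^2\|\pa^2_x(\widetilde\rho,\widetilde u,\widetilde\theta)(t)\|^2$ in \eqref{2ndwdiss}.

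Your pairing destroys this symmetry and produces the crossing term
\[
\eps^2\de^2\Big(v_1\,\pa^3_x\tfrac{M}{\sqrt{\overline{M}}},\,W^2(2,0)\pa^2_x f\Big),
\]
which you do not address. This term contains $\pa^3_x(\rho,u,\theta)$, and neither $\pa^3_x(\widetilde\rho,\widetilde u,\widetilde\theta)$ nor $\pa^3_x f$ is available in $\CE$ or $\CD$; integration by parts in $x$ only trades one for the other. Your $\pa_t-\eps^{-1}\pa_x$ trick does not help here: even after writing $(\pa_t-\eps^{-1}\pa_x)M$ via the macroscopic system to kill the $\eps^{-1}$ factor, you still need $\pa^2_x$ of the result, e.g.\ $\pa^2_x\pa_x(\rho u_1)$, which is again third order. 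Moreover, your integration-by-parts-in-$t$ step is stated incorrectly: the remainder after moving $\pa_t$ off $\pa^2_x(M/\sqrt{\overline{M}})$ is $-\big(\pa^2_x(M/\sqrt{\overline{M}}),W^2\pa_t\pa^2_x f\big)$, involving $\pa_t\pa^2_x f$, not $\pa_t\pa^{\al'}_x(\rho,u,\theta)$ as you claim; substituting the equation for $\pa_t f$ reintroduces the $v_1\pa_x$ crossing and is circular. The fix is precisely the paper's: test against $\pa^2_x H$ so the transport part closes by symmetry, and pay the price of carrying $\|\pa^2_x(\widetilde\rho,\widetilde u,\widetilde\theta)\|^2$ as an additional term on the right.
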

\begin{proof}
Applying $\pa^2_x$ to both side of \eqref{eqMf} and taking the inner product with $\eps^2\de^2 W^2(2,0)\pa^2_x(\frac{M}{\sqrt{\overline{M}}}+\frac{\overline{G}}{\sqrt{\overline{M}}}+f)$, it holds that
\begin{align}\label{ipwF}
	&\eps^2\de^2\frac{1}{2}\frac{d}{dt}\|\pa^2_x(\frac{M}{\sqrt{\overline{M}}}+\frac{\overline{G}}{\sqrt{\overline{M}}}+f)\|_W^2
	-\eps\de^2(\pa^2_x[\partial_x\phi\partial_{v_{1}}(\frac{M+\overline{G}}{\sqrt{\overline{M}}})],W^2(2,0)\pa^2_x(\frac{M}{\sqrt{\overline{M}}}+\frac{\overline{G}}{\sqrt{\overline{M}}}+f))
	\notag\\
	=&\eps\de(\pa^2_x\CL_{\overline{M}} f,W^2(2,0)\pa^2_x(\frac{M}{\sqrt{\overline{M}}}+\frac{\overline{G}}{\sqrt{\overline{M}}}+f))
    \notag\\
    &+\eps^2\de^2(\pa^2_x(M(\pa_t-\frac{1}{\eps}\pa_x+v_1\pa_x)(\sqrt{\overline{M}}^{-1})),W^2(2,0)\pa^2_x(\frac{M}{\sqrt{\overline{M}}}+\frac{\overline{G}}{\sqrt{\overline{M}}}+f))
    \notag\\
	&+\eps^2\de^2(\pa^2_x((\pa_t-\frac{1}{\eps}\pa_x+v_1\pa_x)(\frac{1}{\sqrt{\overline{M}}})\overline{G}),W^2(2,0)\pa^2_x(\frac{M}{\sqrt{\overline{M}}}+\frac{\overline{G}}{\sqrt{\overline{M}}}+f))
    \notag\\
    &+\eps\de(\pa^2_x(\frac{L_M\overline{G}}{\sqrt{\overline{M}}}),W^2(2,0)\pa^2_x(\frac{M}{\sqrt{\overline{M}}}+\frac{\overline{G}}{\sqrt{\overline{M}}}+f))
    \notag\\
	&+\eps\de(\pa^2_x(\frac{\sqrt{\mu}L_Bg}{\sqrt{\overline{M}}}),W^2(2,0)\pa^2_x(\frac{M}{\sqrt{\overline{M}}}+\frac{\overline{G}}{\sqrt{\overline{M}}}+f))
    \notag\\
	&-\eps\de^2(\pa^2_x[\partial_x\phi(M+\overline{G})\partial_{v_{1}}\sqrt{\overline{M}}^{-1}],W^2(2,0)\pa^2_x(\frac{M}{\sqrt{\overline{M}}}+\frac{\overline{G}}{\sqrt{\overline{M}}}+f)).
\end{align}
For the electric field term, a direct calculation shows
\begin{align}\label{phi2}
	\eps&\de^2(\pa^2_x[\partial_x\phi\partial_{v_{1}}(\frac{M+\overline{G}}{\sqrt{\overline{M}}})],W^2(2,0)\pa^2_x(\frac{M}{\sqrt{\overline{M}}}+\frac{\overline{G}}{\sqrt{\overline{M}}}+f))\notag\\
	=&\eps\de^2(\partial^3_x\phi\partial_{v_{1}}(\frac{M}{\sqrt{\overline{M}}}),W^2(2,0)\pa^2_x(\frac{M}{\sqrt{\overline{M}}}+\frac{\overline{G}}{\sqrt{\overline{M}}}+f))
    \notag\\
    &+2\eps\de^2(\partial^2_x\phi\partial_{v_{1}}\pa_x(\frac{M}{\sqrt{\overline{M}}}),W^2(2,0)\pa^2_x(\frac{M}{\sqrt{\overline{M}}}+\frac{\overline{G}}{\sqrt{\overline{M}}}+f))\notag\\ &+\eps\de^2(\partial_x\phi\partial_{v_{1}}\pa^2_x(\frac{M}{\sqrt{\overline{M}}}),W^2(2,0)\pa^2_x(\frac{M}{\sqrt{\overline{M}}}+\frac{\overline{G}}{\sqrt{\overline{M}}}+f))
    \notag\\
    &+\eps\de^2(\pa^2_x[\partial_x\phi\partial_{v_{1}}(\frac{\overline{G}}{\sqrt{\overline{M}}})],W^2(2,0)\pa^2_x(\frac{M}{\sqrt{\overline{M}}}+\frac{\overline{G}}{\sqrt{\overline{M}}}+f)).
\end{align}
Notice that
\begin{align}\label{paM}
	\partial^2_xM=&M\big(\frac{\pa^2_x\rho}{\rho}+\frac{(v-u)\cdot\pa^2_xu}{K\theta}+(\frac{|v-u|^{2}}{2K\theta}-\frac{3}{2})\frac{\pa^2_x\theta}{\theta} \big)
	\nonumber\\
	&+\pa_x(M\frac{1}{\rho})\pa_x\rho+\pa_x(M\frac{v-u}{K\theta})\cdot\pa_xu+\pa_x(M\frac{|v-u|^{2}}{2K\theta^{2}}-M\frac{3}{2\theta})\pa_x\theta.
\end{align}
Then for any $k>0$, it follows from \eqref{apriori}, \eqref{4.5A}, \eqref{boundkdv}, \eqref{smalleps} and \eqref{DefD} and \eqref{DefE} that
\begin{align}\label{wM}
	\|\langle v\rangle^k\frac{\partial^2_xM}{\sqrt{\overline{M}}}\|^2+\|\langle v\rangle^k\frac{\partial^2_xM}{\sqrt{\overline{M}}}\|^2_{\si,w}\leq& C\|\partial^2_x(\rho,u,\theta)\|^2
	+C\|\partial_x(\rho,u,\theta)\|_{L^{\infty}}^2\|\partial_x(\rho,u,\theta)\|^2\notag\\
	\leq& C\|\pa^2_x(\widetilde{\rho},\widetilde{u},\widetilde{\theta})\|^2+C(\eta)+C(\eta)(\|\partial_x(\widetilde{\rho},\widetilde{u},\widetilde{\theta})\|^2+
    \|\partial_x(\widetilde{\rho},\widetilde{u},\widetilde{\theta})\|_{L^{\infty}}^2)
    \notag\\
	&+C\|\partial^2_x(\widetilde{\rho},\widetilde{u},\widetilde{\theta})\|\|\partial_x(\widetilde{\rho},\widetilde{u},\widetilde{\theta})\|^3\notag\\
	\leq& C(\eta)+C\frac{1}{\eps\de}\CD(t).
\end{align}
We can now bound the right hand side of \eqref{phi2} by
\begin{align}\label{phiMMf}
	%&\Big|\eps\de^2(\partial^3_x\phi\partial_{v_{1}}(\frac{M}{\sqrt{\overline{M}}}),W^2(2,0)\pa^2_x(\frac{M}{\sqrt{\overline{M}}}+f))+2\eps\de^2(\partial^2_x\phi\partial_{v_{1}}\pa_x(\frac{M}{\sqrt{\overline{M}}}),W^2(2,0)\pa^2_x(\frac{M}{\sqrt{\overline{M}}}+f))\notag\\ &+\eps\de^2(\partial_x\phi\partial_{v_{1}}\pa^2_x(\frac{M}{\sqrt{\overline{M}}}),W^2(2,0)\pa^2_x(\frac{M}{\sqrt{\overline{M}}}+f))+\eps\de^2(\pa^2_x[\partial_x\phi\partial_{v_{1}}\frac{\overline{G}}{\sqrt{\overline{M}}}],W^2(2,0)\pa^2_x(\frac{M}{\sqrt{\overline{M}}}+f))\Big|\notag\\
	&C\eps\de^2(\|\partial^3_x\phi\|\|\partial_{v_{1}}(\frac{M}{\sqrt{\overline{M}}})W^2(2,0)\langle v\rangle^\frac{1}{2}\|_{L^\infty_xL^\infty_v}+\|\partial^2_x\phi\|\|\partial_{v_{1}}\pa_x(\frac{M}{\sqrt{\overline{M}}})W^2(2,0)\langle v\rangle^\frac{1}{2}\|_{L^\infty_xL^\infty_v}\notag\\
	&\qquad+\|\partial_x\phi\|_{L^\infty}\|\partial_{v_{1}}\pa^2_x(\frac{M}{\sqrt{\overline{M}}})W^2(2,0)\langle v\rangle^\frac{1}{2}\|_{L^2_xL^\infty_v}+\|\pa^2_x[\partial_x\phi\partial_{v_{1}}(\frac{\overline{G}}{\sqrt{\overline{M}}})]W^2(2,0)\langle v\rangle^{\frac{1}{2}}\|)
    \notag\\
    &\times\|\langle v\rangle^{-\frac{1}{2}}\pa^2_x(\frac{M}{\sqrt{\overline{M}}}+\frac{\overline{G}}{\sqrt{\overline{M}}}+f)\|\notag\\
	\leq&C(\ka+C_\ka(\eps+\de+\frac{\de^2}{\eps}))\CD(t)+C_\ka C(\eta)(\eps\de^2+\de^3),
\end{align}
where we have used \eqref{wM}, \eqref{DefE}, \eqref{DefD}, \eqref{boundlandaunorm}, \eqref{apriori} and the fact that
\begin{align*}
	&\eps\de^2\|\partial^3_x\phi\|\|\partial_{v_{1}}\frac{M}{\sqrt{\overline{M}}}W^2(2,0)\langle v\rangle^\frac{1}{2}\|_{L^\infty_xL^\infty_v}\|\langle v\rangle^{-\frac{1}{2}}\pa^2_x(\frac{M}{\sqrt{\overline{M}}}+\frac{\overline{G}}{\sqrt{\overline{M}}}+f)\|
    \notag\\
    &\leq C\eps\de^2\|\partial^3_x\phi\|\|\langle v\rangle^{-\frac{1}{2}}\pa^2_x(\frac{M}{\sqrt{\overline{M}}}+\frac{\overline{G}}{\sqrt{\overline{M}}}+f)\|
    \notag\\
	&\leq  C\eps\de^2(C(\eta)+\|\pa^3_x\widetilde{\phi}\|)
    (\|\langle v\rangle^{-\frac{1}{2}}\pa^2_x(\frac{M}{\sqrt{\overline{M}}})\|+\|\langle v\rangle^{-\frac{1}{2}}\pa^2_x(\frac{\overline{G}}{\sqrt{\overline{M}}})\|+
    \|f\|_{\sigma})
    \notag\\
	&\leq C\ka\CD(t)+\ka\eps^2\de\|\pa^3_x\widetilde{\phi}\|^2
    +C_\ka\frac{\de^2}{\eps}\CD(t)
    +C_\ka C(\eta)
    (\de^3+\eps\de^2)\notag\\
	&\leq C(\ka+C_\ka\frac{\de^2}{\eps})\CD(t)+C_\ka C(\eta)(\eps\de^2+\de^3),
\end{align*}
and other terms can be bounded similarly as above since they have lower order. Therefore, one gets
\begin{align}\label{highphi}
	&|\eps\de^2(\pa^2_x[\partial_x\phi\partial_{v_{1}}(\frac{M+\overline{G}}{\sqrt{\overline{M}}})],W^2(2,0)\pa^2_x(\frac{M}{\sqrt{\overline{M}}}+\frac{\overline{G}}{\sqrt{\overline{M}}}+f))|\notag\\
	\leq&C(\ka+C_\ka(\eps+\de+\frac{\de^2}{\eps}))\CD(t)+C_\ka C(\eta)(\eps\de^2+\de^3).
\end{align}
To estimate the linear operator $\CL_{\overline{M}}$, one has
\begin{align}\label{LMfw}
	&-\eps\de(\pa^2_x\CL_{\overline{M}} f,W^2(2,0)\pa^2_x(\frac{M}{\sqrt{\overline{M}}}+\frac{\overline{G}}{\sqrt{\overline{M}}}+f))\notag\\
	\geq& c\eps\de\|\partial^2_x f\|^2_{\sigma,W}-C\eps\de\|\partial^2_x f\|^2_{\sigma}-C\eps\CD(t)-C(\eta)\eps^2\de^2-|\eps\de(\pa^2_x\CL_{\overline{M}} f,W^2(2,0)\pa^2_x(\frac{M}{\sqrt{\overline{M}}}))|.
\end{align}
For the last term above, integration by parts gives
\begin{align}\label{LMfw1}
	&|\eps\de(\pa^2_x\CL_{\overline{M}} f,W^2(2,0)\pa^2_x\frac{M}{\sqrt{\overline{M}}})|\notag\\
	\leq& C\eps\de\{|(\pa^2_x\CL_{\overline{M}} f,W^2(2,0)\frac{\pa^2_xM}{\sqrt{\overline{M}}})|+|(\pa_x\CL_{\overline{M}} f,W^2(2,0)\big(2\pa_x[\pa_xM \pa_x\overline{M}^{-1/2}]+\pa_x[M \pa^2_x\overline{M}^{-1/2}]\big))|\}\notag\\
	\leq&C\eps\de|(\pa^2_x\CL_{\overline{M}} f,W^2(2,0)\frac{\pa^2_xM}{\sqrt{\overline{M}}})|+C\eps\CD(t)+C(\eta)\eps^2\de^2.
\end{align}
Using \eqref{paM}, for any $0<\ka_2<1$, one gets
\begin{align}\label{LMfw2}
	&\eps\de\{|(\pa^2_x\CL_{\overline{M}} f,W^2(2,0)\frac{\pa^2_xM}{\sqrt{\overline{M}}})|\notag\\
	\leq& \eps\de|(\pa^2_x\CL_{\overline{M}} f,W^2(2,0)\frac{M}{\sqrt{\overline{M}}}\big(\frac{\pa^2_x\widetilde{\rho}}{\rho}+\frac{(v-u)\cdot\pa^2_x\widetilde{u}}{K\theta}+(\frac{|v-u|^{2}}{2K\theta}-\frac{3}{2})\frac{\pa^2_x\widetilde{\theta}}{\theta} \big))|\notag\\
	&+\eps\de|(\pa_x\CL_{\overline{M}} f,W^2(2,0)\pa_x[\frac{M}{\sqrt{\overline{M}}}\big(\frac{\pa^2_x\bar{\rho}}{\rho}+\frac{(v-u)\cdot\pa^2_x\bar{u}}{K\theta}+(\frac{|v-u|^{2}}{2K\theta}-\frac{3}{2})\frac{\pa^2_x\bar{\theta}}{\theta} \big)])|\notag\\
	&+\eps\de|(\pa_x\CL_{\overline{M}} f,W^2(2,0)\pa_x\frac{\pa_x(M\frac{1}{\rho})\pa_x\rho+\pa_x(M\frac{v-u}{K\theta})\cdot\pa_xu+\pa_x(M\frac{|v-u|^{2}}{2K\theta^{2}}-M\frac{3}{2\theta})\pa_x\theta}{\sqrt{\overline{M}}})|\notag\\
	\leq& \eps\de|(\CL_{\overline{M}}\pa^2_x f,W^2(2,0)\frac{M}{\sqrt{\overline{M}}}\big(\frac{\pa^2_x\widetilde{\rho}}{\rho}+\frac{(v-u)\cdot\pa^2_x\widetilde{u}}{K\theta}+(\frac{|v-u|^{2}}{2K\theta}-\frac{3}{2})\frac{\pa^2_x\widetilde{\theta}}{\theta} \big))|+C\eps\CD(t)+C(\eta)\eps^2\de^2\notag\\
	\leq& C\eps\de(\ka_2\|\pa^2_x(\widetilde{\rho},\widetilde{u},\widetilde{\theta})\|^2+\frac{1}{\ka_2}\|\pa^2_xf\|^2_{\si})+C\eps\CD(t)+C(\eta)\eps^2\de^2.
\end{align}
It follows from \eqref{LMfw}, \eqref{LMfw1} and \eqref{LMfw2} that
\begin{align}\label{LMffw}
	&-\eps\de(\pa^2_x\CL_{\overline{M}} f,W^2(2,0)\pa^2_x(\frac{M}{\sqrt{\overline{M}}}+\frac{\overline{G}}{\sqrt{\overline{M}}}+f))\notag\\
	\geq& c\eps\de\|\partial^2_x f\|^2_{\sigma,W}-C\eps\de(\ka_2\|\pa^2_x(\widetilde{\rho},\widetilde{u},\widetilde{\theta})\|^2+\frac{1}{\ka_2}\|\pa^2_xf\|^2_{\si})-C\eps\CD(t)-C(\eta)\eps^2\de^2.
\end{align}
The second and third terms on the right hand side of \eqref{ipwF} can be directly bounded by
\begin{align}\label{tranbarMMw}
	&\eps^2\de^2|(\pa^2_x(M(\pa_t-\frac{1}{\eps}\pa_x+v_1\pa_x)(\sqrt{\overline{M}}^{-1})),W^2(2,0)\pa^2_x(\frac{M}{\sqrt{\overline{M}}}+\frac{\overline{G}}{\sqrt{\overline{M}}}+f))|\notag\\
	&+\eps^2\de^2|(\pa^2_x((\pa_t-\frac{1}{\eps}\pa_x+v_1\pa_x)(\frac{1}{\sqrt{\overline{M}}})\overline{G}),W^2(2,0)\pa^2_x(\frac{M}{\sqrt{\overline{M}}}+\frac{\overline{G}}{\sqrt{\overline{M}}}+f))|
    \notag\\
	\leq& \eps^2\de^2\|\langle v\rangle^{-3}\pa^2_x(\frac{M}{\sqrt{\overline{M}}}+\frac{\overline{G}}{\sqrt{\overline{M}}}+f)\|
    \times \big(\|\langle v\rangle^3 \pa^2_x(M(\pa_t-\frac{1}{\eps}\pa_x+v_1\pa_x)(\sqrt{\overline{M}}^{-1}))W^2(2,0)\|
    \notag\\
	&\quad+\|\langle v\rangle^3 \pa^2_x((\pa_t-\frac{1}{\eps}\pa_x+v_1\pa_x)(\frac{1}{\sqrt{\overline{M}}})\overline{G})W^2(2,0)\| \big)\notag\\
	\leq& C(\eps+\de)\CD(t)+C(\eta)\eps\de^2.
\end{align}
Due to the good property of $\overline{G}$ in \eqref{boundbarG}, we have
\begin{align}\label{barGw}
	&|\eps\de(\pa^2_x(\frac{L_M\overline{G}}{\sqrt{\overline{M}}}),W^2(2,0)\pa^2_x(\frac{M}{\sqrt{\overline{M}}}+\frac{\overline{G}}{\sqrt{\overline{M}}}+f))|
    \notag\\
    &\leq \eps\de \|\langle v\rangle^3\pa^2_x(\frac{L_M\overline{G}}{\sqrt{\overline{M}}})W^2(2,0)\|\|\langle v\rangle^{-3}\pa^2_x(\frac{M}{\sqrt{\overline{M}}}+\frac{\overline{G}}{\sqrt{\overline{M}}}+f)\|\notag\\
	&\leq C(\de+\eps)\CD(t)+C(\eta)\eps^2\de^2.
\end{align}
For the term including $L_B$, we obtain by Cauchy-Schwarz inequality and similar arguments as in \eqref{LMfw1} and \eqref{LMfw2} that
\begin{align}\label{LBgw}
&|\eps\de(\pa^2_x(\frac{\sqrt{\mu}L_Bg}{\sqrt{\overline{M}}}),W^2(2,0)\pa^2_x(\frac{M}{\sqrt{\overline{M}}}+\frac{\overline{G}}{\sqrt{\overline{M}}}+f))|\notag\\
\leq& \eps\de\|\langle v\rangle^{-3}\pa^2_x f\|^2+C\eps\de\|\langle v\rangle^3\pa^2_x(\frac{\sqrt{\mu}L_Bg}{\sqrt{\overline{M}}})W^2(2,0)\|^2
\notag\\
&+ C\eps\de(\ka_2\|\pa^2_x(\widetilde{\rho},\widetilde{u},\widetilde{\theta})\|^2+\frac{1}{\ka_2}\|\pa^2_xg\|^2_{\si})+C\eps\CD(t)+C(\eta)\eps^2\de^2
\notag\\
\leq& C\eps\de(\|\pa^2_x f\|_\sigma^2+\|\pa^2_xg\|^2_{\si}+\frac{1}{\ka_2}
\|\pa^2_xg\|^2_{\si}+\ka_2\|\pa^2_x(\widetilde{\rho},\widetilde{u},\widetilde{\theta})\|^2)+
C\eps\CD(t)+C(\eta)\eps^2\de^2,
\end{align}
for any $0<\ka_2<1$. Here we have used the following  estimates that
\begin{align*} 
&|\eps\de(\pa^2_xL_Bg\frac{\sqrt{\mu}}{\sqrt{\overline{M}}},W^2(2,0)M\pa^2_x(\frac{1}{\sqrt{\overline{M}}})|\notag\\
&=|\eps\de(\pa_xL_Bg,\pa_x[\frac{\sqrt{\mu}}{\sqrt{\overline{M}}}W^2(2,0)M\pa^2_x(\frac{1}{\sqrt{\overline{M}}})])|
\leq C\eps\CD(t)+C(\eta)\eps^2\de^2.
\end{align*}
The last term on the right hand side of \eqref{ipwF} has similar structure as the second term on the left hand side and hence can be bounded using the arguments as in \eqref{phiMMf}, which yields
\begin{align}\label{highphi1}
	&|\eps\de^2(\pa^2_x[\partial_x\phi(M+\overline{G})\partial_{v_{1}}\sqrt{\overline{M}}^{-1}],W^2(2,0)\pa^2_x(\frac{M}{\sqrt{\overline{M}}}+\frac{\overline{G}}{\sqrt{\overline{M}}}+f))|\notag\\
	\leq&C(\ka+C_\ka(\eps+\de+\frac{\de^2}{\eps}))\CD(t)+C_\ka C(\eta)\eps\de^2+C_\ka C(\eta)\de^3.
\end{align}
Hence, it follows from \eqref{ipwF}, \eqref{highphi}, \eqref{LMffw}, \eqref{tranbarMMw}, \eqref{barGw}, \eqref{LBgw} and \eqref{highphi1} that
\begin{align*}
	&\eps^2\de^2\frac{d}{dt}\|\pa^2_x(\frac{M}{\sqrt{\overline{M}}}+\frac{\overline{G}}{\sqrt{\overline{M}}}+f)\|^2_W+	c\eps\de\|\pa^2_xf\|^2_{\si,W}\notag\\
	\leq	&C\eps\de(\ka_2\|\pa^2_x(\widetilde{\rho},\widetilde{u},\widetilde{\theta})\|^2+
    \frac{1}{\ka_2}\|\pa^2_x(g,f)\|^2_{\si})+(C\ka+C_\ka(\de+\eps+\frac{\de^2}{\eps}))\CD(t)+C_\ka  C(\eta)(\eps\de^2+\de^3),	
\end{align*}
which, by integration on time and the Cauchy-Schwarz inequality, yields
\begin{align*}
	&\eps^2\de^2\|\pa^2_x(\frac{M}{\sqrt{\overline{M}}}+\frac{\overline{G}}{\sqrt{\overline{M}}}+f)\|^2_W+	\eps\de\int^t_0\|\pa^2_xf(s)\|^2_{\si,W}\,ds
    \notag\\
	\leq	&C\CE(0)
    +C(\eta)\eps^2\de^2+C\eps^5+C\ka_2\eps\de\int^t_0\|\pa^2_x(\widetilde{\rho},\widetilde{u},\widetilde{\theta})(s)\|^2\,ds
+C\frac{1}{\ka_2}\eps\de\int^t_0\|\pa^2_x(g,f)(s)\|^2_{\si}\,ds
    \notag\\
	&+(C\ka+C_\ka(\de+\eps+\frac{\de^2}{\eps}))\int^t_0\CD(s)\,ds+C_\ka  C(\eta)t(\eps\de^2+\de^3).
\end{align*}
for any $0<\ka_2<1$.
Similar calculation as in \eqref{wM} gives
\begin{align*}%\label{wMenergy}
	\|\langle v\rangle^k\pa^2_x(\frac{M}{\sqrt{\overline{M}}})\|^2+\|\langle v\rangle^k\pa^2_x(\frac{M}{\sqrt{\overline{M}}})\|^2_{\si,w}
	\leq C\|\pa^2_x(\widetilde{\rho},\widetilde{u},\widetilde{\theta})\|^2+C(\eta)+C\frac{1}{\eps\de}A\eps^4.
\end{align*}
Hence, we obtain \eqref{2ndwdiss} by the above two estimates and
\begin{equation*}
\|\partial^2_x f\|^2_{\sigma}\leq C(\|\partial^{2}_x\mathbf{P}_1 f\|_{\sigma}^{2} +\|\partial^2_x g\|^2_{\sigma}).   
\end{equation*}
This completes the proof of Lemma \ref{lem4.16}.
\end{proof}
	
It remains to estimate the highest energy and dissipation without the weight. Compared to the lower order estimate, we need to get energies of fluid quantities and $f$ all from a weaker term $\|\pa^2_x(\frac{M}{\sqrt{\overline{M}}}+\frac{\overline{G}}{\sqrt{\overline{M}}}+f)\|^2$ as its lower bound in some sense, which requires us to carefully deal with the macro-micro relations between $f$, $g$ and $M$. In addition, we apply the technics from previous lemmas that cancel $O(\eps^{-1})$ in singular transport operator to the highest order, and combine with the gain of $O(\eps)$ from the derivatives of $(\bar{\rho},\bar{\theta})$, to obtain the energy of electronic field.
\begin{lemma}\label{le2ndenergy}
Under the a priori assumption \eqref{apriori}, it holds that
	\begin{align}\label{2ndenergy}
		&\eps^2\de^2\{\|\pa^2_x(\widetilde{\rho},\widetilde{u},\widetilde{\theta},\widetilde{\phi})(t)\|^{2}+\|\pa^2_xf(t)\|^{2}+\eps\|\pa^3_x\widetilde{\phi}(t)\|^{2}\}+	\eps\de\int^t_0\|\pa^2_x\mathbf{P}_1f(s)\|^2_{\si}\,ds\notag\\	\leq&C\CE(0)+C(\eta)\eps^2\de^2         +C\eps^5+C\eps^2\de^2\|\pa^2_xg(t)\|^{2}+	(C\ka+C_\ka(\de+\eps+\frac{\de^2}{\eps}))\int^t_0\CD(s)\,ds
        \notag\\
		&+C\eps\de\int^t_0(\ka_3\|\pa^2_x(\widetilde{\rho},\widetilde{u},\widetilde{\theta})(s)\|^2+\frac{1}{\ka_3}\|\pa^2_xg(s)\|^2_{\si})\,ds+C_\ka  C(\eta)t(\eps\de^2+\eps^2\de^2+\de^3),	
	\end{align}
    for any $0<\ka<1$ and $0<\ka_3<1$.
\end{lemma}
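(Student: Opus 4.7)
The strategy mirrors the proof of Lemma \ref{lem4.16}, but without the velocity weight $W(2,0)$, and with additional care to extract the fluid energy $\|\pa^2_x(\widetilde{\rho},\widetilde{u},\widetilde{\theta})\|^2$ and the electric field energy $\eps\|\pa^3_x\widetilde{\phi}\|^2$. Specifically, I would apply $\pa^2_x$ to equation \eqref{eqMf} and take the inner product with $\eps^2\de^2\,\pa^2_x\bigl(\frac{M}{\sqrt{\overline{M}}}+\frac{\overline{G}}{\sqrt{\overline{M}}}+f\bigr)$. The time-derivative term supplies $\frac{\eps^2\de^2}{2}\tfrac{d}{dt}\|\pa^2_x(\frac{M}{\sqrt{\overline{M}}}+\frac{\overline{G}}{\sqrt{\overline{M}}}+f)\|^{2}$. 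Then using \eqref{paM} applied to $(\rho,u,\theta)=(\bar\rho+\widetilde\rho,\bar u+\widetilde u,\bar\theta+\widetilde\theta)$, the leading part of $\pa^2_x\frac{M}{\sqrt{\overline{M}}}$ is proportional to $\overline M^{1/2}$ times a linear combination of $\pa^2_x\widetilde\rho,\pa^2_x\widetilde u,\pa^2_x\widetilde\theta$; integrating against $\overline M^{1/2}$-weights yields a lower bound of the form $c(\|\pa^2_x(\widetilde\rho,\widetilde u,\widetilde\theta)\|^2+\|\pa^2_xf\|^{2})$ after using that $\bigl\|\pa^2_x\tfrac{\overline G}{\sqrt{\overline M}}\bigr\|^2\lesssim C(\eta)\eps^2\de^2+\eps\de\,\CD(t)$ from Lemma \ref{lem3.1} and that the cross-term $(\pa^2_x\tfrac{M}{\sqrt{\overline M}},\pa^2_x f)$ is at most $\ka_3\|\pa^2_x(\widetilde\rho,\widetilde u,\widetilde\theta)\|^2+\frac{1}{\ka_3}\|\pa^2_xf\|^2$ up to lower-order contributions.

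For the linear operator, the coercivity \eqref{controlLbarM} produces $c\eps\de\|\pa^2_x\mathbf{P}_1 f\|^{2}_{\sigma}$; the cross terms $\eps\de(\pa^2_x\CL_{\overline M}f,\pa^2_x\tfrac{M}{\sqrt{\overline M}})$ and $\eps\de(\pa^2_x\CL_{\overline M}f,\pa^2_x\tfrac{\overline G}{\sqrt{\overline M}})$ are handled as in \eqref{LMfw1}--\eqref{LMfw2} by transferring one $\pa_x$ onto the heat kernel factor and using Cauchy--Schwarz with parameter $\ka_3$. The macroscopic part of $f$ that is not controlled by $\|\pa^2_x\mathbf{P}_1 f\|_\sigma$ is recovered through the orthogonality relation $\mathbf{P}_0(\sqrt{\overline M}f)=-\mathbf{P}_0(\sqrt\mu g)$, so $\|\pa^2_x\mathbf P_0 f\|$ is bounded by $\|\pa^2_x g\|$, producing the $\frac{1}{\ka_3}\|\pa^2_xg\|_\sigma^2$ term on the right. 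The remaining source terms ($L_M\overline G$, $L_B g$, transport commutators with $\sqrt{\overline M}^{-1}$, and nonlinearities involving $M-\overline M$) are estimated exactly as in \eqref{tranbarMMw}--\eqref{LBgw}, giving only lower-order contributions of size $(\eps+\de)\CD(t)+C(\eta)(\eps\de^2+\de^3)$.

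The most delicate step is treating the electric-field contribution $-\frac{1}{\eps}\pa_x\phi\,\pa_{v_1}\bigl(\frac{M+\overline G}{\sqrt{\overline M}}\bigr)$, where the singular $\eps^{-1}$ must be balanced against the Poisson equation to generate $\eps\|\pa^3_x\widetilde\phi\|^2$. The key observation is that $\pa_{v_1}M/\sqrt{\overline M}$ has a component proportional to $(v-u)\overline M^{1/2}/(K\theta)$; pairing $\frac{1}{\eps}\pa^3_x\widetilde\phi\cdot\overline M^{1/2}$ against the $\overline M^{1/2}$-part of $\pa^2_x(\frac{M}{\sqrt{\overline M}}+f)$ yields a term of the form $\frac{1}{\eps}(\pa^3_x\widetilde\phi,\pa^2_x\widetilde u_1)$ (plus lower-order). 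We then substitute $\pa^2_x\widetilde u_1$ from the $\widetilde\rho$-equation as in \eqref{pa2phipau}, which upon applying $\pa_x$ and using the Poisson relation $-\eps^2\pa^2_x\widetilde\phi+\eps\widetilde\phi=\widetilde\rho-\eps^4R_4$ gives exactly $\frac{d}{dt}(\frac{\eps}{2}\|\pa^3_x\widetilde\phi\|^2+\frac{1}{2}\|\pa^2_x\widetilde\phi\|^2)$ together with controllable remainders; this runs exactly parallel to \eqref{pa2phipau1}--\eqref{pa2phipau5} but at one higher $x$-derivative level, and produces the $\frac{\de^2}{\eps}\CD(t)$ term (via $\de\|\pa^3_x\widetilde\phi\|$ bounds as in \eqref{6I112}) that appears on the right of \eqref{2ndenergy}.

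Finally, integrating in $t$, absorbing the ``$\ka$'' terms on the right by the left-hand dissipation, and applying the Cauchy--Schwarz inequality to isolate $\eps^2\de^2\|\pa^2_x g(t)\|^{2}$ and $\ka_3\eps\de\int_0^t\|\pa^2_x(\widetilde\rho,\widetilde u,\widetilde\theta)\|^2\,ds+\frac{1}{\ka_3}\eps\de\int_0^t\|\pa^2_xg\|^2_\sigma\,ds$ on the right-hand side yields \eqref{2ndenergy}. The main obstacle is step (iii): making the Poisson-substitution bookkeeping at the second-derivative level rigorous while ensuring the electric-field contribution produces a clean $\eps\|\pa^3_x\widetilde\phi\|^2$ energy with $\frac{\de^2}{\eps}\CD(t)$ dissipation cost, rather than a singular remainder that cannot be closed under the scaling $\eps^{3/2}\lesssim\nu\lesssim\eps^{1/2}$.
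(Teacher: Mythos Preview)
Your overall framework is correct and matches the paper's approach: work with the unweighted inner product of $\pa^2_x$\eqref{eqMf} against $\eps^2\de^2\,\pa^2_x(\frac{M}{\sqrt{\overline M}}+\frac{\overline G}{\sqrt{\overline M}}+f)$, extract the electric-field energy from the singular forcing term via the mass equation and the Poisson relation, and obtain $\eps\de\|\pa^2_x\mathbf P_1 f\|_\sigma^2$ from the coercivity of $\CL_{\overline M}$. The treatment of the $L_M\overline G$, $L_Bg$, and commutator terms is also as in the paper.

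There is, however, a genuine gap in your derivation of the energy lower bound. You propose to control the cross term by
\[
\bigl|(\pa^2_x\tfrac{M}{\sqrt{\overline M}},\pa^2_x f)\bigr|\le \ka_3\|\pa^2_x(\widetilde\rho,\widetilde u,\widetilde\theta)\|^2+\tfrac{1}{\ka_3}\|\pa^2_xf\|^2,
\]
but this cannot close: expanding $\|\pa^2_x(\frac{M}{\sqrt{\overline M}}+f)\|^2$ and subtracting twice this bound leaves the coefficient $(1-\tfrac{2}{\ka_3})$ in front of $\|\pa^2_xf\|^2$, which is negative for every $\ka_3<2$, while the coefficient in front of the fluid part requires $\ka_3\ll 1$. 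Direct Cauchy--Schwarz is therefore insufficient, and this is precisely why the statement of \eqref{2ndenergy} has $C\eps^2\de^2\|\pa^2_xg(t)\|^2$ on the right rather than $C\eps^2\de^2\|\pa^2_xf(t)\|^2$.

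The paper resolves this by a structural cancellation you mention elsewhere but do not invoke here: one writes $\sqrt{\overline M}f+\overline G=G-\sqrt\mu g$, switches $\frac{1}{\overline M}$ to $\frac{1}{M}$ up to an $O(|(\widetilde\rho,\widetilde u,\widetilde\theta)|)$ error, and uses that the leading part $J_1$ of $\pa^2_xM$ (see \eqref{paM}) is a linear combination of the collision invariants so that $\int_{\R^3}\pa^2_xG\cdot\frac{J_1}{M}\,dv=0$ because $G$ is purely microscopic. The cross term then reduces to an integral against $-\sqrt\mu\,\pa^2_xg$, giving the bound $\ka\|\pa^2_x(\widetilde\rho,\widetilde u,\widetilde\theta)\|^2+C_\ka\|\pa^2_xg\|^2$; see \eqref{highenergy1}--\eqref{highenergy3}. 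Your observation that $\mathbf P_0(\sqrt{\overline M}f)=-\mathbf P_0(\sqrt\mu g)$ is the right idea, but it must be applied to this energy cross term, not to the dissipation (only $\|\pa^2_x\mathbf P_1 f\|_\sigma^2$ appears in \eqref{2ndenergy}, so no macroscopic recovery is needed there).

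A minor point: in your electric-field discussion the coefficient should be $\eps\de^2$, not $\tfrac{1}{\eps}$; the paper extracts the key contribution $-\eps\de^2(\frac{\pa^3_x\phi\,\pa_{v_1}M}{\sqrt{\overline M}},\frac{\pa^2_xM}{\sqrt{\overline M}})$, integrates in $v$ to obtain $\eps\de^2(\frac{1}{K\theta}\pa^3_x\phi,\pa^2_x(\rho u_1))$ plus lower order, and then substitutes the mass equation $\pa^2_x(\rho u_1)=-\pa_x\pa_t\rho+\frac{1}{\eps}\pa^2_x\rho$ together with the Poisson relation. This is the same mechanism you describe, just executed at the level of $(\rho u_1)$ rather than $\widetilde u_1$ directly; see \eqref{phiMM1}--\eqref{phiFF}.
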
	
\begin{proof}			
	Applying $\pa^2_x$ to both side of \eqref{eqMf} and taking the inner product with $\eps^2\de^2 \pa^2_x(\frac{M}{\sqrt{\overline{M}}}+\frac{\overline{G}}{\sqrt{\overline{M}}}+f)$, it holds that
\begin{align}\label{ipF}
	&\eps^2\de^2\frac{1}{2}\frac{d}{dt}\|\pa^2_x(\frac{M}{\sqrt{\overline{M}}}+\frac{\overline{G}}{\sqrt{\overline{M}}}+f)\|^2
	-\eps\de^2(\pa^2_x[\partial_x\phi\partial_{v_{1}}(\frac{M+\overline{G}}{\sqrt{\overline{M}}})],\pa^2_x(\frac{M}{\sqrt{\overline{M}}}+\frac{\overline{G}}{\sqrt{\overline{M}}}+f))
	\notag\\
	=&\eps\de(\pa^2_x\CL_{\overline{M}} f,\pa^2_x(\frac{M}{\sqrt{\overline{M}}}+\frac{\overline{G}}{\sqrt{\overline{M}}}+f))
+\eps^2\de^2(\pa^2_x(M(\pa_t-\frac{1}{\eps}\pa_x+v_1\pa_x)(\sqrt{\overline{M}}^{-1})),\pa^2_x(\frac{M}{\sqrt{\overline{M}}}+\frac{\overline{G}}{\sqrt{\overline{M}}}+f))
    \notag\\
	&+\eps^2\de^2(\pa^2_x((\pa_t-\frac{1}{\eps}\pa_x+v_1\pa_x)(\frac{1}{\sqrt{\overline{M}}})\overline{G}),\pa^2_x(\frac{M}{\sqrt{\overline{M}}}+\frac{\overline{G}}{\sqrt{\overline{M}}}+f))
    \notag\\
    &+\eps\de(\pa^2_x(\frac{L_M\overline{G}}{\sqrt{\overline{M}}}),\pa^2_x(\frac{M}{\sqrt{\overline{M}}}+\frac{\overline{G}}{\sqrt{\overline{M}}}+f))
+\eps\de(\pa^2_x(\frac{\sqrt{\mu}L_Bg}{\sqrt{\overline{M}}}),\pa^2_x(\frac{M}{\sqrt{\overline{M}}}+\frac{\overline{G}}{\sqrt{\overline{M}}}+f))
    \notag\\
	&-\eps\de^2(\pa^2_x[\partial_x\phi(M+\overline{G})\partial_{v_{1}}\sqrt{\overline{M}}^{-1}],\pa^2_x(\frac{M}{\sqrt{\overline{M}}}+\frac{\overline{G}}{\sqrt{\overline{M}}}+f)).
\end{align}
Following the approach in \eqref{phiMMf}, we arrive at
	\begin{align}\label{phiFF1}
		&-\eps\de^2(\pa^2_x[\partial_x\phi\partial_{v_{1}}(\frac{M+\overline{G}}{\sqrt{\overline{M}}})],\pa^2_x(\frac{M}{\sqrt{\overline{M}}}+\frac{\overline{G}}{\sqrt{\overline{M}}}+f))\notag\\
		\geq&-\eps\de^2(\frac{\pa^3_x\phi\partial_{v_{1}}M}{\sqrt{\overline{M}}},\frac{\pa^2_xM}{\sqrt{\overline{M}}})
        -C(\ka+C_\ka(\eps+\de+\frac{\de^2}{\eps}))\CD(t)-C_\ka C(\eta)(\eps\de^2+\de^3).
	\end{align}
Now we focus on the first term on the right hand side above, from where we need to get the energy for electric field. Split it into two parts as follows:
\begin{align}\label{splitphiMM}
	\eps\de^2(\frac{\pa^3_x\phi\partial_{v_{1}}M}{\sqrt{\overline{M}}},\frac{\pa^2_xM}{\sqrt{\overline{M}}})
	=&\eps\de^2(\pa^3_x\phi\partial_{v_{1}}M,\frac{1}{M}\pa^2_xM)+\eps\de^2(\pa^3_x\phi\partial_{v_{1}}M,[\frac{1}{\overline{M}}-\frac{1}{M}]\pa^2_xM).
\end{align}
For the first term on the right hand side, we integrate in $v$ and use the first equation in \eqref{1macro} to get
\begin{align}\label{phiMM1}
	\eps\de^2(\pa^3_x\phi\partial_{v_{1}}M,\frac{1}{M}\pa^2_xM)&=
	-\eps\de^2(\pa^3_x\phi\frac{v_1-u_1}{K\theta},\pa^2_xM)
	\notag\\
	&=-\eps\de^2(\frac{1}{K\theta}\pa^3_x\phi,\pa^2_x(\rho u_1))
	+\eps\de^2(\frac{1}{K\theta}\pa^3_x\phi,u_1\pa^2_x\rho)\notag\\
	&=\eps\de^2(\frac{1}{K\theta}\pa^3_x\phi,\pa_x\pa_t\rho)-\de^2(\frac{1}{K\theta}\pa^3_x\phi,\pa^2_x\rho)
	+\eps\de^2(\frac{1}{K\theta}\pa^3_x\phi,u_1\pa^2_x\rho).
\end{align}
We bound each term on the right hand side above. First it holds by the last equation in \eqref{1macro} that
\begin{align*}
	&\eps\de^2(\frac{1}{K\theta}\pa^3_x\phi,\pa_x\pa_t\rho)=\eps\de^2(\frac{1}{K\theta}\pa^3_x\phi,\pa_t(-\eps^2\partial^{3}_{x}\phi+\eps\pa_x\phi ))\notag\\
	=&-\eps^3\de^2\frac{1}{2}\frac{d}{dt}(\frac{1}{K\theta}\pa^3_x\phi,\partial^{3}_{x}\phi)+\eps^3\de^2\frac{1}{2}(\pa_t(\frac{1}{K\theta})\pa^3_x\phi,\partial^{3}_{x}\phi)-\eps^2\de^2(\frac{1}{K\theta}\pa^2_x\phi,\pa_t\pa^2_x\phi )-\eps^2\de^2(\pa_x(\frac{1}{K\theta})\pa^2_x\phi,\pa_t\pa_x\phi )\notag\\
	=&-\eps^2\de^2\frac{1}{2}\frac{d}{dt}\{(\frac{1}{K\theta}\pa^2_x\phi,\partial^{2}_{x}\phi)+\eps(\frac{1}{K\theta}\pa^3_x\phi,\partial^{3}_{x}\phi)\}+\eps^3\de^2\frac{1}{2}(\pa_t(\frac{1}{K\theta})\pa^3_x\phi,\partial^{3}_{x}\phi)\notag\\
	&\quad+\eps^2\de^2\frac{1}{2}(\pa_t(\frac{1}{K\theta})\pa^2_x\phi,\pa^2_x\phi )-\eps^2\de^2(\pa_x(\frac{1}{K\theta})\pa^2_x\phi,\pa_t\pa_x\phi ).
\end{align*}
Again by the last equation in \eqref{1macro}, one has
\begin{align*}
	&\de^2(\frac{1}{K\theta}\pa^3_x\phi,\pa^2_x\rho)=\de^2(\frac{1}{K\theta}\pa^3_x\phi,\pa_x(-\eps^2\partial^{3}_{x}\phi+\eps\pa_x\phi ))=\eps^2\de^2\frac{1}{2}(\pa_x(\frac{1}{K\theta})\pa^3_x\phi,\partial^{3}_{x}\phi)-\ep\de^2\frac{1}{2}(\pa_x(\frac{1}{K\theta})\pa^2_x\phi,\pa^2_x\phi).
\end{align*}
The combination of the above two identities gives
\begin{align}\label{pat-pax1}
	&\eps\de^2(\frac{1}{K\theta}\pa^3_x\phi,\pa_x\pa_t\rho)-\de^2(\frac{1}{K\theta}\pa^3_x\phi,\pa^2_x\rho)\notag\\
	=&-\eps^2\de^2\frac{1}{2}\frac{d}{dt}\{(\frac{1}{K\theta}\pa^2_x\phi,\partial^{2}_{x}\phi)+\eps(\frac{1}{K\theta}\pa^3_x\phi,\partial^{3}_{x}\phi)\}+\eps^3\de^2\frac{1}{2}(-\frac{(\pa_t-\eps^{-1}\pa_x)\theta}{K\theta^2}\pa^3_x\phi,\partial^{3}_{x}\phi)\notag\\
	&\quad+\eps^2\de^2\frac{1}{2}(-\frac{(\pa_t+\eps^{-1}\pa_x)\theta}{K\theta^2}\pa^2_x\phi,\pa^2_x\phi )-\eps^2\de^2(\pa_x(\frac{1}{K\theta})\pa^2_x\phi,\pa_t\pa_x\phi ).
\end{align}
We only need to bound the last three terms above. By the fact that $\bar{\theta}=\frac{3}{2}+\eps\theta_1+\eps^2\theta_2+\eps^3\theta_3$, one has
\begin{align}\label{pat-pax2}
&\big|\eps^3\de^2\frac{1}{2}(-\frac{(\pa_t-\eps^{-1}\pa_x)\bar\theta}{K\theta^2}\pa^3_x\phi,\partial^{3}_{x}\phi)\big|+\big|\eps^2\de^2\frac{1}{2}(-\frac{(\pa_t+\eps^{-1}\pa_x)\bar\theta}{K\theta^2}\pa^2_x\phi,\pa^2_x\phi )\big|
\notag\\
\leq&C(\eta)(\eps^3\de^2\|\pa^3_x\phi\|^2+\eps^2\de^2\|\pa^2_x\phi\|^2)
\notag\\
\leq&C(\eta)\eps^2\de^2+C\de\CD(t),
\end{align}
and
\begin{align*}
\eps^2\de^2\frac{1}{2}\big|(-\frac{(\pa_t+\eps^{-1}\pa_x)\widetilde{\theta}}{K\theta^2}\pa^2_x\phi,\pa^2_x\phi )\big|
\leq C(\eta)\eps^2\de^2+C\de\CD(t).
\end{align*}
It holds by the fourth equation in \eqref{perturbeq} that
\begin{align*}
	&\frac{\eps^3\de^2}{2}\big|(-\frac{(\pa_t-\eps^{-1}\pa_x)\widetilde{\theta}}{K\theta^2}\pa^3_x\phi,\partial^{3}_{x}\phi)\big|
    \notag\\
	=&\frac{\eps^3\de^2}{2}\big|(\frac{\frac{2}{3}\bar{\theta}\partial_x\widetilde{u}_{1}+\frac{2}{3}\widetilde{\theta}\partial_xu_{1}
		+u_1\partial_x\widetilde{\theta}+\widetilde{u}_{1}\partial_x\bar{\theta}
		-\frac{1}{\rho}u\cdot\partial_x(\int_{\mathbb{R}^3} v_{1}v G\, dv)
		+\frac{1}{\rho}\partial_x(\int_{\mathbb{R}^3}v_{1}\frac{|v|^{2}}{2}G\, dv)+\eps^3R_3
		}{K\theta^2}\pa^3_x\phi,\partial^{3}_{x}\phi)\big|
        \notag\\
		\leq& C\|\frac{2}{3}\bar{\theta}\partial_x\widetilde{u}_{1}+\frac{2}{3}\widetilde{\theta}\partial_xu_{1}
		+u_1\partial_x\widetilde{\theta}+\widetilde{u}_{1}\partial_x\bar{\theta}
		-\frac{1}{\rho}u\cdot\partial_x(\int_{\mathbb{R}^3} v_{1}v G\, dv)
		+\frac{1}{\rho}\partial_x(\int_{\mathbb{R}^3}v_{1}\frac{|v|^{2}}{2}G\, dv)+\eps^3R_3\|_{L^\infty_x}\notag\\
		&\qquad\times\eps^3\de^2\|\pa^3_x\phi\|^2,
		\end{align*}
		which, together with \eqref{apriori} and \eqref{boundkdv}, yields
		\begin{align}\label{pat-pax3}
			&\frac{\eps^3\de^2}{2}\big|(-\frac{(\pa_t-\eps^{-1}\pa_x)\widetilde{\theta}}{K\theta^2}\pa^3_x\phi,\partial^{3}_{x}\phi)\big|
		\leq C\eps\CD(t)+C(\eta)\eps^2\de^2.
\end{align}
Furthermore, it follows from \eqref{patphi} that
\begin{align}\label{pat-pax4}
	\eps^2\de^2\big|(\pa_x(\frac{1}{K\theta})\pa^2_x\phi,\pa_t\pa_x\phi )\big|\leq C\eps^2\de^2\|\pa_x\theta\|_{L^\infty_x}\|\|\pa^2_x\phi\|\|\pa_t\pa_x\phi \|\leq C(\eta)\eps^2\de^2+C\de\CD(t).
\end{align}
We collect \eqref{pat-pax1}, \eqref{pat-pax2}, \eqref{pat-pax3} and \eqref{pat-pax4} to get 
\begin{align*}%\label{pat-pax}
	&\eps\de^2(\frac{1}{K\theta}\pa^3_x\phi,\pa_x\pa_t\rho)-\de^2(\frac{1}{K\theta}\pa^3_x\phi,\pa^2_x\rho)\notag\\
	\leq&-\eps^2\de^2\frac{1}{2}\frac{d}{dt}\{(\frac{1}{K\theta}\pa^2_x\phi,\partial^{2}_{x}\phi)+\eps(\frac{1}{K\theta}\pa^3_x\phi,\partial^{3}_{x}\phi)\}+C(\eps+\de)\CD(t)+C(\eta)\eps^2\de^2.
\end{align*}
The last term in \eqref{phiMM1} is bounded by
\begin{align*}
	\eps\de^2\big|(\frac{1}{K\theta}\pa^3_x\phi,u_1\pa^2_x\rho)\big|\leq C\eps\de^2\|\pa^3_x\phi\|\|u_1\|_{L^\infty_x}\|\pa^2_x\rho\|\leq C_{\ka}C(\eta)(\eps^2\de^2+\eps\de^3)+C(\de+\ka+C_{\ka}\frac{\de^2}{\eps})\CD(t).
\end{align*}
The above two estimates, together with \eqref{phiMM1}, show that 
\begin{align*}%\label{phiMM}
	-\eps\de^2(\pa^3_x\phi\partial_{v_{1}}M,\frac{1}{M}\pa^2_xM)
	\geq\eps^2\de^2\frac{1}{2}\frac{d}{dt}&\{(\frac{1}{K\theta}\pa^2_x\phi,\partial^{2}_{x}\phi)+\eps(\frac{1}{K\theta}\pa^3_x\phi,\partial^{3}_{x}\phi)\}
    \notag\\
	&-(C\ka+C_\ka(\de+\eps+\frac{\de^2}{\eps}))\CD(t)-C_{\ka}C(\eta)(\eps^2\de^2+\eps\de^3).
\end{align*}
The other term in \eqref{splitphiMM} can be bounded by
\begin{align*}
%\label{M-barM}
	\eps\de^2\big|(\pa^3_x\phi\partial_{v_{1}}M,[\frac{1}{\overline{M}}-\frac{1}{M}]\pa^2_xM)\big|
    \leq& C\eps\de^2\|\pa^3_x\phi\|\|\sqrt{\overline{M}}\partial_{v_{1}}M[\frac{1}{\overline{M}}-\frac{1}{M}]\|_{L^\infty_x}\|\frac{\pa^2_xM}{\sqrt{\overline{M}}}\|\notag\\
	\leq&C(\ka+C_\ka\frac{\de^2}{\eps}+\de)\CD(t)+C_\ka C(\eta) (\eps\de^2+\de^3).
\end{align*}
We conclude by the above two estimates that
\begin{align*}%\label{phiMMmu}
	-\eps\de^2(\pa^3_x\phi\partial_{v_{1}}M,\frac{1}{\overline{M}}\pa^2_xM)
	\geq \eps^2\de^2\frac{1}{2}\frac{d}{dt}&\{(\frac{1}{K\theta}\pa^2_x\phi,\partial^{2}_{x}\phi)+\eps(\frac{1}{K\theta}\pa^3_x\phi,\partial^{3}_{x}\phi)\}\notag\\
	&-(C\ka+C_\ka(\de+\eps+\frac{\de^2}{\eps}))\CD(t)-C_\ka C(\eta)(\eps\de^2+\de^3),
\end{align*}
which, together with \eqref{phiFF1}, yields
\begin{align}\label{phiFF}
	&-\eps\de^2(\pa^2_x[\partial_x\phi\partial_{v_{1}}(\frac{M+\overline{G}}{\sqrt{\overline{M}}})],\pa^2_x(\frac{M}{\sqrt{\overline{M}}}+\frac{\overline{G}}{\sqrt{\overline{M}}}+f))
    \notag\\
	\geq&\eps^2\de^2\frac{1}{2}\frac{d}{dt}\{(\frac{1}{K\theta}\pa^2_x\phi,\partial^{2}_{x}\phi)+\eps(\frac{1}{K\theta}\pa^3_x\phi,\partial^{3}_{x}\phi)\}-C(\ka+C_{\ka}(\de+\eps+\frac{\de^2}{\eps}))\CD(t)-C_\ka C(\eta)(\eps\de^2+\de^3).
\end{align}
Then we perform similar calculations as in \eqref{LMfw}, \eqref{LMfw1} and \eqref{LMfw2} that
\begin{align}\label{splitLfF}
-\eps\de(\pa^2_x\CL_{\overline{M}} f,\pa^2_x(\frac{M}{\sqrt{\overline{M}}}+\frac{\overline{G}}{\sqrt{\overline{M}}}+f))\geq c\eps\de\|\pa^2_x\mathbf{P}_1 f\|^2_{\si}-\eps\de(\pa^2_x \CL_{\overline{M}}f,\frac{\pa^2_xM}{\sqrt{\overline{M}}})-C\eps\CD(t)-C(\eta)\eps^2\de^2.
\end{align}
We need to make further decomposition for the second term on the right hand side above in order to avoid the increase of second order energy. Based on \eqref{paM} and \eqref{LMfw2}, we have
\begin{align}\label{splitLfM}
	&\eps\de(\pa^2_x \CL_{\overline{M}}  f,\frac{\pa^2_x M}{\sqrt{\overline{M}}})\notag\\
	\leq&\eps\de( \CL_{\overline{M}} \pa^2_x f,\sqrt{\overline{M}}^{-1}\overline{M}\big(\frac{\pa^2_x\widetilde{\rho}}{\rho}+\frac{(v-u)\cdot\pa^2_x\widetilde{u}}{K\theta}+(\frac{|v-u|^{2}}{2K\theta}-\frac{3}{2})\frac{\pa^2_x\widetilde{\theta}}{\theta} \big))\notag\\
	&+\eps\de(\CL_{\overline{M}} \pa^2_x f,\sqrt{\overline{M}}^{-1}(M-\overline{M})\big(\frac{\pa^2_x\widetilde{\rho}}{\rho}+\frac{(v-u)\cdot\pa^2_x\widetilde{u}}{K\theta}+(\frac{|v-u|^{2}}{2K\theta}-\frac{3}{2})\frac{\pa^2_x\widetilde{\theta}}{\theta} \big))\notag\\
	&-\eps\de( \pa_x\CL_{\overline{M}} f,\pa_x[\sqrt{\overline{M}}^{-1}M\big(\frac{\pa^2_x\bar{\rho}}{\rho}+\frac{(v-u)\cdot\pa^2_x\bar{u}}{K\theta}+(\frac{|v-u|^{2}}{2K\theta}-\frac{3}{2})\frac{\pa^2_x\bar{\theta}}{\theta} \big)])\notag\\
	&-\eps\de( \pa_x\CL_{\overline{M}} f,\pa_x[\sqrt{\overline{M}}^{-1}\pa_x(M\frac{1}{\rho})\pa_x\rho+\pa_x(M\frac{v-u}{K\theta})\cdot\pa_xu+\pa_x(M\frac{|v-u|^{2}}{2K\theta^{2}}-M\frac{3}{2\theta})\pa_x\theta])\notag\\
	&+C\eps\CD(t)+C(\eta)\eps^2\de^2.
\end{align}
The first term on the right hand side above vanishes since by the property of $\CL_{\overline{M}}$. The second one is bounded by 
\begin{align*}
&\eps\de(\CL_{\overline{M}} \pa^2_x f,\sqrt{\overline{M}}^{-1}(M-\overline{M})\big(\frac{\pa^2_x\widetilde{\rho}}{\rho}+\frac{(v-u)\cdot\pa^2_x\widetilde{u}}{K\theta}+(\frac{|v-u|^{2}}{2K\theta}-\frac{3}{2})\frac{\pa^2_x\widetilde{\theta}}{\theta} \big))\notag\\
\leq& C\eps\de\|\pa^2_x f\|_\si \|\mu^{-3/4}(M-\overline{M})\|_{L^\infty_xL^2_v}\|\pa^2_x(\widetilde{\rho},\widetilde{u},\widetilde{\theta})\|\leq C\eps\CD(t).
\end{align*}
 Then the rest part can be bounded as in \eqref{LMfw2} without the appearance of terms containing second order spacial derivative, which, together with the above estimate, \eqref{splitLfF} and \eqref{splitLfM}, implies
\begin{align}\label{LfF}
-\eps\de(\pa^2_x\CL_{\overline{M}} f,\pa^2_x(\frac{M}{\sqrt{\overline{M}}}+\frac{\overline{G}}{\sqrt{\overline{M}}}+f))\geq c\eps\de\|\pa^2_x\mathbf{P}_1 f\|^2_{\si}-C\eps\CD(t)-C(\eta)\eps^2\de^2.
\end{align}
Other terms on the right hand side of \eqref{ipF} can be bounded using similar arguments as in \eqref{tranbarMMw}, \eqref{barGw}, \eqref{LBgw} and \eqref{highphi1}. We omit the details to get
\begin{align}\label{2ndnonlinear}
	&\big|\eps^2\de^2(\pa^2_x(M(\pa_t-\frac{1}{\eps}\pa_x+v_1\pa_x)(\sqrt{\overline{M}}^{-1})),\pa^2_x(\frac{M}{\sqrt{\overline{M}}}+\frac{\overline{G}}{\sqrt{\overline{M}}}+f))     \notag\\ 	&+\eps^2\de^2(\pa^2_x((\pa_t-\frac{1}{\eps}\pa_x+v_1\pa_x)(\frac{1}{\sqrt{\overline{M}}})\overline{G}),\pa^2_x(\frac{M}{\sqrt{\overline{M}}}+\frac{\overline{G}}{\sqrt{\overline{M}}}+f))     \notag\\     &+\eps\de(\pa^2_x(\frac{L_M\overline{G}}{\sqrt{\overline{M}}}),\pa^2_x(\frac{M}{\sqrt{\overline{M}}}+\frac{\overline{G}}{\sqrt{\overline{M}}}+f)) +\eps\de(\pa^2_x(\frac{\sqrt{\mu}L_Bg}{\sqrt{\overline{M}}}),\pa^2_x(\frac{M}{\sqrt{\overline{M}}}+\frac{\overline{G}}{\sqrt{\overline{M}}}+f))     \notag\\ 	&-\eps\de^2(\pa^2_x[\partial_x\phi(M+\overline{G})\partial_{v_{1}}\sqrt{\overline{M}}^{-1}],\pa^2_x(\frac{M}{\sqrt{\overline{M}}}+\frac{\overline{G}}{\sqrt{\overline{M}}}+f))
    \big|\notag\\
	\leq &C\eps\de(\ka_3\|\pa^2_x(\widetilde{\rho},\widetilde{u},\widetilde{\theta})\|^2+\frac{1}{\ka_3}\|\pa^2_xg\|^2_{\si})+(C\ka+C_\ka(\de+\eps+\frac{\de^2}{\eps}))\CD(t)+C_\ka  C(\eta)(\eps\de^2+\eps^2\de^2+\de^3),
\end{align}
for any $0<\ka<1$ and $0<\ka_3<1$.
Combining \eqref{ipF}, \eqref{phiFF}, \eqref{LfF} and \eqref{2ndnonlinear}, then taking integral on the time variable, it holds that
\begin{align}\label{highphienergy}
	&\eps^2\de^2\|\pa^2_x(\frac{M}{\sqrt{\overline{M}}}+\frac{\overline{G}}{\sqrt{\overline{M}}}+f)(t)\|^2+\eps^2\de^2\{(\frac{1}{K\theta}\pa^2_x\phi,\partial^{2}_{x}\phi)+\eps(\frac{1}{K\theta}\pa^3_x\phi,\partial^{3}_{x}\phi)\}+	\eps\de\int^t_0\|\pa^2_x\mathbf{P}_1 f(s)\|^2_{\si}\,ds\notag\\	\leq&C\CE(0)+C(\eta)\eps^2\de^2+C\eps^5+(C\ka+C_\ka(\de+\eps+\frac{\de^2}{\eps}))\int^t_0\CD(s)\,ds
    \notag\\
    &+C \eps\de\int^t_0(\ka_3\|\pa^2_x(\widetilde{\rho},\widetilde{u},\widetilde{\theta})(s)\|^2+\frac{1}{\ka_3}\|\pa^2_xg(s)\|^2_{\si})\,ds+C_\ka  C(\eta)t(\eps\de^2+\eps^2\de^2+\de^3).
\end{align}
It remains to bound the first term above. We first split it into three parts that
\begin{align*}
	\|\pa^2_x(\frac{M}{\sqrt{\overline{M}}}+\frac{\overline{G}}{\sqrt{\overline{M}}}+f)\|^{2}=&\int_{\mathbb{R}}\int_{\mathbb{R}^{3}}\big\{|\pa^2_x(\frac{M}{\sqrt{\overline{M}}})|^2+|\pa^2_xf|^{2}+|\pa^2_x(\frac{\overline{G}}{\sqrt{\overline{M}}})|^2\big\}\,dv\,dx
\notag\\	
    &+\int_{\mathbb{R}}\int_{\mathbb{R}^{3}}\big\{
    2\pa^2_x(\frac{M}{\sqrt{\overline{M}}})\pa^2_x(\frac{\overline{G}}{\sqrt{\overline{M}}})
    +2\pa_x^2(\frac{M}{\sqrt{\overline{M}}})\pa^2_xf
    +2\pa^2_x(\frac{\overline{G}}{\sqrt{\overline{M}}})
    \pa^2_xf
    \big\}\,dv\,dx.
\end{align*}
For the right hand side above, if $\pa_x$ acts on the $\overline{M}$ term, we can bound it as in \eqref{LMfw1}, which leads to
\begin{align*}
	\eps^2\de^2\|\pa^2_x(\frac{M}{\sqrt{\overline{M}}}+\frac{\overline{G}}{\sqrt{\overline{M}}}+f)\|^{2}\geq& \frac{1}{2}\eps^2\de^2\int_{\mathbb{R}}\int_{\mathbb{R}^{3}}\big\{|\frac{\pa_x^2M}{\sqrt{\overline{M}}}|^2+|\pa^2_xf|^{2}\big\}\,dv\,dx
    \notag\\
    &+\eps^2\de^2\int_{\mathbb{R}}\int_{\mathbb{R}^{3}}2\frac{\pa_x^2M}{\overline{M}}\pa^2_x(\sqrt{\overline{M}}f+\overline{G})\,dv\,dx
    -C(\eta)\eps^2\de^2-C\eps^5.
\end{align*}
Furthermore, we can claim that
\begin{align}\label{highenergy1}
	&\eps^2\de^2\|\pa^2_x(\frac{M}{\sqrt{\overline{M}}}
    +\frac{\overline{G}}{\sqrt{\overline{M}}}
    +f)\|^{2}\geq \frac{1}{2}\eps^2\de^2\int_{\mathbb{R}}\int_{\mathbb{R}^{3}}\big\{\frac{(\pa_x^2M)^2}{M}+|\pa^2_xf|^{2}+4\frac{\pa_x^2M}{M}\pa^2_x(\sqrt{\overline{M}}f+\overline{G})\big\}\,dv\,dx
    \notag\\
	&\quad\quad-C\eps^2\de^2\int_{\mathbb{R}}\int_{\mathbb{R}^{3}}\big\{|\pa_x^2M|^2|\frac{1}{\overline{M}}-\frac{1}{M}|+2|\pa_x^2M\pa^2_x(\sqrt{\overline{M}}f+\overline{G})||\frac{1}{\overline{M}}-\frac{1}{M}|\big\}\,dv\,dx
     -C(\eta)\eps^2\de^2-C\eps^5
    \notag\\
	&\quad\geq \frac{1}{2}\eps^2\de^2\int_{\mathbb{R}}\int_{\mathbb{R}^{3}}\{\frac{(\pa_x^2M)^2}{M}+|\pa^2_xf|^{2}+4\frac{\pa_x^2M\pa^2_x(\sqrt{\overline{M}}f+\overline{G})}{M}\}\,dv\,dx-C(\eta)\eps^2\de^2-C\eps^5.
\end{align}
For the first term on the right hand side above, recall from \eqref{paM} that
\begin{align}\label{defJ1J2}
	\partial^2_xM=&M\big(\frac{\pa^2_x\rho}{\rho}+\frac{(v-u)\cdot\pa^2_xu}{K\theta}+(\frac{|v-u|^{2}}{2K\theta}-\frac{3}{2})\frac{\pa^2_x\theta}{\theta} \big)
	\nonumber\\
	&+\pa_x(M\frac{1}{\rho})\pa_x\rho+\pa_x(M\frac{v-u}{K\theta})\cdot\pa_xu+\pa_x(M\frac{|v-u|^{2}}{2K\theta^{2}}-M\frac{3}{2\theta})\pa_x\theta:=J_1+J_2,
\end{align}
which leads to
\begin{align*}
	&\eps^2\de^2\int_{\mathbb{R}}\int_{\mathbb{R}^{3}}\frac{(\pa_x^2M)^2}{M}\,dv\,dx=\eps^2\de^2\int_{\mathbb{R}}\int_{\mathbb{R}^{3}}\frac{J_1^2+2J_1J_2+J_2^2}{M}\,dv\,dx.
\end{align*}
For $J_1$, it holds that
\begin{align}\label{J12}
	\eps^2\de^2\int_{\mathbb{R}}\int_{\mathbb{R}^{3}}\frac{(J_1)^{2}}{M}dvdx&=\eps^2\de^2\int_{\mathbb{R}}\int_{\mathbb{R}^{3}}M\Big\{\frac{\pa_x\rho}{\rho}+\frac{(v-u)\cdot\pa_xu}{R\theta}+(\frac{|v-u|^{2}}{2R\theta}-\frac{3}{2})\frac{\pa_x\theta}{\theta} \Big\}^{2}\,dv\,dx\notag
	\\
	&=\eps^2\de^2\int_{\mathbb{R}}\int_{\mathbb{R}^{3}}M
	\Big\{(\frac{\pa_x\rho}{\rho})^{2}+(\frac{(v-u)\cdot\pa_xu}{R\theta})^{2}+((\frac{|v-u|^{2}}{2R\theta}-\frac{3}{2})\frac{\pa_x\theta}{\theta})^{2} \Big\}\,dv\,dx\notag\\
	&\geq c\eps^2\de^2\|\pa^2_x(\widetilde{\rho},\widetilde{u},\widetilde{\theta})\|^{2}-C(\eta)\eps^2\de^2.
\end{align}
Others are lower order terms and thus can be bounded by 
\begin{align}\label{J22}
	\eps^2\de^2\int_{\mathbb{R}}\int_{\mathbb{R}^{3}}\frac{2J_1J_2+(J_2)^{2}}{M}\,dv\,dx\leq C(\eta)\eps^2\de^2+C\eps^5.
\end{align}
Collecting \eqref{J12} and \eqref{J22}, one gets
\begin{align}\label{highenergy2}
	&\eps^2\de^2\int_{\mathbb{R}}\int_{\mathbb{R}^{3}}\frac{(\pa_x^2M)^2}{M}dvdx\geq c\eps^2\de^2\|\pa^2_x(\widetilde{\rho},\widetilde{u},\widetilde{\theta})\|^{2}-C(\eta)\eps^2\de^2-C\eps^5.
\end{align}
There is still one term left on the right hand side of \eqref{highenergy1}. Using \eqref{defJ1J2} and similar argument as in \eqref{J22} we have
\begin{align*}
&\eps^2\de^2\int_{\mathbb{R}}\int_{\mathbb{R}^{3}}\frac{\pa_x^2M\pa^2_x(\sqrt{\overline{M}}f+\overline{G})}{M}\,dv\,dx\notag\\
	=&\eps^2\de^2\int_{\mathbb{R}}\int_{\mathbb{R}^{3}}\frac{\pa^2_x(\sqrt{\overline{M}}f+\overline{G})J_1}{M}dvdx+\eps^2\de^2\int_{\mathbb{R}}\int_{\mathbb{R}^{3}}\frac{\pa^2_x(\sqrt{\overline{M}}f+\overline{G})J_2}{M}\,dv\,dx\notag\\
	\leq& \eps^2\de^2\int_{\mathbb{R}}\int_{\mathbb{R}^{3}}\pa^2_x(-\sqrt{\mu}g)\big(\frac{\pa^2_x\rho}{\rho}+\frac{(v-u)\cdot\pa^2_xu}{K\theta}+(\frac{|v-u|^{2}}{2K\theta}-\frac{3}{2})\frac{\pa^2_x\theta}{\theta} \big)\,dv\,dx+C(\eta)\eps^2\de^2+C\eps^5.
\end{align*}
Here we have used $\sqrt{\overline{M}}f+\overline{G}=G-\sqrt{\mu}g$
and the fact that
$$
\int_{\mathbb{R}}\int_{\mathbb{R}^{3}}\pa^2_x G\big(\frac{\pa^2_x\rho}{\rho}+\frac{(v-u)\cdot\pa^2_xu}{K\theta}+(\frac{|v-u|^{2}}{2K\theta}-\frac{3}{2})\frac{\pa^2_x\theta}{\theta} \big)\,dv\,dx=0.
$$
Hence, we get
\begin{align}\label{highenergy3}
	&\eps^2\de^2\int_{\mathbb{R}^{3}}\int_{\mathbb{R}^{3}}\frac{\pa_x^2M\pa^2_x(\sqrt{\overline{M}}f+\overline{G})}{M}dvdx\notag\\
	\leq&\ka\eps^2\de^2\|\pa^2_x(\widetilde{\rho},\widetilde{u},\widetilde{\theta})\|^{2}+C_\ka\eps^2\de^2\|\pa^2_xg\|^2+C\eps^5+C(\eta)\eps^2\de^2.
\end{align}
We combine \eqref{highenergy1}, \eqref{highenergy2} and \eqref{highenergy3}, then choose $\ka$ to be small to get
\begin{align}\label{highenergy}
	\eps^2\de^2\|\pa^2_x(\frac{M}{\sqrt{\overline{M}}} +\frac{\overline{G}}{\sqrt{\overline{M}}}+f)\|^{2}\geq& \eps^2\de^2(c\|\pa^2_x(\widetilde{\rho},\widetilde{u},\widetilde{\theta})\|^{2}+c\|\pa^2_xf\|^{2}-C\|\pa^2_xg\|^{2}-)C(\eta)\eps^2\de^2-C\eps^5.
\end{align}
Hence, \eqref{2ndenergy} follows from \eqref{highphienergy} and \eqref{highenergy}. This completes the proof of Lemma \ref{le2ndenergy}.
\end{proof}
With the above results in this section, we can conclude our final estimates for the highest order quantities.
\begin{lemma}\label{highE}
Under the a priori assumption \eqref{apriori}, it holds that
\begin{align} 
\label{highE1}  
&\widetilde{C}_{2}\frac{1}{\ka_2}\eps^2\de^2\{\|\pa^2_x(\widetilde{\rho},\widetilde{u},\widetilde{\theta},\widetilde{\phi})(t)\|^{2}+\|\pa^2_xf(t)\|^{2}+\eps\|\pa^3_x\widetilde{\phi}(t)\|^{2}\} +\eps^2\de^2\|\pa^2_xf(t)\|^2_W 
\notag\\ 
&+	c\eps\de\int^t_0\|\pa^2_xf(s)\|^2_{\si,W}\,ds +\widetilde{C}_{2}\frac{1}{\ka_2}\eps\de\int^t_0\|\pa^2_x\mathbf{P}_1f(s)\|^2_{\si}\,ds  \notag\\	  
&+\widetilde{C}_{2}\widetilde{C}_{1}\frac{1}{\ka_3\ka_2}\eps^2\de^2\|\partial^{2}_x g(t)\|_w^{2}+\widetilde{C}_{2}\widetilde{C}_{1}\frac{1}{\ka_3\ka_2}\eps\de\int_0^{t}\|\partial^{2}_x g(s)\|_{\sigma,w}^{2}\,ds
 \notag\\
&+\widetilde{C}_{2}\widetilde{C}_{1}\frac{1}{\ka_3\ka_2}\eps^2\de^2 q_{1}q_{2}\int_0^{t}(1+s)^{-(1+q_2)}\|\langle v\rangle \partial^{2}_xg(s)\|_{w}^{2}\,ds 
\notag\\ 		  
\leq& C(\ka_3\frac{1}{\ka_2}+\ka_2)\eps\de\int^t_0\|\pa^2_x(\widetilde{\rho},\widetilde{u},\widetilde{\theta})(s)\|^2\,ds+C\frac{1}{\ka_3\ka_2}\CE(0) 
+C(\eta)\frac{1}{\ka_3\ka_2}\eps^2\de^2 +C\frac{1}{\ka_3\ka_2}\eps^5     
\notag\\          
&+C\frac{1}{\ka_3\ka_2}C(\eta)\int_0^{t}\CE(s)\,ds         +C\frac{1}{\ka_3\ka_2}\eps^2\de^2(C(\eta)+\eps)\frac{\de^{1/2}}{\eps}\int_0^{t}\|\langle v\rangle \pa^2_x g(s)\|_w^2\,ds
\notag\\	 
&+	C\frac{1}{\ka_2}(\ka+C_{\ka}\frac{1}{\ka_3}(\de+\eps+\frac{\de^2}{\eps}))\int^t_0\CD(s)\,ds        +C_{\ka} \frac{1}{\ka_3\ka_2} C(\eta)t(\eps\de^2+\eps^2\de^2+\de^3).	 
\end{align}	 
\end{lemma}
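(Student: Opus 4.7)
The plan is to combine the three highest-order estimates already established, namely \eqref{w2g} (weighted bound on $\partial_x^2 g$), \eqref{2ndwdiss} (weighted bound on $\partial_x^2 f$), and \eqref{2ndenergy} (unweighted bound on $\partial_x^2(\widetilde{\rho},\widetilde{u},\widetilde{\theta},\widetilde{\phi},f)$ together with the electric field energy $\eps\|\partial_x^3\widetilde{\phi}\|^2$), by an appropriate linear combination with carefully chosen large constants $\widetilde{C}_1,\widetilde{C}_2 \gg 1$. The basic observation is that each of the three inequalities leaves, on its right-hand side, at least one ``bad'' term that cannot be absorbed by its own dissipation but can be absorbed by the dissipation of another. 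Specifically: \eqref{2ndenergy} produces $\frac{1}{\ka_3}\|\partial_x^2 g\|_\sigma^2$ which must be absorbed by the dissipation term $\|\partial_x^2 g\|_{\sigma,w}^2$ in \eqref{w2g}; \eqref{2ndwdiss} produces $\frac{1}{\ka_2}(\|\partial_x^2 g\|_\sigma^2+\|\partial_x^2\mathbf{P}_1 f\|_\sigma^2)$ which must be absorbed partly by \eqref{w2g} and partly by the $\|\partial_x^2 \mathbf{P}_1 f\|_\sigma^2$ dissipation of \eqref{2ndenergy}; and the $\ka_2$-small fluid term $\ka_2\|\partial_x^2(\widetilde{\rho},\widetilde{u},\widetilde{\theta})\|^2$ in \eqref{2ndwdiss} has to be tolerated on the right-hand side.

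First, I would take the linear combination
\[
\widetilde{C}_2\widetilde{C}_1 \tfrac{1}{\ka_3\ka_2}\times\eqref{w2g}
\;+\;\eqref{2ndwdiss}
\;+\;\widetilde{C}_2 \tfrac{1}{\ka_2}\times\eqref{2ndenergy},
\]
the factors being arranged so that the bad $\frac{1}{\ka_3}$ term from \eqref{2ndenergy} is dominated by the dissipation $\widetilde{C}_2\widetilde{C}_1\frac{1}{\ka_3\ka_2}\cdot\eps\de\|\partial_x^2 g\|_{\sigma,w}^2$ of \eqref{w2g} once $\widetilde{C}_1$ is sufficiently large, and the $\frac{1}{\ka_2}$ terms in \eqref{2ndwdiss} are controlled partly by the same $g$-dissipation and partly by $\widetilde{C}_2\frac{1}{\ka_2}\cdot\eps\de\|\partial_x^2\mathbf{P}_1 f\|_\sigma^2$ from \eqref{2ndenergy}. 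The coupling term $\eps^2\de^2\|\partial_x^2 g\|^2$ appearing on the right of \eqref{2ndenergy} is harmless: it is controlled by the energy term $\widetilde{C}_2\widetilde{C}_1\frac{1}{\ka_3\ka_2}\cdot\eps^2\de^2\|\partial_x^2 g\|_w^2$ already produced on the left after fixing $\widetilde{C}_1$ large.

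Next, I would track the fluid-type right-hand side carefully. The term $\frac{1}{\ka_2}\cdot\ka_2\eps\de\|\partial_x^2(\widetilde{\rho},\widetilde{u},\widetilde{\theta})\|^2$ from multiplying \eqref{2ndwdiss} by $1$ is kept on the right, producing the coefficient $\ka_2$ in \eqref{highE1}, while the term $\frac{1}{\ka_2}\cdot\ka_3\eps\de\|\partial_x^2(\widetilde{\rho},\widetilde{u},\widetilde{\theta})\|^2$ from \eqref{2ndenergy} yields the coefficient $\frac{\ka_3}{\ka_2}$; both are integrated in time and will later be absorbed at the stage of combining with the first-order fluid estimate \eqref{1stfluid} by choosing $\ka_2,\ka_3$ sufficiently small. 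The generic $\CD(t)$ contributions are collected under the coefficient $C\frac{1}{\ka_2}(\ka+C_\ka\frac{1}{\ka_3}(\de+\eps+\de^2/\eps))$, and all $C(\eta)$-terms are gathered into a single lower-order source term $C_\ka \frac{1}{\ka_3\ka_2}C(\eta)\, t\,(\eps\de^2+\eps^2\de^2+\de^3)$. The $\CE(0)$ initial contributions are similarly pooled with a factor $C\frac{1}{\ka_3\ka_2}$.

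The only subtle point, and the main obstacle, is to make sure the cascade of absorption is consistent: $\widetilde{C}_1$ must be chosen large enough (depending only on universal constants, not on $\ka_2,\ka_3$) to eat the $\frac{1}{\ka_3\ka_2}$-weighted bad $g$-term coming from \eqref{2ndenergy} via \eqref{w2g}; then $\widetilde{C}_2$ is chosen large enough so that the $\|\partial_x^2 \mathbf{P}_1 f\|_\sigma^2$ dissipation of \eqref{2ndenergy}, multiplied by $\widetilde{C}_2\frac{1}{\ka_2}$, absorbs both the original $\|\partial_x^2 f\|_\sigma^2$ piece of \eqref{2ndwdiss} (after using $\|\partial_x^2 f\|_\sigma^2\le C\|\partial_x^2\mathbf{P}_1 f\|_\sigma^2+C\|\partial_x^2 g\|_\sigma^2$) and any residual $\mathbf{P}_1 f$-type contribution coming through \eqref{2ndwdiss}. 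After these choices, assembling the resulting inequality gives exactly \eqref{highE1}; the remaining time integration is routine, and no new smallness beyond $A\eps^{1/2}+A\de^{1/2}<1$ in \eqref{smalleps} is needed.
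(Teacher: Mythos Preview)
Your proposal is correct and takes essentially the same approach as the paper. The paper proceeds in two steps---first adding $\widetilde{C}_1\frac{1}{\ka_3}\times\eqref{w2g}$ to \eqref{2ndenergy} to get an intermediate inequality \eqref{4.177}, then adding $\widetilde{C}_2\frac{1}{\ka_2}\times\eqref{4.177}$ to \eqref{2ndwdiss}---but expanding this out gives exactly your single linear combination $\widetilde{C}_2\widetilde{C}_1\frac{1}{\ka_3\ka_2}\times\eqref{w2g}+\eqref{2ndwdiss}+\widetilde{C}_2\frac{1}{\ka_2}\times\eqref{2ndenergy}$, and your description of the absorption hierarchy (choosing $\widetilde{C}_1$ so that the $g$-dissipation from \eqref{w2g} absorbs the $\frac{1}{\ka_3}\|\partial_x^2 g\|_\sigma^2$ from \eqref{2ndenergy}, then $\widetilde{C}_2$ so that the $\mathbf{P}_1 f$-dissipation from \eqref{2ndenergy} and the energy terms absorb the corresponding pieces from \eqref{2ndwdiss}) matches the paper's reasoning.
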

\begin{proof}
Adding \eqref{w2g}$\times\widetilde{C}_{1}\frac{1}{\ka_3}$ to \eqref{2ndenergy} and choosing a large constant $\widetilde{C}_{1}>1$, we get
\begin{align}\label{4.177} 
&\eps^2\de^2\{\|\pa^2_x(\widetilde{\rho},\widetilde{u},\widetilde{\theta},\widetilde{\phi})(t)\|^{2}+\|\pa^2_xf(t)\|^{2}+\eps\|\pa^3_x\widetilde{\phi}(t)\|^{2}\}+	\eps\de\int^t_0\|\pa^2_x\mathbf{P}_1f(s)\|^2_{\si}\,ds 
\notag\\	
&+\frac{1}{2}\widetilde{C}_{1}\frac{1}{\ka_3}\eps^2\de^2\|\partial^{2}_x g(t)\|_w^{2}
+\frac{1}{2}\widetilde{C}_{1}\frac{1}{\ka_3}\eps\de\int_0^{t}\|\partial^{2}_x g(s)\|_{\sigma,w}^{2}\,ds
\notag\\	
&+\widetilde{C}_{1}\frac{1}{\ka_3}\eps^2\de^2 q_{1}q_{2}\int_0^{t}(1+s)^{-(1+q_2)}\|\langle v\rangle \partial^{2}_xg(s)\|_{w}^{2}\,ds
\notag\\ 		
\leq& C\ka_3\eps\de\int^t_0\|\pa^2_x(\widetilde{\rho},\widetilde{u},\widetilde{\theta})(s)\|^2\,ds+C\frac{1}{\ka_3}\CE(0) +C(\eta)\eps^2\de^2         +C\eps^5      
\notag\\         
&+\frac{1}{\ka_3}C(\eta)\int_0^{t}\CE(s)\,ds         +C\frac{1}{\ka_3}\eps^2\de^2(C(\eta)+\eps)\frac{\de^{1/2}}{\eps}\int_0^{t}\|\langle v\rangle \pa^2_x g(s)\|_w^2\,ds
\notag\\	
&+	(C\ka+C_{\ka}\frac{1}{\ka_3}(\de+\eps+\frac{\de^2}{\eps}))\int^t_0\CD(s)\,ds        +C_{\ka}  \frac{1}{\ka_3}C(\eta)t(\eps\de^2+\eps^2\de^2+\de^3),	
\end{align}	
for any $0<\ka<1$ and $0<\ka_3<1$.
Adding \eqref{4.177}$\times\widetilde{C}_{2}\frac{1}{\ka_2}$ to \eqref{2ndwdiss} with $\ka_2$ as in \eqref{2ndwdiss}  and choosing a large constant $\widetilde{C}_{2}>1$, we get \eqref{highE1}.
This completes the proof of Lemma \ref{highE}.
\end{proof}

\section{Proof of Theorem \ref{TheoremKdV}}\label{SecProof}
With all the estimates in Section \ref{secKdV}, we now prove Theorem \ref{TheoremKdV} in this section.
\begin{proof}
The summation of \eqref{1stfluid} and \eqref{zerofluid} gives the total low order fluid estimates that
\begin{align}\label{lowfluid}
&\sum_{\alpha\leq 1}\{\|\pa^\al_x(\widetilde{\rho},\widetilde{u},\widetilde{\theta},\widetilde{\phi})(t)\|^{2}+\eps\|\pa^\al_x\pa_x\widetilde{\phi}(t)\|^2\}+\eps\de\sum_{1\leq\alpha\leq 2}\int^t_0\{\|\pa^\al_x(\widetilde{\rho},\widetilde{u},\widetilde{\theta},\widetilde{\phi})(s)\|^{2}+\eps\|\pa^\al_x\pa_x\widetilde{\phi}(s)\|^{2}\}\,ds
\notag\\ 
\leq&C\CE(0)+C\eps^2\de^2\|\pa^2_x\widetilde{\rho}(t)\|^2+C\eps^2\de^2\|\pa^2_x(g,f)(t)\|^2+ C\eps\de\int_0^t\|\pa^2_x(g,f)(s)\|_\sigma^{2}\,ds \notag\\ &+C(\de+\eps+\frac{\de}{\sqrt\eps})\int_0^t\CD(s)\,ds+CA\int_0^t\min\{(\frac{\eps}{\de}+\frac{\eps^2}{\de^2})\CD(s),(\frac{\eps^{3/2}}{\sqrt{\de}}+\frac{\eps^{3}}{\de})\CE(s)\}\,ds \notag\\ &+C(\eta)\int^t_0\CE(s)\,ds+C(\eta)t(\eps^4+\eps^3\de+\eps\de^3)+C\eps^5.
\end{align}	
 Adding \eqref{walbeg}$\times\widetilde{C}_{3}$ to \eqref{01f} and choosing a large constant $\widetilde{C}_{3}>1$, we get 
 \begin{align} \label{4.118A} 		 
 &\frac{d}{dt}\big\{\sum_{\alpha\leq 1}(\widetilde{C}_{3}C_\alpha\|\partial^{\alpha}_x g\|_w^{2}+ \overline{C}'_{\alpha}\|\partial^{\alpha}_x f\|_W^{2}+\overline{C}_1\|\partial^{\alpha}_x f\|^{2} ) 
 \notag\\  
 &\quad\quad+\sum_{\alpha+|\beta|\leq 2,|\beta|\geq 1}(C_{\alpha,\be}\widetilde{C}_{3}\|\partial^{\alpha}_\be g\|_w^{2} +\overline{C}_{\alpha,\be}\|\partial^{\alpha}_\be f\|_W^{2}) \big\} \notag\\  &+c\frac{1}{\eps\de}\big\{\sum_{\alpha\leq 1} (\|\partial^{\alpha}_x g\|_{\sigma,w}^{2} +\|\partial^{\alpha}_x\mathbf{P}_1 f\|_{\sigma}^{2}+\|\partial^{\alpha}_x f\|_{\sigma,W}^{2}) +\sum_{\al+|\beta|\leq 2,|\beta|\geq 1}(\|\partial^{\alpha}_\be g\|_{\sigma,w}^{2}+\|\partial^{\alpha}_\be f\|_{\sigma,W}^{2})\big\}      \notag\\ 		 &+cq_{1}q_{2}(1+t)^{-(1+q_2)}\big\{\sum_{\alpha\leq 1}\|\langle v\rangle \partial^{\alpha}_xg\|_{w}^{2}+\sum_{\al+|\beta|\leq 2,|\beta|\geq 1}\|\langle v\rangle \partial^{\alpha}_{\beta}g\|_{w}^{2}\big\} \notag\\ 		\leq& C(\eps+\de)\mathcal{D}(t)+C\eps\de(\|\pa^2_xf\|_{\sigma,W}^{2}+\|\pa^2_xg\|_{\sigma,w}^{2})+C(\eta)(\eps^3\de+\eps\de^3+\eps^2\de^2)+ C(\eta)\CE(t) \notag\\ 		 &+C\eps\de  \sum_{\alpha\leq 1}\|\pa^{\alpha}_x\pa_x(\widetilde{u},\widetilde{\theta})\|^{2}  +C(C(\eta)+\eps)\frac{\de^{1/2}}{\eps}\big\{\sum_{\alpha\leq 1}\|\langle v\rangle\pa^\al_x g\|_w^2+\sum_{\al+|\beta|\leq 2,|\beta|\geq 1}\|\langle v\rangle\pa^\al_\be g\|_w^2\big\}. \end{align}	 
 By taking integral in $t$ of \eqref{4.118A},  we can obtain 
\begin{align}\label{5.3A}
&\sum_{\alpha\leq 1}(\|\partial^{\alpha}_x f(t)\|_W^{2}+\|\partial^{\alpha}_x g(t)\|_w^{2})+\sum_{\alpha+|\beta|\leq 2,|\beta|\geq 1}(\|\partial^{\alpha}_\be f(t)\|_W^{2} +\|\partial^{\alpha}_\be g(t)\|_w^{2})
\notag\\ 	 
&+c\frac{1}{\eps\de}\int^t_0\big\{\sum_{\alpha\leq 1}(\|\partial^{\alpha}_x f(s)\|_{\sigma,W}^{2}+\|\partial^{\alpha}_x g(s)\|_{\sigma,w}^{2})+\sum_{\al+|\beta|\leq 2,|\beta|\geq 1}(\|\partial^{\alpha}_\be f(s)\|_{\sigma,W}^{2}+\|\partial^{\alpha}_\be g(s)\|_{\sigma,w}^{2})\big\}\,ds 
\notag\\ 	 &-\int^t_0(C(C(\eta)+\eps)\frac{\de^{1/2}}{\eps}-q_1q_2(1+s)^{-(1+q_2)})\big\{\sum_{\alpha\leq 1}\|\langle v\rangle \pa^\al_x g(s)\|_w^2+\sum_{\al+|\beta|\leq 2,|\beta|\geq 1}\|\langle v\rangle \pa^\al_\be g(s)\|_w^2\big\}\,ds \notag\\ 	 \leq&C\CE(0)+C(\eps+\de)\int^t_0\mathcal{D}(s)\,ds+C\eps\de\int^t_0\{\|\pa^2_xf(s)\|_{\sigma,W}^{2}+\|\pa^2_xg(s)\|_{\sigma,w}^{2}\}\,ds \notag\\ 	 
&+C\eps\de  \sum_{\alpha\leq 1}\int^t_0\|\pa^{\alpha}_x\pa_x(\widetilde{u},\widetilde{\theta})(s)\|^{2} \,ds+ C(\eta)\int^t_0\CE(s)\,ds
+C(\eta)t(\eps^3\de+\eps\de^3+\eps^2\de^2).  
\end{align} 
Adding \eqref{lowfluid}$\times\widetilde{C}_{4}$ to \eqref{5.3A}, then by choosing $\widetilde{C}_{4}>1$ large enough, we get
\begin{align}\label{5.4A} 
&\widetilde{C}_{4}\sum_{\alpha\leq 1}\{\|\pa^\al_x(\widetilde{\rho},
\widetilde{u},\widetilde{\theta},\widetilde{\phi})(t)\|^{2}+\eps\|\pa^\al_x\pa_x\widetilde{\phi}(t)\|^2\}     +\widetilde{C}_{4}\eps\de\sum_{1\leq\alpha\leq 2}\int^t_0\{\|\pa^\al_x(\widetilde{\rho},\widetilde{u},\widetilde{\theta},\widetilde{\phi})(s)\|^{2}+\eps\|\pa^\al_x\pa_x\widetilde{\phi}(s)\|^{2}\}\,ds \notag\\ 
&+\sum_{\alpha\leq 1}(\|\partial^{\alpha}_x f(t)\|_W^{2}+\|\partial^{\alpha}_x g(t)\|_w^{2})+\sum_{\alpha+|\beta|\leq 2,|\beta|\geq 1}(\|\partial^{\alpha}_\be f(t)\|_W^{2} +\|\partial^{\alpha}_\be g(t)\|_w^{2})
\notag\\ 	
&+\frac{1}{\eps\de}\int^t_0\big\{\sum_{\alpha\leq 1}(\|\partial^{\alpha}_x f(s)\|_{\sigma,W}^{2}+\|\partial^{\alpha}_x g(s)\|_{\sigma,w}^{2})+\sum_{\al+|\beta|\leq 2,|\beta|\geq 1}(\|\partial^{\alpha}_\be f(s)\|_{\sigma,W}^{2}+\|\partial^{\alpha}_\be g(s)\|_{\sigma,w}^{2})\big\}\,ds 
\notag\\ 	 
&-\int^t_0(C(C(\eta)+\eps)\frac{\de^{1/2}}{\eps}-q_1q_2(1+s)^{-(1+q_2)})\big\{\sum_{\alpha\leq 1}\|\langle v\rangle \pa^\al_x g(s)\|_w^2+\sum_{\al+|\beta|\leq 2,|\beta|\geq 1}\|\langle v\rangle \pa^\al_\be g(s)\|_w^2\big\}\,ds 
\notag\\ 
 &\leq C\CE(0)+C\eps^2\de^2\|\pa^2_x\widetilde{\rho}(t)\|^2+C\eps^2\de^2\|\pa^2_x(g,f)(t)\|^2
+C\eps\de\int^t_0\{\|\pa^2_xf(s)\|_{\sigma,W}^{2}+\|\pa^2_xg(s)\|_{\sigma,w}^{2}\}\,ds
\notag\\
\quad&+C(\de+\eps+\frac{\de}{\sqrt\eps})\int_0^t\CD(s)\,ds+CA\int_0^t\min\{(\frac{\eps}{\de}+\frac{\eps^2}{\de^2})\CD(s),(\frac{\eps^{3/2}}{\sqrt{\de}}+\frac{\eps^{3}}{\de})\CE(s)\}\,ds 
\notag\\ \quad&+C(\eta)\int^t_0\CE(s)\,ds+C(\eta)t(\eps^4+\eps^3\de+\eps\de^3+\eps^2\de^2)+C\eps^5.
\end{align} 
Adding \eqref{highE1}$\times\widetilde{C}_{5}$ to \eqref{5.4A}, 
then by choosing $\widetilde{C}_{5}>1$ large enough, we get
\begin{align*}  
%\label{highEA}   
&\sum_{\alpha\leq 1}\{\|\pa^\al_x(\widetilde{\rho},\widetilde{u},\widetilde{\theta},\widetilde{\phi})(t)\|^{2}+\eps\|\pa^\al_x\pa_x\widetilde{\phi}(t)\|^2
+\|\partial^{\alpha}_x f(t)\|_W^{2}+\|\partial^{\alpha}_x g(t)\|_w^{2}\} 
\notag\\  
&+\widetilde{C}_{5}\eps^2\de^2
\big\{\frac{1}{\ka_2}(\|\pa^2_x(\widetilde{\rho},\widetilde{u},\widetilde{\theta},\widetilde{\phi})(t)\|^{2}+\eps\|\pa^3_x\widetilde{\phi}(t)\|^{2}) +\|\pa^2_xf(t)\|^2_W  
+\frac{1}{\ka_3\ka_2}\|\partial^{2}_x g(t)\|_w^{2}\big\}
\notag\\  
&+\sum_{\alpha+|\beta|\leq 2,|\beta|\geq 1}(\|\partial^{\alpha}_\be f(t)\|_W^{2} +\|\partial^{\alpha}_\be g(t)\|_w^{2})
+\eps\de\sum_{1\leq\alpha\leq 2}\int^t_0\{\|\pa^\al_x(\widetilde{\rho},\widetilde{u},\widetilde{\theta},\widetilde{\phi})(s)\|^{2}+\eps\|\pa^\al_x\pa_x\widetilde{\phi}(s)\|^{2}\}\,ds
\notag\\ 	
&+\frac{1}{\eps\de}\int^t_0\big\{\sum_{\alpha\leq 1}(\|\partial^{\alpha}_x f(s)\|_{\sigma,W}^{2}+\|\partial^{\alpha}_x g(s)\|_{\sigma,w}^{2})+\sum_{\al+|\beta|\leq 2,|\beta|\geq 1}(\|\partial^{\alpha}_\be f(s)\|_{\sigma,W}^{2}+\|\partial^{\alpha}_\be g(s)\|_{\sigma,w}^{2})\big\}\,ds 
\notag\\  &+	\widetilde{C}_{5}\eps\de\int^t_0(\|\pa^2_xf(s)\|^2_{\si,W}+\|\pa^2_x\mathbf{P}_1f(s)\|^2_{\si}\,ds   +\frac{1}{\ka_3\ka_2}\|\partial^{2}_x g(s)\|_{\sigma,w}^{2})\,ds   
\notag\\ 		   
\leq& 
\widetilde{C}_{5}C(\ka_3\frac{1}{\ka_2}+\ka_2)\eps\de\int^t_0\|\pa^2_x(\widetilde{\rho},\widetilde{u},\widetilde{\theta})(s)\|^2\,ds+C\frac{1}{\ka_3\ka_2}\CE(0)  +C(\eta)\frac{1}{\ka_3\ka_2}\eps^2\de^2 +C\frac{1}{\ka_3\ka_2}\eps^5     
\notag\\           
&+CA\int_0^t\min\{(\frac{\eps}{\de}+\frac{\eps^2}{\de^2})\CD(s),(\frac{\eps^{3/2}}{\sqrt{\de}}+\frac{\eps^{3}}{\de})\CE(s)\}\,ds 
+C\frac{1}{\ka_3\ka_2}C(\eta)\int_0^{t}\CE(s)\,ds         
\notag\\	  
&+	C\frac{1}{\ka_2}(\ka+C_{\ka}\frac{1}{\ka_3}(\de+\eps+\frac{\de^2}{\eps}))\int^t_0\CD(s)\,ds        +C_{\ka} \frac{1}{\ka_3\ka_2} C(\eta)t(\eps\de^2+\eps^3\de+\eps^4+\de^3)
\notag\\ 	 
&+C\int^t_0(C(C(\eta)+\eps)\frac{\de^{1/2}}{\eps}-q_1q_2(1+s)^{-(1+q_2)})\big\{\sum_{\alpha\leq 1}\|\langle v\rangle \pa^\al_x g(s)\|_w^2+\sum_{\al+|\beta|\leq 2,|\beta|\geq 1}\|\langle v\rangle \pa^\al_\be g(s)\|_w^2\big\}\,ds 
\notag\\	   
&+C\frac{1}{\ka_3\ka_2}\eps^2\de^2\int_0^{t}
[C(C(\eta)+\eps)\frac{\de^{1/2}}{\eps}-\widetilde{C}_{5}q_{1}q_{2}
(1+s)^{-(1+q_2)}]
\|\langle v\rangle \pa^2_x g(s)\|_w^2\,ds. 	  
\end{align*}	
Recall $\CE(t)$ and $\CD(t)$ given by \eqref{DefE} and \eqref{DefD} respectively. We take $\ka_3=\ka_2^2$ and let $\ka_2$ small enough to get
\begin{align}
\label{5.6A}
&\CE(t)+\int^t_0\CD(s)\,ds\leq C\frac{1}{\ka_3\ka_2}\CE(0)  +C(\eta)\frac{1}{\ka_3\ka_2}\eps^2\de^2 +C\frac{1}{\ka_3\ka_2}\eps^5      \notag\\            &+CA\int_0^t\min\{(\frac{\eps}{\de}+\frac{\eps^2}{\de^2})\CD(s),(\frac{\eps^{3/2}}{\sqrt{\de}}+\frac{\eps^{3}}{\de})\CE(s)\}\,ds  +C\frac{1}{\ka_3\ka_2}C(\eta)\int_0^{t}\CE(s)\,ds         
    \notag\\	   
    &+	C\frac{1}{\ka_2}(\ka+C_{\ka}\frac{1}{\ka_3}(\de+\eps+\frac{\de^2}{\eps}))\int^t_0\CD(s)\,ds        +C_{\ka} \frac{1}{\ka_3\ka_2} C(\eta)t(\eps\de^2+\eps^3\de+\eps^4+\de^3) 
    \notag\\ 	  
    &+C\int^t_0(C(C(\eta)+\eps)\frac{\de^{1/2}}{\eps}-q_1q_2(1+s)^{-(1+q_2)})\big\{\sum_{\alpha\leq 1}\|\langle v\rangle \pa^\al_x g(s)\|_w^2+\sum_{\al+|\beta|\leq 2,|\beta|\geq 1}\|\langle v\rangle \pa^\al_\be g(s)\|_w^2\big\}\,ds  \notag\\	    &+C\frac{1}{\ka_3\ka_2}\eps^2\de^2\int_0^{t} [C(C(\eta)+\eps)\frac{\de^{1/2}}{\eps}-\widetilde{C}_{5}q_{1}q_{2} (1+s)^{-(1+q_2)}] \|\langle v\rangle \pa^2_x g(s)\|_w^2\,ds. 
\end{align}
Now we make the important assumption that
\begin{align}\label{deeps}
	\eps^{3-c_0}<\de<\eps^{2+c_0}\ \text{or}\ \de=\frac{1}{c_1}\eps^3\ \text{or}\ \de=c_1\eps^2,
\end{align}
for any $0<c_0<1/2$ and some $0<c_1<1$, both are independent of $\eps$ and $\de$, and $c_1$ will be determined later. 
We should point out that \eqref{deeps} is to ensure \eqref{5.8A}, \eqref{5.9A} and \eqref{5.10A} hold true.
We choose
$\eps\ll \ka\ll \ka_2$  with $\ka_3=\ka_2^2$  to require that
\begin{equation}
\label{5.8A}
C\frac{1}{\ka_2}(\ka+C_{\ka}\frac{1}{\ka_3}(\de+\eps+\frac{\de^2}{\eps}))\leq \frac{1}{2}.
\end{equation}
For $t\leq T$ with $T$ given in Theorem \ref{TheoremKdV}, we let $\eps$ and $c_1$ with $\eps\ll c_1$ small enough to require that
\begin{equation} 
\label{5.9A}
C(C(\eta)+\eps)\frac{\de^{1/2}}{\eps}-q_1q_2(1+s)^{-(1+q_2)}
\leq C(C(\eta)+\eps)(c_1^{1/2}+\eps^{c_0/2})-q_1q_2(1+T)^{-(1+q_2)}
\leq 0.
\end{equation}
By these facts, we have from \eqref{5.6A} that
\begin{align}
\label{5.10A}
\CE(t)+\frac{1}{2}\int^t_0\CD(s)\,ds\leq &C\CE(0)  +C(\eta)\eps^2\de^2 +C\eps^5  
 +C(\eta)t(\eps\de^2+\eps^3\de+\eps^4+\de^3) 
\notag\\            
&+(CA\sqrt{c_1}+C(\eta))\int_0^{t}\CE(s)\,ds.
\end{align}
By \eqref{deeps} and \eqref{5.10A}, we get
\begin{align*}
%\label{5.11A} 
\CE(t)\leq C\CE(0) +C\eps^5+C(\eta)(t+1)\eps^4+(CA\sqrt{c_1}+C(\eta))\int_0^{t}\CE(s)\,ds,
\end{align*}
which together with the Gronwall inequality gives
\begin{align*}
	\CE(t)\leq& C[\CE(0)+\eps^5+C(\eta)(T+1)\eps^4]e^{(CA\sqrt{c_1}+C(\eta))T}.
\end{align*}
We let 
\begin{align*}
	\CE(0)\leq \eps^4,
\end{align*}
then for $t\in(0,T]$,
there exists a constant $\widetilde{C}_{6}>1$ such that
\begin{align}\label{gronwall}
	\CE(t)\leq& \widetilde{C}_{6}(1+T)e^{\widetilde{C}_{6}(1+\sqrt{c_1}A)T}\eps^4.
\end{align}
 Now we can choose $A$ as in \eqref{apriori} to be
\begin{align}\label{defA}
	A=2\widetilde{C}_{6}(1+T)e^{2\widetilde{C}_{6}T},
\end{align}
which depends only on $T$. Substituting \eqref{defA} into \eqref{gronwall}, it holds that
\begin{align*}
\CE(t)\leq& \frac{A}{2}e^{\widetilde{C}_{6}(1+\sqrt{c_1}A)T-2\widetilde{C}_{6}T}\eps^4.
\end{align*}
Then we take $c_1$ so small that
$$(1+\sqrt{c_1}A)T-2T\leq 0,$$
which yields that for $t\in(0,T]$,
\begin{equation}
\label{5.14A}
\CE(t)\leq \frac{A}{2}\eps^4.
\end{equation}
Thus, the a priori assumption  \eqref{apriori} 
can be closed since the estimate  \eqref{5.14A} is strictly stronger than \eqref{apriori}.
Recall $F=M+\overline{G}+\sqrt{\overline{M}}f+\sqrt{\mu}g$, we 
have from \eqref{5.14A}, \eqref{DefE}, Lemma \ref{lem3.1}
and \eqref{deeps} that
\begin{align*} 
\|\frac{F(t,x,v)-M_{[\bar{\rho},\bar{u},\bar{\theta}](t,x)}(v)}{\sqrt{\mu}}\|_{L_{x}^{\infty}L_{v}^{2}}
\leq&
\|\frac{M_{[\rho,u,\theta](t,x)}(v)-M_{[\bar{\rho},\bar{u},\bar{\theta}](t,x)}(v)}{\sqrt{\mu}}\|_{L_{x}^{\infty}L_{v}^{2}}
+\|\frac{\overline{G}(t,x,v)}{\sqrt{\mu}}\|_{L_{x}^{\infty}L_{v}^{2}}
\notag\\ 
&+\|\frac{\sqrt{\overline{M}}f(t,x,v)}{\sqrt{\mu}}\|_{L_{x}^{\infty}L_{v}^{2}}
+\|g(t,x,v)\|_{L^{\infty}_{x}L^{2}_{v}}
\notag\\ 
\leq& C\|(\widetilde{\rho},\widetilde{u},\widetilde{\theta})(t)\|_{L_{x}^{\infty}}+C(\eta)\eps\de+2\sqrt{A}\eps^2
\leq C\eps^2, 
\end{align*}
for any $t\in[0,T]$. Similar estimates also hold true for
$\|\frac{F(t,x,v)-M_{[\bar{\rho},\bar{u},\bar{\theta}](t,x)}(v)}{\sqrt{\mu}}\|$. This gives \eqref{2.63A}.

 By the uniform a priori estimates, the local existence of the solution, and the standard continuity argument, we can immediately derive the existence and uniqueness of solutions to the VPL system \eqref{rVPL}.
Recall $$\frac{1}{\de}:=\frac{\nu}{\eps^{3+1/2}}.$$ Then \eqref{deeps} is equivalent to \eqref{nueps}. Hence, the proof of Theorem \ref{TheoremKdV} is complete.
\end{proof}

\section{Global estimates around constant state}\label{SecGlobal}
In this section, we are devoted to the proof of our second result Theorem \ref{TheoremGlobal}. Namely, we consider the global existence of solutions near global Maxwellians for the scaled VPL system \eqref{rVPL} with the small parameter $\eps>0$. In particular, we establish the uniform a priori estimates with respect to both time and $\eps$.  

For the purpose, we still denote the macroscopic quantities $(\rho,u,\theta,\phi)(t,x)$ for the unknown function $F(t,x,v)$ by \eqref{DefMacro}. To construct solutions around constant states, we let
\begin{equation*}
%\label{def.backpcon}
(\bar{\rho},\bar{u},\bar{\theta},\bar{\phi})=(1,0,\frac{3}{2},0),
\end{equation*}
and still define the perturbation functions $(\widetilde{\rho},\widetilde{u},\widetilde{\theta},\widetilde{\phi})$ and $F=M+G$ with  $G=h=\sqrt{\overline{M}}f+\sqrt{\mu}g$ as in \eqref{Defpert}. Notice that now the Maxwellian
$$
\overline{M}=M_{[\bar{\rho},\bar{u},\bar{\theta}](t,x)}(v)
=\frac{1}{(2\pi)^{3/2}}\exp\big(-\frac{|v|^{2}}{2
}\big)
$$
is a global Maxwellian, which implies $\overline{G}=0$.
By performing similar calculations to how we get \eqref{perturbeq}, \eqref{eqg} and \eqref{eqf}, we have the following three systems. For the macroscopic perturbation, it holds that

\begin{align} 		
\label{Gperturbeq} 		
\left\{ 		
\begin{array}{rl} 			
&\dis \partial_{t}\widetilde{\rho}-\frac{1}{\eps}\pa_x \widetilde{\rho} +\partial_x(\rho\widetilde{u}_{1})=0, 			
\\ 			
&\dis \partial_t\widetilde{u}_{1}-\frac{1}{\eps}\partial_x\widetilde{u}_{1} 			+u_{1}\partial_x\widetilde{u}_{1}+\frac{2}{3}\partial_x\widetilde{\theta} 			+\frac{2}{3}\frac{\theta}{\rho}\partial_x\widetilde{\rho} 			+\frac{1}{\eps}\partial_x\widetilde{\phi} 			
=\eps\de\frac{4}{3\rho}\partial_x(\mu(\theta)\partial_xu_{1})-\frac{1}{\rho}\partial_x(\int_{\mathbb{R}^{3}} v^{2}_{1}L^{-1}_{M}\Theta\,dv),
\\ 			&\dis\partial_t\widetilde{u}_{i}-\frac{1}{\eps}\partial_x\widetilde{u}_{i}+u_{1}\partial_x\widetilde{u}_{i}=\eps\de\frac{1}{\rho}\partial_x(\mu(\theta)\partial_xu_{i})-\frac{1}{\rho}\partial_x(\int_{\mathbb{R}^3} v_{1}v_{i}L^{-1}_{M}\Theta\,dv), ~~i=2,3, 			
\\ 			
&\dis\partial_t\widetilde{\theta}-\frac{1}{\eps}\partial_x\widetilde{\theta} 			+\frac{2}{3}\bar{\theta}\partial_x\widetilde{u}_{1}+\frac{2}{3}\widetilde{\theta}\partial_xu_{1} 			+u_1\partial_x\widetilde{\theta}=\eps\de\frac{1}{\rho}\partial_x(\kappa(\theta)\partial_x\theta)+\eps\de\frac{4}{3\rho}\mu(\theta)(\partial_xu_{1})^2
\\ 			
&\dis\qquad  			
+\eps\de\frac{1}{\rho}\mu(\theta)[(\partial_xu_{2})^2+(\partial_xu_{3})^2] +\frac{1}{\rho}u\cdot\partial_x(\int_{\mathbb{R}^3} v_{1}v L^{-1}_{M}\Theta\, dv)-\frac{1}{\rho}\partial_x(\int_{\mathbb{R}^3}v_{1}\frac{|v|^{2}}{2}L^{-1}_{M}\Theta\, dv), 			
\\ 			
&\dis-\eps^2\pa_x^2\widetilde{\phi}+\eps\widetilde{\phi}
=\widetilde{\rho}.		
\end{array} \right. 	
\end{align}

For equations on $g$ and $f$ in the Caflisch's decomposition $F=M+\sqrt{\overline{M}}f+\sqrt{\mu}g$, it holds that
\begin{align}
\label{Geqg} 		
&\partial_tg-\frac{1}{\eps}\partial_xg+v_{1}\partial_xg	-\frac{1}{\eps\de}L_Dg-\frac{1}{\eps}\frac{\partial_x\phi\partial_{v_{1}}(\sqrt{\mu}g+\sqrt{\overline{M}}f)}{\sqrt{\mu}} 		
\notag\\ 		
=&\frac{1}{\eps\de}\Gamma(g+\frac{\sqrt{\overline{M}}}{\sqrt{\mu}}f,g+\frac{\sqrt{\overline{M}}}{\sqrt{\mu}}f)+\frac{1}{\eps\de}\{\Gamma(\frac{M-\overline{M}}{\sqrt{\mu}},\frac{\sqrt{\overline{M}}}{\sqrt{\mu}}f)+\Gamma(\frac{\sqrt{\overline{M}}}{\sqrt{\mu}}f,\frac{M-\overline{M}}{\sqrt{\mu}})\}, 	
\end{align} 	
and 	
\begin{align}\label{Geqf} 		&\partial_tf-\frac{1}{\eps}\partial_xf+v_{1}\partial_xf	-\frac{1}{\eps\de}\CL_{\overline{M}} f-\frac{1}{\eps\de}\frac{\sqrt{\mu}}{\sqrt{\overline{M}}}L_Bg\notag\\ 		=&\frac{P_{0}(v_{1}\partial_x(\sqrt{\overline{M}}f)+v_{1}\sqrt{\mu}\partial_xg)}{\sqrt{\overline{M}}}-\frac{1}{\sqrt{\overline{M}}}P_{1}\{v_{1}M(\frac{|v-u|^{2} 			\partial_x\widetilde{\theta}}{2K\theta^{2}}+\frac{(v-u)\cdot\partial_x\widetilde{u}}{K\theta})\}.
\end{align}
Note that three systems above have the similar structure to \eqref{perturbeq}, \eqref{eqg} and \eqref{eqf}, where $(\bar{\rho},\bar{u},\bar{\theta},\bar{\phi})=(1,0,\frac{3}{2},0)$
and $\overline{G}=0$. Hence, by performing the same calculations as in Section \ref{secKdV}, all estimates in Section \ref{secKdV} still hold. By the note under \eqref{boundkdv}, the background profile $(\bar{\rho},\bar{u},\bar{\theta},\bar{\phi})$ can be regarded as a constant KdV solution in Section \ref{secKdV} and hence we can let all $\eta$ and $C(\eta)$ in Section \ref{secKdV} be zero. Furthermore, to carry out weighted energy estimates for global-in-time existence, we use a different weight for $g$:
\begin{align}
\label{defw}
\textbf{w}(\alpha,\beta)(t,v):=\langle v\rangle^{2(l-\al-|\beta|)}\exp\left((q_1-q_2\int^t_0 q_3(\tau)\,d\tau)\langle v\rangle\right),
\quad  l\geq \al+|\beta|,
\end{align}
with
$$
q_3(t)=\frac{\de}{\eps}\|\pa_t\widetilde{\phi}(t)\|^2_{L^\infty_x}+\frac{\de}{\eps^3}\|\pa_x\widetilde{\phi}(t)\|^2_{L^\infty_x}.
$$
We change the weight $w$ into $\textbf{w}$ in the definition \eqref{DefE} and \eqref{DefD} such that
\begin{align}\label{DefEG}
	\mathcal{E}(t):=&\sum_{\alpha\leq 1}\{\|\partial^{\alpha}_x(\widetilde{\rho},\widetilde{u},\widetilde{\theta},\widetilde{\phi})(t)\|^{2}+\eps\|\pa^\al_x \pa_x\widetilde{\phi}(t)\|^2
	+\|\partial^{\alpha}_xg(t)\|_{\textbf{w}}^{2}+\|\partial^{\alpha}_xf(t)\|_W^{2}\}
	\nonumber\\
	&\hspace{0.5cm}+\eps^2 \de^2\{\|\pa^2_x(\widetilde{\rho},\widetilde{u},\widetilde{\theta},\widetilde{\phi})(t)\|^{2}+\eps\|\pa^3_x\widetilde{\phi}(t)\|^2
	+\|\pa^2_xg(t)\|_{\textbf{w}}^{2}+\|\pa^2_xf(t)\|_W^{2}\}\notag\\
	&\hspace{0.5cm}+\sum_{\alpha+|\beta|\leq 2,|\beta|\geq1}\{\|\partial^{\alpha}_{\beta}g(t)\|_{\textbf{w}}^{2}+\|\partial^{\alpha}_{\beta}f(t)\|_W^{2}\},
\end{align}
and
\begin{align}\label{DefDG}
	\mathcal{D}(t)&:=\eps\de\sum_{1\leq\alpha\leq 2}\{
	\|\partial^{\alpha}_x(\widetilde{\rho},\widetilde{u},\widetilde{\theta},\widetilde{\phi})(t)\|^{2}+\eps\|\pa^\al_x \pa_x\widetilde{\phi}\|^2\}
	+\frac{1}{\eps\de}\sum_{\alpha+|\beta|\leq 2,|\beta|\geq1}\{\|\partial^{\alpha}_{\beta}g(t)\|_{\sigma,\textbf{w}}^{2}+\|\partial^{\alpha}_{\beta}f(t)\|_{\sigma,W}^{2}\}
	\nonumber\\
	&\quad+\frac{1}{\eps\de}\sum_{\al\leq 1}\{\|\partial^{\alpha}_xg(t)\|_{\sigma,\textbf{w}}^{2}+\|\partial^{\alpha}_xf(t)\|_{\sigma,W}^{2}\}+\eps\de\sum_{\al=2}\{\|\partial^{\alpha}_xg(t)\|_{\sigma,\textbf{w}}^{2}+\|\partial^{\alpha}_xf(t)\|_{\sigma,W}^{2}\}.
\end{align}
Then we make the a priori assumption
\begin{equation}\label{Gapriori}
	\sup_{0\leq t\leq T}\mathcal{E}(t)+\int^T_0\CD(s)\,ds\leq A\eps^{4+\frac{3}{4}},
\end{equation}
with \eqref{smalleps} still holding true, where $0< T<\infty$ can be arbitray. By the similar calculations in \eqref{1patphi}, we get
\begin{align*}	\eps^2(\partial_{x}\partial_t\widetilde{\phi},\partial_{x}\partial_t\widetilde{\phi})+\eps(\partial_t\widetilde{\phi},\partial_t\widetilde{\phi}) =&(\frac{1}{\eps}\pa_x \widetilde{\rho} 		-\partial_x(\rho\widetilde{u}_{1}),\partial_t\widetilde{\phi}) 
\notag\\
=&\eps(\pa_x^2\widetilde{\phi},\partial_t\pa_x\widetilde{\phi})+(\pa_x\widetilde{\phi},\pa_t\widetilde{\phi})
-(\partial_x(\rho\widetilde{u}_{1}),\partial_t\widetilde{\phi}),
\end{align*} 	
which implies that
\begin{align*}
\eps^2\|\partial_t\pa_x\widetilde{\phi}\|^2+\eps\|\partial_t\widetilde{\phi}\|^2
\leq C\|\pa_x^2\widetilde{\phi}\|^2+
C\frac{1}{\eps}\big(\|\pa_x\widetilde{\phi}\|^2+
\|\pa_x\widetilde{u}_{1}\|^2+\|\pa_x\widetilde{\rho}\|^2\|\widetilde{u}_{1}\|_{L^\infty}^2\big)\leq C\frac{1}{\eps^2\de}\CD(t).
\end{align*}
It follows from this, \eqref{defw} and \eqref{Gapriori} that
\begin{align*}
	\int^t_0 q_3(s)\,ds&\leq \int^t_0 (\frac{\de}{\eps}\|\pa_s\widetilde{\phi}(s)\|\|\pa_s\pa_x\widetilde{\phi}(s)\|+\frac{\de}{\eps^3}\|\pa_x\widetilde{\phi}(s)\|\|\pa^2_x\widetilde{\phi}(s)\|)\,ds
    \notag\\
	&\leq C\int^t_0 (\frac{\de}{\eps}\frac{1}{\eps^{7/2}\de}\CD(s)+\frac{1}{\eps^4}\CD(s))\,ds\leq CA\eps^{\frac{1}{4}},
\end{align*}
uniformly on $0<t<T$.
Hence, $q_3(t)$ is integrable in time, and we further require that 
\begin{align*}
0<q_1-q_2\cdot CA\eps^{\frac{1}{4}}\leq q_1-q_2\int^T_0 q_3(\tau)\,d\tau
<q_1.
\end{align*}
In this section we will choose $0<q_1\ll 1$ and $q_2=\ep^{-\frac{1}{8}}$.
Recall the weighted  estimates on the linearized Landau  operator $\CL_{\overline{M}}$ with $\overline{M}$ being a global Maxwellian. 
Assume that $0\leq q_1-q_2\int^T_0 q_3(\tau)d\tau\ll 1$ in $\textbf{w}(\alpha,\beta)$, there exists $C_\ka>0$ such that
\begin{equation*}
%\label{6.8A}
\langle\partial^\alpha_\beta\mathcal{L}_{\overline{M}}f,\textbf{w}^2(\alpha,\beta)\partial^\alpha_\beta f\rangle
\geq |\textbf{w}(\alpha,\beta)\partial^\alpha_\beta f|_\sigma^2-\ka\sum_{|\beta_1|=|\beta|}|\textbf{w}(\alpha,\beta_1)\partial^\alpha_{\beta_1} f|_\sigma^2
-C_\ka\sum_{|\beta_1|<|\beta|}|\textbf{w}(\alpha,\beta_1)\partial^\alpha_{\beta_1} f|_\sigma^2,
\end{equation*}
and
\begin{equation*}
%\label{6.9A}
\langle\partial_x^\alpha\mathcal{L}_{\overline{M}}f,\textbf{w}^2(\alpha,0)\partial^\alpha g\rangle\geq |\textbf{w}(\alpha,0)\partial_x^\alpha f|_\sigma^2-C_\ka|\chi_{\ka}(v)\partial_x^\alpha f|_2^2,
\end{equation*}
where $\chi_\ka(v)$ is a general cutoff function depending on $\ka$.
We point out that they can be proved by a straightforward modification of the arguments used in \cite[Lemmas 9]{Strain-Guo}
and \cite[Lemmas 2.2-2.3]{Wang}, and we thus omit their proofs for brevity.

Note that the a priori assumption \eqref{Gapriori} naturally implies that
 $$
 \sup_{0\leq t\leq T}\mathcal{E}(t)\leq A\eps^4,
 $$ which is consistent with \eqref{apriori}. Then under the a priori assumption in \eqref{Gapriori}, we have the following lemmas by similar proofs to Lemmas \ref{le1stfluid}, \ref{le01f}, \ref{lem4.16} and \ref{le2ndenergy}. 
\begin{lemma}%\label{lem6.1}
Under the a priori assumption \eqref{Gapriori}, it holds that
\begin{align}
\label{Gfluid} &\|\pa_x(\widetilde{\rho},\widetilde{u},\widetilde{\theta},\widetilde{\phi})(t)\|^{2}+\eps\|\pa^2_x\widetilde{\phi}(t)\|^2+\eps\de\int^t_0\{\|\pa^2_x(\widetilde{\rho},\widetilde{u},\widetilde{\theta},\widetilde{\phi})(s)\|^{2}+\eps\|\pa^3_x\widetilde{\phi}(s)\|^{2}\}\,ds \notag\\ \leq&C\CE(0)+C\eps^2\de^2\|\pa^2_x\widetilde{\rho}(t)\|^2+C\eps^2\de^2\|\pa^2_x(g,f)(t)\|^2+ C\eps\de\int_0^t\|\pa^2_x(g,f)(s)\|_\sigma^{2}\,ds \notag\\ &+C(\de+\eps+\frac{\de}{\sqrt\eps})\int_0^t\CD(s)\,ds+CA\int_0^t\min\{(\frac{\eps}{\de}+\frac{\eps^2}{\de^2})\CD(s),(\frac{\eps^{3/2}}{\sqrt{\de}}+\frac{\eps^{3}}{\de})\CE(s)\}\,ds+C\eps^5, 
\end{align}	
and
\begin{align}\label{Gf} 	
&\frac{d}{dt}\big\{\sum_{\alpha\leq 1}\overline{C}'_{\alpha}\|\partial^{\alpha}_x f\|_W^{2}+\overline{C}_1\sum_{\alpha\leq 1}\|\partial^{\alpha}_x f\|^{2}    +\sum_{\alpha+|\beta|\leq 2,|\beta|\geq 1}\overline{C}_{\alpha,\be}\|\partial^{\alpha}_\be f\|_W^{2}\big\}      \notag\\    
&+c\frac{1}{\eps\de}\big\{\overline{C}_1     \sum_{\alpha\leq 1}\|\partial^{\alpha}_x\mathbf{P}_1 f\|_{\sigma}^{2}+     \sum_{\alpha\leq 1}\|\partial^{\alpha}_x f\|_{\sigma,W}^{2}+\sum_{\al+|\beta|\leq 2,|\beta|\geq 1}\|\partial^{\alpha}_\be f\|_{\sigma,W}^{2}\big\}
\notag\\ 	\leq&C\eps\de(\|\pa^2_xf\|_{\sigma,W}^{2}+\|\pa^2_xg\|_{\sigma,\textbf{w}}^{2})+C(\eps+\de)\mathcal{D}(t)   
\notag\\ 	
&+C\eps\de  \sum_{\alpha\leq 1}\|\pa^{\alpha}_x\pa_x(\widetilde{u},\widetilde{\theta})\|^{2}      +C\frac{1}{\eps\de}\big\{\sum_{\alpha\leq 1}\|\partial^{\alpha}_x g\|_{\sigma,\textbf{w}}^{2}+\sum_{\al+|\beta|\leq 2,|\beta|\geq 1}\|\partial^{\alpha}_\be g\|_{\sigma,\textbf{w}}^{2}\big\}, 
\end{align}	
for some positive constants $\overline{C}'_{\alpha}$, $\overline{C}_{\alpha,\be}$ and $\overline{C}_1$,
and
\begin{align}
\label{G2ndwdiss} &\eps^2\de^2\|\pa^2_xf(t)\|^2_W+	\eps\de\int^t_0\|\pa^2_xf(s)\|^2_{\si,W}\,ds
\notag\\ \leq	&C\CE(0)+C\eps^5 +C\eps^2\de^2\|\pa^2_x(\widetilde{\rho},\widetilde{u},\widetilde{\theta})(t)\|^2+C\ka_2\eps\de\int^t_0\|\pa^2_x(\widetilde{\rho},\widetilde{u},\widetilde{\theta})(s)\|^2\,ds 
\notag\\
&+C\frac{1}{\ka_2}\eps\de\int^t_0\|(\pa^2_xg,\partial^{2}_x\mathbf{P}_1 f)(s)\|^2_{\si}\,ds +(C\ka+C_\ka(\de+\eps+\frac{\de^2}{\eps}))\int^t_0\CD(s)\,ds, 	\end{align}	    
for any $0<\ka<1$ and $0<\ka_2<1$, and
\begin{align}
\label{G2ndenergy} 		
&\eps^2\de^2\{\|\pa^2_x(\widetilde{\rho},\widetilde{u},\widetilde{\theta},\widetilde{\phi})(t)\|^{2}+\|\pa^2_xf(t)\|^{2}+\eps\|\pa^3_x\widetilde{\phi}(t)\|^{2}\}+	\eps\de\int^t_0\|\pa^2_x\mathbf{P}_1f(s)\|^2_{\si}\,ds
\notag\\	\leq&C\CE(0)        +C\eps^5+C\eps^2\de^2\|\pa^2_xg(t)\|^{2}+	(C\ka+C_\ka(\de+\eps+\frac{\de^2}{\eps}))\int^t_0\CD(s)\,ds         \notag\\ 		&+C\eps\de\int^t_0(\ka_3\|\pa^2_x(\widetilde{\rho},\widetilde{u},\widetilde{\theta})(s)\|^2+\frac{1}{\ka_3}\|\pa^2_xg(s)\|^2_{\si})\,ds,	 	
\end{align}     
for any $0<\ka<1$ and $0<\ka_3<1$. 
\end{lemma}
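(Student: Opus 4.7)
The plan is to follow the four proofs in Lemmas \ref{le1stfluid}, \ref{le01f}, \ref{lem4.16}, and \ref{le2ndenergy} essentially verbatim, exploiting three simplifications that come from specialising to the constant background $(\bar\rho,\bar u,\bar\theta,\bar\phi)=(1,0,\tfrac32,0)$: first, every derivative of a background quantity vanishes, so all terms of the form $C(\eta)$ disappear; second, $\overline G\equiv 0$, which eliminates all interaction terms involving $\overline G$ (in particular the boundary terms of the form $\eps\de\int\partial_x[\tfrac{1}{\rho}K\theta B_{1j}(\tfrac{v-u}{\sqrt{K\theta}})\tfrac{1}{M}h]\partial_x\widetilde u_i$ no longer need the $\overline G$ split in \eqref{defI11}); and third, the Maxwellian $\overline M$ in $\mathcal{L}_{\overline M}$ is global, so the coercivity inequalities from Lemma \ref{leL} collapse to their cleaner Guo-type form (all $C(\eta)\sum_{\alpha'<\alpha}|\pa_x^{\alpha'}f|^2_{\sigma,W}$ terms are gone). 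With these simplifications the right-hand sides of \eqref{Gfluid}--\eqref{G2ndenergy} are exactly what one reads off the respective proofs in Section \ref{secKdV} after setting $C(\eta)=0$ and $\overline G=0$.

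The one genuine adaptation required concerns the velocity weight. The lower-order $g$-estimate (Lemma \ref{lem4.11}) and its highest-order analogue (Lemma \ref{lehighg}) were stated with $w(\alpha,\beta)$ from \eqref{Defw}, whereas in the global-in-time setting we use $\mathbf w(\alpha,\beta)$ from \eqref{defw}. The steps that must be re-checked are: (i) the identity $-\partial_t\mathbf w^2=2q_2q_3(t)\langle v\rangle\mathbf w^2$, which replaces \eqref{patw} and yields a dissipation term $\int_0^t q_3(s)\|\langle v\rangle\partial^\alpha_\beta g(s)\|_{\mathbf w}^2\,ds$; (ii) the weighted coercivity for $L_D$ and $\mathcal{L}_{\overline M}$ with $\mathbf w$, which, under the requirement $0\le q_1-q_2\int_0^T q_3\,d\tau\ll 1$ recorded in the excerpt, reduces to the same form used for $w$; and (iii) the electric-field estimates of the type \eqref{I31}, where the additional $\langle v\rangle$-weighted dissipation produced by the new weight $\mathbf w$ is exactly what allows the terms $\tfrac{1}{\eps}|(\pa_x\phi\,\pa_{v_1}g,\mathbf w^2\pa^\alpha_\beta g)|$ to be absorbed once $q_3(t)$ is chosen large enough in proportion to $\frac{\de}{\eps}\|\partial_t\widetilde\phi\|_{L^\infty_x}^2+\frac{\de}{\eps^3}\|\partial_x\widetilde\phi\|_{L^\infty_x}^2$.

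For the macroscopic estimate \eqref{Gfluid}, I would repeat the derivations of Lemmas \ref{le1strho}, \ref{le1stu}, \ref{le1sttheta}, \ref{lem4.8}, and \ref{le1stfluid} word for word, noting that the cancellation argument of Lemma \ref{le1stu} (which exploits the Euler--Poisson structure to trade $\frac{1}{\eps}\partial_x^2\widetilde\phi$ for $\partial_t\widetilde\rho$) works identically around the constant state. For \eqref{Gf} I would mirror Lemma \ref{le01f}, combining the weighted $f$-estimate from a modified \eqref{walbef} with the $L^2$-estimate on $\mathbf P_1 f$ from Lemma \ref{lem4.13}; again the $\overline G$ terms and the $C(\eta)$ terms vanish. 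For \eqref{G2ndwdiss} and \eqref{G2ndenergy}, I would rerun the argument of Lemmas \ref{lem4.16} and \ref{le2ndenergy} starting from the equation \eqref{eqMf} for $M/\sqrt{\overline M}+\overline G/\sqrt{\overline M}+f$ (which here simplifies since $\overline G=0$), using the key decomposition \eqref{highenergy1}--\eqref{highenergy} to recover $\|\partial_x^2(\widetilde\rho,\widetilde u,\widetilde\theta)\|^2+\|\partial_x^2 f\|^2$ from $\|\partial_x^2(M/\sqrt{\overline M}+f)\|^2$ modulo a small $\|\partial_x^2 g\|^2$ error.

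The main obstacle is not at the level of individual estimates but in making sure the new weight $\mathbf w$ is self-consistent with the a priori assumption \eqref{Gapriori}, namely that $q_1-q_2\int_0^T q_3(s)\,ds$ stays strictly positive for \emph{all} $T>0$. The computation in the excerpt bounds $\int_0^t q_3(s)\,ds\le CA\eps^{1/4}$ using \eqref{Gapriori}, so choosing $q_2=\eps^{-1/8}$ and $q_1\in(0,1)$ small but fixed yields $q_1-q_2\cdot CA\eps^{1/4}\ge q_1-CA\eps^{1/8}>0$ for $\eps$ small. This in turn guarantees both that the coercivity of $\mathcal{L}_{\overline M}$ and $L_D$ is preserved under $\mathbf w$ and that the bookkeeping in \eqref{Gf}--\eqref{G2ndenergy} carries through uniformly in $t\in[0,T]$, which is precisely what is needed to close the argument globally in time in the subsequent proof of Theorem \ref{TheoremGlobal}.
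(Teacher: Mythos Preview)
Your approach is correct and matches the paper's own proof, which simply says to rerun Lemmas \ref{le1stfluid}, \ref{le01f}, \ref{lem4.16}, and \ref{le2ndenergy} with $C(\eta)=0$ and $\overline G=0$. One minor note: your points (i)--(iii) on the $w\to\mathbf w$ adaptation are not actually needed for \emph{this} lemma, since the $\mathbf w$-weighted $g$-norms appearing on the right-hand sides of \eqref{Gf}--\eqref{G2ndenergy} arise only from velocity-localized terms (the $L_Bg$ coupling and the $P_0$ projection) where any reasonable weight works; the genuine weight adaptation you describe belongs instead to the separate $g$-estimate in Lemma \ref{Gleg}.
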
	

\begin{proof}
Using \eqref{Gperturbeq} and \eqref{Geqf}, the proof follows by performing similar calculations to Lemma \ref{le1stfluid}, Lemma \ref{le01f} and Lemma \ref{le2ndenergy}, and letting $C(\eta)=0$ with $\eta=0$ in the aforementioned results. Hence, we omit the details for brevity.
\end{proof}

It remains to bound the zero order energy for fluid quantities and get the estimates for $g$. We start with $g$ to illustrate how we construct the new parameter-dependent weight $\textbf{w}$ to control the global singular terms induced by electronic field. We focus mainly on parts that are different from the corresponding results in Section \ref{secKdV}.
\begin{lemma}\label{Gleg}
	Under the a priori assumption \eqref{Gapriori}, it holds that
	\begin{align}\label{Gwalbeg}
		&\frac{d}{dt}\big\{C_\al\sum_{\alpha\leq 1}\|e^{\frac{1}{4}\eps^{-1}\widetilde{\phi}}\partial^\alpha_\be g\|_{\textbf{w}}^2+\sum_{\alpha+|\beta|\leq 2,|\beta|\geq 1}C_{\al,\beta}\|e^{\frac{1}{4}\eps^{-1}\widetilde{\phi}}\partial^{\alpha}_\be g\|_\textbf{w}^{2}\big\}
        \notag\\
        &+c\frac{1}{\eps\de}\big\{\sum_{\alpha\leq 1}\|\partial^{\alpha}_x g\|_{\sigma,\textbf{w}}^{2}+\sum_{\al+|\beta|\leq 2,|\beta|\geq 1}\|\partial^{\alpha}_\be g\|_{\sigma,\textbf{w}}^{2}\big\}
        \notag\\
		&\qquad+cq_{2}q_3(t)\big\{\sum_{\alpha\leq 1}\|\langle v\rangle^{1/2} \partial^{\alpha}_xg(t)\|_{\textbf{w}}^{2}+\sum_{\al+|\beta|\leq 2,|\beta|\geq 1}\|\langle v\rangle^{1/2} \partial^{\alpha}_{\beta}g(t)\|_{\textbf{w}}^{2}\big\}\notag\\
		\leq&C(\eps+\de)\mathcal{D}(t)+C\eps\de\|\pa^2_xg\|_{\sigma,w}^{2}+Cq_3(t)\big\{\sum_{\alpha\leq 1}\|\langle v\rangle^{1/2} \pa^\al_x g\|_\textbf{w}^2+\sum_{\al+|\beta|\leq 2,|\beta|\geq 1}\|\langle v\rangle^{1/2} \pa^\al_\be g\|_\textbf{w}^2\big\},
	\end{align}	
	and
	\begin{align}\label{Gw2g}
		&\eps^2\de^2\frac{d}{dt}\|e^{\frac{1}{4}\eps^{-1}\widetilde{\phi}}\partial^{2}_x g\|_\textbf{w}^{2}+c\eps\de\|\partial^{2}_x g\|_{\sigma,\textbf{w}}^{2}+c\eps^2\de^2 q_{2}q_3(t)\|\langle v\rangle^{1/2} \partial^{2}_xg(t)\|_{\textbf{w}}^{2}
        \notag\\
	&\leq C(\eps+\de)\mathcal{D}(t)+C\eps^2\de^2q_3(t)\|\langle v\rangle^{1/2} \pa^2_x g\|_\textbf{w}^2,
	\end{align}	
	for any $0<\ka<1$.
\end{lemma}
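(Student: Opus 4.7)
\medskip

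\noindent\textbf{Proof proposal for Lemma \ref{Gleg}.} The plan is to mimic the scheme of Lemma \ref{lem4.11}, but to replace the test function $\textbf{w}^2(\alpha,\beta)\,\partial^\alpha_\beta g$ by a doubly-weighted one, namely $\textbf{w}^2(\alpha,\beta)\,e^{\widetilde{\phi}/(2\eps)}\,\partial^\alpha_\beta g$, where the extra exponential $e^{\widetilde{\phi}/(4\eps)}$ is uniformly close to $1$ thanks to the a priori bound $\|\widetilde{\phi}\|_{L^\infty}\lesssim \eps^{2+3/8}$ drawn from \eqref{Gapriori} (Sobolev embedding applied to $\CE(t)\leq A\eps^{4+3/4}$). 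The rationale for the coefficient $1/4$ is Guo's cancellation trick from \cite{Guo-JAMS}, rescaled to the present $\eps^{-1}$-singular setting: integration by parts in $x$ against $\partial_x\bigl(e^{\widetilde{\phi}/(2\eps)}\bigr) = \tfrac{\partial_x\widetilde{\phi}}{2\eps}\,e^{\widetilde{\phi}/(2\eps)}$ in the transport term $v_1\partial_x g$ produces a contribution $-\tfrac{1}{4\eps}(v_1\partial_x\widetilde{\phi},\textbf{w}^2 e^{\widetilde{\phi}/(2\eps)}(\partial^\alpha_\beta g)^2)$, which cancels exactly against the $+\tfrac{1}{4\eps}(v_1\partial_x\widetilde{\phi},\ldots)$ generated by pulling $\partial_{v_1}$ off the factor $\sqrt{\mu}$ in the electronic-force term (using $\partial_{v_1}\sqrt{\mu}=-v_1\sqrt{\mu}/4$).

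Once this first-order cancellation is carried out, the remaining contributions split into four groups, which I will treat in order. First, $\partial_t \textbf{w}^2 = -2q_2 q_3(t)\langle v\rangle \textbf{w}^2$ delivers the advertised good dissipation $cq_2 q_3(t)\|\langle v\rangle^{1/2}\partial^\alpha_\beta g\|^2_\textbf{w}$. Second, $\partial_t e^{\widetilde{\phi}/(2\eps)}$ produces an error of size $\eps^{-1}\|\partial_t\widetilde{\phi}\|_{L^\infty}\|\textbf{w}\partial^\alpha_\beta g\|^2$, which is absorbed via Cauchy-Schwarz by the $\tfrac{\delta}{\eps}\|\partial_t\widetilde{\phi}\|^2_{L^\infty}$ piece of $q_3(t)$ combined with a small multiple of the collision dissipation $(\eps\delta)^{-1}\|g\|^2_{\sigma,\textbf{w}}$. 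Third, the residual $O(\eps^{-2})$ contribution $\tfrac{1}{4\eps^2}(\partial_x\widetilde{\phi},\textbf{w}^2 e^{\widetilde{\phi}/(2\eps)}(\partial^\alpha_\beta g)^2)$ arising from integrating by parts in $x$ against $e^{\widetilde{\phi}/(2\eps)}$ in the shift $-\eps^{-1}\partial_x g$ is controlled by AM-GM as $\tfrac{1}{\eps^2}\|\partial_x\widetilde{\phi}\|_{L^\infty}\lesssim \tfrac{\delta}{\eps^3}\|\partial_x\widetilde{\phi}\|^2_{L^\infty} + \tfrac{1}{\eps\delta}$, the first term being precisely a piece of $q_3(t)$ and the second being bounded via $\|\textbf{w} g\|^2\leq \|\langle v\rangle^{1/2} g\|^2_\textbf{w}$ together with the weight dissipation. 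Fourth, the leftover $O(\eps^{-1})$ commutator $\tfrac{1}{2\eps}(\partial_x\widetilde{\phi}\,\partial_{v_1}(\textbf{w}^2) g, e^{\widetilde{\phi}/(2\eps)} g)$ from moving $\partial_{v_1}$ onto $\textbf{w}^2$ is of the same type (note that $|\partial_{v_1}\textbf{w}/\textbf{w}|\lesssim 1$ because the exponent in $\textbf{w}$ is only $q_1\langle v\rangle$, differentiating to a bounded $q_1 v_1/\langle v\rangle$) and is absorbed identically.

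For the linear operator $-L_D/(\eps\delta)$ I will invoke the coercivity from Lemma \ref{LemmaL}, adapted trivially since now $\overline{M}$ is a constant-state Maxwellian, yielding $c(\eps\delta)^{-1}\|\partial^\alpha_\beta g\|^2_{\sigma,\textbf{w}}$ modulo the standard $|\beta|$-commutator losses absorbable by taking a suitable linear combination over $|\beta|\leq 2$. The nonlinear $\Gamma$ terms and the cross terms involving $f$ (where $\sqrt{\overline{M}}/\sqrt{\mu}$ provides exponential smallness in $v$) are estimated exactly as in the proof of Lemma \ref{lem4.11}, now with $C(\eta)=0$ throughout since $(\bar\rho,\bar u,\bar\theta,\bar\phi)$ is constant. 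The highest-order estimate \eqref{Gw2g} follows by applying the identical scheme at $\alpha=2$, $\beta=0$ and multiplying through by $\eps^2\delta^2$ to compensate the enlarged highest-order functional space, in direct parallel with Lemma \ref{lehighg}.

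The main obstacle is the delicate calibration of the weight $\textbf{w}$ required to make the scheme uniform in time: the singular $O(\eps^{-2})$ shift term must be absorbed without destroying the uniform-in-$\eps$ smallness of $\int_0^\infty q_3(s)\,ds$, since otherwise the exponent $q_1-q_2\int_0^t q_3$ in $\textbf{w}$ could become negative and the weight would lose its coercive effect. This is precisely why $q_3(t)$ is chosen with the explicit prefactors $\delta/\eps$ and $\delta/\eps^3$ tied to the energy norms of $\partial_t\widetilde{\phi}$ and $\partial_x\widetilde{\phi}$, and why one will ultimately be forced to take $q_2\sim \eps^{-1/8}$ so that the smallness provided by $\CE(t)\lesssim \eps^{4+3/4}$ balances the amplification $\eps^{-2}$ from the shift; this calibration is the ultimate source of the non-standard exponent $4+3/4$ appearing in the a priori assumption \eqref{Gapriori}.
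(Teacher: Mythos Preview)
Your proposal is correct and follows essentially the same approach as the paper: the paper likewise tests \eqref{Geqg} against $\textbf{w}^2(\alpha,\beta)\,e^{\widetilde{\phi}/(2\eps)}\,\partial^\alpha_\beta g$, uses the cancellation between the $-\tfrac{1}{4\eps}(v_1\partial_x\widetilde{\phi},\ldots)$ produced by the transport term and the $+\tfrac{1}{4\eps}(v_1\partial_x\widetilde{\phi},\ldots)$ coming from $\partial_{v_1}\sqrt{\mu}=-\tfrac{v_1}{4}\sqrt{\mu}$, absorbs the $\partial_t e^{\widetilde{\phi}/(2\eps)}$ and $-\eps^{-1}\partial_x$ errors via the two pieces of $q_3(t)$ exactly as you describe, and invokes Lemma~\ref{LemmaL} (with $\overline{M}$ now global) for the $L_D$ coercivity while referring the remaining nonlinear and cross terms back to the estimates of Lemmas~\ref{lem4.11} and~\ref{lehighg} with $C(\eta)=0$. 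Your closing remarks on the calibration $q_2=\eps^{-1/8}$ and the resulting exponent $4+\tfrac34$ also match the paper's choices.
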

\begin{proof}
	Applying $\partial^{\alpha}_\be$ to \eqref{Geqg} and taking the inner product with $\textbf{w}^2(\al,\be)e^{\frac{1}{2}\eps^{-1}\widetilde{\phi}} \partial^{\alpha}_\be g$, one has
\begin{align}\label{Gipg}
&(\partial_t\partial^{\alpha}_\be g,\textbf{w}^2(\alpha,\be)e^{\frac{1}{2}\eps^{-1}\widetilde{\phi}} \partial^{\alpha}_\be g)
-\frac{1}{\eps}(\partial_x\partial^{\alpha}_\be g,\textbf{w}^2(\alpha,\be)e^{\frac{1}{2}\eps^{-1}\widetilde{\phi}} \partial^{\alpha}_\be g)
\notag\\
&+(\partial^{\alpha}_\be (v_1\pa_xg),\textbf{w}^2(\alpha,\beta)e^{\frac{1}{2}\eps^{-1}\widetilde{\phi}} \partial_{\beta}^\alpha g)-\frac{1}{\eps}(\partial^{\alpha}_\be[\frac{\partial_x\phi\partial_{v_{1}}(\sqrt{\mu}g)}{\sqrt{\mu}}],\textbf{w}^2(\al,\be)e^{\frac{1}{2}\eps^{-1}\widetilde{\phi}} \partial^{\alpha}_\be g)
\notag\\
&-\frac{1}{\eps\de}(\partial^{\alpha}_\be L_D g,\textbf{w}^2(\al,\be)e^{\frac{1}{2}\eps^{-1}\widetilde{\phi}} \partial^{\alpha}_\be g)-\frac{1}{\eps}(\partial^{\alpha}_\be[\frac{\partial_x\phi\partial_{v_{1}}(\sqrt{\overline{M}}f)}{\sqrt{\mu}}],\textbf{w}^2(\al,\be)e^{\frac{1}{2}\eps^{-1}\widetilde{\phi}} \partial^{\alpha}_\be g)
\notag\\
=&\frac{1}{\eps\de}(\partial^{\alpha}_\be \Gamma(g+\frac{\sqrt{\overline{M}}}{\sqrt{\mu}}f,g+\frac{\sqrt{\overline{M}}}{\sqrt{\mu}}f),\textbf{w}^2(\al,\be)e^{\frac{1}{2}\eps^{-1}\widetilde{\phi}} \partial^{\alpha}_\be g)
        \notag\\
		&+\frac{1}{\eps\de}(\partial^{\alpha}_\be\Gamma(\frac{M-\overline{M}}{\sqrt{\mu}},\frac{\sqrt{\overline{M}}}{\sqrt{\mu}}f)+\partial^{\alpha}_\be\Gamma(\frac{\sqrt{\overline{M}}}{\sqrt{\mu}}f,\frac{M-\overline{M}}{\sqrt{\mu}}),\textbf{w}^2(\al,\be)e^{\frac{1}{2}\eps^{-1}\widetilde{\phi}} \partial^{\alpha}_\be g).
	\end{align}
In view of \eqref{Gapriori} and \eqref{DefEG}, one has
$$
\sup_{t\geq 0,x\in\mathbb{R}}|\widetilde{\phi}|\leq C\eps^2,\quad
e^{\frac{1}{2}\eps^{-1}\widetilde{\phi}}\sim 1.
$$
The definition of $\textbf{w}$ gives
\begin{equation*}%\label{patw}
\partial_{t}[\textbf{w}^{2}(\alpha,\beta)] =-2q_{2}q_3(t)\langle v\rangle \textbf{w}^{2}(\alpha,\beta),
\end{equation*}
and
$$
\partial_{v_1}\textbf{w}(\alpha,\beta)=2(l-\al-|\beta|)\frac{v_1}{\langle v\rangle^{2}}\textbf{w}(\alpha,\beta)+
(q_1-q_2\int^t_0 q_3(\tau)d\tau)\frac{v_1}{\langle v\rangle} w(\alpha,\beta),
$$
with
\begin{equation}
\label{Gpavw}
|\partial_{v_1}\textbf{w}(\alpha,\beta)|\leq C \textbf{w}(\alpha,\beta).
\end{equation}
Then we have from this, \eqref{boundlandaunorm} and \eqref{defw} that
\begin{align}\label{G1gtime}
&(\partial_t\partial^{\alpha}_\be g,\textbf{w}^2(\alpha,\be)e^{\frac{1}{2}\eps^{-1}\widetilde{\phi}} \partial^{\alpha}_\be g)
\notag\\
	=&\frac{1}{2}\frac{d}{dt}(\partial^\alpha_\be g,\textbf{w}^2(\alpha,\be)e^{\frac{1}{2}\eps^{-1}\widetilde{\phi}} \partial^\alpha_\be g)
	-\frac{1}{2}(\partial^\alpha_\be g,\partial_{t}[\textbf{w}^2(\alpha,\be)]e^{\frac{1}{2}\eps^{-1}\widetilde{\phi}} \partial^\alpha_\be g)-\frac{1}{2\eps}(\partial^\alpha_\be g,\textbf{w}^2(\alpha,\be)e^{\frac{1}{2}\eps^{-1}\widetilde{\phi}} \pa_t\widetilde{\phi}\partial^\alpha_\be g)
\notag\\
	\geq&\frac{1}{2}\frac{d}{dt}\|e^{\frac{1}{4}\eps^{-1}\widetilde{\phi}}\partial^\alpha_\be g\|_{\textbf{w}}^2+\frac{1}{2}q_2q_3(t)\|\langle v\rangle^{1/2} \partial^\alpha_\be g\|^2_{\textbf{w}}-\ka\frac{1}{\eps\de}\|\langle v\rangle^{-1/2}\partial^\alpha_\be g\|^2_{\textbf{w}}-C_\ka \frac{\de}{\eps}\|\pa_t\widetilde{\phi}\|^2_{L^\infty_x}\|\langle v\rangle^{1/2}\partial^\alpha_\be g\|^2_{\textbf{w}}
    \notag\\
	\geq&\frac{1}{2}\frac{d}{dt}\|e^{\frac{1}{4}\eps^{-1}\widetilde{\phi}}\partial^\alpha_\be g\|_{\textbf{w}}^2+\frac{1}{2}q_2q_3(t)\|\langle v\rangle^{1/2} \partial^\alpha_\be g\|^2_{\textbf{w}}-C\ka\frac{1}{\eps\de}\|\partial^\alpha_\be g\|^2_{\sigma,\textbf{w}}
    -C_\ka q_3(t)\|\langle v\rangle^{1/2}\partial^\alpha_\be g\|^2_{\textbf{w}}.
\end{align}
For the second term in \eqref{Gipg}, one has
\begin{align*}
\frac{1}{\eps}(\partial_x\partial^{\alpha}_\be g,\textbf{w}^2(\alpha,\be)e^{\frac{1}{2}\eps^{-1}\widetilde{\phi}} \partial^{\alpha}_\be g)
&=-\frac{1}{\eps}(\partial^{\alpha}_\be g,\partial_x[\textbf{w}^2(\alpha,\be)e^{\frac{1}{2}\eps^{-1}\widetilde{\phi}}]\partial^{\alpha}_\be g)
 \notag\\
&\leq C\frac{1}{\eps^2}\|\pa_x\widetilde{\phi}\|^2_{L^\infty_x}
\|\langle v\rangle^{-1/2}\partial^\alpha_\be g\|^2_{\textbf{w}}
\|\langle v\rangle^{1/2}\partial^\alpha_\be g\|^2_{\textbf{w}}
\notag\\
&\leq C\ka\frac{1}{\eps\de}\|\partial^\alpha_\be g\|^2_{\sigma,\textbf{w}}
+C_\ka q_3(t)\|\langle v\rangle^{1/2}\partial^\alpha_\be g\|^2_{\textbf{w}}.
\end{align*}
Similar arguments as in \eqref{1gtran} show that
\begin{align}\label{G1gtran}
	(\partial^{\alpha}_\be (v_1\pa_xg),\textbf{w}^2(\alpha,\beta)e^{\frac{1}{2}\eps^{-1}\widetilde{\phi}}\partial_{\beta}^\alpha g)\geq& -\frac{1}{4\eps}( v_1\partial^{\alpha}_\be g,\pa_x\widetilde{\phi}\,\textbf{w}^2(\alpha,\beta)e^{\frac{1}{2}\eps^{-1}\widetilde{\phi}}\partial_{\beta}^\alpha g)
    \notag\\
	 & -C\ka\frac{1}{\eps\de}\|\partial^\alpha_\be g\|^2_{\sigma,\textbf{w}}-C_\ka \eps\de\|\partial^2_xg\|^2_{\sigma,\textbf{w}}
     -C\eps\CD(t),
\end{align}
for $\alpha+|\beta|\leq 2,|\beta|\geq 1$ and 
\begin{align}\label{G1gtran0}
	(\partial^{\alpha}_x (v_1\pa_xg),\textbf{w}^2(\alpha,0)e^{\frac{1}{2}\eps^{-1}\widetilde{\phi}}\partial_x^\alpha g)= -\frac{1}{4\eps}( v_1\partial^{\alpha}_x g,\pa_x\widetilde{\phi}\,\textbf{w}^2(\alpha,0)e^{\eps^{-1}\widetilde{\phi}}\partial_x^\alpha g),
\end{align}
for $0\leq\al\leq2$. For the fourth term on the left hand side of \eqref{Gipg}, we have
\begin{align}\label{Gphif1}
&\frac{1}{\eps}(\partial_\be^{\alpha} [\frac{\partial_x\phi\partial_{v_1}(\sqrt{\mu}g)}{\sqrt{\mu}}],\textbf{w}^2(\al,\be)e^{\frac{1}{2}\frac{1}{2}\eps^{-1}\widetilde{\phi}}\partial_\be^{\alpha} g)
\notag\\
=&\frac{1}{\eps}(\partial^\alpha_\be[\partial_x\phi\partial_{v_1}g],\textbf{w}^2(\al,\be)e^{\frac{1}{2}\eps^{-1}\widetilde{\phi}}\partial_\be^\alpha g)-\frac{1}{\eps}(\partial^\alpha_\be[\frac{v_1}{4} g\partial_x\phi],\textbf{w}^2(\al,\be)e^{\frac{1}{2}\eps^{-1}\widetilde{\phi}}\partial_\be^\alpha g)
\notag\\
=&\CI_3-\CI_4.
\end{align}
Direct calculation gives
\begin{align*}
		\CI_3=	\left\{
	\begin{array}{rl}
		&\dis\frac{1}{\eps}(\partial_x\phi\partial_{v_1}\partial^\alpha_\beta g,\textbf{w}^2(\alpha,\beta)e^{\frac{1}{2}\eps^{-1}\widetilde{\phi}}\partial_{\beta}^\alpha g),\  \al=0,
        \\
		&\dis	\frac{1}{\eps}(\partial_x\phi\partial_{v_1}\partial^\alpha_\beta g,\textbf{w}^2(\alpha,\beta)e^{\frac{1}{2}\eps^{-1}\widetilde{\phi}}\partial_{\beta}^\alpha g)+\frac{1}{\eps}(\partial^2_x\phi\partial_{v_1}\partial^\beta_v g,\textbf{w}^2(\alpha,\beta)e^{\frac{1}{2}\eps^{-1}\widetilde{\phi}}\partial_{\beta}^\alpha g),\  \al=1,
        \\
		&\dis \frac{1}{\eps}(\partial_x\phi\partial_{v_1}\partial^2_x g,\textbf{w}^2(2,0)e^{\frac{1}{2}\eps^{-1}\widetilde{\phi}}\partial^2_x g)
        +\frac{2}{\eps}(\partial^2_x\phi\partial_{v_1}\partial_x g,\textbf{w}^2(2,0)
        e^{\frac{1}{2}\eps^{-1}\widetilde{\phi}}\partial^2_x g)
        \\
       & +\frac{1}{\eps}(\partial^3_x\phi\partial_{v_1}g,\textbf{w}^2(2,0)e^{\frac{1}{2}\eps^{-1}\widetilde{\phi}}\partial^2_x g),\  \al=2.
	\end{array} \right.
\end{align*}
Applying \eqref{Gpavw} and Cauchy-Schwarz inequality, one has
\begin{align*}%\label{GI31}
	&|\frac{1}{\eps}(\partial_x\phi\partial_{v_1}\partial^\alpha_\beta g,\textbf{w}^2(\alpha,\beta)e^{\frac{1}{2}\eps^{-1}\widetilde{\phi}}\partial_{\beta}^\alpha g)|=|\frac{1}{2\eps}(\partial_x\phi\, e^{\frac{1}{2}\eps^{-1}\widetilde{\phi}} [\pa_{v_1} \textbf{w}^2(\al,\be)]\partial_{\beta}^\alpha g,\partial_{\beta}^\alpha g)|\notag\\
	\leq& C\frac{1}{\eps}\int_{\mathbb R}\int_{{\mathbb R}^3}|\partial_x\phi  \textbf{w}^2(\al,\be)
	(\partial_{\beta}^\alpha g)^2|\,dv\,dx
	\notag\\	
	\leq& C\frac{1}{\eps}\int_{\mathbb R}\{\frac{\ka}{\de}\int_{{\mathbb R}^3}\langle v\rangle^{-1}\textbf{w}^2(\al,\be) |\partial_{\beta}^\alpha g|^2\,dv+C_\ka
	\de|\partial_x\phi|^2\int_{{\mathbb R}^3}\langle v\rangle\textbf{w}^2(\al,\be) |\partial_{\beta}^\alpha g|^2\,dv\}\,dx	
	\notag\\	
	\leq& C\ka\frac{1}{\eps\delta}\| \textbf{w}(\al,\be)\partial_{\beta}^\alpha g\|_\sigma^2
	+C_\ka\frac{\de}{\eps}\|\partial_x\widetilde{\phi}\|^{2}_{L^{\infty}}\|\langle v\rangle^{1/2} \textbf{w}(\al,\be)\partial_{\beta}^\alpha g\|^2
	\notag\\
	\leq& C\ka\frac{1}{\eps\delta}\| \partial_{\beta}^\alpha g\|_{\sigma,\textbf{w}}^2
	+C_\ka\eps^2 q_3(t)\|\langle v\rangle^{1/2}\partial_{\beta}^\alpha g\|_{\textbf{w}}^2.	
\end{align*}
For the rest terms, similar arguments as in \eqref{I32} and \eqref{2I312} show that
\begin{align*}%\label{GI32}
&|\frac{1}{\eps}(\partial^2_x\phi\partial_{v_1}\partial^\beta_v g,\textbf{w}^2(\alpha,\beta)e^{\frac{1}{2}\eps^{-1}\widetilde{\phi}}\partial_{\beta}^\alpha g)|
\leq	C(\eps+\de)\CD(t),
\end{align*}
and
\begin{align*}%\label{G2I312}
	&|2\frac{1}{\eps}(\partial^2_x\phi\partial_{v_1}\partial_x g,\textbf{w}^2(2,0)e^{\frac{1}{2}\eps^{-1}\widetilde{\phi}}\partial^2_x g)+\frac{1}{\eps}(\partial^3_x\phi\partial_{v_1}g,\textbf{w}^2(2,0)e^{\eps^{-1}\widetilde{\phi}}\partial^2_x g)|\leq	C\frac{1}{\eps^2\de^2}(\eps+\de)\CD(t).
\end{align*}
It follows from the above three inequalities that
\begin{align}\label{GI3}
	|\CI_3|\leq C\ka\frac{1}{\eps\delta}\| \pa^\al_\be g\|_{\sigma,\textbf{w}}^2
	+C_\ka\eps^2 q_3(t)\|\langle v\rangle^{1/2}\pa^\al_\be g\|_{\textbf{w}}^2+C(\eps+\de)\CD(t),
\end{align}
for $\be=0$, $\al<2$ or $\alpha+|\beta|\leq 2$, $|\beta|\geq1$, and
\begin{align}\label{G2I3}
	|\CI_3|\leq C\ka\frac{1}{\eps\de}\| \partial^2_x g\|_{\sigma,\textbf{w}}^2
	+C_\ka\eps q_3(t)\|\langle v\rangle^{1/2} \partial^2_x g\|_\textbf{w}^2+C\frac{1}{\eps^2\de^2}(\eps+\de)\CD(t).
\end{align}
Now we turn to $\CI_4$ which has the similar form as in \eqref{reI4} and \eqref{re2I4} by changing $w$ into $\textbf{w}$.
Note that in each line of \eqref{reI4} after replacing $w^2$ by $\textbf{w}^2 e^{\eps^{-1}\widetilde{\phi}}$, we keep the first term and others can be controlled using similar arguments as in \eqref{I32} and \eqref{2I312}, which leads to
\begin{align}\label{GI4}
	\CI_4\leq \frac{1}{\eps}(\frac{v_1}{4} \partial_x\widetilde{\phi}\pa^\al_\be g,\textbf{w}^2(\al,\be)e^{\frac{1}{2}\eps^{-1}\widetilde{\phi}}\partial^\al_\be g)
	+C(\eps+\de)\CD(t),
\end{align}
for $\be=0$, $\al<2$ or $\alpha+|\beta|\leq 2$, $|\beta|\geq1$, and
\begin{align}\label{G2I4}
	\CI_4\leq \frac{1}{\eps}(\frac{v_1}{4} \partial_x\widetilde{\phi}\pa^\al_x g,\textbf{w}^2(\al,0)
    e^{\frac{1}{2}\eps^{-1}\widetilde{\phi}}\partial^\al_x g)
	+C\frac{1}{\eps^2\de^2}(\eps+\de)\CD(t).
\end{align}
The combination of \eqref{Gphif1}, \eqref{GI3}, \eqref{G2I3}, \eqref{GI4} and \eqref{G2I4} shows
\begin{align}\label{Gphif}
	-\frac{1}{\eps}(\partial_\be^{\alpha} [\frac{\partial_x\phi\partial_{v_1}(\sqrt{\mu}g)}{\sqrt{\mu}}],\textbf{w}^2(\al,\be)e^{\frac{1}{2}\eps^{-1}\widetilde{\phi}}\partial_\be^{\alpha} g)
	&\geq \frac{1}{\eps}(\frac{v_1}{4} \partial_x\widetilde{\phi}\pa^\al_\be g,\textbf{w}^2(\al,\be)e^{\frac{1}{2}\eps^{-1}\widetilde{\phi}}\partial^\al_\be g)-C\ka\frac{1}{\eps\delta}\| \pa^\al_\be g\|_{\sigma,\textbf{w}}^2\notag\\
	&\quad
	-C_\ka\eps^2 q_3(t)\|\langle v\rangle^{1/2} \pa^\al_\be g\|_\textbf{w}^2
	-C(\eps+\de)\CD(t),
\end{align}
for $\be=0$, $\al<2$ or $\alpha+|\beta|\leq 2$, $|\beta|\geq1$, and
\begin{align}\label{G2phif}
	-\frac{1}{\eps}(\pa^2_x [\frac{\partial_x\phi\partial_{v_1}(\sqrt{\mu}g)}{\sqrt{\mu}}],\textbf{w}^2(2,0)
    e^{\frac{1}{2}\eps^{-1}\widetilde{\phi}}\pa^2_x g)
		&\geq \frac{1}{\eps}(\frac{v_1}{4} \partial_x\widetilde{\phi}\pa^\al_x g,\textbf{w}^2(2,0)e^{\frac{1}{2}\eps^{-1}\widetilde{\phi}}\partial^\al_x g)-C\ka\frac{1}{\eps\delta}\| \pa^2_x g\|_{\sigma,\textbf{w}}^2\notag\\
	&\quad
	-C_\ka\eps q_3(t)\|\langle v\rangle^{1/2} \pa^2_x g\|_\textbf{w}^2
	-C\frac{1}{\eps^2\de^2}(\eps+\de)\CD(t).
\end{align}
All other terms in \eqref{Gipg} can be bounded similarly as in the proof of Lemma \ref{LemmaL}, \eqref{Gagg}, \eqref{pabarM}, \eqref{phifg}, \eqref{2Gagg} and \eqref{2GaMf}. We omit the details to get 
\begin{equation}\label{GLg}
	-\frac{1}{\eps\de}(\partial^\alpha_x L_D g,\textbf{w}^2(\alpha,0)e^{\frac{1}{2}\eps^{-1}\widetilde{\phi}}\partial^\alpha_x g)
	\geq c\frac{1}{\eps\de}\|\partial^\alpha_x g\|^2_{\sigma,\textbf{w}}-C\frac{1}{\eps\de}\sum_{\al'<\al}\|\partial^{\al'}_x g\|^2_{\sigma,\textbf{w}}-C\eps\CD(t),
\end{equation}
for $\al\leq1$,
\begin{align}\label{GpaLg}
	%\label{ }
	-\frac{1}{\eps\de}(\partial^\alpha_\beta L_D g,\textbf{w}^2(\alpha,\beta)e^{\frac{1}{2}\eps^{-1}\widetilde{\phi}}\partial^\alpha_\beta g)
	\geq& \frac{1}{\eps\de}\big(c\|\partial^\alpha_\beta g\|^2_{\sigma,\textbf{w}}-\ka_1\sum_{|\beta'|=|\beta|}\|\partial^\alpha_{\beta'} g\|_{\sigma,\textbf{w}}^2-C_{\ka_1}\sum_{|\beta'|<|\beta|}\|\partial^\alpha_{\beta'}g\|_{\sigma,\textbf{w}}^2)
    \notag\\
	&-C\frac{1}{\eps\de}\sum_{\al'<\al,|\beta'|\leq|\beta|}\|\partial^{\alpha'}_{\beta'}g\|_{\sigma,\textbf{w}}^2-C\eps\CD(t)\big),
\end{align}
for $\al+|\be|\leq2$, $|\be|\geq 1$ and some $0<\ka_1<1$ that will be chosen later, and
\begin{equation}\label{G2Lg}
	-\frac{1}{\eps\de}(\partial^2_x L_D g,\textbf{w}^2(2,0)e^{\frac{1}{2}\eps^{-1}\widetilde{\phi}}\partial^\alpha_x g)
	\geq c\frac{1}{\eps\de}\|\partial^2_x g\|^2_{\sigma,\textbf{w}}-C\frac{1}{\eps^2\de^2}\eps\CD(t),
\end{equation}
as well as
\begin{align}\label{GRe}
&\frac{1}{\eps}(\partial^{\alpha}_\be[\frac{\partial_x\phi\partial_{v_{1}}(\sqrt{\overline{M}}f)}{\sqrt{\mu}}],\textbf{w}^2(\al,\be)e^{\frac{1}{2}\eps^{-1}\widetilde{\phi}}\partial^{\alpha}_\be g)+\frac{1}{\eps\de}(\partial^{\alpha}_\be \Gamma(g+\frac{\sqrt{\overline{M}}}{\sqrt{\mu}}f,g+\frac{\sqrt{\overline{M}}}{\sqrt{\mu}}f),\textbf{w}^2(\al,\be)e^{\frac{1}{2}\eps^{-1}\widetilde{\phi}}\partial^{\alpha}_\be g)\notag\\
	&
	+\frac{1}{\eps\de}(\pa^2_x\Gamma(\frac{M-\overline{M}}{\sqrt{\mu}},\frac{\sqrt{\overline{M}}}{\sqrt{\mu}}f)+\partial^{\alpha}_\be\Gamma(\frac{\sqrt{\overline{M}}}{\sqrt{\mu}}f,\frac{M-\overline{M}}{\sqrt{\mu}}),\textbf{w}^2(\al,\be)e^{\frac{1}{2}\eps^{-1}\widetilde{\phi}}\partial^{\alpha}_\be g)\notag\\
&\leq 
C(\eps+\de)\CD(t),
\end{align}
for $\be=0$, $\al<2$ or $\alpha+|\beta|\leq 2$, $|\beta|\geq1$, and
\begin{align}\label{G2Re}
	&\frac{1}{\eps}(\pa^2_x[\frac{\partial_x\phi\partial_{v_{1}}(\sqrt{\overline{M}}f)}{\sqrt{\mu}}],\textbf{w}^2(2,0)e^{\frac{1}{2}\eps^{-1}\widetilde{\phi}}\pa^2_x g)+\frac{1}{\eps\de}(\pa^2_x \Gamma(g+\frac{\sqrt{\overline{M}}}{\sqrt{\mu}}f,g+\frac{\sqrt{\overline{M}}}{\sqrt{\mu}}f),\textbf{w}^2(2,0)e^{\frac{1}{2}\eps^{-1}\widetilde{\phi}}\pa^2_x g)\notag\\
&
	+\frac{1}{\eps\de}(\partial^2_x\Gamma(\frac{M-\overline{M}}{\sqrt{\mu}},\frac{\sqrt{\overline{M}}}{\sqrt{\mu}}f)+\pa^2_x\Gamma(\frac{\sqrt{\overline{M}}}{\sqrt{\mu}}f,\frac{M-\overline{M}}{\sqrt{\mu}}),\textbf{w}^2(2,0)e^{\frac{1}{2}\eps^{-1}\widetilde{\phi}}\pa^2_x g)
    \notag\\
&\leq C\frac{1}{\eps^2\de^2}(\eps+\de)\CD(t).
\end{align}
Hence, the desired  estimate \eqref{Gwalbeg} follows from \eqref{Gipg}, \eqref{G1gtime}, \eqref{G1gtran}, \eqref{Gphif}, \eqref{GLg}, \eqref{GpaLg} and \eqref{GRe}, where we have used the smallness of $\ka$
and $\ka_1$. Correspondingly, \eqref{Gw2g} holds from \eqref{Gipg}, \eqref{G1gtime}, \eqref{G1gtran0}, \eqref{G2phif}, \eqref{G2Lg} and \eqref{G2Re}. This then completes the proof of Lemma \ref{Gleg}.
\end{proof}

Similarly to \eqref{5.3A}, the combination of \eqref{Gf} and \eqref{Gwalbeg} gives the following lemma.
\begin{lemma}
	Under the a priori assumption \eqref{Gapriori}, it holds that
\begin{align}\label{Glowfg}
	&\sum_{\alpha\leq 1}(\|\partial^{\alpha}_x f(t)\|_W^{2}+\|\partial^{\alpha}_x g(t)\|_\textbf{w}^{2})+\sum_{\alpha+|\beta|\leq 2,|\beta|\geq 1}(\|\partial^{\alpha}_\be f(t)\|_W^{2}+\|\partial^{\alpha}_\be g(t)\|_\textbf{w}^{2})
    \notag\\
	&+\frac{1}{\eps\de}\int^t_0\big\{\sum_{\alpha\leq 1}(\|\partial^{\alpha}_x f(s)\|_{\sigma,W}^{2}+\|\partial^{\alpha}_x g(s)\|_{\sigma,\textbf{w}}^{2})+\sum_{\al+|\beta|\leq 2,|\beta|\geq 1}(\|\partial^{\alpha}_\be f(s)\|_{\sigma,W}^{2}+\|\partial^{\alpha}_\be g(s)\|_{\sigma,\textbf{w}}^{2})\big\}ds\notag\\
	&+\int^t_0(q_{2}-C) q_3(s)\big\{\sum_{\alpha\leq 1}\|\langle v\rangle^{\frac{1}{2}} \pa^\al_x g(s)\|_\textbf{w}^2+\sum_{\al+|\beta|\leq 2,|\beta|\geq 1}\|\langle v\rangle^{\frac{1}{2}} \pa^\al_\be g(s)\|_\textbf{w}^2\big\}\,ds
    \notag\\
	&\leq C\CE(0)+C(\eps+\de)\int^t_0\mathcal{D}(s)\,ds+C\eps\de\int^t_0\{\|\pa^2_xf(s)\|_{\sigma,W}^{2}+\|\pa^2_xg(s)\|_{\sigma,\textbf{w}}^{2}\}\,ds
    \notag\\
    &\quad +C\eps\de  \sum_{\alpha\leq 1}\int^t_0\|\pa^{\alpha}_x\pa_x(\widetilde{u},\widetilde{\theta})(s)\|^{2}\,ds.
\end{align}
\end{lemma}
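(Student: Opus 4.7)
The plan is to combine the two estimates \eqref{Gf} and \eqref{Gwalbeg} in the same fashion as in \eqref{5.3A}, so as to absorb the microscopic dissipation of $g$ appearing on the right-hand side of the $f$-equation by the coercive dissipation of $g$ supplied by \eqref{Gwalbeg}. Specifically, I would multiply \eqref{Gwalbeg} by a sufficiently large positive constant $\widetilde{C}_3$, add the result to \eqref{Gf}, and then integrate in $s$ from $0$ to $t$. Since \eqref{Gwalbeg} produces
\[
c\,\widetilde{C}_3\,\frac{1}{\eps\de}\Big\{\sum_{\alpha\leq 1}\|\partial^{\alpha}_x g\|_{\sigma,\textbf{w}}^{2}+\!\!\!\sum_{\alpha+|\beta|\leq 2,|\beta|\geq 1}\!\!\!\|\partial^{\alpha}_\be g\|_{\sigma,\textbf{w}}^{2}\Big\}
\]
on the dissipation side, choosing $\widetilde{C}_3$ large enough ensures this term dominates the counterpart on the right of \eqref{Gf}, which appears with a fixed constant.

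Next I would use the a~priori bound $\sup_{t,x}|\widetilde{\phi}| \leq C\eps^2$ coming from \eqref{Gapriori} (Sobolev embedding combined with \eqref{DefEG}) to conclude that $e^{\frac{1}{2}\eps^{-1}\widetilde{\phi}}$ is uniformly comparable to $1$, which lets me replace $\|e^{\frac{1}{4}\eps^{-1}\widetilde{\phi}}\partial^{\alpha}_{\be}g\|_{\textbf{w}}^{2}$ in the resulting inequality by the plain quantity $\|\partial^{\alpha}_{\be}g\|_{\textbf{w}}^{2}$ (up to universal multiplicative constants), thereby matching the form of the left-hand side of \eqref{Glowfg}. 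The quadratic-growth-in-$\langle v\rangle$ term contributed by the time derivative of $\textbf{w}$ appears multiplied by $q_2$ on the left, while on the right we only pick up a contribution of order $C\,q_3(s)\|\langle v\rangle^{1/2}\pa^\al_\be g\|_{\textbf{w}}^2$ after the combination, so the difference is $(q_2-C)\,q_3(s)$ as stated. Since the eventual choice is $q_2=\eps^{-1/8}$, this coefficient is positive for small $\eps$ and provides the extra weighted dissipation needed later.

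Finally, the $\eps\de\|\pa^{\alpha}_x\pa_x(\widetilde u,\widetilde\theta)\|^2$ contribution on the right of \eqref{Gf} is absorbed into the last line of the target inequality, while the $\eps\de\|\pa^2_x g\|_{\sigma,\textbf w}^2$ and $\eps\de\|\pa^2_x f\|_{\sigma,W}^2$ terms remain explicitly as $\eps\de$-prefactored second-order contributions (these will ultimately be killed by the highest-order estimates \eqref{G2ndwdiss}--\eqref{G2ndenergy} in a later combination). All initial-data contributions assemble into $C\,\CE(0)$ by the definition \eqref{DefEG} together with the equivalence $e^{\frac{1}{2}\eps^{-1}\widetilde\phi(0)}\sim 1$.

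The only mildly delicate point in this argument is the bookkeeping of the weight $\textbf{w}$: because $\textbf{w}$ depends on $t$ through $\int_0^t q_3(\tau)d\tau$, one must verify that the factor $e^{(q_1-q_2\int_0^t q_3)\langle v\rangle}$ stays bounded both above and below uniformly on $[0,T]$, which is precisely what the computation $\int_0^t q_3(s)\,ds\leq CA\eps^{1/4}$ just above Lemma~\ref{Gleg} guarantees as long as $q_1$ is fixed small and $q_2=\eps^{-1/8}$ (so $q_2\int_0^t q_3\lesssim A\eps^{1/8}\ll q_1$). No other step requires new ideas beyond the ones already used to derive \eqref{5.3A}.
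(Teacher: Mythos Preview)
Your proposal is correct and follows exactly the approach indicated in the paper, which simply states that the lemma follows ``similarly to \eqref{5.3A}'' by combining \eqref{Gf} and \eqref{Gwalbeg}. The only minor slip is calling the extra dissipation a ``quadratic-growth-in-$\langle v\rangle$'' term; with the weight $\textbf{w}$ it is a $\langle v\rangle^{1/2}$-weighted term (cf.\ $\partial_t\textbf{w}^2=-2q_2q_3(t)\langle v\rangle\textbf{w}^2$), but this does not affect the argument.
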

Using the arguments as in Lemma \ref{highE}, the highest order energy estimates  can be obtained by \eqref{G2ndwdiss}, \eqref{G2ndenergy} and \eqref{Gw2g}.
\begin{lemma}
Under the a priori assumption \eqref{Gapriori}, it holds that
\begin{align} \label{GhighE}   
&\frac{1}{\ka_2}\eps^2\de^2\{\|\pa^2_x(\widetilde{\rho},\widetilde{u},\widetilde{\theta},\widetilde{\phi})(t)\|^{2}+\|\pa^2_xf(t)\|^{2}+\eps\|\pa^3_x\widetilde{\phi}(t)\|^{2}\} +\eps^2\de^2\|\pa^2_xf(t)\|^2_W  
\notag\\  &
+\frac{1}{\ka_3\ka_2}\eps^2\de^2\|\partial^{2}_x g(t)\|_\textbf{w}^{2}+	\eps\de\int^t_0\|\pa^2_xf(s)\|^2_{\si,W}\,ds +\frac{1}{\ka_2}\eps\de\int^t_0\|\pa^2_x\mathbf{P}_1f(s)\|^2_{\si}\,ds  
\notag\\	 
&+\frac{1}{\ka_3\ka_2}\eps\de\int_0^{t}\|\partial^{2}_x g(s)\|_{\sigma,\textbf{w}}^{2}\,ds  
+\frac{1}{\ka_3\ka_2}\eps^2\de^2 q_{2}\int_0^{t}q_3(s)\|\langle v\rangle^{\frac{1}{2}} \partial^{2}_xg(s)\|_{\textbf{w}}^{2}\,ds  
\notag\\ 		   
\leq& C(\ka_3\frac{1}{\ka_2}+\ka_2)\eps\de\int^t_0\|\pa^2_x(\widetilde{\rho},\widetilde{u},\widetilde{\theta})(s)\|^2\,ds+C\frac{1}{\ka_3\ka_2}\CE(0)  +C\frac{1}{\ka_3\ka_2}\eps^5      
\notag\\           
&     +C\frac{1}{\ka_3\ka_2}\eps^2\de^2\int_0^{t}q_3(s)\|\langle v\rangle^{\frac{1}{2}} \pa^2_x g(s)\|_\textbf{w}^2\,ds+	C\frac{1}{\ka_2}(\ka+C_{\ka}\frac{1}{\ka_3}(\de+\eps+\frac{\de^2}{\eps}))\int^t_0\CD(s)\,ds.	  
\end{align}	 

\end{lemma}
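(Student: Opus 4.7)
The proof is a careful linear combination of the three highest-order estimates \eqref{G2ndwdiss}, \eqref{G2ndenergy} and \eqref{Gw2g}, in the same spirit as Lemma \ref{highE} in the KdV setting, but tracking the new time-dependent dissipation generated by the parameter-dependent weight $\textbf{w}$.

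The plan is to proceed in two stages. First, I would integrate \eqref{Gw2g} in time, which yields the bound
\begin{equation*}
\eps^2\de^2\|e^{\frac{1}{4}\eps^{-1}\widetilde{\phi}}\pa_x^2 g(t)\|_{\textbf{w}}^2
+c\eps\de\int_0^t\|\pa_x^2 g(s)\|_{\sigma,\textbf{w}}^2\,ds
+c\eps^2\de^2 q_2\int_0^t q_3(s)\|\langle v\rangle^{1/2}\pa_x^2 g(s)\|_{\textbf{w}}^2\,ds
\end{equation*}
is controlled by the initial data, $(\eps+\de)\int_0^t \CD(s)ds$ and $\eps^2\de^2 \int_0^t q_3(s)\|\langle v\rangle^{1/2}\pa_x^2 g\|_{\textbf{w}}^2 ds$. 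Multiplying this by the large constant $\widetilde{C}_1/\ka_3$ and adding it to \eqref{G2ndenergy}, the term $C\eps\de\frac{1}{\ka_3}\int_0^t\|\pa_x^2 g(s)\|_\sigma^2 ds$ on the right of \eqref{G2ndenergy} is absorbed by $\widetilde{C}_1\frac{1}{\ka_3}\eps\de\int_0^t\|\pa_x^2 g(s)\|_{\sigma,\textbf{w}}^2 ds$ once $\widetilde{C}_1$ is chosen sufficiently large. The resulting inequality controls the macroscopic second-order energy $\eps^2\de^2\|\pa_x^2(\widetilde{\rho},\widetilde{u},\widetilde{\theta},\widetilde{\phi})\|^2$, the dissipation of $\pa_x^2\mathbf{P}_1 f$, and the full $g$ contribution at the second order, modulo a $\ka_3$-small term multiplying $\int_0^t\|\pa_x^2(\widetilde{\rho},\widetilde{u},\widetilde{\theta})\|^2 ds$ and a negative term of the form $-c\eps^2\de^2\int_0^t q_3(s)\|\langle v\rangle^{1/2}\pa_x^2 g\|_{\textbf{w}}^2 ds$ that will remain on the left.

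In the second stage, I multiply the weighted estimate \eqref{G2ndwdiss} by a large constant $\widetilde{C}_2/\ka_2$ and add the outcome of the first stage. The $C\frac{1}{\ka_2}\eps\de\int_0^t\|(\pa_x^2 g,\pa_x^2\mathbf{P}_1 f)(s)\|_\sigma^2\,ds$ term in \eqref{G2ndwdiss} is absorbed by the dissipations $\widetilde{C}_2\frac{1}{\ka_2}\eps\de\int_0^t\|\pa_x^2\mathbf{P}_1 f\|_\sigma^2 ds$ and $\widetilde{C}_1\widetilde{C}_2\frac{1}{\ka_3\ka_2}\eps\de\int_0^t\|\pa_x^2 g\|_{\sigma,\textbf{w}}^2 ds$ already present on the left after stage one, provided $\widetilde{C}_2$ is large enough (relative to the fixed constant produced in stage one). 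Finally, choosing $\ka>0$ small in the $(C\ka+C_\ka(\de+\eps+\de^2/\eps))\int_0^t \CD(s)\,ds$ term and using \eqref{deeps}-type relations between $\eps$ and $\de$ which are implied by \eqref{Gnueps}, one keeps the collective $\CD$-coefficient uniformly bounded and arrives precisely at \eqref{GhighE}, with the residual terms $C(\ka_3/\ka_2+\ka_2)\eps\de\int_0^t\|\pa_x^2(\widetilde{\rho},\widetilde{u},\widetilde{\theta})\|^2\,ds$ and the $q_3(t)$-contribution surviving on the right.

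The main obstacle I expect is the bookkeeping around the $q_3(t)$-dissipation: \eqref{Gw2g} produces a positive dissipation $c\eps^2\de^2 q_2 q_3(t)\|\langle v\rangle^{1/2}\pa_x^2 g\|_{\textbf{w}}^2$, but on the right-hand side of the same estimate (and of \eqref{G2ndwdiss} after transferring the constant) a term $C\eps^2\de^2 q_3(t)\|\langle v\rangle^{1/2}\pa_x^2 g\|_{\textbf{w}}^2$ appears with a fixed coefficient. To keep the former strictly dominant after multiplication by $\widetilde{C}_1/\ka_3$ and $\widetilde{C}_2/(\ka_3\ka_2)$, I will exploit the freedom to choose $q_2=\eps^{-1/8}$ (as announced in Section \ref{SecGlobal}); this renders $q_2 q_3(t)-C q_3(t)\geq c q_3(t)$ uniformly in $t$ once $\eps$ is small, so that the $q_3(t)$-dissipation survives the combination and can be kept on the left side of \eqref{GhighE} with the sharp coefficient $\widetilde{C}_1\widetilde{C}_2 q_2/(\ka_3\ka_2)$. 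All other manipulations are routine linear combinations and standard absorptions, completing the proof.
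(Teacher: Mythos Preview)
Your overall strategy matches the paper's exactly: the lemma is obtained by the same two-stage linear combination as in Lemma~\ref{highE}, namely first combining the time-integrated \eqref{Gw2g} (with multiplier $\widetilde{C}_1/\ka_3$) with \eqref{G2ndenergy}, and then combining with \eqref{G2ndwdiss}.

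There is, however, a slip in your description of stage two. You write that you ``multiply the weighted estimate \eqref{G2ndwdiss} by a large constant $\widetilde{C}_2/\ka_2$ and add the outcome of the first stage.'' Taken literally this fails: the bad term $C\frac{1}{\ka_2}\eps\de\int_0^t\|(\pa_x^2 g,\pa_x^2\mathbf{P}_1 f)\|_\sigma^2\,ds$ in \eqref{G2ndwdiss} would become of size $C\widetilde{C}_2/\ka_2^2$, which the stage-one dissipation (with coefficients $1$ and $\widetilde{C}_1/\ka_3$) cannot absorb. The correct order, which your own displayed coefficients $\widetilde{C}_2/\ka_2$ and $\widetilde{C}_1\widetilde{C}_2/(\ka_3\ka_2)$ already reflect, is to multiply the \emph{stage-one result} by $\widetilde{C}_2/\ka_2$ and add \eqref{G2ndwdiss} as is. This is exactly what the paper does in Lemma~\ref{highE}, and it is what produces the coefficient pattern in \eqref{GhighE} (the $W$-weighted $f$-energy carries no $1/\ka_2$, while the macroscopic energy and the $g$-terms do).

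A second minor point: your last paragraph invokes the choice $q_2=\eps^{-1/8}$ and the $\eps$--$\de$ relations to make the $q_3$-dissipation dominate. At the level of this lemma neither is needed: inspection of \eqref{GhighE} shows that the $q_2 q_3$-term is simply kept on the left and the $C q_3$-term on the right, both with the same coefficient $\frac{1}{\ka_3\ka_2}\eps^2\de^2$. The absorption $q_2 q_3 - C q_3\geq 0$ is deferred to the final assembly in the proof of Theorem~\ref{TheoremGlobal} (see \eqref{5.9Aa}), not performed inside this lemma.
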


In the previous section, the extra $t\eps^4$ in the zero order fluid energy estimates causes increase in $t$, see Lemma \ref{lezerofluid}. We carry out the entropy-entropy flux pair method, which brings good cancellations, to avoid such terms.

\begin{lemma}\label{le0fluid}
Under the a priori assumption \eqref{Gapriori},	it holds that
\begin{align}\label{Gzerofluid}
&\|(\widetilde{\rho},\widetilde{u},\widetilde{\theta},\widetilde{\phi})(t)\|^{2}+\eps\|\pa_x\widetilde{\phi}(t)\|^{2}
+\eps\de\int^{t}_0\{
\|\pa_x(\widetilde{\rho},\widetilde{u},\widetilde{\theta},\widetilde{\phi})(s)\|^{2}+\eps\|\pa^2_x\widetilde{\phi}(s)\|^{2}\} \,ds
\notag\\
&\leq C\CE(0)+C(\de+\eps+\frac{\de}{\sqrt\eps})\int^t_0\mathcal{D}(s)\,ds
+C\eps^5.
\end{align}
\end{lemma}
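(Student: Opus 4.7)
The plan is to run weighted $L^{2}$ estimates on each of the four perturbation fluid equations in \eqref{Gperturbeq} and to sum them with carefully chosen multipliers so that the \emph{entropy--entropy flux} structure of the underlying Euler--Poisson system forces all non-dissipative linear terms to cancel after integration by parts. Test the mass equation against $\tfrac{2\bar{\theta}}{3\bar{\rho}^{2}}\widetilde{\rho}$, the momentum equations against $\widetilde{u}_{i}$, the temperature equation against $\tfrac{1}{\bar{\theta}}\widetilde{\theta}$, and exploit the Poisson equation paired with $\partial_{t}\widetilde{\phi}$ exactly as in \eqref{pa2phipau1}. Because $(\bar{\rho},\bar{u},\bar{\theta},\bar{\phi})=(1,0,\tfrac{3}{2},0)$ is a genuine constant state, the weights are constants, so the singular transport terms $\tfrac{1}{\eps}(\pa_{x}\widetilde{\rho},\widetilde{\rho})$, $\tfrac{1}{\eps}(\pa_{x}\widetilde{u}_{i},\widetilde{u}_{i})$, $\tfrac{1}{\eps}(\pa_{x}\widetilde{\theta},\widetilde{\theta})$ vanish by integration by parts, and the pressure cross couplings $\tfrac{2\bar\theta}{3\bar\rho}(\pa_{x}\widetilde{\rho},\widetilde{u}_{1})+\tfrac{2}{3}(\pa_{x}\widetilde{\theta},\widetilde{u}_{1})+\tfrac{2\bar\theta}{3}(\pa_{x}\widetilde{u}_{1},\tfrac{1}{\bar\theta}\widetilde{\theta})+\tfrac{2\bar\theta}{3\bar\rho}(\pa_{x}\widetilde{u}_{1},\widetilde{\rho})$ combine into a single total derivative. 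The electric coupling $\tfrac{1}{\eps}(\pa_{x}\widetilde{\phi},\widetilde{u}_{1})$ is then converted to $\tfrac{1}{2}\tfrac{d}{dt}\{\tfrac{1}{\bar\rho}\|\widetilde{\phi}\|^{2}+\tfrac{\eps}{\bar\rho}\|\pa_{x}\widetilde{\phi}\|^{2}\}$ by substituting the mass equation and then the Poisson equation, following the bookkeeping of \eqref{pa2phipau}--\eqref{pa2phipau5}.

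\textbf{Dissipation and nonlinear terms.} The viscous/heat terms $\eps\de\,\pa_{x}(\mu(\theta)\pa_{x}u)$ and $\eps\de\,\pa_{x}(\kappa(\theta)\pa_{x}\theta)$ on the right-hand side of \eqref{Gperturbeq} directly produce the $\eps\de\|\pa_{x}(\widetilde{u},\widetilde{\theta})\|^{2}$ dissipation. To recover $\eps\de\|\pa_{x}\widetilde{\rho}\|^{2}+\eps\de\|\pa_{x}\widetilde{\phi}\|^{2}+\eps^{2}\de\|\pa_{x}^{2}\widetilde{\phi}\|^{2}$, I would add a small multiple of the zeroth-order Kawashima-type identity \eqref{zerodissrho}, which tests the momentum equation against $\eps\de\,\pa_{x}\widetilde{\rho}$ and uses the Poisson equation $-\eps^{2}\pa_{x}^{2}\widetilde{\phi}+\eps\widetilde{\phi}=\widetilde{\rho}$ to trade $\|\pa_{x}\widetilde{\rho}\|^{2}$ for the electric-field dissipation. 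All nonlinear fluid contributions are at least cubic in the perturbation (since $\bar{u}=0$, $\pa_{x}\bar{u}=\pa_{x}\bar\theta=0$); combining $\|\widetilde{u}_{1}\|_{L^{\infty}}\lesssim\|\widetilde{u}_{1}\|^{1/2}\|\pa_{x}\widetilde{u}_{1}\|^{1/2}$ with the a priori bound $\CE(t)\leq A\eps^{4+3/4}$ and the scaling $\eps^{3-c_{0}}\leq\de\leq\eps^{2+c_{0}}$ shows that each such term is bounded by $(\de+\eps+\tfrac{\de}{\sqrt\eps})\CD(t)$. The microscopic source $\tfrac{1}{\rho}\pa_{x}\!\int v_{1}^{2}L_{M}^{-1}\Theta\,dv$, now expanded with $G=\sqrt{\overline{M}}f+\sqrt{\mu}g$ since $\overline{G}\equiv 0$ around the global Maxwellian, is treated exactly as in the derivation of \eqref{I11}--\eqref{I14} but with all $\overline{G}$-contributions set to zero, so it produces only $(\eps+\de)\CD(t)$ plus a time-independent residue of order $\eps^{5}$ from the cross terms between $f,g,M$.

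\textbf{Main obstacle.} The essential difficulty, in contrast with Lemma \ref{lezerofluid}, is that no Gronwall argument is admissible since we need a bound valid on $[0,\infty)$. In the finite-time KdV setting the sources $C(\eta)\int_{0}^{t}\CE(s)\,ds$ and $Ct(\eps^{4}+\eps^{3}\de+\eps\de^{3})$ appeared on the right, both originating from the large-amplitude KdV profile $(\rho_{i},u_{1i},\theta_{i},\phi_{i})$ and its residuals $R_{i}$. Here $\eta=0$, $R_{i}\equiv 0$, and the background is exactly constant, so these terms vanish identically; what must be checked is that the entropy--flux manipulation does not reintroduce any non-integrable-in-$t$ linear terms. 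The technical heart of the proof is therefore to verify, by an exact algebraic computation specific to the constant-background case, that every linear-in-perturbation term either (a) cancels via the entropy--flux pairing, (b) is absorbed into the dissipation with a prefactor of order $\de+\eps+\tfrac{\de}{\sqrt\eps}$, or (c) reduces to an absolute $\mathcal{O}(\eps^{5})$ constant. Only after this bookkeeping does the time-integration of the combined identity yield the claimed uniform-in-$t$ bound \eqref{Gzerofluid}.
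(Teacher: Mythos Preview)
Your strategy—testing each equation in \eqref{Gperturbeq} against $\frac{2\bar\theta}{3\bar\rho^{2}}\widetilde\rho$, $\widetilde u_{i}$, $\frac{1}{\bar\theta}\widetilde\theta$ and summing—is exactly the direct inner-product scheme that produced \eqref{zerorho}--\eqref{zerotheta} in Section~\ref{secKdV}. The linear transport and pressure cross terms do indeed cancel as you say, and the electric coupling is handled as in \eqref{pa2phipau1}. The gap is in your treatment of the \emph{cubic} fluid convection terms. You claim that each such term is bounded by $(\de+\eps+\tfrac{\de}{\sqrt\eps})\CD(t)$, but this is false: the dissipation functional $\CD(t)$ in \eqref{DefDG} contains \emph{no} zero-order piece $\|(\widetilde\rho,\widetilde u,\widetilde\theta)\|^{2}$. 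For instance, after summing the mass and momentum contributions one is left with a term of the type $(\partial_{x}\widetilde u_{1},\widetilde\rho^{2})$, and any honest bound on this—say $\|\widetilde\rho\|^{3/2}\|\partial_{x}\widetilde\rho\|^{1/2}\|\partial_{x}\widetilde u_{1}\|\leq \kappa\CD(t)+C_{\kappa}\CE(t)^{3}(\eps\de)^{-3}$ via Young—always produces a residual constant (independent of $\CD$) that, once integrated over $[0,\infty)$, grows linearly in $t$. This is precisely the $Ct\eps^{4}$ term in \eqref{zerofluid} that you rightly identify as the obstacle but do not actually remove. The paper itself flags this: ``the direct inner product method will be difficult to control terms in the form like $(u\partial_{x}\widetilde\rho,\widetilde\rho)$ by $O(\eps)\CD(t)$ due to the lack of zero order dissipation on the fluid part.''

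The paper's fix is to replace the quadratic functional $\|(\widetilde\rho,\widetilde u,\widetilde\theta)\|^{2}$ by the genuine \emph{relative entropy} $\eta(t,x)=\rho\bar\theta\,\Psi(\bar\rho/\rho)+\tfrac{3}{2}\rho\bar\theta\,\Psi(\theta/\bar\theta)+\tfrac{3}{4}\rho|u-\bar u|^{2}$ with $\Psi(s)=s-\ln s-1$, together with the associated entropy flux $q$. One then has the exact identity $\partial_{t}\eta+\partial_{x}q=(\text{dissipative}+\text{microscopic source})$, so that \emph{all} convective nonlinearities—linear, cubic, and higher—are packaged into $\partial_{x}q$ and vanish upon $x$-integration. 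What survives is $\frac{d}{dt}\int\eta\,dx+c\eps\de\|\partial_{x}(\widetilde u,\widetilde\theta)\|^{2}+\frac{3}{2\eps}\int\rho\widetilde u_{1}\partial_{x}\widetilde\phi\,dx$ plus Burnett-type remainders in $\Theta$ that are genuinely $O(\de+\eps+\tfrac{\de}{\sqrt\eps})\CD(t)$. Your quadratic multipliers are only the Hessian of $\eta$ at the constant state; the missing cubic corrections to $\eta$ and $q$ are exactly what turn the leftover convection terms into total $x$-derivatives.
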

\begin{proof}
	Motivated by \cite{Liu-Yang-Yu}, we define the macroscopic entropy $S$ by
	\begin{equation*}%\label{defF}
		-\frac{3}{2}\rho S:=\int_{\mathbb{R}^{3}}M\ln M\,dv,
	\end{equation*}
	which yields
	\begin{equation*}%\label{reS}
		S=-\frac{2}{3}\ln\rho+\ln(2\pi K\theta)+1,\quad
		p=K\rho\theta=\frac{1}{2\pi \theta}\rho^{\frac{5}{3}}\exp (S).
	\end{equation*}
Multiplying \eqref{rVPL} by $\ln M$  and integrating over $v$, then making a direct calculation, it holds that
\begin{align}\label{6.33a}
-\frac{3}{2}\pa_t(\rho S)+\frac{1}{\eps}\frac{3}{2}\pa_x(\rho S)
-\frac{3}{2}\pa_x(\rho u_1 S)
=
-\int_{\mathbb{R}^{3}} v_1\pa_xG\ln M\,dv.
\end{align}
Here we have used
\begin{equation*}
\int_{\mathbb{R}^{3}}\pa_x\phi\partial_{v_{1}}F \ln M\,dv=\frac{\pa_x\phi}{K\theta}\int_{\mathbb{R}^{3}}F(v_1-u_1)\,dv
=\frac{\pa_x\phi}{K\theta}(\rho u_1-u_1\rho)=0.
	\end{equation*}
	We denote
	\begin{align*}
		m:=&(m_{0},m_{1},m_{2},m_{3},m_{4})^{t}=(\rho,\rho u_{1},\rho u_{2},\rho u_{3},\rho(\theta+\frac{|u|^{2}}{2}))^{t},
		\\
		n:=&(n_{0},n_{1},n_{2},n_{3},n_{4})^{t}
		=(\rho u_1,\rho u^2_{1}+p,\rho u_1u_{2},\rho u_1u_{3},\rho u_1(\theta+\frac{|u|^{2}}{2})+pu_1)^{t},
	\end{align*}
	where $(\cdot,\cdot,\cdot)^{t}$ is the transpose of the vector $(\cdot,\cdot,\cdot)$.
	Then we rewrite the conservation laws \eqref{NSmacro} as
	\begin{equation*}
		\pa_t m-\frac{1}{\eps}\pa_x m+\pa_x n=\begin{pmatrix}
			0
			\\
			-\frac{1}{\eps}\rho\pa_x\phi+\frac{4}{3}\eps\de\partial_x(\mu(\theta)\partial_xu_{1})
			-\int_{\mathbb{R}^{3}} v_{1}^2\pa_xL^{-1}_{M}\Theta\,dv
			\\
			\eps\de\partial_x(\mu(\theta)\partial_xu_{2})
			-\int_{\mathbb{R}^{3}} v_{2}v_1\pa_xL^{-1}_{M}\Theta\,dv
			\\
		\eps\de\partial_x(\mu(\theta)\partial_xu_{3})
			-\int_{\mathbb{R}^{3}} v_{3}v_1\pa_xL^{-1}_{M}\Theta\,dv
			\\
			-\frac{1}{\eps}\rho u_1\pa_x\phi
            +D-\int_{\mathbb{R}^{3}} \frac{1}{2}|v|^{2}v_1\pa_xL^{-1}_{M}\Theta\,dv
		\end{pmatrix}.
	\end{equation*}
Here we have denoted that
$$
D=\eps\de\pa_x[\kappa(\theta)\pa_x\theta+\mu(\theta)(\frac{4}{3}u_{1}\partial_xu_{1}+u_{2}\partial_xu_{2}+u_{3}\partial_xu_{3})].  
$$
Define a relative entropy-entropy flux pair $(\eta,q)(t,x)$ around the global Maxwellian $\overline{M}=M_{[\bar{\rho},\bar{u},\bar{S}]}$ 
with $\bar{S}=-\frac{2}{3}\ln\bar{\rho}+\ln(2\pi K\bar{\theta})+1$ to be
	\begin{equation*}%\label{defetaq}
		\left\{
		\begin{array}{rl}
			&\eta(t,x)=\bar{\theta}\{-\frac{3}{2}\rho S+\frac{3}{2}\bar{\rho}\bar{S}+\frac{3}{2}\nabla_{m}(\rho S)|_{m=\bar{m}}(m-\bar{m})\},
			\\
			&q(t,x)=\bar{\theta}\{-\frac{3}{2}\rho u_1S
            +\frac{1}{\eps}\frac{3}{2}\rho S-\frac{1}{\eps}\frac{3}{2}\bar{\rho} \bar{S}
            +\frac{3}{2}\bar{\rho}\bar{u}_1\bar{S}+\frac{3}{2}\nabla_{m}(\rho S)|_{m=\bar{m}}\cdot[(n-\frac{1}{\eps}m)-(\bar{n}-\frac{1}{\eps}\bar{m})]\},
		\end{array} \right.
	\end{equation*}
where $\bar{m}=(\bar{\rho},\bar{\rho}\bar{ u}_{1},\bar{\rho}\bar{ u}_{2},\bar{\rho}\bar{ u}_{3},\bar{\rho}(\bar{\theta}
	+\frac{1}{2}|\bar{u}|^{2}))^{t}$. Here 
$(\bar{\rho},\bar{u},\bar{\theta},\bar{\phi})=(1,0,\frac{3}{2},0)$.
It is straightforward to check that
\begin{equation*}
\pa_{m_0}(\rho S)=S+\frac{|u|^{2}}{2\theta}-\frac{5}{3}, \quad
\pa_{m_i}(\rho S)=-\frac{u_{i}}{\theta}, \quad \mbox{i=1,2,3}, \quad \pa_{m_4}(\rho S)=\frac{1}{\theta}.
\end{equation*}
A direct calculation gives
\begin{align*}
			\eta(t,x)&=\frac{3}{2}\{\rho\theta-\bar{\theta}\rho S+\rho[(\bar{S}-\frac{5}{3})\bar{\theta}
			+\frac{|u-\bar{u}|^{2}}{2}]+\frac{2}{3}\bar{\rho}\bar{\theta}\}
			\notag\\
			&=\rho\bar{\theta}\Psi(\frac{\bar{\rho}}{\rho})+\frac{3}{2}\rho\bar{\theta}\Psi(\frac{\theta}{\bar{\theta}})
			+\frac{3}{4}\rho|u-\bar{u}|^{2},
	\end{align*}
where $\Psi(s)=s-\ln s-1$ is a strictly convex function around $s=1$. Thus, for $\bf{X}$ in any closed
	bounded region in $\sum=\{{\bf{X}}:\rho>0,~~\theta>0\}$,  there exists a constant $C>1$ such that
	\begin{equation}
		\label{6.33A}
		C^{-1}|(\widetilde{\rho},\widetilde{u},\widetilde{\theta})|^{2}\leq \eta(t,x)\leq C|(\widetilde{\rho},\widetilde{u},\widetilde{\theta})|^{2}.
	\end{equation}
The definition of $(\eta,q)(t,x)$  implies that
\begin{align}
\label{6.34A} 			
\partial_{t}\eta(t,x)+\pa_x q(t,x)=&
\bar{\theta}\{-\partial_{t}(\frac{3}{2}\rho S)-
\pa_x(\frac{3}{2}\rho u_1S)            +\frac{1}{\eps}\pa_x(\frac{3}{2}\rho S)+\frac{3}{2}\nabla_{m}(\rho S)|_{m=\bar{m}}\cdot(\partial_{t}m-\frac{1}{\eps}\pa_x m+\pa_x n)\}
\notag\\
=& -\frac{3}{2}\int_{\mathbb{R}^{3}} v_1\pa_xG\ln M\,dv
-\frac{3}{2}\frac{1}{\eps}\rho u_1\pa_x\phi+\frac{3}{2}D-\frac{3}{2}\int_{\mathbb{R}^{3}} \frac{1}{2}|v|^{2}v_1\pa_xL^{-1}_{M}\Theta\,dv
\end{align}
Here we have used \eqref{6.33a} and  $\frac{3}{2}\nabla_{m}(\rho S)|_{m=\bar{m}}=\frac{3}{2}(\ln(2\pi)-\frac{2}{3},0,0,0,\frac{2}{3})$.
Note that
\begin{equation}\label{reG1} 		
G=\eps\de L^{-1}_{M}[P_{1}(v_{1}\partial_xM)]+L^{-1}_{M}\Theta, 	\end{equation} 	with 	
\begin{equation}\label{DefTheta1} 		
\Theta:=\eps\de\partial_tG-\de\partial_xG+\eps\de P_{1}(v_{1}\partial_xG)-\de\partial_x\phi\partial_{v_{1}}G-Q(G,G). 	\end{equation}
By \eqref{reG1} and \eqref{DefTheta1}, we have from a direct calculation that
\begin{align} 
\label{6.38A} 			 
-\frac{3}{2}\int_{\mathbb{R}^{3}} v_1\pa_xG\ln M\,dv
&=-\frac{3}{2}\int_{\mathbb{R}^{3}} v_1\pa_xG[\ln\rho-\frac{3}{2}\ln(2\pi K\theta)-\frac{|v-u|^2}{2K\theta}]\,dv
\notag\\
&=\frac{3}{2}\int_{\mathbb{R}^{3}} v_1\pa_xG\frac{|v-u|^2}{2K\theta}\,dv
=\frac{3}{2}\frac{1}{2K\theta}\int_{\mathbb{R}^{3}} v_1\pa_xG(v^2-2uv)\,dv
\notag\\ 
=\frac{3}{2}\frac{1}{K\theta}\int_{\mathbb{R}^{3}} &v_1(\frac{1}{2}|v|^2-uv)\pa_xL^{-1}_{M}\Theta\,dv+\frac{3}{2}\frac{1}{2K\theta}\eps\de\int_{\mathbb{R}^{3}} v_1(|v|^2-2uv)\pa_xL^{-1}_{M}[P_{1}(v_{1}\partial_xM)]\,dv
\notag\\  
=&\frac{3}{2}\frac{1}{K\theta}\int_{\mathbb{R}^{3}} v_1(\frac{|v|^2}{2}-uv)\pa_xL^{-1}_{M}\Theta\,dv
\notag\\ 
&-\frac{3}{2}\frac{1}{K\theta}\eps\de[\partial_x(\kappa(\theta)\partial_x\theta)+\partial_x(\mu(\theta)\{\frac{4}{3}u_{1}\partial_xu_{1}+u_{2}\partial_xu_{2}+u_{3}\partial_xu_{3}\})]
\notag\\ 
&+\frac{3}{2}\frac{1}{K\theta}\eps\de
[\frac{4}{3}u_1\partial_x(\mu(\theta)\partial_xu_{1})+u_2\partial_x(\mu(\theta)\partial_xu_{2})+u_3\partial_x(\mu(\theta)\partial_xu_{3})],
\end{align}
where in the last identity we have used \eqref{id1} and \eqref{id2}.

Plugging \eqref{6.34A} into \eqref{6.38A} gives
\begin{align*}	%\label{eqeta}
		&\partial_{t}\eta(t,x)+\pa_x q(t,x)+\eps\de\frac{3\bar{\theta}}{2\theta}\mu(\theta)(\frac{4}{3}(\partial_xu_{1})^2+(\partial_xu_{2})^2+(\partial_xu_{3})^2)
		+\eps\de\frac{3\bar{\theta}}{2\theta^{2}}\kappa(\theta)|\pa_x\widetilde{\theta}|^{2}+\frac{3}{2}\frac{1}{\eps}\rho\widetilde{u}_1\pa_x\widetilde{\phi}
		\nonumber\\
		=&\frac{3}{2}\eps\de\pa_x[\mu(\theta)(\frac{4}{3}u_{1}\partial_xu_{1}+u_{2}\partial_xu_{2}+u_{3}\partial_xu_{3})] +\frac{3}{2}\eps\de\pa_x(\frac{\widetilde{\theta}}{\theta}\kappa(\theta)\pa_x\theta)
		\nonumber\\
		&-\frac{3}{2}\pa_x(\frac{\widetilde{\theta}}{\theta}\int_{\mathbb{R}^{3}}(\frac{1}{2}|v|^{2}-u\cdot v)v_1L^{-1}_{M}\Theta\, dv)
		-\frac{3}{2}\pa_x(\sum^{3}_{i=1}\widetilde{u}_{i}\int_{\mathbb{R}^{3}} v_{i}v_1L^{-1}_{M}\Theta\,dv)
		\nonumber\\
		&+\frac{3}{2}\pa_x(\frac{\widetilde{\theta}}{\theta})\int_{\mathbb{R}^{3}}(\frac{1}{2}|v|^{2}-v\cdot u)v_1L^{-1}_{M}\Theta\, dv
		+\frac{9}{4}\frac{1}{\theta}\pa_x\widetilde{u}\cdot\int_{\mathbb{R}^{3}} vv_1L^{-1}_{M}\Theta\,dv
	\end{align*}
Then taking integration on $x$ gives
\begin{align}
	\label{eqeta}
&\frac{d}{dt}\int_{\R}\eta(t,x)\,dx+c\eps\de\|\pa_x(\widetilde{u},\widetilde{\theta})(t)\|^{2}+\frac{3}{2}\frac{1}{\eps}\int_{\R}\rho\widetilde{u}_1\pa_x\widetilde{\phi}\,dx
	\nonumber\\
\leq &\frac{3}{2}\int_{\R}\pa_x(\frac{\widetilde{\theta}}{\theta})\int_{\mathbb{R}^{3}}(\frac{1}{2}|v|^{2}-v\cdot u)v_1L^{-1}_{M}\Theta\, dv\,dx
	+\frac{3}{2}\int_{\R}\frac{\bar{\theta}}{\theta}\pa_x\widetilde{u}\cdot\int_{\mathbb{R}^{3}} vv_1L^{-1}_{M}\Theta\,dv\, dx.
\end{align}
For the last term on the left hand side of \eqref{eqeta}, we use the first and last equations of \eqref{Gperturbeq} to get
\begin{align*}
	\frac{1}{\eps}\int_{\R}\rho\widetilde{u}_1\pa_x\widetilde{\phi}\,dx&=\frac{1}{\eps}\int_{\R}\pa_t\widetilde{\rho}\widetilde{\phi}\,dx=\frac{1}{\eps}\int_{\R}(-\eps^2\pa^2_x\widetilde{\phi}+\eps\widetilde{\phi})\widetilde{\phi}\,dx=\eps\|\pa_x\widetilde{\phi}\|^2+\|\widetilde{\phi}\|^2.
\end{align*}
The right hand side of \eqref{eqeta} can be bounded using similar arguments as how we obtain \eqref{1stutheta} in Lemma \ref{le1stu} that
\begin{align*}
	&\Big|\frac{3}{2}\int_{\R}\pa_x(\frac{\widetilde{\theta}}{\theta})\int_{\mathbb{R}^{3}}(\frac{1}{2}|v|^{2}-v\cdot u)v_1L^{-1}_{M}\Theta\, dv\,dx
	+\frac{9}{4}\int_{\R}\frac{1}{\theta}\pa_x\widetilde{u}\cdot\int_{\mathbb{R}^{3}} vv_1L^{-1}_{M}\Theta\,dv\, dx\Big|
    \notag\\
=&\Big|\frac{3}{2}\int_{\R}\pa_x(\frac{\widetilde{\theta}}{\theta})
(K\theta)^{\frac{3}{2}}\int_{\mathbb{R}^{3}}A_{1}(\frac{v-u}{\sqrt{K\theta}})\frac{\Theta}{M}\,dv\,dx 	+\frac{3}{2}\sum^{3}_{j=1}\int_{\R}\pa_x\widetilde{u}_j\int_{\mathbb{R}^{3}}B_{1j}(\frac{v-u}{\sqrt{K\theta}})\frac{\Theta}{M}\,dv\, dx\Big|    
\notag\\
	\leq &\eps\de\frac{d}{dt}\int_{\mathbb{R}}\int_{\mathbb{R}^{3}}
	\big\{\frac{3}{2}\pa_x(\frac{\widetilde{\theta}}{\theta})
(K\theta)^{\frac{3}{2}}A_{1}(\frac{v-u}{\sqrt{K\theta}})\frac{h}{M}
   +\frac{3}{2}\sum^{3}_{j=1}\pa_x\widetilde{u}_jB_{1j}(\frac{v-u}{\sqrt{K\theta}})\frac{h}{M}\, 
    \big\}\,dv\,dx
    \notag\\
	&+C\ka\eps\de\|\pa_x(\widetilde{\rho},     \widetilde{u},\widetilde{\theta})\|^{2}+C_\ka(\de+\eps+\frac{\de}{\sqrt\eps})\CD(t).
\end{align*}
The zero order dissipation for $\widetilde{\rho}$ and $\widetilde{\phi}$ can be obtained by \eqref{zerodissrho} and we directly let $\eta$ in \eqref{zerodissrho} be zero to get
\begin{align*}
&\eps\de\frac{d}{dt}(\widetilde{u}_1,\pa_x\widetilde{\rho})
	+c\eps\de(\|\pa_x\widetilde{\rho}\|^2+\|\pa_x\widetilde{\phi}\|^2
	+\eps\|\pa^2_x\widetilde{\phi}\|^2)
	\leq C\eps\de\|\pa_x(\widetilde{u},\widetilde{\theta})\|^2+ C\eps\CD(t).
\end{align*}
Hence, plugging the above three inequalities and \eqref{6.33A} into \eqref{eqeta}, we see that \eqref{Gzerofluid} holds by taking $\ka$ small enough. This then completes the proof of Lemma \ref{le0fluid}.
\end{proof}

 With the above lemmas, we are now in a position to prove Theorem \ref{TheoremGlobal}.
 
\begin{proof}[Proof of Theorem \ref{TheoremGlobal}]
The summation of \eqref{Gfluid} and \eqref{Gzerofluid} shows that
\begin{align}
\label{Glowfluid} 
&\sum_{\alpha\leq 1}\{\|\pa^\al_x(\widetilde{\rho},\widetilde{u},\widetilde{\theta},\widetilde{\phi})(t)\|^{2}+\eps\|\pa^\al_x\pa_x\widetilde{\phi}(t)\|^2\}+\eps\de\sum_{1\leq\alpha\leq 2}\int^t_0\{\|\pa^\al_x(\widetilde{\rho},\widetilde{u},\widetilde{\theta},\widetilde{\phi})(s)\|^{2}+\eps\|\pa^\al_x\pa_x\widetilde{\phi}(s)\|^{2}\}\,ds \notag\\  
&\leq C\CE(0)+C\eps^2\de^2\|\pa^2_x\widetilde{\rho}(t)\|^2+C\eps^2\de^2\|\pa^2_x(g,f)(t)\|^2+ C\eps\de\int_0^t\|\pa^2_x(g,f)(s)\|_\sigma^{2}\,ds 
\notag\\ 
&\quad+C(\de+\eps+\frac{\de}{\sqrt\eps})\int_0^t\CD(s)\,ds+CA\int_0^t(\frac{\eps}{\de}+\frac{\eps^2}{\de^2})\CD(s)\,ds+C\eps^5. 
\end{align}	
Similar to \eqref{5.6A}, a linear combination of \eqref{Glowfluid}, \eqref{Glowfg} and \eqref{GhighE} yields
\begin{align} 
\label{Genergy0}
&\CE(t)+\int^t_0\CD(s)\,ds
+\frac{1}{\ka_3\ka_2}\eps^2\de^2\int_0^{t}(q_2q_3(s)-Cq_3(s)) \|\langle v\rangle^{\frac{1}{2}} \pa^2_x g(s)\|_\textbf{w}^2\,ds
\notag\\ 	       
&+\int^t_0(q_2q_3(s)-Cq_3(s))\big\{\sum_{\alpha\leq 1}\|\langle v\rangle^{\frac{1}{2}} \pa^\al_x g(s)\|_\textbf{w}^2+\sum_{\al+|\beta|\leq 2,|\beta|\geq 1}\|\langle v\rangle^{\frac{1}{2}} \pa^\al_\be g(s)\|_\textbf{w}^2\big\}\,ds  
\notag\\
&\leq C\frac{1}{\ka_3\ka_2}\CE(0)   +C\frac{1}{\ka_3\ka_2}\eps^5      
+CA\int_0^t(\frac{\eps}{\de}+\frac{\eps^2}{\de^2})\CD(s)\,ds  
+	C\frac{1}{\ka_2}(\ka+C_{\ka}\frac{1}{\ka_3}(\de+\eps+\frac{\de^2}{\eps}))\int^t_0\CD(s)\,ds.              
\end{align}
We set $A=2$, then let 
\begin{align}\label{Gdeeps}
	\eps^{1-c_0}<\de<\eps^{1/2+c_0}\ \text{or}\ \de=\frac{1}{c_1}\eps\ \text{or}\ \de=c_1\eps^{1/2},
\end{align}
for any $0<c_0<1/2$ and some $0<c_1<1$ that both are independent of $\eps$ and $\de$. Here $c_1$ will be determined later. 
We should point out that \eqref{Gdeeps} is to ensure 
\eqref{5.8Aa}, \eqref{5.9Aa} and \eqref{5.10Aa} hold true.
We choose $\eps\ll \ka\ll \ka_2\ll 1$  with $\ka_3=\ka_2^2$ and
$c_1\ll 1$ to require that 
\begin{equation} 
\label{5.8Aa}
CA(\frac{\eps}{\de}+\frac{\eps^2}{\de^2})\leq \frac{1}{4}, \quad
C\frac{1}{\ka_2}(\ka+C_{\ka}\frac{1}{\ka_3}(\de+\eps+\frac{\de^2}{\eps}))\leq \frac{1}{4}. 
\end{equation} 
We take $q_2=\eps^{-\frac{1}{8}}$ to require that
\begin{equation} 
\label{5.9Aa} 
q_2q_3(s)-Cq_3(s)\geq 0
\end{equation} 
By these facts and \eqref{Genergy0}, there exits a $C_1>1$ such that
\begin{align} \label{5.10Aa} 
\CE(t)+\int^t_0\CD(s)\,ds\leq C_1\CE(0)  +C_1\eps^5,   
\end{align}
for $t\in(0,T]$ with $T<+\infty$. By taking $\CE(0)\leq \frac{1}{2C_1}\eps^{4+\frac{3}{4}}$, we have
\begin{align*}
\sup_{t\in(0,T]}\CE(t)+\int^T_0\CD(s)\,ds\leq \eps^{4+\frac{3}{4}}, \quad T\in(0,+\infty).
\end{align*}
Then the a priori assumption \eqref{Gapriori} can be closed. Hence we obtain \eqref{globalenergy}. Notice that \eqref{Gdeeps} is equivalent to \eqref{Gnueps}. Then the proof of Theorem \ref{TheoremGlobal} is complete.
\end{proof}

\section{Appendix}\label{sec.app}
In this appendix, for completeness we give the proof of Lemma \ref{lemmacor} and Lemma \ref{leL}.

\subsection{Solving the corrections}
\begin{proof}[Proof of Lemma \ref{lemmacor}]
To simplify the notation, we denote $u_{1i}$ by $U_i$.
Note that for any $s\geq 2$,
\begin{align}\label{KdVsolution}
	(\rho_1,U_0=u_{10},\theta_1,\phi_0)\in L^{\infty}(-T,T;H^{s}(\mathbb{R}))
\end{align} is determined by \eqref{1orderid} and \eqref{KdV}. For $(\rho_2,U_1,\theta_2,\phi_1)$, we represent all quantities in terms of $U_1$, then we only need to solve $U_1.$ Recall $\rho_1=U_0=\theta_1=\phi_0$ and
\begin{align}\label{U0}
\pa_tU_0+\frac{1}{2}\pa_x^3U_0+\frac{3}{2}U_0\pa_xU_0=0.
\end{align}
From \eqref{mass2}, we solve $U_1-\rho_2$ by
\begin{align}\label{u1rho2}
	U_1-\rho_2&=-\int^x (\pa_t U_0+\pa_x(U_0^2)(t,\xi))d\xi=\int^x (\frac{1}{2}\pa_x^3U_0+\frac{3}{4}\pa_x(U_0^2)-\pa_x(U_0^2))d\xi\notag\\
	&=\frac{1}{2}\pa_x^2U_0-\frac{1}{4}U_0^2,
\end{align}
where $\int^x$ denotes the antiderivative and we omit the $(t,\xi)$ in the integral from later on. From \eqref{velocity2} and \eqref{temperature2}, we get
\begin{align}\label{phi1u1}
	\phi_1-U_1=-\int^x(\pa_t U_0+U_0\pa_x U_0)d\xi=\frac{1}{2}\pa_x^2U_0+\frac{1}{4}U_0^2,
\end{align}
and
\begin{align}\label{U1theta2}
U_1-\theta_2=-\int^x(\pa_t U_0+U_0\pa_x U_0+\frac{2}{3}U_0 \pa_x U_0)d\xi=\frac{1}{2}\pa_x^2U_0-\frac{1}{12}U_0^2.
\end{align}
Hence, we only need to determine $U_1$. The Sobolev regularity of $U_1$ is the same as $\rho_2$, $\phi_1$ and $\theta_2$ if $U_0$ has some higher regularity. The summation of \eqref{mass3} and \eqref{velocity3} gives 
\begin{align*}
	\pa_t U_1+\pa_t\rho_2-\pa_x\rho_3+\pa_x \phi_2+\pa_x(\rho_2U_0+U_0 U_1)+U_0\pa_x U_1+U_1\pa_x U_0+\frac{5}{3}\pa_xU_0=0,
\end{align*}
which, combined with \eqref{Poisson3}, yields
\begin{align*}
	\pa_t U_1+\pa_t\rho_2+\pa^3_x \phi_1+\pa_x(\rho_2U_0+U_0 U_1)+\pa_x (U_0U_1)+\frac{5}{3}\pa_xU_0=0.
\end{align*}
Substituting \eqref{u1rho2} and \eqref{phi1u1} into the above equation gives
\begin{align*}
	&2\pa_t U_1-\frac{1}{2}\pa_x^2\pa_tU_0+\frac{1}{4}\pa_t(U_0^2)+\pa^3_x(U_1+\frac{1}{2}\pa_x^2U_0+\frac{1}{4}U_0^2)\notag\\
	&\qquad\qquad+\pa_x((U_1-\frac{1}{2}\pa_x^2U_0+\frac{1}{4}U_0^2)U_0+U_0 U_1)+\pa_x (U_0U_1)+\frac{5}{3}\pa_xU_0=0.
\end{align*}
Denote
\begin{align}\label{defN1}
	N_1(U_0)=\frac{1}{4}\pa_x^2\pa_tU_0-\frac{1}{4}U_0\pa_tU_0-\frac{1}{4}\pa_x^5U_0-\frac{1}{8}\pa^3_x(U_0^2)-\frac{1}{2}\pa_x((-\frac{1}{2}\pa_x^2U_0+\frac{1}{4}U_0^2)U_0)-\frac{5}{6}\pa_xU_0.
\end{align}
Then
\begin{align}\label{u1}
	&\pa_t U_1+\frac{1}{2}\pa^3_xU_1+\frac{3}{2}\pa_x(U_0U_1)=N_1(U_0).
\end{align}
Direct energy estimate shows
\begin{align*}
	\frac{d}{dt}\|U_1\|^2\leq C(\|\pa_xU_0\|_{L^\infty_x}+1)\|U_1\|^2+C\|N_1(U_0)\|^2.
\end{align*}
The $L^2$ bound of $U_1$ follows from Gronwall inequality and $\|U_1\|_{H^r_x}$ can be proved iteratively by letting $s$ in \eqref{KdVsolution} be large. Therefore, we obtain unique solutions
\begin{align*}
	(\rho_2,U_1,\theta_2,\phi_1)\in L^{\infty}(-T,T;H^{r}(\mathbb{R})).
\end{align*}
Next, we determine $\rho_3,\theta_3,\phi_2$ by $U_2$. It holds by \eqref{mass3} that
\begin{align*}
	U_2-\rho_3=-\int^x(\pa_t\rho_2+\pa_x(\rho_2U_0+\rho_1 U_1))d\xi=-\rho_2U_0-\rho_1 U_1-\int^x\pa_t\rho_2d\xi.
\end{align*}
We need a more explicit form of $\int^x\pa_t\rho_2d\xi$ to prove its $L^2$ integrability. It holds by \eqref{u1rho2} and \eqref{u1} that
\begin{align*}
	\int^x\pa_t\rho_2d\xi&=\int^x\pa_t(U_1-\frac{1}{2}\pa_x^2U_0+\frac{1}{4}U_0^2)d\xi\notag\\
	&=-\frac{1}{2}\pa_x\pa_tU_0 -\frac{1}{2}\pa^2_xU_1-\frac{3}{2}U_0U_1+\frac{1}{2}\int^x U_0\pa_tU_0d\xi+\int^x N_1(U_0)d\xi.
\end{align*}
Using \eqref{U0} and integration by parts, we have
\begin{align}\label{u0patu0}
	\int^x U_0\pa_tU_0d\xi=-\int^x U_0(\frac{1}{2}\pa_x^3U_0+\frac{3}{2}U_0\pa_xU_0)d\xi=-\frac{1}{2}U_0\pa^2_xU_0+\frac{1}{4}(\pa_xU_0)^2-\frac{1}{2}\pa_x(U_0^3),
\end{align}
which, combined with \eqref{defN1}, yields
\begin{align}\label{antiN1}
	&\int^x N_1(U_0)d\xi\notag\\
	=&\frac{1}{4}\pa_x\pa_tU_0-\frac{1}{4}\int^xU_0\pa_tU_0d\xi-\frac{1}{4}\pa_x^4U_0-\frac{1}{8}\pa^2_x(U_0^2)-\frac{1}{2}((-\frac{1}{2}\pa_x^2U_0+\frac{1}{4}U_0^2)U_0)-\frac{5}{6}U_0\notag\\
	=&\frac{1}{4}\pa_x\pa_tU_0+\frac{3}{8}U_0\pa^2_xU_0-\frac{1}{16}(\pa_xU_0)^2+\frac{1}{8}\pa_x(U_0^3)-\frac{1}{4}\pa_x^4U_0-\frac{1}{8}\pa^2_x(U_0^2)-\frac{1}{8}U_0^3-\frac{5}{6}U_0.
\end{align}
Thus, the combination of the above four identities shows
\begin{align*}%\label{U2rho3}
	U_2-\rho_3=&-\int^x(\pa_t\rho_2+\pa_x(\rho_2U_0+\rho_1 U_1))d\xi\notag\\
	=&-\rho_2U_0-\rho_1 U_1+\frac{1}{2}\pa_x\pa_tU_0 +\frac{1}{2}\pa^2_xU_1+\frac{3}{2}U_0U_1+\frac{1}{4}U_0\pa^2_xU_0-\frac{1}{8}(\pa_xU_0)^2+\frac{1}{4}\pa_x(U_0^3)\notag\\
	&\quad-(\frac{1}{4}\pa_x\pa_tU_0+\frac{3}{8}U_0\pa^2_xU_0-\frac{1}{16}(\pa_xU_0)^2+\frac{1}{8}\pa_x(U_0^3)-\frac{1}{4}\pa_x^4U_0-\frac{1}{8}\pa^2_x(U_0^2)-\frac{1}{8}U_0^3-\frac{5}{6}U_0)\notag\\
	=&-\rho_2U_0 +\frac{1}{2}\pa^2_xU_1+\frac{1}{2}U_0U_1+\frac{1}{4}\pa_x\pa_tU_0-\frac{1}{8}U_0\pa^2_xU_0-\frac{1}{16}(\pa_xU_0)^2+\frac{1}{8}\pa_x(U_0^3)\notag\\
	&+\frac{1}{4}\pa_x^4U_0+\frac{1}{8}\pa^2_x(U_0^2)+\frac{1}{8}U_0^3+\frac{5}{6}U_0.
\end{align*} 
Then we obtain from \eqref{velocity3}, \eqref{u1} and \eqref{antiN1} that 
\begin{align*}%\label{U2phi2}
	U_2-\phi_2&=U_0U_1+\frac{5}{3}U_0+\int^x\pa_t U_1d\xi\notag\\
	&=U_0U_1-\frac{1}{2}\pa^2_xU_1-\frac{3}{2}(U_0U_1)+\frac{1}{4}\pa_x\pa_tU_0+\frac{3}{8}U_0\pa^2_xU_0\notag\\
	&\quad-\frac{1}{16}(\pa_xU_0)^2+\frac{1}{8}\pa_x(U_0^3)-\frac{1}{4}\pa_x^4U_0-\frac{1}{8}\pa^2_x(U_0^2)-\frac{1}{8}U_0^3+\frac{5}{6}U_0.
\end{align*}
Moreover, \eqref{temperature3} and \eqref{U1theta2} show that
\begin{align*}
	\theta_3-U_2&=\int^x(\pa_t \theta_2+U_0\pa_x \theta_2+U_1\pa_x U_0+\frac{2}{3}\theta_2 \pa_x U_0+\frac{2}{3}U_0 \pa_x U_1)d\xi\notag\\
	&=\int^x\{\pa_t (U_1-\frac{1}{2}\pa_x^2U_0+\frac{1}{12}U_0^2)+U_0\pa_x (U_1-\frac{1}{2}\pa_x^2U_0+\frac{1}{12}U_0^2)+U_1\pa_x U_0\notag\\
	&\qquad\qquad\qquad+\frac{2}{3}(U_1-\frac{1}{2}\pa_x^2U_0+\frac{1}{12}U_0^2) \pa_x U_0+\frac{2}{3}U_0 \pa_x U_1\}d\xi\notag\\
	&=-\frac{1}{2}\pa_x\pa_tU_0+\frac{5}{3}U_0U_1+\int^x\{\pa_tU_1+\frac{1}{6}U_0\pa_tU_0-\frac{1}{2}U_0\pa_x^3U_0+\frac{2}{9}U_0^2\pa_xU_0-\frac{1}{3}\pa_x^2U_0\pa_xU_0\}d\xi.
\end{align*}
For brevity, we keep the antiderivative above without writing it explicitly. We see from the arguments in \eqref{u1}, \eqref{u0patu0}, \eqref{antiN1} and integration by parts that the antiderivative can be explicitly represented in terms of $U_0$ and is in $H^r_x.$ Then we can solve $\rho_3,\theta_3,\phi_2$ by $U_2$ and their Sobolev regularities remain the same. Then we deduce the equation of $U_0$ and prove the $H^r$ bound. The equation of $U_2$ in \eqref{velocity4} gives
\begin{align*}
	\pa_t U_2+\pa_x(U_0U_2)=&\pa_x(U_3-\phi_3)-U_1\pa_x U_1-\pa_x\rho_2-\frac{2}{3}U_0\pa_xU_0-\frac{2}{3}\pa_x\theta_2\notag\\
	:=&\pa_x(U_3-\phi_3)+N_2(U_0,U_1,\rho_2,\theta_2).
\end{align*}
Then we have the $L_2$ estimate
\begin{align*}
\frac{d}{dt}\|U_2\|^2\leq C(\|\pa_xU_0\|_{L^\infty_x}+1)\|U_2\|^2+\|\pa_x(U_3-\phi_3)\|^2+C\|N_2(U_0,U_1,\rho_2,\theta_2)\|^2.	
\end{align*}
Note that for any $r>2$, we can show $N_2(U_0,U_1,\rho_2,\theta_2)\in H^r$ by letting $s$ be large enough in \eqref{KdVsolution}. The $L^2$ bound of $U_2$ follows from Gronwall inequality and $\|U_2\|_{H^r_x}$ can be proved iteratively by choosing any $U_3,\phi_3\in H^{r+1}$. Therefore, we obtain unique solutions
\begin{align*}
	(\rho_3,U_2,\theta_3,\phi_2)\in L^{\infty}(-T,T;H^{r}(\mathbb{R})).
\end{align*}
Noticing that $U_3$ and $\phi_3$ is chosen above to be any functions that have higher Solobev regularity than $U_2$, we thus obtain \eqref{boundcor}. Lemma \ref{lemmacor} is proved.
\end{proof}

\subsection{The linear operator $\CL_{\overline{M}}$}

\begin{proof}[Proof of Lemma \ref{leL}]

We first prove \eqref{controlpaL}. Denote  $\overline{M}(v)=M_{[\bar{\rho},\bar{u},\bar{\theta}](t,x)}(v)$ and	 
\begin{align*} 		 
\sigma^{ij}_{\overline{M}}(v):=(\Phi_{ij}\ast \overline{M})(v)=\int_{{\mathbb R}^3}\Phi_{ij}(v-v_{\ast})\overline{M}(v_{\ast})\,dv_{\ast}. 	\end{align*} 	 
A direct calculation gives		 
\begin{align*} 		\sigma^{ij}_{\overline{M}}(v)&=\int_{{\mathbb R}^3}\Phi_{ij}(v-v_{\ast})M_{[\bar{\rho},\bar{u},\bar{\theta}](t,x)}(v_{\ast})\,dv_{\ast} 		=\int_{{\mathbb R}^3}\Phi_{ij}(v-v_{\ast})\frac{\bar{\rho}(t,x)}{(2\pi K\bar{\theta}(t,x))^{3/2}}\exp\big(-\frac{|v_{\ast}-\bar{u}(t,x)|^{2}}{2K\bar{\theta}(t,x)}\big)\,dv_{\ast}
\notag\\ 		
&=\frac{\bar{\rho}(t,x)}{(2\pi K\bar{\theta}(t,x))^{3/2}}\int_{{\mathbb R}^3}\Phi_{ij}(v-v_{\ast}-\bar{u}(t,x))\exp\big(-\frac{|v_{\ast}|^{2}}{2K\bar{\theta}(t,x)}\big)\,dv_{\ast}\notag\\ 		&=\frac{\bar{\rho}(t,x)}{(2\pi K\bar{\theta}(t,x))^{3/2}}\sigma^{ij}_{M_{[1,0,\bar{\theta}]}}(v-\bar{u}(t,x)). 	 
\end{align*} 	 
Furthermore, we also have
\begin{align*} 		
\sigma^{ij}_{\overline{M}}(v)&=\int_{{\mathbb R}^3}\Phi_{ij}(v_{\ast})M_{[\bar{\rho},\bar{u},\bar{\theta}](t,x)}(v-v_{\ast})\,dv_{\ast} \notag\\ 		 
&=\frac{\bar{\rho}(t,x)}{(2\pi K\bar{\theta}(t,x))^{3/2}}\int_{{\mathbb R}^3}\frac{1}{|v_*|}\big(\delta_{ij}-\frac{v_{*i}v_{*j}}{|v_*|^{2}}\big)\exp\big(-\frac{|v-v_{\ast}-\bar{u}(t,x)|^{2}}{2K\bar{\theta}(t,x)}\big)\,dv_{\ast}. 	 
\end{align*}  
Similar to \cite[Lemma 4]{Strain-Guo}, we define $v^1=\frac{v}{|v|}$ and generate the orthogonal basis $v^2$ and $v^3$ such that 	 
$$ 	 v^i\cdot v^j=\de_{ij},\quad i,j=1,2,3, $$  
and 	 
$$ 	 \bar{u}=\bar{u}\cdot v^1 v^1+\bar{u}\cdot v^2 v^2+\bar{u}\cdot v^3 v^3=\bar{u}_1v^1+\bar{u}_{2}v^2+\bar{u}_{3}v^3, 	 
$$  with $\bar{u}_{2}=\bar{u}_{3}.$  Define the orthogonal $3\times 3$ matrix as	 
$$ 
\mathcal{O}=[v^1,v^2,v^3]. 	
$$ 		
Then the orthogonal transformation gives 	 
\begin{align*} 	\sigma^{ij}_{\overline{M}}(v)&=\frac{\bar{\rho}}{(2\pi K\bar{\theta})^{3/2}}\int_{{\mathbb R}^3}\frac{1}{|v_*|}\big(\delta_{ij}-\frac{(\mathcal{O}v_{*})_i (\mathcal{O}v_{*})_j}{|v_*|^{2}}\big)\exp\big(-\frac{|v-\mathcal{O}v_{\ast}-\bar{u}|^{2}}{2K\bar{\theta}(t,x)}\big)\,dv_{\ast}.  \end{align*}
Here we have used $|\mathcal{O}v_{*}|=|v_{*}|$. Also
$$  (\mathcal{O}v_{\ast})_i=v_{\ast 1}v^1_i+v_{\ast 2}v^2_i+v_{\ast 3}v^3_i, 	
$$ 	 
and 	 
\begin{align*} 	 |v-\mathcal{O}v_{\ast}-\bar{u}|^2&=|v-v_{*1}v^1-v_{*2}v^2-v_{*3}v^3-\bar{u}_1v^1-\bar{u}_{2}v^2-\bar{u}_{2}v^3|^2
\notag\\ 	&=||v|-v_{*1}-\bar{u}_{1}|^2+|v_{*2}+\bar{u}_{2}|^2+|v_{*3}+\bar{u}_{2}|^2.  
\end{align*}	 
By symmetry, one gets 
\begin{align*} 	 
\sigma^{ij}_{\overline{M}}(v)&=\frac{\bar{\rho}}{(2\pi K\bar{\theta})^{3/2}}\int_{{\mathbb R}^3}\frac{1}{|v_*|}\big(\delta_{ij}-\sum_{l,m=1}^3\frac{v_{*l}v_{*m}v^l_iv^m_j}{|v_*|^{2}}\big)\exp\big(-\frac{||v|-v_{*1}-\bar{u}_{1}|^2+|v_{*2}+\bar{u}_{2}|^2+|v_{*3}+\bar{u}_{2}|^2}{2K\bar{\theta}}\big)\,dv_{\ast} 
\notag\\ 	 &=\frac{\bar{\rho}}{(2\pi K\bar{\theta})^{3/2}}\int_{{\mathbb R}^3}\frac{1}{|v_*|}\big(\delta_{ij}-\sum_{m=1}^3\frac{|v_{*m}|^2v^m_iv^m_j}{|v_*|^{2}}\big)\exp\big(-\frac{||v|-v_{*1}-\bar{u}_{1}|^2+|v_{*2}+\bar{u}_{2}|^2+|v_{*3}+\bar{u}_{2}|^2}{2K\bar{\theta}}\big)\,dv_{\ast}.  
\end{align*}		 	
Denote 	
\begin{align*} 		 
B_0(v)&=\frac{\bar{\rho}}{(2\pi K\bar{\theta})^{3/2}}\int_{{\mathbb R}^3}\frac{1}{|v_*|}\exp\big(-\frac{||v|-v_{*1}-\bar{u}_{1}|^2+|v_{*2}+\bar{u}_{2}|^2+|v_{*3}+\bar{u}_{2}|^2}{2K\bar{\theta}}\big)\,dv_{\ast}, 
\notag\\ 		
B_m(v)&=\frac{\bar{\rho}}{(2\pi K\bar{\theta})^{3/2}}\int_{{\mathbb R}^3}\frac{1}{|v_*|}\frac{|v_{*m}|^2}{|v_*|^{2}}\exp\big(-\frac{||v|-v_{*1}-\bar{u}_{1}|^2+|v_{*2}+\bar{u}_{2}|^2+|v_{*3}+\bar{u}_{2}|^2}{2K\bar{\theta}}\big)\,dv_{\ast}, 	 
\end{align*}  
for $m=2,3$. By symmetry,  we see that 	    
 $$ 	 
B_2(v)=B_3(v). 	 
$$ 	 
Hence, by      
$$      
\de_{ij}-\frac{v_iv_j}{|v|^2}=\de_{ij}-v^1_iv^1_j=v^2_iv^2_j+v^3_iv^3_j,      
$$     
it holds that 	
\begin{align*} 		
\sigma^{ij}_{\overline{M}}(v)&=B_0(v)\de_{ij}-B_1(v)v^1_iv^1_j-B_2(v)(v^2_iv^2_j+v^3_iv^3_j) 
\notag\\ 		 
&=(B_0(v)-B_1(v))\frac{v_iv_j}{|v|^2}+(B_0(v)-B_2(v))(\de_{ij}-\frac{v_iv_j}{|v|^2})\notag\\ 		 
&:=\la_1(v)\frac{v_iv_j}{|v|^2}+\la_2(v)(\de_{ij}-\frac{v_iv_j}{|v|^2}) 	
\end{align*} 	 
with 	
\begin{align*} 		 
\la_1(v)&=\frac{\bar{\rho}}{(2\pi K\bar{\theta})^{3/2}}\int_{{\mathbb R}^3}\frac{1}{|v_*|}(1-\frac{|v_{*1}^2|}{|v_*|^2})\exp\big(-\frac{||v|-v_{*1}-\bar{u}_{1}|^2+|v_{*2}+\bar{u}_{2}|^2+|v_{*3}+\bar{u}_{2}|^2}{2K\bar{\theta}}\big)\,dv_{\ast},
\\ 		
\la_2(v)&=\frac{\bar{\rho}}{(2\pi K\bar{\theta})^{3/2}}\int_{{\mathbb R}^3}\frac{1}{|v_*|}(1-\frac{|v_{*2}^2+v_{*3}^2|}{2|v_*|^2})\exp\big(-\frac{||v|-v_{*1}-\bar{u}_{1}|^2+|v_{*2}+\bar{u}_{2}|^2+|v_{*3}+\bar{u}_{2}|^2}{2K\bar{\theta}}\big)\,dv_{\ast}, 	
\end{align*} 
where we have ignored the dependence of $\la_i(v)$ $(i=1,2)$ on the functions $(\bar{\rho}, \bar{u},\bar{\theta})$. Notice that the following elementary inequalities hold:
\begin{align*}
	\frac{1}{2}||v|-v_{*1}|^2+\frac{1}{2}|v_{*2}|^2+\frac{1}{2}|v_{*3}|^2-|\bar{u}|^2\leq ||v|-v_{*1}-\bar{u}_{1}|^2+&|v_{*2}+\bar{u}_{2}|^2+|v_{*3}+\bar{u}_{3}|^2\notag\\
	&\leq 2||v|-v_{*1}|^2+2|v_{*2}|^2+2|v_{*3}|^2+2|\bar{u}|^2.
\end{align*}
By the boundedness of $\bar{u}$, it follows that
\begin{align*}
	&C^{-1}\int_{{\mathbb R}^3}\frac{1}{|v_*|}(1-\frac{|v_{*1}^2|}{|v_*|^2})\exp\big(-\frac{||v|-v_{*1}|^2+|v_{*2}|^2+|v_{*3}|^2}{K\bar{\theta}}\big)\notag\\
	&\leq \la_1(v)\leq C\int_{{\mathbb R}^3}\frac{1}{|v_*|}(1-\frac{|v_{*1}^2|}{|v_*|^2})\exp\big(-\frac{||v|-v_{*1}|^2+|v_{*2}|^2+|v_{*3}|^2}{4K\bar{\theta}}\big)\,dv_{\ast},
\end{align*}
and
\begin{align*}
&C^{-1}\int_{{\mathbb R}^3}\frac{1}{|v_*|}(1-\frac{|v_{*2}^2+v_{*3}^2|}{2|v_*|^2})\exp\big(-\frac{||v|-v_{*1}|^2+|v_{*2}|^2+|v_{*3}|^2}{K\bar{\theta}}\big)\notag\\
	&\leq \la_2(v)\leq C\int_{{\mathbb R}^3}\frac{1}{|v_*|}(1-\frac{|v_{*2}^2+v_{*3}^2|}{2|v_*|^2})\exp\big(-\frac{||v|-v_{*1}|^2+|v_{*2}|^2+|v_{*3}|^2}{4K\bar{\theta}}\big)\,dv_{\ast},
\end{align*}
for some constant $C>1$ which depend on $\|\bar{u}\|_{L^\infty}$, $\sup \bar{\theta}$, $\inf\bar{\theta}>0$, $\sup \bar{\rho}/\bar{\theta}^{3/2}$ and $\inf\bar{\rho}/\bar{\theta}^{3/2}>0$.
Thus, we deduce that when $|v|\rightarrow \infty$,
\begin{align}\label{asymla}
\la_1(v)\sim (1+|v|)^{-3},\quad \la_2(v)\sim (1+|v|)^{-1}.
\end{align}
Define the projection to the vector $v$ as 
$$
P_v f_i=\frac{v_i}{|v|}\sum^{3}_{j=1}\frac{v_j}{|v|}g_j,
$$
then
\begin{align}\label{sififj}
\sigma^{ij}_{\overline{M}}(v)f_if_j=\la_1(v)\sum^3_{i=1}\{P_v f_i\}^2+\la_2(v)\sum^3_{i=1}\{[I-P_v] f_i\}^2.
\end{align}
Recall that $-\CL_{\overline{M}}=-A_{\overline{M}}-K_{\overline{M}}$, where
	\begin{align*}
		A_{\overline{M}}f&=\pa_{v_i}[\sigma^{ij}_{\overline{M}}\pa_{v_j}f]-\sigma^{ij}_{\overline{M}}\frac{(v_i-\bar{u}_i)(v_j-\bar{u}_j)}{4K^2\bar{\theta}^2}f+\pa_{v_i}[\sigma^{ij}_{\overline{M}}\frac{v_j-\bar{u}_j}{2K\bar{\theta}}]f,
        \\
		K_{\overline{M}}f&=-\overline{M}^{-1/2}\pa_{v_i}\Big[\overline{M}[\Phi_{ij}*\overline{M}(\pa_{v_j}f+\frac{v_j-\bar{u}_j}{2K\bar{\theta}})]\Big].
	\end{align*}
	Then we have from this and \eqref{DefW} that
	\begin{align}\label{7.12A}
		-( A_{\overline{M}} f,W^2(0,0) f)=&\int_{\R^3}W^2(0,0)\sigma^{ij}_{\overline{M}}\pa_{v_j} f\ \pa_{v_i} f\, dv 
        +\int_{\R^3}\pa_{v_i}[W^2(0,0)]\sigma^{ij}_{\overline{M}}\pa_{v_j}f f\, dv
        \notag\\
        &+\int_{\R^3}W^2(0,0)\sigma^{ij}_{\overline{M}}\frac{(v_i-\bar{u}_i)(v_j-\bar{u}_j)}{4K^2\bar{\theta}^2}|f|^2\,dv
+\int_{\R^3}W^2(0,0)\pa_{v_i}[\sigma^{ij}_{\overline{M}}\frac{v_j-\bar{u}_j}{2K\bar{\theta}}]|f|^2\,dv.
	\end{align}
By \eqref{asymla} and \eqref{sififj}, one has
\begin{align}
\label{equipaipaj}
\int_{\R^3}W^2(0,0)(v)\sigma^{ij}_{\overline{M}}\pa_{v_j} f\ \pa_{v_i} f\ dv\sim\{\Big|W(0,0)\langle v\rangle^{-\frac{3}{2}}\nabla_vf\cdot\frac{v}{|v|}\Big|_2+\Big|W(0,0)\langle v\rangle^{-\frac{1}{2}}\nabla_vf\times\frac{v}{|v|}\Big|_2\}.
\end{align}
	
Note from \cite[Lemma 3]{Guo-2002} that there is a function $\bar{\la}_1(v)$ such that
$$
\si^{ij}_{M_{[1,0,\bar{\theta}]}}v_iv_jf^2=\bar{\la}_1(v)|v|^2f^2,
$$
where $\bar{\la}_1(v)\sim (1+|v|)^{-3}$ as $|v|$ tends to infinity. Then
	\begin{align}\label{revivjf}
		&\int_{\R^3}W^2(0,0)(v)\sigma^{ij}_{\overline{M}}\frac{(v_i-\bar{u}_i)(v_j-\bar{u}_j)}{4K^2\bar{\theta}^2}| f|^2\,dv
        \notag\\
		=& \frac{\bar{\rho}}{(2\pi K\bar{\theta})^{3/2}} \int_{\R^3}W^2(0,0)(v)\sigma^{ij}_{M_{[1,0,\bar{\theta}]}}(v-\bar{u})\frac{(v_i-\bar{u}_i)(v_j-\bar{u}_j)}{4K^2\bar{\theta}^2}| f|^2\,dv
        \notag\\
		=& \frac{\bar{\rho}}{(2\pi K\bar{\theta})^{3/2}} \int_{\R^3}W^2(0,0)(v)\bar{\la}_1(v-\bar{u})|v-\bar{u}|^2| f|^2\,dv.
	\end{align}
The velocity weight $\langle v\rangle$ is equivalent to $\langle v-\bar{u}\rangle$ since
\begin{align*}
\langle v\rangle\leq C(1+|v-\bar{u}+\bar{u}|)\leq C(1+|\bar{u}|+| v-\bar{u}|)\leq C\langle v-\bar{u}\rangle,
\end{align*}
and
\begin{align*}
	\langle v-\bar{u}\rangle\leq C(1+|v-\bar{u}|)\leq C(1+|\bar{u}|+| v|)\leq C\langle v\rangle.
\end{align*} 
Choose a smooth cut-off function $\chi(v)$ such that $\chi(v)=1$ for $|v-\bar{u}|<1$ and $\chi(v)=0$ for $|v-\bar{u}|>2$, then we have from Poincare’s inequality and \eqref{equipaipaj} that
\begin{align}\label{L2large}
	\int_{|v-\bar{u}|<1}\frac{W^2(0,0)}{\langle v\rangle}|f|^2\,dv
    &\leq C\int_{\R^3}|\chi(v) f|^2\,dv\leq C\int_{\R^3}|\nabla_{v}\chi(v)\, f|^2dv+C\int_{\R^3}|\chi(v)\, \nabla_{v}f|^2\,dv
    \notag\\
	&\leq  C\int_{1\leq |v-\bar{u}|\leq 2}| f|^2\,dv+C\int_{|v-\bar{u}|\leq 2}|\nabla_{v}f|^2\,dv
    \notag\\
	&\leq C\frac{\bar{\rho}}{(2\pi K\bar{\theta})^{3/2}} \int_{\R^3}W^2(0,0)\la_1(v-\bar{u})|v-\bar{u}|^2| f|^2\,dv
    \notag\\
	&\quad+C\int_{\R^3}W^2(0,0)\sigma^{ij}_{\overline{M}}\pa_{v_j} f\ \pa_{v_i} f\, dv.
\end{align}
On the other way, by the fact that
$$
\frac{1}{\langle v\rangle}\chi_{\{|v-\bar{u}|\geq1\}}\leq\frac{(1+|v-\bar{u}|^2) \chi_{\{|v-\bar{u}|\geq1\}}}{\langle v\rangle^3}\leq C\frac{|v-\bar{u}|^2 \chi_{\{|v-\bar{u}|\geq1\}}}{\langle v-\bar{u}\rangle^3},
$$
then one has from this and  \eqref{asymla} that
\begin{align}
\label{L2bounded}
\int_{|v-\bar{u}|\geq1}\frac{W^2(0,0)}{\langle v\rangle}|f|^2\,dv\leq C\frac{\bar{\rho}}{(2\pi K\bar{\theta})^{3/2}} \int_{\R^3}W^2(0,0)\la_1(v-\bar{u})|v-\bar{u}|^2| f|^2\,dv.
\end{align}
We combine \eqref{revivjf}, \eqref{L2large} and \eqref{L2bounded} to get
\begin{align*}
|W(0,0)\langle v\rangle^{-\frac{1}{2}}f|^2_2\leq C\int_{\R^3}W^2(0,0)\sigma^{ij}_{\overline{M}}\pa_{v_j} f\ \pa_{v_i} f\, dv +C\int_{\R^3}W^2(0,0)\sigma^{ij}_{\overline{M}}\frac{(v_i-\bar{u}_i)(v_j-\bar{u}_j)}{4K^2\bar{\theta}^2}|f|^2\,dv,
\end{align*}
which, together with \eqref{equipaipaj}, yields
\begin{align}\label{term12}
&	C\int_{\R^3}W^2(0,0)\sigma^{ij}_{\overline{M}}\pa_{v_j} f\ \pa_{v_i} f\, dv +C\int_{\R^3}W^2(0,0)\sigma^{ij}_{\overline{M}}\frac{(v_i-\bar{u}_i)(v_j-\bar{u}_j)}{4K^2\bar{\theta}^2}|f|^2\,dv
\notag\\
\geq&|W^2(0,0)\langle v\rangle^{-\frac{1}{2}}f|^2_2+\Big|W^2(0,0)\langle v\rangle^{-\frac{3}{2}}\nabla_vf\cdot\frac{v}{|v|}\Big|^2_2+\Big|W^2(0,0)\langle v\rangle^{-\frac{1}{2}}\nabla_vf\times\frac{v}{|v|}\Big|^2_2
\notag\\
\geq&c|f|^2_{\si,W}.
	\end{align}
	With the similar calculation as Lemma 7 in \cite{Strain-Guo}, we get
	\begin{align}\label{term3}
&\int_{\R^3}|\pa_{v_i}[W^2(0,0)]\sigma^{ij}_{\overline{M}}\pa_{v_j} f |f\, dv+		\int_{\R^3}W^2(0,0)\pa_{v_i}[\sigma^{ij}_{\overline{M}}\frac{v_j-\bar{u}_j}{2K\bar{\theta}}]|f|^2\,dv+\int_{\R^3}W^2(0,0)(v)|K_{\overline{M}}f||f|\,dv
\notag\\      
&\leq \ka|f|^2_{\si,W}+C_{\ka}|\chi_{|v|\leq C_{\ka}}f|_2^2.
\end{align}
Then \eqref{controlpaL} holds by $-\CL_{\overline{M}}=-A_{\overline{M}}-K_{\overline{M}}$, \eqref{7.12A}, \eqref{term12} and \eqref{term3} with sufficiently small $\ka$. Moreover, \eqref{controlLbarM} can be obtained from similar arguments as above and \cite[Lemma 5]{Guo-2002}.

Similar proofs for obtaining \eqref{controlpaL} and \eqref{coLD}, together with \eqref{controlpaL} \eqref{controlGa} and \eqref{controlpaGa}, give the desired
estimates \eqref{controlpaLM} and \eqref{controlLM}. The details are omitted for brevity.  Hence, Lemma \ref{leL} is proved.   
\end{proof}

\medskip
\noindent {\bf Acknowledgment:}\,
The research of Renjun Duan was partially supported by the General Research Fund (Project No.~14303523) from RGC of Hong Kong and also by the grant from the National Natural Science Foundation of China (Project No.~12425109). Zongguang Li would like to thank the Research Centre for Nonlinear Analysis at The Hong Kong Polytechnic University for supporting his postdoc study. The research of Dongcheng Yang was supported by the National Natural Science Foundation (Project No.~ 12401276),
the Guangdong Basic and Applied Basic Research Foundation (Project No.~ SL2024A04J01013).   
The research of Tong Yang is supported by the General Research Fund of Hong Kong (Project No. 11318822). He would also like to thank the Kuok Group foundation for its generous support.

\medskip
\noindent{\bf Data availability:} The manuscript contains no associated data.

\medskip
\noindent{\bf Conflict of Interest:} The authors declare that they have no conflict of interest.

%\newpage

	\end{document}